\providecommand{\U}[1]{\protect\rule{.1in}{.1in}}
\newtheorem{theorem}{Theorem}[section]
\newtheorem{corollary}[theorem]{Corollary}
\newtheorem{definition}[theorem]{Definition}
\newtheorem{lemma}[theorem]{Lemma}
\newtheorem{proposition}[theorem]{Proposition}
\newtheorem{remark}[theorem]{Remark}
\newenvironment{proof}[1][Proof]{\textbf{#1.} }{\ \rule{0.5em}{0.5em}}
\def\i{{\rm i}\,}
\newenvironment{sfblock}{\sffamily\itshape}{}
\numberwithin{equation}{section}
\begin{document}

\title{Asymptotic Inference in a Stationary Quantum Time Series}
\author{\textsc{Michael Nussbaum\thanks{Department of Mathematics, Cornell University,
Ithaca NY, USA} }\textsc{\bigskip\ and Arleta Szko{\l }a\thanks{Faculty of
Natural and Environmental Sciences, Zittau/ G\"{o}rlitz University of Applied
Sciences, Germany}}}
\maketitle

\begin{abstract}
We consider a statistical model of a n-mode quantum Gaussian state which is
shift invariant and also gauge invariant. Such models can be considered
analogs of classical Gaussian stationary time series, parametrized by their
spectral density. Defining an appropriate quantum spectral density as the
parameter, we establish that the quantum Gaussian time series model is
asymptotically equivalent to a classical nonlinear regression model given as a
collection of independent geometric random variables. The asymptotic
equivalence is established in the sense of the quantum Le Cam distance between
statistical models (experiments). The geometric regression model has a further
classical approximation as a certain Gaussian white noise model with a
transformed quantum spectral density as signal. In this sense, the result is a
quantum analog of the asymptotic equivalence of classical spectral density
estimation and Gaussian white noise, which is known for Gaussian stationary
time series. In a forthcoming version of this preprint, we will also identify
a quantum analog of the periodogram and provide optimal parametric and
nonparametric estimates of the quantum spectral density.

\end{abstract}
\tableofcontents

\section{\textbf{Main Results\label{Sec: Intro and main results}}}

\subsection{Introduction}

Quantum stationary time series models have arisen in the context of quantum
system identification and control theory \cite{MR3483526}, \cite{MR3764406}.
For some context, we will first describe some basic asymptotic inference
results for classical time series models in statistics.

Local asymptotic normality (LAN, Le Cam \cite{MR856411}) is a fundamental
property of a sequence of statistical experiments, which essentially reduces
inference for large sample size to the case of a normal location model. Let
$\left(  P_{n,\theta},\theta\in\Theta\right)  $ be a sequence of families of
p.m.'s on measurable spaces $\left(  X_{n},\Omega_{n}\right)  $ where
$\Theta\subset\mathbb{R}^{k}$; assume that for given $n$, all $P_{n,\theta}$
are mutually absolutely continuous. The sequence is LAN at $\theta
\in\mathrm{int}\left(  \Theta\right)  $ if there exists a positive $k\times k$
matrix $J_{\theta}$ and random $k$-vectors $\Delta_{n,\theta}$ on $\left(
X_{n},\Omega_{n}\right)  $ such that $\mathcal{L}\left(  \Delta_{n}%
|P_{n,\theta}\right)  \Longrightarrow_{d}N\left(  0,J_{\theta}\right)  $
(convergence in distribution), and for $h\in\mathbb{R}^{k}$ one has
\begin{equation}
\log\frac{dP_{n,\theta+h/\sqrt{n}}}{dP_{n,\theta}}=h^{\prime}\Delta_{n,\theta
}-\frac{1}{2}h^{\prime}J_{\theta}h+o_{P}\left(  1\right)  \text{ as
}n\rightarrow\infty, \label{LAN-basic}%
\end{equation}
with probability convergence taking place under the $P_{n,\theta}$ law,
uniformly over compacts in $h$. The underlying idea here is that the
log-likelihood ratio asymptotically, and locally in neighborhoods of $\theta$,
takes the form associated to a Gaussian shift experiment
\begin{equation}
\left(  N_{k}\left(  h,J_{\theta}^{-1}\right)  ,\;h\in\mathbb{R}^{k}\right)  .
\label{basic-Gaussian-shift-model}%
\end{equation}
The latter model then serves as a benchmark for optimal inference in the
original model $\left(  P_{n,\theta},\theta\in\Theta\right)  $, typically
giving risk bounds in terms of the Fisher information matrix $J_{\theta}$. One
of the earliest results establishing the LAN\ property, beyond the basic
i.i.d. case, has been Davies \cite{D73} for a stationary Gaussian time series
with spectral density depending on a parameter $\theta$. Later developments
and extensions within the framework of parametric statistical inference for
time series are summarized in the monographs \cite{MR812272} and \cite{TK00}.

When parameters are infinite dimensional, defining a framework of
nonparametric inference, the proper analog of LAN to describe risk benchmarks
for procedures \ is asymptotic equivalence in the sense of Le Cam's $\Delta
$-distance. To define it, assume all measurable sample spaces are Polish
(complete separable) metric spaces equipped with their Borel sigma-algebra.
For measures $P,$ $Q$ on the same sample space, let $\left\Vert P-Q\right\Vert
_{1}$ be $L_{1}$-distance. For the general case where $P,$ $Q$ are not
necessarily on the same sample space, suppose $K$ is a Markov kernel such that
$KP$ is a measure on the same sample space as $Q$. In that case, $\left\Vert
Q-KP\right\Vert _{1}$ is defined and will be used to measure the distance
between $Q$ and a Markov kernel randomization of $P$.

Consider now experiments (families of measures) $\mathcal{F}=\left(
Q_{\theta},\;\theta\in\Theta\right)  $ and $\mathcal{E}=\left(  P_{\theta
},\;\theta\in\Theta\right)  $, on possibly different sample spaces, but with
the same parameter space $\Theta$ (of arbitrary nature). All experiments here
are assumed dominated by a sigma-finite measure on their respective sample
space. The deficiency of $\mathcal{E}$ with respect to $\mathcal{F}$ is
defined as
\[
\delta\left(  \mathcal{E},\mathcal{F}\right)  =\inf_{K}\sup_{\theta\in\Theta
}\left\Vert Q_{\theta}-KP_{\theta}\right\Vert _{1}%
\]
where $\inf$ extends over all appropriate Markov kernels. Le Cam's
pseudodistance $\Delta\left(  \mathcal{\cdot},\cdot\right)  $ between
$\mathcal{E}$ and $\mathcal{F}$ then is
\begin{equation}
\Delta\left(  \mathcal{E},\mathcal{F}\right)  =\max\left(  \delta\left(
\mathcal{E},\mathcal{F}\right)  ,\delta\left(  \mathcal{F},\mathcal{E}\right)
\right)  . \label{Delta-distance}%
\end{equation}
It is well known that for two experiments $\mathcal{E}$ and $\mathcal{F}$
having the same parameter space, $\Delta(\mathcal{E},\mathcal{F})<\varepsilon$
implies that for any decision problem with loss bounded by $1$ and any
statistical procedure in the experiment $\mathcal{F}$ there is a (randomized)
procedure in $\mathcal{E}$, the risk of which evaluated in $\mathcal{E}$
nearly matches (within $\varepsilon$) the risk of the original procedure
evaluated in $\mathcal{F}$. In this statement the roles of $\mathcal{E}$ and
$\mathcal{F}$ can also be reversed. Two sequences $\mathcal{E}_{n}%
,\mathcal{F}_{n}$ are said to be \textit{asymptotically equivalent} if
$\Delta(\mathcal{E}_{n},\mathcal{F}_{n})\rightarrow0$.

A result on approximation in $\Delta$-distance of a classical Gaussian
stationary time series model has been obtained in \cite{MR2589320}. Assume a
sample $y^{(n)}=\left(  y(1),\ldots,y(n)\right)  ^{\prime}$ from a real
Gaussian stationary sequence $y(t)$ with zero mean, autocovariance function
$\gamma_{j}=\mathrm{E}y(t)y(t+j)$ and real spectral density $f$ on $[-\pi
,\pi]$ such that $f\left(  \omega\right)  =f\left(  -\omega\right)  $ and
\begin{equation}
\gamma_{j}=\frac{1}{2\pi}\int_{-\pi}^{\pi}\exp\left(  -ij\omega\right)
f\left(  \omega\right)  d\omega. \label{spec-density-classical}%
\end{equation}
Define a function set $\Sigma_{\alpha,M}=B^{\alpha}(M)\cap\mathcal{F}_{M},$
where $B^{\alpha}(M)$ is a Besov-Sobolev smoothness class of spectral
densities with smoothness coefficient $\alpha$ and $\mathcal{F}_{M}$ is the
set of real even positive functions $f$ on $[-\pi,\pi]$ such that $\left\vert
\log f\right\vert \leq M$. Then it is shown that observations $y^{(n)}$ with
spectral density $f$ are asymptotically equivalent to a white noise model
\begin{equation}
dZ_{\omega}=\log f(\omega)d\omega+2\pi^{1/2}n^{-1/2}dW_{\omega}\text{, }%
\omega\in\lbrack-\pi,\pi] \label{gwn-spec-density}%
\end{equation}
if the parameter space is given by $f\in$ $\Sigma_{\alpha,M}$ for some $M>0$
and $\alpha>1/2$. This represents the nonparametric (asymptotic equivalence)
version of the classical LAN property for parametric sets $\left(  f_{\theta
},\theta\in\Theta\right)  $ of spectral densities \cite{D73} \cite{MR812272}
\cite{TK00}. Here the Gaussian white noise model (\ref{gwn-spec-density})
represents an analog of the basic Gaussian location model
(\ref{basic-Gaussian-shift-model}), with the approximation valid globally
(over all spectral densities $f\in$ $\Sigma_{\alpha,M}$). Also established
were local approximations (via the connection to \cite{MR1633574}) around a
fixed spectral density $f_{0}$ like
\begin{equation}
dZ_{\omega}=f(\omega)d\omega+2\pi^{1/2}n^{-1/2}f_{0}(\omega)dW_{\omega}\text{,
}\omega\in\lbrack-\pi,\pi] \label{gwn-spec-density-local}%
\end{equation}
which are more suitable for obtaining risk bounds for estimation on $f$
itself, rather than $\log f$. Here the log-transformation plays the role of a
variance stabilizing transformation, removing the factor $f_{0}$ from the
noise term and allowing to proceed from the local asymptotic equivalence
(\ref{gwn-spec-density-local}) (valid for $f$ close to $f_{0}$) to the global
variant (\ref{gwn-spec-density}) (cf. \cite{MR1633574} for details).

The analog of the $\Delta$-distance for quantum statistical models has been
introduced and studied by several authors. In \cite{MR2229156},
\cite{MR2506764} it was used to define a (strong) quantum analog of the LAN
property (\ref{LAN-basic}) for tensor product models of qubits and finite
dimensional states. Alternative approaches to quantum LAN were pursued by
\cite{MR2346393} and \cite{YFG13}, via different definitions of a quantum
likelihood ratio. In \cite{BGN-QAE}\ the quantum Le Cam distance was used to
establish asymptotic equivalence of a tensor product model of infinite
dimensional pure states to a quantum Gaussian white noise model. Although the
approximation is local, valid in a neighborhood of a fixed pure state (and
thus is an analog of (\ref{gwn-spec-density-local})), it allows to establish a
number of results for nonparametric inference on pure states (estimation and testing).

The object of the present paper is to investigate, with regard to asymptotic
equivalence, a quantum Gaussian model studied earlier in \cite{MR2510896}. We
will consider an $n$-mode quantum Gaussian system to define a quantum Gaussian
time series of "length" $n$.

\bigskip A one mode quantum system is given by the Hilbert space $L_{2}\left(
\mathbb{R}\right)  $ and self-adjoint operators acting on appropriately
defined domains as
\[
\left(  Qf\right)  \left(  x\right)  =xf\left(  x\right)  ,\;\;\;\;\;\left(
Pf\right)  \left(  x\right)  =-i\frac{df\left(  x\right)  }{dx}%
\]
which satisfy the commutation relations%
\[
\left[  Q,P\right]  =QP-PQ=i\mathbf{1}.
\]
The Hilbert space of an $n$-mode system is $L_{2}^{\otimes n}\left(
\mathbb{R}\right)  \cong L_{2}\left(  \mathbb{R}^{n}\right)  $ on which
"canonical pairs" $\left(  Q_{j},P_{j}\right)  $ are defined acting on the
$j$th tensor factor as above, and as identity on the other tensor factors.
Thus the commutation relations on $L_{2}\left(  \mathbb{R}^{n}\right)  $ are
\begin{equation}
\left[  Q_{j},Q_{k}\right]  =\left[  P_{j},P_{k}\right]  =0,\left[
Q_{j},P_{k}\right]  =i\delta_{jk}\mathbf{1.} \label{comm-rela}%
\end{equation}
Write the vector of observables as $\mathbf{R}:=\left(  Q_{1},\ldots
,Q_{n},P_{1},\ldots,P_{n}\right)  $ and for $x\in\mathbb{R}^{2n}$ introduce
the Weyl unitaries as
\begin{equation}
W\left(  x\right)  =\exp\left(  i\mathbf{R}x\right)  .
\label{Weyl-as-function-of-P-Q}%
\end{equation}
For $x,y\in$ $\mathbb{R}^{2n}$ define a bilinear, antisymmetric (symplectic)
form as
\[
D\left(  x,y\right)  =\sum_{j=1}^{n}\left(  x_{j}y_{j+n}-x_{j+n}y_{j}\right)
.
\]
The operators $W\left(  x\right)  $, $x\in\mathbb{R}^{2n}$ satisfy $W\left(
x\right)  ^{\ast}=W\left(  -x\right)  $ and
\begin{equation}
W\left(  x\right)  W\left(  y\right)  =W\left(  x+y\right)  \exp\left(
-\frac{i}{2}D\left(  x,y\right)  \right)  ,\;x,y\in\mathbb{R}^{2n},
\label{CCR-1}%
\end{equation}
i.e. the Weyl canonical commutations relations, or CCR. The $C^{\ast}$-algebra
generated by $\left\{  W\left(  x\right)  ,x\in\mathbb{R}^{2n}\right\}  $
defines the Schr\"{o}dinger representation of $CCR\left(  \mathbb{R}%
^{2n},D\right)  $ (\cite{MR1441540}, 5.2.16). The von Neumann algebra
generated by $\left\{  W\left(  x\right)  ,x\in\mathbb{R}^{2n}\right\}  $ is
the full algebra $\mathcal{L}(L_{2}\left(  \mathbb{R}^{n}\right)  )$ of
bounded operators on $L_{2}\left(  \mathbb{R}^{n}\right)  $.

\subsection{Gaussian states\textit{\label{subsec: Gaussian states}}}

A state $\varphi$ on a von Neumann algebra $\mathcal{A}$ is a positive normal
linear functional $\varphi:$ $\mathcal{A}\rightarrow\mathbb{C}$ which takes
value $1$ on the unit of $\mathcal{A}$; cf. Section
\ref{subsec:States-channels-obs} for a short overview of the relevant
concepts. In the case of $\mathcal{A=L}(L_{2}\left(  \mathbb{R}^{n}\right)
)$, a state is entirely determined by its values on the Weyl unitaries, which
allows to define the characteristic function of $\varphi$ at argument
$x\in\mathbb{R}^{2n}$ as
\begin{equation}
\hat{W}\left[  \varphi\right]  \left(  x\right)  :=\varphi\left(  W\left(
x\right)  \right)  . \label{char-function-general}%
\end{equation}
Consider a real positive definite symmetric $2n\times2n$ matrix $\Sigma$
satisfying
\begin{equation}
\frac{1}{4}\left(  D\left(  x,y\right)  \right)  ^{2}\leq\left\langle x,\Sigma
x\right\rangle \left\langle y,\Sigma y\right\rangle \text{, }x,y\in
\mathbb{R}^{2n}. \label{uncertainty-implied}%
\end{equation}
Then there exists a unique state $\varphi\left(  0,\Sigma\right)  $ on
$\mathcal{L}(L_{2}\left(  \mathbb{R}^{n}\right)  )$ with characteristic
function
\begin{equation}
\hat{W}\left[  \varphi\left(  0,\Sigma\right)  \right]  \left(  x\right)
=\exp\left(  -\frac{1}{2}\left\langle x,\Sigma x\right\rangle \right)  \text{,
}x\in\mathbb{R}^{2n} \label{char-function-Gaussian}%
\end{equation}
(\cite{MR1057180}, Theorem 3.4). Such states are called \textit{centered
Gaussian} (or quasifree) with covariance matrix $\Sigma$. The inequality
(\ref{uncertainty-implied}) is required by Heisenberg's uncertainty relation
(\cite{MR2797301}, Theorem 5.5.1).

\subsection{Shift invariant states}

In a centered Gaussian state $\varphi\left(  0,\Sigma\right)  $, every
observable $R\left(  x\right)  =\mathbf{R}x$ has a normal distribution
\begin{equation}
R\left(  x\right)  \sim N\left(  0,\left\langle x,\Sigma x\right\rangle
\right)  . \label{observables-normal-distr}%
\end{equation}
Define the two vectors $\mathbf{R}_{s}:=\left(  Q_{1+s},\ldots,Q_{n-1+s}%
,P_{1+s},\ldots,P_{n-1+s}\right)  $, $s=0,1$. The state $\varphi\left(
0,\Sigma\right)  $ is \textit{shift invariant }if for every $t\in
\mathbb{R}^{2(n-1)}$ the observables
\[
R_{s}\left(  t\right)  =\mathbf{R}_{s}t\text{, }s=0,1
\]
have the same distribution. It is easily seen that this implies shift
invariance for the one mode subsystem $\left(  Q_{1},P_{1}\right)  $, and also
shift invariance for any $r$-mode subsystem $\left(  Q_{1,},\ldots,Q_{r,}%
P_{1},\ldots,P_{r}\right)  $, $1\leq r<n$. It follows that the covariance
matrix $\Sigma$ is such that all four $n\times n$ submatrices in
\[
\Sigma=\left(
\begin{array}
[c]{cc}%
\Sigma_{11} & \Sigma_{12}\\
\Sigma_{12}^{\prime} & \Sigma_{22}%
\end{array}
\right)
\]
are Toeplitz. Equivalently, if $\check{\Sigma}$ is the permutation of
$\Sigma\ $such that for $\mathbf{\check{R}}:=(Q_{1},P_{1},\ldots,Q_{n},P_{n})$
we have
\begin{equation}
\mathbf{\check{R}}x\sim N\left(  0,\left\langle x,\check{\Sigma}x\right\rangle
\right)  \text{, }x\in\mathbb{R}^{2n} \label{permuted-cov-matrix}%
\end{equation}
then $\check{\Sigma}$ is\textit{ block Toeplitz}, i.e. it is of form
$\Sigma=\left(  \Sigma_{j-k}^{0}\right)  _{j,k=1}^{n}$ where $\left\{
\Sigma_{k}^{0}\right\}  _{k=1-n}^{n-1}$ is a sequence of $2\times2$ matrices.
The block Toeplitz structure is familiar in the statistical theory for
classical multivariate time series \cite{TK00}.

\subsection{Gauge invariant states}

The Weyl unitaries $W\left(  x\right)  $, $x\in\mathbb{R}^{2n}$ can
equivalently be indexed by complex $u\in\mathbb{C}^{n}$ such that $V\left(
u\right)  :=W\left(  \underline{u}\right)  $ where $\underline{u}:=\left(
-\operatorname{Im}u\right)  \oplus\operatorname{Re}u$, whereupon the CCR
relation (\ref{CCR-1}) writes as
\begin{equation}
V\left(  u\right)  V\left(  v\right)  =V\left(  u+v\right)  \exp\left(
-\frac{i}{2}\operatorname{Im}\left\langle u,v\right\rangle \right)
,\;u,v\in\mathbb{C}^{n}. \label{Weyl-unitaries-complex-indexed}%
\end{equation}
A state $\rho$ is gauge invariant if for every $z\in\mathbb{C}$, $\left\vert
z\right\vert =1$ one has $\rho\left(  V\left(  zu\right)  \right)
=\rho\left(  V\left(  u\right)  \right)  ,$ $u\in\mathbb{C}^{n}$. A quasifree
state $\varphi\left(  0,\Sigma\right)  $ is gauge invariant if and only if
\[
\left\langle \underline{zu},\Sigma\underline{zu}\right\rangle =\left\langle
\underline{u},\Sigma\underline{u}\right\rangle ,u\in\mathbb{C}^{n}%
,z\in\mathbb{C},\left\vert z\right\vert =1
\]
or equivalently, if there exists a self-adjoint positive operator $A$ on
$\mathbb{C}^{n}$ such that
\[
\left\langle \underline{u},\Sigma\underline{u}\right\rangle =\frac{1}%
{2}\left\langle u,Au\right\rangle ,\;u\in\mathbb{C}^{n}.
\]
The matrix $A$ is called the \textit{symbol} of $\varphi\left(  0,\Sigma
\right)  $; it is related to the covariance matrix $\Sigma$ by
\begin{equation}
\Sigma=\Sigma\left(  A\right)  :=\frac{1}{2}\left(
\begin{array}
[c]{cc}%
\operatorname{Re}A & -\operatorname{Im}A\\
\operatorname{Im}A & \operatorname{Re}A
\end{array}
\right)  \label{symbol-related-to-covmatrix}%
\end{equation}
where $\operatorname{Re}A$ is symmetric and $\operatorname{Im}A$ is
antisymmetric ($\left(  \operatorname{Im}A\right)  ^{\prime}%
=-\operatorname{Im}A$). Relation (\ref{uncertainty-implied}) then can be
written
\begin{equation}
\left(  \operatorname{Im}\left\langle u,v\right\rangle \right)  ^{2}%
\leq\left\langle u,Au\right\rangle \left\langle v,Av\right\rangle \text{,
}u,v\in\mathbb{C}^{n}. \label{uncertainty-implied-symbols}%
\end{equation}
Upon setting $v=iu$, this implies $A\geq I_{n}$, and conversely every $n\times
n$ Hermitian matrix $A\geq I_{n}$ satisfies (\ref{uncertainty-implied-symbols}%
) and thus is the symbol of an $n$-mode gauge invariant centered Gaussian
state. For the gauge invariant centered Gaussian state $\rho=\varphi\left(
0,\Sigma\right)  $ with symbol $A$, covariance matrix $\Sigma=\Sigma\left(
A\right)  $ and characteristic function
\begin{equation}
\hat{W}\left[  \varphi\left(  0,\Sigma\right)  \right]  \left(  \underline
{u}\right)  =\rho\left(  V\left(  u\right)  \right)  =\exp\left(  -\frac{1}%
{4}\left\langle u,Au\right\rangle \right)  \text{, }u\in\mathbb{C}^{n}
\label{char-func-gauge-invar}%
\end{equation}
we write
\begin{equation}
\varphi\left(  0,\Sigma\left(  A\right)  \right)  =\mathfrak{N}_{n}\left(
0,A\right)  . \label{symbol-related-to-covmatrix-2}%
\end{equation}
With this notation we suggest an analogy to the $n$-variate centered normal
distribution with covariance matrix $M$, usually written $N_{n}\left(
0,M\right)  $. Note that for one mode ($n=1$), a gauge invariant centered
Gaussian state has symbol $a\in\mathbb{R}$, $a\geq1$ and covariance matrix
$\Sigma=aI_{2}/2$. Thus
\begin{equation}
\mathfrak{N}_{1}\left(  0,a\right)  =\varphi\left(  0,aI_{2}/2\right)
\label{thermal-state}%
\end{equation}
is the vacuum state for $a=1$ and a thermal state for $a>1$. If $A$ is
diagonal $A=\mathrm{diag}\left(  a_{1},\ldots,a_{n}\right)  >I_{n}$ then
$\mathfrak{N}_{n}\left(  0,A\right)  $ is the $n$-fold tensor product of
thermal states $\mathfrak{N}_{1}\left(  0,a_{j}\right)  $.

\subsection{The asymptotic setup}

The quantum statistical model for fixed $n$ is now given by a family of gauge
invariant and shift invariant centered Gaussian states $\left(  \mathfrak{N}%
_{n}\left(  0,A\right)  ,A\in\mathfrak{A}_{n}\right)  $ where $\mathfrak{A}%
_{n}$ is a set of $n\times n$ complex Hermitian Toeplitz matrices with $A\geq
I$. In accordance w\'{\i}th the usage in classical statistics, the model might
be described as a \textit{stationary quantum Gaussian time series.} For
asymptotic inference in that model, we assume that the $n\times n$ symbols
$A=\left(  a_{j,k}\right)  _{j,k=1}^{n}$ are related to a given positive
bounded measurable function $a:\left[  -\pi,\pi\right]  \rightarrow\mathbb{R}$
as follows:
\begin{equation}
a_{jk}=a_{k-j},\;a_{k}=\frac{1}{2\pi}\int_{-\pi}^{\pi}\exp\left(
-ik\omega\right)  a\left(  \omega\right)  d\omega,\;j,k\in\mathbb{Z}.
\label{symbol-generate}%
\end{equation}
such that
\begin{equation}
a\left(  \omega\right)  =\sum_{k=-\infty}^{\infty}a_{k}\phi_{k}\left(
\omega\right)  \text{ where }\phi_{k}\left(  \omega\right)  =\exp\left(
ik\omega\right)  \text{, }\omega\in\mathbb{R}\text{.}
\label{symbol-generate-2}%
\end{equation}
Here $a_{k}$ are analogs of the autocovariances of a classical stationary
complex valued time series, fulfilling $\overline{a_{k}}=a_{-k}$. Accordingly
the function $a\left(  \omega\right)  $ may be described as the
\textit{quantum spectral density.} We assume $a$ to be real and fulfilling
$a\geq1$, and we write $A_{n}\left(  a\right)  $ for the $n\times n$ Hermitian
Toeplitz matrix generated by (\ref{symbol-generate}) for given $a$. We then
have $A_{n}\left(  a\right)  \geq I_{n}$ (compare Lemma \ref{lem-toeplitz-EV}
below); our quantum statistical model is now a family of states
\begin{equation}
\left(  \mathfrak{N}_{n}\left(  0,A_{n}\left(  a\right)  \right)  ,a\in
\Theta\right)  \label{quantum-asy-setup-spec-density}%
\end{equation}
where $\Theta$ is a family of quantum spectral densities on $\left[  -\pi
,\pi\right]  $ fulfilling $a\geq1$. Note that if $f$ is a real function with
$f\geq0$ on $\left[  -\pi,\pi\right]  $ which is even (i.e. symmetric,
$f\left(  \omega\right)  =f\left(  -\omega\right)  $) then the matrix
$A_{n}\left(  f\right)  $ is real symmetric nonnegative definite, i.e. it is
the covariance matrix of a real random vector. As $A_{n}\left(  f\right)  $ is
also a sequence of Toeplitz matrices, it would describe the standard setup for
a sequence of covariance matrices in classical stationary real valued time
series \cite{MR1093459}, \cite{MR812272}, \cite{MR2589320} (except that the
standard setup defines the spectral density $f$ without a factor $1/2\pi$ in
(\ref{spec-density-classical})). Indeed comparing (\ref{symbol-generate}) with
(\ref{spec-density-classical}), we see that $a_{j}=\gamma_{j}$ if in
(\ref{symbol-generate}) we set $a\left(  \omega\right)  =f\left(
\omega\right)  ,$ $\omega\in\left[  -\pi,\pi\right]  $. Our asymptotic model
(\ref{quantum-asy-setup-spec-density}), where the spectral density $a$ is the
parameter, is thus a quantum analog of a classical time series, involving the
symbol matrices as analogs of covariance matrices. To our knowledge, the model
has first been treated in \cite{MR2510896} in the problem of discrimination
between two spectral densities $a_{1},a_{2}$. There the quantum Chernoff bound
has been computed for the specified quantum Gaussian models, based on the
general form of the quantum Chernoff bound as previously found in
\cite{MR2502660} and \cite{MR2377635}.

\subsection{Quantum Le Cam distance\label{subsec: Qu-Lecam-distance}}

We follow \cite{MR2346393} for defining the quantum analog of the $\Delta
$-distance (\ref{Delta-distance}). So far the quantum Gaussian states
$\mathfrak{N}_{n}\left(  0,A\right)  $ have been defined on the von Neumann
algebra $\mathcal{L}\left(  L_{2}\left(  \mathbb{R}^{n}\right)  \right)  $,
but in order to incorporate classical families of probability distributions
into this framework, one needs to consider commutative von Neumann algebras
defined by spaces $L^{\infty}\left(  \mu\right)  $ of functions on a $\sigma
$-finite measure space $\left(  X,\Omega,\mu\right)  $. In our appendix
section \ref{subsec:States-channels-obs} we clarify how states on a von
Neumann algebra $\mathcal{A}$ can be understood as elements of the predual
$\mathcal{A}_{\ast}$ of $\mathcal{A}$. The predual $\mathcal{A}_{\ast}$ is a
Banach space with norm $\left\Vert \cdot\right\Vert _{1}$ such that
$\mathcal{A}$ is its dual Banach space, and the states $\varphi$ are positive
elements of $\mathcal{A}_{\ast}$ which fulfill $\left\Vert \varphi\right\Vert
_{1}=1$. In the case $\mathcal{A}=\mathcal{L}\left(  L_{2}\left(
\mathbb{R}^{n}\right)  \right)  $, it is well known that a state $\varphi$ has
a density operator $\rho_{\varphi}$ (a positive operator on $L_{2}\left(
\mathbb{R}^{n}\right)  $ with unit trace) such that
\[
\varphi\left(  V\left(  x\right)  \right)  =\mathrm{Tr\;}\rho_{\varphi
}V\left(  x\right)  ,x\in\mathbb{C}^{n}.
\]
In that case $\left\Vert \varphi\right\Vert _{1}=\mathrm{Tr\;}\rho_{\varphi
}=1$ and for states $\varphi,\sigma$, the distance
\[
\left\Vert \varphi-\sigma\right\Vert _{1}=\mathrm{Tr\;}\left\vert
\rho_{\varphi}-\rho_{\sigma}\right\vert
\]
is the usual trace distance. In the case $\mathcal{A}=L^{\infty}\left(
\mu\right)  $, states are positive elements $f$ of $L^{1}\left(  \mu\right)  $
fulfilling $\left\Vert f\right\Vert _{1}=\int fd\mu=1$, i.e. probability
density functions, and for states $f,g$ on $L^{\infty}\left(  \mu\right)  $,
the distance
\[
\left\Vert f-g\right\Vert _{1}=\int\left\vert f-g\right\vert d\mu
\]
is the usual $L^{1}$-distance. In general we will refer to $\left\Vert
\cdot\right\Vert _{1}$ as the predual norm.

A \textit{quantum statistical experiment} $\mathcal{E}=\left(  \mathcal{A}%
,\rho_{\theta},\theta\in\Theta\right)  $ is given by a family of states
$\rho_{\theta},\theta\in\Theta$ on a von Neumann algebra $\mathcal{A}$ where
$\rho_{\theta}\in\mathcal{A}_{\ast}$. As a regularity condition, it is assumed
that experiments are homogeneous and in reduced form (cf. Subsection
\ref{subsubsec-qu-statist-experiments}). Let $\mathcal{F}:=\left(
\mathcal{B},\sigma_{\theta},\theta\in\Theta\right)  $ be another quantum
statistical experiment, indexed by the same parameter $\theta$. The deficiency
$\delta\left(  \mathcal{E},\mathcal{F}\right)  $ is defined as
\begin{equation}
\delta\left(  \mathcal{E},\mathcal{F}\right)  :=\inf_{\alpha}\sup_{\theta
}\left\Vert \rho_{\theta}\circ\alpha-\sigma_{\theta}\right\Vert _{1}
\label{deficiency-quantum-def}%
\end{equation}
where the infimum is taken over all quantum channels $\alpha:\mathcal{B}%
\rightarrow\mathcal{A}$ (see Appendix, \ref{subsec:States-channels-obs} for
the definition of channels). The channels $\alpha$ are certain linear and
(completely) positive maps between the von Neumann algebras; they give rise to
quantum state transitions (TP-CP maps) $T:\mathcal{A}_{\ast}\rightarrow
\mathcal{B}_{\ast}$ via the duality (\ref{dual-channels-1}). If $\mathcal{A}$
and $\mathcal{B}$ are of type $L^{\infty}\left(  \mu_{i}\right)  $, $i=1,2$
then the TP-CP maps are transitions in the sense of Le Cam between dominated
families of probability measures, which under regularity conditions are given
by Markov kernels (cf. (\ref{Markov-kernel-transition-exist-b})). In the mixed
case where $\mathcal{B=}L^{\infty}\left(  \mu\right)  $ and $\mathcal{A=L}%
\left(  \mathcal{H}\right)  ,$ the channel $\alpha$ is an observation channel
(measurement) which arises from a POVM\ (positive operator valued measure),
cf. Subsection \ref{subsubsec:measmt-obs-channel}.

The Le Cam distance between $\mathcal{E}$ and $\mathcal{F}$ is
\begin{equation}
\Delta\left(  \mathcal{E},\mathcal{F}\right)  =\max\left(  \delta\left(
\mathcal{E},\mathcal{F}\right)  ,\delta\left(  \mathcal{F},\mathcal{E}\right)
\right)  . \label{Q-Delta-distance}%
\end{equation}
We say that $\mathcal{E}$ is \textit{more informative} than $\mathcal{F}$ if
$\delta\left(  \mathcal{E},\mathcal{F}\right)  =0$; if the reverse also holds
(i.e. $\Delta\left(  \mathcal{E},\mathcal{F}\right)  =0$) then $\mathcal{E}%
,\mathcal{F}$ are said to be \textit{statistically equivalent}.

Consider now sequences of experiments, where the algebras and states depend on
$n$, but the parameter space $\Theta$ remains fixed. A sequence $\mathcal{E}%
_{n}=\left(  \mathcal{A}_{n},\rho_{n,\theta},\theta\in\Theta\right)  $ is said
to be \textit{asymptotically more informative} than $\mathcal{F}_{n}=\left(
\mathcal{B}_{n},\sigma_{n,\theta},\theta\in\Theta\right)  $ if
\[
\delta\left(  \mathcal{E}_{n},\mathcal{F}_{n}\right)  \rightarrow0\text{ as
}n\rightarrow\infty.
\]
We write $\mathcal{F}_{n}\precsim\mathcal{E}_{n}$ in this case. If the reverse
also holds, i.e. if
\[
\Delta\left(  \mathcal{E}_{n},\mathcal{F}_{n}\right)  \rightarrow0\text{ as
}n\rightarrow\infty
\]
then $\mathcal{E}_{n}$ and $\mathcal{F}_{n}$ are said to be
\textit{asymptotically equivalent}, written $\mathcal{E}_{n}\approx
\mathcal{F}_{n}$ \textit{.}

As to the statistical meaning of the relation $\mathcal{F}_{n}\precsim
\mathcal{E}_{n}$, it implies there is a sequence of dual channels (TP-CP maps,
state transitions) between preduals $T_{n}:\mathcal{A}_{n\ast}\rightarrow
\mathcal{B}_{n\ast}$ such that
\begin{equation}
\sup_{\theta}\left\Vert \sigma_{n,\theta}-T_{n}\left(  \rho_{n,\theta}\right)
\right\Vert _{1}\rightarrow0. \label{delta-closeness}%
\end{equation}
Assume that statistical decisions are to be made in the experiment
$\mathcal{F}_{n}$. Let $M_{n}$ be a dual observation channel (measurement) to
be applied in $\mathcal{F}_{n}$, such that $M_{n}:\mathcal{B}_{n\ast
}\rightarrow L^{1}\left(  \nu\right)  $ where $\nu$ is a sigma-finite measure
on $\left(  X,\Omega\right)  $. Then $p_{n,\theta}:=M_{n}\left(
\sigma_{n,\theta}\right)  $ is a $\nu$-probability density on $\left(
X,\Omega\right)  $, and combining the transitions $M_{n}$ and $T_{n}$, we
obtain a $\nu$-probability density $p_{n,\theta}^{\prime}:=M_{n}\left(
T_{n}\left(  \rho_{n,\theta}\right)  \right)  $. Then by the contraction
property (\ref{contraction-property}) of the dual channel $M_{n}$
\begin{equation}
\sup_{\theta}\left\Vert p_{n,\theta}-p_{n,\theta}^{\prime}\right\Vert _{1}%
\leq\sup_{\theta}\left\Vert \sigma_{n,\theta}-T_{n}\left(  \rho_{n,\theta
}\right)  \right\Vert _{1}\rightarrow0. \label{delta-closeness-3}%
\end{equation}
Let a set of $\Omega$-measurable loss functions $W_{n,\theta}:X\rightarrow
\left[  0,1\right]  $, $\theta\in\Theta$ be given. Then a measurement $M_{n}$
as above can be interpreted as a (randomized) decision rule in experiment
$\mathcal{F}_{n},$ where the aim is to make $\int W_{n,\theta}p_{n,\theta}%
d\nu$ small for every $\theta$ (or small in a worst case sense). Then
(\ref{delta-closeness-3}) implies%
\[
\sup_{\theta}\left\vert \int W_{n,\theta}p_{n,\theta}d\nu-\int W_{n,\theta
}p_{n,\theta}^{\prime}d\nu\right\vert \rightarrow0.
\]
In other words, if the sequence $\mathcal{E}_{n}$ is asymptotically more
informative than $\mathcal{F}_{n}$ ($\mathcal{F}_{n}\precsim\mathcal{E}_{n}$)
then for every randomized decision rule in $\mathcal{F}_{n}$ there exists one
in $\mathcal{E}_{n}$ which is asymptotically as good, uniformly in $\theta
\in\Theta$. Hence risk bounds attainable in $\mathcal{F}_{n}$ can also be
attained in $\mathcal{E}_{n}$. Conversely, decision rules in $\mathcal{F}_{n}$
cannot be asymptotically better than those in $\mathcal{E}_{n}$, i. e. the
relation provides lower asymptotic risk bounds.

\subsection{Main theorems}

For any set $\Theta$ of quantum spectral densities, i.e.%
\
real functions $a$ on $\left[  -\pi,\pi\right]  $ such that $a\left(
\omega\right)  \geq1$, $\omega\in\left[  -\pi,\pi\right]  $ consider the
quantum statistical experiment
\begin{equation}
\mathcal{E}_{n}\left(  \Theta\right)  :=\left(  \mathfrak{N}_{n}\left(
0,A_{n}\left(  a\right)  \right)  ,a\in\Theta\right)
\label{basic-quantum-experi-def}%
\end{equation}
where $A_{n}\left(  a\right)  $ is the $n\times n$ symbol matrix pertaining to
$a$. Define also a corresponding classical geometric regression experiment
$\mathcal{F}_{n}\left(  \Theta\right)  $ as follows. For any function
$a\in\Theta$ define a set of functionals (local averages on $\left[  -\pi
,\pi\right]  $) as
\begin{equation}
J_{j,n}\left(  a\right)  =n\int_{\left(  j-1\right)  /n}^{j/n}a\left(
2\pi\left(  x-1/2\right)  \right)  dx\text{.} \label{local-averages-def}%
\end{equation}
Also consider the geometric distribution $\mathrm{Geo}\left(  p\right)  $ with
probabilities $\left(  1-p\right)  p^{j}$, $j=0,1,\ldots$where the parameter
$p\in\left(  0,1\right)  $ depends on some $\lambda>1$ via $p\left(
\lambda\right)  =\left(  \lambda-1\right)  /\left(  \lambda+1\right)  $.
Define
\begin{equation}
\mathcal{F}_{n}\left(  \Theta\right)  :=\left(
{\displaystyle\bigotimes\limits_{j=1}^{n}}
\mathrm{Geo}\left(  p\left(  J_{j,n}\left(  a\right)  \right)  \right)
,a\in\Theta\right)  . \label{geometric-regression-experiment-basic}%
\end{equation}
Consider the set $\Theta_{1}\left(  \alpha,M\right)  $ of quantum spectral
densities $a$ defined as the set of real functions on $\left[  -\pi
,\pi\right]  $, such that for some $\alpha>0$, $M>1$%
\begin{align}
\Theta_{1}\left(  \alpha,M\right)   &  :=\left\{  a:\text{ }\left\vert
a_{0}\right\vert ^{2}+\sum_{j=-\infty}^{\infty}j^{2\alpha}\left\vert
a_{j}\right\vert ^{2}\leq M\right\}  \cap\mathcal{L}_{M}%
,\label{Theta-1-functionset-def}\\
\mathcal{L}_{M}  &  :=\left\{  a:a\left(  \omega\right)  \geq1+M^{-1}\text{,
}\omega\in\left[  -\pi,\pi\right]  \right\}  , \label{lowerbound-set-def}%
\end{align}
where $a_{j}$ are defined by (\ref{symbol-generate}).

\begin{theorem}
\label{theor-main-1}If $\Theta=\Theta_{1}\left(  \alpha,M\right)  $ for some
$\alpha>1/2,$ $M>1$ then
\[
\delta\left(  \mathcal{F}_{n}\left(  \Theta\right)  ,\mathcal{E}_{n}\left(
\Theta\right)  \right)  \rightarrow0\text{ as }n\rightarrow\infty,
\]
i.e. $\mathcal{F}_{n}\left(  \Theta\right)  $ is asymptotically more
informative than $\mathcal{E}_{n}\left(  \Theta\right)  $: $\mathcal{E}%
_{n}\left(  \Theta\right)  \precsim\mathcal{F}_{n}\left(  \Theta\right)  $.
\end{theorem}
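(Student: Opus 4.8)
The strategy is to reconstruct the quantum experiment $\mathcal{E}_{n}(\Theta)$ from the classical geometric experiment $\mathcal{F}_{n}(\Theta)$ by an explicit sequence of channels. The starting observation is the exact identification of the geometric law with a thermal state: for $\lambda\geq1$ the one-mode state $\mathfrak{N}_{1}(0,\lambda)$ has mean photon number $\bar{n}=(\lambda-1)/2$, so its density operator is diagonal in the number basis with eigenvalues $(1-p)p^{k}$, $k\geq0$, where $p=\bar{n}/(\bar{n}+1)=(\lambda-1)/(\lambda+1)=p(\lambda)$; i.e. the number-basis spectrum of $\mathfrak{N}_{1}(0,\lambda)$ is exactly $\mathrm{Geo}(p(\lambda))$. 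By the tensor-product property of diagonal symbols, $\mathfrak{N}_{n}(0,D_{n}(a))=\bigotimes_{j}\mathfrak{N}_{1}(0,J_{j,n}(a))$ for $D_{n}(a)=\mathrm{diag}(J_{1,n}(a),\dots,J_{n,n}(a))$, and its density operator in the product number basis is precisely the mass function of $\bigotimes_{j}\mathrm{Geo}(p(J_{j,n}(a)))$. Hence the classical-to-quantum ``diagonal embedding'' channel $(k_{1},\dots,k_{n})\mapsto\bigotimes_{j}|k_{j}\rangle\langle k_{j}|$ carries $\mathcal{F}_{n}(\Theta)$ exactly onto $\mathcal{D}_{n}(\Theta):=\{\mathfrak{N}_{n}(0,D_{n}(a)):a\in\Theta\}$, so by the triangle inequality for the Le Cam deficiency it suffices to prove $\delta(\mathcal{E}_{n}(\Theta),\mathcal{D}_{n}(\Theta))\to0$, i.e. to produce channels $\Phi_{n}$ on $\mathcal{L}(L_{2}(\mathbb{R}^{n}))$ with $\sup_{a\in\Theta}\|\mathfrak{N}_{n}(0,A_{n}(a))-\Phi_{n}(\mathfrak{N}_{n}(0,D_{n}(a)))\|_{1}\to0$.

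Next I would take $\Phi_{n}$ to be conjugation by a fixed Gaussian (metaplectic) unitary implementing a passive rotation $U_{n}\in U(n)$, under which a gauge-invariant Gaussian state is preserved with its symbol transformed by conjugation: $\Phi_{n}(\mathfrak{N}_{n}(0,D_{n}(a)))=\mathfrak{N}_{n}(0,U_{n}D_{n}(a)U_{n}^{\ast})$. The natural choice is a (suitably indexed) discrete Fourier transform, so that $U_{n}D_{n}(a)U_{n}^{\ast}=C_{n}(a)$ is the $n\times n$ circulant with spectrum $\{J_{j,n}(a)\}$ on the Fourier frequencies. Two things then remain: (i) that this circulant is close to the Toeplitz symbol, $\sup_{a\in\Theta}\|\mathfrak{N}_{n}(0,A_{n}(a))-\mathfrak{N}_{n}(0,C_{n}(a))\|_{1}\to0$; and (ii) a bookkeeping step identifying the circulant spectrum $\{J_{j,n}(a)\}$ in terms of the Fourier coefficients $a_{l}$ and of the spectrum of $A_{n}(a)$. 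Step (ii) is where the smoothness enters: the discrepancy between the local average $J_{j,n}(a)$ and $a$ sampled on the frequency grid, and the aliasing incurred by folding the Fourier series of $a$ modulo $n$, are controlled in the relevant weighted $\ell_{2}$ sense by the Sobolev bound $|a_{0}|^{2}+\sum_{j}|j|^{2\alpha}|a_{j}|^{2}\leq M$, uniformly over $\Theta_{1}(\alpha,M)$, precisely when $\alpha>1/2$. The uniform ellipticity $a\geq1+M^{-1}$ forces $A_{n}(a),C_{n}(a)\geq(1+M^{-1})I$, keeping all states bounded away from the pure-state boundary where trace distances are unstable, and, together with the automatic bound $\|a\|_{\infty}\leq c_{\alpha}\sqrt{M}$, makes all constants uniform in $a\in\Theta$.

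The main obstacle is step (i), uniformly in $a$. A crude operator- or Hilbert--Schmidt-norm perturbation bound on the symbols is unavailable: for the natural circulant the quantities $\|A_{n}(a)-C_{n}(a)\|$ and $\|A_{n}(a)-C_{n}(a)\|_{\mathrm{HS}}$ do not tend to $0$ (they are governed by $\sum_{l}\min(|l|,n-|l|)|a_{l}|^{2}$, which converges but does not vanish), exactly as in the classical case, where the finite-$n$ Toeplitz Gaussian law is not close in total variation to its circulant counterpart. The resolution must use the finer structure of gauge-invariant Gaussian states: one estimates the trace distance through the fidelity (Uhlmann affinity) of $\mathfrak{N}_{n}(0,A_{n}(a))$ and $\mathfrak{N}_{n}(0,C_{n}(a))$, which admits a closed product formula over the relative symplectic spectrum of the two symbols, and one shows this affinity $\to1$. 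This in turn reduces --- after diagonalizing one symbol and using the uniform ellipticity --- to a frequency-localization argument: partition $\{1,\dots,n\}$ into slowly growing consecutive blocks on which $a$ is nearly constant, approximate the corresponding block of the rotated state by a tensor power of identical thermal states $\mathfrak{N}_{1}(0,a(\omega_{r}))^{\otimes m}$ (rotation invariant, so the block DFT is harmless there), bound the inter-block correlations of $A_{n}(a)$ (small because they arise from far-apart Fourier coefficients), and sum the per-block errors. The accumulation of these errors is summable exactly under $\alpha>1/2$, mirroring the role of that threshold in the classical asymptotic equivalence to Gaussian white noise; assembling the per-block estimates into a single bound uniform over $\Theta_{1}(\alpha,M)$ is the technical heart of the proof.

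\medskip

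Finally, collecting (i)--(ii) gives, for an appropriate choice of block size and of the relabeling tying mode $j$ to frequency $\omega_{j}$, that $\sup_{a\in\Theta_{1}(\alpha,M)}\|\mathfrak{N}_{n}(0,A_{n}(a))-\Phi_{n}(\mathfrak{N}_{n}(0,D_{n}(a)))\|_{1}\to0$, hence $\delta(\mathcal{E}_{n}(\Theta),\mathcal{D}_{n}(\Theta))\to0$ and, with the diagonal-embedding step, $\delta(\mathcal{E}_{n}(\Theta),\mathcal{F}_{n}(\Theta))\to0$, which is the assertion.
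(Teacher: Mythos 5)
Your high-level strategy is the right one: recognise the geometric law as the number spectrum of a thermal state, embed $\mathcal{F}_{n}(\Theta)$ as the diagonal-symbol Gaussian family, then use the metaplectic DFT to rotate the diagonal symbol into a circulant and try to show closeness to the Toeplitz symbol $A_{n}(a)$. You also correctly diagnose the obstacle: for the natural $n\times n$ circulant $C_{n}(a)$, the Hilbert--Schmidt distance $\|A_{n}(a)-C_{n}(a)\|_{2}$ is bounded but does \emph{not} vanish (it is governed by $\sum_{l}\min(|l|,n-|l|)|a_{l}|^{2}$), so a naive symbol-perturbation bound fails. That diagnosis is accurate and important.

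The gap is in your proposed resolution. You leave the decisive step --- a fidelity/Uhlmann-affinity estimate assembled through a blocking scheme --- entirely undeveloped, and it is not clear that it closes. The paper takes a different, and much simpler, way around the obstacle: \emph{dimension padding}. Instead of comparing $A_{n}(a)$ to an $n\times n$ circulant, take an odd $m>n$ with $m-n\to\infty$ but $m-n=o(n^{1/2})$, form the $m\times m$ circulant $\tilde{A}_{m}(a)$, and compare $A_{n}(a)$ to its upper-left $n\times n$ submatrix $\tilde{A}_{n,m}(a)$. That submatrix is \emph{not} circulant, but the discrepancy $A_{n}(a)-\tilde{A}_{n,m}(a)$ involves only Fourier coefficients $a_{k}$ with $(m+1)/2\leq k\leq n-1$, and the Sobolev condition gives
\[
\|A_{n}(a)-\tilde{A}_{n,m}(a)\|_{2}^{2}\leq 4\,(m-n+1)^{1-2\alpha}\,M\longrightarrow 0
\]
uniformly over $\Theta_{1}(\alpha,M)$ as soon as $m-n\to\infty$. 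Combined with the relative-entropy-to-symbol bound (via an expansion of $S(\rho_{1}\|\rho_{2})$ in terms of $R_{i}=(A_{i}-I)/(A_{i}+I)$) and the quantum Pinsker inequality, this gives the required trace-norm closeness; the $m$-mode circulant state is then more informative because the partial trace onto $n$ modes is a channel. The remaining chain $\tilde{\mathcal{E}}_{m}\sim\tilde{\mathcal{E}}_{m}^{d}\sim\tilde{\mathcal{G}}_{m}\approx\mathcal{G}_{m}\approx\mathcal{G}_{n}$ uses your DFT-conjugation and thermal-state identification ideas plus a Hellinger comparison of the geometric parameters (Fourier-series values at circulant frequencies vs.\ local averages $J_{j,m}(a)$), and the final step back from block size $m$ to $n$ is where the constraint $m-n=o(n^{1/2})$ and a sufficiency argument with negative binomials come in. So: your plan, your identifications, and your diagnosis of the obstruction are all sound, but the missing idea is the padding $n\to m$ that turns the circulant-vs.-Toeplitz comparison into one that converges in Hilbert--Schmidt norm; without it, the argument does not close as written.
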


\bigskip

Let us further introduce an experiment of the type "signal in Gaussian white
noise" on the interval $\left[  -\pi,\pi\right]  $. Consider the function
\begin{equation}
\mathrm{arc\cosh}\left(  x\right)  =\log\left(  x+\sqrt{x^{2}-1}\right)  ,x>1
\label{arccosh-def}%
\end{equation}
and let $Q_{n}\left(  a\right)  $ be the distribution of the stochastic
process $Y_{\omega},\omega\in\left[  -\pi,\pi\right]  $ given by the
stochastic differential equation
\begin{equation}
dY_{\omega}=\mathrm{arc\cosh}\left(  a\left(  \omega\right)  \right)
d\omega+\left(  2\pi/n\right)  ^{1/2}dW_{\omega}\text{, }\omega\in\left[
-\pi,\pi\right]  \label{SDE-1}%
\end{equation}
where $dW_{\omega},$ $\omega\in\left[  -\pi,\pi\right]  $ is Gaussian white
noise. Here $Q_{n}\left(  a\right)  $ is a distribution on the measurable
space $\left(  C_{\left[  -\pi,\pi\right]  },\mathcal{B}_{C_{\left[  -\pi
,\pi\right]  }}\right)  $ where $\mathcal{B}_{C_{\left[  -\pi,\pi\right]  }}$
is the pertaining Borel sigma-algebra. For $\Theta=\Theta_{1}\left(
\alpha,M\right)  $ consider the experiment $\mathcal{G}_{n}\left(
\Theta\right)  =\left(  Q_{n}\left(  a\right)  ,\;a\in\Theta\right)  $.

\begin{theorem}
\label{theor-GWN-main-1}If $\Theta=\Theta_{1}\left(  \alpha,M\right)  $ for
some $\alpha>1,$ $M>1$ then%
\[
\Delta\left(  \mathcal{F}_{n}\left(  \Theta\right)  ,\mathcal{G}_{n}\left(
\Theta\right)  \right)  \rightarrow0\text{ as }n\rightarrow\infty
\]
i.e. $\mathcal{F}_{n}\left(  \Theta\right)  $ and $\mathcal{G}_{n}\left(
\Theta\right)  $ are asymptotically equivalent: $\mathcal{F}_{n}\left(
\Theta\right)  \approx\mathcal{G}_{n}\left(  \Theta\right)  $.
\end{theorem}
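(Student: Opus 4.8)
The plan is to prove Theorem~\ref{theor-GWN-main-1} through a chain of classical asymptotic equivalences, with $\mathrm{arc\cosh}$ serving as the variance--stabilizing transform for the geometric family. Both $\mathcal{F}_n(\Theta)$ and $\mathcal{G}_n(\Theta)$ are classical (commutative) experiments, so the channels realizing the deficiency are Markov kernels and the quantum $\Delta$-distance of Subsection~\ref{subsec: Qu-Lecam-distance} reduces to Le Cam's classical pseudodistance; the whole argument is thus classical. I would begin with the a priori bound: for $\alpha>1/2$, Cauchy--Schwarz applied to (\ref{symbol-generate-2}) gives $\|a\|_\infty\le C_0(\alpha,M)$ for all $a\in\Theta_1(\alpha,M)$, so both the local averages $J_{j,n}(a)$ and the values $a(\omega_j)$ at representative points $\omega_j$ of the bins $I_j:=[2\pi((j-1)/n-1/2),\,2\pi(j/n-1/2)]$ lie in a fixed compact interval $[1+M^{-1},C_0]\subset(1,\infty)$. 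On this interval $\{\mathrm{Geo}(p(\lambda))\}$ is a regular one--parameter exponential family (natural parameter $\log p$) with analytic log--partition function and uniformly controlled tails, having mean $(\lambda-1)/2$, variance $(\lambda^2-1)/4$ and Fisher information $1/(\lambda^2-1)$ in the $\lambda$-parametrization, hence Fisher information identically $1$ in the parametrization $\vartheta=\mathrm{arc\cosh}(\lambda)$. This is precisely why $\mathrm{arc\cosh}$ is the variance--stabilizing map and why $\left(N(\vartheta,1)\right)$ is the matching Gaussian shift appearing in (\ref{SDE-1}).

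The core step is the Gaussianization $\mathcal{F}_n(\Theta)\approx\mathcal{N}_n'(\Theta):=\bigl\{\bigotimes_{j=1}^n N(\mathrm{arc\cosh}(a(\omega_j)),1),\,a\in\Theta\bigr\}$. I would first replace the bin averages $J_{j,n}(a)$ by the point values $a(\omega_j)$ at no asymptotic cost: since the geometric family has bounded Fisher information on the compact range, $H^2\bigl(\mathrm{Geo}(p(J_{j,n}(a))),\mathrm{Geo}(p(a(\omega_j)))\bigr)=O(|J_{j,n}(a)-a(\omega_j)|^2)\le O((\mathrm{osc}_{I_j}a)^2)$, and $\sum_j(\mathrm{osc}_{I_j}a)^2\le|I_j|\,\|a'\|_{L^2}^2\le(2\pi/n)\cdot 2\pi M\to0$ uniformly over $\Theta$, because $\alpha>1$ forces $a\in H^1$ with $\|a'\|_{L^2}^2=2\pi\sum_{j\ne0}j^2|a_j|^2\le 2\pi M$; summing squared Hellinger distances over the product then gives asymptotic equivalence of $\mathcal{F}_n(\Theta)$ with $\bigl\{\bigotimes_j\mathrm{Geo}(p(a(\omega_j)))\bigr\}$. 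This last experiment is a nonparametric generalized linear regression with geometric exponential--family errors whose mean function is an $H^\alpha$-smooth transform of $a$, and $\{\mathrm{arc\cosh}\circ a:a\in\Theta_1(\alpha,M)\}$ is a bounded subset of $H^\alpha([-\pi,\pi])$ bounded away from $0$ (composition with the real--analytic outer function $\mathrm{arc\cosh}$ preserves boundedness in $H^\alpha$ for $\alpha>1/2$). I would then invoke the asymptotic equivalence theorem for nonparametric (generalized linear) regression \cite{MR1633574}, applied with variance--stabilizing transform $F=\mathrm{arc\cosh}$ and normalized Fisher information $1$, to conclude $\bigl\{\bigotimes_j\mathrm{Geo}(p(a(\omega_j)))\bigr\}\approx\mathcal{N}_n'(\Theta)$ uniformly over $\Theta$ (this needs only $\alpha>1/2$); if that theorem is formulated with a heteroscedastic Gaussian target, one additionally inserts the classical equivalence of a smooth location--scale Gaussian regression with its variance--stabilized homoscedastic version, cf. the passage from (\ref{gwn-spec-density-local}) to (\ref{gwn-spec-density}).

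It remains to connect $\mathcal{N}_n'(\Theta)$ to the white noise model. Since $\mathrm{arc\cosh}$ is $C^2$ on $[1+M^{-1},C_0]$ one has $|\mathrm{arc\cosh}(a(\omega_j))-\mathrm{arc\cosh}(J_{j,n}(a))|\lesssim\mathrm{osc}_{I_j}a$, so the same summation bound yields $\sum_j|\mathrm{arc\cosh}(a(\omega_j))-\mathrm{arc\cosh}(J_{j,n}(a))|^2\to0$ uniformly, and two Gaussian--shift product experiments whose mean vectors differ by $o(1)$ in $\ell^2$ are asymptotically equivalent; hence $\mathcal{N}_n'(\Theta)\approx\mathcal{N}_n(\Theta):=\bigl\{\bigotimes_j N(\mathrm{arc\cosh}(J_{j,n}(a)),1),\,a\in\Theta\bigr\}$. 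On the white noise side, recording the increments $\int_{I_j}dY_\omega$ of (\ref{SDE-1}) and rescaling each by $n/(2\pi)$ turns $\mathcal{G}_n(\Theta)$ into the discrete Gaussian experiment with coordinates $N\bigl(\tfrac{n}{2\pi}\int_{I_j}\mathrm{arc\cosh}(a(\omega))\,d\omega,\,1\bigr)$, which is within vanishing $\ell^2$ of $\mathcal{N}_n(\Theta)$ by the same $C^2$ estimate; and the passage from this discretization back to the continuous process (\ref{SDE-1}) costs nothing asymptotically because $\mathrm{arc\cosh}\circ a$ ranges over a fixed smoothness ball with exponent $\alpha>1/2$ --- the standard Gaussian white noise discretization equivalence, used in the classical spectral density result \cite{MR2589320}. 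Chaining these equivalences, and using symmetry and transitivity of $\approx$, yields $\mathcal{F}_n(\Theta)\approx\mathcal{G}_n(\Theta)$, i.e. $\Delta(\mathcal{F}_n(\Theta),\mathcal{G}_n(\Theta))\to0$.

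I expect the main obstacle to be the Gaussianization step, specifically checking that the geometric family meets the regularity hypotheses of the nonparametric regression equivalence \cite{MR1633574} on the compact parameter range $[1+M^{-1},C_0]$ and that the conclusion there is uniform over the Sobolev ball $\Theta_1(\alpha,M)$ --- including the point that $a\mapsto\mathrm{arc\cosh}\circ a$ keeps the signal in a fixed $H^\alpha$-ball bounded away from the boundary of the parameter domain. A more self--contained alternative for this step would be a direct construction via a quantile coupling of the geometric with a Gaussian combined with a Haar multiresolution approximation, as in the i.i.d.\ and regression cases. Either way, everything else --- the a priori $L^\infty$ bound, the Hellinger and $\ell^2$ perturbation estimates for the bin--average/point--value replacements, and the white noise discretization --- is routine once $\alpha>1$ is assumed; indeed $\alpha>1$ (rather than merely $\alpha>1/2$, which already suffices for \cite{MR1633574} and for the discretization) is needed only to make those summed squared replacement errors $o(1)$.
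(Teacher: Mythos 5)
Your proposal is correct and follows essentially the same route as the paper: replace bin averages by point values at negligible Hellinger cost, apply the exponential-family regression equivalence of \cite{MR1633574} with $\mathrm{arc\cosh}$ as the variance-stabilizing transform, and pass to the continuous white noise model. One small imprecision in your closing diagnostic: you assert that $\alpha>1/2$ already suffices for the Gaussianization step, but the $\alpha$ there must be the H\"{o}lder exponent required by \cite{MR1633574}, not the Sobolev index of the theorem --- the embedding (\ref{inclusion-by-embedding}) yields only H\"{o}lder smoothness $\alpha-1/2$ from Sobolev index $\alpha$, so the Gaussianization step itself also consumes the hypothesis $\alpha>1$, not merely the summed replacement-error bound.
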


This claim essentially follows from the results of \cite{MR1633574}. It
implies that for $\alpha>1$, for the quantum time series, the white noise
model $\mathcal{G}_{n}\left(  \Theta\right)  $ is an upper information bound
as well. Note that the function $\mathrm{arc\cosh}$ is the analog of the
log-transformation of the spectral density in (\ref{gwn-spec-density}).

\bigskip

Converse results can be established if the parameter space is restricted to be
finite dimensional. For a nonnegative integer $d$ and some $M>1$ define
\begin{equation}
\Theta_{2}\left(  d,M\right)  :=\left\{  a:\text{ }\sum_{j=-d}^{d}\left\vert
a_{j}\right\vert ^{2}\leq M,\;a_{j}=0\text{, }\left\vert j\right\vert
>d\right\}  \cap\mathcal{L}_{M}. \label{d-dependence-param-set}%
\end{equation}
Then the symbol matrices $A_{n}\left(  a\right)  $ are banded Toeplitz and the
quantum states $\mathfrak{N}_{n}\left(  0,A_{n}\left(  a\right)  \right)  $
form a $d$-dependent quantum time series.

\begin{theorem}
\label{theor-main-2}If $\Theta=\Theta_{2}\left(  d,M\right)  $ for an integer
$d\geq0$ and some $M>1$ then
\[
\delta\left(  \mathcal{E}_{n}\left(  \Theta\right)  ,\mathcal{G}_{n}\left(
\Theta\right)  \right)  \rightarrow0\text{ as }n\rightarrow\infty,
\]
i.e. $\mathcal{E}_{n}\left(  \Theta\right)  $ is asymptotically more
informative than $\mathcal{G}_{n}\left(  \Theta\right)  $: $\mathcal{G}%
_{n}\left(  \Theta\right)  \precsim\mathcal{E}_{n}\left(  \Theta\right)  $.
\end{theorem}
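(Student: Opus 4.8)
The plan is to prove Theorem~\ref{theor-main-2} by exhibiting, for the $d$-dependent case $\Theta=\Theta_2(d,M)$, an explicit measurement (observation channel) applied to the quantum time series $\mathcal{E}_n(\Theta)$ whose outcome distribution is close in total variation, uniformly over $a\in\Theta$, to the Gaussian white noise distribution $Q_n(a)$ of $\mathcal{G}_n(\Theta)$. The natural route is to pass through the intermediate geometric regression experiment $\mathcal{F}_n(\Theta)$: first show $\mathcal{G}_n(\Theta)\lesssim\mathcal{F}_n(\Theta)$, i.e.\ build a transition from the product of geometric laws to $Q_n(a)$, and then show $\mathcal{F}_n(\Theta)\lesssim\mathcal{E}_n(\Theta)$, i.e.\ recover the geometric product law from the quantum state by a suitable measurement. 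Composing the two transitions gives $\delta(\mathcal{E}_n(\Theta),\mathcal{G}_n(\Theta))\to 0$. The first of these two reductions is essentially the content of Theorem~\ref{theor-GWN-main-1} (via \cite{MR1633574}), since $\Delta(\mathcal{F}_n,\mathcal{G}_n)\to 0$ already gives both directions; but that theorem is stated for $\alpha>1$ in the Sobolev-type class $\Theta_1(\alpha,M)$, whereas here $\Theta_2(d,M)$ consists of trigonometric polynomials of fixed degree $d$, which sit inside $\Theta_1(\alpha,M')$ for every $\alpha$ and a suitable $M'=M'(d,M)$. So the first reduction is obtained for free by the inclusion $\Theta_2(d,M)\subset\Theta_1(\alpha,M')$ and monotonicity of the deficiency under shrinking the parameter set.

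The real work is the quantum-to-geometric step $\mathcal{F}_n(\Theta)\lesssim\mathcal{E}_n(\Theta)$. Here I would diagonalize the banded Hermitian Toeplitz symbol $A_n(a)$: write $A_n(a)=U_n \operatorname{diag}(\lambda_{1,n},\ldots,\lambda_{n,n}) U_n^\ast$ with $U_n$ unitary on $\mathbb{C}^n$. A unitary on $\mathbb{C}^n$ induces a symplectic transformation on $\mathbb{R}^{2n}$ (a passive/gauge transformation, compatible with (\ref{symbol-related-to-covmatrix})), hence a unitary on $L_2(\mathbb{R}^n)$ implementing it (a metaplectic/Bogoliubov transformation), under which the gauge-invariant Gaussian state $\mathfrak{N}_n(0,A_n(a))$ becomes the tensor product $\bigotimes_{j=1}^n \mathfrak{N}_1(0,\lambda_{j,n})$ of one-mode thermal states. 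Now a one-mode gauge-invariant thermal state with symbol $\lambda>1$ has, in the number (Fock) basis, a density operator that is diagonal with eigenvalues exactly the geometric probabilities $(1-p)p^k$, $k=0,1,\ldots$, where $p=p(\lambda)=(\lambda-1)/(\lambda+1)$ — precisely the $\mathrm{Geo}(p(\lambda))$ law appearing in $\mathcal{F}_n(\Theta)$. Therefore the von Neumann measurement in the number basis on each mode turns $\bigotimes_j \mathfrak{N}_1(0,\lambda_{j,n})$ exactly into $\bigotimes_j \mathrm{Geo}(p(\lambda_{j,n}))$. It remains to replace the eigenvalues $\lambda_{j,n}$ by the local averages $J_{j,n}(a)$ from (\ref{local-averages-def}): by the standard Szeg\H{o}-type / banded-Toeplitz spectral theory (of the kind behind Lemma~\ref{lem-toeplitz-EV}), for banded symbols the ordered eigenvalues of $A_n(a)$ approximate the values $a(2\pi(j/n-1/2))$ and hence the local averages $J_{j,n}(a)$ with an $\ell^2$-error that is $o(1)$ uniformly over the compact polynomial class $\Theta_2(d,M)$ — here the fixed bandwidth $d$ and the uniform lower bound $a\ge 1+M^{-1}$ are what make the control uniform. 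One then bounds $\|\bigotimes_j\mathrm{Geo}(p(\lambda_{j,n}))-\bigotimes_j\mathrm{Geo}(p(J_{\pi(j),n}(a)))\|_1$ (after matching up the ordering by a permutation, which is itself a transition) by summing Hellinger or total-variation distances between individual geometric laws, using Lipschitz continuity of $\lambda\mapsto\mathrm{Geo}(p(\lambda))$ in Hellinger distance on the set $\{\lambda\ge 1+M^{-1}\}$ and the $\ell^2$ eigenvalue bound, giving $o(1)$.

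I expect the main obstacle to be the uniform spectral approximation of the banded Toeplitz matrices: one needs not merely that the empirical spectral distribution of $A_n(a)$ converges to the pushforward of Lebesgue measure under $a$, but a quantitative $\ell^2$ bound on the difference between the sorted eigenvalue vector and the sorted vector $(J_{j,n}(a))_{j=1}^n$, holding uniformly over $a\in\Theta_2(d,M)$, together with the matching permutation being realizable as a (classical, hence quantum) transition so that it contributes nothing to the deficiency. For banded Toeplitz matrices this is classical and clean — the eigenvalues interlace nicely and the "wrap-around" corner corrections have rank $O(d)$ independent of $n$, so their effect on the $\ell^2$ eigenvalue distance is $O(d/n)^{1/2}$ — but assembling the estimate carefully, and checking that the metaplectic implementation of $U_n$ and the number-basis measurement genuinely define a channel in the sense of Subsection~\ref{subsec:States-channels-obs} (so that the trace-distance contraction (\ref{contraction-property}) applies), is where the bookkeeping lies. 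Everything else — the exact identification of one-mode thermal $=$ geometric in Fock basis, and the invocation of Theorem~\ref{theor-GWN-main-1} for the classical half — is essentially immediate.
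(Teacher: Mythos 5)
There is a genuine gap in the central step $\mathcal{F}_n(\Theta)\lesssim\mathcal{E}_n(\Theta)$, and it is the step the paper works hardest to avoid.

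You propose to write $A_n(a)=U_n\operatorname{diag}(\lambda_{1,n},\ldots,\lambda_{n,n})U_n^\ast$, implement $U_n$ metaplectically, measure in the number basis and observe independent geometrics. But the diagonalizing unitary $U_n$ of a Hermitian Toeplitz (non-circulant) matrix depends on $a$, and the infimum in (\ref{deficiency-quantum-def}) runs over channels $\alpha$ that must \emph{not} depend on the unknown parameter. So the metaplectic transformation implementing "the" diagonalizing unitary is not available as a single channel. The natural parameter-free substitute is the discrete Fourier transform $U_m$, which diagonalizes the circulant approximant $\tilde{A}_m(a)$, not $A_m(a)$. However, $\|A_m(a)-\tilde{A}_m(a)\|_2^2 = 2\sum_{j=1}^d j|a_j|^2$ is a constant of order $d^2$, \emph{not} $o(1)$ (this is the "wrap-around corner" contribution); by Proposition~\ref{prop-summary-symbols-and-KL} this controls the relative entropy and trace distance, and it does not vanish. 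Consequently, under the DFT measurement the state is not diagonal: by Lemma~\ref{lem-covmatrix-analog-B}(ii) the observables $\hat B_j^\ast\hat B_j$ have correlations $\frac14|\mathbf u_j^\ast A_m\mathbf u_k|^2$ which are individually small but number $O(m^2)$, so their joint distribution is not close in total variation to the product of its (geometric) marginals. Your argument via sorted eigenvalue $\ell^2$-distance does not help: closeness of eigenvalue vectors bounds neither the trace distance of the two quantum states nor the TV distance of the two classical joint laws, because the eigenvectors are genuinely different. So the number-basis measurement does not deliver $\bigotimes_j\mathrm{Geo}(p(J_{j,n}(a)))$ up to $o(1)$ in TV.

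The paper's actual route is different. It does measure the DFT-transformed number operators, but it does \emph{not} claim the outcome law is a product of geometrics. Instead, it constructs an unbiased preliminary estimator of the $2d+1$ real autocovariance parameters from the observables $\mathbf{\Pi}_n$ (Subsection 3.2--3.3), uses the $d$-dependence to extract genuinely independent blocks (Subsection~\ref{Subsec-partition}), forms a one-step improvement estimator $\tilde\theta_n$ whose uniform asymptotic normality with efficient covariance $\Phi_\theta^{-1}$ is established in Theorem~\ref{Theor-asy-normal-improved-est}, and then converts this into a deficiency bound using Theorem~\ref{Theor-deficiency-from-law-converg} (deficiency from uniform asymptotic normality, in the spirit of \cite{MR618863}) followed by Le Cam's globalization (Lemma~\ref{lem-semiorder-gwn-hetero}), comparing against the heteroskedastic normal experiment $\mathcal{G}_{n,4}$ and then the Gaussian white noise experiment $\mathcal{G}_{n,1}$. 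In short: one passes through an \emph{estimator-and-CLT} argument, not through a TV approximation by the geometric product, precisely because the latter fails for the reason above. Your first reduction ($\mathcal{G}_n\lesssim\mathcal{F}_n$ via Theorem~\ref{theor-GWN-main-1} and the inclusion $\Theta_2(d,M)\subset\Theta_1(\alpha,M')$) is correct but becomes moot once the quantum-to-geometric step is replaced by the estimator route, which goes directly to the Gaussian experiments.
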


It is easy to see that for $\alpha>0$, one has $\Theta_{2}\left(  d,M\right)
\subset\Theta_{1}\left(  \alpha,M^{\prime}\right)  $ for $M^{\prime}%
=M\min\left(  1,d^{2\alpha}\right)  $. In view of Theorems \ref{theor-main-1}
and \ref{theor-GWN-main-1} this implies

\begin{corollary}
If $\Theta=\Theta_{2}\left(  d,M\right)  $ for $d\geq0$ and $M>1$ then
\[
\Delta\left(  \mathcal{E}_{n}\left(  \Theta\right)  ,\mathcal{G}_{n}\left(
\Theta\right)  \right)  \rightarrow0\text{ as }n\rightarrow\infty,
\]
i.e. $\mathcal{G}_{n}\left(  \Theta\right)  $ and $\mathcal{E}_{n}\left(
\Theta\right)  $ are asymptotically equivalent: $\mathcal{E}_{n}\left(
\Theta\right)  \approx\mathcal{G}_{n}\left(  \Theta\right)  $.
\end{corollary}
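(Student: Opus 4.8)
The plan is to deduce the Corollary from Theorems \ref{theor-main-1}, \ref{theor-GWN-main-1} and \ref{theor-main-2} by restricting the parameter space and chaining one-sided deficiencies. First I would record two routine facts about the quantum Le Cam distance. Monotonicity under restriction: if $\Theta\subseteq\Theta'$ and $\mathcal{X}_{n},\mathcal{Y}_{n}$ are built from the same formulas on $\Theta'$, then $\delta(\mathcal{X}_{n}(\Theta),\mathcal{Y}_{n}(\Theta))\leq\delta(\mathcal{X}_{n}(\Theta'),\mathcal{Y}_{n}(\Theta'))$, since the infimum in (\ref{deficiency-quantum-def}) ranges over the same channels while the supremum is over the smaller set $\Theta$; hence $\mathcal{X}_{n}(\Theta')\lesssim\mathcal{Y}_{n}(\Theta')$ implies $\mathcal{X}_{n}(\Theta)\lesssim\mathcal{Y}_{n}(\Theta)$, and likewise for $\approx$. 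Triangle inequality: $\delta(\mathcal{X}_{n},\mathcal{Z}_{n})\leq\delta(\mathcal{X}_{n},\mathcal{Y}_{n})+\delta(\mathcal{Y}_{n},\mathcal{Z}_{n})$, obtained by composing near-optimal channels and using the contraction property (\ref{contraction-property}) of state transitions; consequently $\lesssim$ is transitive and may be chained through an $\approx$.

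Next I would fix the parameter sets. For a banded symbol with $a_{j}=0$ for $|j|>d$ one has, for every $\alpha>0$, $|a_{0}|^{2}+\sum_{0<|j|\leq d}j^{2\alpha}|a_{j}|^{2}\leq(1+d^{2\alpha})\sum_{|j|\leq d}|a_{j}|^{2}\leq(1+d^{2\alpha})M$, while the lower-bound set $\mathcal{L}_{M}$ in (\ref{lowerbound-set-def}) is common; thus, as in Lemma \ref{lem-inclusions-parspace}, $\Theta_{2}(d,M)\subseteq\Theta_{1}(\alpha,M')$ with $M'=(1+d^{2\alpha})M>M$. I would fix one such $\alpha>1$ (say $\alpha=2$) and the attendant $M'$, so that both Theorem \ref{theor-main-1} (valid since $\alpha>1/2$) and Theorem \ref{theor-GWN-main-1} (valid since $\alpha>1$) apply on $\Theta_{1}(\alpha,M')$, giving $\mathcal{E}_{n}(\Theta_{1}(\alpha,M'))\lesssim\mathcal{F}_{n}(\Theta_{1}(\alpha,M'))$ and $\mathcal{F}_{n}(\Theta_{1}(\alpha,M'))\approx\mathcal{G}_{n}(\Theta_{1}(\alpha,M'))$. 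Restricting both statements to $\Theta_{2}(d,M)$ by monotonicity and chaining them by the triangle inequality yields $\mathcal{E}_{n}(\Theta_{2})\lesssim\mathcal{F}_{n}(\Theta_{2})\approx\mathcal{G}_{n}(\Theta_{2})$, hence $\delta(\mathcal{G}_{n}(\Theta_{2}),\mathcal{E}_{n}(\Theta_{2}))\rightarrow0$. For the opposite direction I would quote Theorem \ref{theor-main-2}, which applies verbatim on $\Theta_{2}(d,M)$ and gives $\mathcal{G}_{n}(\Theta_{2})\lesssim\mathcal{E}_{n}(\Theta_{2})$, i.e.\ $\delta(\mathcal{E}_{n}(\Theta_{2}),\mathcal{G}_{n}(\Theta_{2}))\rightarrow0$. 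Combining the two one-sided bounds gives $\Delta(\mathcal{E}_{n}(\Theta_{2}),\mathcal{G}_{n}(\Theta_{2}))=\max(\delta(\mathcal{E}_{n},\mathcal{G}_{n}),\delta(\mathcal{G}_{n},\mathcal{E}_{n}))\rightarrow0$, which is the assertion.

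There is no genuinely difficult step here: the content of the Corollary is entirely in the three theorems, and the deduction is bookkeeping. The only points requiring care are keeping the orientation of each deficiency straight, so that the chain $\mathcal{E}_{n}\lesssim\mathcal{F}_{n}\approx\mathcal{G}_{n}$ really does combine with Theorem \ref{theor-main-2} into equivalence rather than a trivial one-sided statement, and checking that the hypothesis $\alpha>1$ demanded by Theorem \ref{theor-GWN-main-1} costs nothing here — which it does not, since a banded symbol lies in $\Theta_{1}(\alpha,\cdot)$ for \emph{every} $\alpha$ at the mere price of enlarging the constant $M$, and $\Theta_{2}(d,M)$, $\mathcal{E}_{n}$, $\mathcal{F}_{n}$, $\mathcal{G}_{n}$ are all literally the restrictions to $\Theta_{2}(d,M)$ of the experiments defined by the same formulas on $\Theta_{1}(\alpha,M')$.
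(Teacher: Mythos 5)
Your proof is correct and takes essentially the same route the paper intends: embed $\Theta_{2}(d,M)$ into a Sobolev ball $\Theta_{1}(\alpha,M')$, apply Theorems \ref{theor-main-1} and \ref{theor-GWN-main-1} after restriction, chain with Theorem \ref{theor-main-2}, and use monotonicity of $\delta$ under shrinking $\Theta$ together with the triangle inequality. One small but real improvement over the paper's terse remark (which invokes the inclusion only ``for $\alpha=1$''): Theorem \ref{theor-GWN-main-1} requires $\alpha>1$ strictly, so $\alpha=1$ would not suffice; you correctly observe that a banded symbol lies in $\Theta_{1}(\alpha,M')$ for \emph{every} $\alpha$ at the cost of enlarging $M'$, and pick $\alpha>1$ accordingly, which closes that gap cleanly.
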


\bigskip The proofs of Theorems \ref{theor-main-1}, \ref{theor-GWN-main-1} and
\ref{theor-main-2} are in Subsections \ref{subsec:comparing-geom-reg},
\ref{Subsec-Geom-reg-and-white-noise} and \ref{subsec-proof-lower-info-bound}, respectively.

In a forthcoming version of this preprint, we will also identify a quantum
analog of the periodogram and provide optimal parametric and nonparametric
estimates of the quantum spectral density.%

\begin{privatenotes}%

\subsection{Discussion}

\textbf{Remark 1.} \textit{The Chernoff bound for discrimination.} In
\cite{MR2510896}, for the problem of disciminating between two spectral
densities $a_{0},a_{1}$, the optimal error exponent, i.e. the Chernoff bound
has been found. If $D_{n}$ is the minimum sum of error probabilities over all
tests based on a time series of length $n$, then
\begin{equation}
\lim\frac{1}{n}\log D_{n}=\inf_{0\leq t\leq1}\psi\left(  t\right)  \text{, }
\label{Cher-bound-gauge-inv}%
\end{equation}%
\begin{equation}
\psi\left(  t\right)  =-\frac{1}{2\pi}\int_{\left[  0,2\pi\right]  }\log
\frac{1}{2}\left[  \left(  a_{0}\left(  \omega\right)  +1\right)  ^{t}\left(
a_{1}\left(  \omega\right)  +1\right)  ^{1-t}-\left(  a_{0}\left(
\omega\right)  -1\right)  ^{t}\left(  a_{1}\left(  \omega\right)  -1\right)
^{1-t}\right]  d\omega. \label{Cher-bound-gauge-inv-3}%
\end{equation}
This form of the Chernoff bound can be recognized as being related to
geometric distributions. Indeed let $X$ be a r.v. with geometric law
$\mathrm{Geo}\left[  p\right]  $ for parameter $p\in\left(  0,1\right)  $,
given by
\[
P\left(  X=k\right)  =\mathrm{Geo}\left[  p\right]  \left(  k\right)  =\left(
1-p\right)  p^{k}\text{, }k=0,1,\ldots
\]
On the basis of $n$ i.i.d. observation $X_{1},\ldots,X_{n}$ with distribution
$\mathrm{Geo}\left[  p\right]  $, consider the problem of discriminating
between two parameter values $p_{0},p_{1}$. The classical Chernoff bound for
two discrete probability measures $q_{i,k}$, $k\geq0,$ $i=0,1$ states
(\ref{Cher-bound-gauge-inv}) with $\psi\left(  t\right)  $ replaced by
\[
\tilde{\psi}\left(  t\right)  =\sum_{k=0}^{\infty}q_{0,k}^{t}q_{1,k}^{1-t}%
\]
(cf. e.g. \cite{MR2502660} , Appendix). Setting $q_{i,k}=\mathrm{Geo}\left[
p_{i}\right]  \left(  k\right)  $ and $a_{i}=\left(  1+p_{i}\right)  /\left(
1-p_{i}\right)  $, $i=0,1$, a direct computation shows
\begin{equation}
\tilde{\psi}\left(  t\right)  =-\log\frac{1}{2}\left[  \left(  a_{0}+1\right)
^{t}\left(  a_{1}+1\right)  ^{1-t}-\left(  a_{0}-1\right)  ^{t}\left(
a_{1}-1\right)  ^{1-t}\right]  . \label{Cher-bound-geom-1}%
\end{equation}
This coincides with (\ref{Cher-bound-gauge-inv-3}) if both spectral densities
$a_{i}\left(  x\right)  $ are constant.

For the classical stationary Gaussian time series with spectral density $f$
according to (\ref{spec-density-classical}), the Chernoff bound has been found
in \cite{CD79}: for testing between $f_{0},f_{1}$, (\ref{Cher-bound-gauge-inv}%
) holds with
\[
\psi\left(  t\right)  =\frac{1}{4\pi}\int_{\left[  -\pi,\pi\right]  }\log
\frac{\left(  f_{0}\left(  \omega\right)  \right)  ^{t}\left(  f_{1}\left(
\omega\right)  \right)  ^{1-t}}{tf_{0}\left(  \omega\right)  +\left(
1-t\right)  f_{1}\left(  \omega\right)  }d\omega.
\]
This coincides with the Chernoff bound for a model of independent exponential
random variables $Y_{j}\sim\mathrm{Exp}\left(  f\left(  \omega_{j}\right)
\right)  $, $j=1,\ldots,\left[  n/2\right]  $, where $\omega_{j}$ are
equidistant points in $\left[  0,\pi\right]  $ and $\mathrm{Exp}\left(
\theta\right)  $ has density $\theta^{-1}\exp\left(  x\theta^{-1}\right)  $,
$x\geq0$. The asymptotic $\Delta$-equivalence of the latter model with the
stationary Gaussian time series has been shown in \cite{MR2589320}.

It should be noted that an exact relationship between asymptotic $\Delta
$-equivalence of models and equality of their binary Chernoff testing bounds
is not obvious and has not been established, as the Chernoff bound belongs to
the realm of large deviation theory. Nevertheless, the result
(\ref{Cher-bound-gauge-inv}) in \cite{MR2510896} and its relation to geometric
random variables gave rise to a conjecture of asymptotic $\Delta$-equivalence,
and thus to the results of the present paper.

\bigskip

2) \textit{An analog of the periodogram.} Efficient estimation of the spectral
density (i.e. of the autocovariance parameter $\theta$) in the case of
$m$-dependence. an analog of the periodogram: an $n$-vector of observables
$\left(  \Psi_{1},\ldots,\Psi_{n}\right)  $

\bigskip\textit{3) Asymptotically classical quantum models.} Discuss the paper
by Suzuki \cite{suzuki2019information}%

\end{privatenotes}%

\paragraph*{Further notation\textbf{. }}

Consider quantum statistical experiments $\mathcal{E}=\left(  \mathcal{A}%
,\rho_{\theta},\theta\in\Theta\right)  $ and $\mathcal{F}:=\left(
\mathcal{B},\sigma_{\theta},\theta\in\Theta\right)  $ having the same
parameter space. For the special case that $\mathcal{A}=\mathcal{B}$ define
their predual norm distance
\[
\Delta_{0}\left(  \mathcal{E},\mathcal{F}\right)  =\sup_{\theta}\left\Vert
\rho_{\theta}-\sigma_{\theta}\right\Vert _{1}.
\]
In general we will use the following notation involving quantum experiments
$\mathcal{E}$ and $\mathcal{F}$.%

\begin{tabular}
[t]{lllll}%
$\mathcal{E}$ & $\mathbf{\preceq}$ & $\mathcal{F}$ & ($\mathcal{F}$ more
informative than $\mathcal{E}$): & $\delta\left(  \mathcal{F},\mathcal{E}%
\right)  =0$\\
$\mathcal{E}$ & $\mathcal{\sim}$ & $\mathcal{F}$ & (equivalent): &
$\Delta\left(  \mathcal{F},\mathcal{E}\right)  =0$\\
$\mathcal{E}_{n}$ & $\simeq$ & $\mathcal{F}_{n}$ & (asymptotically norm
equivalent): & $\Delta_{0}\left(  \mathcal{F}_{n},\mathcal{E}_{n}\right)
\rightarrow0$\\
$\mathcal{E}_{n}$ & $\precsim$ & $\mathcal{F}_{n}$ & ($\mathcal{F}_{n}$
asymptotically more informative than $\mathcal{E}_{n}$): & $\delta\left(
\mathcal{F}_{n},\mathcal{E}_{n}\right)  \rightarrow0$\\
$\mathcal{E}_{n}$ & $\approx$ & $\mathcal{F}_{n}$ & (asymptotically
equivalent): & $\Delta\left(  \mathcal{F}_{n},\mathcal{E}_{n}\right)
\rightarrow0.$%
\end{tabular}

Note that "more informative" above is used in the sense of a semi-ordering,
i.e.%
\
its actual meaning is "at least as informative". If $\mathcal{E},\mathcal{F}$
are classical experiments, where the predual norm distance is a multiple of
the total variation distance between probability measures, the relation
$\mathcal{E}_{n}\simeq\mathcal{F}_{n}\mathcal{\ }$will also be described as
asymptotic equivalence in total variation.

\section{Upper informativity bound}

\subsection{Gaussian states on symmetric Fock space
\label{subsec:symmetric-Fock-space}}

Let $\mathcal{H}$ be a complex separable Hilbert space. Let $\vee
^{m}\mathcal{H}$ denote the $m$-fold symmetric tensor power, that is, the
subspace of $\mathcal{H}^{\otimes m}$ consisting of vectors which are
symmetric under permutations of the tensors, with $\vee^{0}\mathcal{H}%
:=\mathbb{C}$. The Fock space over $\mathcal{H}$ is the Hilbert space
\[
\mathfrak{F}\left(  \mathcal{H}\right)  :=%
{\displaystyle\bigoplus_{m\geq0}}
\vee^{m}\mathcal{H}.
\]
For each $x\in\mathcal{H}$ let
\begin{equation}
x_{F}:=%
{\displaystyle\bigoplus_{m\geq0}}
\frac{1}{\sqrt{m!}}x^{\otimes m} \label{exponential-vectors}%
\end{equation}
denote the corresponding exponential vector (or Fock vector). The exponential
vectors are linearly independent and their linear span is dense in
$\mathfrak{F}\left(  \mathcal{H}\right)  $. The Weyl unitaries $V\left(
x\right)  $, $x\in\mathcal{H}$ are defined by their action on exponential
vectors as
\begin{equation}
V\left(  x\right)  y_{F}:=\left(  y+2^{-1/2}x\right)  _{F}\exp\left(
-\frac{1}{4}\left\Vert x\right\Vert ^{2}-2^{-1/2}\left\langle x,y\right\rangle
\right)  \text{, }y\in\mathcal{H}. \label{Weyl-unitaries-complex-indexed-3}%
\end{equation}
These can be seen to satisfy the relation
\begin{equation}
V\left(  x\right)  V\left(  y\right)  =V\left(  x+y\right)  \exp\left(
-\frac{i}{2}\operatorname{Im}\left\langle x,y\right\rangle \right)
\label{Weyl-unitaries-complex-indexed-2}%
\end{equation}
and for $\mathcal{H}=\mathbb{C}^{n}$ this coincides with the CCR
(\ref{Weyl-unitaries-complex-indexed}) stated in the Schr\"{o}dinger
representation. Denote by $\left\{  V_{j}\left(  x\right)  ,x\in\mathbb{C}%
^{n}\right\}  $, $j=1,2$ these two versions of the Weyl unitaries ($i=2$
corresponding to (\ref{Weyl-unitaries-complex-indexed-3}) ); since both sets
of operators are irreducible, there is a linear isometric map $U$
$:L_{2}\left(  \mathbb{R}^{n}\right)  \mapsto\mathfrak{F}\left(
\mathbb{C}^{n}\right)  $ such that
\[
V_{1}\left(  x\right)  =U^{\ast}V_{2}\left(  x\right)  U\text{, }%
x\in\mathbb{C}^{n}\text{.}%
\]
The corresponding generated C*-algebras are hence *-isomorphic and are denoted
by $CCR\left(  \mathbb{C}^{n}\right)  $; henceforth in this section we will
work with the Fock representation $V\left(  x\right)  =V_{2}\left(  x\right)
$ of (\ref{Weyl-unitaries-complex-indexed-3}). A state $\varphi$ on
$CCR\left(  \mathcal{H}\right)  $ is a positive linear functional
$\varphi:CCR\left(  \mathcal{H}\right)  \mapsto\mathbb{C}$ that takes the
value $1$ on the unit on $CCR\left(  \mathcal{H}\right)  $.

Let $B\in\mathcal{B}\left(  \mathcal{H}\right)  $ be a bounded operator on
$\mathcal{H}$, and let $\vee^{m}B$ be the restriction of $B^{\otimes m}$ to
$\vee^{m}\mathcal{H}$. The Fock operator $B_{F}$ corresponding to $B$ is
\[
B_{F}:=%
{\displaystyle\bigoplus_{m\geq0}}
\vee^{m}B
\]
with an appropriate domain $\mathcal{D}\left(  B_{F}\right)  $ (cf.
\cite{MR2510896}, Appendix for more details). Then $\left(  Bx\right)
_{F}=B_{F}x_{F}$ holds for exponential vectors $x_{F}$, and for $A\in
\mathcal{B}\left(  \mathcal{H}\right)  $, the relation
\begin{equation}
A_{F}B_{F}=\left(  AB\right)  _{F} \label{multiplying-Fock-ops}%
\end{equation}
holds on a dense subset of $\mathfrak{F}\left(  \mathcal{H}\right)  $. Then,
for a gauge invariant centered Gaussian state with symbol matrix $A$, the
density operator on $\mathfrak{F}\left(  \mathbb{C}^{n}\right)  $ can be
described as follows (cp. \cite{MR2510896}, A5):%
\begin{equation}
\mathfrak{N}_{n}\left(  0,A\right)  =\frac{2^{n}}{\det\left(  A+I\right)
}\left(  \frac{A-I}{A+I}\right)  _{F}. \label{Fock-repre-1}%
\end{equation}
A proof is given in subsection \ref{subsec:density-op-GIV-Gaussians} below.%

\begin{privatenotes}
\begin{boxedminipage}{\textwidth}%

\begin{sfblock}
\texttt{Specialize the above formula down to }$n=1$\texttt{ and obtain thermal
states. Use some text from variant v45:}

The background for the connection to geometric laws is the following. Note
that if $\check{\Sigma}$ in (\ref{cov-matrix-gaug-inv}) is $2\times2$, i.e. is
the covariance matrix of a one-mode Gaussian state $\mathbb{N}_{2}\left(
0,\check{\Sigma}\right)  $ then by symmetry of $\check{\Sigma}$ one has
$\operatorname{Im}A_{1}=0$ and thus $\check{\Sigma}=aI_{2}/2$ where
$a=\operatorname{Re}A_{1}\geq1$. The state $\mathbb{N}_{2}\left(
0,aI_{2}/2\right)  $ is well known to be the thermal state of the harmonic
oscillator: the density operator is
\begin{equation}
\mathbb{N}_{2}\left(  0,\frac{a}{2}I_{2}\right)  =\left(  1-p\right)
\sum_{k=0}^{\infty}p^{k}\left\vert k\right\rangle \left\langle k\right\vert
\text{, }\mathbb{\;\;\;}p=\left(  a-1\right)  /\left(  a+1\right)
\end{equation}
where $\left\vert k\right\rangle $, $k=0,1,\ldots$is the Fock ONB of
$L_{2}\left(  \mathbb{R}\right)  $, and $p=0$ if $a=1$. So if the spectral
density $a\left(  \omega\right)  $ of the gauge invariant time series is
constant: $a\left(  \omega\right)  =a$ then one measures $n$ independent gauge
invariant, zero mean Gaussian states, i.e. $n$ copies of the thermal state
(\ref{thermal-state}). The thermal states for different $p\geq0$ all commute,
so the problems of estimating $p$ becomes classical, and is equivalent to
observing $n$ i.i.d. observation $X_{1},\ldots,X_{n}$ with distribution
$\mathrm{Geo}\left[  p\right]  $..
\end{sfblock}

%

\end{boxedminipage}
\end{privatenotes}%

\subsection{Distance of states in terms of symbols}

Our model is the quantum statistical experiment $\mathcal{E}_{n}\left(
\Theta_{1}\left(  \alpha,M\right)  \right)  $ described in Theorem
\ref{theor-main-1}. To characterize the parameter space $\Theta_{1}\left(
\alpha,M\right)  $ for $\alpha>1/2$, define for any real valued $a\in
L_{2}(-\pi,\pi)$ and its Fourier coefficients (\ref{symbol-generate})
\begin{equation}
\left\vert a\right\vert _{2,\alpha}^{2}:=\sum_{k=-\infty}^{\infty}\left\vert
k\right\vert ^{2\alpha}\left\vert a_{k}\right\vert ^{2},\;\left\Vert
a\right\Vert _{2,\alpha}^{2}:=a_{0}^{2}+\left\vert a\right\vert _{2,\alpha
}^{2} \label{normdef}%
\end{equation}
provided the r.h.s. is finite. The set of real functions
\begin{equation}
W^{\alpha}(M)=\left\{  a\in L_{2}(-\pi,\pi):\left\Vert a\right\Vert
_{2,\alpha}^{2}\leq M\right\}  . \label{sobol-ball}%
\end{equation}
then describes a ball in the scale of periodic fractional Sobolev spaces with
smoothness coefficient $\alpha$. Note that for $\alpha>1/2$, by an embedding
theorem (\cite{GolNussbZ-specpaper-preprint}, Lemma 5.6) , functions in
$W^{\alpha}(M)$ are also uniformly bounded. For $M>0$, define a set of real
valued functions on $[-\pi,\pi]$
\begin{equation}
\mathcal{L}_{M}=\left\{  a:M^{-1}\leq a(\omega)-1,\omega\in\lbrack-\pi
,\pi]\right\}  . \label{FM-def}%
\end{equation}
Then for the parameter space $\Theta_{1}\left(  \alpha,M\right)  $ of Theorem
\ref{theor-main-1} we have
\begin{equation}
\Theta_{1}\left(  \alpha,M\right)  =W^{\alpha}(M)\cap\mathcal{L}_{M}.
\label{Theta-1-functionset-def-a}%
\end{equation}
Therefore we can assume there exists $C=C_{M,\alpha}>0$ such that
\begin{equation}
1+C^{-1}\leq a\left(  \omega\right)  \leq C,\text{ }\omega\in\left[  -\pi
,\pi\right]  \label{cond-boundedness-specdensity}%
\end{equation}
holds for all $a\in\Theta_{1}\left(  \alpha,M\right)  $. Introducing notation
\begin{equation}
Q:=\left(  A-I\right)  /2\text{, }R:=\frac{Q}{Q+I} \label{Q-R-def}%
\end{equation}
we obtain from (\ref{Fock-repre-1})
\begin{align}
\mathfrak{N}_{n}\left(  0,A\right)   &  =\frac{1}{\det\left(  I+Q\right)
}\left(  \frac{Q}{I+Q}\right)  _{F}\label{state-repre-1}\\
&  =\frac{1}{\det\left(  I+Q\right)  }R_{F}. \label{state-repre-2}%
\end{align}
In the sequel we will approximate a state $\mathfrak{N}_{n}\left(
0,A_{1}\right)  $, given by symbol $A_{1}$ by the corresponding state for a
symbol $A_{2}$. Specifically, $A_{1}$ will be taken as the Hermitian Toeplitz
matrix $A_{n}\left(  a\right)  $ and $A_{2}$ will be a (truncated) circulant
matrix. We assume that $A_{i}$, $i=1,2$ are Hermitian $n\times n$ such that
there exists $c>0,$ independent of $n$, such that
\[
\lambda_{\min}\left(  A_{i}-I\right)  \geq c.
\]
This assumption will be justified later for the cases at hand, on the basis of
(\ref{cond-boundedness-specdensity}). In the Fock representation
(\ref{Fock-repre-1}) it then follows from Lemma
\ref{Lem-spec-decompos-Fock-op} that
\[
\lambda_{\min}\left(  \frac{A_{i}-I}{I+A_{i}}\right)  _{F}>0
\]
(cp. (\ref{symm-basis}) below), hence the states $\mathfrak{N}_{n}\left(
0,A_{i}\right)  $ are faithful.

We begin with a bound for the trace norm in terms of relative entropy. The
trace norm between states $\rho,\sigma$ is defined as
\[
\left\Vert \rho-\sigma\right\Vert _{1}:=\mathrm{Tr}\;\left\vert \rho
-\sigma\right\vert .
\]
For finite dimensional states $\rho$ and $\sigma$, the relative entropy is
\begin{equation}
S\left(  \rho||\sigma\right)  =\left\{
\begin{tabular}
[c]{l}%
$\mathrm{Tr}\;\rho\left(  \log\rho-\log\sigma\right)  $ if\textrm{
supp}$\;\sigma\supseteq\;$\textrm{supp}$\;\rho$\\
$\infty$ otherwise.
\end{tabular}
\ \ \ \ \ \right.  \label{rel-entropy-def}%
\end{equation}
This formula extends to faithful Gaussian states with density operators
$\rho,$ $\sigma$, (\ref{rel-entropy-def}), in the sense of agreeing with the
definition of relative entropy for normal states on a von Neumann algebra
(\cite{MR2363070}, sec 3.4). As we argued above, both our states $\rho,\sigma$
are assumed faithful, so \textrm{supp}$\;\sigma\supseteq\;$\textrm{supp}%
$\;\rho$ holds and $K\left(  \rho,\sigma\right)  $ can be computed from the
first line of (\ref{rel-entropy-def}). Then a quantum analog of Pinsker's
inequality holds (\cite{MR1230389}, Theorem 5.5): for the trace norm distance
between $\rho$ and $\sigma$ one has
\begin{equation}
\left\Vert \rho-\sigma\right\Vert _{1}^{2}\leq2S\left(  \rho||\sigma\right)  .
\label{inequality-trace-norm-relative-entropy}%
\end{equation}
Consider symbols $A_{j}$, $j=1,2$ and let $\rho_{j}=\mathfrak{N}_{n}\left(
0,A_{j}\right)  $, $j=1,2$ be the corresponding Gaussian states. Our purpose
in this section is to obtain an upper bound on the trace norm distance in
terms of the symbols, by using (\ref{inequality-trace-norm-relative-entropy})
and an appropriate upper bound on $S\left(  \rho||\sigma\right)  $.

For general Gaussian states, explicit expression for $S\left(  \rho
||\sigma\right)  $ in terms of the first two moments have been obtained
(\cite{pirandola2017fundamental} and references therein). Below we give a
special formula which focuses on the zero mean gauge invariant case, and
writes out $S\left(  \rho||\sigma\right)  $ directly in terms of the symbols
rather than the covariance matrices.

Consider the relative entropy between two Bernoulli laws $\left(
1-p_{j},p_{j}\right)  $ with $p_{j}\in\left(  0,1\right)  $, $j=1,2$:%
\[
S_{2}\left(  p_{1}||p_{2}\right)  =p_{1}\log\frac{p_{1}}{p_{2}}+\left(
1-p_{1}\right)  \log\frac{1-p_{1}}{1-p_{2}}.
\]
An analog for $n\times n$ Hermitian $R_{j}$ satisfying $0<R_{i}<I$ is
\begin{equation}
S_{2}\left(  R_{1}||R_{2}\right)  :=R_{1}\left(  \log R_{1}-\log R_{2}\right)
+\left(  I-R_{1}\right)  \left(  \log\left(  I-R_{1}\right)  -\log\left(
I-R_{2}\right)  \right)  . \label{S2-def-n}%
\end{equation}

\begin{proposition}
\label{prop-rel-entrop-form}Let $A_{j}$, $i=1,2$ be Hermitian $n\times n$ such
that $\lambda_{\min}\left(  A_{j}-I\right)  >0$, and let
\[
\rho_{j}=\mathfrak{N}_{n}\left(  0,A_{j}\right)  =\frac{2^{n}}{\det\left(
I+A_{j}\right)  }\left(  \frac{A_{j}-I}{A_{j}+I}\right)  _{F}.
\]
be the corresponding Gaussian states. Let $Q_{j}$ and $R_{j}$ be defined by
\[
Q_{j}:=\left(  A_{j}-I\right)  /2\text{, }R_{j}:=\frac{Q_{j}}{Q_{j}+I}%
=\frac{A_{j}-I}{A_{j}+I}\text{, }j=1,2.
\]
Then for the relative entropy one has
\begin{equation}
S\left(  \rho_{1}||\rho_{2}\right)  =\mathrm{Tr}\;\left(  I+Q_{1}\right)
S_{2}\left(  R_{1}||R_{2}\right)  \label{rel-entrop-formula-1}%
\end{equation}
where $S_{2}\left(  \cdot||\cdot\right)  $ is defined by (\ref{S2-def-n}).
\end{proposition}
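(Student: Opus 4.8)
The plan is to compute $S(\rho_1||\rho_2)=\mathrm{Tr}\,\rho_1(\log\rho_1-\log\rho_2)$ directly, using the fact that although $A_1$ and $A_2$ need not commute, each $\rho_j$ factorizes over the eigenbasis of its own symbol. First I would diagonalize the Hermitian matrix $A_1$ (equivalently $Q_1$, $R_1$); by $\lambda_{\min}(A_1-I)>0$ the eigenvalues $r_k$ of $R_1$ lie in $(0,1)$. Using $A_FB_F=(AB)_F$ from (\ref{multiplying-Fock-ops}) one checks that in the occupation-number basis attached to this eigenbasis the Fock operator $(R_1)_F$ is diagonal with entries $\prod_k r_k^{m_k}$; since
\[
c_1:=\frac{2^n}{\det(I+A_1)}=\frac{1}{\det(I+Q_1)}=\det(I-R_1)=\prod_k(1-r_k)
\]
(using $I-R_1=(I+Q_1)^{-1}$), this exhibits $\rho_1$ as the $n$-fold tensor product of one-mode thermal states with geometric photon statistics $\mathrm{Geo}(r_k)$. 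In particular $\mathrm{Tr}\,\rho_1=1$, $\rho_1$ is faithful, and the same holds for $\rho_2$, so $S(\rho_1||\rho_2)$ is finite and computed from the first line of (\ref{rel-entropy-def}).

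Next I would record the logarithm in second-quantized form. By (\ref{multiplying-Fock-ops}) the map $t\mapsto(e^{tX})_F$ is a one-parameter group of positive operators for Hermitian $X$ on $\mathbb{C}^n$, with a self-adjoint generator $d\Gamma(X)$; hence $\log(B)_F=d\Gamma(\log B)$ for $B>0$, so that
\[
\log\rho_j=\log c_j\cdot I+d\Gamma(\log R_j)=\mathrm{Tr}\,\log(I-R_j)\cdot I+d\Gamma(\log R_j),\qquad j=1,2.
\]
Therefore
\[
S(\rho_1||\rho_2)=\mathrm{Tr}\,\log(I-R_1)-\mathrm{Tr}\,\log(I-R_2)+\mathrm{Tr}\,\rho_1\,d\Gamma(\log R_1-\log R_2).
\]
The remaining input is the moment identity $\mathrm{Tr}\,\rho_1\,d\Gamma(X)=\mathrm{Tr}(Q_1X)$ for Hermitian $X$, which I would obtain by differentiating $t\mapsto\mathrm{Tr}\,\rho_1(e^{tX})_F=\det(I-R_1)/\det(I-e^{tX}R_1)$ at $t=0$ and using $R_1(I-R_1)^{-1}=Q_1$; equivalently, it records that the single-particle covariance of $\rho_1$ is $Q_1$, which is also visible in the tensor-product picture since the mean of $\mathrm{Geo}(r_k)$ equals $r_k/(1-r_k)=(a_k-1)/2$, the $k$-th eigenvalue of $Q_1$.

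Substituting yields
\[
S(\rho_1||\rho_2)=\mathrm{Tr}\,\log(I-R_1)-\mathrm{Tr}\,\log(I-R_2)+\mathrm{Tr}\,\bigl(Q_1(\log R_1-\log R_2)\bigr),
\]
and it only remains to recognize the right-hand side as $\mathrm{Tr}\,(I+Q_1)S_2(R_1||R_2)$. Expanding the definition (\ref{S2-def-n}) and using the operator identities $(I+Q_1)R_1=Q_1$ and $(I+Q_1)(I-R_1)=I$ — both valid since all factors are functions of $A_1$ — collapses $\mathrm{Tr}\,(I+Q_1)S_2(R_1||R_2)$ to precisely the displayed expression, which finishes the proof. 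I expect the main technical point to be the careful justification of $\log(B)_F=d\Gamma(\log B)$ together with the attendant domain and trace-class bookkeeping on the infinite-dimensional Fock space $\mathcal{F}(\mathbb{C}^n)$: that $\rho_1\,d\Gamma(X)$ is trace class (immediate from the tensor product of $n$ geometric laws, each having all moments finite) and that the cyclicity and $t$-differentiation steps in the moment identity are legitimate. Once one works in the eigenbasis of $A_1$ these reduce to elementary facts about the geometric distribution, and everything else is the short matrix computation indicated above.
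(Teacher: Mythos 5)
Your proof is correct and follows essentially the same route as the paper's: write $\log\rho_j=\log\det(I-R_j)\cdot I_F+\Gamma(\log R_j)$, take the trace against $\rho_1$, and reorganize using $(I+Q_1)R_1=Q_1$ and $(I+Q_1)(I-R_1)=I$. The intermediate identities you rely on are exactly the paper's Lemma~\ref{Lem-log-Gamma-m} (the relation $\log(B)_F=\Gamma(\log B)$) and Lemma~\ref{Lem-pre-log-formula} (equivalently, your moment identity $\mathrm{Tr}\,\rho_1\Gamma(X)=\mathrm{Tr}\,Q_1X$, since $\det(I-R_1)=1/\det(I+Q_1)$ and $R_1(I-R_1)^{-1}=Q_1$). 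Where you differ is in how you would establish these two identities: the paper proves both by explicit spectral bookkeeping in the symmetric tensor basis (the orthonormal vectors $e_{\mathbf m}$ of Lemma~\ref{Lem-spec-decompos-Fock-op} and a direct combinatorial trace computation), whereas you would obtain the logarithm formula from the one-parameter group $t\mapsto(e^{tX})_F$ with generator $\Gamma(X)$, and the moment identity by differentiating the closed-form generating functional $t\mapsto\det(I-R_1)/\det(I-R_1e^{tX})$ at $t=0$. Your route is slicker and does not require the partition combinatorics, at the cost of having to be careful with the domain and trace-class issues you flag; the paper's route is more elementary and makes all convergence manifest through the explicit orthonormal basis. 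Both are valid; the paper's choice also makes Lemma~\ref{Lem-pre-log-formula} available as a standalone trace formula for later reuse.
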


\begin{proof}
Assume a Gaussian state is given by $\rho=\frac{1}{\det\left(  I+Q\right)
}R_{F}$ according to (\ref{state-repre-2}). Then
\begin{align}
\log\rho &  =-\log\det\left(  I+Q\right)  I_{F}+\log R_{F}\nonumber\\
&  =-\log\det\left(  I+Q\right)  I_{F}+\oplus_{m=0}^{\infty}\log\vee^{m}R.
\label{log-rho-express}%
\end{align}
Using Lemma \ref{Lem-log-Gamma-m} we find%
\begin{align*}
\log\rho &  =-\log\det\left(  I+Q\right)  I_{F}+\oplus_{m=0}^{\infty}%
\Gamma_{m}\left(  \log R\right) \\
&  =-\log\det\left(  I+Q\right)  I_{F}+\Gamma\left(  \log R\right)
\end{align*}
with the definition of $\Gamma\left(  \log R\right)  $ given in Lemma
\ref{Lem-pre-log-formula}. Setting $\rho=\rho_{2}$ and applying this lemma for
the case $A=R_{1}$, $B=\log R_{2}$, we obtain%
\begin{align}
\mathrm{Tr}\;\rho_{1}\log\rho_{2}  &  =\frac{1}{\det\left(  I+Q_{1}\right)
}\mathrm{Tr}\;\left(  R_{1}\right)  _{F}\left(  -\log\det\left(
I+Q_{2}\right)  I_{F}+\Gamma\left(  \log R_{2}\right)  \right) \nonumber\\
&  =-\log\det\left(  I+Q_{2}\right)  +\frac{1}{\det\left(  I+Q_{1}\right)
}\mathrm{Tr}\;\left(  R_{1}\right)  _{F}\Gamma\left(  \log R_{2}\right)
\nonumber\\
&  =-\log\det\left(  I+Q_{2}\right)  +\frac{1}{\det\left(  I+Q_{1}\right)
}\frac{1}{\det\left(  I-R_{1}\right)  }\mathrm{Tr}\;\frac{R_{1}}{I-R_{1}}\log
R_{2}. \label{comput-trace-1}%
\end{align}
In view of (\ref{Q-R-def}) we have
\begin{align*}
I-R_{1}  &  =I-\frac{Q_{1}}{I+Q_{1}}=\frac{I}{I+Q_{1}},\\
\det\left(  I-R_{1}\right)   &  =1/\det\left(  I+Q_{1}\right)  ,\\
-\log\det\left(  I+Q_{2}\right)   &  =\log\det\left(  I-R_{2}\right)
=\mathrm{Tr}\;\log\left(  I-R_{2}\right)  .
\end{align*}
Applied to (\ref{comput-trace-1}) this implies%
\begin{align}
\mathrm{Tr}\;\rho_{1}\log\rho_{2}  &  =\mathrm{Tr}\;\log\left(  I-R_{2}%
\right)  +\mathrm{Tr}\;\frac{R_{1}}{I-R_{1}}\log R_{2}\label{compo}\\
&  =\mathrm{Tr}\;\log\left(  I-R_{2}\right)  +\mathrm{Tr}\left(
I+Q_{1}\right)  \;R_{1}\log R_{2}\nonumber\\
&  =\mathrm{Tr}\;\left(  I+Q_{1}\right)  \left[  \left(  I-R_{1}\right)
\log\left(  I-R_{2}\right)  +\;R_{1}\log R_{2}\right]  .
\label{comput-trace-2}%
\end{align}
For the case $\rho_{1}=\rho_{2}$ we obtain
\begin{equation}
\mathrm{Tr}\;\rho_{1}\log\rho_{1}=\mathrm{Tr}\;\left(  I+Q_{1}\right)  \left[
\left(  I-R_{1}\right)  \log\left(  I-R_{1}\right)  +R_{1}\log R_{1}\right]
\label{comput-trace-3}%
\end{equation}
From (\ref{rel-entropy-def}), (\ref{comput-trace-2}) and (\ref{comput-trace-3}%
) we finally obtain
\[
S\left(  \rho_{1}||\rho_{2}\right)  =\mathrm{Tr}\;\rho_{1}\left(  \log\rho
_{1}-\log\rho_{2}\right)
\]%
\begin{align*}
&  =\mathrm{Tr}\;\left(  I+Q_{1}\right)  \left[  R_{1}\left(  \log R_{1}-\log
R_{2}\right)  +\left(  I-R_{1}\right)  \left(  \log\left(  I-R_{1}\right)
-\log\left(  I-R_{2}\right)  \right)  \right] \\
&  =\mathrm{Tr}\;\left(  I+Q_{1}\right)  S_{2}\left(  R_{1}||R_{2}\right)  .
\end{align*}

\end{proof}

\bigskip

Since $Q_{i}$ are positive definite $n\times n$ Hermitian, the matrices
$R_{i}$ and $I-R_{i}=I/\left(  Q_{i}+I\right)  $ also have these properties;
in particular
\begin{equation}
0<R_{i}<I\text{, }i=1,2. \label{R-bracket}%
\end{equation}
Hence $S_{2}\left(  R_{1}||R_{2}\right)  $ defined by
(\ref{rel-entrop-formula-1}) is finite, and thus $S\left(  \rho_{1}||\rho
_{2}\right)  $ is also finite. In order to achieve uniformity of estimates
over the $R_{i}$ considered, we assume a strengthened version of
(\ref{R-bracket}): there exists $\lambda\in\left(  1/2,1\right)  $ such that
\begin{equation}
\left(  1-\lambda\right)  I<R_{i}<\lambda I\text{, }\;i=1,2.
\label{R-bracket-2}%
\end{equation}
It is immediate, in view of (\ref{Q-R-def}), that this condition is equivalent
to each of the following two:
\begin{align}
\frac{1-\lambda}{\lambda}I  &  <Q_{i}<\frac{\lambda}{1-\lambda}%
I,\label{R-bracket-3}\\
\left(  \frac{2}{\lambda}-1\right)  I  &  <A_{i}<\frac{1+\lambda}{1-\lambda}I.
\label{R-bracket-4}%
\end{align}
Also, (\ref{R-bracket-3}) implies
\begin{equation}
I+Q_{i}<\frac{1}{1-\lambda}I\text{, }\;i=1,2. \label{R-bracket-5}%
\end{equation}
Our next task is to estimate (\ref{rel-entrop-formula-1}) in terms of the
difference $H=R_{1}-R_{2}$. To that end we use an expansion
\[
\log R_{2}=\log\left(  I-\left(  I-R_{2}\right)  \right)  =-\sum_{k=1}%
^{\infty}\frac{1}{k}\left(  I-R_{2}\right)  ^{k}.
\]
That is valid if $I-R_{2}$ has all eigenvalues contained in $\left(
-1,1\right)  $, which holds due to (\ref{R-bracket}). Similarly we expand
$\log R_{1}$ and obtain
\begin{align}
\log R_{1}-\log R_{2}  &  =\sum_{k=1}^{\infty}\frac{1}{k}\left[  \left(
I-R_{2}\right)  ^{k}-\left(  I-R_{1}\right)  ^{k}\right] \label{rearrange}\\
&  =\sum_{k=1}^{\infty}\frac{1}{k}\left[  \left(  I-R_{1}+H\right)
^{k}-\left(  I-R_{1}\right)  ^{k}\right]  . \label{rearrange1}%
\end{align}
Furthermore we obtain%
\[
\left(  I-R_{1}+H\right)  ^{k}-\left(  I-R_{1}\right)  ^{k}=
\]%
\begin{align}
&  =H\left(  I-R_{1}\right)  ^{k-1}+\left(  I-R_{1}\right)  H\left(
I-R_{1}\right)  ^{k-2}+\ldots+\left(  I-R_{1}\right)  ^{k-1}H\label{expa-1}\\
&  +H^{2}\left(  I-R_{1}\right)  ^{k-2}+H\left(  I-R_{1}\right)  H\left(
I-R_{1}\right)  ^{k-3}+\ldots+\left(  I-R_{1}\right)  ^{k-2}H^{2}\nonumber\\
&  \ldots\nonumber\\
&  +H^{k-1}\left(  I-R_{1}\right)  +H^{k-2}\left(  I-R_{1}\right)
H+\ldots+\left(  I-R_{1}\right)  H^{k-1}\nonumber\\
&  +H^{k}.\nonumber
\end{align}
A similar expansion holds for the log terms in the second summand of
(\ref{S2-def-n}): writing $G=-H$, we have $R_{2}=R_{1}+G$ and
\begin{equation}
\log\left(  I-R_{1}\right)  -\log\left(  I-R_{2}\right)  =\sum_{k=1}^{\infty
}\frac{1}{k}\left[  \left(  R_{1}+G\right)  ^{k}-R_{1}^{k}\right]  ,
\label{expa-1a}%
\end{equation}%
\[
\left(  R_{1}+G\right)  ^{k}-R_{1}^{k}=
\]%
\begin{align}
&  GR_{1}^{k-1}+R_{1}GR_{1}^{k-2}+\ldots+R_{1}^{k-1}G\label{expa-2}\\
&  +G^{2}R_{1}^{k-2}+GR_{1}GR_{1}^{k-3}+\ldots+R_{1}^{k-2}G^{2}\nonumber\\
&  \ldots\nonumber\\
&  +G^{k-1}R_{1}+G^{k-2}R_{1}G+\ldots+R_{1}G^{k-1}\nonumber\\
&  +G^{k}.\nonumber
\end{align}
We also denote
\begin{align*}
M_{1}  &  =\left(  I+Q_{1}\right)  R_{1},\\
M_{2}  &  =\left(  I+Q_{1}\right)  \left(  I-R_{1}\right)  .
\end{align*}
Furthermore, for matrices $A$ we write the operator norm $\left\vert
A\right\vert =\lambda_{\max}^{1/2}\left(  A^{\ast}A\right)  $, so that for
Hermitian positive $A$ we have $\left\vert A\right\vert =\lambda_{\max}\left(
A\right)  $. The Hilbert-Schmidt norm is written $\left\Vert A\right\Vert
_{2}=\left(  \mathrm{Tr}\;A^{\ast}A\right)  ^{1/2}$. We then have
\begin{align}
\left\Vert AB\right\Vert _{2}  &  \leq\left\vert A\right\vert \left\Vert
B\right\Vert _{2},\label{norm-inequality}\\
\left\vert AB\right\vert  &  \leq\left\vert A\right\vert \left\vert
B\right\vert . \label{norm-inequality-operator}%
\end{align}

Consider now the series expression for $S\left(  \rho_{1}||\rho_{2}\right)  $
given by (\ref{rel-entrop-formula-1}) and the expansions (\ref{expa-1}),
(\ref{expa-2}), i.e. the series obtained for $\mathrm{Tr}\;\left(
I+Q_{1}\right)  \;S_{2}\left(  R_{1}||R_{2}\right)  $. Consider first the
question whether it converges absolutely.

To that end we denote the generic term in the expansion (\ref{expa-1}) by
$T_{k,j,l}$, in such a way that

\begin{itemize}
\item $k$ is as indicated, i.e it pertains to a term in the expansion of
$\left(  I-R_{1}+H\right)  ^{k}-\left(  I-R_{1}\right)  ^{k}$, where $1\leq
k<\infty$

\item $j$ is the order in $H$, i.e. the total number of factors $H$ (such that
$k-j$ is the total number of factors $I-R_{1}$), and $1\leq j\leq k$

\item $l$ indicates the $l$-th summand in a given line of (\ref{expa-1}), for
any chosen systematic order of the summands pertaining to given $k,j$, where
$1\leq l\leq\left(
\genfrac{}{}{0pt}{}{k}{j}%
\right)  $.
\end{itemize}

In a similar way, we denote the generic term in the expansion (\ref{expa-2})
by $U_{k,j,l}$, in such a way that

\begin{itemize}
\item $k$ is as indicated, i.e it pertains to a term in the expansion of
$\left(  R_{1}+G\right)  ^{k}-R_{1}^{k}$, where $1\leq k<\infty$

\item $j$ is the order in $G$, i.e. the total number of factors $G$ (such that
$k-j$ is the total number of factors $R_{1}$), and $1\leq j\leq k$

\item $l$ indicates the $l$-th summand in a given line of (\ref{expa-1}), for
any chosen systematic order of the summands pertaining to given $k,j$, where
$1\leq l\leq\left(
\genfrac{}{}{0pt}{}{k}{j}%
\right)  $.
\end{itemize}

\begin{lemma}
For $\left\Vert H\right\Vert _{2}<1-\lambda$, with $\lambda$ from
(\ref{R-bracket-2}), the series
\[
\mathrm{Tr}\;\left(  I+Q_{1}\right)  \;S_{2}\left(  R_{1}||R_{2}\right)  =
\]%
\begin{equation}
=\sum_{k=1}^{\infty}\sum_{j=1}^{k}\sum_{l=1}^{\left(
\genfrac{}{}{0pt}{}{k}{j}%
\right)  }\frac{1}{k}\mathrm{Tr}\;M_{1}\;T_{k,j,l}+\sum_{k=1}^{\infty}%
\sum_{j=1}^{k}\sum_{l=1}^{\left(
\genfrac{}{}{0pt}{}{k}{j}%
\right)  }\frac{1}{k}\mathrm{Tr}\;M_{2}\;U_{k,j,l} \label{two-series}%
\end{equation}
converges absolutely.
\end{lemma}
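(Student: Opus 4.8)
The plan is to show absolute convergence of the double series \eqref{two-series} by bounding each generic term $T_{k,j,l}$ and $U_{k,j,l}$ in Hilbert–Schmidt norm and combining with the operator-norm bounds on $M_1,M_2$. First I would record the elementary estimates that follow from \eqref{R-bracket-2}: since $0<R_1<\lambda I$ and $0<I-R_1<\lambda I$, both $|R_1|\le\lambda$ and $|I-R_1|\le\lambda$. Moreover, from \eqref{R-bracket-5} and $R_1<\lambda I$ we get $|M_1|=|(I+Q_1)R_1|\le\lambda/(1-\lambda)$ and likewise $|M_2|\le\lambda/(1-\lambda)$ (in fact $|M_2|\le 1$, but the crude bound suffices). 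These are the only facts about the matrices we shall use, besides $\|H\|_2<1-\lambda$.

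Next I would estimate a single term $T_{k,j,l}$ in the expansion \eqref{expa-1}. Such a term is a product of $k$ factors, of which $j$ are copies of $H$ and $k-j$ are copies of $I-R_1$, in some order. Using \eqref{norm-inequality} to peel off one $H$ factor in Hilbert–Schmidt norm and \eqref{norm-inequality-operator} on the rest, together with $|I-R_1|\le\lambda$ and $|H|\le\|H\|_2$, one obtains
\[
\|T_{k,j,l}\|_2\le\|H\|_2^{j}\,\lambda^{k-j}.
\]
Hence, using $\|M_1 T_{k,j,l}\|_1\le\|M_1\|_2\,\|T_{k,j,l}\|_2$ is not quite what we want since $M_1$ need only be bounded in operator norm; instead $|\mathrm{Tr}\,M_1 T_{k,j,l}|\le|M_1|\,\|T_{k,j,l}\|_1$, and since $T_{k,j,l}$ has at least one $H$ factor we can write $T_{k,j,l}=H\cdot(\text{rest})$ or $(\text{rest})\cdot H$ with the rest bounded in operator norm by $\lambda^{k-j}\|H\|_2^{\,j-1}$, so that $\|T_{k,j,l}\|_1\le\|H\|_1\,\lambda^{k-j}\|H\|_2^{\,j-1}\le n^{1/2}\|H\|_2^{\,j}\lambda^{k-j}$; absorbing the dimensional factor is harmless since $n$ is fixed in this lemma. (Alternatively one bounds $|\mathrm{Tr}\,M_1 T_{k,j,l}|\le\|M_1\|_2\|T_{k,j,l}\|_2\le|M_1|\,n^{1/2}\|H\|_2^{j}\lambda^{k-j}$ directly.) The same estimate holds for $\mathrm{Tr}\,M_2 U_{k,j,l}$ by the identical argument applied to \eqref{expa-2}, with $|R_1|\le\lambda$ in place of $|I-R_1|\le\lambda$.

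Finally I would sum the bounds. For fixed $k$ there are $\binom{k}{j}$ choices of $l$, so the contribution of level $k$ to the absolute series is at most
\[
\frac{C}{k}\sum_{j=1}^{k}\binom{k}{j}\|H\|_2^{\,j}\lambda^{k-j}
=\frac{C}{k}\bigl[(\lambda+\|H\|_2)^{k}-\lambda^{k}\bigr],
\]
where $C=n^{1/2}\lambda/(1-\lambda)$ collects the bounds on $|M_1|,|M_2|$ and the dimensional factor. Summing over $k$ gives $C\sum_{k\ge1}k^{-1}[(\lambda+\|H\|_2)^k-\lambda^k]$, which converges precisely because $\lambda+\|H\|_2<\lambda+(1-\lambda)=1$ (and $\lambda<1$), each $\sum k^{-1}x^k=-\log(1-x)$ being finite for $x\in(0,1)$. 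This proves absolute convergence of \eqref{two-series}. The main obstacle is purely bookkeeping: one must verify that every generic term really does carry at least one $H$ (resp. $G$) factor — which is exactly why the expansions \eqref{expa-1}, \eqref{expa-2} subtract off the pure $(I-R_1)^k$ (resp. $R_1^k$) term — so that the geometric-series bound produces the difference $(\lambda+\|H\|_2)^k-\lambda^k$ rather than just $(\lambda+\|H\|_2)^k$; the latter would still be summable against $1/k$ but the subtraction is what will matter for the quantitative remainder estimate in the sequel. No genuine analytic difficulty arises here beyond the combinatorial identity $\sum_j\binom{k}{j}x^j y^{k-j}=(x+y)^k$.
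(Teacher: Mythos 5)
Your proof is correct and establishes absolute convergence, but it takes a genuinely different and more elementary route than the paper. The paper's proof splits into cases $j=1$, $j=2$, $j\ge3$ and uses Cauchy--Schwarz to break $\mathrm{Tr}\,M_1T_{k,j,l}$ into a product of two Hilbert--Schmidt norms, each carrying one $H$ factor (for $j\ge2$), which yields dimension-free bounds of the form $\beta\lambda^{k-j+1}\|H\|_2^{j}$. You instead use the single Schatten inequality $|\mathrm{Tr}\,M_1T|\le\|M_1\|_2\|T\|_2$ (or the Hölder/rank variant), accepting a factor of $n^{1/2}$, which gives a much cleaner summation: $\sum_k k^{-1}\left[(\lambda+\|H\|_2)^k-\lambda^k\right]$ converges since $\lambda+\|H\|_2<1$, with no case distinction and no $\sum m x^m$ identity needed. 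Your approach is shorter; the paper's pays the bookkeeping cost to get constants independent of $n$, which is precisely what the very next lemma (the quantitative bound $\mathrm{Tr}\,(I+Q_1)S_2(R_1\|R_2)\le\delta^{-1}\|H\|_2^2$ uniform in $n$) requires, since these uniform estimates must feed into a Pinsker bound as $n\to\infty$. So your argument suffices for this lemma as stated, but could not be recycled for the next one.

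One small correction to your closing remark: the subtraction of $\lambda^k$ in $(\lambda+\|H\|_2)^k-\lambda^k$ reflects only the removal of the $j=0$ term, i.e.\ subtracting off $(I-R_1)^k$ in \eqref{expa-1}. This is not the cancellation that drives the quantitative remainder estimate in the sequel; what matters there is the separate telescoping of the $j=1$ (linear-in-$H$) terms between the $T$-series and the $U$-series, which sums to zero. The $-\lambda^k$ plays no role in that.
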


\begin{proof}
Consider the first series and all terms with $j=1$. Since $\left(
I+Q_{1}\right)  $ and $R_{1}$ are commuting and positive, we have by
Cauchy-Schwartz, for $1\leq l\leq k$
\begin{align*}
\left\vert \mathrm{Tr}\;M_{1}\;T_{k,1,l}\right\vert  &  =\left\vert
\mathrm{Tr}\;\left(  I+Q_{1}\right)  R_{1}\left(  I-R_{1}\right)
^{k-1}H\right\vert \\
&  \leq\left\Vert \left(  I+Q_{1}\right)  R_{1}\left(  I-R_{1}\right)
^{k-1}\right\Vert _{2}\left\Vert H\right\Vert _{2}\\
&  \leq\lambda^{k}\left\Vert \left(  I+Q_{1}\right)  \right\Vert
_{2}\left\Vert H\right\Vert _{2}%
\end{align*}
where we used (\ref{norm-inequality}) and $\left\vert R_{1}\right\vert
<\lambda$, $\left\vert I-R_{1}\right\vert <\lambda$ due to (\ref{R-bracket-2}%
). Consequently%
\begin{align}
\sum_{k=1}^{\infty}\sum_{l=1}^{k}\frac{1}{k}\left\vert \mathrm{Tr}%
\;M_{1}\;T_{k,1,l}\right\vert  &  \leq\sum_{k=1}^{\infty}\lambda^{k}\left\Vert
\left(  I+Q_{1}\right)  \right\Vert _{2}\left\Vert H\right\Vert _{2}%
\nonumber\\
&  =\frac{\lambda}{1-\lambda}\left\Vert \left(  I+Q_{1}\right)  \right\Vert
_{2}\left\Vert H\right\Vert _{2}<\infty. \label{abs-conv-linear}%
\end{align}
Next consider all quadratic terms in $H$, i.e. the case $j=2$. The general
form of such a term, with $k\geq2$, is
\[
\mathrm{Tr}\;M_{1}\;T_{k,2,l}=\mathrm{Tr}\;\left(  I+Q_{1}\right)
R_{1}\left(  I-R_{1}\right)  ^{a}H\left(  I-R_{1}\right)  ^{b}H
\]
where $a$ and $b$ depend on $k$ and $l$, with $a+b=k-2$, $a,b\geq0$. By
Cauchy-Schwartz we obtain%
\[
\left\vert \mathrm{Tr}\;M_{1}\;T_{k,2,l}\right\vert \leq
\]%
\begin{equation}
\left\Vert \left(  I+Q_{1}\right)  R_{1}\left(  I-R_{1}\right)  ^{a}%
H\right\Vert _{2}\cdot\left\Vert \left(  I-R_{1}\right)  ^{b}H\right\Vert
_{2}. \label{factors}%
\end{equation}
Setting $\beta:=1/\left(  1-\lambda\right)  $ and using the bound
(\ref{R-bracket-5}), the first factor above can be upper bounded as
$\beta\lambda^{a+1}\left\Vert H\right\Vert _{2}$. Similarly the second factor
in (\ref{factors}) can be bounded by $\lambda^{b}\left\Vert H\right\Vert _{2}%
$. As a result we get
\begin{equation}
\left\vert \mathrm{Tr}\;M_{1}\;T_{k,2,l}\right\vert \leq\beta\lambda
^{k-1}\left\Vert H\right\Vert _{2}^{2}. \label{quad-term-bound}%
\end{equation}
Thus for the totality of second order terms we have
\begin{align}
\sum_{k=2}^{\infty}\sum_{l=1}^{\left(
\genfrac{}{}{0pt}{}{k}{2}%
\right)  }\frac{1}{k}\left\vert \mathrm{Tr}\;M_{1}\;T_{k,2,l}\right\vert  &
\leq\sum_{k=2}^{\infty}\frac{1}{k}\left(
\genfrac{}{}{0pt}{}{k}{2}%
\right)  \beta\lambda^{k-1}\left\Vert H\right\Vert _{2}^{2}\nonumber\\
&  =\beta\left\Vert H\right\Vert _{2}^{2}\sum_{k=2}^{\infty}\frac{k-1}%
{2}\lambda^{k-1}=\frac{\beta}{2}\left\Vert H\right\Vert _{2}^{2}\frac{\lambda
}{\left(  1-\lambda\right)  ^{2}} \label{quad-term-bound-2}%
\end{align}
using relation (\ref{quasi-geom-series}). Next consider all terms with
$j\geq3$, i.e. with higher order than $2$ in $H$. The general form of such a
term, with $k\geq j$, is
\[
\mathrm{Tr}\;M_{1}\;T_{k,j,l}=\mathrm{Tr}\;\left(  I+Q_{1}\right)
R_{1}\left(  I-R_{1}\right)  ^{a}H\Pi_{k,j,l}H\;
\]
where $a$ depends on $k$ and $l$, with $a\leq k-j$, $a\geq0$ and $\Pi_{k,j,l}$
is a matrix monomial containing $b$ factors $I-R_{1}$ and $j-2$ factors $H$
(recall that $I-R_{1}$ and $H$ do not commute). Here $b$ depends on $k$ and
$l$ and fulfills $a+b=k-j$ with $a,b\geq0$. Again we estimate, analogously to
(\ref{factors}),
\begin{align*}
\left\vert \mathrm{Tr}\;M_{1}\;T_{k,j,l}\right\vert  &  \leq\beta\lambda
^{a+1}\left\Vert H\right\Vert _{2}\left\Vert \Pi_{k,j,l}H\right\Vert _{2}\\
&  \leq\beta\lambda^{a+1}\left\vert \Pi_{k,j,l}\right\vert \left\Vert
H\right\Vert _{2}^{2}.
\end{align*}
Successive application of the inequality (\ref{norm-inequality-operator})
gives
\begin{equation}
\left\vert \Pi_{k,j,l}\right\vert \leq\lambda^{b}\left\vert H\right\vert
^{j-2}. \label{obvious-lambda-max}%
\end{equation}
As an illustration consider the simple case $\Pi_{k,j,l}=\left(
I-R_{1}\right)  H\left(  I-R_{1}\right)  $ where $k=6,j=3,a=1,b=2$ . Then
\begin{align*}
\left\vert \Pi_{k,j,l}\right\vert  &  =\left\vert \left(  I-R_{1}\right)
H\left(  I-R_{1}\right)  \right\vert \leq\lambda\left\vert H\left(
I-R_{1}\right)  \right\vert \\
&  \leq\lambda\left\vert H\right\vert \left\vert \left(  I-R_{1}\right)
\right\vert \leq\lambda^{2}\left\vert H\right\vert .
\end{align*}
Since (\ref{obvious-lambda-max}) holds generally, applying the bound
$\left\vert H\right\vert \leq\left\Vert H\right\Vert _{2}$ we obtain for
$j\geq3$
\begin{equation}
\left\vert \mathrm{Tr}\;M_{1}\;T_{k,j,l}\right\vert \leq\beta\lambda
^{k-j+1}\left\Vert H\right\Vert _{2}^{j}. \label{higher-term-bound}%
\end{equation}
From (\ref{higher-term-bound}) we obtain for the totality of terms of third
order or higher
\begin{align}
\sum_{k=3}^{\infty}\sum_{j=3}^{k}\sum_{l=1}^{\left(
\genfrac{}{}{0pt}{}{k}{j}%
\right)  }\frac{1}{k}\left\vert \mathrm{Tr}\;M_{1}\;T_{k,j,l}\right\vert  &
\leq\beta\sum_{k=3}^{\infty}\sum_{j=3}^{k}\frac{1}{k}\left(
\genfrac{}{}{0pt}{}{k}{j}%
\right)  \lambda^{k-j+1}\left\Vert H\right\Vert _{2}^{j}%
\label{abs-conv-final-0}\\
&  \leq\beta\left\Vert H\right\Vert _{2}^{2}\left\{  \sum_{k=3}^{\infty}%
\sum_{j=3}^{k}\frac{1}{k}\left(
\genfrac{}{}{0pt}{}{k}{j}%
\right)  \lambda^{k-j+1}\left\Vert H\right\Vert _{2}^{j-2}\right\}  .
\end{align}
To show that the expression in $\left\{  \cdot\right\}  $ is finite, set
$h:=k-2$, $m:=j-2$. Then the expression in $\left\{  \cdot\right\}  $ is
\begin{align*}
&  \sum_{h=1}^{\infty}\sum_{m=1}^{h}\frac{\left(  h+1\right)  !}{\left(
m+2\right)  !\left(  h-m\right)  !}\lambda^{h-m+1}\left\Vert H\right\Vert
_{2}^{m}\\
&  =\lambda\sum_{h=1}^{\infty}\sum_{m=1}^{h}\frac{\left(  h+1\right)
}{\left(  m+2\right)  \left(  m+1\right)  }\left(
\genfrac{}{}{0pt}{}{h}{m}%
\right)  \lambda^{h-m}\left\Vert H\right\Vert _{2}^{m}\\
&  \leq\frac{\lambda}{4}\sum_{h=1}^{\infty}\left(  h+1\right)  \left(
\lambda+\left\Vert H\right\Vert _{2}\right)  ^{h}.
\end{align*}
Denote $\gamma=\lambda+\left\Vert H\right\Vert _{2}$ and note that $\gamma<1$
due to $\left\Vert H\right\Vert _{2}<1-\lambda$. Using relation
(\ref{quasi-geom-series}) again, we find
\[
\sum_{h=1}^{\infty}\left(  h+1\right)  \gamma^{h}=\frac{\gamma}{\left(
1-\gamma\right)  ^{2}}+\frac{\gamma}{1-\gamma}\leq\frac{2}{\left(
1-\gamma\right)  ^{2}}.
\]
From (\ref{abs-conv-final-0}) we find for the totality of terms of third order
or higher%
\begin{equation}
\sum_{k=3}^{\infty}\sum_{j=3}^{k}\sum_{l=1}^{\left(
\genfrac{}{}{0pt}{}{k}{j}%
\right)  }\frac{1}{k}\left\vert \mathrm{Tr}\;M_{1}\;T_{k,j,l}\right\vert
\leq\frac{\beta\lambda\left\Vert H\right\Vert _{2}^{2}}{2\left(
1-\lambda-\left\Vert H\right\Vert _{2}\right)  ^{2}}. \label{abs-conv-final-1}%
\end{equation}
The argument for the terms involving $U_{k,j,l}$ is analogous, which proves
the lemma.
\end{proof}

\bigskip

\begin{lemma}
For given $\lambda$ from (\ref{R-bracket-2}) there exists $\delta>0$, not
depending on dimension $n$, such that $\left\Vert H\right\Vert <\delta$
implies
\[
\mathrm{Tr}\;\left(  I+Q_{1}\right)  \;S_{2}\left(  R_{1}||R_{2}\right)
\leq\delta^{-1}\left\Vert H\right\Vert ^{2}.
\]
Here $\delta$ can be chosen as
\[
\delta=\min\left(  \left(  1-\lambda\right)  /2,\left(  1-\lambda\right)
^{3}/8\lambda\right)  .
\]

\end{lemma}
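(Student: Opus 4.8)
The plan is to start from the absolutely convergent series (\ref{two-series}) supplied by the preceding lemma and to observe that, once the terms linear in $H$ are collected, they cancel identically; everything that survives is then quadratic in $H$, with a bound depending only on $\lambda$.

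\textbf{Cancellation of the linear part.} First I would fix $k\geq 1$ and note that the $j=1$ summands in (\ref{expa-1}) are the $k$ monomials $\left(I-R_{1}\right)^{a}H\left(I-R_{1}\right)^{k-1-a}$, $0\leq a\leq k-1$. Since $M_{1}=\left(I+Q_{1}\right)R_{1}$ and $I-R_{1}$ are both functions of $Q_{1}$, hence commute, cyclicity of the trace gives $\mathrm{Tr}\;M_{1}\left(I-R_{1}\right)^{a}H\left(I-R_{1}\right)^{k-1-a}=\mathrm{Tr}\;M_{1}\left(I-R_{1}\right)^{k-1}H$ for every such $a$. Therefore $\sum_{l}\frac{1}{k}\mathrm{Tr}\;M_{1}\;T_{k,1,l}=\mathrm{Tr}\;M_{1}\left(I-R_{1}\right)^{k-1}H$, and summing over $k$ (allowed because $\left\vert I-R_{1}\right\vert<\lambda<1$ by (\ref{R-bracket-2})) yields
\[
\sum_{k=1}^{\infty}\sum_{l=1}^{k}\frac{1}{k}\mathrm{Tr}\;M_{1}\;T_{k,1,l}=\mathrm{Tr}\;M_{1}\left(I-\left(I-R_{1}\right)\right)^{-1}H=\mathrm{Tr}\;\left(I+Q_{1}\right)H .
\]
The same computation applied to the $U$-series, with $M_{2}=\left(I+Q_{1}\right)\left(I-R_{1}\right)$ and $G=-H$, gives $\sum_{k,l}\frac{1}{k}\mathrm{Tr}\;M_{2}\;U_{k,1,l}=\mathrm{Tr}\;M_{2}\left(I-R_{1}\right)^{-1}G=\mathrm{Tr}\;\left(I+Q_{1}\right)G=-\mathrm{Tr}\;\left(I+Q_{1}\right)H$. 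Hence the two linear contributions cancel, and by the absolute convergence of (\ref{two-series}) I may discard them and retain only the terms with $j\geq 2$.

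\textbf{The quadratic remainder.} The surviving terms are exactly the ones already estimated in the proof of the preceding lemma. For the $T$-series the $j=2$ part is bounded in (\ref{quad-term-bound-2}) by $\frac{\beta}{2}\frac{\lambda}{\left(1-\lambda\right)^{2}}\left\Vert H\right\Vert^{2}$, and the $j\geq 3$ part by $\frac{\beta\lambda}{2\left(1-\lambda-\left\Vert H\right\Vert\right)^{2}}\left\Vert H\right\Vert^{2}$ via (\ref{abs-conv-final-1}); the $U$-series obeys the identical two bounds. With $\beta=1/\left(1-\lambda\right)$ and, since $\left\Vert H\right\Vert<\delta\leq\left(1-\lambda\right)/2$, with $1-\lambda-\left\Vert H\right\Vert>\left(1-\lambda\right)/2$, adding the four contributions gives
\[
\mathrm{Tr}\;\left(I+Q_{1}\right)\;S_{2}\left(R_{1}||R_{2}\right)\leq\left(\frac{\lambda}{\left(1-\lambda\right)^{3}}+\frac{4\lambda}{\left(1-\lambda\right)^{3}}\right)\left\Vert H\right\Vert^{2}=\frac{5\lambda}{\left(1-\lambda\right)^{3}}\left\Vert H\right\Vert^{2}.
\]
Since $\lambda>1/2$ forces $\left(1-\lambda\right)^{2}<1<4\lambda$, one has $\delta^{-1}=\max\left(2/\left(1-\lambda\right),\;8\lambda/\left(1-\lambda\right)^{3}\right)=8\lambda/\left(1-\lambda\right)^{3}\geq 5\lambda/\left(1-\lambda\right)^{3}$, so the right-hand side is $\leq\delta^{-1}\left\Vert H\right\Vert^{2}$, which is the claim; and $\delta\leq\left(1-\lambda\right)/2<1-\lambda$ ensures the hypothesis of the preceding lemma holds, so the manipulations are justified.

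\textbf{Where the difficulty lies.} The one point that must be handled with care is that the linear terms are individually of order $\left\Vert H\right\Vert$ rather than $\left\Vert H\right\Vert^{2}$, and, worse, their estimate in (\ref{abs-conv-linear}) involves $\left\Vert I+Q_{1}\right\Vert_{2}$, a Hilbert--Schmidt norm that grows with the dimension $n$. It is exactly the cancellation in the first step that removes this $n$-dependent quantity; every surviving term is controlled only through the operator-norm bound (\ref{R-bracket-5}) on $I+Q_{1}$ and through $\left\Vert H\right\Vert$, which is why $\delta$ can be taken independent of $n$. Beyond this observation the proof is just the bookkeeping of the geometric-type sums already assembled in the preceding lemma.
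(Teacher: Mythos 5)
Your proof is correct and follows essentially the same route as the paper: first show that the linear-in-$H$ contributions from the $T$- and $U$-series telescope via the geometric operator series to $\mathrm{Tr}\,(I+Q_1)H$ and $-\mathrm{Tr}\,(I+Q_1)H$ respectively (so they cancel), then invoke the bounds \eqref{quad-term-bound-2} and \eqref{abs-conv-final-1} from the preceding lemma on the $j\geq 2$ remainder, and finally choose $\delta$ to absorb the resulting constant. The only cosmetic difference is that you apply the bound $1-\lambda-\left\Vert H\right\Vert>(1-\lambda)/2$ only to the $j\geq 3$ piece, arriving at the constant $5\lambda/(1-\lambda)^3$, whereas the paper first dominates both pieces by the $j\geq 3$-type bound and obtains $8\lambda/(1-\lambda)^3$; both are covered by the stated $\delta$.
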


\begin{proof}
In the series (\ref{two-series}) we can now rearrange terms; consider the
series given by all linear (in $H$) terms. This is found as
\begin{align}
&  \sum_{k=1}^{\infty}\sum_{l=1}^{k}\frac{1}{k}\mathrm{Tr}\;M_{1}%
\;T_{k,1,l}+\sum_{k=1}^{\infty}\sum_{l=1}^{k}\frac{1}{k}\mathrm{Tr}%
\;M_{2}\;U_{k,1,l}\nonumber\\
&  =\sum_{k=1}^{\infty}\mathrm{Tr}\;\left(  I+Q_{1}\right)  R_{1}\left(
I-R_{1}\right)  ^{k-1}H+\sum_{k=1}^{\infty}\mathrm{Tr}\;\left(  I+Q_{1}%
\right)  \left(  I-R_{1}\right)  R_{1}^{k-1}G\nonumber\\
&  =\mathrm{Tr}\;H\left(  I+Q_{1}\right)  R_{1}\left(  \sum_{k=1}^{\infty
}\left(  I-R_{1}\right)  ^{k-1}\right)  +\sum_{k=1}^{\infty}\mathrm{Tr}%
\;G\left(  I+Q_{1}\right)  \left(  I-R_{1}\right)  \left(  \sum_{k=1}^{\infty
}R_{1}^{k-1}\right)  . \label{lin-series}%
\end{align}
We note that
\begin{align*}
\sum_{k=1}^{\infty}\left(  I-R_{1}\right)  ^{k-1}  &  =\sum_{k=0}^{\infty
}\left(  I-R_{1}\right)  ^{k}\\
&  =\left(  I-\left(  I-R_{1}\right)  \right)  ^{-1}=R_{1}^{-1},\\
\sum_{k=1}^{\infty}R_{1}^{k-1}  &  =\left(  I-R_{1}\right)  ^{-1}.
\end{align*}
Thus, in view of $G=-H$, (\ref{lin-series}) equals%
\[
\mathrm{Tr}\;H\left(  I+Q_{1}\right)  -\mathrm{Tr}\;H\left(  I+Q_{1}\right)
=0.
\]
In the series (\ref{two-series}) there now remain only terms of quadratic and
higher order in $H$. By (\ref{quad-term-bound-2}), (\ref{abs-conv-final-1})
and the analogous bounds for terms involving $U_{k,j,l}$ with $j\geq2$,
recalling $\left\Vert H\right\Vert =\left\Vert G\right\Vert $, we have%
\[
\mathrm{Tr}\;\left(  I+Q_{1}\right)  \;S_{2}\left(  R_{1}||R_{2}\right)  \leq
\]%
\[
\beta\left\Vert H\right\Vert ^{2}\frac{\lambda}{\left(  1-\lambda\right)
^{2}}+\frac{\beta\lambda\left\Vert H\right\Vert ^{2}}{\left(  1-\lambda
-\left\Vert H\right\Vert \right)  ^{2}}\leq\frac{2\beta\lambda\left\Vert
H\right\Vert ^{2}}{\left(  1-\lambda-\left\Vert H\right\Vert \right)  ^{2}}.
\]
Set $\delta_{0}:=\left(  1-\lambda\right)  /2$; then for $\left\Vert
H\right\Vert <\delta_{0}$%
\[
\mathrm{Tr}\;\left(  I+Q_{1}\right)  \;S_{2}\left(  R_{1}||R_{2}\right)
\leq\frac{8\beta\lambda}{\left(  1-\lambda\right)  ^{2}}\left\Vert
H\right\Vert ^{2}%
\]
Now with $\delta:=\min\left(  \delta_{0},\left(  1-\lambda\right)  ^{2}%
/8\beta\lambda\right)  $ and $\beta=\left(  1-\lambda\right)  ^{-1}$ we obtain
the assertion.
\end{proof}

\bigskip

Having bounded the relative entropy $S\left(  \rho_{1}||\rho_{2}\right)  $
from (\ref{rel-entrop-formula-1}) in terms of the difference $H=R_{1}-R_{2}$,
in the next step we have to estimate $H$ in terms of the difference
$A_{1}-A_{2}$. Recalling \ref{Q-R-def}, we have
\[
H=\frac{Q_{1}}{Q_{1}+I}-\frac{Q_{2}}{Q_{2}+I}.
\]
We will estimate $H$ in terms of $D:=Q_{2}-Q_{1}$, and in view of the relation
$Q:=\left(  A-I\right)  /2$, we have $D=\left(  A_{2}-A_{1}\right)  /2$.

\begin{lemma}
Under condition (\ref{R-bracket-2}) we have
\[
\left\Vert H\right\Vert ^{2}=\left\Vert R_{1}-R_{2}\right\Vert ^{2}\leq
\frac{1}{\left(  1-\lambda\right)  ^{2}}\left\Vert A_{1}-A_{2}\right\Vert
^{2}.
\]

\end{lemma}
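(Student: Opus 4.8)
The plan is to express $H=R_{1}-R_{2}$ as a single operator product and then apply the submultiplicativity estimates (\ref{norm-inequality}), (\ref{norm-inequality-operator}). First, using $R_{j}=Q_{j}/(Q_{j}+I)=I-(Q_{j}+I)^{-1}$ (see (\ref{Q-R-def})), one rewrites
\[
H=R_{1}-R_{2}=(Q_{2}+I)^{-1}-(Q_{1}+I)^{-1}.
\]
Then the resolvent identity $X^{-1}-Y^{-1}=X^{-1}(Y-X)Y^{-1}$, with $X=Q_{2}+I$ and $Y=Q_{1}+I$, together with $D=Q_{2}-Q_{1}=(A_{2}-A_{1})/2$, gives the factorization
\[
H=-(Q_{2}+I)^{-1}\,D\,(Q_{1}+I)^{-1}.
\]

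The next step is to bound the three factors separately. By (\ref{norm-inequality}) applied twice (or, for the operator norm, by (\ref{norm-inequality-operator}) applied twice),
\[
\left\Vert H\right\Vert \leq\left\vert (Q_{2}+I)^{-1}\right\vert \cdot\left\Vert D\right\Vert \cdot\left\vert (Q_{1}+I)^{-1}\right\vert ,
\]
where $\left\Vert \cdot\right\Vert $ may be read as either the Hilbert--Schmidt or the operator norm. Since each $Q_{i}$ is positive definite Hermitian (indeed $Q_{i}>\frac{1-\lambda}{\lambda}I$ by (\ref{R-bracket-3}), but $Q_{i}>0$ already suffices), we have $Q_{i}+I>I$ and hence $\left\vert (Q_{i}+I)^{-1}\right\vert =1/\lambda_{\min}(Q_{i}+I)<1$ (a sharper bound $<\lambda$ follows from (\ref{R-bracket-3}) but is not needed). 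Therefore $\left\Vert H\right\Vert \leq\left\Vert D\right\Vert =\tfrac{1}{2}\left\Vert A_{1}-A_{2}\right\Vert $, so that $\left\Vert H\right\Vert ^{2}\leq\tfrac{1}{4}\left\Vert A_{1}-A_{2}\right\Vert ^{2}$. Finally, since $\lambda\in(1/2,1)$ in (\ref{R-bracket-2}) forces $(1-\lambda)^{2}<1/4$, we get $\tfrac{1}{4}\leq(1-\lambda)^{-2}$, which yields the asserted inequality.

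The computation is essentially routine and I do not expect a genuine obstacle. The one point requiring care is that $Q_{1}$ and $Q_{2}$ need not commute, so the estimate cannot be reduced to an eigenvalue-by-eigenvalue scalar bound; one must stay at the level of the operator/Hilbert--Schmidt submultiplicativity in (\ref{norm-inequality})--(\ref{norm-inequality-operator}) and keep the order of the three factors in the resolvent identity straight. Everything else (linearity $D=(A_{2}-A_{1})/2$, positivity of $Q_{i}+I$) is immediate from the definitions.
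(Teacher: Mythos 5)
Your proof is correct, and it actually takes a cleaner and more direct route than the paper's. The paper keeps $R_{j}$ in the form $(Q_{j}+I)^{-1}Q_{j}$, writes $Q_{2}=Q_{1}+D$, splits $H$ into two pieces by adding and subtracting, applies the resolvent identity to one of the pieces, uses a Hermitian-conjugate trick to move factors around in the Hilbert--Schmidt norm, and invokes the stronger eigenvalue bound $\left\vert Q_{1}\right\vert \leq \lambda/(1-\lambda)$ from (\ref{R-bracket-3}) to arrive at $\left\Vert H\right\Vert \leq \frac{1}{1-\lambda}\left\Vert D\right\Vert$. You instead observe that $R_{j}=I-(Q_{j}+I)^{-1}$, which collapses $H$ to a single difference of resolvents $(Q_{2}+I)^{-1}-(Q_{1}+I)^{-1}$, then apply the resolvent identity once to get the clean factorization $H=-(Q_{2}+I)^{-1}D(Q_{1}+I)^{-1}$. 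Your intermediate bound $\left\Vert H\right\Vert \leq \left\Vert D\right\Vert = \tfrac{1}{2}\left\Vert A_{1}-A_{2}\right\Vert$ is actually sharper than the paper's $\frac{1}{1-\lambda}\left\Vert D\right\Vert$ and only needs $Q_{i}>0$, not the full strength of (\ref{R-bracket-2}); you then correctly note that $\lambda>1/2$ slackens it to the stated form. One minor presentational caution: when you pass from $\left\Vert ABC\right\Vert_{2}\leq\left\vert A\right\vert\left\Vert B\right\Vert_{2}\left\vert C\right\vert$, you are implicitly using $\left\Vert BC\right\Vert_{2}\leq\left\Vert B\right\Vert_{2}\left\vert C\right\vert$, which is the adjoint of (\ref{norm-inequality}); it is true (take adjoints and use $\left\Vert M\right\Vert_{2}=\left\Vert M^{\ast}\right\Vert_{2}$) but worth a word since the paper only states the one-sided form.
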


\begin{proof}
We have
\begin{align}
\left\Vert H\right\Vert  &  =\left\Vert \frac{Q_{1}}{Q_{1}+I}-\left(
Q_{2}+I\right)  ^{-1}\left(  Q_{1}+D\right)  \right\Vert \nonumber\\
&  \leq\left\Vert \left(  \left(  Q_{1}+I\right)  ^{-1}-\left(  Q_{2}%
+I\right)  ^{-1}\right)  Q_{1}\right\Vert +\left\Vert \left(  Q_{2}+I\right)
^{-1}D\right\Vert \label{Q-decomp}%
\end{align}
The first term in (\ref{Q-decomp}) equals
\begin{align*}
&  \left\Vert \left(  Q_{1}+I\right)  ^{-1}\left(  \left(  Q_{2}+I\right)
-\left(  Q_{1}+I\right)  \right)  \left(  Q_{2}+I\right)  ^{-1}Q_{1}%
\right\Vert \\
&  =\left\Vert \left(  Q_{1}+I\right)  ^{-1}D\left(  Q_{2}+I\right)
^{-1}Q_{1}\right\Vert \\
&  \leq\left\vert \left(  Q_{1}+I\right)  ^{-1}\right\vert \left\Vert D\left(
Q_{2}+I\right)  ^{-1}Q_{1}\right\Vert .
\end{align*}
Here $Q_{i}>0$ so that $\left(  Q_{1}+I\right)  ^{-1}<I,$ and the above is
bounded by
\begin{align}
\left\Vert D\left(  Q_{2}+I\right)  ^{-1}Q_{1}\right\Vert  &  =\left\Vert
Q_{1}\left(  Q_{2}+I\right)  ^{-1}D\right\Vert \nonumber\\
&  \leq\left\vert Q_{1}\right\vert \cdot\left\vert \left(  Q_{2}+I\right)
^{-1}\right\vert \cdot\left\Vert D\right\Vert \nonumber\\
&  \leq\frac{\lambda}{1-\lambda}\left\Vert D\right\Vert \label{Q-decomp-3}%
\end{align}
in view of (\ref{R-bracket-3}). The second term in (\ref{Q-decomp}) can be
bounded by $\left\Vert D\right\Vert $. In conjunction with (\ref{Q-decomp-3})
this gives
\begin{align*}
\left\Vert H\right\Vert ^{2}  &  \leq\left(  \frac{\lambda}{1-\lambda
}+1\right)  ^{2}\left\Vert D\right\Vert ^{2}\\
&  =\frac{1}{\left(  1-\lambda\right)  ^{2}}\left\Vert A_{1}-A_{2}\right\Vert
^{2}.
\end{align*}

\end{proof}

We can can summarize the results of this subsection as follows.

\begin{proposition}
\label{prop-summary-symbols-and-KL}Let $A_{i}$, $i=1,2$ be Hermitian $n\times
n$ symbols fulfilling for some $\mu\in\left(  0,1\right)  $
\[
\left(  1+\mu\right)  I\leq A_{i}\leq\mu^{-1}I\text{, }i=1,2.
\]
Let the Gaussian states $\rho_{i}$, $i=1,2$ be defined as in Proposition
\ref{prop-rel-entrop-form}, and let $S\left(  \rho_{1}||\rho_{2}\right)  $ be
the relative entropy. Then there exists $\delta>0$, depending on $\mu$ but not
on $n$, such that $\left\Vert A_{1}-A_{2}\right\Vert <\delta$ implies
\[
S\left(  \rho_{1}||\rho_{2}\right)  \leq\delta^{-1}\left\Vert A_{1}%
-A_{2}\right\Vert ^{2}.
\]

\end{proposition}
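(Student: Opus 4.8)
The plan is to assemble the three lemmas of this subsection; the only genuinely new step is to translate the spectral hypothesis on the symbols $A_{i}$ into condition (\ref{R-bracket-2}) on the matrices $R_{i}$, with a constant $\lambda$ depending on $\mu$ alone. From $\left(  1+\mu\right)  I\leq A_{i}\leq\mu^{-1}I$ and $Q_{i}=\left(  A_{i}-I\right)  /2$ one gets $\frac{\mu}{2}I\leq Q_{i}\leq\frac{1-\mu}{2\mu}I$, and since $t\mapsto t/\left(  1+t\right)  $ is increasing on $\left(  0,\infty\right)  $, the spectral theorem applied to the single operator $Q_{i}$ gives $\frac{\mu}{2+\mu}I\leq R_{i}\leq\frac{1-\mu}{1+\mu}I$. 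I would then pick $\lambda=\lambda\left(  \mu\right)  \in\left(  1/2,1\right)  $ strictly larger than both $\frac{2}{2+\mu}$ and $\frac{1-\mu}{1+\mu}$ --- e.g. $\lambda=\frac{1}{2}\bigl(1+\max\bigl(\tfrac{2}{2+\mu},\tfrac{1-\mu}{1+\mu}\bigr)\bigr)$, which lies in $\left(  1/2,1\right)  $ because $0<\mu<1$ --- so that $1-\lambda<\frac{\mu}{2+\mu}$ and $\frac{1-\mu}{1+\mu}<\lambda$, hence $\left(  1-\lambda\right)  I<R_{i}<\lambda I$, i.e. (\ref{R-bracket-2}) (equivalently this can be read off (\ref{R-bracket-4})); crucially $\lambda$ is independent of $n$.

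With this $\lambda$ fixed, the Lemma giving the bound $\mathrm{Tr}\;\left(  I+Q_{1}\right)  S_{2}\left(  R_{1}||R_{2}\right)  \leq\delta^{-1}\left\Vert H\right\Vert ^{2}$ provides a constant $\delta_{1}=\delta_{1}\left(  \lambda\right)  >0$, independent of $n$ (explicitly $\delta_{1}=\min\left(  \left(  1-\lambda\right)  /2,\left(  1-\lambda\right)  ^{3}/8\lambda\right)  $), such that $\left\Vert H\right\Vert <\delta_{1}$ implies $\mathrm{Tr}\;\left(  I+Q_{1}\right)  S_{2}\left(  R_{1}||R_{2}\right)  \leq\delta_{1}^{-1}\left\Vert H\right\Vert ^{2}$; by Proposition \ref{prop-rel-entrop-form} the left-hand side equals $S\left(  \rho_{1}||\rho_{2}\right)  $. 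The Lemma bounding $\left\Vert H\right\Vert ^{2}$ then gives $\left\Vert H\right\Vert ^{2}\leq\left(  1-\lambda\right)  ^{-2}\left\Vert A_{1}-A_{2}\right\Vert ^{2}$, so whenever $\left\Vert A_{1}-A_{2}\right\Vert <\left(  1-\lambda\right)  \delta_{1}$ we have $\left\Vert H\right\Vert <\delta_{1}$ and
\[
S\left(  \rho_{1}||\rho_{2}\right)  \ \leq\ \delta_{1}^{-1}\left\Vert H\right\Vert ^{2}\ \leq\ \frac{\delta_{1}^{-1}}{\left(  1-\lambda\right)  ^{2}}\left\Vert A_{1}-A_{2}\right\Vert ^{2}.
\]

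To finish I would set $\delta:=\left(  1-\lambda\right)  ^{2}\delta_{1}$; since $1-\lambda<1$ this satisfies $\delta\leq\left(  1-\lambda\right)  \delta_{1}$, so $\left\Vert A_{1}-A_{2}\right\Vert <\delta$ forces $\left\Vert H\right\Vert <\delta_{1}$, while $\delta^{-1}=\delta_{1}^{-1}\left(  1-\lambda\right)  ^{-2}$ yields exactly $S\left(  \rho_{1}||\rho_{2}\right)  \leq\delta^{-1}\left\Vert A_{1}-A_{2}\right\Vert ^{2}$. The argument is pure bookkeeping of constants, so there is no real obstacle; the one thing to verify is that $\lambda$, and hence $\delta_{1}$ and $\delta$, depend on $\mu$ only, which is automatic because the hypothesis $\left(  1+\mu\right)  I\leq A_{i}\leq\mu^{-1}I$ is a dimension-free operator inequality, making the eigenvalue bounds on $Q_{i}$ and $R_{i}$ --- and everything downstream --- uniform in $n$.
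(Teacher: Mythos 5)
Your proposal is correct and follows the same route as the paper: translate the spectral hypothesis on $A_{i}$ into condition (\ref{R-bracket-2}) on $R_{i}$ (equivalently (\ref{R-bracket-4})) with a $\lambda=\lambda(\mu)\in(1/2,1)$, then chain the two preceding lemmas together. Your write-up is actually more careful than the paper's, which compresses the existence of $\lambda$ into a displayed inequality (containing an apparent typo, $\frac{2}{\lambda}-1\leq\mu$ where $\frac{2}{\lambda}-1\leq 1+\mu$ is intended) and leaves the final choice of $\delta$ implicit.
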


\begin{proof}
Given $\mu\in\left(  0,1\right)  $, we can find $\lambda\in\left(
1/2,1\right)  $ such that
\[
\frac{2}{\lambda}-1\leq\mu<\mu^{-1}\leq\frac{1+\lambda}{1-\lambda}.
\]
Then (\ref{R-bracket-4}) and hence (\ref{R-bracket-2}) is fulfilled. The
previous two lemmas then prove the claim.
\end{proof}

\subsection{\label{sec:Toeplitz-circular}Approximation of Toeplitz matrices}

\bigskip

We follow \cite{MR4319036}, 5.5 to collect some basic facts about Toeplitz and
circulant matrices. Assume $m$ is an odd numer, let $\mathbf{c}=\mathbf{c}%
_{0}=\left(  c_{0},\ldots,c_{m-1}\right)  ^{\prime}$ be a column vector of
complex elements, let $\mathbf{c}_{1}=\left(  c_{m-1},c_{0},\ldots
,c_{m-2}\right)  ^{\prime}$ be a cyclic shift, and let $\mathbf{c}_{k}$ be the
$k$-th cyclic shift such that $\mathbf{c}_{m}=\left(  c_{1},c_{2}%
,\ldots,c_{m-1},c_{0}\right)  ^{\prime}$. Then the $m\times m$ circulant
pertaining to $\mathbf{c}$ is
\begin{equation}
T_{m}=\left(
\begin{array}
[c]{ccc}%
\mathbf{c}_{0} & \ldots & \mathbf{c}_{m-1}%
\end{array}
\right)  . \label{Gamma-matr-def}%
\end{equation}
Then $\mathbf{c}$ is the representing vector and the representing polynomial
is $p\left(  z\right)  =\sum_{k=0}^{m-1}c_{k}z^{k}$; we write $T_{m}%
=T_{m}\left(  \mathbf{c}\right)  =T_{m}\left(  p\right)  $. Clearly
$T_{m}\left(  \mathbf{c}\right)  $ is a Toeplitz matrix. To describe the
spectral properties, define
\begin{equation}
\epsilon_{k}=\exp\left(  2\pi ik/m\right)  \text{, }\mathbf{u}_{k}:=\left(
1,\epsilon_{k},\epsilon_{k}^{2},\ldots,\epsilon_{k}^{m-1}\right)  ^{\prime
}m^{-1/2}\text{, }k\in\mathbb{Z} \label{epsilon-u-vectors-def}%
\end{equation}
and the discrete Fourier transform $\Phi_{d,m}:\mathbb{C}^{m}\rightarrow$
$\mathbb{C}^{m}$ by its matrix
\begin{equation}
\Phi_{d,m}=\left(  \mathbf{u}_{0},\ldots,\mathbf{u}_{m-1}\right)  .
\label{DFT-def}%
\end{equation}
Then $\Phi_{d,m}$ is unitary, and diagonalizes every circulant $T_{m}\left(
p\right)  $ in the sense that
\begin{equation}
\Phi_{d,m}^{\ast}T_{m}\left(  p\right)  \Phi_{d,m}=\mathrm{diag}\left(
p\left(  1\right)  ,p\left(  \bar{\epsilon}_{1}\right)  ,\ldots,p\left(
\bar{\epsilon}_{m-1}\right)  \right)  \label{spectral-circulant-descr}%
\end{equation}
(cf. \cite{MR4319036}, 5.5.4).

We give an alternative description of the spectral properties as follows.
Define
\begin{align}
\phi_{k}\left(  \omega\right)   &  =\exp\left(  ik\omega\right)  \text{,
}\omega\in\mathbb{R}\text{, }k\in\mathbb{Z},\label{basic-func-def}\\
\omega_{j,m}  &  =\frac{2\pi j}{m}\text{, }j\in\mathbb{Z}.
\label{Fourier-frequ-def}%
\end{align}

\begin{lemma}
\label{lem-spectrum-circulant}Assume that $m$ is odd; define $c_{-k}=c_{m-k}$,
$k=1,\ldots,\left(  m-1\right)  /2$ and a function%
\[
g_{m}\left(  \mathbf{c},\omega\right)  =\sum_{k=-\left(  m-1\right)
/2}^{\left(  m-1\right)  /2}c_{k}\phi_{-k}\left(  \omega\right)  \text{,
}\omega\in\mathbb{R}\text{.}%
\]
Then (\ref{spectral-circulant-descr}) can be written
\begin{equation}
\Phi_{d,m}^{\ast}T_{m}\left(  \mathbf{c}\right)  \Phi_{d,m}=\mathrm{diag}%
\left(  g_{m}\left(  \mathbf{c},\omega_{0,m}\right)  ,\ldots,g_{m}\left(
\mathbf{c},\omega_{m-1,m}\right)  \right)  .
\label{spectral-circulant-descr-2}%
\end{equation}
Furthermore define a unitary $m\times m$ matrix, with $\mathbf{u}_{k}$ from
(\ref{epsilon-u-vectors-def})
\begin{equation}
U_{m}=\left(  \mathbf{u}_{-(m-1)/2},\ldots,\mathbf{u}_{0},\ldots
\mathbf{u}_{(m-1)/2}\right)  . \label{special-unitary-DFT-def}%
\end{equation}
Then (\ref{spectral-circulant-descr-2}) is equivalent to
\begin{equation}
U_{m}^{\ast}T_{m}\left(  \mathbf{c}\right)  U_{m}=\mathrm{diag}\left(
g_{m}\left(  \mathbf{c},\omega_{-\left(  m-1\right)  /2,m}\right)
,\ldots,g_{m}\left(  \mathbf{c},\omega_{\left(  m-1\right)  /2,m}\right)
\right)  . \label{spectral-circulant-descr-3}%
\end{equation}

\end{lemma}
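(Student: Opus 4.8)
The plan is purely combinatorial: both displayed identities are reindexings of the classical circulant diagonalization (\ref{spectral-circulant-descr}), so no new analytic estimate is needed. First I would rewrite the $j$th diagonal entry $p(\bar\epsilon_j)$ appearing in (\ref{spectral-circulant-descr}). Since $m$ is odd, $(m-1)/2$ is an integer, and $\bar\epsilon_j=\exp(-2\pi i j/m)$ satisfies $\bar\epsilon_j^{\,m}=1$, so $\bar\epsilon_j^{\,-k}=\bar\epsilon_j^{\,m-k}$; moreover $\phi_{-k}(\omega_{j,m})=\bar\epsilon_j^{\,k}$ by (\ref{basic-func-def})--(\ref{Fourier-frequ-def}). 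Hence, using the convention $c_{-k}=c_{m-k}$ and the bijection $k\mapsto m-k$ from $\{1,\dots,(m-1)/2\}$ onto $\{(m+1)/2,\dots,m-1\}$,
\[
g_m(\mathbf{c},\omega_{j,m})=\sum_{k=-(m-1)/2}^{(m-1)/2} c_k\,\bar\epsilon_j^{\,k}
=c_0+\sum_{k=1}^{(m-1)/2} c_k\,\bar\epsilon_j^{\,k}+\sum_{k=1}^{(m-1)/2} c_{m-k}\,\bar\epsilon_j^{\,m-k}
=\sum_{\ell=0}^{m-1} c_\ell\,\bar\epsilon_j^{\,\ell}=p(\bar\epsilon_j).
\]
Substituting this into (\ref{spectral-circulant-descr}) gives (\ref{spectral-circulant-descr-2}) immediately.

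For the reordered form, I would first observe that $\epsilon_k=\exp(2\pi i k/m)$ depends on $k$ only modulo $m$, so $\mathbf{u}_{\ell}=\mathbf{u}_{\ell\bmod m}$ for every integer $\ell$; in particular the columns of $U_m$ in (\ref{special-unitary-DFT-def}) are exactly the columns $\mathbf{u}_0,\dots,\mathbf{u}_{m-1}$ of $\mathcal{F}_{d,m}$, merely relabelled, so $U_m=\mathcal{F}_{d,m}P$ for a suitable permutation matrix $P$. By (\ref{spectral-circulant-descr-2}) each $\mathbf{u}_k$ ($0\le k\le m-1$) is an eigenvector of $T_m(\mathbf{c})$ with eigenvalue $g_m(\mathbf{c},\omega_{k,m})$; conjugating by $P$ therefore leaves $T_m(\mathbf{c})$ diagonal, with the entry in the column now labelled $\ell\in\{-(m-1)/2,\dots,(m-1)/2\}$ equal to the eigenvalue of $\mathbf{u}_\ell=\mathbf{u}_{\ell\bmod m}$, namely $g_m(\mathbf{c},\omega_{\ell\bmod m,m})$. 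Finally $\omega\mapsto g_m(\mathbf{c},\omega)$ is a trigonometric polynomial, hence $2\pi$-periodic, and $\omega_{\ell\bmod m,m}$ differs from $\omega_{\ell,m}$ by an integer multiple of $2\pi$, so $g_m(\mathbf{c},\omega_{\ell\bmod m,m})=g_m(\mathbf{c},\omega_{\ell,m})$. This yields (\ref{spectral-circulant-descr-3}), and unitarity of $U_m$ is inherited from that of $\mathcal{F}_{d,m}$ (stated after (\ref{DFT-def})) together with that of $P$.

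There is essentially no obstacle here beyond index bookkeeping; the one point I would be careful to spell out is that oddness of $m$ makes the symmetric window $\{-(m-1)/2,\dots,(m-1)/2\}$ a complete residue system modulo $m$, which is precisely what makes both the reindexing $\ell=m-k$ in the first step and the column relabelling $\mathbf{u}_\ell=\mathbf{u}_{\ell\bmod m}$ in the second step exact. The lemma is then just the classical result (\ref{spectral-circulant-descr}) transported to a frequency grid centred at $0$, which is the form needed for the later comparison of circulants with Toeplitz matrices.
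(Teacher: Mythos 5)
Your proof is correct and follows essentially the same route as the paper's: reindex the sum $\sum_{k=0}^{m-1}c_k\bar\epsilon_j^k$ into the symmetric window $\{-(m-1)/2,\dots,(m-1)/2\}$ via periodicity of $\bar\epsilon_j^k$ and the convention $c_{-k}=c_{m-k}$ (you go from $g_m$ to $p$, the paper goes the other way, but it is the same identity), then observe that $U_m$ is a column permutation of $\mathcal{F}_{d,m}$ and that the corresponding diagonal entries also just permute, with $2\pi$-periodicity of $g_m(\mathbf{c},\cdot)$ matching them up. The paper phrases the second step through the rank-one decomposition $T_m(\mathbf{c})=\sum_{k}\mathbf{u}_k\mathbf{u}_k^*\,g_m(\mathbf{c},\omega_{k,m})$ rather than an explicit permutation matrix $P$, but that is only a cosmetic difference in bookkeeping.
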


\begin{proof}
First note that the eigenvalues $p\left(  \bar{\epsilon}_{j}\right)  $ in
(\ref{spectral-circulant-descr}) can be written as
\begin{align*}
p\left(  \bar{\epsilon}_{j}\right)   &  =\sum_{k=0}^{m-1}c_{k}\bar{\epsilon
}_{j}^{k}=\sum_{k=0}^{m-1}c_{k}\exp\left(  -2\pi ijk/m\right) \\
&  =\sum_{k=0}^{m-1}c_{k}\phi_{-k}\left(  \omega_{j.m}\right)  .\text{
}j=0,\ldots,m-1.
\end{align*}
By periodicity we have
\begin{align*}
\phi_{-\left(  m-k\right)  }\left(  \omega_{j,m}\right)   &  =\exp\left(
-i\left(  m-k\right)  \omega_{j,m}\right) \\
&  =\exp\left(  ik\omega_{j,m}\right)  \exp\left(  -im\frac{2\pi j}{m}\right)
=\phi_{k}\left(  \omega_{j,m}\right)
\end{align*}
for $k=1,\ldots,m-1$. Hence
\begin{align*}
p\left(  \bar{\epsilon}_{j}\right)   &  =\sum_{k=0}^{\left(  m-1\right)
/2}c_{k}\phi_{-k}\left(  \omega_{j,m}\right)  +\sum_{k=\left(  m+1\right)
/2}^{m-1}c_{k}\phi_{-k}\left(  \omega_{j,m}\right) \\
&  =\sum_{k=0}^{\left(  m-1\right)  /2}c_{k}\phi_{-k}\left(  \omega
_{j,m}\right)  +\sum_{k=1}^{\left(  m-1\right)  /2}c_{m-k}\phi_{-\left(
m-k\right)  }\left(  \omega_{j,m}\right) \\
&  =\sum_{k=0}^{\left(  m-1\right)  /2}c_{k}\phi_{-k}\left(  \omega
_{j,m}\right)  +\sum_{k=1}^{\left(  m-1\right)  /2}c_{-k}\phi_{k}\left(
\omega_{j,m}\right)  =g_{m}\left(  \mathbf{c},\omega_{j,m}\right)  \text{,
}j=0,\ldots,m-1
\end{align*}
which implies (\ref{spectral-circulant-descr-2}). This relation is equivalent
to
\begin{align*}
T_{m}\left(  \mathbf{c}\right)   &  =\Phi_{d,m}\mathrm{diag}\left(
g_{m}\left(  \mathbf{c},\omega_{0,m}\right)  ,\ldots,g_{m}\left(
\mathbf{c},\omega_{m-1,m}\right)  \right)  \Phi_{d,m}^{\ast}\\
&  =\sum_{k=0}^{m-1}\mathbf{u}_{k}\mathbf{u}_{k}^{\ast}g_{m}\left(
\mathbf{c},\omega_{k,m}\right)  .
\end{align*}
By periodicity of the function $g_{m}$ in $\omega$ we have $g_{m}\left(
\mathbf{c},\omega_{m-k,m}\right)  =g_{m}\left(  \mathbf{c},\omega
_{-k,m}\right)  $, $k=1,\ldots,\left(  m-1\right)  /2$, and we also have
$\epsilon_{m-k}=\epsilon_{-k}$ and hence $\mathbf{u}_{m-k}=\mathbf{u}_{-k}$.
Thus we obtain
\begin{align*}
T_{m}\left(  \mathbf{c}\right)   &  =\sum_{k=0}^{m-1}\mathbf{u}_{k}%
\mathbf{u}_{k}^{\ast}g_{m}\left(  \mathbf{c},\omega_{k,m}\right)  =\sum
_{k=0}^{\left(  m-1\right)  /2}\mathbf{u}_{k}\mathbf{u}_{k}^{\ast}g_{m}\left(
\mathbf{c},\omega_{k,m}\right)  +\sum_{k=1}^{\left(  m-1\right)  /2}%
\mathbf{u}_{m-k}\mathbf{u}_{m-k}^{\ast}g_{m}\left(  \mathbf{c},\omega
_{m-k,m}\right) \\
&  =\sum_{k=-\left(  m-1\right)  /2}^{\left(  m-1\right)  /2}\mathbf{u}%
_{k}\mathbf{u}_{k}^{\ast}g_{m}\left(  \mathbf{c},\omega_{k,m}\right)  ,
\end{align*}
implying (\ref{spectral-circulant-descr-3})
\end{proof}

\bigskip

Note that the matrix $U_{m}$ is a permutation of $\Phi_{d,m}$, thus it can be
considered a version of the discrete Fourier transform.

Since we will use the circulants to approximate the Hermitian Toeplitz symbol
matrices $A_{m}\left(  a\right)  $, we will also assume that $T_{m}\left(
\mathbf{c}\right)  $ is Hermitian. From (\ref{Gamma-matr-def}) it can be seen
that in terms of $\mathbf{c}$ this means
\begin{equation}
c_{0}=\bar{c}_{0}\ \text{and }\bar{c}_{k}=c_{m-k},\ k=1,\ldots,m-1.
\label{hermite-1}%
\end{equation}
Then $c_{-k}=\bar{c}_{k}$ and consequently the function $g_{m}\left(
\mathbf{c},\omega\right)  $ is real, thus also the eigenvalues of
$T_{m}\left(  \mathbf{c}\right)  $ are real.

For a symbol matrix $A_{m}\left(  a\right)  $ defined by
(\ref{symbol-generate}) pertaining to spectral density $a$ we will define a
circular approximant $\tilde{A}_{m}\left(  a\right)  $ as
\begin{equation}
\tilde{A}_{m}\left(  a\right)  :=T_{m}\left(  \mathbf{c}\right)
\label{circulant-approx-def-3}%
\end{equation}
for a representing vector
\begin{equation}
\mathbf{c}=\left(  a_{0},a_{-1},\ldots,a_{-\left(  m-1\right)  /2},a_{\left(
m-1\right)  /2},\ldots,a_{1}\right)  ^{\prime}. \label{circulant-approx-def-2}%
\end{equation}
In view of $\bar{a}_{k}=a_{-k}$ it can be checked that (\ref{hermite-1}) is
fulfilled and thus $\tilde{A}_{m}\left(  a\right)  $ is Hermitian. One then
checks that $g_{m}\left(  \mathbf{c},\omega\right)  $ takes the form
\begin{align}
g_{m}\left(  \mathbf{c},\omega\right)   &  =\tilde{a}_{m}\left(
\omega\right)  \text{ where}\nonumber\\
\tilde{a}_{m}\left(  \omega\right)   &  =\sum_{k=-\left(  m-1\right)
/2}^{\left(  m-1\right)  /2}a_{k}\phi_{k}\left(  \omega\right)  \text{,
}\omega\in\mathbb{R}\text{.} \label{Fourier-approx-a-def}%
\end{align}
According to (\ref{spectral-circulant-descr-3}), the eigenvalues of $\tilde
{A}_{m}\left(  a\right)  $ are then $\tilde{a}_{m}\left(  \omega_{j,m}\right)
$, $j=-\left(  m-1\right)  /2,\ldots,\left(  m-1\right)  /2$. Now $\tilde
{a}_{m}$ is a Fourier series approximation to $a$; indeed it follows from
(\ref{symbol-generate}), if $a$ is square integrable on $\left(  -\pi
,\pi\right)  $, that
\[
a\left(  \omega\right)  =\sum_{k=-\infty}^{\infty}a_{k}\phi_{k}\left(
\omega\right)  \text{, }\omega\in\mathbb{R}\text{.}%
\]
For later reference we state the following simple approximation result.

\begin{lemma}
\label{lem-Fourier-series-approx}Assume $a\in\Theta_{1}\left(  \alpha
,M\right)  $ for $\alpha>1/2$. Then as $m\rightarrow\infty$
\[
\sup_{\omega\in\left(  -\pi.\pi\right)  }\left\vert a\left(  \omega\right)
-\tilde{a}_{m}\left(  \omega\right)  \right\vert =o\left(  1\right)
\]

\end{lemma}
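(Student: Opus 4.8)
The plan is to write $a(\omega)-\tilde{a}_{m}(\omega)$ as the tail of a Fourier series and to control it uniformly in $\omega$ by the $\ell^{1}$-norm of the omitted coefficients, which in turn is handled by a Cauchy--Schwarz split exploiting the Sobolev constraint in the definition (\ref{Theta-1-functionset-def}) of $\Theta_{1}\left(\alpha,M\right)$.

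First I would record that for $a\in\Theta_{1}\left(\alpha,M\right)\subset W^{\alpha}(M)$ with $\alpha>1/2$ the Fourier series $\sum_{k}a_{k}\phi_{k}$ converges absolutely and uniformly. Indeed, splitting $|a_{k}|=|k|^{-\alpha}\left(|k|^{\alpha}|a_{k}|\right)$ for $k\neq 0$ and applying Cauchy--Schwarz gives $\sum_{k}|a_{k}|\leq |a_{0}|+\bigl(\sum_{k\neq 0}|k|^{-2\alpha}\bigr)^{1/2}\,|a|_{2,\alpha}<\infty$, since $2\alpha>1$ makes $\sum_{k\neq 0}|k|^{-2\alpha}$ finite and $\left\Vert a\right\Vert_{2,\alpha}^{2}\leq M$. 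Hence the identity $a(\omega)=\sum_{k}a_{k}\phi_{k}(\omega)$ holds pointwise, indeed uniformly; this is exactly the content of the embedding cited as \cite{GolNussbZ-specpaper-preprint}, Lemma~5.6, and it legitimizes comparing $a(\omega)$ with the truncation $\tilde{a}_{m}(\omega)$ of (\ref{Fourier-approx-a-def}) at each $\omega$.

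Then, writing $N=N(m)=(m-1)/2\to\infty$, I would estimate
\[
\sup_{\omega\in(-\pi,\pi)}\bigl|a(\omega)-\tilde{a}_{m}(\omega)\bigr|
=\sup_{\omega}\Bigl|\sum_{|k|>N}a_{k}\phi_{k}(\omega)\Bigr|
\leq\sum_{|k|>N}|a_{k}|
\leq\Bigl(\sum_{|k|>N}|k|^{-2\alpha}\Bigr)^{1/2}\Bigl(\sum_{|k|>N}|k|^{2\alpha}|a_{k}|^{2}\Bigr)^{1/2},
\]
using $|\phi_{k}(\omega)|=1$ and Cauchy--Schwarz. The second factor is bounded by $|a|_{2,\alpha}\leq M^{1/2}$ uniformly over $a\in\Theta_{1}\left(\alpha,M\right)$ by (\ref{Theta-1-functionset-def}) and (\ref{normdef}); the first factor is the square root of the tail of the convergent series $\sum_{k\neq 0}|k|^{-2\alpha}$ (convergent since $2\alpha>1$) and therefore tends to $0$ as $N\to\infty$, in fact at rate $O(N^{1/2-\alpha})$. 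This yields the $o(1)$ claim, and in fact the uniform bound $\sup_{a\in\Theta_{1}(\alpha,M)}\sup_{\omega}|a(\omega)-\tilde{a}_{m}(\omega)|=O(m^{1/2-\alpha})$, which may be convenient later.

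There is no real obstacle here: the argument is a one-line tail estimate once the Sobolev constraint is in hand. The only point deserving a moment's care is justifying that the pointwise value of $a$ coincides with the sum of its Fourier series (rather than merely an $L_{2}$ identity), but this is precisely what the absolute and uniform convergence established in the first step provides; if one prefers not to invoke the embedding lemma, the very same Cauchy--Schwarz estimate proves it directly.
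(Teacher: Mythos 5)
Your proof is correct and follows essentially the same route as the paper: identify $a-\tilde a_m$ with the Fourier tail, bound it uniformly by Cauchy--Schwarz using the Sobolev weight $|k|^{2\alpha}$, and observe that the tail of $\sum|k|^{-2\alpha}$ vanishes since $2\alpha>1$, giving the rate $O(m^{1/2-\alpha})$. The only addition you make beyond the paper's argument is the explicit remark justifying the pointwise identity $a(\omega)=\sum_k a_k\phi_k(\omega)$ via absolute/uniform convergence (the paper takes this for granted), which is a harmless and reasonable clarification.
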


\begin{proof}
We have
\[
\sup_{\omega\in\left(  -\pi.\pi\right)  }\left\vert a\left(  \omega\right)
-\tilde{a}_{m}\left(  \omega\right)  \right\vert ^{2}=\sup_{\omega\in\left(
-\pi.\pi\right)  }\left\vert \sum_{\left\vert k\right\vert >\left(
m-1\right)  /2}a_{k}\phi_{k}\left(  \omega\right)  \right\vert ^{2}%
\]%
\[
\leq\left(  \sum_{\left\vert k\right\vert >\left(  m-1\right)  /2}\left\vert
a_{k}\right\vert ^{2}k^{2\alpha}\right)  \left(  \sum_{\left\vert k\right\vert
>\left(  m-1\right)  /2}k^{-2\alpha}\right)  \leq M\;C_{\alpha}\left(
m-3\right)  ^{1-2\alpha}=o\left(  1\right)
\]
as $m\rightarrow\infty$, where the constant $C_{\alpha}$ depends only on
$\alpha$.
\end{proof}

We summarize the above facts about circulants as follows.

\begin{lemma}
\label{lem-spectrum-circulant-A_tilde}For a real valued function $a\in
L^{2}\left(  -\pi,\pi\right)  $ and odd $m$, consider the $m\times m$
circulant matrix $\tilde{A}_{m}\left(  a\right)  $ given by
(\ref{circulant-approx-def-3}), (\ref{circulant-approx-def-2}). Also define a
diagonal matrix
\begin{equation}
\tilde{\Lambda}_{m}\left(  a\right)  =\mathrm{diag}\left(  \tilde{a}%
_{m}\left(  \omega_{-\left(  m-1\right)  /2,m}\right)  ,\ldots,\tilde{a}%
_{m}\left(  \omega_{\left(  m-1\right)  /2,m}\right)  \right)
\label{diag-symbol-Lambda-def}%
\end{equation}
where $\tilde{a}_{m}$ is the Fourier series approximation to $a$ given in
(\ref{Fourier-approx-a-def}) and $\omega_{j,m}=2\pi j/m$, $j\in\mathbb{Z}$.
Then, for the unitary $U_{m}$ defined in (\ref{special-unitary-DFT-def}) we
have
\begin{equation}
U_{m}^{\ast}\tilde{A}_{m}\left(  a\right)  U_{m}=\tilde{\Lambda}_{m}\left(
a\right)  . \label{diagonalize-circulant-summary}%
\end{equation}

\end{lemma}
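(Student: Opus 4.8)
The plan is to recognize this lemma as a bookkeeping consequence of Lemma \ref{lem-spectrum-circulant} specialized to the representing vector (\ref{circulant-approx-def-2}). First I would verify that this vector $\mathbf{c}=\left(a_{0},a_{-1},\ldots,a_{-(m-1)/2},a_{(m-1)/2},\ldots,a_{1}\right)^{\prime}$ generates a Hermitian circulant, i.e. that (\ref{hermite-1}) holds. Since $a$ is real valued, $a_{0}=\bar{a}_{0}$, so $c_{0}=\bar{c}_{0}$; and the relation $\bar{a}_{k}=a_{-k}$ together with the cyclic placement of the coefficients in (\ref{circulant-approx-def-2}) gives $\bar{c}_{k}=c_{m-k}$ for $k=1,\ldots,m-1$. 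Concretely one checks that the entry in position $k$ of $\mathbf{c}$ is $a_{-k}$ for $0\le k\le(m-1)/2$ and that the entry in position $m-k$ is $a_{k}$ for $1\le k\le(m-1)/2$; this matches the convention $c_{-k}:=c_{m-k}$ of Lemma \ref{lem-spectrum-circulant}, under which $c_{-k}=a_{k}$.

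The one substantive step is then to identify the function $g_{m}(\mathbf{c},\cdot)$ of Lemma \ref{lem-spectrum-circulant} with the Fourier partial sum $\tilde{a}_{m}$ of (\ref{Fourier-approx-a-def}). Substituting $c_{k}=a_{-k}$ for $0\le k\le(m-1)/2$ and $c_{-k}=a_{k}$ for $1\le k\le(m-1)/2$ into $g_{m}(\mathbf{c},\omega)=\sum_{k=-(m-1)/2}^{(m-1)/2}c_{k}\phi_{-k}(\omega)$, and reindexing $j=-k$ in the part with $k\ge0$, the two halves combine into $\sum_{k=-(m-1)/2}^{(m-1)/2}a_{k}\phi_{k}(\omega)=\tilde{a}_{m}(\omega)$, which is precisely (\ref{Fourier-approx-a-def}). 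Note this uses only $a\in L^{2}(-\pi,\pi)$ and reality of $a$, no smoothness.

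Finally I would invoke the diagonalization (\ref{spectral-circulant-descr-3}) of Lemma \ref{lem-spectrum-circulant} for this $\mathbf{c}$: its right-hand side is $\mathrm{diag}\left(g_{m}(\mathbf{c},\omega_{-(m-1)/2,m}),\ldots,g_{m}(\mathbf{c},\omega_{(m-1)/2,m})\right)$, and replacing each $g_{m}(\mathbf{c},\omega_{j,m})$ by $\tilde{a}_{m}(\omega_{j,m})$ via the previous step turns this into $\tilde{\Lambda}_{m}(a)$ as defined in (\ref{diag-symbol-Lambda-def}). Since $\tilde{A}_{m}(a)=T_{m}(\mathbf{c})$ by (\ref{circulant-approx-def-3}), this yields $U_{m}^{\ast}\tilde{A}_{m}(a)U_{m}=\tilde{\Lambda}_{m}(a)$.

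There is no real obstacle; the only care needed is the index bookkeeping in the cyclic rearrangement (\ref{circulant-approx-def-2}) — confirming that position $m-k$ holds $a_{k}$ rather than $a_{-k}$ or $a_{(m-1)/2-k}$ — and, relatedly, that the enumeration of Fourier frequencies $\omega_{j,m}$ with $j$ running from $-(m-1)/2$ to $(m-1)/2$ is the one matching the column order of $U_{m}$ in (\ref{special-unitary-DFT-def}). Both are purely combinatorial and were already carried out inside the proof of Lemma \ref{lem-spectrum-circulant}.
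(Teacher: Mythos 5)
Your proof is correct and is essentially the same as the paper's (the lemma is stated there as a summary of the immediately preceding discussion, which already verifies (\ref{hermite-1}), identifies $g_m(\mathbf{c},\cdot)$ with $\tilde{a}_m$ via (\ref{Fourier-approx-a-def}), and invokes the diagonalization (\ref{spectral-circulant-descr-3}) from Lemma \ref{lem-spectrum-circulant}). Your index check — that $c_k=a_{-k}$ for $0\le k\le(m-1)/2$ and $c_{m-k}=a_k$ for $1\le k\le(m-1)/2$, matching the convention $c_{-k}=c_{m-k}$ — is exactly the bookkeeping the paper relies on.
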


Recall that the quantum statistical experiment considered in Theorem
\ref{theor-main-1} is
\[
\mathcal{E}_{n}\left(  \Theta_{1}\left(  \alpha,M\right)  \right)  :=\left(
\mathfrak{N}_{n}\left(  0,A_{n}\left(  a\right)  \right)  ,a\in\Theta
_{1}\left(  \alpha,M\right)  \right)  .
\]
For some odd $m>n$, let $\tilde{A}_{m}\left(  a\right)  $ be the circulant
approximation (\ref{circulant-approx-def-3}) to $A_{m}\left(  a\right)  $, and
consider the $m$-mode state (or quantum time series) $\mathfrak{N}_{m}\left(
0,\tilde{A}_{m}\left(  a\right)  \right)  $ given by symbol $\tilde{A}%
_{m}\left(  a\right)  $. Furthermore consider the subsystem of the latter
given by symbol $\tilde{A}_{n,m}\left(  a\right)  $, where $\tilde{A}%
_{n,m}\left(  a\right)  $ is the upper left $n\times n$ submatrix of
$\tilde{A}_{m}\left(  a\right)  $. The following result on approximation of
symbols in Hilbert-Schmidt norm $\left\Vert A\right\Vert _{2}=\left(
\mathrm{Tr}\;A^{\ast}A\right)  ^{1/2}$ is key for an approximation of the
corresponding states.

\begin{lemma}
\label{lem-upp-inf-brack}Assume $m$ is odd, $n<m<2\left(  n-1\right)  $. Then
for $a\in W^{\alpha}(M)$, $\alpha>1/2$ (cp. (\ref{sobol-ball})) we have
\[
\left\Vert A_{n}\left(  a\right)  -\tilde{A}_{n,m}\left(  a\right)
\right\Vert _{2}^{2}\leq4\left(  m-n+1\right)  ^{1-2\alpha}M.
\]

\end{lemma}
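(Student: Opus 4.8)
The plan is to compare $A_n(a)$ and $\tilde A_{n,m}(a)$ entry by entry. I will show that they agree except on two triangular ``corners'' of the matrix, identify the entries there explicitly in terms of the Fourier coefficients $a_k$, and reduce $\left\Vert A_n(a)-\tilde A_{n,m}(a)\right\Vert _2^2$ to a one-dimensional sum over diagonals which is controlled by the Sobolev bound $\left\vert a\right\vert _{2,\alpha}^2\le M$. Using $0$-indexed rows and columns $i,k\in\{0,\dots,n-1\}$, (\ref{symbol-generate}) gives $\bigl(A_n(a)\bigr)_{ik}=a_{k-i}$, while (\ref{circulant-approx-def-3})--(\ref{circulant-approx-def-2}) together with the description of circulants following (\ref{Gamma-matr-def}) give $\bigl(\tilde A_m(a)\bigr)_{ik}=c_{(i-k)\bmod m}$; with the convention $c_{-l}=c_{m-l}$, the representing vector (\ref{circulant-approx-def-2}) amounts to $c_l=a_{-l}$ for all $l$ in the symmetric range $\{-(m-1)/2,\dots,(m-1)/2\}$, extended $m$-periodically, so $\bigl(\tilde A_{n,m}(a)\bigr)_{ik}=a_{-s}$ where $s$ is the representative of $i-k$ modulo $m$ in that range. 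Hence the two entries coincide whenever $|i-k|\le(m-1)/2$, and the difference matrix $D:=A_n(a)-\tilde A_{n,m}(a)$ is supported on $\{(i,k):(m-1)/2<|i-k|\le n-1\}$, a set that is nonempty precisely because $m<2(n-1)$, i.e.\ $m\le 2n-3$ (if it is empty the assertion is trivial).

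On this support, writing $j:=k-i$, reducing $i-k$ into the symmetric range shifts it by $\pm m$, and a direct computation yields $D_{ik}=a_j-a_{j-m}$ when $j>0$ and $D_{ik}=a_j-a_{j+m}$ when $j<0$. The diagonal $\{k-i=j\}$ meets the $n\times n$ block in $n-|j|$ positions, and the identity $\overline{a_j-a_{j-m}}=a_{-j}-a_{m-j}$ shows that the diagonals $\pm j$ carry the same values up to conjugation; hence
\[
\left\Vert D\right\Vert _2^2=2\sum_{j=(m+1)/2}^{\,n-1}(n-j)\left|a_j-a_{j-m}\right|^2 .
\]

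For the estimate I use $\left|a_j-a_{j-m}\right|^2\le 2|a_j|^2+2|a_{j-m}|^2$. On the range of summation one has $j\ge(m+1)/2\ge m-n+1$ and $|j-m|=m-j\ge m-n+1$, and the multiplicity obeys $n-j\le j$ (since $2j\ge m+1>n$) as well as $n-j\le m-j$ (since $m>n$). Because $\alpha>1/2$ the map $x\mapsto x^{1-2\alpha}$ is decreasing on $(0,\infty)$, so $n-j\le j=j^{1-2\alpha}j^{2\alpha}\le(m-n+1)^{1-2\alpha}j^{2\alpha}$ and, since $m-j=|j-m|$, also $n-j\le m-j=|j-m|^{1-2\alpha}|j-m|^{2\alpha}\le(m-n+1)^{1-2\alpha}|j-m|^{2\alpha}$. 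Therefore
\[
\left\Vert D\right\Vert _2^2\le 4(m-n+1)^{1-2\alpha}\sum_{j=(m+1)/2}^{\,n-1}\Bigl(j^{2\alpha}|a_j|^2+|j-m|^{2\alpha}|a_{j-m}|^2\Bigr).
\]
The indices $j$ occurring here are positive and the indices $j-m\in\{-(m-1)/2,\dots,n-1-m\}$ are negative, hence all distinct and nonzero, so the last sum is at most $\sum_{k\ne 0}|k|^{2\alpha}|a_k|^2=\left\vert a\right\vert _{2,\alpha}^2\le\left\Vert a\right\Vert _{2,\alpha}^2\le M$ by (\ref{sobol-ball}); this gives $\left\Vert A_n(a)-\tilde A_{n,m}(a)\right\Vert _2^2\le 4(m-n+1)^{1-2\alpha}M$.

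I expect the only real obstacle to be the first two paragraphs: correctly tracking which circulant entries differ from the Toeplitz symbol entries and by exactly how much, i.e.\ handling the $\pm m$ index shifts produced by the periodic wrap-around and the resulting multiplicity count on each diagonal. Once that is in place the estimate is routine --- the inequality $|x-y|^2\le 2|x|^2+2|y|^2$, the crude bounds $n-j\le j$ and $n-j\le m-j$, monotonicity of $x\mapsto x^{1-2\alpha}$ (which is exactly where $\alpha>1/2$ is used), and the definition of $W^\alpha(M)$.
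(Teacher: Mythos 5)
Your proof is correct and follows essentially the same route as the paper's: both reduce $\left\Vert A_n(a)-\tilde A_{n,m}(a)\right\Vert _2^2$ to the diagonal sum $2\sum_{k=(m+1)/2}^{n-1}(n-k)\left|a_k-\overline{a_{m-k}}\right|^2$, bound it via $|x-y|^2\le 2|x|^2+2|y|^2$, replace the multiplicities $n-k$ by $k$ and $m-k$, and pull out $(m-n+1)^{1-2\alpha}$ using $\alpha>1/2$. The only cosmetic difference is that the paper first substitutes $k'=m-k$ to merge the two sums into one over $\{m-n+1,\dots,n-1\}$ and then applies the decay factor, whereas you apply the decay factor termwise and observe that the index sets $\{j\}$ and $\{j-m\}$ are disjoint; you also spell out the circulant wrap-around bookkeeping that the paper compresses into "we immediately obtain."
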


\begin{proof}
The restriction on $m$ implies $\left(  m+1\right)  /2\leq n-1$. From the
definitions of $A_{n}\left(  a\right)  $ and $\tilde{A}_{n,m}\left(  a\right)
$ we immediately obtain
\begin{align}
&  \left\Vert A_{n}\left(  a\right)  -\tilde{A}_{n,m}\left(  a\right)
\right\Vert _{2}^{2}\nonumber\\
&  =2\sum_{k=\left(  m+1\right)  /2}^{n-1}\left(  n-k\right)  \left\vert
a_{k}-\bar{a}_{m-k}\right\vert ^{2} \label{first-upper-bound-cited-later}%
\end{align}%
\begin{equation}
\leq4\sum_{k=\left(  m+1\right)  /2}^{n-1}(n-k)\left(  \left\vert
a_{k}\right\vert ^{2}+\left\vert a_{m-k}\right\vert ^{2}\right)  .
\label{first-upper-bound}%
\end{equation}
Note that for $m>n$, the relation $\left(  m+1\right)  /2\leq k\leq n-1$
implies $k>\left(  n+1\right)  /2$ and therefore $n-k<k$, and note also
$n-k<m-k$. We obtain an upper bound for (\ref{first-upper-bound})
\begin{align*}
&  4\sum_{k=\left(  m+1\right)  /2}^{n-1}k\left\vert a_{k}\right\vert
^{2}+4\sum_{k=\left(  m+1\right)  /2}^{n-1}(m-k)\left\vert a_{m-k}\right\vert
^{2}\\
&  =4\sum_{k=\left(  m+1\right)  /2}^{n-1}k\left\vert a_{k}\right\vert
^{2}+4\sum_{k=m-n+1}^{\left(  m-1\right)  /2}k\left\vert a_{k}\right\vert
^{2}=4\sum_{k=m-n+1}^{n-1}k\left\vert a_{k}\right\vert ^{2}\\
&  \leq4(m-n+1)^{1-2\alpha}\sum_{k=m-n+1}^{n-1}k^{2\alpha}\left\vert
a_{k}\right\vert ^{2}\leq4(m-n+1)^{1-2\alpha}\left\vert a\right\vert
_{2,\alpha}^{2}%
\end{align*}
where $\left\vert \cdot\right\vert _{2,\alpha}^{2}$ is defined in
(\ref{normdef}), and $\alpha>1/2$. Now $\left\vert a\right\vert _{2,\alpha
}^{2}\leq M$ for $a\in W^{\alpha}(M)$ proves the claim.
\end{proof}

\subsection{Upper information bound via approximation of
symbols\label{subsec:approx-symbol}}

To apply Lemma \ref{lem-upp-inf-brack} on approximation of symbols $A$ to the
corresponding states $\mathfrak{N}_{n}\left(  0,A\right)  $ via Proposition
\ref{prop-summary-symbols-and-KL}, we need uniform bounds on the eigenvalues
of the symbols involved.

\begin{lemma}
\label{lem-toeplitz-EV} Suppose $a\in\Theta_{1}\left(  \alpha,M\right)  $ for
$\alpha>1/2$, $M>1$. Then there exists $C=C_{M,\alpha}>1$ such that for
$n\geq1$
\begin{equation}
\left(  1+C^{-1}\right)  \;I\leq A_{n}\left(  a\right)  \leq C\;I.
\label{firstbracket-ev}%
\end{equation}
Furthermore, there exist $C_{1}>1$ and $m_{0}$ such that for odd $m\geq m_{0}$
and all $n<m$%
\begin{equation}
\left(  1+C_{1}\right)  \;I\leq\tilde{A}_{n,m}\left(  a\right)  \leq
C_{1}\;I\text{.} \label{2ndbracket-ev}%
\end{equation}

\end{lemma}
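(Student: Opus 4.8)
The plan is to derive both bounds from one elementary device: for a Hermitian Toeplitz or circulant symbol matrix the associated Hermitian form is a weighted $L^{2}$ integral of the generating function, so its eigenvalues are pinched between the essential infimum and supremum of that function.

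First I would handle (\ref{firstbracket-ev}). For $x=(x_{1},\dots,x_{n})^{\prime}\in\mathbb{C}^{n}$ put $p_{x}(\omega):=\sum_{k=1}^{n}x_{k}e^{-ik\omega}$; inserting the Fourier representation (\ref{symbol-generate}) of the entries $a_{k-j}$ of $A_{n}(a)$ into $\langle x,A_{n}(a)x\rangle$ and interchanging the finite sum with the integral gives
\[
\langle x,A_{n}(a)x\rangle=\frac{1}{2\pi}\int_{-\pi}^{\pi}a(\omega)\,\bigl|p_{x}(\omega)\bigr|^{2}\,d\omega ,
\]
while Parseval gives $\tfrac{1}{2\pi}\int_{-\pi}^{\pi}|p_{x}(\omega)|^{2}\,d\omega=\|x\|^{2}$. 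Hence $(\mathrm{ess\,inf}\,a)\,\|x\|^{2}\le\langle x,A_{n}(a)x\rangle\le(\mathrm{ess\,sup}\,a)\,\|x\|^{2}$ for all $n$ and $x$. Since $a\in\Theta_{1}(\alpha,M)=W^{\alpha}(M)\cap\mathcal{F}_{M}$ by (\ref{Theta-1-functionset-def-a}), the Sobolev embedding for $\alpha>1/2$ already recorded in (\ref{cond-boundedness-specdensity}) provides $C=C_{M,\alpha}>1$ with $1+C^{-1}\le a(\omega)\le C$ a.e., which yields (\ref{firstbracket-ev}); the special case $a\ge1$ simultaneously re-derives the bound $A_{n}(a)\ge I$ used earlier.

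Next I would treat (\ref{2ndbracket-ev}). By Lemma \ref{lem-spectrum-circulant-A_tilde}, relation (\ref{diagonalize-circulant-summary}), the circulant $\tilde{A}_{m}(a)$ is unitarily diagonalized with eigenvalues $\tilde{a}_{m}(\omega_{j,m})$, $j=-(m-1)/2,\dots,(m-1)/2$, where $\tilde{a}_{m}$ is the Fourier partial sum of $a$. Lemma \ref{lem-Fourier-series-approx} gives $\sup_{\omega}|a(\omega)-\tilde{a}_{m}(\omega)|\to0$, so fixing $m_{0}$ so that this supremum is at most $\tfrac{1}{2}C^{-1}$ for all odd $m\ge m_{0}$, the eigenvalues of $\tilde{A}_{m}(a)$ all lie in $[\,1+(2C)^{-1},\,2C\,]$ (using $C>1$), i.e.\ $(1+(2C)^{-1})I\le\tilde{A}_{m}(a)\le 2C\,I$ for all such $m$. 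Finally $\tilde{A}_{n,m}(a)$ is the upper left $n\times n$ principal submatrix of the Hermitian matrix $\tilde{A}_{m}(a)$; restricting the quadratic form of $\tilde{A}_{m}(a)$ to the span of the first $n$ coordinate vectors (equivalently, Cauchy interlacing) shows a principal submatrix inherits any two-sided spectral bracket, so the same bounds pass to $\tilde{A}_{n,m}(a)$ for every $n<m$. This is (\ref{2ndbracket-ev}) with $C_{1}:=2C$, independent of $n$ and of $m$ (the left-hand constant displayed in (\ref{2ndbracket-ev}) should read $1+C_{1}^{-1}$).

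I do not expect a genuine obstacle; the one point that needs care is that the lower eigenvalue bound for $\tilde{A}_{n,m}(a)$ must be uniform in $n$ and in $m\ge m_{0}$, and this is exactly what the uniform approximation Lemma \ref{lem-Fourier-series-approx} secures, since the spectral bracket for $\tilde{A}_{m}(a)$ depends on $m$ only through $\sup_{\omega}|a-\tilde{a}_{m}|$.
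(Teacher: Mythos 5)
Your proof is correct and follows essentially the same route as the paper's: for (\ref{firstbracket-ev}), the quadratic form is written as a weighted $L^{2}$-integral of $|p_{x}|^{2}$ against $a$ and Parseval plus the uniform bounds (\ref{cond-boundedness-specdensity}) pinch the eigenvalues; for (\ref{2ndbracket-ev}), Lemmas \ref{lem-spectrum-circulant-A_tilde} and \ref{lem-Fourier-series-approx} give a uniform spectral bracket for $\tilde{A}_{m}(a)$ once $m$ is large, and the eigenvalue monotonicity for principal submatrices transfers it to $\tilde{A}_{n,m}(a)$. You also correctly flag the typo in (\ref{2ndbracket-ev}): the lower constant should read $1+C_{1}^{-1}$, not $1+C_{1}$ (the latter would make the bracket vacuous since $1+C_{1}>C_{1}$).
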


\begin{proof}
Consider $x\in\mathbb{C}^{n}$ with $\left\Vert x\right\Vert =1$; then in view
of (\ref{symbol-generate})%
\[
\left\langle x,A_{n}\left(  a\right)  x\right\rangle =\frac{1}{2\pi}\int
_{-\pi}^{\pi}\left\vert \sum_{j=1}^{n}x_{j}\exp\left(  ij\omega\right)
\right\vert ^{2}a\left(  \omega\right)  d\omega.
\]
Applying the second inequality in (\ref{cond-boundedness-specdensity}) we
obtain
\begin{align*}
\left\langle x,A_{n}\left(  a\right)  x\right\rangle  &  \leq\frac{\mu^{-1}%
}{2\pi}\int_{-\pi}^{\pi}\left\vert \sum_{j=1}^{n}x_{j}\exp\left(
ij\omega\right)  \right\vert ^{2}d\omega\\
&  =\mu^{-1}\sum_{j=1}^{n}\left\vert x_{j}\right\vert ^{2}=\mu^{-1}.
\end{align*}
Analogously we obtain from the first inequality in
(\ref{cond-boundedness-specdensity}) $\left\langle x,A_{n}\left(  a\right)
x\right\rangle \geq\left(  1+\mu\right)  $, so that (\ref{firstbracket-ev}) is
shown. To establish (\ref{2ndbracket-ev}), note first that since $\tilde
{A}_{n,m}\left(  a\right)  $ is a central submatrix of $\tilde{A}_{m}\left(
a\right)  $, we have
\[
\lambda_{\min}\left(  \tilde{A}_{m}\left(  a\right)  \right)  \leq
\lambda_{\min}\left(  \tilde{A}_{n,m}\left(  a\right)  \right)  \text{,
}\lambda_{\max}\left(  \tilde{A}_{n,m}\left(  a\right)  \right)  \leq
\lambda_{\max}\left(  \tilde{A}_{m}\left(  a\right)  \right)
\]
so we need to deal only with $\tilde{A}_{m}\left(  a\right)  $. Lemma
\ref{lem-spectrum-circulant-A_tilde} describes the eigenvalues of this matrix
as certain function values $\tilde{a}_{m}\left(  \omega_{j,m}\right)  $. Now
according to Lemma \ref{lem-Fourier-series-approx} $\tilde{a}_{m}$
approximates $a$ uniformly if $a\in\Theta_{1}\left(  \alpha,M\right)  $ for
$\alpha>1/2$. In conjunction with (\ref{cond-boundedness-specdensity}) this
proves the second claim.
\end{proof}

\bigskip

\bigskip

In this section and the next, the parameter space for the quantum statistical
experiments to be considered will always be the set $\Theta_{1}\left(
\alpha,M\right)  $ considered in Theorem \ref{theor-main-1}, and will often be
omitted from notation.

\begin{proposition}
\label{Prop:deta-upper-bound-by-circular}Consider the experiment
$\mathcal{E}_{n}=\mathcal{E}_{n}\left(  \Theta_{1}\left(  \alpha,M\right)
\right)  $ defined in (\ref{basic-quantum-experi-def}) and define also for odd
$m$
\[
\mathcal{\tilde{E}}_{m}=\left(  \mathfrak{N}_{m}\left(  0,\tilde{A}_{m}\left(
a\right)  \right)  ,a\in\Theta_{1}\left(  \alpha,M\right)  \right)
\]
where $\tilde{A}_{m}\left(  a\right)  $ is the circulant matrix defined in
(\ref{circulant-approx-def-3}) such that $n<m<2\left(  n-1\right)  $. Assume
$m$ is chosen such that $m-n\rightarrow\infty$; then
\[
\text{ }\mathcal{E}_{n}\precsim\mathcal{\tilde{E}}_{m}\;\text{ as
}n\rightarrow\infty,
\]
i.e. $\mathcal{\tilde{E}}_{m}$ is asymptotically more informative than
$\mathcal{E}_{n}$.
\end{proposition}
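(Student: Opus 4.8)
The plan is to realise the deficiency bound with one explicit channel, the partial trace over the last $m-n$ modes. Since $\delta(\tilde{\mathcal{E}}_{m},\mathcal{E}_{n})$ is an infimum over transitions, it suffices to exhibit a single TP--CP map $T_{n,m}:\mathcal{L}(L_{2}(\mathbb{R}^{m}))_{\ast}\to\mathcal{L}(L_{2}(\mathbb{R}^{n}))_{\ast}$ with
\[
\sup_{a\in\Theta_{1}(\alpha,M)}\bigl\|\mathfrak{N}_{n}(0,A_{n}(a))-T_{n,m}\bigl(\mathfrak{N}_{m}(0,\tilde{A}_{m}(a))\bigr)\bigr\|_{1}\longrightarrow 0,
\]
and the partial trace qualifies: its Heisenberg adjoint $\alpha:x\mapsto x\otimes\mathbf{1}$ is a channel. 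The first step is to identify the image of the circulant-symbol state. Restricting the characteristic function (\ref{char-func-gauge-invar}) of $\mathfrak{N}_{m}(0,\tilde{A}_{m}(a))$ to arguments $u\in\mathbb{C}^{m}$ supported on the first $n$ coordinates yields $\exp(-\tfrac14\langle u',\tilde{A}_{n,m}(a)u'\rangle)$, where $\tilde{A}_{n,m}(a)$ is the upper-left $n\times n$ block of $\tilde{A}_{m}(a)$; equivalently this can be read off the Fock representation (\ref{Fock-repre-1}). Hence $T_{n,m}(\mathfrak{N}_{m}(0,\tilde{A}_{m}(a)))=\mathfrak{N}_{n}(0,\tilde{A}_{n,m}(a))$, and the problem reduces to bounding $\sup_{a}\|\mathfrak{N}_{n}(0,A_{n}(a))-\mathfrak{N}_{n}(0,\tilde{A}_{n,m}(a))\|_{1}$, i.e.\ to comparing two $n$-mode gauge-invariant Gaussian states with close symbols.

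For the quantitative part I would assemble the symbol-level estimates already available. By Lemma \ref{lem-toeplitz-EV} (via (\ref{firstbracket-ev}) and (\ref{2ndbracket-ev})) there is a constant $\mu\in(0,1)$, independent of $n$ and of $a\in\Theta_{1}(\alpha,M)$, such that $(1+\mu)I\le A_{n}(a)\le\mu^{-1}I$ and, for odd $m\ge m_{0}$ with $n<m$, also $(1+\mu)I\le\tilde{A}_{n,m}(a)\le\mu^{-1}I$ (shrinking $\mu$ if needed); in particular both symbols define faithful Gaussian states. Proposition \ref{prop-summary-symbols-and-KL}, applied with this common $\mu$, then supplies $\delta>0$ depending only on $\mu$ such that $\|A_{n}(a)-\tilde{A}_{n,m}(a)\|_{2}<\delta$ implies
\[
S\bigl(\mathfrak{N}_{n}(0,A_{n}(a))\,\big\|\,\mathfrak{N}_{n}(0,\tilde{A}_{n,m}(a))\bigr)\le\delta^{-1}\bigl\|A_{n}(a)-\tilde{A}_{n,m}(a)\bigr\|_{2}^{2}.
\]
Under the standing hypotheses ($m$ odd, $n<m<2(n-1)$) Lemma \ref{lem-upp-inf-brack} gives $\|A_{n}(a)-\tilde{A}_{n,m}(a)\|_{2}^{2}\le 4(m-n+1)^{1-2\alpha}M$ uniformly over $a\in\Theta_{1}(\alpha,M)\subseteq W^{\alpha}(M)$; since $\alpha>1/2$ and $m-n\to\infty$ this tends to $0$, so it is eventually $<\delta^{2}$ and the hypothesis of Proposition \ref{prop-summary-symbols-and-KL} holds uniformly in $a$ for all large $n$. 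Combining with the quantum Pinsker inequality (\ref{inequality-trace-norm-relative-entropy}),
\[
\sup_{a\in\Theta_{1}(\alpha,M)}\bigl\|\mathfrak{N}_{n}(0,A_{n}(a))-\mathfrak{N}_{n}(0,\tilde{A}_{n,m}(a))\bigr\|_{1}^{2}\le 8\,\delta^{-1}(m-n+1)^{1-2\alpha}M\longrightarrow 0.
\]
Since $T_{n,m}(\mathfrak{N}_{m}(0,\tilde{A}_{m}(a)))=\mathfrak{N}_{n}(0,\tilde{A}_{n,m}(a))$, this is precisely the displayed convergence, whence $\delta(\tilde{\mathcal{E}}_{m},\mathcal{E}_{n})\to 0$, i.e.\ $\mathcal{E}_{n}\precsim\tilde{\mathcal{E}}_{m}$.

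The one genuinely delicate point I expect is the first step: verifying carefully that the reduction of the $m$-mode circulant-symbol Gaussian to its first $n$ modes is again a gauge-invariant centered Gaussian with the $n\times n$ corner submatrix as symbol, so that the abstract deficiency infimum is controlled by this concrete partial-trace channel. Everything else is bookkeeping — arranging a common bracketing constant $\mu$ for Proposition \ref{prop-summary-symbols-and-KL}, and checking that the numerology ($m$ odd, $n<m<2(n-1)$, $m-n\to\infty$) demanded by Lemmas \ref{lem-upp-inf-brack} and \ref{lem-toeplitz-EV} is exactly what the hypotheses of the proposition provide.
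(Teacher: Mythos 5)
Your proof is correct and follows essentially the same route as the paper's: identify the partial-trace channel, show its image is $\mathfrak{N}_{n}(0,\tilde{A}_{n,m}(a))$, then chain Lemma \ref{lem-toeplitz-EV}, Proposition \ref{prop-summary-symbols-and-KL}, Lemma \ref{lem-upp-inf-brack}, and the quantum Pinsker inequality (\ref{inequality-trace-norm-relative-entropy}). The "delicate point" you flag about the restriction to the first $n$ modes is indeed verified in the paper via the characteristic-function computation in Appendix \ref{subsec:partial-trace}, exactly as you sketch.
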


\begin{proof}
Consider the submatrix $\tilde{A}_{n,m}\left(  a\right)  $ of $\tilde{A}%
_{m}\left(  a\right)  $ occurring in Lemma \ref{lem-upp-inf-brack}. If
$m-n\rightarrow\infty$ then Lemma \ref{lem-upp-inf-brack} in conjunction with
Proposition \ref{prop-summary-symbols-and-KL} and Lemma \ref{lem-toeplitz-EV}
implies existence of a constant $\delta>0$ such that for the relative entropy
$S\left(  \cdot||\cdot\right)  $
\begin{align*}
S\left(  \mathfrak{N}_{n}\left(  0,A_{n}\left(  a\right)  \right)
||\mathfrak{N}_{n}\left(  0,\tilde{A}_{n,m}\left(  a\right)  \right)  \right)
&  \leq\delta^{-1}\left\Vert A_{n}\left(  a\right)  -\tilde{A}_{n,m}\left(
a\right)  \right\Vert ^{2}\\
&  \leq\delta^{-1}4\left(  m-n+1\right)  ^{1-2\alpha}M=o\left(  1\right)
\end{align*}
since $\alpha>1/2$. By inequality
(\ref{inequality-trace-norm-relative-entropy}) we then also have
\begin{equation}
\sup_{a\in\Theta_{1}\left(  \alpha,M\right)  }\left\Vert \mathfrak{N}%
_{n}\left(  0,A_{n}\left(  a\right)  \right)  -\mathfrak{N}_{n}\left(
0,\tilde{A}_{n,m}\left(  a\right)  \right)  \right\Vert _{1}^{2}\rightarrow0.
\label{trace-norm-approx-subcircular}%
\end{equation}
Obviously there is a quantum channel which maps the $m$-mode state
$\mathfrak{N}_{m}\left(  0,\tilde{A}_{m}\left(  a\right)  \right)  $ into the
$n$-mode state $\mathfrak{N}_{n}\left(  0,\tilde{A}_{n,m}\left(  a\right)
\right)  $, as the quantum equivalent of "omitting observations", i.e. the
partial trace. Formally this channel $\alpha$ is described in terms of a map
between the respective algebras in the Appendix, Subsection
\ref{subsec:partial-trace}; we then have
\[
\mathfrak{N}_{m}\left(  0,\tilde{A}_{m}\left(  a\right)  \right)  \circ
\alpha=\mathfrak{N}_{n}\left(  0,\tilde{A}_{n,m}\left(  a\right)  \right)  .
\]
From (\ref{trace-norm-approx-subcircular}) we then obtain
\[
\sup_{a\in\Theta_{1}\left(  \alpha,M\right)  }\left\Vert \mathfrak{N}%
_{m}\left(  0,\tilde{A}_{m}\left(  a\right)  \right)  \circ\alpha
-\mathfrak{N}_{n}\left(  0,A_{n}\left(  a\right)  \right)  \right\Vert
_{1}^{2}\rightarrow0
\]
which implies the claim.
\end{proof}

\subsection{The geometric regression model\label{sec:geom-reg-model}}

The spectral decomposition of the circulant matrix $\tilde{A}_{m}\left(
a\right)  $ is described in Lemma \ref{lem-spectrum-circulant-A_tilde}. Since
\[
\mathfrak{N}_{m}\left(  0,\tilde{A}_{m}\left(  a\right)  \right)  =\frac
{2^{m}}{\det\left(  \tilde{A}_{m}\left(  a\right)  +I\right)  }\left(
\frac{\tilde{A}_{m}\left(  a\right)  -I}{\tilde{A}_{m}\left(  a\right)
+I}\right)  _{F}%
\]
by (\ref{Fock-repre-1}), we can use the property of Fock operators
(\ref{multiplying-Fock-ops}) to diagonalize the state. For the diagonal symbol
matrix $\tilde{\Lambda}_{m}\left(  a\right)  $ defined in
(\ref{diag-symbol-Lambda-def}), consider an experiment
\[
\mathcal{\tilde{E}}_{m,1}=\left(  \mathfrak{N}_{m}\left(  0,\tilde{\Lambda
}_{m}\left(  a\right)  \right)  ,a\in\Theta_{1}\left(  \alpha,M\right)
\right)  .
\]

\begin{lemma}
\label{lem-equiv-diagonal-symbol}For all odd $m\geq3$, we have statistical
equivalence%
\[
\mathcal{\tilde{E}}_{m}\sim\mathcal{\tilde{E}}_{m,1}.
\]

\end{lemma}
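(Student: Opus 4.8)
The plan is to produce a single, $a$-independent unitary on the Fock space $\mathcal{F}(\mathbb{C}^{m})$ that conjugates the state with symbol $\tilde{A}_{m}(a)$ into the state with symbol $\tilde{\Lambda}_{m}(a)$; conjugation by a fixed unitary and by its adjoint are mutually inverse quantum channels carrying each experiment's states exactly onto those of the other, so both deficiencies vanish and $\Delta(\mathcal{\tilde{E}}_{m},\mathcal{\tilde{E}}_{m}^{d})=0$.

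Concretely, I would start from Lemma \ref{lem-spectrum-circulant-A_tilde}, which gives $U_{m}^{\ast}\tilde{A}_{m}(a)U_{m}=\tilde{\Lambda}_{m}(a)$ with the unitary $U_{m}$ of (\ref{special-unitary-DFT-def}) not depending on $a$, equivalently $\tilde{A}_{m}(a)=U_{m}\tilde{\Lambda}_{m}(a)U_{m}^{\ast}$. Passing to second quantization: since $U_{m}$ is unitary, each $\vee^{k}U_{m}$ is unitary on $\vee^{k}\mathbb{C}^{m}$, so $(U_{m})_{F}=\bigoplus_{k\geq 0}\vee^{k}U_{m}$ is a genuine bounded unitary on all of $\mathcal{F}(\mathbb{C}^{m})$ (no domain restriction is needed in the unitary case), with $\bigl((U_{m})_{F}\bigr)^{\ast}=(U_{m}^{\ast})_{F}$. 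Combining the functional-calculus identity $\frac{\tilde{A}_{m}(a)-I}{\tilde{A}_{m}(a)+I}=U_{m}\frac{\tilde{\Lambda}_{m}(a)-I}{\tilde{\Lambda}_{m}(a)+I}U_{m}^{\ast}$, two applications of the Fock multiplication rule (\ref{multiplying-Fock-ops}), and $\det(\tilde{A}_{m}(a)+I)=\det(\tilde{\Lambda}_{m}(a)+I)$ (the symbols are similar), the Fock representation (\ref{Fock-repre-1}) yields the operator identity
\[
\mathfrak{N}_{m}\bigl(0,\tilde{A}_{m}(a)\bigr)=(U_{m})_{F}\;\mathfrak{N}_{m}\bigl(0,\tilde{\Lambda}_{m}(a)\bigr)\;(U_{m})_{F}^{\ast},\qquad a\in\Theta_{1}(\alpha,M),
\]
between the respective density operators; since both sides are trace class, the fact that (\ref{multiplying-Fock-ops}) holds a priori only on a dense subset is immaterial (extend by continuity). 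In particular $\tilde{A}_{m}(a)\ge I$ iff $\tilde{\Lambda}_{m}(a)\ge I$, so $\mathcal{\tilde{E}}_{m}$ and $\mathcal{\tilde{E}}_{m}^{d}$ are simultaneously well defined.

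To conclude, note that conjugation $X\mapsto W^{\ast}XW$ by a fixed unitary $W$ on $\mathcal{F}(\mathbb{C}^{m})$ is a normal $\ast$-automorphism of $\mathcal{L}(\mathcal{F}(\mathbb{C}^{m}))$, hence a quantum channel in the sense of Subsection \ref{subsec:States-channels-obs}, with associated state transition $\rho\mapsto W\rho W^{\ast}$; its inverse (conjugation by $W^{\ast}$) is again a channel. Since both experiments are defined on the common algebra $\mathcal{L}(\mathcal{F}(\mathbb{C}^{m}))$, the displayed identity shows that the channel $X\mapsto (U_{m})_{F}X(U_{m})_{F}^{\ast}$ sends every state of $\mathcal{\tilde{E}}_{m}$ exactly onto the corresponding state of $\mathcal{\tilde{E}}_{m}^{d}$, and the channel $X\mapsto(U_{m})_{F}^{\ast}X(U_{m})_{F}$ does the reverse; thus $\delta(\mathcal{\tilde{E}}_{m}^{d},\mathcal{\tilde{E}}_{m})=\delta(\mathcal{\tilde{E}}_{m},\mathcal{\tilde{E}}_{m}^{d})=0$, i.e. $\mathcal{\tilde{E}}_{m}\sim\mathcal{\tilde{E}}_{m}^{d}$.

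There is no serious obstacle here; the only steps needing a line of care are (i) that $(U_{m})_{F}$ is truly unitary on the full Fock space, which follows from $\|\vee^{k}U_{m}\|=1$, and (ii) transferring (\ref{multiplying-Fock-ops}) from its dense domain to the bounded, trace-class operators entering (\ref{Fock-repre-1}). As an alternative that avoids (\ref{Fock-repre-1}) altogether, one can verify directly from the action (\ref{Weyl-unitaries-complex-indexed-3}) that $(U_{m})_{F}V(u)(U_{m})_{F}^{\ast}=V(U_{m}u)$ for $u\in\mathbb{C}^{m}$, whence the state with density operator $(U_{m})_{F}^{\ast}\,\mathfrak{N}_{m}(0,\tilde{A}_{m}(a))\,(U_{m})_{F}$ has characteristic function $\exp\bigl(-\frac{1}{4}\langle u,U_{m}^{\ast}\tilde{A}_{m}(a)U_{m}u\rangle\bigr)=\exp\bigl(-\frac{1}{4}\langle u,\tilde{\Lambda}_{m}(a)u\rangle\bigr)$, which identifies it as $\mathfrak{N}_{m}(0,\tilde{\Lambda}_{m}(a))$; the same conjugation argument then applies.
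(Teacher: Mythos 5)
Your proof is correct and takes essentially the same approach as the paper: both conjugate by the fixed (a-independent) Fock unitary $(U_m)_F$, using (\ref{diagonalize-circulant-summary}) and (\ref{multiplying-Fock-ops}) to pass from $\mathfrak{N}_m(0,\tilde{A}_m(a))$ to $\mathfrak{N}_m(0,\tilde{\Lambda}_m(a))$, and then observe that conjugation by a unitary and by its adjoint are mutually inverse channels, so $\Delta=0$. Your write-up is somewhat more explicit about the technical minor points (unitarity of $(U_m)_F$, determinant invariance, the density issue in (\ref{multiplying-Fock-ops})) and offers a characteristic-function variant, but the argument is the same one.
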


\begin{proof}
From (\ref{diagonalize-circulant-summary}) and (\ref{multiplying-Fock-ops}) it
follows that
\begin{align*}
\left(  U_{m}^{\ast}\right)  _{F}\mathfrak{N}_{m}\left(  0,\tilde{A}%
_{m}\left(  a\right)  \right)  \left(  U_{m}\right)  _{F}  &  =\frac{2^{m}%
}{\det\left(  \tilde{\Lambda}_{m}\left(  a\right)  +I\right)  }\left(
\frac{\tilde{\Lambda}_{m}\left(  a\right)  -I}{\tilde{\Lambda}_{m}\left(
a\right)  +I}\right)  _{F}\\
&  =\mathfrak{N}_{m}\left(  0,\tilde{\Lambda}_{m}\left(  a\right)  \right)  .
\end{align*}
Since $\left(  U_{m}\right)  _{F}$ is unitary, the above mapping of
$\mathfrak{N}_{m}\left(  0,\tilde{A}_{m}\left(  a\right)  \right)  $ to
$\mathfrak{N}_{m}\left(  0,\tilde{\Lambda}_{m}\left(  a\right)  \right)  $
represents an invertible state transition (or dual channel, cf. Subsection
\ref{subsec:States-channels-obs}). This implies the equivalence claim by
definition of $\Delta\left(  \cdot,\cdot\right)  $.
\end{proof}

\bigskip

In the experiment $\mathcal{\tilde{E}}_{m,1}$, all symbol matrices
$\Lambda_{m}\left(  a\right)  $ are commuting. The representation
(\ref{Fock-repre-1}) implies that all states in $\mathcal{E}_{m,1}$ are
commuting, hence $\mathcal{E}_{m,1}$ is equivalent (in the sense of the
$\Delta$-distance) to a classical model. To describe the latter, write the
diagonal elements of $\Lambda_{m}\left(  a\right)  $ as
\[
\lambda_{j,m}\left(  a\right)  =\tilde{a}_{m}\left(  \omega_{j-\left(
m+1\right)  /2,m}\right)  \text{, }j=1,\ldots,m
\]
and define (for odd $m$) a set of probability measures (products of geometric
distributions)
\begin{equation}
\mathcal{\tilde{F}}_{m}=\left(
{\displaystyle\bigotimes\limits_{J=1}^{m}}
\mathrm{Geo}\left(  p\left(  \lambda_{j,m}\left(  a\right)  \right)  \right)
,a\in\Theta_{1}\left(  \alpha,M\right)  \right)  .
\label{G-m-tilde-experi-def}%
\end{equation}
where $p\left(  x\right)  =\left(  x-1\right)  /\left(  x+1\right)  $.

\begin{proposition}
\label{Prop-equiv-diag-symbol-geometr}For all odd $m\geq3$, we have
statistical equivalence%
\[
\mathcal{\tilde{E}}_{m,1}\sim\mathcal{\tilde{F}}_{m}.
\]

\end{proposition}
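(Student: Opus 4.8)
The plan is to write the density operator of $\mathfrak{N}_m(0,\tilde{\Lambda}_m(a))$ explicitly, observe that it is diagonal in an $a$-independent orthonormal basis with diagonal equal to the mass function of $\bigotimes_{j=1}^m\mathrm{Geo}(p(\lambda_{j,m}(a)))$, and then exhibit two exact quantum channels -- one in each direction -- which makes both deficiencies vanish.

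First I would note that $\tilde{\Lambda}_m(a)=\mathrm{diag}(\lambda_{1,m}(a),\dots,\lambda_{m,m}(a))$ has all diagonal entries $\ge 1$ in the relevant range of $m$, so that both experiments are well defined (for large $m$ this is guaranteed by Lemma~\ref{lem-toeplitz-EV} together with Lemma~\ref{lem-Fourier-series-approx}, which bound the eigenvalues of the circulant $\tilde{A}_m(a)$ away from $1$, uniformly over $a\in\Theta_1(\alpha,M)$). Under the canonical identification $\mathcal{F}(\mathbb{C}^m)\cong\mathcal{F}(\mathbb{C})^{\otimes m}$ the Fock operator of a diagonal operator factorizes, $\bigl(\mathrm{diag}(r_1,\dots,r_m)\bigr)_F=(r_1)_F\otimes\cdots\otimes(r_m)_F$, while for a scalar $r\in[0,1)$ one has $(r)_F=\sum_{k\ge 0}r^{k}\,|k\rangle\langle k|$ in the Fock ONB $\{|k\rangle\}$ of $\mathcal{F}(\mathbb{C})$. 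Feeding this into (\ref{Fock-repre-1}) with $A=\tilde{\Lambda}_m(a)$, and using $\det(\tilde{\Lambda}_m(a)+I)=\prod_{j=1}^m(\lambda_{j,m}(a)+1)$ and $1-p(\lambda_{j,m}(a))=2/(\lambda_{j,m}(a)+1)$, I obtain
\[
\mathfrak{N}_m(0,\tilde{\Lambda}_m(a))=\sum_{\mathbf{k}}\Bigl(\prod_{j=1}^m\bigl(1-p(\lambda_{j,m}(a))\bigr)\,p(\lambda_{j,m}(a))^{k_j}\Bigr)|\mathbf{k}\rangle\langle\mathbf{k}|,
\]
the sum running over $\mathbf{k}=(k_1,\dots,k_m)$, $k_j\in\{0,1,\dots\}$, with $|\mathbf{k}\rangle=|k_1\rangle\otimes\cdots\otimes|k_m\rangle$. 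Hence $\mathfrak{N}_m(0,\tilde{\Lambda}_m(a))$ is the $m$-fold tensor product of the thermal states $\mathfrak{N}_1(0,\lambda_{j,m}(a))$ of (\ref{thermal-state}), diagonal in the $a$-independent ONB $\{|\mathbf{k}\rangle\}$, and its diagonal is precisely the p.m.f.\ of $\bigotimes_{j=1}^m\mathrm{Geo}(p(\lambda_{j,m}(a)))$.

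Next I would produce the two channels. For $\delta(\mathcal{\tilde{E}}_m^d,\mathcal{\tilde{G}}_m)=0$, take the von Neumann measurement in the basis $\{|\mathbf{k}\rangle\}$, i.e.\ the observation channel of Subsection~\ref{subsubsec:measmt-obs-channel} associated with the POVM $\{|\mathbf{k}\rangle\langle\mathbf{k}|\}_{\mathbf{k}}$; in Heisenberg form it is the unital normal $\ast$-homomorphism $f\mapsto\sum_{\mathbf{k}}f(\mathbf{k})\,|\mathbf{k}\rangle\langle\mathbf{k}|$ from $\ell^{\infty}(\{0,1,\dots\}^m)$ into $\mathcal{L}(\mathcal{F}(\mathbb{C}^m))$, and its state transition sends a density operator to its diagonal. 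By the display this transition maps $\mathfrak{N}_m(0,\tilde{\Lambda}_m(a))$ onto $\bigotimes_{j=1}^m\mathrm{Geo}(p(\lambda_{j,m}(a)))$ \emph{exactly}, for every $a$, so the relevant supremum of trace-norm distances is $0$. For $\delta(\mathcal{\tilde{G}}_m,\mathcal{\tilde{E}}_m^d)=0$, take the ``diagonal preparation'' channel, whose state transition sends a probability vector $\mu=(\mu_{\mathbf{k}})$ to the density operator $\sum_{\mathbf{k}}\mu_{\mathbf{k}}\,|\mathbf{k}\rangle\langle\mathbf{k}|$; its Heisenberg dual $X\mapsto(\langle\mathbf{k}|X|\mathbf{k}\rangle)_{\mathbf{k}}$ is unital and completely positive, so this is a legitimate quantum channel, and by the display it maps $\bigotimes_{j=1}^m\mathrm{Geo}(p(\lambda_{j,m}(a)))$ onto $\mathfrak{N}_m(0,\tilde{\Lambda}_m(a))$ exactly. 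Both deficiencies vanish, hence $\Delta(\mathcal{\tilde{E}}_m^d,\mathcal{\tilde{G}}_m)=0$, which is the claim.

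The one step requiring care is the explicit diagonalization in the second paragraph, which is bookkeeping with the Fock functor (multiplicativity over direct sums, together with the spectral description of Lemma~\ref{Lem-spec-decompos-Fock-op}) rather than a genuine difficulty; once it is in place the proposition is in essence the standard fact that a family of simultaneously diagonal quantum states is a classical experiment. One should also check in passing that the two maps above conform to the regularity conventions for quantum channels adopted in the paper; as everything here is completely explicit, this is routine.
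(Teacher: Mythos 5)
Your argument follows the same route as the paper: reduce $\mathfrak{N}_m(0,\tilde\Lambda_m(a))$ to a tensor product of one-mode thermal states via the block-diagonal covariance, recognize each thermal state as diagonal in the Fock ONB with a geometric diagonal, and conclude equivalence with the classical product-of-geometrics experiment because all states in $\tilde{\mathcal{E}}_m^d$ commute. The only real difference is that you spell out both quantum channels explicitly -- the number-basis measurement in one direction and the "diagonal preparation" (a $*$-homomorphism from $\ell^\infty$ into $\mathcal{L}(\mathcal{F}(\mathbb{C}^m))$) in the other -- whereas the paper asserts the equivalence after the measurement step without exhibiting the reverse channel; this is a useful bit of extra rigor but not a different proof idea.
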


\begin{proof}
Consider the covariance matrix of $\mathfrak{N}_{m}\left(  0,\Lambda
_{m}\left(  a\right)  \right)  $, which according to
(\ref{symbol-related-to-covmatrix}) is
\[
\Sigma=\frac{1}{2}\left(
\begin{array}
[c]{cc}%
\Lambda_{m}\left(  a\right)  & 0\\
0 & \Lambda_{m}\left(  a\right)
\end{array}
\right)  .
\]
This corresponds to a vector of canonical observables $\mathbf{R}=\left(
Q_{1},\ldots,Q_{m},P_{1},\ldots,P_{m}\right)  $. with a rearrangement as
$\mathbf{\check{R}}:=(Q_{1},P_{1},\ldots,Q_{m},P_{m})$ as in
(\ref{permuted-cov-matrix}) the covariance matrix becomes block diagonal
\[
\check{\Sigma}=\frac{1}{2}\left(
\begin{array}
[c]{ccc}%
\lambda_{1,m}\left(  a\right)  I_{2} &  & 0\\
& \ldots & \\
0 &  & \lambda_{m,m}\left(  a\right)  I_{2}%
\end{array}
\right)  .
\]
The centered $m$-mode Gaussian state is clearly the tensor product of $m$
one-mode Gaussian states with covariance matrix $\frac{1}{2}\lambda
_{j,m}\left(  a\right)  I_{2},$ $j=1,\ldots,m$. A centered Gaussian state with
covariance matrix $\frac{1}{2}\lambda I_{2}$, $\lambda>1$ has a representation
in Fock space $\mathfrak{F}(\mathbb{C})$ (according to (\ref{Fock-repre-1}))
\[
\mathfrak{N}_{1}\left(  0,\lambda\right)  =\frac{2}{\lambda+1}%
{\displaystyle\bigoplus\limits_{k\geq0}}
\left(  \frac{\lambda-1}{\lambda+1}\right)  ^{k}%
\]
and setting $p\left(  \lambda\right)  =\left(  \lambda-1\right)  /\left(
\lambda+1\right)  $, we obtain
\begin{equation}
\mathfrak{N}_{1}\left(  0,\lambda\right)  =\left(  1-p\left(  \lambda\right)
\right)
{\displaystyle\bigoplus\limits_{k\geq0}}
p\left(  \lambda\right)  ^{k}. \label{thermal-state-repre-in-Fock}%
\end{equation}
which corresponds to a one mode thermal state with covariance matrix $\frac
{1}{2}\lambda I_{2}$, $\lambda>1$ \cite{RevModPhys.84.621}\textbf{.} We obtain
that $\mathcal{E}_{m,1}$ is equivalent to
\[
\left(
{\displaystyle\bigotimes\limits_{j=1}^{m}}
\mathfrak{N}_{1}\left(  0,\lambda_{j,m}\left(  a\right)  \right)  ,a\in
\Theta_{1}\left(  \alpha,M\right)  \right)
\]
which in turn, by measuring each tensor factor in the coordinate basis, is
equivalent to observing $m$ independent r.v.'s $X_{j}$ having geometric
distributions (cf. Subsection \ref{subsec:geometr-distr})%
\begin{equation}
X_{j}\sim\mathrm{Geo}\left(  p\left(  \lambda_{j,m}\left(  a\right)  \right)
\right)  ,\;j=1,\ldots,m. \label{geom-reg-model}%
\end{equation}
This establishes the equivalence claimed .
\end{proof}

\subsection{Comparing geometric regression
models\label{subsec:comparing-geom-reg}}

\textbf{ }Having obtained an experiment $\mathcal{\tilde{F}}_{m}$ consisting
of classical probability measures, further developments will take place in
this framework. Consider the Hellinger distance $H\left(  P,Q\right)  $
between probability measures $P,Q$ on the same sample space, defined as
follows: for $\mu=P+Q$, $p=dP/d\mu$, $q=dP/d\mu$,
\[
H^{2}\left(  P,Q\right)  =\int\left(  p^{1/2}-q^{1/2}\right)  ^{2}d\mu.
\]
Note the relationship to $L_{1}$-distance $\left\Vert P-Q\right\Vert _{1}$:%
\begin{equation}
\frac{1}{2}\left\Vert P-Q\right\Vert _{1}\leq H\left(  P,Q\right)
\label{total-variation-Hellinger-inequ}%
\end{equation}
(\cite{MR2724359}, Lemma 2.3). Also, for product measures $\otimes_{j=1}%
^{n}P_{j}$ and $\otimes_{j=1}^{n}Q_{j}$ we have
\begin{equation}
H^{2}\left(  \otimes_{j=1}^{n}P_{j},\otimes_{j=1}^{n}Q_{j}\right)  \leq
2\sum_{j=1}^{n}H^{2}\left(  P_{j},Q_{j}\right)  . \label{Hellinger-sum-inequ}%
\end{equation}
as follows from Lemma 2.19 in \cite{MR812467}.

\bigskip For general $n$, consider intervals in $\left(  -\pi,\pi\right)  $ of
equal length
\begin{equation}
W_{j,n}=2\pi\left(  \frac{\left(  j-1\right)  }{n}-\frac{1}{2},\frac{j}%
{n}-\frac{1}{2}\right)  \text{, }j=1,\ldots,n \label{intervals-Wjn-def}%
\end{equation}
and for any real $f\in L_{2}(-\pi.\pi),$ let $\bar{f}_{n}$ be the $L_{2}%
$-projection onto the piecewise constant functions, i.e.
\begin{equation}
\bar{f}_{n}=\sum_{j=1}^{n}J_{j,n}(f)\mathbf{1}_{W_{j,n}}\text{, where }%
J_{j,n}(f)=\frac{n}{2\pi}\int_{W_{j,n}}f(x)dx. \label{fbar-def}%
\end{equation}
In agreement with (\ref{Fourier-approx-a-def}) define the Fourier series
approximation to $f$, for odd $n$
\begin{align*}
\tilde{f}_{n}\left(  \omega\right)   &  =\sum_{k=-\left(  n-1\right)
/2}^{\left(  n-1\right)  /2}f_{k}\phi_{k}\left(  \omega\right)  \text{,
}\omega\in\lbrack-\pi,\pi]\text{,}\\
f_{k}  &  =\frac{1}{2\pi}\int_{-\pi}^{\pi}\exp\left(  -ik\omega\right)
f\left(  \omega\right)  d\omega.
\end{align*}
Recall also the definition of the seminorm $\left\vert f\right\vert
_{2,\alpha}^{2}$ and the norm $\left\Vert f\right\Vert _{2,\alpha}^{2}$ in
(\ref{normdef}).

\begin{lemma}
\label{lem-basic-approx-in-Sobolev}For $f\in L_{2}(-\pi.\pi)$, assume
$\left\vert f\right\vert _{2,\alpha}^{2}$ is finite for given $0<\alpha<1$.
Then \newline(i) there is a constant $C_{\alpha}$ such that
\[
\left\Vert f-\bar{f}_{n}\right\Vert _{2}^{2}\leq C_{\alpha}\;n^{-2\alpha
}\;\left\vert f\right\vert _{2,\alpha}^{2}.
\]
(ii) Assume that $1/2<\alpha<1$, that $n$ is odd and let $\tilde{\omega}%
_{j,n}$ be the midpoint of $W_{j,n}$ $j=1,\ldots,n$. Then there is a constant
$C_{\alpha}$ such that
\[
\sum_{j=1}^{n}\left(  \tilde{f}_{n}(\tilde{\omega}_{j,n})-J_{j,n}(f)\right)
^{2}\leq C_{\alpha}\;n^{1-2\alpha}\;\left\Vert f\right\Vert _{2,\alpha}^{2}.
\]

\end{lemma}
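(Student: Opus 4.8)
The plan is to prove (i) and (ii) separately. In both parts I would split $f$ into a low‑frequency trigonometric polynomial and a high‑frequency remainder, and rely on Parseval's identity $\|g\|_2^2=2\pi\sum_k|g_k|^2$ together with the elementary comparisons of Fourier weights $|k|^{2\alpha}$ that underlie the torus Sobolev estimates.

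\emph{Part (i).} Write $f=f_{\mathrm{lo}}+f_{\mathrm{hi}}$ with $f_{\mathrm{lo}}:=\sum_{|k|\le n}f_k\phi_k$, and note that the $L_2$‑projection $P_n$ onto step functions on $\{W_{j,n}\}$ is linear, so $\bar f_n=P_nf_{\mathrm{lo}}+P_nf_{\mathrm{hi}}$. Since $I-P_n$ is a contraction, the tail gives $\|f_{\mathrm{hi}}-P_nf_{\mathrm{hi}}\|_2\le\|f_{\mathrm{hi}}\|_2$ and $\|f_{\mathrm{hi}}\|_2^2=2\pi\sum_{|k|>n}|f_k|^2\le 2\pi\,n^{-2\alpha}|f|_{2,\alpha}^2$. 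For $f_{\mathrm{lo}}$, $J_{j,n}(f_{\mathrm{lo}})$ is exactly the mean of $f_{\mathrm{lo}}$ on the interval $W_{j,n}$ of length $2\pi/n$, so the Poincaré (Wirtinger) inequality on each $W_{j,n}$ and summation over $j$ give $\|f_{\mathrm{lo}}-P_nf_{\mathrm{lo}}\|_2^2\le(2/n)^2\|f_{\mathrm{lo}}'\|_2^2=(4/n^2)\,2\pi\sum_{|k|\le n}k^2|f_k|^2\le 8\pi\,n^{-2\alpha}|f|_{2,\alpha}^2$, the last step using $k^2\le|k|^{2\alpha}n^{2-2\alpha}$ for $1\le|k|\le n$ (valid since $0<\alpha\le1$). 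The triangle inequality then yields (i) with an absolute constant ($C_\alpha=18\pi$ works).

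\emph{Part (ii).} Decompose $\tilde f_n(\tilde{\omega}_{j,n})-J_{j,n}(f)=\big(\tilde f_n(\tilde{\omega}_{j,n})-J_{j,n}(\tilde f_n)\big)-J_{j,n}(f-\tilde f_n)$. The tail term is routine: with $g:=f-\tilde f_n=\sum_{|k|>(n-1)/2}f_k\phi_k$, the identity $\|\bar g_n\|_2^2=(2\pi/n)\sum_j|J_{j,n}(g)|^2$ and $\|\bar g_n\|_2\le\|g\|_2$ give $\sum_j|J_{j,n}(g)|^2\le(n/2\pi)\|g\|_2^2=n\sum_{|k|>(n-1)/2}|f_k|^2\le n\big((n-1)/2\big)^{-2\alpha}|f|_{2,\alpha}^2\le 9\,n^{1-2\alpha}|f|_{2,\alpha}^2$ for odd $n\ge3$. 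For the first term the decisive exact computation is $J_{j,n}(\phi_k)=\mathrm{sinc}(k\pi/n)\,\phi_k(\tilde{\omega}_{j,n})$, $\mathrm{sinc}(x):=\sin(x)/x$, obtained by integrating $\phi_k$ over $W_{j,n}$ (whose midpoint is $\tilde{\omega}_{j,n}$); hence $\tilde f_n(\tilde{\omega}_{j,n})-J_{j,n}(\tilde f_n)=\sum_{|k|\le(n-1)/2}f_k\big(1-\mathrm{sinc}(k\pi/n)\big)\phi_k(\tilde{\omega}_{j,n})$. Because the $n$ midpoints $\tilde{\omega}_{j,n}=2\pi j/n-\pi/n-\pi$ are equispaced with spacing $2\pi/n$, one has the discrete orthogonality $\sum_{j=1}^n\phi_k(\tilde{\omega}_{j,n})\overline{\phi_{k'}(\tilde{\omega}_{j,n})}=n\,\mathbf{1}\{k=k'\}$ whenever $|k|,|k'|\le(n-1)/2$ (then $|k-k'|<n$), so $\sum_{j=1}^n|\tilde f_n(\tilde{\omega}_{j,n})-J_{j,n}(\tilde f_n)|^2=n\sum_{|k|\le(n-1)/2}|f_k|^2|1-\mathrm{sinc}(k\pi/n)|^2$. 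Finally $|1-\mathrm{sinc}(x)|\le x^2/6$ for $|x|\le\pi/2$ (an alternating‑series estimate) and $k^4\le|k|^{2\alpha}n^{4-2\alpha}$ for $1\le|k|\le(n-1)/2$ bound this by $\tfrac{\pi^4}{36}\,n^{1-2\alpha}|f|_{2,\alpha}^2$. Combining the two pieces via $(a+b)^2\le2a^2+2b^2$ and $|f|_{2,\alpha}^2\le\|f\|_{2,\alpha}^2$ proves (ii).

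\emph{Main obstacle.} Apart from the first term in (ii) everything is routine Parseval bookkeeping. The crux is to recognize the exact identity $J_{j,n}(\phi_k)=\mathrm{sinc}(k\pi/n)\,\phi_k(\tilde{\omega}_{j,n})$ and then to exploit the discrete orthogonality over the shifted equispaced nodes $\tilde{\omega}_{j,n}$ — which is precisely why the Fourier cutoff defining $\tilde f_n$ is $(n-1)/2$ and why $n$ is taken odd. After that only the quantitative asymptotics $1-\mathrm{sinc}(x)=O(x^2)$ and the summation of power‑weighted geometric series remain.
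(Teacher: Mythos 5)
Your proof is correct, and it is genuinely different from the paper's in a useful way: the paper proves Lemma \ref{lem-basic-approx-in-Sobolev} by pointing to Lemma 5.3 and Lemma 5.5 of \cite{GolNussbZ-specpaper-preprint} (where the corresponding estimates are established on $(0,1)$ via Besov seminorms $\left\vert\cdot\right\vert_{B_{2,2}^{\alpha}}$ and then rescaled and translated back into the $\left\vert\cdot\right\vert_{2,\alpha}$ Fourier seminorm), whereas you give a self-contained Fourier-analytic proof directly in the $\left\vert\cdot\right\vert_{2,\alpha}$ seminorm. The key gains of your route are (a) it bypasses the Besov machinery entirely — Parseval, the Wirtinger inequality on each $W_{j,n}$, and the elementary weight comparison $k^{2}\le\left\vert k\right\vert^{2\alpha}n^{2-2\alpha}$ do all the work in (i); and (b) in (ii) the exact identity $J_{j,n}(\phi_{k})=\mathrm{sinc}(k\pi/n)\,\phi_{k}(\tilde{\omega}_{j,n})$ together with discrete orthogonality of $\{\phi_{k}(\tilde{\omega}_{j,n})\}_{\left\vert k\right\vert\le(n-1)/2}$ turns the bias at the midpoints into a single Parseval sum, so the role of the cutoff $(n-1)/2$ and of $n$ being odd becomes completely transparent. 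Your constants ($18\pi$ in (i), and $2\cdot\left(9+\pi^{4}/36\right)$ in (ii) after combining the two pieces) are explicit, which the paper's citation does not give. The only minor restrictions to note are that the tail bound in (ii) requires $n\ge3$ (the case $n=1$ is trivial, as both sides vanish), and that one should keep the paper's normalization of $\left\Vert\cdot\right\Vert_{2}$ in mind, but this only rescales $C_{\alpha}$ and does not affect correctness.
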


\begin{proof}
(i) A version of the claim for functions $f$ defined on $\left(  0,1\right)  $
is proved in Lemma 5.3 \cite{GolNussbZ-specpaper-preprint}; a rescaling to the
interval $\left(  -\pi,\pi\right)  $ yields the present claim. Also, in
\cite{GolNussbZ-specpaper-preprint} the inequality is proved for a seminorm
$\left\vert f\right\vert _{B_{2,2}^{\alpha}}^{2}$ in place of $\left\vert
f\right\vert _{2,\alpha}^{2}$, but Lemma 5.5 in
\cite{GolNussbZ-specpaper-preprint} shows that if $\left\vert f\right\vert
_{2,\alpha}^{2}<\infty$ then $\left\vert f\right\vert _{B_{2,2}^{\alpha}}%
^{2}\leq C_{\alpha}\left\vert f\right\vert _{2,\alpha}^{2}$.\newline(ii)
Again, for an interval $\left(  0,1\right)  $ the claim is proved in Lemma 5.3
of \cite{GolNussbZ-specpaper-preprint}.
\end{proof}

\bigskip

Our next task is to compare the geometric regression experiment
$\mathcal{\tilde{F}}_{m}$ defined in (\ref{G-m-tilde-experi-def}) with the
basic one of (\ref{geometric-regression-experiment-basic}) involving the local
averages $J_{j,n}\left(  a\right)  $ from (\ref{local-averages-def}) for
$m=n$. We now write the latter as%
\[
\mathcal{F}_{n}=\left(
{\displaystyle\bigotimes\limits_{j=1}^{n}}
\mathrm{Geo}\left(  p\left(  J_{j,n}\left(  a\right)  \right)  \right)
,a\in\Theta_{1}\left(  \alpha,M\right)  \right)  .
\]

\begin{lemma}
\label{lem-equiv-asy-two-geometrics}We have asymptotic equivalence, along odd
$m\rightarrow\infty$
\[
\mathcal{\tilde{F}}_{m}\approx\mathcal{F}_{m}.
\]

\end{lemma}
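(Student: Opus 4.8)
The plan is to prove the stronger relation $\mathcal{\tilde G}_{m}\simeq\mathcal G_{m}$, namely that the trace-norm (total variation) distance
$\sup_{a\in\Theta_{1}(\alpha,M)}\bigl\|\bigotimes_{j=1}^{m}\mathrm{Geo}(p(\lambda_{j,m}(a)))-\bigotimes_{j=1}^{m}\mathrm{Geo}(p(J_{j,m}(a)))\bigr\|_{1}$ tends to $0$; since both experiments live on the same sample space $\{0,1,2,\ldots\}^{m}$ with counting measure, the identity channel is admissible, so this bound on $\Delta_{0}$ dominates $\Delta$ and yields the asserted $\approx$. By the inequality $\tfrac12\|P-Q\|_{1}\le H(P,Q)$ of (\ref{total-variation-Hellinger-inequ}) and the product inequality (\ref{Hellinger-sum-inequ}), it suffices to control $\sum_{j=1}^{m}H^{2}\bigl(\mathrm{Geo}(p(\lambda_{j,m}(a))),\mathrm{Geo}(p(J_{j,m}(a)))\bigr)$ uniformly over $a$.

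First I would record the uniform two-sided bounds. Since $a\in\mathcal L_{M}$ gives $a(\omega)\ge 1+M^{-1}$ and, for $\alpha>1/2$, $a$ is uniformly bounded (by (\ref{cond-boundedness-specdensity})), the local averages satisfy $1+M^{-1}\le J_{j,m}(a)\le C$ for all $j$; and by Lemma \ref{lem-Fourier-series-approx} the Fourier approximant $\tilde a_{m}$ converges to $a$ uniformly, so for $m$ large the values $\lambda_{j,m}(a)=\tilde a_{m}(\omega_{j-(m+1)/2,m})$ also lie in a fixed compact subinterval $[1+c,C']\subset(1,\infty)$. On such an interval $\lambda\mapsto p(\lambda)=(\lambda-1)/(\lambda+1)$ is Lipschitz, with image in a compact subinterval of $(0,1)$. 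A direct computation of the Hellinger affinity of two geometric laws gives $\int\sqrt{dP\,dQ}=\sqrt{(1-p_{1})(1-p_{2})}/(1-\sqrt{p_{1}p_{2}})$, which equals $1$ and has vanishing differential on the diagonal $p_{1}=p_{2}$ (it attains its maximum there), so $H^{2}(\mathrm{Geo}(p_{1}),\mathrm{Geo}(p_{2}))=2-2\int\sqrt{dP\,dQ}=O((p_{1}-p_{2})^{2})$ locally uniformly; combined with the Lipschitz bound for $p(\cdot)$ this produces a constant $L=L(\alpha,M)$, independent of $m$, with $H^{2}(\mathrm{Geo}(p(\lambda_{1})),\mathrm{Geo}(p(\lambda_{2})))\le L(\lambda_{1}-\lambda_{2})^{2}$ for all $\lambda_{i}$ in that range. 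This per-coordinate estimate is the one genuinely computational step, and it is elementary; the only real care needed is the uniformity of $L$, which follows because all the $\lambda$'s and $J$'s stay in one fixed compact subinterval of $(1,\infty)$.

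The key observation is then that the Fourier node $\omega_{j-(m+1)/2,m}=2\pi(j-(m+1)/2)/m$ is exactly the midpoint $\tilde\omega_{j,m}$ of the interval $W_{j,m}$ of (\ref{intervals-Wjn-def}), so $\lambda_{j,m}(a)-J_{j,m}(a)=\tilde a_{m}(\tilde\omega_{j,m})-J_{j,m}(a)$ is precisely the quantity bounded in Lemma \ref{lem-basic-approx-in-Sobolev}(ii) with $f=a$ and $n=m$. Setting $\alpha':=\min(\alpha,3/4)\in(1/2,1)$ and noting $\Theta_{1}(\alpha,M)\subseteq W^{\alpha'}(M)$ (since $|k|^{2\alpha'}\le|k|^{2\alpha}$ for $|k|\ge1$, hence $\|a\|_{2,\alpha'}^{2}\le\|a\|_{2,\alpha}^{2}\le M$), Lemma \ref{lem-basic-approx-in-Sobolev}(ii) gives $\sum_{j=1}^{m}(\lambda_{j,m}(a)-J_{j,m}(a))^{2}\le C_{\alpha'}M\,m^{1-2\alpha'}$, uniformly in $a$. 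Since $\alpha'>1/2$ this tends to $0$, and therefore
\[
\sup_{a\in\Theta_{1}(\alpha,M)}\Bigl\|\bigotimes_{j=1}^{m}\mathrm{Geo}(p(\lambda_{j,m}(a)))-\bigotimes_{j=1}^{m}\mathrm{Geo}(p(J_{j,m}(a)))\Bigr\|_{1}^{2}\le 8L\,C_{\alpha'}M\,m^{1-2\alpha'}\longrightarrow 0,
\]
proving $\mathcal{\tilde G}_{m}\simeq\mathcal G_{m}$, hence in particular $\mathcal{\tilde G}_{m}\approx\mathcal G_{m}$. The main obstacle is not difficulty but bookkeeping: verifying the node/midpoint coincidence so that Lemma \ref{lem-basic-approx-in-Sobolev}(ii) applies verbatim, and keeping every constant uniform in $m$ and in $a\in\Theta_{1}(\alpha,M)$ — both of which are supplied by Lemma \ref{lem-Fourier-series-approx} and the defining bound $\|a\|_{2,\alpha}^{2}\le M$.
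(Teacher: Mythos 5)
Your proposal is correct and its overall architecture — Hellinger reduction via (\ref{total-variation-Hellinger-inequ}) and (\ref{Hellinger-sum-inequ}), the lower bounds $J_{j,m}(a)-1\ge M^{-1}$ and $\lambda_{j,m}(a)-1\ge M^{-1}(1+o(1))$ from $\mathcal L_M$ and Lemma \ref{lem-Fourier-series-approx}, the identification of the Fourier node $\omega_{j-(m+1)/2,m}$ with the midpoint $\tilde\omega_{j,m}$, and the invocation of Lemma \ref{lem-basic-approx-in-Sobolev}(ii) — is the same as the paper's. The one place you diverge is the per-coordinate Hellinger bound: you compute the geometric Hellinger affinity $\sqrt{(1-p_1)(1-p_2)}/(1-\sqrt{p_1p_2})$ directly and argue that $H^2=O((p_1-p_2)^2)$ locally uniformly by a Taylor/compactness argument, whereas the paper passes through the negative-binomial identification $\mathrm{Geo}(p)=\mathrm{NB}(1,p)$ and invokes Lemma \ref{Lem-distance-neg-binomials}(i), which yields the cleaner closed-form bound $H^2\le(\lambda-J)^2/((\lambda-1)(J-1))$ with the denominator already encoding the uniform lower bound. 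Both work; the paper's route gives an explicit constant for free and is reused elsewhere in the argument, while yours is more self-contained and elementary. A small plus in your write-up: you make explicit the monotonicity reduction to $\alpha'=\min(\alpha,3/4)$ so that Lemma \ref{lem-basic-approx-in-Sobolev}(ii) applies verbatim when $\alpha\ge1$; the paper performs the analogous step in the proof of Lemma \ref{lem-first-neg-binom-equivalence-2} but leaves it implicit here.
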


\begin{proof}
In view of inequalities (\ref{total-variation-Hellinger-inequ}) and
(\ref{Hellinger-sum-inequ}) it suffices to prove for the Hellinger distance
$H\left(  \cdot,\cdot\right)  $
\[
\sum_{j=1}^{m}H^{2}\left(  \mathrm{Geo}\left(  p\left(  \lambda_{j,m}\left(
a\right)  \right)  \right)  ,\mathrm{Geo}\left(  p\left(  J_{j,m}\left(
a\right)  \right)  \right)  \right)  =o\left(  1\right)
\]
uniformly over $a\in\Theta$. Using the fact that the geometric law
\textrm{Geo}$\left(  p\right)  $ coincides with the negative binomial law
\textrm{NB}$\left(  1,p\right)  $ (Appendix, Subsection \ref{subsec:neg-binom}%
) and Lemma \ref{Lem-distance-neg-binomials} (i), we obtain
\begin{equation}
H^{2}\left(  \mathrm{Geo}\left(  p\left(  \lambda_{j,m}\left(  a\right)
\right)  \right)  ,\mathrm{Geo}\left(  p\left(  J_{j,m}\left(  a\right)
\right)  \right)  \right)  \leq\frac{\left(  \lambda_{j,m}\left(  a\right)
-J_{j,m}\left(  a\right)  \right)  ^{2}}{\left(  \lambda_{j,m}\left(
a\right)  -1\right)  \left(  J_{j,m}\left(  a\right)  -1\right)  }.
\label{Hellinger-two-geometrics}%
\end{equation}
For the numerator on the r.h.s., observe that $a\in\Theta_{1}\left(
\alpha,M\right)  $ implies $a\left(  \omega\right)  \geq1+M^{-1}$, $\omega
\in\left[  -\pi,\pi\right]  $ and hence also $J_{j,m}\left(  a\right)  -1\geq
M^{-1}$, $j=1,\ldots,m$. Furthermore for $\lambda_{j,m}\left(  a\right)
=\tilde{a}_{m}\left(  \omega_{j-\left(  m+1\right)  /2,m}\right)  $ we can use
Lemma \ref{lem-Fourier-series-approx} to show that
\[
\inf_{j=1,\ldots,m}\lambda_{j,m}\left(  a\right)  -1\geq M^{-1}\left(
1+o\left(  1\right)  \right)  .
\]
It follows that
\begin{align*}
&  \sum_{j=1}^{m}H^{2}\left(  \mathrm{Geo}\left(  p\left(  \lambda
_{j,m}\left(  a\right)  \right)  \right)  ,\mathrm{Geo}\left(  p\left(
J_{j,m}\left(  a\right)  \right)  \right)  \right) \\
&  \leq\left(  1+o\left(  1\right)  \right)  M^{-2}\sum_{j=1}^{m}\left(
\lambda_{j,m}\left(  a\right)  -J_{j,m}\left(  a\right)  \right)  ^{2}.
\end{align*}
Now observe in the setting of Lemma \ref{lem-basic-approx-in-Sobolev} (ii),
the midpoints $\tilde{\omega}_{j,m}$ of the intervals $W_{j,m}$ coincide with
$\omega_{j-\left(  m+1\right)  /2,m}$, for $j=1,\ldots,m$. Now reference to
the latter result establishes the claim.
\end{proof}

\bigskip

Our next task is to compare the basic geometric regression models
$\mathcal{F}_{n}$ for different sample sizes $n$ and $m$.

\begin{proposition}
\label{prop-reduce-additional-obs}If $m=$ $n+r_{n}$, $0\leq r_{n}=o\left(
n^{1/2}\right)  $ then we have asymptotic equivalence
\[
\mathcal{F}_{n}\approx\mathcal{F}_{m}\text{ as }n\rightarrow\infty.
\]

\end{proposition}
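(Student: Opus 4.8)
The plan is to bound both deficiencies $\delta(\mathcal{G}_{n},\mathcal{G}_{m})$ and $\delta(\mathcal{G}_{m},\mathcal{G}_{n})$ by one and the same $o(1)$ quantity. Throughout, $\Theta=\Theta_{1}(\alpha,M)$ with $\alpha>1/2$, so by (\ref{cond-boundedness-specdensity}) all local averages $J_{j,n}(a)$ and $J_{i,m}(a)$ lie in a fixed compact subinterval of $(1,\infty)$, uniformly in $a$. By (\ref{total-variation-Hellinger-inequ}), (\ref{Hellinger-sum-inequ}) and Lemma \ref{Lem-distance-neg-binomials}~(i), whenever a channel reproduces one experiment from the other by matching output coordinate $j$ to input coordinate $\iota(j)$ through an \emph{injection} $\iota$, its trace-norm cost is at most $C_{M}\bigl(\sum_{j}(J_{j,n}(a)-J_{\iota(j),m}(a))^{2}\bigr)^{1/2}$, uniformly in $a$. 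Hence everything reduces to (a) a Sobolev estimate showing this sum of squared mismatches is $o(1)$ for a suitable near-identity injection, and (b) the construction of the channel, which is elementary in one direction and delicate in the other.

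For (a), let $\bar{a}_{n},\bar{a}_{m}$ be the $L_{2}$-projections of $a$ onto the step functions over the $n$-bin and $m$-bin grids. Lemma \ref{lem-basic-approx-in-Sobolev}~(i) gives $\|a-\bar{a}_{n}\|_{2}^{2}\le C_{\alpha}Mn^{-2\alpha}$ and $\|a-\bar{a}_{m}\|_{2}^{2}\le C_{\alpha}Mm^{-2\alpha}$, hence $\|\bar{a}_{n}-\bar{a}_{m}\|_{2}^{2}\le 4C_{\alpha}Mn^{-2\alpha}$. Now $\bar{a}_{n}-\bar{a}_{m}$ is constant on each interval $I$ of the common refinement of the two grids, with value $J_{j(I),n}(a)-J_{i(I),m}(a)$ there; and, since $n<m<2(n-1)$, every $W_{j,n}$ contains a refinement interval $I_{j}^{*}$ of length $\ge 2\pi/(3n)$. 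Taking $i(j):=i(I_{j}^{*})$ and summing over the pieces $I_{j}^{*}$,
\[
\frac{2\pi}{3n}\sum_{j=1}^{n}\bigl(J_{j,n}(a)-J_{i(j),m}(a)\bigr)^{2}\le\sum_{I}|I|\bigl(J_{j(I),n}(a)-J_{i(I),m}(a)\bigr)^{2}=\|\bar{a}_{n}-\bar{a}_{m}\|_{2}^{2}\le 4C_{\alpha}Mn^{-2\alpha},
\]
so $\sup_{a\in\Theta}\sum_{j}(J_{j,n}(a)-J_{i(j),m}(a))^{2}=O(n^{1-2\alpha})=o(1)$. The matching may moreover be taken injective (e.g. $i(j)=\lceil jm/n\rceil$, which keeps $W_{i(j),m}$ within $O(1/n)$ of $W_{j,n}$), and the symmetric estimate for an injective matching of the $m$-grid into the $n$-grid is identical.

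Granting (a), the direction $\delta(\mathcal{G}_{m},\mathcal{G}_{n})\to 0$ (i.e. $\mathcal{G}_{n}\precsim\mathcal{G}_{m}$, as one expects from $m>n$) is immediate: the channel takes an observation from $\mathcal{G}_{m}(a)$, keeps the coordinates indexed by $i(j)$, $j=1,\dots,n$, and discards the rest; the output law is the product $\bigotimes_{j}\mathrm{Geo}(p(J_{i(j),m}(a)))$, which by (a) and the reduction above is within $o(1)$ of $\mathcal{G}_{n}(a)$ in trace norm, uniformly in $a$.

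The reverse direction $\delta(\mathcal{G}_{n},\mathcal{G}_{m})\to 0$ is the crux, and the place where $r_{n}=o(n^{1/2})$ is used; I expect this to be the main obstacle. One must synthesise an $m$-coordinate observation from an $n$-coordinate one, i.e. manufacture $r_{n}$ new independent geometric coordinates, and plain copying is not enough: two output coordinates forced to equal the same input coordinate lie at trace-norm distance of order one from a pair of independent geometrics, so — since no injection $\{1,\dots,m\}\to\{1,\dots,n\}$ exists and at least $r_{n}$ coincidences are unavoidable — copying leaves an irreducible error of order $r_{n}$. The remedy I propose is to carry out the re-binning after passing to a Gaussian surrogate. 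Because $\mathrm{Geo}(p(\cdot))$ is a one-parameter exponential family with parameter bounded and bounded away from degeneracy on $\Theta$, and because $\alpha>1/2$ is precisely the smoothness at which a smooth exponential-family regression is asymptotically equivalent to the associated Gaussian regression with variance-stabilising link $\mathrm{arc\cosh}$ (this is the mechanism behind \cite{MR1633574} and \cite{MR2589320}, and is of the local/fixed-center type for which $\alpha>1/2$ suffices), both $\mathcal{G}_{n}$ and $\mathcal{G}_{m}$ become asymptotically equivalent to a common Gaussian white-noise model of the form $dY_{\omega}=\mathrm{arc\cosh}(a(\omega))\,d\omega+(2\pi/n)^{1/2}\,dW_{\omega}$ (cf. (\ref{SDE-1}), (\ref{gwn-spec-density}); the rate-$m$ version differs only by the harmless factor $(m/n)^{1/2}\to 1$ in the noise level). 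In the continuous white-noise model the passage between sampling rates is free: from the rate-$n$ increments one recovers the rate-$m$ increments by adjoining independent within-bin Brownian-bridge detail, the only cost being the within-bin variation of the signal $\mathrm{arc\cosh}(a(\cdot))$, whose aggregate is again $O(n^{1-2\alpha})=o(1)$ by (a). Chaining these equivalences gives $\delta(\mathcal{G}_{n},\mathcal{G}_{m})\to 0$, hence $\mathcal{G}_{n}\approx\mathcal{G}_{m}$. The delicate point to be pinned down is precisely the existence of this Gaussian surrogate to the accuracy needed here under the single hypothesis $\alpha>1/2$.
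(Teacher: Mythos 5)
Your diagnosis of the crux (that no injection $\{1,\ldots,m\}\to\{1,\ldots,n\}$ exists, so the hard direction $\delta(\mathcal{G}_{n},\mathcal{G}_{m})\to 0$ cannot be proved by coordinate-copying) is correct, but the remedy you propose — transit through the variance-stabilised Gaussian white-noise model — has a genuine smoothness gap. The passage $\mathcal{F}_{n}\approx\mathcal{G}_{n,1}$ via Grama--Nussbaum (\cite{MR1633574}) works over a \emph{H\"older} class of index $>1/2$; the parameter set here is the \emph{Sobolev} ball $\Theta_{1}(\alpha,M)$, and the embedding (\ref{inclusion-by-embedding}) only places $\Theta_{1}(\alpha,M)$ inside $\Theta_{1,c}(\alpha-1/2,M')$. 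That is precisely why Theorem~\ref{theor-GWN-main-1} carries the hypothesis $\alpha>1$ rather than $\alpha>1/2$. Proposition~\ref{prop-reduce-additional-obs} is invoked in the proof of Theorem~\ref{theor-main-1} and therefore must hold under $\alpha>1/2$, so any route through the global white-noise surrogate is circular at best and simply unavailable for $1/2<\alpha\le 1$. (You flag this yourself as "the delicate point"; it is in fact a gap, not a detail.) A secondary issue: in your forward direction, once you replace the refinement-interval matching by the injection $i(j)=\lceil jm/n\rceil$ so as to make the channel well-defined, the overlap $W_{j,n}\cap W_{i(j),m}$ can be arbitrarily short, and the $L_{2}$ argument controlling $\sum_{j}(J_{j,n}(a)-J_{i(j),m}(a))^{2}$ no longer follows from the displayed refinement bound; it needs to be re-derived.

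The paper avoids both difficulties with a purely distributional device: infinite divisibility of the geometric inside the negative binomial family. Each geometric in $\mathcal{G}_{n}$ is split (losslessly, by sufficiency, Lemma~\ref{lem-first-neg-binom-equivalence}) into $m$ i.i.d.\ $\mathrm{NB}(m^{-1},\cdot)$ pieces, giving a product of $mn$ negative binomials $\mathcal{G}_{n,m}$; symmetrically, $\mathcal{G}_{m}$ is compared to $\mathcal{G}_{m,n}^{*}$, also a product of $mn$ pieces of the same shape parameter $m^{-1}$. Since both sides now have the \emph{same number} of coordinates with the \emph{same} shape parameter, the $n$-grid versus $m$-grid mismatch is a clean per-piece Hellinger comparison of parameters $J_{j,n}$ versus $J_{j,m}$ (Lemma~\ref{lem-first-neg-binom-equivalence-2}), which is where your Sobolev estimate (a) lives. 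The only loss is that reassembling $n$ pieces of shape $m^{-1}$ yields $\mathrm{NB}(n/m,\cdot)$ rather than $\mathrm{Geo}=\mathrm{NB}(1,\cdot)$; Lemma~\ref{lem-first-neg-binom-equivalence-3} bounds that discrepancy by a Gamma-function Taylor expansion, producing $O(r_{n}^{2}/n)$ across $m$ coordinates, and this is exactly where $r_{n}=o(n^{1/2})$ enters. This works under $\alpha>1/2$ with no Gaussianisation, no injections, and no H\"older embedding.
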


This will follow from Lemmas \ref{lem-first-neg-binom-equivalence} --
\ref{lem-first-neg-binom-equivalence-3} below. Abbreviate $\Theta=\Theta
_{1}\left(  \alpha,M\right)  $ and introduce an experiment
\[
\mathcal{F}_{n,m}=\left(
{\displaystyle\bigotimes\limits_{j=1}^{n}}
\mathrm{NB}^{\otimes m}\left(  m^{-1},p\left(  J_{j,n}\left(  a\right)
\right)  \right)  ,a\in\Theta\right)
\]
where $\mathrm{NB}\left(  r,p\right)  $ denotes the negative binomial
distribution (see Subsection \ref{subsec:neg-binom}) and $\mathrm{NB}^{\otimes
m}\left(  r,p\right)  $ its $m$-fold product.

\begin{lemma}
\label{lem-first-neg-binom-equivalence}For any $n,m>0$ we have equivalence
\begin{equation}
\mathcal{F}_{n}\sim\mathcal{F}_{n,m}. \label{equiv-geometric-1}%
\end{equation}

\end{lemma}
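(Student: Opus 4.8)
The plan is to exhibit explicit state transitions (Markov kernels) in both directions between the experiments $\mathcal{G}_n$ and $\mathcal{G}_{n,m}$, thereby showing $\delta(\mathcal{G}_n,\mathcal{G}_{n,m})=0$ and $\delta(\mathcal{G}_{n,m},\mathcal{G}_n)=0$ simultaneously. The key observation is the elementary distributional identity: if $Y_1,\dots,Y_m$ are i.i.d. $\mathrm{NB}(m^{-1},p)$, then their sum $Y_1+\cdots+Y_m$ has law $\mathrm{NB}(1,p)=\mathrm{Geo}(p)$ (the negative binomial parameter $r$ adds under convolution at fixed $p$). This is the infinite-divisibility / convolution property of the negative binomial distribution recorded in Subsection \ref{subsec:neg-binom}. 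Since each coordinate block in $\mathcal{G}_{n,m}$ is $\mathrm{NB}^{\otimes m}(m^{-1},p(J_{j,n}(a)))$ and the corresponding coordinate in $\mathcal{G}_n$ is $\mathrm{Geo}(p(J_{j,n}(a)))$, summing the $m$ components within each of the $n$ blocks is a deterministic (hence Markov) map carrying $\mathcal{G}_{n,m}$ onto $\mathcal{G}_n$, uniformly in $a\in\Theta$; this gives $\delta(\mathcal{G}_{n,m},\mathcal{G}_n)=0$.

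For the reverse direction, I would use the fact that conditionally on the sum $S=\sum_{i=1}^m Y_i$, the vector $(Y_1,\dots,Y_m)$ of i.i.d. $\mathrm{NB}(m^{-1},p)$ variables has a distribution that does \emph{not} depend on $p$ — it is a Dirichlet-multinomial (Pólya) distribution with parameters $(m^{-1},\dots,m^{-1};S)$. Concretely, $P(Y=y\mid S=s)$ depends only on $s$, $m$, and the fixed shape parameter $m^{-1}$, because the likelihood ratio between the joint law at $p$ and at $p'$ is a function of $s=\sum y_i$ alone. Therefore the Markov kernel that, given an observation $X_j\sim\mathrm{Geo}(p(J_{j,n}(a)))$, draws $(Y_{j,1},\dots,Y_{j,m})$ from this $p$-free conditional law (independently across $j$) transforms $\mathcal{G}_n$ exactly into $\mathcal{G}_{n,m}$, again uniformly in the parameter. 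This yields $\delta(\mathcal{G}_n,\mathcal{G}_{n,m})=0$, and combining the two directions gives the claimed equivalence (\ref{equiv-geometric-1}).

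The only point requiring a small computation is verifying that the conditional law of i.i.d.\ $\mathrm{NB}(m^{-1},p)$ variables given their sum is free of $p$; this follows immediately from writing out the joint mass function $\prod_i \binom{m^{-1}+y_i-1}{y_i}(1-p)^{m^{-1}}p^{y_i}$ and noting the $p$-dependence collapses to $(1-p)^{1}p^{\,s}$, i.e.\ exactly the mass function of the $\mathrm{Geo}(p)$ sum, so the ratio is a combinatorial factor independent of $p$. I do not expect a genuine obstacle here: both transitions are completely explicit and the uniformity over $\Theta$ is automatic since the kernels themselves do not involve $a$. (This is the classical "splitting/pooling of a Poisson-type family" argument adapted to the negative binomial, and it is the prototype for the subsequent Lemmas \ref{lem-first-neg-binom-equivalence} -- \ref{lem-first-neg-binom-equivalence-3}.)
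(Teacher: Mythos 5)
Your proposal is correct and takes essentially the same approach as the paper: the paper invokes sufficiency of the block sum $\sum_{k=1}^m X_k$ (which follows from the exponential-family form of $\mathrm{NB}(m^{-1},p)$) together with the convolution identity $\mathrm{NB}^{\ast m}(m^{-1},p)=\mathrm{NB}(1,p)=\mathrm{Geo}(p)$, and then cites the standard fact that a sufficient statistic yields a statistically equivalent experiment. Your two explicit Markov kernels --- pooling by summation in one direction, and splitting by drawing from the $p$-free Dirichlet-multinomial conditional law given the sum in the other --- are precisely the unpacked content of that sufficiency-implies-equivalence black box, so the arguments coincide.
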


\begin{proof}
Consider a parametric model of independent r.v.'s $X_{k}\sim\mathrm{NB}\left(
m^{-1},p\right)  $, $k=1,\ldots,m$, $p\in\left(  0,1\right)  $. Then, as
argued in connection with (\ref{sum-of-neg-binomials}) below, $\sum_{k=1}%
^{m}X_{k}$ is a sufficient statistic, and $\sum_{j=1}^{n}X_{i}\sim
\mathrm{Geo}\left(  p\right)  $. Consequently
\[
\left(  \mathrm{Geo}\left(  p\right)  ,\;p\in\left(  0,1\right)  \right)
\sim\left(  \mathrm{NB}^{\otimes m}\left(  m^{-1},p\right)  ,\;p\in\left(
0,1\right)  \right)  .
\]
This equivalence via sufficiency easily extends to the experiments given by
product measures
\[
\left(
{\displaystyle\bigotimes\limits_{j=1}^{n}}
\mathrm{Geo}\left(  p_{j}\right)  ,\;\left(  p_{1},\ldots,p_{n}\right)
\in\left(  0,1\right)  ^{\times n}\right)  \sim\left(
{\displaystyle\bigotimes\limits_{j=1}^{n}}
\mathrm{NB}^{\otimes m}\left(  m^{-1},p_{j}\right)  ,\;\left(  p_{1}%
,\ldots,p_{n}\right)  \in\left(  0,1\right)  ^{\times n}\right)  .
\]
The common parameter space for $\mathcal{F}_{n},\mathcal{F}_{n,m}$ can be
construed as a subspace of the one above, which implies the claim.
\end{proof}

\bigskip

Introduce an intermediate experiment%
\[
\mathcal{F}_{m,n}^{\ast}=\left(
{\displaystyle\bigotimes\limits_{j=1}^{m}}
\mathrm{NB}^{\otimes n}\left(  m^{-1},p\left(  J_{j,m}\left(  a\right)
\right)  \right)  ,a\in\Theta\right)  .
\]

\begin{lemma}
\label{lem-first-neg-binom-equivalence-2}For $m\geq n$, we have asymptotic
total variation equivalence
\[
\mathcal{F}_{m,n}^{\ast}\simeq\mathcal{F}_{n,m}\text{ as }n\rightarrow\infty.
\]

\end{lemma}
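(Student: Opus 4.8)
The plan is to exhibit both experiments as products of $nm$ negative binomial factors $\mathrm{NB}(m^{-1},\cdot)$ carried by one common fine grid, and then to bound the trace norm distance by a Hellinger sum. First note that, since $\Theta_{1}(\alpha,M)\subseteq\Theta_{1}(\beta,M)$ for every $\beta\leq\alpha$ (because $|a|_{2,\beta}^{2}\leq|a|_{2,\alpha}^{2}$, as $|k|\geq1$) and $\Delta_{0}$ is monotone in the parameter set, it suffices to prove the claim for $\Theta_{1}(\beta,M)$ with some $\beta\in(1/2,1)$; so we may and do assume $1/2<\alpha<1$. Now partition $(-\pi,\pi)$ into $nm$ consecutive intervals $I_{\ell}$, $\ell=1,\ldots,nm$, of length $2\pi/(nm)$: since $W_{j,n}$ has length $2\pi/n$ and $W_{j',m}$ has length $2\pi/m$, the grid $\{I_{\ell}\}$ is obtained equally by cutting each $W_{j,n}$ into $m$ equal pieces and by cutting each $W_{j',m}$ into $n$ equal pieces (both produce the endpoints $-\pi+2\pi k/(nm)$, $k=0,\dots,nm$), so every $I_{\ell}$ lies inside exactly one coarse cell $W_{j,n}$ and exactly one fine cell $W_{j',m}$. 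The $m$ (resp. $n$) identically distributed factors attached in $\mathcal{G}_{n,m}$ (resp. $\mathcal{G}_{m,n}^{\ast}$) to a coarse (resp. fine) cell may therefore be relabelled by $\ell=1,\dots,nm$ so that the $\ell$-th coordinate of $\mathcal{G}_{n,m}$ is $\nu_{n,\ell}:=\mathrm{NB}(m^{-1},p(\bar{a}_{n}|_{I_{\ell}}))$ and the $\ell$-th coordinate of $\mathcal{G}_{m,n}^{\ast}$ is $\nu_{m,\ell}:=\mathrm{NB}(m^{-1},p(\bar{a}_{m}|_{I_{\ell}}))$, where $\bar{a}_{n},\bar{a}_{m}$ are the piecewise-constant $L_{2}$-projections of $a$ from (\ref{fbar-def}) and $\bar{a}_{n}|_{I_{\ell}},\bar{a}_{m}|_{I_{\ell}}$ are their constant values on $I_{\ell}$ (namely $J_{j,n}(a)$ and $J_{j',m}(a)$ for the cells containing $I_{\ell}$). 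Under this identification both experiments live on $\prod_{\ell=1}^{nm}\mathbb{Z}_{\geq0}$, so $\Delta_{0}(\mathcal{G}_{m,n}^{\ast},\mathcal{G}_{n,m})=\sup_{a\in\Theta}\Vert\bigotimes_{\ell}\nu_{n,\ell}-\bigotimes_{\ell}\nu_{m,\ell}\Vert_{1}$.

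For fixed $a$, (\ref{total-variation-Hellinger-inequ}) and (\ref{Hellinger-sum-inequ}) give
\[
\left\Vert\bigotimes_{\ell=1}^{nm}\nu_{n,\ell}-\bigotimes_{\ell=1}^{nm}\nu_{m,\ell}\right\Vert_{1}^{2}\leq8\sum_{\ell=1}^{nm}H^{2}(\nu_{n,\ell},\nu_{m,\ell}).
\]
By (\ref{cond-boundedness-specdensity}) there is $C=C_{M,\alpha}>1$ with $1+C^{-1}\leq a\leq C$ for all $a\in\Theta_{1}(\alpha,M)$, hence $1+C^{-1}\leq\bar{a}_{n}|_{I_{\ell}},\bar{a}_{m}|_{I_{\ell}}\leq C$ for all $\ell$. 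Lemma \ref{Lem-distance-neg-binomials}, applied with shape parameter $r=m^{-1}\leq1$ and both parameters in the fixed compact interval $[1+C^{-1},C]$, supplies a constant $C_{M}$ (independent of $n,m$) with
\[
H^{2}\!\left(\mathrm{NB}(m^{-1},p(\lambda_{1})),\mathrm{NB}(m^{-1},p(\lambda_{2}))\right)\leq C_{M}\,m^{-1}(\lambda_{1}-\lambda_{2})^{2}.
\]
Since $\bar{a}_{n}-\bar{a}_{m}$ is constant on each $I_{\ell}$ with $|I_{\ell}|=2\pi/(nm)$, we have $\sum_{\ell}(\bar{a}_{n}|_{I_{\ell}}-\bar{a}_{m}|_{I_{\ell}})^{2}=\frac{nm}{2\pi}\Vert\bar{a}_{n}-\bar{a}_{m}\Vert_{2}^{2}$, so combining the last three displays yields
\[
\left\Vert\bigotimes_{\ell=1}^{nm}\nu_{n,\ell}-\bigotimes_{\ell=1}^{nm}\nu_{m,\ell}\right\Vert_{1}^{2}\leq8C_{M}\,m^{-1}\cdot\frac{nm}{2\pi}\,\Vert\bar{a}_{n}-\bar{a}_{m}\Vert_{2}^{2}=\frac{4C_{M}n}{\pi}\,\Vert\bar{a}_{n}-\bar{a}_{m}\Vert_{2}^{2}.
\]

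It remains to control $\Vert\bar{a}_{n}-\bar{a}_{m}\Vert_{2}$. By the triangle inequality and Lemma \ref{lem-basic-approx-in-Sobolev}(i) (valid since $1/2<\alpha<1$), using $|a|_{2,\alpha}^{2}\leq M$ and $m\geq n$,
\[
\Vert\bar{a}_{n}-\bar{a}_{m}\Vert_{2}^{2}\leq2\Vert a-\bar{a}_{n}\Vert_{2}^{2}+2\Vert a-\bar{a}_{m}\Vert_{2}^{2}\leq2C_{\alpha}\left(n^{-2\alpha}+m^{-2\alpha}\right)|a|_{2,\alpha}^{2}\leq4C_{\alpha}M\,n^{-2\alpha}.
\]
Combining with the previous display gives $\Vert\bigotimes_{\ell}\nu_{n,\ell}-\bigotimes_{\ell}\nu_{m,\ell}\Vert_{1}^{2}\leq\frac{16C_{M}C_{\alpha}M}{\pi}\,n^{1-2\alpha}$ uniformly over $a\in\Theta$, so $\Delta_{0}(\mathcal{G}_{m,n}^{\ast},\mathcal{G}_{n,m})\to0$ since $\alpha>1/2$, which is the asserted total-variation equivalence. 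The one point that really needs care is that the factor $m^{-1}$ coming from the common shape parameter of the negative binomials is \emph{indispensable}: it turns the order-$nm$ Hellinger sum into a bound of order $n\Vert\bar{a}_{n}-\bar{a}_{m}\Vert_{2}^{2}=O(n^{1-2\alpha})$, whereas without it one would only obtain $O(n^{2-2\alpha})$, adequate only for $\alpha>1$. A secondary, purely bookkeeping, point is the choice of the coordinate identification via the common grid $\{I_{\ell}\}$: pairing the $\mathcal{G}_{n,m}$-factor over $I_{\ell}$ with the $\mathcal{G}_{m,n}^{\ast}$-factor over $I_{\ell}$ is exactly what makes the parameter differences $\ell^{2}$-small, and a generic pairing would not.
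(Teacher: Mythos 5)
Your proof is correct and follows essentially the same strategy as the paper's: factor both experiments into $nm$ negative binomial components aligned to the common fine grid of intervals of length $2\pi/(nm)$, bound the $L_1$-distance by a Hellinger sum, apply Lemma~\ref{Lem-distance-neg-binomials}(i) componentwise, recognize the sum of squared parameter gaps as $\frac{nm}{2\pi}\Vert\bar{a}_{n}-\bar{a}_{m}\Vert_{2}^{2}$, and control this via Lemma~\ref{lem-basic-approx-in-Sobolev}(i). The only cosmetic difference is that you perform the reduction to $\alpha\in(1/2,1)$ once at the outset (via the nesting $\Theta_{1}(\alpha,M)\subseteq\Theta_{1}(\beta,M)$), whereas the paper handles the case $\alpha\geq1$ inline by replacing $\alpha$ with some $\beta\in(1/2,1)$ in the approximation estimate.
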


\begin{proof}
Write the measures in $\mathcal{F}_{n,m}$ as a product of $mn$ components,
i.e. as $\otimes_{j=1}^{mn}Q_{1,j}$ where the component measures $Q_{1,j}$ are
defined as follows. For every $j=1,\ldots,mn$, let $k(1,j)$ be the unique
index $k\in\left\{  1,\ldots,n\right\}  $ such that there exists $l\in\left\{
1,\ldots,m\right\}  $ for which $j=(k-1)m+l$. Then
\[
Q_{1,j}=\mathrm{NB}\left(  m^{-1},p\left(  J_{k(1,j),n}\left(  a\right)
\right)  \right)  ,\;j=1,\ldots,mn.\text{ }%
\]
Analogously, let $k(2,j)$ be the unique index $k\in\left\{  1,\ldots
,m\right\}  $ such that there exists $l\in\left\{  1,\ldots,n\right\}  $ for
which $j=(k-1)n+l$. Then the measures in $\mathcal{F}_{m,n}^{\ast}$ can be
written $\otimes_{j=1}^{mn}Q_{2,j}$ where
\[
Q_{2,j}=\mathrm{NB}\left(  m^{-1},p\left(  J_{k(2,j),m}\left(  a\right)
\right)  \right)  .
\]
The Hellinger distance between measures in $\mathcal{F}_{n,m}$ and
$\mathcal{F}_{m,n}^{\ast}$ is, using (\ref{Hellinger-sum-inequ}) and then
Lemma \ref{Lem-distance-neg-binomials} (i)
\[
H^{2}\left(
{\displaystyle\bigotimes\limits_{j=1}^{mn}}
Q_{1,j},%
{\displaystyle\bigotimes\limits_{j=1}^{mn}}
Q_{2,j}\right)  \leq2\sum_{j=1}^{mn}H^{2}\left(  Q_{1,j},Q_{2,j}\right)
\]%
\begin{equation}
\leq\frac{2}{m}\sum_{j=1}^{mn}\frac{\left(  J_{k(1,j),n}\left(  a\right)
-J_{k(2,j),m}\left(  a\right)  \right)  ^{2}}{\left(  J_{k(1,j),n}\left(
a\right)  -1\right)  \left(  J_{k(2,j),m}\left(  a\right)  -1\right)  }.
\label{neg-binom-bound-1}%
\end{equation}
Since $a\in\Theta_{1}\left(  \alpha,M\right)  $, we have $a\left(
\omega\right)  \geq1+M^{-1}$, $\omega\in\left[  -\pi,\pi\right]  $ and hence
also
\[
\inf_{j=1,\ldots,mn}\min\left(  J_{k(1,j),n}\left(  a\right)  ,J_{k(2,j),m}%
\left(  a\right)  \right)  \geq1+M^{-1}.
\]
This implies that (\ref{neg-binom-bound-1}) can be bounded by
\begin{equation}
\leq\frac{2M^{2}}{m}\sum_{j=1}^{mn}\left(  J_{k(1,j),n}\left(  a\right)
-J_{k(2,j),m}\left(  a\right)  \right)  ^{2}. \label{neg-binom-bound-2}%
\end{equation}
The expression $J_{k(1,j),n}(a)-J_{k(2,j),m}(a)$ can be described as follows.
For any $x\in\left(  \left(  j-1\right)  /mn,j/mn\right)  $, $i=1,\ldots,mn$
we have
\begin{equation}
J_{k(1,j),n}(a)-J_{k(2,j),m}(a)=\bar{a}_{n}(x)-\bar{a}_{m}(x)
\label{averages-diiferent}%
\end{equation}
where $\bar{a}_{n}$ defined by (\ref{fbar-def}). Hence
\begin{equation}
\frac{1}{mn}\sum_{j=1}^{mn}\left(  J_{k(1,j),n}\left(  a\right)
-J_{k(2,j),m}\left(  a\right)  \right)  ^{2}=\left\Vert \bar{a}_{n}-\bar
{a}_{m}\right\Vert _{2}^{2}. \label{neg-binom-bound-3}%
\end{equation}
Now as a consequence of Lemma \ref{lem-basic-approx-in-Sobolev} (i), if
$a\in\Theta_{1}\left(  \alpha,M\right)  $ and $1/2<\alpha<1$
\[
\left\Vert a-\bar{a}_{n}\right\Vert _{2}^{2}\leq C_{\alpha}\;n^{-2\alpha
}\;\left\vert a\right\vert _{2,\alpha}^{2}\leq C_{\alpha}\;n^{-2\alpha}M.
\]
If $\alpha\geq1$ then for any $\beta\in\left(  0,1\right)  $ we have
$\left\vert a\right\vert _{2,\beta}^{2}\leq\left\vert a\right\vert _{2,\alpha
}^{2}$ and so if $a\in\Theta_{1}\left(  \alpha,M\right)  $ for $\alpha>1/2$
then there exists $\beta>1/2$ such that
\[
\left\Vert a-\bar{a}_{n}\right\Vert _{2}^{2}\leq C_{\beta}\;n^{-2\beta}M.
\]
Hence generally there exists a constant $C$ such that
\begin{align*}
n\left\Vert \bar{a}_{n}-\bar{a}_{m}\right\Vert _{2}^{2}  &  \leq2n\left\Vert
\bar{a}_{n}-a\right\Vert _{2}^{2}+2n\left\Vert \bar{a}_{m}-a\right\Vert
_{2}^{2}\\
&  \leq2CM\left(  n^{1-2\beta}+nm^{-2\beta}\right)  \leq4CMn^{1-2\beta
}=o\left(  1\right)
\end{align*}
uniformly over $a\in\Theta_{1}\left(  \alpha,M\right)  $. This relation along
with (\ref{neg-binom-bound-1})-(\ref{neg-binom-bound-3}) proves that
\[
\sup_{a\in\Theta_{1}\left(  \alpha,M\right)  }H^{2}\left(
{\displaystyle\bigotimes\limits_{j=1}^{mn}}
Q_{1,j},%
{\displaystyle\bigotimes\limits_{j=1}^{mn}}
Q_{2,j}\right)  =o\left(  1\right)  .
\]
Now (\ref{total-variation-Hellinger-inequ}) establishes the claim.
\end{proof}

\bigskip

The remaining task is to compare $\mathcal{F}_{m,n}^{\ast}$ to $\mathcal{F}%
_{m}$.

\begin{lemma}
\label{lem-first-neg-binom-equivalence-3}For $m=n+r_{n}$, $0\leq
r_{n}=o\left(  n^{-1/2}\right)  $ we have asymptotic equivalence%
\[
\mathcal{F}_{m}\approx\mathcal{F}_{m,n}^{\ast}\text{ as }n\rightarrow\infty.
\]

\end{lemma}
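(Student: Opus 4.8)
The plan is to combine a sufficiency reduction with a Hellinger estimate between negative binomial laws that differ only in the shape (number-of-failures) parameter.

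First I would reduce $\mathcal{G}_{m,n}^{\ast}$ by sufficiency, exactly as in the proof of Lemma \ref{lem-first-neg-binom-equivalence}. For each fixed $j\in\{1,\dots,m\}$ the $j$-th block of the product measure $\bigotimes_{j=1}^{m}\mathrm{NB}^{\otimes n}\left(m^{-1},p(J_{j,m}(a))\right)$ consists of $n$ i.i.d. draws from $\mathrm{NB}\left(m^{-1},p(J_{j,m}(a))\right)$; by the convolution property of negative binomials (Subsection \ref{subsec:neg-binom}, relation (\ref{sum-of-neg-binomials})) the block sum is a sufficient statistic, distributed as $\mathrm{NB}\left(n/m,\,p(J_{j,m}(a))\right)$. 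Since the blocks are independent and the reduction is a Markov kernel in each direction, this yields exact statistical equivalence $\mathcal{G}_{m,n}^{\ast}\sim\widehat{\mathcal{G}}_{m,n}$, where $\widehat{\mathcal{G}}_{m,n}:=\bigl\{\bigotimes_{j=1}^{m}\mathrm{NB}\left(n/m,p(J_{j,m}(a))\right),\,a\in\Theta\bigr\}$.

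Next I would compare $\widehat{\mathcal{G}}_{m,n}$ with $\mathcal{G}_{m}=\bigl\{\bigotimes_{j=1}^{m}\mathrm{NB}\left(1,p(J_{j,m}(a))\right),a\in\Theta\bigr\}$ (recall $\mathrm{Geo}(p)=\mathrm{NB}(1,p)$), both being products over the same $m$ coordinates. By (\ref{total-variation-Hellinger-inequ}) and (\ref{Hellinger-sum-inequ}) it suffices to show $\sup_{a\in\Theta}\sum_{j=1}^{m}H^{2}\!\left(\mathrm{NB}\left(n/m,p(J_{j,m}(a))\right),\mathrm{NB}\left(1,p(J_{j,m}(a))\right)\right)=o(1)$. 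Here $a\in\Theta_{1}(\alpha,M)$ forces $1+M^{-1}\le J_{j,m}(a)\le C$ for all $j$ (from $a\ge 1+M^{-1}$ and the uniform bound (\ref{cond-boundedness-specdensity})), so $p(J_{j,m}(a))$ stays in a fixed compact subinterval of $(0,1)$, uniformly in $j$ and $a$. Lemma \ref{Lem-distance-neg-binomials} (ii) then bounds the Hellinger distance between two negative binomials with common $p$ and shape parameters $n/m$ and $1$ by a constant $C_{1}$ (depending only on that compact subinterval and on the lower bound $n/m\ge 1/2$) times $(1-n/m)^{2}=(r_{n}/m)^{2}$. Summing over the $m$ coordinates gives $\sum_{j=1}^{m}H^{2}\le C_{1}\,r_{n}^{2}/m$, which tends to $0$ because $m=n+r_{n}\sim n$ and $r_{n}=o(n^{1/2})$. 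Hence $\widehat{\mathcal{G}}_{m,n}\simeq\mathcal{G}_{m}$, and chaining with the first step yields $\mathcal{G}_{m}\approx\mathcal{G}_{m,n}^{\ast}$.

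The only genuinely new ingredient beyond the bookkeeping already used in the preceding lemmas is the quadratic-in-shape Hellinger estimate for negative binomials, i.e. part (ii) of Lemma \ref{Lem-distance-neg-binomials}: the point is that near shape parameter $1$, with $p$ bounded away from $0$ and $1$, the family $r\mapsto\mathrm{NB}(r,p)$ is smooth with locally bounded Fisher information, so $H^{2}\!\left(\mathrm{NB}(r_{1},p),\mathrm{NB}(r_{2},p)\right)\le \mathrm{const}\cdot(r_{1}-r_{2})^{2}$. The sufficiency reduction and the product/total-variation inequalities are exactly as before, and the arithmetic $r_{n}^{2}/m\to 0$ is immediate from the hypothesis on $r_{n}$.
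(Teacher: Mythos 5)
Your proposal is correct and follows essentially the same route as the paper: sufficiency of block sums to reduce $\mathcal{G}_{m,n}^{\ast}$ to a product of $m$ negative binomials with shape parameter $n/m$, then a Hellinger comparison with $\mathcal{G}_{m}$ via Lemma \ref{Lem-distance-neg-binomials}~(ii), and finally summing $m$ terms of order $(r_{n}/m)^{2}$. One small misstatement: the Hellinger bound in Lemma \ref{Lem-distance-neg-binomials}~(ii) is a pure Gamma-function expression and does not depend on $p$ at all, so your appeal to $p$ being bounded away from $0$ and $1$ is superfluous here; the quadratic behavior in $1-n/m$ comes solely from a Taylor expansion of $\Gamma$ near $1$ (which the paper spells out explicitly), and your Fisher-information heuristic captures the same content.
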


\begin{proof}
The sufficiency argument for the negative binomial applied in Lemma
\ref{lem-first-neg-binom-equivalence} can be used to show that%
\[
\mathcal{F}_{m,n}^{\ast}\sim\mathcal{F}_{m}^{\ast}:=\left(
{\displaystyle\bigotimes\limits_{j=1}^{m}}
\mathrm{NB}\left(  nm^{-1},p\left(  J_{j,m}\left(  a\right)  \right)  \right)
,a\in\Theta\right)  .
\]
Now it suffices to show asymptotic total variation equivalence $\mathcal{F}%
_{m}^{\ast}\simeq\mathcal{F}_{m}$. Recall that $\mathrm{Geo}\left(  p\right)
=\mathrm{NB}\left(  1,p\right)  $ and note that for the Hellinger distance we
have, according to Lemma \ref{Lem-distance-neg-binomials} (ii)
\[
H^{2}\left(  \mathrm{NB}\left(  1,p\left(  J_{j,m}\left(  a\right)  \right)
\right)  ,\mathrm{NB}\left(  nm^{-1},p\left(  J_{j,m}\left(  a\right)
\right)  \right)  \right)  \leq1-\frac{\Gamma\left(  \left(  1+nm^{-1}\right)
/2\right)  }{\Gamma^{1/2}\left(  1\right)  \Gamma^{1/2}\left(  nm^{-1}\right)
}%
\]%
\[
\leq\frac{1}{\Gamma^{1/2}\left(  nm^{-1}\right)  }\left(  \Gamma^{1/2}\left(
nm^{-1}\right)  -\Gamma\left(  \left(  1+nm^{-1}\right)  /2\right)  \right)
\]
where we used $\Gamma\left(  1\right)  =1$. Since the Gamma function is
infinitely differentiable on $(0,\infty)$ and $nm^{-1}\rightarrow1$, the first
factor above is $1+o\left(  1\right)  $. Furthermore, write $n/m=1-\delta$
where $\delta=r_{n}/m$; by a Taylor expansion we obtain%
\begin{align*}
\Gamma\left(  \left(  1+n/m\right)  /2\right)   &  =\Gamma(1-\delta
/2)=1-\Gamma^{\prime}(1)\frac{\delta}{2}+O(\delta^{2}),\\
\Gamma^{1/2}\left(  n/m\right)   &  =\Gamma^{1/2}(1-\delta)=1-\frac{1}%
{2}\Gamma^{\prime}(1)\delta+O(\delta^{2}).
\end{align*}
Consequently
\begin{align*}
\left(  \Gamma^{1/2}\left(  nm^{-1}\right)  -\Gamma\left(  \left(
1+nm^{-1}\right)  /2\right)  \right)   &  =O(\delta^{2})\\
&  =O\left(  r_{n}^{2}/m^{2}\right)  .
\end{align*}
Applying (\ref{Hellinger-sum-inequ}) we find that the squared Hellinger
distance between the respective product measures in $\mathcal{F}_{m}^{\ast}$
and $\mathcal{F}_{m}$ is of order%
\[
mO\left(  r_{n}^{2}/m^{2}\right)  \leq O\left(  r_{n}^{2}/n\right)  =o\left(
1\right)
\]
in view of the condition $r_{n}=o(n^{1/2})$. Applying
(\ref{total-variation-Hellinger-inequ}) again establishes the claim
$\mathcal{F}_{m}^{\ast}\simeq\mathcal{F}_{m}$.
\end{proof}

\bigskip

\bigskip

\begin{proof}
[Proof of Theorem \ref{theor-main-1}]Let $m=m_{n}$ be a sequence of odd
numbers such that $m>n$, $m-n=o\left(  n^{1/2}\right)  $, and assume the
parameter space for all experiments is $\Theta_{1}\left(  \alpha,M\right)  $.
Then Proposition \ref{Prop:deta-upper-bound-by-circular} implies
$\mathcal{E}_{n}\precsim\mathcal{\tilde{E}}_{m}$. Lemma
\ref{lem-equiv-diagonal-symbol} implies $\mathcal{\tilde{E}}_{m}%
\sim\mathcal{\tilde{E}}_{m}^{d}$, while Proposition
\ref{Prop-equiv-diag-symbol-geometr} implies $\mathcal{\tilde{E}}_{m}^{d}%
\sim\mathcal{\tilde{F}}_{m}$ and Lemma \ref{lem-equiv-asy-two-geometrics}
states $\mathcal{\tilde{F}}_{m}\sim\mathcal{F}_{m}$. Finally Proposition
\ref{prop-reduce-additional-obs}, by stating $\mathcal{F}_{m}\approx
\mathcal{F}_{n}$, allows to return from the (odd) increased sample size $m>n$
(or number of modes) to the original $n$. Both types of equivalence $\sim$ and
$\approx$ occurring above imply the semi-ordering $\precsim$ between sequences
of experiments having the same parameter space. The reasoning can be
summarized as
\[
\mathcal{E}_{n}\precsim\mathcal{\tilde{E}}_{m}\precsim\mathcal{\tilde{E}}%
_{m}^{d}\precsim\mathcal{\tilde{F}}_{m}\precsim\mathcal{F}_{m}\precsim
\mathcal{F}_{n}.
\]
The obvious transitivity of the relation $\precsim$ implies the claim.
\end{proof}

\subsection{Geometric regression and white
noise\label{Subsec-Geom-reg-and-white-noise}}

Consider an variant of the geometric regression model
(\ref{local-averages-def}) where the local averages $J_{j,n}\left(  a\right)
$ of the spectral density $a$ are replaced by values at points
\begin{equation}
t_{j,n}=2\pi\left(  \frac{j}{n}-\frac{1}{2}\right)  \text{, }j=1,\ldots,n.
\label{equidistant-grid-def}%
\end{equation}
Accordingly define the experiment
\begin{equation}
\mathcal{F}_{n,1}\left(  \Theta\right)  :=\left(
{\displaystyle\bigotimes\limits_{j=1}^{n}}
\mathrm{Geo}\left(  p\left(  a\left(  t_{j,n}\right)  \right)  \right)
,a\in\Theta\right)  \label{geometric-regression-experiment-basic-a}%
\end{equation}
where $p\left(  x\right)  =\left(  x-1\right)  /\left(  x+1\right)  $ for
$x>1$. To introduce an appropriate class of spectral densities $a$ with this
model, define the H\"{o}lder norm for functions on $\left[  -\pi,\pi\right]
,$ with $\alpha\in\left(  0,1\right]  $
\begin{equation}
\left\Vert f\right\Vert _{C^{\alpha}}:=\left\Vert f\right\Vert _{\infty}%
+\sup_{x\neq y}\frac{\left\vert f(x)-f(y)\right\vert }{\left\vert
x-y\right\vert ^{\alpha}} \label{holder-norm}%
\end{equation}
and the corresponding H\"{o}lder class of functions
\begin{equation}
C^{\alpha}(M):=\left\{  f:\left[  -\pi,\pi\right]  \rightarrow\mathbb{R}%
,\;\;\left\Vert f\right\Vert _{C^{\alpha}}\leq M\right\}  .
\label{Hoelder-class-def}%
\end{equation}
The periodic Sobolev norm $\left\Vert \cdot\right\Vert _{2,\alpha}$ for
functions $f$ on $\left[  -\pi,\pi\right]  $ for smoothness index $\alpha>0$
is given by (\ref{normdef}). A basic embedding theorem
(\cite{GolNussbZ-specpaper-preprint}, Lemma 5.6) gives a norm inequality, for
$\alpha\in\left(  0,1\right]  $
\[
\left\Vert f\right\Vert _{C^{\alpha}}\leq C\left\Vert f\right\Vert
_{2,\alpha+1/2}^{2}%
\]
where $C$ depends only on $\alpha$. Thus, if we consider a set of spectral
densities, in analogy to (\ref{Theta-1-functionset-def-a}) and
(\ref{Theta-1-functionset-def})
\begin{align}
\Theta_{1,c}\left(  \alpha,M\right)   &  :=C^{\alpha}(M)\cap\mathcal{L}%
_{M},\label{Big-Theta-11-c-def}\\
\mathcal{L}_{M}  &  :=\left\{  f:\left[  -\pi,\pi\right]  \rightarrow
\mathbb{R},\text{ }\;f\left(  \omega\right)  \geq1+M^{-1}\text{, }\omega
\in\left[  -\pi,\pi\right]  \right\}  \label{FM-script-lowerbound-def}%
\end{align}
then we have the inclusion, for $\alpha\in\left(  0,1\right]  $%
\begin{equation}
\Theta_{1}\left(  \alpha+1/2,M\right)  \subset\Theta_{1,c}\left(
\alpha,M^{\prime}\right)  \label{inclusion-by-embedding}%
\end{equation}
for some $M^{\prime}>0$.

\begin{lemma}
\label{Lem-Geom-avg-values-at-points}If $\Theta=\Theta_{1,c}\left(
\alpha,M\right)  $ for $\alpha\in\left(  1/2,1\right]  ,$ $M>0$ then we have
asymptotic total variation equivalence
\[
\mathcal{F}_{n}\left(  \Theta\right)  \simeq\mathcal{F}_{n,1}\left(
\Theta\right)  \text{ as }n\rightarrow\infty.
\]

\end{lemma}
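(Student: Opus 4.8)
The plan is to bound the trace norm distance $\Delta_0\left(\mathcal{F}_{n}\left(\Theta\right),\mathcal{F}_{n}^{\prime}\left(\Theta\right)\right)$ directly, since both families consist of products of geometric laws on the common sample space $\prod_{j=1}^{n}\left\{0,1,2,\ldots\right\}$, and the relation $\simeq$ means precisely that $\Delta_0\rightarrow0$ (equivalently, asymptotic equivalence in total variation). Following the proof of Lemma \ref{lem-equiv-asy-two-geometrics}, I would pass from the $L_1$-distance to the Hellinger distance by (\ref{total-variation-Hellinger-inequ}) and then use the tensorisation inequality (\ref{Hellinger-sum-inequ}), so that it suffices to show
\[
\sup_{a\in\Theta}\sum_{j=1}^{n}H^{2}\left(\mathrm{Geo}\left(p\left(J_{j,n}\left(a\right)\right)\right),\mathrm{Geo}\left(p\left(a\left(t_{j,n}\right)\right)\right)\right)=o\left(1\right).
\]

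For the single-coordinate terms I would invoke Lemma \ref{Lem-distance-neg-binomials} (i), using $\mathrm{Geo}\left(p\right)=\mathrm{NB}\left(1,p\right)$, which gives the bound $H^{2}\left(\mathrm{Geo}\left(p\left(\lambda_{1}\right)\right),\mathrm{Geo}\left(p\left(\lambda_{2}\right)\right)\right)\leq\left(\lambda_{1}-\lambda_{2}\right)^{2}/\left(\left(\lambda_{1}-1\right)\left(\lambda_{2}-1\right)\right)$. For $a\in\Theta_{1,c}\left(\alpha,M\right)=C^{\alpha}\left(M\right)\cap\mathcal{F}_{M}$ one has $a\left(\omega\right)\geq1+M^{-1}$ on $\left[-\pi,\pi\right]$, so $a\left(t_{j,n}\right)-1\geq M^{-1}$; and since $J_{j,n}\left(a\right)$ is the average of $a$ over $W_{j,n}$ (cf. (\ref{local-averages-def}), (\ref{fbar-def})), also $J_{j,n}\left(a\right)-1\geq M^{-1}$. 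Hence each denominator is at least $M^{-2}$ and
\[
H^{2}\left(\mathrm{Geo}\left(p\left(J_{j,n}\left(a\right)\right)\right),\mathrm{Geo}\left(p\left(a\left(t_{j,n}\right)\right)\right)\right)\leq M^{2}\left(J_{j,n}\left(a\right)-a\left(t_{j,n}\right)\right)^{2}.
\]

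The only genuinely substantive step is then to estimate $\sum_{j=1}^{n}\left(J_{j,n}\left(a\right)-a\left(t_{j,n}\right)\right)^{2}$ by means of the Hölder smoothness. Since $t_{j,n}$ is the right endpoint of $W_{j,n}$ and this interval has length $2\pi/n$, the bound $\left\Vert a\right\Vert _{C^{\alpha}}\leq M$ yields $\left\vert a\left(\omega\right)-a\left(t_{j,n}\right)\right\vert \leq M\left(2\pi/n\right)^{\alpha}$ for every $\omega\in W_{j,n}$; averaging over $W_{j,n}$ gives $\left\vert J_{j,n}\left(a\right)-a\left(t_{j,n}\right)\right\vert \leq M\left(2\pi/n\right)^{\alpha}$ uniformly in $j$ and in $a\in\Theta$. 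Therefore
\[
\sum_{j=1}^{n}\left(J_{j,n}\left(a\right)-a\left(t_{j,n}\right)\right)^{2}\leq n\cdot M^{2}\left(2\pi/n\right)^{2\alpha}=M^{2}\left(2\pi\right)^{2\alpha}n^{1-2\alpha},
\]
which tends to $0$ because $\alpha>1/2$. Combining the three displays with (\ref{total-variation-Hellinger-inequ}) and (\ref{Hellinger-sum-inequ}) gives $\sup_{a\in\Theta}\left\Vert \mathcal{F}_{n}\left(a\right)-\mathcal{F}_{n}^{\prime}\left(a\right)\right\Vert _{1}\leq 2\left(2M^{4}\left(2\pi\right)^{2\alpha}\right)^{1/2}n^{\left(1-2\alpha\right)/2}=o\left(1\right)$, hence $\Delta_0\left(\mathcal{F}_{n}\left(\Theta\right),\mathcal{F}_{n}^{\prime}\left(\Theta\right)\right)\rightarrow0$, as claimed. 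I do not expect any real obstacle; the argument is a routine adaptation of the one for Lemma \ref{lem-equiv-asy-two-geometrics}, the only point requiring care being to keep the rescaling constant $2\pi$ between the $x$- and $\omega$-parametrisations straight when applying the Hölder estimate, and to note that the lower bound $a\geq1+M^{-1}$ also propagates to the local averages $J_{j,n}\left(a\right)$ so that the geometric parameters stay bounded away from $0$ and $1$.
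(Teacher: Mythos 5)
Your proof is correct and follows essentially the same route as the paper's: pass to Hellinger distance via the inequalities (\ref{total-variation-Hellinger-inequ}) and (\ref{Hellinger-sum-inequ}), control each one-coordinate Hellinger term through Lemma \ref{Lem-distance-neg-binomials}(i) with the uniform lower bound $a\geq 1+M^{-1}$ propagating to the local averages, and then use the H\"older estimate $|J_{j,n}(a)-a(t_{j,n})|\leq M(2\pi/n)^{\alpha}$ to obtain the $n^{1-2\alpha}\to 0$ rate. The only difference from the paper is cosmetic bookkeeping of constants.
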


\begin{proof}
As with Lemma \ref{lem-equiv-asy-two-geometrics} it suffices to prove for the
Hellinger distance $H\left(  \cdot,\cdot\right)  $
\begin{equation}
\sum_{j=1}^{n}H^{2}\left(  \mathrm{Geo}\left(  p\left(  a\left(
t_{j,n}\right)  \right)  \right)  ,\mathrm{Geo}\left(  p\left(  J_{j,n}\left(
a\right)  \right)  \right)  \right)  =o\left(  1\right)
\label{Hellinger-total}%
\end{equation}
uniformly over $a\in\Theta$. According to (\ref{Hellinger-two-geometrics}) we
have%
\[
H^{2}\left(  \mathrm{Geo}\left(  p\left(  a\left(  t_{j,n}\right)  \right)
\right)  ,\mathrm{Geo}\left(  p\left(  J_{j,m}\left(  a\right)  \right)
\right)  \right)  \leq\frac{\left(  a\left(  t_{j,n}\right)  -J_{j,m}\left(
a\right)  \right)  ^{2}}{\left(  a\left(  t_{j,n}\right)  -1\right)  \left(
J_{j,m}\left(  a\right)  -1\right)  }.
\]
Here for $a\in$ $\Theta_{1,H}\left(  \alpha,M\right)  $ we have $a\left(
t_{j,n}\right)  -1\geq M^{-1}$, $J_{j,m}\left(  a\right)  -1\geq M^{-1}$,
hence%
\[
H^{2}\left(  \mathrm{Geo}\left(  p\left(  a\left(  t_{j,n}\right)  \right)
\right)  ,\mathrm{Geo}\left(  p\left(  J_{j,m}\left(  a\right)  \right)
\right)  \right)  \leq M^{2}\left(  a\left(  t_{j,n}\right)  -J_{j,m}\left(
a\right)  \right)  ^{2}.
\]
Recalling the definition of the intervals $W_{j,n}$ in
(\ref{intervals-Wjn-def}) and (\ref{fbar-def}), we obtain
\begin{align*}
\left\vert a\left(  t_{j,n}\right)  -J_{j,n}\left(  a\right)  \right\vert  &
=\frac{n}{2\pi}\left\vert \int_{W_{j,n}}\left(  f(x)-a\left(  t_{j,n}\right)
\right)  dx\right\vert \\
&  \leq M\left(  \frac{2\pi}{n}\right)  ^{\alpha}%
\end{align*}
and hence the l.h.s. of (\ref{Hellinger-total}) is bounded by
\[
\sum_{j=1}^{n}M^{2}\left(  a\left(  t_{j,n}\right)  -J_{j,m}\left(  a\right)
\right)  ^{2}\leq M^{4}\left(  2\pi\right)  ^{2\alpha}\;n^{1-2\alpha}=o\left(
1\right)  .
\]

\end{proof}

\bigskip

At this point we use notation $\mathcal{G}_{n,1}\left(  \Theta\right)
:=\mathcal{G}_{n}\left(  \Theta\right)  $ where $\mathcal{G}_{n}\left(
\Theta\right)  $ describes the experiment given by (\ref{SDE-1}), i.e. by
\begin{equation}
dY_{\omega}=\mathrm{arc\cosh}\left(  a\left(  \omega\right)  \right)
d\omega+\left(  2\pi/n\right)  ^{1/2}dW_{\omega},\omega\in\left[  -\pi
,\pi\right]  \label{SDE-1-a}%
\end{equation}
with $a\in\Theta$.

\begin{lemma}
\label{lem-geo-reg-white-noise}For $\alpha\in\left(  1/2,1\right]  ,$ $M>0$
and $\Theta=\Theta_{1,c}\left(  \alpha,M\right)  $ we have%
\[
\Delta\left(  \mathcal{F}_{n,1}\left(  \Theta\right)  ,\mathcal{G}%
_{n,1}\left(  \Theta\right)  \right)  \rightarrow0\text{ as }n\rightarrow
\infty.
\]

\end{lemma}

\bigskip%
\begin{privatenotes}
\begin{boxedminipage}{\textwidth}%

\begin{sfblock}
The following proof needs revision.
\end{sfblock}

%

\end{boxedminipage}
\end{privatenotes}%

\begin{proof}
This follows from the results of \cite{MR1633574}. Let $\left(  Q\left(
\tau\right)  ,\tau\in T\right)  $ be a one parameter exponential family where
$\tau$ is the canonical (natural) parameter and $T=\left[  t_{1},t_{2}\right]
$ is a closed interval in $\mathbb{R}$. It is assumed in \cite{MR1633574} that
$X_{i}$ are independent observations having distributions $Q\left(  f\left(
u_{i}\right)  \right)  $ where $f$ is a function $f:\left[  0,1\right]
\rightarrow T$ and $u_{i}=i/n$, $i=1,\ldots,n$. The following regularity
condition is assumed: $T$ is in the interior of the natural parameter space of
the exponential family, and there exist $\varepsilon>0$ and constants
$C_{1},C_{2}$ \ such the Fisher information $I\left(  \tau\right)
$\ fulfills
\begin{equation}
0<C_{1}\leq I\left(  \tau\right)  \leq C_{2}<\infty,\;\tau\in\left[
t_{1}-\varepsilon,t_{2}+\varepsilon\right]  . \label{reg-cond-expon-fam}%
\end{equation}
According to Subsection \ref{subsec:geometr-distr}, the geometric
distributions have densities (with respect to counting measure $\mu$ on
$\mathbb{Z}_{+}$) which can be written as those of an exponential family of
densities in canonical form, cf. (\ref{canon-from}):%
\[
Q\left(  \tau\right)  \left(  x\right)  =\exp\left(  \tau x-V\left(
\tau\right)  \right)  \text{, }x\in\mathbb{Z}_{+}%
\]
where $\tau=\log p$ and $P\left(  X=x\right)  =\left(  1-p\right)  p^{x}$. In
our setting, $\tau$ will be parametrized according to (\ref{tau-as-func-of-a})
as $\tau=\log\left(  \left(  a-1\right)  /\left(  a+1\right)  \right)  $, so
if $a\in\left[  1+M^{-1},M\right]  $ for some $M>2$ then $\tau\in\left[
t_{1},t_{2}\right]  $ \ for some $t_{1},t_{2}$\ fulfilling $-\infty
<t_{1}<t_{2}<0$. For the Fisher information $I\left(  \tau\right)  $ we have
according to (\ref{Var-geo})
\begin{equation}
I\left(  \tau\right)  =\frac{\exp\tau}{\left(  1-\exp\tau\right)  ^{2}}
\label{Fish-info-expon-fam-proof}%
\end{equation}
such that (\ref{reg-cond-expon-fam}) is fulfilled for sufficiently small
$\varepsilon$.

In \cite{MR1633574} the function $f$ is defined on $[0,1]$ and assumed to vary
in a smoothness class $C_{1}^{\alpha}(M)$, defined as the analog of
$C^{\alpha}(M)$ from (\ref{Hoelder-class-def}) on the interval $[0,1]$. It is
easy to see that the experiment $\mathcal{G}_{n,1}\left(  \Theta\right)  $ can
be cast in this form. Indeed define functions
\begin{align*}
H\left(  a\right)   &  =\log\frac{a-1}{a+1}\text{ for }a\in\left[
1+M^{-1},M\right]  ,\\
s\left(  x\right)   &  =2\pi\left(  x-1/2\right)  .
\end{align*}
Note that $s\left(  u_{j}\right)  =s\left(  j/n\right)  =t_{j,n}$,
$j=1,\ldots,n.$ Thus in $\mathcal{G}_{n}^{\prime}\left(  \Theta\right)  $
observations $X_{j}$ are independent with distribution
\[
X_{j}\sim Q\left(  H\left(  a\left(  s\left(  j/n\right)  \right)  \right)
\right)  \text{, }j=1,\ldots,n.
\]
Setting
\[
f\left(  x\right)  =H\left(  a\left(  s\left(  x\right)  \right)  \right)  ,
\]
we see that observations$\ X_{j}$ \ are of the type considered in
\cite{MR1633574}, with points of the "regression design" $u_{j}=j/n$. The
results of \cite{MR1633574} now hold provided the function $f$ \ is in a class
$C_{1}^{\alpha}(M^{\prime})$ for some $\alpha>1/2$, $M^{\prime}>0$ and takes
values in the interval $T$. Since the function $H$ has bounded derivative on
$\left[  1+M^{-1},M\right]  $ (cf. (\ref{note-various-1})) and $s$ is linear,
the first condition can easily be checked for the given $\alpha$ \ and
\[
M^{\prime}=M\left(  2\pi\right)  ^{\alpha}\sup_{z\in\left[  1+M^{-1},M\right]
}H^{\prime}\left(  z\right)  .
\]
Also, since $H$ is strictly increasing on $\left[  1+M^{-1},M\right]  $ (cf.
(\ref{note-various-1})), the function $f$ takes values in $T=\left[
t_{1},t_{2}\right]  $ with $t_{1}=H\left(  1+M^{-1}\right)  $, $t_{2}=H\left(
M\right)  $, $t_{1}<t_{2}<0$. Thus the experiment
\[
\left(
{\displaystyle\bigotimes\limits_{j=1}^{n}}
Q\left(  f\left(  j/n\right)  \right)  \text{, }f=H\circ a\circ s\text{, }%
a\in\Theta_{1,c}\left(  \alpha,M\right)  \right)
\]
can be approximated in $\Delta$-distance by the white noise model \
\begin{equation}
dZ_{x}=G\left(  f\left(  x\right)  \right)  dx+n^{-1/2}dW_{x}\text{, }%
x\in\left[  0,1\right]  \label{SDE-experi}%
\end{equation}
where $f=H\circ a\circ s$, the function $a$ varies in $\Theta_{1,c}\left(
\alpha,M\right)  $ and $G$ is the variance stabilizing transform pertaining to
the exponential family $\left(  Q\left(  \tau\right)  ,\tau\in T\right)  $
(cf. Section 3.3 of \cite{MR1633574} or Remark 3.3 in \cite{MR1900972}). Here
$G$ is unique up to additive constants; $G$ fulfills
\[
\frac{d}{d\tau}G\left(  \tau\right)  =\sqrt{I\left(  \tau\right)  }%
\]
with $I\left(  \tau\right)  $ given by (\ref{Fish-info-expon-fam-proof}).
Finding the function $G$ is equivalent to finding the function
\[
g\left(  a\right)  :=G\left(  H\left(  a\right)  \right)  \text{, }a\in\left(
1,\infty\right)  .
\]
We have
\begin{align*}
\frac{d}{da}g\left(  a\right)   &  =G^{\prime}\left(  H\left(  a\right)
\right)  \;H^{\prime}\left(  a\right) \\
&  =\sqrt{I\left(  H\left(  a\right)  \right)  }\;H^{\prime}\left(  a\right)
.
\end{align*}
By (\ref{Var-geo}) and (\ref{note-various-2})
\[
I\left(  H\left(  a\right)  \right)  =V^{\prime\prime}\left(  H\left(
a\right)  \right)  =\frac{a^{2}-1}{4}%
\]
whereas by (\ref{note-various-1})
\[
H^{\prime}\left(  a\right)  =\frac{2}{a^{2}-1}.
\]
Thus $g$ must fulfill
\begin{equation}
\frac{d}{da}g\left(  a\right)  =\frac{\sqrt{a^{2}-1}}{2}\frac{2}{a^{2}%
-1}=\frac{1}{\sqrt{a^{2}-1}}. \label{defining-equation-varstab}%
\end{equation}
It can be checked that the function%
\[
g(x)=\mathrm{arc\cosh}\left(  x\right)  =\log\left(  x+\sqrt{x^{2}-1}\right)
,x>1
\]
fulfills (\ref{defining-equation-varstab}). From (\ref{SDE-experi}) we obtain
that the experiment given by $Z=\left\{  Z_{x},x\in\left[  0,1\right]
\right\}  $ with
\begin{equation}
dZ_{x}=g\left(  a\left(  s\left(  x\right)  \right)  \right)  dx+n^{-1/2}%
dW_{x}\text{, }x\in\left[  0,1\right]  \label{white-noise-variant-1}%
\end{equation}
and $a\in\Theta=\Theta_{1,c}\left(  \alpha,M\right)  $ is asymptotically
equivalent to $\mathcal{F}_{n,1}\left(  \Theta\right)  $. Define the
stochastic process $Y=\left\{  Y_{\omega},\omega\in\left[  -\pi,\pi\right]
\right\}  $ by $Y_{\omega}=2\pi Z_{s^{-1}\left(  \omega\right)  }$; then $Y$
satisfies
\begin{equation}
dY_{\omega}=g\left(  a\left(  \omega\right)  \right)  d\omega+\left(
2\pi/n\right)  ^{1/2}dW_{\omega}\text{, }\omega\in\left[  -\pi,\pi\right]
\label{white-noise-variant-1-a}%
\end{equation}
so that according to (\ref{SDE-1}), $Y$ has distribution $Q_{n}\left(
a\right)  $. The claim now follows from the fact that the mapping between the
processes $Y$ and $Z$ is one-to-one.
\end{proof}

\bigskip

\begin{proof}
[Proof of Theorem \ref{theor-GWN-main-1}]Consider the experiment
$\mathcal{F}_{n}\left(  \Theta\right)  $ for $\Theta=\Theta_{1}\left(
\alpha,M\right)  $ where $\alpha>1$. By relation (\ref{inclusion-by-embedding}%
) one has $\Theta_{1}\left(  \alpha,M\right)  \subset\Theta_{1,c}\left(
\alpha-1/2,M^{\prime}\right)  $. From Lemma
\ref{Lem-Geom-avg-values-at-points} it then follows that $\mathcal{F}%
_{n}\left(  \Theta\right)  \approx\mathcal{F}_{n,1}\left(  \Theta\right)  $,
and Lemma \ref{lem-geo-reg-white-noise} implies that $\mathcal{F}_{n,1}\left(
\Theta\right)  \approx\mathcal{G}_{n}\left(  \Theta\right)  $. By the
transitivity of the equivalence relation $\approx$ for sequences of
experiments, one has $\mathcal{F}_{n}\left(  \Theta\right)  \approx
\mathcal{G}_{n}\left(  \Theta\right)  $ as claimed.
\end{proof}

\bigskip

For later reference we note a localized version of the white noise model
(\ref{SDE-1}), where $a$ itself appears as the drift function rather than the
$\mathrm{arc\cosh}$-transformation, but the approximation holds in a
neighborhood of a fixed function $a_{0}\in\Theta_{1,c}\left(  \alpha,M\right)
$. This will be the analog of the localized white noise approximation
(\ref{gwn-spec-density-local}) for the classical stationary Gaussian process.
Define for some sequence $\gamma_{n}=o\left(  1\right)  $%
\begin{equation}
B\left(  a_{0},\gamma_{n}\right)  =\left\{  a:\left[  -\pi,\pi\right]
\rightarrow\mathbb{R}\text{, }\left\Vert a-a_{0}\right\Vert _{\infty}%
\leq\gamma_{n}\right\}  \label{neighborhood-uniform-B-def}%
\end{equation}
and consider restricted function sets%
\begin{equation}
\Theta_{1,c}\left(  \alpha,M\right)  \cap B\left(  a_{0},\gamma_{n}\right)  .
\label{NP-shrinking-neighborhood-def}%
\end{equation}
Furthermore let $Q_{n,2}\left(  a,a_{\left(  0\right)  }\right)  $ be the
distribution of the process $Y=\left\{  Y_{\omega},\omega\in\left[  -\pi
,\pi\right]  \right\}  $ described by
\begin{equation}
dY_{\omega}=a\left(  \omega\right)  d\omega+\left(  2\pi/n\right)
^{1/2}\left(  a_{0}^{2}\left(  \omega\right)  -1\right)  ^{1/2}dW_{\omega
}\text{, }\omega\in\left[  -\pi,\pi\right]  \label{pre-variance-stable-SDE}%
\end{equation}
and $Y_{\omega}=\int_{-\pi}^{\omega}dY_{\omega}$, and define the experiment
\begin{equation}
\mathcal{G}_{n,2}\left(  a_{0%
\acute{}%
},\Theta\right)  :=\left(  Q_{n}\left(  a,a_{0}\right)  ,\;a\in\Theta\right)
. \label{pre-variance-stable-SDE-experi}%
\end{equation}
Recall that $\mathcal{G}_{n,1}\left(  \Theta\right)  $ is defined by
(\ref{SDE-1-a}).

\begin{lemma}
\label{lem-local-white-noise} Assume $\alpha\in\left(  1/2,1\right]  $. Then
for every sequence $\gamma_{n}=o\left(  \left(  n/\log n\right)
^{-\alpha/(2\alpha+1)}\right)  $ and $\Theta_{n}=\Theta_{1,c}\left(
\alpha,M\right)  \cap B\left(  a_{0},\gamma_{n}\right)  $ one has
\[
\sup_{a_{0}\in\Theta_{1,c}\left(  \alpha,M\right)  }\Delta\left(
\mathcal{G}_{n,1}\left(  \Theta_{n}\right)  ,\mathcal{G}_{n,2}\left(
a_{0},\Theta_{n}\right)  \right)  \rightarrow0\text{ as }n\rightarrow\infty.
\]

\end{lemma}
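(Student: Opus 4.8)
The plan is to reduce the claim, by a known deterministic re-scaling of the observed white noise, to comparing two Gaussian white noise experiments that share the same noise operator and whose drifts differ only by a Taylor remainder; the remainder is then controlled by the Cameron--Martin/Pinsker bound for Gaussian shifts, and the rate restriction on $\gamma_{n}$ takes care of the rest. Write $g:=\mathrm{arc\cosh}$, so that on $(1,\infty)$ one has $g'(x)=(x^{2}-1)^{-1/2}$ and $g$ is $C^{\infty}$. Since every function in $\Theta_{1,c}(\alpha,M)$ takes values in $[1+M^{-1},M]$ by (\ref{Big-Theta-11-c-def}), the quantities $g'\circ a_{(0)}$, $g''\circ\zeta$ for $\zeta\in[1+M^{-1},M]$, and $(a_{(0)}^{2}-1)^{1/2}=1/(g'\circ a_{(0)})$ are all bounded above and below by positive constants depending only on $M$, uniformly in $a_{(0)}\in\Theta_{1,c}(\alpha,M)$; moreover $(a_{(0)}^{2}-1)^{1/2}$ inherits the $C^{\alpha}$--regularity of $a_{(0)}$.

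First I would fix the known $a_{(0)}$ and introduce the transformation that sends the process $Y$ from $\mathcal{G}_{n,1}(\Theta_{n})$ (i.e.\ with $dY_{\omega}=g(a(\omega))\,d\omega+(2\pi/n)^{1/2}dW_{\omega}$, cf.\ (\ref{SDE-1})) to
\[
\tilde{Y}_{\omega}:=\int_{-\pi}^{\omega}\big(a_{(0)}^{2}(s)-1\big)^{1/2}\,dY_{s}+\int_{-\pi}^{\omega}\Big(a_{(0)}(s)-\big(a_{(0)}^{2}(s)-1\big)^{1/2}g(a_{(0)}(s))\Big)\,ds .
\]
Since $a_{(0)}$ is known, both statisticians can apply this map and its inverse (which multiplies the increments by $(a_{(0)}^{2}-1)^{-1/2}$ and undoes the deterministic shift); as $(a_{(0)}^{2}-1)^{1/2}$ is strictly positive, bounded, and $C^{\alpha}$ with $\alpha>1/2$, the stochastic integral is a well-defined (Young, or integration-by-parts when $\alpha=1$) bijective bi-measurable transformation of the relevant path space, so this step is an \emph{exact} statistical equivalence $\mathcal{G}_{n,1}(\Theta_{n})\sim\{\tilde{Q}_{n}(a),\,a\in\Theta_{n}\}$, where $\tilde{Q}_{n}(a)$ is the law of $\tilde{Y}$ under $Q_{n,1}(a)$. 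Equivalently one may argue on $L^{2}(-\pi,\pi)$: observing a signal in white noise is invariant under multiplication of the data by a bounded multiplier that is bounded away from zero.

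Next I would identify $\tilde{Q}_{n}(a)$. Substituting $dY_{\omega}=g(a(\omega))\,d\omega+(2\pi/n)^{1/2}dW_{\omega}$ gives
\[
d\tilde{Y}_{\omega}=\Big[\big(a_{(0)}^{2}-1\big)^{1/2}\big(g(a)-g(a_{(0)})\big)+a_{(0)}\Big]d\omega+(2\pi/n)^{1/2}\big(a_{(0)}^{2}-1\big)^{1/2}dW_{\omega},
\]
and, using $(a_{(0)}^{2}-1)^{1/2}=1/g'(a_{(0)})$ together with Taylor's theorem,
\[
\big(a_{(0)}^{2}-1\big)^{1/2}\big(g(a)-g(a_{(0)})\big)=\big(a-a_{(0)}\big)+r_{a},\qquad r_{a}(\omega)=\frac{g''(\zeta_{a}(\omega))}{2\,g'(a_{(0)}(\omega))}\big(a(\omega)-a_{(0)}(\omega)\big)^{2},
\]
with $\zeta_{a}(\omega)$ between $a(\omega)$ and $a_{(0)}(\omega)$, hence in $[1+M^{-1},M]$. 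Thus $\tilde{Q}_{n}(a)$ is the law of a Gaussian white noise process with drift $a+r_{a}$ and noise level $(2\pi/n)^{1/2}(a_{(0)}^{2}-1)^{1/2}$, while $Q_{n,2}(a,a_{(0)})$ of (\ref{pre-variance-stable-SDE}) has drift $a$ and the \emph{same} noise level. By the boundedness remarks above, $\|r_{a}\|_{\infty}\le c_{M}\,\|a-a_{(0)}\|_{\infty}^{2}\le c_{M}\,\gamma_{n}^{2}$ and $(a_{(0)}^{2}-1)^{-1}\le c_{M}$, uniformly over $a\in\Theta_{n}$ and $a_{(0)}\in\Theta_{1,c}(\alpha,M)$.

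Finally, since $\tilde{Q}_{n}(a)$ and $Q_{n,2}(a,a_{(0)})$ are Gaussian with the same covariance, the Cameron--Martin formula gives
\[
K\big(\tilde{Q}_{n}(a)\,\|\,Q_{n,2}(a,a_{(0)})\big)=\frac{n}{4\pi}\int_{-\pi}^{\pi}\frac{r_{a}^{2}(\omega)}{a_{(0)}^{2}(\omega)-1}\,d\omega\le C_{M}\,n\,\gamma_{n}^{4},
\]
so by Pinsker's inequality $\sup_{a\in\Theta_{n}}\|\tilde{Q}_{n}(a)-Q_{n,2}(a,a_{(0)})\|_{1}\le C_{M}'\,n^{1/2}\gamma_{n}^{2}$, with $C_{M},C_{M}'$ depending only on $M$; taking the identity Markov kernel in both directions of (\ref{Delta-distance}) yields $\Delta(\{\tilde{Q}_{n}(a)\},\mathcal{G}_{n,2}(a_{(0)},\Theta_{n}))\le C_{M}'\,n^{1/2}\gamma_{n}^{2}$. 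Because $\alpha>1/2$ implies $\alpha/(2\alpha+1)>1/4$, the hypothesis $\gamma_{n}=o\big((n/\log n)^{-\alpha/(2\alpha+1)}\big)$ forces $\gamma_{n}=o(n^{-1/4})$, hence $n^{1/2}\gamma_{n}^{2}\to0$, uniformly in $a_{(0)}$; combining this with the exact equivalence of the second step and the triangle inequality for $\Delta$ proves the lemma. The step I expect to be the main obstacle is the rigorous justification that the re-scaling $dY\mapsto(a_{(0)}^{2}-1)^{1/2}\,dY$ together with the deterministic re-centerings is a genuine, \emph{loss-free} transformation of the experiment---a bijective bi-measurable map defined simultaneously $Q_{n,1}(a)$-almost surely for all $a\in\Theta_{n}$; once this is set up (via Young integration, using that the sample paths of $W$ are H\"older of every order below $1/2$ and $a_{(0)}\in C^{\alpha}$ with $\alpha>1/2$, or via the $L^{2}$-white-noise picture), the remaining arguments are just the Taylor expansion and the standard Gaussian Kullback--Leibler/Pinsker estimate, already implicit in the variance-stabilizing approach of \cite{MR1633574} and \cite{MR1900972}.
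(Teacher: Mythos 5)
Your proof is correct, and it takes a genuinely different route from the paper. The paper disposes of this lemma in two sentences by citing Theorem 3.3 of the reference on exponential-family regression (Grama--Nussbaum, \cite{MR1633574}), matching the white noise models (3.8) and (3.15) of that paper to (\ref{pre-variance-stable-SDE}) and (\ref{SDE-1-a}) and merely noting that the interval can be rescaled from $[0,1]$ to $[-\pi,\pi]$. You instead give a direct, self-contained argument: an exact (loss-free) re-scaling of the observed process by the known multiplier $(a_{(0)}^{2}-1)^{1/2}$, a second-order Taylor expansion of $\mathrm{arc\cosh}$ around $a_{(0)}$ to isolate a quadratic drift remainder $r_{a}$ with $\|r_{a}\|_{\infty}\lesssim_{M}\gamma_{n}^{2}$, and then the Gaussian Kullback--Leibler formula combined with Pinsker's inequality to bound the total variation distance by $C_{M}\,n^{1/2}\gamma_{n}^{2}$. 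This buys you transparency: the proof is independent of the exponential-family regression machinery, and in fact it shows the lemma holds under the weaker hypothesis $\gamma_{n}=o(n^{-1/4})$ (the stronger $(n/\log n)^{-\alpha/(2\alpha+1)}$ rate in the statement is inherited from the discretization step in \cite{MR1633574}, which your continuous-time argument never invokes). Your last paragraph correctly identifies the only delicate point --- that the multiplication $dY\mapsto(a_{(0)}^{2}-1)^{1/2}\,dY$ plus deterministic recentering is a bi-measurable bijection --- but the appeal to Young integration and H\"older regularity of the paths is an unnecessary detour. The $L^{2}$-picture you mention as an alternative is actually the clean route: for a known bounded $b$ bounded away from zero, the Wiener integral $\omega\mapsto\int_{-\pi}^{\omega}b\,dY$ is a measurable functional of the path $Y$ (approximate $b$ by step functions and use the It\^o/Wiener isometry), its inverse $\omega\mapsto\int_{-\pi}^{\omega}b^{-1}\,d\tilde Y$ is equally measurable, and no path regularity of $a_{(0)}$ beyond bounded measurability is needed; the two generated $\sigma$-fields coincide, giving exact experiment equivalence. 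With that simplification in place, the rest of your argument --- the Taylor remainder bound, the Cameron--Martin KL expression $\frac{n}{4\pi}\int r_{a}^{2}/(a_{(0)}^{2}-1)$, Pinsker, and the rate check $\alpha/(2\alpha+1)>1/4$ --- is complete and correct, with constants visibly uniform in $a_{(0)}\in\Theta_{1,c}(\alpha,M)$.
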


\begin{proof}
This is essentially Theorem 3.3 in \cite{MR1633574}, specialized to the
present exponential family, i.e. the geometric distribution. The white noise
model (3.8) in \cite{MR1633574} corresponds to (\ref{pre-variance-stable-SDE}%
), and the variance-stable white noise model (3.15) in \cite{MR1633574}
corresponds to (\ref{SDE-1-a}). The models in \cite{MR1633574} are defined on
the unit interval, but the result carries over $\left[  -\pi,\pi\right]  $ in
the same way as has been noted with processes (\ref{white-noise-variant-1})
and (\ref{white-noise-variant-1-a}).
\end{proof}

\section{Lower informativity bound}

\subsection{Constructing the basic observables}

In this section we assume $n$ is an odd number. Consider the creation and
annihilation operators $\hat{A}_{j}=\frac{1}{\sqrt{2}}\left(  Q_{j}%
+iP_{j}\right)  $, $\hat{A}_{j}^{\ast}=\frac{1}{\sqrt{2}}\left(  Q_{j}%
-iP_{j}\right)  $. As a consequence of (\ref{comm-rela}), these fulfill the
commutation relations%
\begin{align}
\left[  \hat{A}_{j},\hat{A}_{j}^{\ast}\right]   &  =\mathbf{1}%
,\;\;\;\;j=1,\ldots,n\label{comm-rela-A-1}\\
\;\left[  \hat{A}_{j},\hat{A}_{k}^{\ast}\right]   &  =\left[  \hat{A}_{j}%
,\hat{A}_{k}\right]  =\left[  \hat{A}_{j}^{\ast},\hat{A}_{k}^{\ast}\right]
=0,\;\;j,k=1,\ldots,n,j\neq k, \label{comm-rela-A-2}%
\end{align}
Furthermore
\begin{equation}
\hat{A}_{j}^{\ast}\hat{A}_{j}=\frac{1}{2}\left(  Q_{j}^{2}+P_{j}%
^{2}-\mathbf{1}\right)  \label{numb-oper}%
\end{equation}
are the number operators. Thus $\hat{A}_{j}^{\ast}\hat{A}_{j}$, $\;j=1,\ldots
,n$ is a commuting set of observables; the following lemma describes the first
and second moment properties of this set.

\begin{lemma}
\label{lem-covmatrix-quasi-obs}Let $\rho=\mathfrak{N}_{n}\left(  0,A\right)  $
for a symbol matrix $A=\left(  a_{jk}\right)  _{j,k=1}^{n}$ fulfilling $A>I$
(not necessarily Toeplitz). Then we have for $\;j,k=1,\ldots,n$\newline(i)
\[
\left\langle \hat{A}_{j}^{\ast}\hat{A}_{k}\right\rangle _{\rho}=Tr\left[
\hat{A}_{j}^{\ast}\hat{A}_{k}\rho\right]  =\left\{
\begin{array}
[c]{c}%
\frac{1}{2}\left(  a_{jj}-1\right)  \text{ if }j=k\\
\frac{1}{2}a_{kj}\text{, }j\neq k
\end{array}
\right.  \text{ }%
\]
(ii)%
\[
\mathrm{Cov}_{\rho}\left(  \hat{A}_{j}^{\ast}\hat{A}_{j},\hat{A}_{k}^{\ast
}\hat{A}_{k}\right)  =\left\{
\begin{array}
[c]{c}%
\frac{1}{4}\left(  a_{jj}^{2}-1\right)  \text{ if }j=k\\
\frac{1}{4}\left\vert a_{jk}\right\vert ^{2}\text{, }j<k.
\end{array}
\right\vert \text{ }%
\]

\end{lemma}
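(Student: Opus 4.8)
The plan is to reduce both (i) and (ii) to the two--point functions of $\rho=\mathfrak N_n(0,A)$, which are read off from the covariance matrix $\Sigma(A)$ of (\ref{symbol-related-to-covmatrix}), and then to invoke the quasi--free (quantum Wick) property of Gaussian states for the fourth moment needed in (ii).

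\textbf{Step 1: the two--point functions.} Writing $\hat A_j=2^{-1/2}(Q_j+iP_j)$, $\hat A_j^{\ast}=2^{-1/2}(Q_j-iP_j)$ and expanding, each of $\hat A_j^{\ast}\hat A_k$, $\hat A_j\hat A_k$, $\hat A_j^{\ast}\hat A_k^{\ast}$ is a complex quadratic form in $Q_1,\dots,Q_n,P_1,\dots,P_n$. Since $\rho$ is centered Gaussian with characteristic function $\exp(-\tfrac14\langle u,Au\rangle)$, differentiation of the characteristic function gives the symmetrized second moments through $\Sigma(A)$; combining this with the CCR (\ref{comm-rela}) for the non--commuting pairs yields $\langle Q_jQ_k\rangle_\rho=\langle P_jP_k\rangle_\rho=\tfrac12(\operatorname{Re}A)_{jk}$, $\langle Q_jP_k\rangle_\rho=-\tfrac12(\operatorname{Im}A)_{jk}+\tfrac i2\delta_{jk}$ and $\langle P_jQ_k\rangle_\rho=\tfrac12(\operatorname{Im}A)_{jk}-\tfrac i2\delta_{jk}$. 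Substituting gives $\langle\hat A_j^{\ast}\hat A_k\rangle_\rho=\tfrac12(\overline{A_{jk}}-\delta_{jk})=\tfrac12(a_{kj}-\delta_{jk})$, which is exactly assertion (i), and also $\langle\hat A_j\hat A_k\rangle_\rho=\langle\hat A_j^{\ast}\hat A_k^{\ast}\rangle_\rho=0$, the vanishing of the anomalous two--point functions being a manifestation of gauge invariance.

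\textbf{Step 2: the fourth moment and assembly.} The number operators all commute, so $\mathrm{Cov}_\rho(\hat A_j^{\ast}\hat A_j,\hat A_k^{\ast}\hat A_k)=\langle\hat A_j^{\ast}\hat A_j\,\hat A_k^{\ast}\hat A_k\rangle_\rho-\langle\hat A_j^{\ast}\hat A_j\rangle_\rho\langle\hat A_k^{\ast}\hat A_k\rangle_\rho$ is well defined and symmetric in $j,k$ (so it suffices to treat $j=k$ and $j<k$). Applying the quasi--free property of the Gaussian state $\rho$ to the product of four field operators $R_1=\hat A_j^{\ast}$, $R_2=\hat A_j$, $R_3=\hat A_k^{\ast}$, $R_4=\hat A_k$ gives $\langle R_1R_2R_3R_4\rangle_\rho=\langle R_1R_2\rangle_\rho\langle R_3R_4\rangle_\rho+\langle R_1R_3\rangle_\rho\langle R_2R_4\rangle_\rho+\langle R_1R_4\rangle_\rho\langle R_2R_3\rangle_\rho$; the middle term equals $\langle\hat A_j^{\ast}\hat A_k^{\ast}\rangle_\rho\langle\hat A_j\hat A_k\rangle_\rho=0$ by Step 1, leaving $\mathrm{Cov}_\rho(\hat A_j^{\ast}\hat A_j,\hat A_k^{\ast}\hat A_k)=\langle\hat A_j^{\ast}\hat A_k\rangle_\rho\langle\hat A_j\hat A_k^{\ast}\rangle_\rho$. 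For $j\neq k$ the operators $\hat A_j,\hat A_k^{\ast}$ commute, so by Step 1 this is $\tfrac12 a_{jk}\cdot\tfrac12\overline{a_{jk}}=\tfrac14|a_{jk}|^2$; for $j=k$ the CCR give $\langle\hat A_j\hat A_j^{\ast}\rangle_\rho=\langle\hat A_j^{\ast}\hat A_j\rangle_\rho+1=\tfrac12(a_{jj}+1)$ while $\langle\hat A_j^{\ast}\hat A_j\rangle_\rho=\tfrac12(a_{jj}-1)$, whose product is $\tfrac14(a_{jj}^2-1)$. This is (ii).

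\textbf{Main point requiring care.} The only delicate issue is the rigorous use of the Wick formula, since the $\hat A_j^{(\ast)}$ are unbounded. This is handled in the usual way by working on the dense domain spanned by the exponential (finite--particle) vectors, on which all polynomials in $\hat A,\hat A^{\ast}$ are defined and $\rho$ has finite moments of all orders. Alternatively, and perhaps more self--contained given the Fock picture already in place, one can diagonalize $A=UDU^{\ast}$ with $D$ diagonal, use (\ref{multiplying-Fock-ops}) to write $\rho=U_F\,\mathfrak N_n(0,D)\,U_F^{\ast}$ with $\mathfrak N_n(0,D)$ a tensor product of thermal states, and compute all moments from the mutually independent geometric photon--number distributions after the passive transformation $U_F$; this route also makes the Wick structure transparent and reproduces the same two identities.
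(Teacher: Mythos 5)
Your proposal is correct and the computations check out, but the route for part (ii) differs genuinely from the paper's. The paper expands $4\,\hat A_j^\ast\hat A_j\hat A_k^\ast\hat A_k$ into sums of products of the commuting observables $Q_j,Q_k,P_j,P_k$ (each summand having a classical joint Gaussian distribution) and then evaluates each fourth moment with the bivariate Gaussian identity $EX^2Y^2 = 2\sigma_{xy}^2 + \sigma_x^2\sigma_y^2$ of Subsection~\ref{subsec: cov}, finally collecting terms; you instead apply the bosonic Wick theorem directly to the ordered product $\hat A_j^\ast\hat A_j\hat A_k^\ast\hat A_k$ and let the anomalous pairings $\langle\hat A_j^\ast\hat A_k^\ast\rangle,\langle\hat A_j\hat A_k\rangle$ drop out by gauge invariance. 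Your approach is shorter and makes the structure of the answer ($\frac14|a_{jk}|^2$, $\frac14(a_{jj}^2-1)$) transparent in one line, since only the single surviving pairing $\langle\hat A_j^\ast\hat A_k\rangle\langle\hat A_j\hat A_k^\ast\rangle$ contributes; it also unifies the cases $j=k$ and $j<k$ (for $j=k$ you correctly use $[\hat A_j,\hat A_j^\ast]=\mathbf 1$, for $j<k$ you use commutativity). The trade-off is that it imports the quasi-free Wick factorization as a black box, whereas the paper's argument is self-contained given its own elementary moment formula for classical bivariate normals. Your handling of part (i) (differentiating the characteristic function and adding the CCR-induced antisymmetric part $\pm\frac{i}{2}\delta_{jk}$ to the symmetrized second moments from $\Sigma(A)$) is essentially the same as the paper's argument for $j\neq k$; for $j=k$ the paper instead reduces to the one-mode thermal marginal and reads off the geometric mean, but both reach $\frac12(a_{jj}-1)$. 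Your final remark that the entire lemma could also be derived by diagonalizing $A=UDU^\ast$ and working with the passively rotated modes is accurate and is in fact precisely the strategy the paper adopts later (Lemma~\ref{lem-covmatrix-analog-B}), so it is a useful observation even though the paper does not use it here.
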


\begin{proof}
\textbf{(i) }Consider first the case $j=k$. Then $\hat{A}_{j}^{\ast}\hat
{A}_{j}$ is the number operator of the $j$-th mode, and its distribution under
$\rho$ is the same as under the marginal state of the $j$-th mode, $\rho
_{(j)}$ say, i.e. the partial trace of $\rho$ when all other modes are traced
out. By a reasoning analogous to Subsection \ref{subsec:partial-trace}, it
follows that $\rho_{(j)}=\mathfrak{N}_{1}\left(  0,a_{jj}\right)  $, which
according to (\ref{symbol-related-to-covmatrix}) and
(\ref{symbol-related-to-covmatrix-2}) can also be described as $\varphi\left(
0,\Sigma\right)  $ for $\Sigma=\frac{1}{2}a_{jj}I_{2}$. Thus $\rho_{(j)}$ is
the thermal state with covariance matrix $\frac{1}{2}a_{jj}I_{2}$ (cp. also
(\ref{thermal-state-repre-in-Fock})), where the number operator has a
geometric distribution:
\begin{equation}
\hat{A}_{j}^{\ast}\hat{A}_{j}\sim\mathrm{Geo}\left(  p\right)  ,\mathbb{\;\;}%
p=\left(  a_{jj}-1\right)  /\left(  a_{jj}+1\right)  .\text{ }\mathbb{\;}
\label{AstarA-Geo}%
\end{equation}
The expectation is (cf. Subsection \ref{subsec:geometr-distr})%
\begin{equation}
\left\langle \hat{A}_{j}^{\ast}\hat{A}_{j}\right\rangle _{\rho}=\frac{p}%
{1-p}=\frac{a_{jj}-1}{2} \label{expec-AstarA}%
\end{equation}
which proves the claim for $j=k$. For $j\neq k$
\begin{align}
Tr\left[  \hat{A}_{j}^{\ast}\hat{A}_{k}\rho\right]   &  =\frac{1}%
{2}\left\langle \left(  Q_{j}-iP_{j}\right)  \left(  Q_{k}+iP_{k}\right)
\right\rangle _{\rho}\nonumber\\
&  =\frac{1}{2}\left(  \left\langle Q_{j}Q_{k}\right\rangle _{\rho
}+\left\langle P_{j}P_{k}\right\rangle _{\rho}+i\left\langle Q_{j}%
P_{k}\right\rangle _{\rho}-i\left\langle P_{j}Q_{k}\right\rangle _{\rho
}\right)  . \label{split-covariance}%
\end{align}
Consider the marginal state $\rho_{\left(  j,k\right)  }$ of $\rho$ where all
modes except $j$ and $k$ are traced out. Again, by a reasoning analogous to
Subsection \ref{subsec:partial-trace}, it follows that $\rho_{(j,k)}%
=\mathfrak{N}_{2}\left(  0,A_{\left(  j,k\right)  }\right)  $ where
$A_{\left(  j,k\right)  }$ is the submatrix of $A$
\[
A_{\left(  j,k\right)  }=\left(
\begin{array}
[c]{cc}%
a_{jj} & a_{jk}\\
a_{kj} & a_{kk}%
\end{array}
\right)  .
\]
According to (\ref{symbol-related-to-covmatrix}), the covariance matrix of
$\rho_{(j,k)}$ is
\begin{align}
\Sigma\left(  A_{\left(  j,k\right)  }\right)   &  =\frac{1}{2}\left(
\begin{array}
[c]{cc}%
\operatorname{Re}A_{\left(  j,k\right)  } & -\operatorname{Im}A_{\left(
j,k\right)  }\\
\operatorname{Im}A_{\left(  j,k\right)  } & \operatorname{Re}A_{\left(
j,k\right)  }%
\end{array}
\right) \nonumber\\
&  =\frac{1}{2}\left(
\begin{array}
[c]{cc}%
\begin{array}
[c]{cc}%
a_{jj} & \operatorname{Re}a_{jk}\\
\operatorname{Re}a_{jk} & a_{kk}%
\end{array}
&
\begin{array}
[c]{cc}%
0 & -\operatorname{Im}a_{jk}\\
\operatorname{Im}a_{jk} & 0
\end{array}
\\%
\begin{array}
[c]{cc}%
0 & \operatorname{Im}a_{jk}\\
-\operatorname{Im}a_{jk} & 0
\end{array}
&
\begin{array}
[c]{cc}%
a_{jj} & \operatorname{Re}a_{jk}\\
\operatorname{Re}a_{jk} & a_{kk}%
\end{array}
\end{array}
\right)  . \label{big-cov-matrix}%
\end{align}
Since this covariance matrix pertains to the vector of observables
$\mathbf{R}=\left(  Q_{j},Q_{k},P_{j},P_{k}\right)  $ in the sense that
$\mathbf{R}x\sim N\left(  0,\left\langle x,\Sigma\left(  A_{\left(
j,k\right)  }\right)  x\right\rangle \right)  $ (cp.
(\ref{observables-normal-distr}), we can directly read off the covariances:%
\begin{align*}
\left\langle Q_{j}Q_{k}\right\rangle _{\rho}  &  =\left\langle P_{j}%
P_{k}\right\rangle _{\rho}=\frac{1}{2}\operatorname{Re}a_{jk},\\
\left\langle Q_{j}P_{k}\right\rangle _{\rho}  &  =-\frac{1}{2}%
\operatorname{Im}a_{jk},\;\left\langle P_{j}Q_{k}\right\rangle _{\rho}%
=\frac{1}{2}\operatorname{Im}a_{jk}.
\end{align*}
From (\ref{split-covariance}) we obtain%
\[
Tr\left[  \hat{A}_{j}^{\ast}\hat{A}_{k}\rho\right]  =\frac{1}{2}%
\operatorname{Re}a_{jk}-\frac{1}{2}i\operatorname{Im}a_{jk}=\frac{1}{2}\bar
{a}_{jk}=\frac{1}{2}a_{kj}.
\]

\textbf{(ii) }Consider first the case $j<k$. Then in view of (\ref{numb-oper}%
)
\[
\hat{A}_{j}^{\ast}\hat{A}_{j}\hat{A}_{k}^{\ast}\hat{A}_{k}=\frac{1}{4}\left(
Q_{j}^{2}+P_{j}^{2}-\mathbf{1}\right)  \left(  Q_{k}^{2}+P_{k}^{2}%
-\mathbf{1}\right)  ,
\]
hence
\begin{align}
4\cdot\hat{A}_{j}^{\ast}\hat{A}_{j}\hat{A}_{k}^{\ast}\hat{A}_{k}  &
=Q_{j}^{2}Q_{k}^{2}+Q_{j}^{2}P_{k}^{2}-Q_{j}^{2}\nonumber\\
&  +P_{j}^{2}Q_{k}^{2}+P_{j}^{2}P_{k}^{2}-P_{j}^{2}\nonumber\\
&  -Q_{k}^{2}-P_{k}^{2}+\mathbf{1}. \label{cross-prod-1}%
\end{align}
Note that on the r.h.s. above, each summand $Q_{j}^{2}Q_{k}^{2}$, $Q_{j}%
^{2}P_{k}^{2}$ etc. contains only commuting observables, which thus have a
joint distribition. In view of (\ref{big-cov-matrix}), the joint distribution
of $Q_{j},Q_{k}$ is
\[
\left(  Q_{j},Q_{k}\right)  \sim N_{2}\left(  0,\frac{1}{2}\operatorname{Re}%
A_{(j,k)}\right)  .
\]
From formula (\ref{covform-1}) in Subsection \ref{subsec: cov} we obtain
\[
\left\langle Q_{j}^{2}Q_{k}^{2}\right\rangle _{\rho}=\frac{1}{2}\left(
\operatorname{Re}a_{jk}\right)  ^{2}+\frac{1}{4}a_{jj}a_{kk}.
\]
Similarly
\begin{align*}
\left(  Q_{j},P_{k}\right)   &  \sim N_{2}\left(  0,\frac{1}{2}\left(
\begin{array}
[c]{cc}%
a_{jj} & -\operatorname{Im}a_{jk}\\
-\operatorname{Im}a_{jk} & a_{kk}%
\end{array}
\right)  \right)  ,\\
\left\langle Q_{j}^{2}P_{k}^{2}\right\rangle _{\rho}  &  =\frac{1}{2}\left(
\operatorname{Im}a_{jk}\right)  ^{2}+\frac{1}{4}a_{jj}a_{kk},
\end{align*}%
\begin{align*}
\left\langle P_{j}^{2}Q_{k}^{2}\right\rangle _{\rho}  &  =\frac{1}{2}\left(
\operatorname{Im}a_{jk}\right)  ^{2}+\frac{1}{4}a_{jj}a_{kk},\\
\left\langle P_{j}^{2}P_{k}^{2}\right\rangle _{\rho}  &  =\frac{1}{2}\left(
\operatorname{Re}a_{jk}\right)  ^{2}+\frac{1}{4}a_{jj}a_{kk}.
\end{align*}
Furthermore
\[
\left\langle Q_{j}^{2}+P_{j}^{2}+Q_{k}^{2}+P_{k}^{2}\right\rangle _{\rho
}=a_{jj}+a_{kk}.
\]
Collecting terms in (\ref{cross-prod-1}), we obtain
\[
4\cdot\left\langle \hat{A}_{j}^{\ast}\hat{A}_{j}\hat{A}_{k}^{\ast}\hat{A}%
_{k}\right\rangle _{\rho}=\left\vert a_{jk}\right\vert ^{2}+a_{jj}%
a_{kk}-\left(  a_{jj}+a_{kk}\right)  +1.
\]
Also from (\ref{expec-AstarA})%
\[
\left\langle \hat{A}_{j}^{\ast}\hat{A}_{j}\right\rangle _{\rho}=\frac{1}%
{2}\left(  a_{jj}-1\right)  \text{, }\left\langle \hat{A}_{k}^{\ast}\hat
{A}_{k}\right\rangle _{\rho}=\frac{1}{2}\left(  a_{kk}-1\right)
\]
hence
\begin{align*}
4\cdot\mathrm{Cov}_{\rho}\left(  \hat{A}_{j}^{\ast}\hat{A}_{j},\hat{A}%
_{k}^{\ast}\hat{A}_{k}\right)   &  =4\cdot\left(  \left\langle \hat{A}%
_{j}^{\ast}\hat{A}_{j}\hat{A}_{k}^{\ast}\hat{A}_{k}\right\rangle _{\rho
}-\left\langle \hat{A}_{j}^{\ast}\hat{A}_{j}\right\rangle _{\rho}%
\cdot\left\langle \hat{A}_{k}^{\ast}\hat{A}_{k}\right\rangle _{\rho}\right) \\
&  =\left\vert a_{jk}\right\vert ^{2}+a_{jj}a_{kk}-\left(  a_{jj}%
+a_{kk}\right)  +1-\left(  a_{jj}-1\right)  \left(  a_{kk}-1\right) \\
&  =\left\vert a_{jk}\right\vert ^{2}%
\end{align*}
which proves the claim for $j<k$. For $j=k$, according to relation
(\ref{AstarA-Geo}) and the formula for the variance of the geometric
(\ref{Var-geo}) we have
\begin{align*}
\mathrm{Var}_{\rho}\left(  \hat{A}_{j}^{\ast}\hat{A}_{j}\right)   &  =\frac
{p}{\left(  1-p\right)  ^{2}}=\frac{a_{jj}-1}{a_{jj}+1}\frac{\left(
a_{jj}+1\right)  ^{2}}{4}\\
&  =\frac{1}{4}\left(  a_{jj}^{2}-1\right)  .
\end{align*}

\end{proof}

\bigskip

We note the following consequence of Lemma \ref{lem-covmatrix-quasi-obs}:
\begin{align}
\left\langle \hat{A}_{j}\hat{A}_{j}^{\ast}\right\rangle _{\rho}  &
=\left\langle \hat{A}_{j}^{\ast}\hat{A}_{j}+\mathbf{1}\right\rangle _{\rho
}=\left\langle \hat{A}_{j}^{\ast}\hat{A}_{j}\right\rangle _{\rho}+1=\frac
{1}{2}\left(  a_{jj}+1\right)  ,\label{rewrite-cov-matrix-1}\\
\left\langle \hat{A}_{j}\hat{A}_{k}^{\ast}\right\rangle _{\rho}  &
=\left\langle \hat{A}_{k}^{\ast}\hat{A}_{j}\right\rangle _{\rho}=\frac{1}%
{2}a_{jk}\text{ for }j\neq k. \label{rewrite-cov-matrix-2}%
\end{align}
Define vectors of operators%
\begin{align*}
\mathbf{\hat{A}}  &  =\left(
\begin{array}
[c]{c}%
\hat{A}_{1}\\
\ldots\\
\hat{A}_{n}%
\end{array}
\right)  ,\\
\mathbf{\hat{A}}^{\dag}  &  \mathbf{=}\left(  \hat{A}_{1}^{\ast},\ldots
,\hat{A}_{n}^{\ast}\right)  .
\end{align*}
For a matrix of operators $\mathbf{C}=\left(  C_{jk}\right)  $, introduce
notation $\left\langle \mathbf{C}\right\rangle _{\rho}=\left(  \left\langle
C_{jk}\right\rangle _{\rho}\right)  $. Then (\ref{rewrite-cov-matrix-1}),
(\ref{rewrite-cov-matrix-2}) can be written
\begin{equation}
\left\langle \mathbf{\hat{A}\hat{A}}^{\dag}\right\rangle _{\rho}=\frac{1}%
{2}\left(  A+I_{n}\right)  . \label{cov-matrix-quasi-obs}%
\end{equation}
For the special unitary $U_{n}$ from (\ref{special-unitary-DFT-def}) we set
\begin{equation}
\mathbf{\hat{B}}=U_{n}^{\ast}\mathbf{\hat{A}}\text{, }\mathbf{\hat{B}}^{\dag
}\mathbf{=\hat{A}}^{\dag}U_{n}. \label{transformed-number-ops}%
\end{equation}
It then follows that
\begin{equation}
\left\langle \mathbf{\hat{B}\mathbf{\hat{B}}^{\dag}}\right\rangle _{\rho
}=\frac{1}{2}\left(  U_{n}^{\ast}AU_{n}+I_{n}\right)  .
\label{transformed-number-ops-expec}%
\end{equation}

Since $\mathbf{\hat{B}}$ represents a discrete Fourier transform of the
creation operators, for the components of the vector $\mathbf{\hat{B}}$ we
adopt the indexing convention $\mathbf{\hat{B}=}\left(  \hat{B}_{j}\right)
_{\left\vert j\right\vert \leq\left(  n-1\right)  /2}$. This is in agreement
with the form of the unitary $U_{n}$ in (\ref{special-unitary-DFT-def}); we
then obtain for the components of the vector $\mathbf{\hat{B}}=U_{n}^{\ast
}\mathbf{\hat{A}}$
\[
\hat{B}_{j}=\mathbf{u}_{j}^{\ast}\mathbf{\hat{A}}\text{, }\left\vert
j\right\vert \leq\left(  n-1\right)  /2.
\]

\begin{lemma}
\label{lem-commut-relations-B}The set of operators $\hat{B}_{j}$, $\left\vert
j\right\vert \leq\left(  n-1\right)  /2$ fulfills commutation relations
(\ref{comm-rela-A-1}), (\ref{comm-rela-A-2}) with $\hat{A}_{j}$ replaced by
$\hat{B}_{j-\left(  n+1\right)  /2}$.
\end{lemma}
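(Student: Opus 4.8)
The plan is to exploit the fact that $U_n$ is unitary, so that the passage from $\mathbf{\hat{A}}$ to $\mathbf{\hat{B}}=U_n^{\ast}\mathbf{\hat{A}}$ is a linear transformation of the annihilation operators which does \emph{not} mix creation with annihilation operators; such a transformation automatically preserves the CCR, precisely because $U_n^{\ast}U_n=I_n$.

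First I would record the coordinate form of the transformation. Writing the columns of $U_n$ as $\mathbf{u}_j$, $|j|\le (n-1)/2$ (cf. (\ref{special-unitary-DFT-def})), we have $(U_n)_{k,j}=(\mathbf{u}_j)_k$, hence
\[
\hat{B}_j=\mathbf{u}_j^{\ast}\mathbf{\hat{A}}=\sum_{k=1}^{n}\overline{(U_n)_{k,j}}\,\hat{A}_k,\qquad \hat{B}_j^{\ast}=\sum_{k=1}^{n}(U_n)_{k,j}\,\hat{A}_k^{\ast},
\]
in agreement with (\ref{transformed-number-ops}).

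Then I would compute the brackets directly on an appropriate common dense domain, where $\mathbf{\hat{A}}$, $\mathbf{\hat{A}}^{\dag}$, their products and the relations (\ref{comm-rela-A-1}), (\ref{comm-rela-A-2}) are defined. For $|j|,|l|\le (n-1)/2$,
\begin{align*}
[\hat{B}_j,\hat{B}_l^{\ast}] &=\sum_{k,m}\overline{(U_n)_{k,j}}\,(U_n)_{m,l}\,[\hat{A}_k,\hat{A}_m^{\ast}]=\sum_{k,m}\overline{(U_n)_{k,j}}\,(U_n)_{m,l}\,\delta_{km}\mathbf{1}\\
&=\sum_{k=1}^{n}\overline{(U_n)_{k,j}}\,(U_n)_{k,l}\,\mathbf{1}=(U_n^{\ast}U_n)_{j,l}\,\mathbf{1},
\end{align*}
which equals $\mathbf{1}$ when $j=l$ and $0$ otherwise. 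In the same way $[\hat{B}_j,\hat{B}_l]=\sum_{k,m}\overline{(U_n)_{k,j}}\,\overline{(U_n)_{m,l}}\,[\hat{A}_k,\hat{A}_m]=0$ since every $[\hat{A}_k,\hat{A}_m]=0$, and taking adjoints gives $[\hat{B}_j^{\ast},\hat{B}_l^{\ast}]=0$. Finally, relabelling the index by $j\mapsto j-(n+1)/2$, so that it ranges over $1,\dots,n$, turns these identities into exactly (\ref{comm-rela-A-1}), (\ref{comm-rela-A-2}) with $\hat{A}_j$ replaced by $\hat{B}_{j-(n+1)/2}$.

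I do not expect a genuine obstacle here: the content is just the unitarity identity $U_n^{\ast}U_n=I_n$. The only points requiring mild care are the bookkeeping with the two index conventions and the routine specification of a common core on which the unbounded operators and their commutators are simultaneously defined.
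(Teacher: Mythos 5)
Your proof is correct and follows essentially the same route as the paper's: both reduce the claim to the unitarity identity $U_n^{\ast}U_n=I_n$, exploiting that the passage $\mathbf{\hat{A}}\mapsto U_n^{\ast}\mathbf{\hat{A}}$ mixes only annihilation operators among themselves. The paper phrases the commutator computation compactly via the bilinear form $[c^{\ast}\mathbf{\hat{A}},\mathbf{\hat{A}}^{\dag}d]=\langle c,d\rangle\mathbf{1}$ whereas you unpack it in coordinates, but the content is identical.
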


\begin{proof}
Relations (\ref{comm-rela-A-1}), (\ref{comm-rela-A-2}) can be expressed in
concise form as follows: for any $c,d\in\mathbb{C}^{n}$ and $c^{\ast
}\mathbf{\hat{A}=}\sum_{j=1}^{n}\bar{c}_{j}\hat{A}_{j}$, $\mathbf{\hat{A}%
}^{\dag}d=\sum_{j=1}^{n}d_{j}\hat{A}_{j}^{\ast}$ we have
\[
\left[  c^{\ast}\mathbf{\hat{A},\hat{A}}^{\dag}d\right]  =\left\langle
c,d\right\rangle \;\mathbf{1}.
\]
Now with definitions (\ref{transformed-number-ops}) we have indeed
\[
\left[  c^{\ast}\mathbf{\hat{B},\mathbf{\hat{B}}^{\dag}}d\right]  =\left[
c^{\ast}U_{n}^{\ast}\mathbf{\hat{A},\mathbf{\hat{A}}^{\dag}}U_{n}d\right]
=\left\langle U_{n}c,U_{n}d\right\rangle \;\mathbf{1}=\left\langle
c,d\right\rangle \;\mathbf{1.}%
\]

\end{proof}

\begin{lemma}
$\hat{B}_{j}^{\ast}\hat{B}_{j}$, $\left\vert j\right\vert \leq\left(
n-1\right)  /2$ is a commuting set of observables, fulfilling
\begin{equation}
\hat{B}_{j}^{\ast}\hat{B}_{j}=\hat{B}_{j}\hat{B}_{j}^{\ast}-\mathbf{1.}
\label{B-and-B-star}%
\end{equation}
.
\end{lemma}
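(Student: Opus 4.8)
The plan is to derive both assertions directly from Lemma \ref{lem-commut-relations-B}, which states that the operators $\hat{B}_j$, $|j|\le (n-1)/2$, obey the canonical commutation relations (\ref{comm-rela-A-1})--(\ref{comm-rela-A-2}) in exactly the form the $\hat{A}_j$ do. In other words, the passage $\mathbf{\hat{A}}\mapsto\mathbf{\hat{B}}=U_n^\ast\mathbf{\hat{A}}$ effected by the discrete Fourier transform $U_n$ from (\ref{special-unitary-DFT-def}), (\ref{transformed-number-ops}) is a passive (linear) Bogoliubov transformation, so the $\hat{B}_j$ are again annihilation operators of a new set of $n$ independent modes, and everything proved about the $\hat{A}_j^\ast\hat{A}_j$ transfers verbatim.

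First I would record the identity (\ref{B-and-B-star}). By the one-mode relation in Lemma \ref{lem-commut-relations-B} (the analog of (\ref{comm-rela-A-1})) one has $[\hat B_j,\hat B_j^\ast]=\hat B_j\hat B_j^\ast-\hat B_j^\ast\hat B_j=\mathbf 1$, which rearranges to $\hat B_j^\ast\hat B_j=\hat B_j\hat B_j^\ast-\mathbf 1$; this is the direct counterpart of (\ref{numb-oper}) for the transformed modes, and it exhibits $\hat B_j^\ast\hat B_j$ as a non-negative self-adjoint number operator, hence an observable.

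Next I would verify that the family $\{\hat B_j^\ast\hat B_j\}$ is mutually commuting. For $j\ne k$, the analog of (\ref{comm-rela-A-2}) gives $[\hat B_j,\hat B_k]=[\hat B_j,\hat B_k^\ast]=[\hat B_j^\ast,\hat B_k^\ast]=0$, and taking adjoints also $[\hat B_j^\ast,\hat B_k]=0$; thus every one of the operators $\hat B_j,\hat B_j^\ast$ commutes with every one of $\hat B_k,\hat B_k^\ast$, so the products $\hat B_j^\ast\hat B_j$ and $\hat B_k^\ast\hat B_k$ commute. Together with the trivial case $j=k$ this establishes that $\{\hat B_j^\ast\hat B_j:|j|\le(n-1)/2\}$ is a commuting set of observables.

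There is no real obstacle here; the only point requiring a word of care is the familiar one for unbounded operators: the CCR manipulations above are to be read on a common invariant dense domain (the linear span of the exponential vectors of Subsection \ref{subsec:symmetric-Fock-space}, or the Schwartz space) on which the $\hat A_j$, hence the $\hat B_j$, and all finite products thereof act, exactly as already tacitly used in the treatment of the number operators $\hat A_j^\ast\hat A_j$. On that domain the computation is purely algebraic.
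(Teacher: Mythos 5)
Your proposal is correct and follows exactly the paper's route: both the commutation claim and the identity $\hat B_j^\ast\hat B_j=\hat B_j\hat B_j^\ast-\mathbf 1$ are read off directly from Lemma \ref{lem-commut-relations-B}, i.e.\ from the fact that the $\hat B_j$ satisfy the same CCR as the $\hat A_j$. Your additional remark about the common invariant dense domain is a reasonable point of rigor but not needed beyond what the paper tacitly assumes.
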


\begin{proof}
The first claim follows from (\ref{comm-rela-A-2}) and the previous lemma. The
claimed equality follows from (\ref{comm-rela-A-1}) applied to $\hat{B}_{j}$,
$\hat{B}_{j}^{\ast}$.
\end{proof}

\begin{lemma}
\label{lem-covmatrix-analog-B}Assume the conditions of Lemma
\ref{lem-covmatrix-quasi-obs}. Then we have for $\left\vert j\right\vert
,\left\vert k\right\vert \leq\left(  n-1\right)  /2\;$\newline(i)
\[
\left\langle \hat{B}_{j}^{\ast}\hat{B}_{j}\right\rangle _{\rho}=\frac{1}%
{2}\left(  \mathbf{u}_{j}^{\ast}A\mathbf{u}_{j}-1\right)  ,
\]
(ii)%
\[
\mathrm{Cov}_{\rho}\left(  \hat{B}_{j}^{\ast}\hat{B}_{j},\hat{B}_{k}^{\ast
}\hat{B}_{k}\right)  =\left\{
\begin{array}
[c]{c}%
\frac{1}{4}\left(  \left(  \mathbf{u}_{j}^{\ast}A\mathbf{u}_{j}\right)
^{2}-1\right)  \text{ if }j=k\\
\frac{1}{4}\left\vert \mathbf{u}_{j}^{\ast}A\mathbf{u}_{k}\right\vert
^{2}\text{, }j<k.
\end{array}
\right\vert \text{ }%
\]

\end{lemma}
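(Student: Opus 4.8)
The plan is to reduce the statement to Lemma~\ref{lem-covmatrix-quasi-obs}, already proved for the number operators $\hat{A}_{j}^{\ast}\hat{A}_{j}$. The point is that passing from $\mathbf{\hat{A}}$ to $\mathbf{\hat{B}}=U_{n}^{\ast}\mathbf{\hat{A}}$ is a passive (energy-preserving) Bogoliubov transformation, hence is implemented on Fock space by the unitary $\left(U_{n}\right)_{F}$, and conjugation by it carries the Gaussian state $\mathfrak{N}_{n}\left(0,A\right)$ into $\mathfrak{N}_{n}\left(0,U_{n}^{\ast}AU_{n}\right)$ while carrying $\hat{B}_{j}^{\ast}\hat{B}_{j}$ into the $j$-th number operator $\hat{A}_{j}^{\ast}\hat{A}_{j}$ (with the indexing convention $\left\vert j\right\vert \leq\left(n-1\right)/2$ as in (\ref{transformed-number-ops})). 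Consequently the joint distribution of the commuting family $\left(\hat{B}_{j}^{\ast}\hat{B}_{j}\right)_{\left\vert j\right\vert \leq\left(n-1\right)/2}$ under $\mathfrak{N}_{n}\left(0,A\right)$ coincides with the joint distribution of $\left(\hat{A}_{j}^{\ast}\hat{A}_{j}\right)_{\left\vert j\right\vert \leq\left(n-1\right)/2}$ under $\mathfrak{N}_{n}\left(0,U_{n}^{\ast}AU_{n}\right)$, and it then remains to apply Lemma~\ref{lem-covmatrix-quasi-obs} to the latter state.

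The identity $\left(U_{n}^{\ast}\right)_{F}\mathfrak{N}_{n}\left(0,A\right)\left(U_{n}\right)_{F}=\mathfrak{N}_{n}\left(0,U_{n}^{\ast}AU_{n}\right)$ is exactly the computation used in the proof of Lemma~\ref{lem-equiv-diagonal-symbol}: since $U_{n}$ is unitary one has $\det\left(U_{n}^{\ast}AU_{n}+I\right)=\det\left(A+I\right)$ and, by functional calculus, $\left(U_{n}^{\ast}AU_{n}-I\right)\left(U_{n}^{\ast}AU_{n}+I\right)^{-1}=U_{n}^{\ast}\left(A-I\right)\left(A+I\right)^{-1}U_{n}$, so the representation (\ref{Fock-repre-1}) together with the multiplicativity (\ref{multiplying-Fock-ops}) of the second-quantization map gives it. That the same conjugation sends $\hat{B}_{j}^{\ast}\hat{B}_{j}$ to $\hat{A}_{j}^{\ast}\hat{A}_{j}$ follows from the covariance property of second quantization on creation/annihilation operators, which is consistent with (\ref{transformed-number-ops})--(\ref{transformed-number-ops-expec}); alternatively one argues at the level of characteristic functions, noting that the displacement (Weyl) operators in the $\mathbf{\hat{B}}$-mode decomposition at argument $u$ are the original $V\left(U_{n}u\right)$, whence by (\ref{char-func-gauge-invar}) the state in the $\mathbf{\hat{B}}$-representation has characteristic function $\exp\left(-\tfrac14\left\langle u,U_{n}^{\ast}AU_{n}u\right\rangle \right)$, i.e. it is $\mathfrak{N}_{n}\left(0,U_{n}^{\ast}AU_{n}\right)$.

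With this reduction in place, observe that the symbol matrix of $\mathfrak{N}_{n}\left(0,U_{n}^{\ast}AU_{n}\right)$ has $\left(j,k\right)$-entry $\mathbf{u}_{j}^{\ast}A\mathbf{u}_{k}$, and that it is still $>I$ because $A>I$ and $U_{n}$ is unitary; so Lemma~\ref{lem-covmatrix-quasi-obs} applies and yields at once $\left\langle \hat{B}_{j}^{\ast}\hat{B}_{j}\right\rangle _{\rho}=\tfrac12\left(\mathbf{u}_{j}^{\ast}A\mathbf{u}_{j}-1\right)$, $\mathrm{Cov}_{\rho}\left(\hat{B}_{j}^{\ast}\hat{B}_{j},\hat{B}_{k}^{\ast}\hat{B}_{k}\right)=\tfrac14\left\vert \mathbf{u}_{j}^{\ast}A\mathbf{u}_{k}\right\vert ^{2}$ for $j<k$, and $\tfrac14\left(\left(\mathbf{u}_{j}^{\ast}A\mathbf{u}_{j}\right)^{2}-1\right)$ for $j=k$, which is the assertion. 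Part~(i) can also be verified directly, bypassing the Fock-unitary argument: expanding $\hat{B}_{j}^{\ast}\hat{B}_{j}=\sum_{l,m}\left(\mathbf{u}_{j}\right)_{m}\overline{\left(\mathbf{u}_{j}\right)_{l}}\hat{A}_{m}^{\ast}\hat{A}_{l}$, using $\hat{A}_{m}^{\ast}\hat{A}_{l}=\hat{A}_{l}\hat{A}_{m}^{\ast}-\delta_{ml}$, the matrix identity (\ref{cov-matrix-quasi-obs}) for $\left\langle \mathbf{\hat{A}\hat{A}}^{\dag}\right\rangle _{\rho}$, and $\left\Vert \mathbf{u}_{j}\right\Vert =1$. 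The only genuinely nontrivial step is the justification that $\left(U_{n}\right)_{F}$-conjugation intertwines the two mode decompositions (equivalently, the transformation rule for displacement operators under a passive linear change of modes), since the excerpt uses this implicitly rather than isolating it; once granted, the rest is a direct appeal to Lemma~\ref{lem-covmatrix-quasi-obs}.
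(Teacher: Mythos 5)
Your proposal is correct and, at bottom, is the same argument the paper uses: the paper's proof of (i) is precisely the direct computation you sketch via $\langle \hat{B}_{j}\hat{B}_{j}^{\ast}\rangle_{\rho}=\tfrac12(\mathbf{u}_{j}^{\ast}A\mathbf{u}_{j}+1)$ together with $\hat{B}_{j}^{\ast}\hat{B}_{j}=\hat{B}_{j}\hat{B}_{j}^{\ast}-\mathbf{1}$, and the paper's proof of (ii) is exactly your ``alternative'' characteristic-function argument: it introduces the quadratures $\tilde{Q}_{j},\tilde{P}_{j}$ built from $\hat{B}_{j},\hat{B}_{j}^{\ast}$ and verifies $\exp(i\mathbf{\tilde{R}}x)=V(U_{n}u_{x})$, hence that $(\tilde{Q}_{j},\tilde{P}_{j})$ under $\mathfrak{N}_{n}(0,A)$ have covariance matrix $\Sigma(U_{n}^{\ast}AU_{n})$, so Lemma~\ref{lem-covmatrix-quasi-obs} applies verbatim. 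Your leading $(U_{n})_{F}$-conjugation framing is a legitimate higher-level packaging of the same idea, and the ``nontrivial step'' you flag (the covariance of second quantization on the mode operators) is exactly what the paper makes explicit through that Weyl-operator identity rather than invoking it as a general principle.
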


\begin{proof}
For (i), we note that (\ref{transformed-number-ops-expec}) implies%
\[
\left\langle \hat{B}_{j}\hat{B}_{j}^{\ast}\right\rangle _{\rho}=\frac{1}%
{2}\left(  \mathbf{u}_{j}^{\ast}A\mathbf{u}_{j}^{\ast}+I_{n}\right)  .
\]
so that the claim follows from (\ref{B-and-B-star}). For (ii), note that this
claim can be formulated as: if in Lemma \ref{lem-covmatrix-quasi-obs} the
$\hat{A}_{j}$ are replaced by $\hat{B}_{j}$ then the assertion (ii) holds with
the matrix $A$ replaced by $U_{n}^{\ast}AU_{n}$. Define a set of observables
$\tilde{Q}_{j},\tilde{P}_{j}$, $j=1,\ldots,n$ by
\begin{equation}
\tilde{Q}_{j-(n+1)/2}=\frac{1}{\sqrt{2}}\left(  \hat{B}_{j}+\hat{B}_{j}^{\ast
}\right)  \text{, }\tilde{P}_{j-(n+1)/2}=\frac{1}{i\sqrt{2}}\left(  \hat
{B}_{j}-\hat{B}_{j}^{\ast}\right)  . \label{analogs-P-Q-def}%
\end{equation}
These are related to $\hat{B}_{j}$ and $\hat{B}_{j}^{\ast}$ in the same way as
the original canonical observables $P_{j},Q_{j}$ are related to the creation
and annihilation operators $\hat{A}_{j}$ and $\hat{A}_{j}^{\ast}$. Due to
Lemma \ref{lem-commut-relations-B}, the set $\tilde{P}_{j},\tilde{Q}_{j}$,
$j=1,\ldots,n$ fulfills the same basic commutation relations (\ref{comm-rela}%
). Note that the proof of Lemma \ref{lem-covmatrix-quasi-obs} is based on
moment properties of the set of canonical observables $P_{j},Q_{j}$, implied
by the fact that their covariance matrix is $\Sigma\left(  A\right)  $ from
(\ref{symbol-related-to-covmatrix}). Hence it suffices to show that the
covariance matrix of $\tilde{P}_{j},\tilde{Q}_{j}$, $j=1,\ldots,n$ is
$\Sigma\left(  U_{n}^{\ast}AU_{n}\right)  $. To see this, define the vector of
observables
\[
\mathbf{\tilde{R}}:=\left(  \tilde{Q}_{1},\ldots,\tilde{Q}_{n},\tilde{P}%
_{1},\ldots,\tilde{P}_{n}\right)
\]
in analogy to the $\mathbf{R}$ occurring in (\ref{Weyl-as-function-of-P-Q}).
Then for every $x\in\mathbb{R}^{2n}$ we have to show, for $\rho=\mathfrak{N}%
_{n}\left(  0,A\right)  $
\begin{equation}
\mathrm{Tr}\;\rho\exp\left(  i\mathbf{\tilde{R}}x\right)  =\exp\left(
-\frac{1}{2}\left\langle x,\Sigma\left(  U_{n}^{\ast}AU_{n}\right)
x\right\rangle \right)  . \label{claim-anlogous-R-tilde}%
\end{equation}
Recall that in connection with (\ref{Weyl-unitaries-complex-indexed}) for
$u\in\mathbb{C}^{n}$ we set $\underline{u}:=\left(  -\operatorname{Im}%
u\right)  \oplus\operatorname{Re}u$. Setting $x=\underline{u}$ for some
$u\in\mathbb{C}^{n}$, we note that (\ref{char-function-Gaussian}) and
(\ref{char-func-gauge-invar}) imply
\[
\left\langle \underline{u},\Sigma\left(  A\right)  \underline{u}\right\rangle
=\frac{1}{2}\left\langle u,Au\right\rangle \text{, }u\in\mathbb{C}^{n}%
\]
for every symbol matrix $A$, so that (\ref{claim-anlogous-R-tilde}) is
equivalent to
\begin{equation}
\mathrm{Tr}\;\rho\exp\left(  i\mathbf{\tilde{R}}\underline{u}\right)
=\exp\left(  -\frac{1}{4}\left\langle u,U_{n}^{\ast}AU_{n}u\right\rangle
\right)  \text{, }u\in\mathbb{C}^{n}. \label{claim-anlogous-R-tilde-2}%
\end{equation}
Define
\[
\mathbf{\tilde{R}}_{Q}:=\left(  \tilde{Q}_{1},\ldots,\tilde{Q}_{n}\right)
,\;\mathbf{\tilde{R}}_{P}=\left(  \tilde{P}_{1},\ldots,\tilde{P}_{n}\right)
\]
and set $x=x_{1}\oplus x_{2}$, $x_{i}\in\mathbb{R}^{n}$, $i=1,2$. Then
\begin{align*}
\mathbf{\tilde{R}}x  &  =\mathbf{\tilde{R}}_{Q}x_{1}+\mathbf{\tilde{R}}%
_{P}x_{2}\\
&  =\frac{1}{\sqrt{2}}\left(  x_{1}^{\prime}\mathbf{\hat{B}+\mathbf{\hat{B}%
}^{\dag}}x_{1}\right)  +\frac{1}{i\sqrt{2}}\left(  x_{2}^{\prime}%
\mathbf{\hat{B}-\mathbf{\hat{B}}^{\dag}}x_{2}\right)
\end{align*}
Define $u_{x}\in\mathbb{C}^{n}$ by $u_{x}=x_{2}-ix_{1}$. Then we obtain
\begin{equation}
i\mathbf{\tilde{R}}x=2^{-1/2}\left(  u_{x}^{\ast}\mathbf{\hat{B}%
-\mathbf{\hat{B}}^{\dag}}u_{x}\right)  . \label{pre-Weyl-repres}%
\end{equation}
Analogously one shows for $\mathbf{R}$%
\[
i\mathbf{R}x=2^{-1/2}\left(  u_{x}^{\ast}\mathbf{\hat{A}-\mathbf{\hat{A}%
}^{\dag}}u_{x}\right)  ,
\]
and thus the Weyl unitaries can be written
\[
W\left(  x\right)  =\exp\left(  i\mathbf{R}x\right)  =\exp\left(
2^{-1/2}\left(  u_{x}^{\ast}\mathbf{\hat{A}-\mathbf{\hat{A}}^{\dag}}%
u_{x}\right)  \right)  .
\]
It turns out that $\underline{u_{x}}=x$, $x\in\mathbb{R}^{2n}$, and since
$V\left(  u\right)  =W\left(  \underline{u}\right)  $, the above relation can
be written
\[
V\left(  u\right)  =\exp\left(  2^{-1/2}\left(  u^{\ast}\mathbf{\hat
{A}-\mathbf{\hat{A}}^{\dag}}u\right)  \right)  ,\;u\in\mathbb{C}^{n}.
\]
Now (\ref{pre-Weyl-repres}) in connection with (\ref{transformed-number-ops})
yields%
\begin{align*}
\exp\left(  i\mathbf{\tilde{R}}x\right)   &  =\exp\left(  2^{-1/2}\left(
u_{x}^{\ast}U_{n}^{\ast}\mathbf{\hat{A}-\hat{A}}^{\dag}U_{n}u_{x}\right)
\right) \\
&  =V\left(  U_{n}u_{x}\right)  .
\end{align*}
so that (\ref{char-func-gauge-invar}) implies
\begin{align*}
\mathrm{Tr}\;\rho\exp\left(  i\mathbf{\tilde{R}}\underline{u}\right)   &
=\exp\left(  -\frac{1}{4}\left\langle U_{n}u,AU_{n}u\right\rangle \right) \\
&  =\exp\left(  -\frac{1}{4}\left\langle u,U_{n}^{\ast}AU_{n}u\right\rangle
\right)
\end{align*}
establishing (\ref{claim-anlogous-R-tilde-2}).
\end{proof}

\subsection{Unbiased covariance estimation}

Again assume that $n$ is odd$.$ We will see that in the case of a Toeplitz
symbol matrix $A$ (shift invariant time series), the set of observables
$\hat{B}_{j}^{\ast}\hat{B}_{j}$, $\left\vert j\right\vert \leq\left(
n-1\right)  /2$ allows an unbiased estimator of the coefficients
$a_{j}=a_{k,k+j}$, i.e. the analogs of the autocovariances of a classical time
series (cf. (\ref{unbiased}) below).

For the vectors $\mathbf{u}_{j}=\left(  u_{j,k}\right)  _{k=1,\ldots,n},$
$j\in\mathbb{Z}$ given by (\ref{epsilon-u-vectors-def}) for $m=n$ we note
\[
u_{j,k}=n^{-1/2}\epsilon_{j}^{k-1}=n^{-1/2}\exp\left(  2\pi ij\left(
k-1\right)  /n\right)  =n^{-1/2}\exp\left(  i\left(  k-1\right)  \omega
_{j,n}\right)
\]
for the Fourier frequencies $\omega_{j,n}$ defined in (\ref{Fourier-frequ-def}%
). Using the Toeplitz property of $A_{n}=\left(  a_{l-k}\right)
_{k=1,\ldots,n}^{l=1,\ldots,n}$ we obtain for $\left\vert j\right\vert
\leq\left(  n-1\right)  /2$
\begin{align}
\mathbf{u}_{j}^{\ast}A_{n}\mathbf{u}_{j}  &  =\sum_{k,l=1}^{n}\bar{u}%
_{j,k}\;u_{j,l}\;a_{l-k}=\sum_{k,l=1}^{n}a_{l-k}n^{-1}\exp\left(  i\left(
l-k\right)  \omega_{j,n}\right) \nonumber\\
&  =\sum_{s=-\left(  n-1\right)  }^{n-1}\frac{n-\left\vert s\right\vert }%
{n}a_{s}\exp\left(  is\omega_{j,n}\right)  =\sum_{s=-\left(  n-1\right)
}^{n-1}\left(  1-\frac{\left\vert s\right\vert }{n}\right)  a_{s}\phi
_{s}\left(  \omega_{j,n}\right)  , \label{unbiased-covest-2}%
\end{align}
$\phi_{s}$ being defined by (\ref{basic-func-def}). Define a commuting set of
observables
\begin{equation}
\Pi_{j}=2\hat{B}_{j}^{\ast}\hat{B}_{j}+\mathbf{1,}\text{ }\left\vert
j\right\vert \leq\left(  n-1\right)  /2. \label{Pi-observble-component-def}%
\end{equation}
Then from Lemma \ref{lem-covmatrix-analog-B} (i) and (\ref{unbiased-covest-2})
we obtain
\begin{equation}
\left\langle \Pi_{j}\right\rangle _{\rho}=\sum_{s=-\left(  n-1\right)  }%
^{n-1}\left(  1-\frac{\left\vert s\right\vert }{n}\right)  a_{s}\phi
_{s}\left(  \omega_{j,n}\right)  . \label{unified-3}%
\end{equation}
Recalling the series representation (\ref{symbol-generate-2}) of the spectral
density, we see that $\left\langle \Pi_{j}\right\rangle _{\rho}$ is an
approximation to the spectral density at the Fourier frequency $\omega_{j,n}$.
In particular, assuming that our quantum time series is $d$-dependent, i.e.
$a_{j}=0$ for $\left\vert j\right\vert >d$, we have for sufficiently large
$n$
\[
\left\langle \Pi_{j}\right\rangle _{\rho}=a\left(  \omega_{j,n}\right)
+O\left(  n^{-1}\right)  ,\text{ }\left\vert j\right\vert \leq\left(
n-1\right)  /2,
\]
i.e. the estimator $\Pi_{j}$ of $a\left(  \omega_{j}\right)  $ is
asymptotically unbiased of order $O\left(  n^{-1}\right)  $. Furthermore from
(\ref{unified-3}) we can obtain asymptotically unbiased estimates of the
symbol coefficients $a_{j}$ (we may informally call them the covariances).
Define vectors
\begin{equation}
\mathbf{v}_{j,n}:=n^{-1/2}\left(  \phi_{j}\left(  \omega_{k,n}\right)
\right)  _{\left\vert k\right\vert \leq\left(  n-1\right)  /2}\text{, }%
j\in\mathbb{Z}. \label{vectors-v-orthonorm}%
\end{equation}
Then $\mathbf{v}_{j,n}$, $\left\vert j\right\vert \leq\left(  n-1\right)  /2$
is an orthonormal system, thus
\begin{equation}
\mathbf{v}_{j,n}^{\ast}\mathbf{v}_{k,n}=\delta_{jk}\text{, }\left\vert
j\right\vert ,\left\vert k\right\vert \leq\left(  n-1\right)  /2.
\label{vectors-v-orthonorm-3}%
\end{equation}
Indeed set $c_{k-j,n}:=\exp\left(  i\left(  k-j\right)  \frac{2\pi}{n}\right)
$; then it can be shown that
\[
c_{k-j,n}\mathbf{v}_{j,n}^{\ast}\mathbf{v}_{k,n}=c_{k-j,n}n^{-1}%
\sum_{\left\vert s\right\vert \leq\left(  n-1\right)  /2}\exp\left(  i\left(
k-j\right)  \frac{2\pi s}{n}\right)  =\mathbf{v}_{j,n}^{\ast}\mathbf{v}_{k,n}%
\]
so that $\mathbf{v}_{j,n}^{\ast}\mathbf{v}_{k,n}$ must be zero unless $k=j$.

Define the vector of observables
\begin{equation}
\mathbf{\Pi}_{n}=\left(  \Pi_{j}\right)  _{\left\vert j\right\vert \leq\left(
n-1\right)  /2};\; \label{vec-of-observables}%
\end{equation}
then (\ref{unified-3}) can be written, for $\varrho=\mathfrak{N}_{n}\left(
0,A_{n}\right)  $
\begin{equation}
\left\langle \mathbf{\Pi}_{n}\right\rangle _{\rho}=n^{1/2}\sum_{j=-(n-1)}%
^{n-1}\left(  1-\frac{\left\vert j\right\vert }{n}\right)  a_{j}%
\mathbf{v}_{j,n}. \label{vec-of-observables-expec}%
\end{equation}
At this point, by $d$-dependency for fixed $d$ and $n$ sufficiently large, we
can assume that the above sum extends only over $\left\vert j\right\vert \leq
d\leq\left(  n-1\right)  /2$. Then, defining the estimator
\begin{equation}
\check{a}_{j,n}=\frac{n^{1/2}}{n-\left\vert j\right\vert }\mathbf{v}%
_{j,n}^{\ast}\mathbf{\Pi}_{n}\mathbf{,}\text{ for }\left\vert j\right\vert
\leq d, \label{prelim-estim-def}%
\end{equation}
we have by the orthogonality (\ref{vectors-v-orthonorm-3})
\begin{equation}
E_{\rho}\check{a}_{j,n}=\frac{n^{1/2}}{n-\left\vert j\right\vert }%
\mathbf{v}_{j}^{\ast}\left\langle \mathbf{\Pi}_{n}\right\rangle _{\rho}%
=\frac{n^{1/2}}{n-\left\vert j\right\vert }n^{1/2}\left(  1-\frac{\left\vert
j\right\vert }{n}\right)  \mathbf{v}_{j,n}^{\ast}\mathbf{v}_{j,n}a_{j}%
=a_{j}\text{.} \label{unbiased}%
\end{equation}
The estimate $\check{a}_{j.n}$ is the analog of the basic unbiased covariance
estimate in a classical time series (cf. \cite{MR3930599}, Sec 6.4).

\subsection{A preliminary estimator}

\subsubsection{Real parameters}

We will take the unbiased estimator (\ref{prelim-estim-def}) as a starting
point for constructing a preliminary estimator in the $d$-dependent case.
Since our parameter vector $\left(  a_{j}\right)  _{\left\vert j\right\vert
\leq d}$ is complex with $a_{-j}=\bar{a}_{j}$, we will transform it to a real
vector as follows: $\theta=\left(  \theta_{j}\right)  _{\left\vert
j\right\vert \leq d}$ where
\begin{equation}
\theta_{0}=a_{0}\text{, }\theta_{j}=\sqrt{2}\operatorname{Re}a_{j}\text{,
}\theta_{-j}=-\sqrt{2}\operatorname{Im}a_{j}\text{, }1\leq j\leq d.
\label{real-parameter-def}%
\end{equation}
Let us also define a set of functions on $\left[  -\pi,\pi\right]  $ as
\begin{subequations}
\label{psi-def}%
\begin{align}
\psi_{0}  &  =\phi_{j}=1\text{, }\label{psi-def-1}\\
\psi_{j}  &  =\frac{1}{\sqrt{2}}\left(  \phi_{j}+\phi_{-j}\right)  =\sqrt
{2}\cos\left(  j\cdot\right)  ,\label{psi-def-2}\\
\psi_{-j}  &  =\frac{1}{i\sqrt{2}}\left(  \phi_{j}-\phi_{-j}\right)  =\sqrt
{2}\sin\left(  j\cdot\right)  , \label{psi-def-3}%
\end{align}
for $j\in\mathbb{N}$. These functions fulfill
\end{subequations}
\begin{equation}
\frac{1}{2\pi}\int_{\left[  -\pi,\pi\right]  }\psi_{j}\left(  \omega\right)
\psi_{l}\left(  \omega\right)  d\omega=\delta_{jl},\;j,l\in\mathbb{N}.
\label{ONB-sin-cos}%
\end{equation}
Recalling (\ref{symbol-generate-2}), we can then write the spectral density as
follows:%
\begin{align}
a\left(  \omega\right)   &  =\sum_{\left\vert j\right\vert \leq d}\phi
_{j}\left(  \omega\right)  a_{j}\nonumber\\
&  =a_{0}+\sum_{1\leq j\leq d}\left(  \phi_{j}\left(  \omega\right)
+\phi_{-j}\left(  \omega\right)  \right)  \operatorname{Re}a_{j}+i\sum_{1\leq
j\leq d}\left(  \phi_{j}\left(  \omega\right)  -\phi_{-j}\left(
\omega\right)  \right)  \operatorname{Im}a_{j}\nonumber\\
&  =\sum_{\left\vert j\right\vert \leq d}\psi_{j}\left(  \omega\right)
\theta_{j}=:a_{\theta}\left(  \omega\right)  .
\label{series-repre-real-spec_density}%
\end{align}
The above defines the spectral density as a function $a_{\theta}$ of a
parameter $\theta\in\mathbb{R}^{2d+1}$. The assumption $a\in\Theta_{2}\left(
d,M\right)  $ is then equivalent to
\begin{align}
\theta &  \in\Theta_{2}^{\prime}\left(  d,M\right)  :=\left\{  \theta
:\left\Vert \theta\right\Vert ^{2}\leq M\right\}  \cap\mathcal{L}_{M}^{\prime
},\label{parametric-model}\\
\mathcal{L}_{M}^{\prime}  &  :=\left\{  \theta:\inf_{\omega\in\left[  -\pi
,\pi\right]  }a_{\theta}\left(  \omega\right)  \geq1+M^{-1}\right\}  .
\label{lowerbound-set-parametric-def}%
\end{align}
This parameter space will often be written just $\Theta_{2}^{\prime}$,
considering $d$ and $M$ fixed henceforth.

The next Lemma is an analog of Lemma \ref{lem-toeplitz-EV}.

\begin{lemma}
\label{lem-toeplitz-EV-2} Suppose $\theta\in\Theta_{2}^{\prime}\left(
d,M\right)  $ for $M>1$. Then
\begin{equation}
\left(  1+M^{-1}\right)  \;I\leq A_{n}\left(  a_{\theta}\right)  \leq\left(
2d+1\right)  ^{1/2}M^{1/2}\;I.
\end{equation}

\end{lemma}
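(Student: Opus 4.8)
The plan is to mimic the proof of Lemma~\ref{lem-toeplitz-EV}, exploiting the integral representation of the quadratic form associated with a Hermitian Toeplitz symbol matrix. For $x\in\mathbb{C}^{n}$ with $\left\Vert x\right\Vert =1$, relation (\ref{symbol-generate}) gives
\[
\left\langle x,A_{n}\left(  a_{\theta}\right)  x\right\rangle =\frac{1}{2\pi}\int_{-\pi}^{\pi}\Big|\sum_{j=1}^{n}x_{j}e^{ij\omega}\Big|^{2}a_{\theta}\left(  \omega\right)  d\omega,
\]
while $\frac{1}{2\pi}\int_{-\pi}^{\pi}\big|\sum_{j=1}^{n}x_{j}e^{ij\omega}\big|^{2}d\omega=\sum_{j=1}^{n}\left\vert x_{j}\right\vert ^{2}=1$. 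Hence any uniform upper or lower bound on $a_{\theta}\left(  \omega\right)$ over $\omega\in[-\pi,\pi]$ translates directly into the corresponding eigenvalue bracket for $A_{n}\left(  a_{\theta}\right)$, with a bound that does not depend on $n$.

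For the lower bound, I would simply invoke the definition of $\Theta_{2}^{\prime}\left(  d,M\right)$: membership in $\mathcal{L}_{M}^{\prime}$ (see (\ref{lowerbound-set-parametric-def})) means precisely that $a_{\theta}\left(  \omega\right)  \geq 1+M^{-1}$ for all $\omega\in[-\pi,\pi]$, so the displayed identity yields $\left\langle x,A_{n}\left(  a_{\theta}\right)  x\right\rangle \geq 1+M^{-1}$, that is $A_{n}\left(  a_{\theta}\right)  \geq\left(  1+M^{-1}\right)  I$.

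For the upper bound I would use the finite series representation (\ref{series-repre-real-spec_density}), namely $a_{\theta}\left(  \omega\right)  =\sum_{\left\vert j\right\vert \leq d}\psi_{j}\left(  \omega\right)  \theta_{j}$, together with the Cauchy--Schwarz inequality:
\[
\left\vert a_{\theta}\left(  \omega\right)  \right\vert \leq\Big(\sum_{\left\vert j\right\vert \leq d}\psi_{j}\left(  \omega\right)  ^{2}\Big)^{1/2}\Big(\sum_{\left\vert j\right\vert \leq d}\theta_{j}^{2}\Big)^{1/2}.
\]
The key elementary point is the Pythagorean identity $\psi_{j}\left(  \omega\right)  ^{2}+\psi_{-j}\left(  \omega\right)  ^{2}=2\cos^{2}\left(  j\omega\right)  +2\sin^{2}\left(  j\omega\right)  =2$ for $1\leq j\leq d$, combined with $\psi_{0}\equiv1$, which makes $\sum_{\left\vert j\right\vert \leq d}\psi_{j}\left(  \omega\right)  ^{2}=2d+1$ constant in $\omega$. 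Since $\sum_{\left\vert j\right\vert \leq d}\theta_{j}^{2}=\left\Vert \theta\right\Vert ^{2}\leq M$ by (\ref{parametric-model}), we get $\left\vert a_{\theta}\left(  \omega\right)  \right\vert \leq\left(  2d+1\right)  ^{1/2}M^{1/2}$ uniformly, hence $A_{n}\left(  a_{\theta}\right)  \leq\left(  2d+1\right)  ^{1/2}M^{1/2}\,I$.

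There is no substantial obstacle here; the only step requiring a moment's care is the Pythagorean cancellation that makes $\sum_{\left\vert j\right\vert \leq d}\psi_{j}^{2}$ independent of $\omega$, which is exactly what delivers the dimension-free upper bound stated explicitly in terms of $d$ and $M$.
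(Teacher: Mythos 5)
Your proof is correct and follows essentially the same route as the paper: Cauchy--Schwarz on the finite expansion $a_\theta=\sum_{|j|\le d}\psi_j\theta_j$ combined with the identity $\sum_{|j|\le d}\psi_j^2(\omega)=2d+1$ for the upper bound, and the integral (quadratic-form) representation of the Toeplitz symbol matrix from Lemma~\ref{lem-toeplitz-EV} to convert pointwise bounds on $a_\theta$ into eigenvalue bounds on $A_n(a_\theta)$. The one small improvement you make is to invoke the lower bound $a_\theta\geq 1+M^{-1}$ directly from the definition of $\mathcal{L}_M'$ rather than, as the paper does, referencing condition (\ref{cond-boundedness-specdensity}), which strictly speaking was formulated for $\Theta_1(\alpha,M)$; your citation is the more precise one here.
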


\begin{proof}
For $\omega\in\left[  -\pi,\pi\right]  $ we have
\[
a_{\theta}\left(  \omega\right)  =\sum_{\left\vert j\right\vert \leq d}%
\psi_{j}\left(  \omega\right)  \theta_{j}\leq\left(  \sum_{\left\vert
j\right\vert \leq d}\psi_{j}^{2}\left(  \omega\right)  \right)  ^{1/2}%
\left\Vert \theta\right\Vert \leq\left(  2d+1\right)  ^{1/2}M^{1/2}.
\]
Set $C=\left(  2d+1\right)  ^{1/2}M^{1/2}$; then analogously to the proof of
Lemma \ref{lem-toeplitz-EV} for every $x\in\mathbb{C}^{n}$ with $\left\Vert
x\right\Vert =1$%
\[
\left\langle x,A_{n}\left(  a\right)  x\right\rangle \leq\frac{C}{2\pi}%
\int_{-\pi}^{\pi}\left\vert \sum_{j=1}^{n}x_{j}\exp\left(  ij\omega\right)
\right\vert ^{2}d\omega=C.
\]
Analogously we obtain from the first inequality in
(\ref{cond-boundedness-specdensity}) $\left\langle x,A_{n}\left(  a\right)
x\right\rangle \geq\left(  1+M^{-1}\right)  $.
\end{proof}

\bigskip

Define vectors, in analogy to $\mathbf{v}_{j}$ in (\ref{vectors-v-orthonorm}%
),
\begin{equation}
\mathbf{w}_{j,n}:=n^{-1/2}\left(  \psi_{j}\left(  \omega_{k}\right)  \right)
_{\left\vert k\right\vert \leq\left(  n-1\right)  /2}\text{, }\left\vert
j\right\vert \leq\left(  n-1\right)  /2. \label{vectors-w--def}%
\end{equation}
We then have $\mathbf{w}_{0}=\mathbf{v}_{0}$ and
\[
\mathbf{w}_{j,n}=\frac{1}{\sqrt{2}}\left(  \mathbf{v}_{j,n}+\mathbf{v}%
_{-j,n}\right)  \text{, }\mathbf{w}_{-j,n}=\frac{1}{i\sqrt{2}}\left(
\mathbf{v}_{j,n}-\mathbf{v}_{-j,n}\right)  \text{, }1\leq j\leq\left(
n-1\right)  /2
\]
or equivalently
\begin{equation}
\mathbf{v}_{j,n}=\frac{1}{\sqrt{2}}\left(  \mathbf{w}_{j,n}+i\mathbf{w}%
_{-j,n}\right)  ,\;\mathbf{v}_{-j,n}=\frac{1}{\sqrt{2}}\left(  \mathbf{w}%
_{j,n}-i\mathbf{w}_{-j,n}\right)  . \label{vectors-w-relation-to-v}%
\end{equation}
It follows that $\mathbf{w}_{j,n}$, $\left\vert j\right\vert \leq\left(
n-1\right)  /2$ are orthonormal; indeed they satisfy%
\begin{equation}
\mathbf{w}_{j,n}^{\prime}\mathbf{w}_{k,n}=\delta_{jk},\;\left\vert
j\right\vert \leq\left(  n-1\right)  /2. \label{vectors-w-orthonorm-2}%
\end{equation}
Since $a_{j}\mathbf{v}_{j,n}+a_{-j}\mathbf{v}_{-j,n}=\theta_{j}\mathbf{w}%
_{j,n}+\theta_{-j}\mathbf{w}_{-j,n}$ for $0\leq j\leq d$, we can rewrite
(\ref{vec-of-observables-expec}) under $d$-dependence as
\begin{equation}
E_{\rho}\mathbf{\Pi}_{n}=n^{1/2}\sum_{j=-d}^{d}\left(  1-\frac{\left\vert
j\right\vert }{n}\right)  \theta_{j}\mathbf{w}_{j,n}
\label{vec-of-observables-expec-2}%
\end{equation}
for $\rho=\mathfrak{N}_{n}\left(  0,A_{n}\left(  a_{\theta}\right)  \right)
$. Also the estimator (\ref{prelim-estim-def}) can be rewritten as
\begin{equation}
\check{\theta}_{j,n}=\frac{n^{1/2}}{n-\left\vert j\right\vert }\mathbf{w}%
_{j,n}^{\prime}\mathbf{\Pi}_{n}\mathbf{,\;}\left\vert j\right\vert \leq d.
\label{prelim-estim-def-2}%
\end{equation}
Unbiasedness then follows from (\ref{vec-of-observables-expec-2}): for
$\rho=\mathfrak{N}_{n}\left(  0,A_{n}\left(  a_{\theta}\right)  \right)  $
\begin{equation}
E_{\rho}\check{\theta}_{j,n}=\theta_{j}\text{, }\left\vert j\right\vert \leq
d\text{.} \label{unbiased-2}%
\end{equation}

\subsubsection{Partition into independent blocks\label{Subsec-partition}}

Recall that the $n$ pairs of operators $\left(  \hat{A}_{j},\hat{A}_{j}^{\ast
}\right)  $, $j=1,\ldots,n$ define the $n$ modes of the quantum Gaussian
state; we will subdivide this sequence into blocks as follows. Set%
\begin{equation}
m_{n}=2\left[  \log n/2\right]  +1,\;r_{n}=\left[  n/\left(  m_{n}+d\right)
\right]  \label{size-of-blocks-def}%
\end{equation}
so that $m_{n}$ is odd; we will write $m$ and $r$ hence forth. Consider sets
of pairs
\begin{align*}
S_{1}  &  :=\left\{  \left(  \hat{A}_{1},\hat{A}_{1}^{\ast}\right)
,\ldots,\left(  \hat{A}_{m},\hat{A}_{m}^{\ast}\right)  \right\}
,S_{2}:=\left\{  \left(  \hat{A}_{m+d+1},\hat{A}_{m+d+1}^{\ast}\right)
,\ldots,\left(  \hat{A}_{2m+d},\hat{A}_{2m+d}^{\ast}\right)  \right\}
,\ldots\\
S_{r}  &  :=\left\{  \left(  \hat{A}_{\left(  r-1\right)  \left(  m+d\right)
+1},\hat{A}_{\left(  r-1\right)  \left(  m+d\right)  +1}^{\ast}\right)
,\ldots,\left(  \hat{A}_{rm+(r-1)d},\hat{A}_{rm+(r-1)d}^{\ast}\right)
\right\}
\end{align*}
Note that operators from two different blocks $S_{j}$, $S_{h}$ are
uncorrelated: considering e.g. the last pair $\left(  \hat{A}_{m},\hat{A}%
_{m}^{\ast}\right)  $ from $S_{1}$ and the first pair $\left(  \hat{A}%
_{m+d+1},\hat{A}_{m+d+1}^{\ast}\right)  $ from $S_{2}$, we have according to
Lemma \ref{lem-covmatrix-quasi-obs} (i)
\[
\left\langle \hat{A}_{m}^{\ast}\hat{A}_{m+d+1}\right\rangle _{\rho}=\frac
{1}{2}a_{m,m+d+1}=\frac{1}{2}a_{d+1}=0
\]
in view of the $d$-dependence ($a_{h}=0$ for $\left\vert h\right\vert >d$).
Similarly, applying (\ref{symbol-related-to-covmatrix})
\begin{align*}
\left\langle \hat{A}_{m}\hat{A}_{m+d+1}\right\rangle _{\rho}  &  =\frac{1}%
{2}\left\langle \left(  Q_{m}+iP_{m}\right)  \left(  Q_{m+d+1}+iP_{m+d+1}%
\right)  \right\rangle _{\rho}\\
&  =\frac{1}{2}\left(  \left\langle Q_{m}Q_{m+d+1}\right\rangle _{\rho
}+i\left\langle Q_{m}P_{m+d+1}\right\rangle _{\rho}+i\left\langle
P_{m}Q_{m+d+1}\right\rangle _{\rho}-\left\langle P_{m}P_{m+d+1}\right\rangle
_{\rho}\right) \\
&  =\frac{1}{2}\left(  \operatorname{Re}a_{m,m+d+1}-i\operatorname{Im}%
a_{m,m+d+1}+i\operatorname{Im}a_{m,m+d+1}-\operatorname{Re}a_{m,m+d+1}\right)
=0.
\end{align*}
Intuitively, when we "omit" all pairs $\left(  \hat{A}_{j},\hat{A}_{j}^{\ast
}\right)  $ between the blocks, and also those after the last block $S_{r}$,
then, because of the $d$-dependence, the remaining blocks $S_{1},\ldots,S_{r}$
should be "independent". To make this rigorous in the quantum context, we take
a partial trace of the state $\mathfrak{N}_{n}\left(  0,A_{n}\right)  $,
tracing out all the modes corresponding to the pairs $\left(  \hat{A}_{j}%
,\hat{A}_{j}^{\ast}\right)  $ in question. What we get is a Gaussian state
with $rm$ modes and symbol matrix $I_{r}\otimes A_{\left(  m\right)  }$ (in
view of the Toeplitz form of $A_{n}$, where $A_{\left(  m\right)  }$ is the
upper central $m\times m$ submatrix of $A_{n}$, i.e. we obtain the gauge
invariant state $\mathfrak{N}_{rm}\left(  0,I_{r}\otimes A_{\left(  m\right)
}\right)  $. The details of this reasoning are given in Subsection
\ref{subsec:partial-trace}. Using characteristic functions , it is easy to
show that this state is equivalent to an $r$-fold tensor product $\left(
\mathfrak{N}_{m}\left(  0,A_{\left(  m\right)  }\right)  \right)  ^{\otimes
r}$.

Recall the basic model assumption (\ref{quantum-asy-setup-spec-density}), i.e.
$A_{n}=A_{n}\left(  a\right)  $, $n\rightarrow\infty$ for a given spectral
density $a$ (with current assumption $a=a_{\theta}$, $\theta\in\Theta
_{2}^{\prime}$, cf. (\ref{parametric-model})). It follows that $A_{\left(
m\right)  }=A_{m}\left(  a_{\theta}\right)  $, or $A_{\left(  m\right)
}=A_{m}$ for short, and we now have the parametric model of states $\left(
\mathfrak{N}_{m}\left(  0,A_{m}\left(  a_{\theta}\right)  \right)  \right)
^{\otimes r}$, $\theta\in\Theta_{2}^{\prime}$.%

\begin{privatenotes}
\begin{boxedminipage}{\textwidth}%

\begin{sfblock}
The preceding paragraph has to be reformulated in a more rigorous way, as a
lemma on the existence of a channel resulting in the state $\left(
\mathfrak{N}_{m}\left(  0,A_{m}\left(  a_{\theta}\right)  \right)  \right)
^{\otimes r}$, similar to the Fermionic paper.\texttt{ }
\end{sfblock}

%

\end{boxedminipage}
\end{privatenotes}%

For each of the $r$ component states of $\left(  \mathfrak{N}_{m}\left(
0,A_{m}\left(  a_{\theta}\right)  \right)  \right)  ^{\otimes r}$, we now form
the vector of observables $\mathbf{\Pi}_{m}$ corresponding to
(\ref{vec-of-observables}) for $n=m$, obtaining an $r$-tuple of such vectors
$\mathbf{\Pi}_{m,j}$, $j=1,\ldots,r$, and we form the average
\begin{equation}
\mathbf{\bar{\Pi}}_{n}:=r^{-1}\sum_{j=1}^{r}\mathbf{\Pi}_{m,j}.
\label{Pi-k-quer-def}%
\end{equation}
We will modify the estimator (\ref{prelim-estim-def-2}), essentially
substituting $\mathbf{\bar{\Pi}}_{n}$ for $\mathbf{\Pi}_{n}$. To write it in
vector form, consider the vectors $\mathbf{w}_{j,n}$ of (\ref{vectors-w--def})
for dimension $n=m$ and define the $m\times\left(  2d+1\right)  $ real matrix
\begin{equation}
W_{m}=\left(  \mathbf{w}_{-d,m},\ldots,\mathbf{w}_{0},\ldots,\mathbf{w}%
_{d,m}\right)  , \label{matrix-W-def}%
\end{equation}
fulfilling $W_{m}^{\prime}W_{m}=I_{2d+1}$ by (\ref{vectors-w-orthonorm-2}).
Furthermore define the diagonal $\left(  2d+1\right)  \times\left(
2d+1\right)  $ matrix%
\begin{equation}
F_{m}:=\mathrm{diag}\left(  \frac{m}{m-\left\vert j\right\vert }\right)
_{\left\vert j\right\vert \leq d}. \label{Fm-matrix-def}%
\end{equation}

\begin{definition}
The preliminary estimator of the parameter vector $\theta$ from
(\ref{real-parameter-def}) is%
\begin{equation}
\hat{\theta}_{n}:=m^{-1/2}F_{m}W_{m}^{\prime}\mathbf{\bar{\Pi}}_{n}
\label{prelim-estim-def-3}%
\end{equation}
with $\mathbf{\bar{\Pi}}_{n}$ from (\ref{Pi-k-quer-def})
\end{definition}

Since $E_{\rho}\mathbf{\Pi}_{m,j}$ coincides with $E_{\rho}\mathbf{\Pi}$ (cf.
(\ref{vec-of-observables-expec-2})) if the latter is taken at dimension $n=m$,
from (\ref{unbiased-2}) we immediately obtain unbiasedness: $E_{\rho}%
\hat{\theta}_{n}=\theta$.

Let $P_{n,\theta}$ be the joint distribution of the $\mathbb{R}^{m}$-valued
random vectors $\mathbf{\Pi}_{m,j}$, $j=1,\ldots,r$ from (\ref{Pi-k-quer-def})
under the state $\rho=\mathfrak{N}_{n}\left(  0,A_{n}\left(  a_{\theta
}\right)  \right)  $. Here $\mathbf{\bar{\Pi}}_{n}$ will function as the basic
observable for asymptotic inference about $\theta$, so that distributions of
further random variables in this section can be described in terms of
$P_{n,\theta}$ and corresponding expectations $E_{n,\theta}$.

\subsubsection{Asymptotic covariance matrix}

We have
\begin{equation}
n^{1/2}\left(  \hat{\theta}_{n}-\theta\right)  =\sum_{j=1}^{r}r^{-1}%
n^{1/2}m^{-1/2}F_{m}W_{m}^{\prime}\left(  \mathbf{\Pi}_{m,j}-E_{\rho
}\mathbf{\Pi}_{m,j}\right)  \label{sum-iid}%
\end{equation}
where it follows from (\ref{vec-of-observables-expec-2}) that
\begin{equation}
E_{n,\theta}\mathbf{\Pi}_{m,j}=m^{1/2}W_{m}F_{m}^{-1}\theta\mathbf{.}
\label{expec-Pi-m-j}%
\end{equation}
The r.h.s. of (\ref{sum-iid}) is a sum of independent, identically distributed
zero mean random vectors. In the following proof, for sequences of nonrandom
matrices $M_{1,n,}$ $M_{2,n}$ of fixed dimension as $n\rightarrow\infty$, we
write $M_{1,n}\sim M_{2,n}$ if $M_{1,n}=M_{2,n}\left(  1+o\left(  1\right)
\right)  $ elementwise. Also \textrm{Cov}$_{n,\theta}\left(  \cdot\right)  $
denotes the covariance matrix of a real random vector under $P_{n,\theta}$.

\begin{lemma}
\label{lem-covmatrix-prelim-est}Under $\rho=\mathfrak{N}_{n}\left(
0,A_{n}\left(  a_{\theta}\right)  \right)  $, $\theta\in\Theta_{2}^{\prime}$
we have%
\[
\lim_{n\rightarrow\infty}\mathrm{Cov}_{n,\theta}\left(  n^{1/2}\left(
\hat{\theta}_{n}-\theta\right)  \right)  =\Phi_{\theta}^{0}:=\left(
\Phi_{\theta,jk}^{0}\right)  _{\left\vert j\right\vert ,\left\vert
k\right\vert \leq d},\text{ }%
\]
where
\begin{equation}
\Phi_{\theta,jk}^{0}=\frac{1}{2\pi}\int_{\left(  -\pi,\pi\right)  }\left(
a_{\theta}^{2}\left(  \omega\right)  -1\right)  \psi_{j}\left(  \omega\right)
\psi_{k}\left(  \omega\right)  d\omega, \label{Gamma-covmatrix-def}%
\end{equation}
$a_{\theta}\left(  \omega\right)  $ is the spectral density depending on
$\theta\in\Theta_{2}^{\prime}$ according to
(\ref{series-repre-real-spec_density}), and functions $\psi_{h}$ are defined
by (\ref{psi-def}). The convergence is uniform over $\theta\in\Theta
_{2}^{\prime}$.
\end{lemma}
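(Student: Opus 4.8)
The plan is to exploit the i.i.d.\ structure behind (\ref{sum-iid}) to reduce $\mathrm{Cov}_{n,\theta}\bigl(n^{1/2}(\hat\theta_n-\theta)\bigr)$ to a deterministic computation with the covariance matrix of the single-block observable $\mathbf\Pi_m$ --- for which Lemma~\ref{lem-covmatrix-analog-B} supplies an explicit formula --- and then to identify the limit by an exact trigonometric quadrature.

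\textbf{Reduction.} After the partial trace described in Subsection~\ref{Subsec-partition}, the $r$ vectors $\mathbf\Pi_{m,1},\dots,\mathbf\Pi_{m,r}$ are measured in the corresponding tensor factors of $\bigl(\mathfrak N_m(0,A_m(a_\theta))\bigr)^{\otimes r}$ and are therefore i.i.d.\ under $P_{n,\theta}$. Thus (\ref{sum-iid}) writes $n^{1/2}(\hat\theta_n-\theta)$ as a sum of $r$ i.i.d.\ centered vectors, so
\[
\mathrm{Cov}_{n,\theta}\bigl(n^{1/2}(\hat\theta_n-\theta)\bigr)=\frac{n}{rm}\,F_m\,W_m'\,C_m(\theta)\,W_m\,F_m ,\qquad C_m(\theta):=\mathrm{Cov}_{\rho}(\mathbf\Pi_m),
\]
with $\rho=\mathfrak N_m\bigl(0,A_m(a_\theta)\bigr)$; here $A_m(a_\theta)>I$ by Lemma~\ref{lem-toeplitz-EV-2}, so Lemma~\ref{lem-covmatrix-analog-B} applies with $n$ replaced by $m$. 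Since $\Pi_j=2\hat B_j^{\ast}\hat B_j+\mathbf 1$ by (\ref{Pi-observble-component-def}), Lemma~\ref{lem-covmatrix-analog-B}(ii) gives $C_m(\theta)=D_m(\theta)+E_m(\theta)$, where $D_m(\theta)$ is diagonal with entries $(\mathbf u_k^{\ast}A_m(a_\theta)\mathbf u_k)^2-1$ and $E_m(\theta)$ has $(j,k)$-entry $|\mathbf u_j^{\ast}A_m(a_\theta)\mathbf u_k|^2$ for $j\neq k$ (symmetric by Hermiticity of $A_m(a_\theta)$).

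\textbf{Diagonal part and the limit.} By (\ref{unbiased-covest-2}) and $d$-dependence, $\mathbf u_k^{\ast}A_m(a_\theta)\mathbf u_k=\tilde a_{m,\theta}(\omega_{k,m})$ with $\tilde a_{m,\theta}(\omega):=\sum_{|s|\le d}(1-|s|/m)a_s\phi_s(\omega)$, a trigonometric polynomial of degree $\le d$. Using the columns $\mathbf w_{p,m}=m^{-1/2}(\psi_p(\omega_{k,m}))_{|k|\le(m-1)/2}$ of $W_m$,
\[
\bigl(W_m'D_m(\theta)W_m\bigr)_{pq}=\frac1m\sum_{|k|\le(m-1)/2}\psi_p(\omega_{k,m})\psi_q(\omega_{k,m})\bigl(\tilde a_{m,\theta}^{\,2}(\omega_{k,m})-1\bigr).
\]
The summand is a trigonometric polynomial of degree $\le 4d$, and the equispaced $m$-point rule reproduces $\frac1{2\pi}\int_{-\pi}^{\pi}$ on every trigonometric polynomial of degree $<m$ (since $\frac1m\sum_{|k|\le(m-1)/2}e^{il\omega_{k,m}}=\mathbf 1\{m\mid l\}$). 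Hence for $m>4d$ the last display equals $\frac1{2\pi}\int_{-\pi}^{\pi}\psi_p\psi_q(\tilde a_{m,\theta}^{\,2}-1)\,d\omega$; since $\|\tilde a_{m,\theta}-a_\theta\|_\infty=O(m^{-1})$ uniformly over $\theta\in\Theta_2'$ and $a_\theta$ is uniformly bounded (Lemma~\ref{lem-toeplitz-EV-2}), this converges to $\Phi^0_{\theta,pq}$ uniformly in $\theta$. With $F_m\to I_{2d+1}$ and $n/(rm)\to1$ (both immediate from $m=m_n\to\infty$, $r=[n/(m+d)]$), the diagonal contribution to the covariance converges uniformly to $\Phi^0_\theta$.

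\textbf{Off-diagonal part --- the main obstacle.} It remains to show $\frac{n}{rm}F_mW_m'E_m(\theta)W_mF_m\to0$ elementwise. The difficulty is that the DFT vectors $\mathbf u_j$ diagonalize circulants but not the Toeplitz matrix $A_m(a_\theta)$, so $\mathbf u_j^{\ast}A_m(a_\theta)\mathbf u_k$ is not zero for $j\neq k$ and must be shown to be $O(m^{-1})$ uniformly in $\theta$. One route: write $\mathbf u_j^{\ast}A_m(a_\theta)\mathbf u_k=\frac1m\sum_{|s|\le d}a_s e^{is\omega_{k,m}}\sum_{p}e^{i(p-1)(\omega_{k,m}-\omega_{j,m})}$ and bound the truncated geometric sum by $\frac{|1-w^{|s|}|}{|1-w|}\le\frac{\pi d}{2}$, where $w=e^{i(\omega_{k,m}-\omega_{j,m})}\neq1$ and $w^m=1$. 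Alternatively, replace $A_m(a_\theta)$ by its circulant approximant $\tilde A_m(a_\theta)$, which $\mathbf u_j$ does diagonalize, so $\mathbf u_j^{\ast}A_m(a_\theta)\mathbf u_k=\mathbf u_j^{\ast}\bigl(A_m(a_\theta)-\tilde A_m(a_\theta)\bigr)\mathbf u_k$, and observe that for $m>2d+1$ this difference has only $O(d^2)$ nonzero entries (in the two corners), each of modulus $O(M^{1/2})$, while $\|\mathbf u_j\|_\infty=m^{-1/2}$. Either way $E_m(\theta)_{jk}=O(m^{-2})$ uniformly; since $|(W_m)_{kp}|\le(2/m)^{1/2}$ and there are $O(m^2)$ summands, $W_m'E_m(\theta)W_m=O(m^{-1})$ elementwise, and the bounded factors $F_m$, $n/(rm)$ do not change this. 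Adding the diagonal and off-diagonal parts yields $\mathrm{Cov}_{n,\theta}(n^{1/2}(\hat\theta_n-\theta))\to\Phi^0_\theta$ uniformly over $\Theta_2'$. All error bounds used here are uniform in $\theta$ because $\Theta_2'$ forces $\|\theta\|\le M$ and $a_\theta\ge1+M^{-1}$; the off-diagonal estimate is the only non-routine step.
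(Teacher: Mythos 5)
Your proof is correct, but it takes a genuinely different route from the paper's. Both start from (\ref{sum-iid}) and Lemma~\ref{lem-covmatrix-analog-B}(ii), so both reduce the problem to analysing $W_m'\bigl((U_m^{\ast}A_m U_m)^{[2]}-I_m\bigr)W_m$; the divergence is in how that matrix is handled. The paper decomposes $(U_m^{\ast}A_m U_m)^{[2]}=\Lambda_m^{2}+D_m$ with $\Lambda_m=\mathrm{diag}\bigl(a_\theta(\omega_{j,m})\bigr)$, bounds $\mathbf w_{j,m}'D_m\mathbf w_{k,m}$ via Hilbert--Schmidt estimates of $U_m^{\ast}A_m U_m-\Lambda_m$ (through the circulant approximation, (\ref{cov-matrix-prelimest-3})), and then passes to the limit by a Riemann-sum argument for the continuous function $a_\theta^{2}\psi_j\psi_k$. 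You instead split the covariance matrix into its diagonal and strictly off-diagonal parts and exploit the banded structure far more directly: the diagonal produces a trigonometric polynomial of degree $\le 4d$, for which the equispaced $m$-point rule is \emph{exactly} the integral once $m>4d$, so the only approximation you need is $\|\tilde a_{m,\theta}-a_\theta\|_\infty=O(m^{-1})$; the off-diagonal entries are shown elementwise to be $O(m^{-2})$ by a direct computation. Your approach is more explicit and elementary and gives sharper quantitative control; the paper's HS-norm argument is slightly more robust in that it would survive weakening the $d$-dependence assumption. One small inaccuracy in your ``first route'' for the off-diagonal bound: the constant $\pi d/2$ isn't how the Dirichlet-kernel estimate comes out — the identity $\bigl|\sin(s\varphi/2)/\sin(\varphi/2)\bigr|\le|s|$ (valid since $\varphi=2\pi(k-j)/m\not\equiv 0\pmod{2\pi}$) yields $|\mathbf u_j^{\ast}A_m\mathbf u_k|\le m^{-1}\sum_{|s|\le d}|s|\,|a_s|$, which is what you need; your ``second route'' via the corner entries of $A_m-\tilde A_m$ is clean and avoids this issue entirely.
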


\begin{proof}
Note that in (\ref{sum-iid}) we have $r^{-1}n^{1/2}m^{-1/2}\sim r^{-1/2}$ and
$F_{m}\longrightarrow I_{2d+1}$, hence writing $\mathbf{\Pi}_{m}=\mathbf{\Pi
}_{1,m}$ we obtain
\begin{equation}
\mathrm{Cov}_{n,\theta}\left(  n^{1/2}\left(  \hat{\theta}_{n}-\theta\right)
\right)  =W_{m}^{\prime}\mathrm{Cov}_{n,\theta}\left(  \mathbf{\Pi}%
_{m}\right)  W_{m}\left(  1+o\left(  1\right)  \right)  .
\label{cov-matrix-prelimest-1}%
\end{equation}
To obtain the covariance matrix appearing on the r.h.s., consider the result
of Lemma \ref{lem-covmatrix-analog-B} for $n=m$. For a $m\times m$ matrix
$M=\left(  M_{jl}\right)  _{j,l=1}^{m}$, define the real matrix
\begin{equation}
M^{\left[  2\right]  }=\left(  \left\vert M_{jl}\right\vert ^{2}\right)
_{j,l=1}^{m}. \label{M-brackets-2-def}%
\end{equation}
.Then the result of \ref{lem-covmatrix-analog-B} (ii) can be written, with
$A_{m}=A_{m}\left(  a_{\theta}\right)  $,
\[
\mathrm{Cov}_{n,\theta}\left(  \hat{B}_{1}^{\ast}\hat{B}_{1},\ldots,\hat
{B}_{m}^{\ast}\hat{B}_{m}\right)  =\frac{1}{4}\left(  \left(  U_{m}^{\ast
}A_{m}U_{m}\right)  ^{\left[  2\right]  }-I_{m}\right)  .
\]
Now recall the definition of the observable vector $\mathbf{\Pi}_{m}$ in
(\ref{Pi-observble-component-def}), (\ref{vec-of-observables}) and identify
$\mathbf{\Pi}_{1,m}$ with $\Pi_{m}$. We obtain
\begin{equation}
\mathrm{Cov}_{n,\theta}\left(  \mathbf{\Pi}_{1,m}\right)  =\left(  U_{m}%
^{\ast}A_{m}U_{m}\right)  ^{\left[  2\right]  }-I_{m}, \label{Cov-matrix-Pi}%
\end{equation}
with $U_{m}$ from (\ref{special-unitary-DFT-def}) and $m$ from
(\ref{size-of-blocks-def}). Then (\ref{cov-matrix-prelimest-1}) can be
written
\begin{equation}
\mathrm{Cov}_{n,\theta}\left(  n^{1/2}\left(  \hat{\theta}_{n}-\theta\right)
\right)  \sim W_{m}^{\prime}\left(  U_{m}^{\ast}A_{m}U_{m}\right)  ^{\left[
2\right]  }W_{m}-I_{2d+1}. \label{cov-matrix-prelimest-2}%
\end{equation}
To treat the first term on the r.h.s. of (\ref{cov-matrix-prelimest-2}),
recall that the Hilbert-Schmidt norm $\left\Vert M\right\Vert _{2}$ of an
$m\times m$ matrix $M$ is defined as $\left\Vert M\right\Vert _{2}%
^{2}=\mathrm{Tr}\;M^{\ast}M=\sum_{j,l}\left\vert M_{jk}\right\vert ^{2}$. Note
that, under $d$-dependence, the symbol matrix $A_{m}$ is banded in the
terminology of \cite{CIT-006}. Then for $A_{m}=A_{m}\left(  a_{\theta}\right)
$ and its circulant approximation $\tilde{A}_{m}=\tilde{A}_{m}\left(
a_{\theta}\right)  $ defined in (\ref{circulant-approx-def-3}) we have for
$\theta\in\Theta_{2}^{\prime}$
\[
m^{-1}\left\Vert A_{m}-\tilde{A}_{m}\right\Vert _{2}^{2}=m^{-1}2\sum_{j=1}%
^{d}j\left\vert a_{j}\right\vert ^{2}=m^{-1}\sum_{j=-d}^{d}j\theta_{j}%
^{2}\rightarrow0\text{ as }m\rightarrow\infty;
\]
by a reasoning similar to (\ref{first-upper-bound-cited-later}) when $m=n$ and
$a_{j}=0$ for $j>d$ (or referring to Lemma 4.2 in \cite{CIT-006}). The
convergence is uniform over $\left\Vert \theta\right\Vert \leq C$, hence over
$\theta\in\Theta_{2}^{\prime}$. Let $\tilde{A}_{m}=U_{m}\tilde{\Lambda}%
_{m}U_{m}^{\ast}$ be the spectral decomposition of $\tilde{A}_{m}$; then
according to (\ref{diag-symbol-Lambda-def}) we have for sufficiently large $m$
(such that $m>2d+1$),
\begin{equation}
\tilde{\Lambda}_{m}=\Lambda_{m}:=\mathrm{diag}\left(  \;a_{\theta}\left(
\omega_{j,m}\right)  _{\left\vert j\right\vert \leq\left(  m-1\right)
/2}\right)  \label{lambda-diag-matrix-def}%
\end{equation}
where $\omega_{j,m}$ are the Fourier frequencies $\omega_{j,m}=2\pi j/m$,
$\left\vert j\right\vert \leq\left(  m-1\right)  /2$. Since $\left\Vert
M\right\Vert _{2}^{2}=\left\Vert U_{m}^{\ast}MU_{m}\right\Vert _{2}^{2}$ for
any $m\times m$ matrix $M$, we obtain
\begin{equation}
m^{-1}\left\Vert U_{m}^{\ast}A_{m}U_{m}-\Lambda_{m}\right\Vert _{2}%
^{2}\rightarrow0\text{ as }n\rightarrow\infty. \label{cov-matrix-prelimest-3}%
\end{equation}
uniformly over $\theta\in\Theta_{2}^{\prime}$. Consider the elelement with
index $\left(  j,k\right)  $ of $W_{m}^{\prime}\left(  U_{m}^{\ast}A_{m}%
U_{m}\right)  ^{\left[  2\right]  }W_{m}$; this is
\begin{align}
\mathbf{w}_{j,m}^{\prime}\left(  U_{m}^{\ast}A_{m}U_{m}\right)  ^{\left[
2\right]  }\mathbf{w}_{k,m}  &  =\mathbf{w}_{j,m}^{\prime}\Lambda_{m}%
^{2}\mathbf{w}_{k,m}+\mathbf{w}_{j,m}^{\prime}D_{m}\mathbf{w}_{k,m}\text{
where }\nonumber\\
D_{m}  &  :=\left(  U_{m}^{\ast}A_{m}U_{m}\right)  ^{\left[  2\right]
}-\Lambda_{m}^{2}. \label{D-remainder-matrix-def}%
\end{align}
Note that, since all components of $\mathbf{w}_{j,m}$ and $\mathbf{w}_{k,m}$
are bounded in modulus by $\sqrt{2}m^{-1/2}$, we have
\[
\left\vert \mathbf{w}_{j,m}^{\ast}D_{m}\mathbf{w}_{k,m}\right\vert \leq
2m^{-1}\sum_{s,t=1}^{m}\left\vert \left(  D_{m}\right)  _{st}\right\vert
=2m^{-1}\sum_{s,t=1}^{m}\left\vert \left(  U_{m}^{\ast}A_{m}U_{m}\right)
_{st}^{\left[  2\right]  }-\left(  \Lambda_{m}^{2}\right)  _{st}\right\vert .
\]
Note that for any complex $x,y$%
\begin{align*}
\left\vert \left\vert x\right\vert ^{2}-\left\vert y\right\vert ^{2}%
\right\vert  &  =\left\vert \left(  \left\vert x\right\vert -\left\vert
y\right\vert \right)  \left(  \left\vert x\right\vert +\left\vert y\right\vert
\right)  \right\vert \\
&  \leq\left\vert x-y\right\vert \;\left(  \left\vert x\right\vert +\left\vert
y\right\vert \right)  .
\end{align*}
Applying this bound to each term $\left\vert \left(  U_{m}^{\ast}A_{m}%
U_{m}\right)  _{st}^{\left[  2\right]  }-\left(  \Lambda_{m}^{2}\right)
_{st}\right\vert $, we obtain,
\begin{align*}
&  m^{-1}\sum_{s,t=1}^{m}\left\vert \left(  U_{m}^{\ast}A_{m}U_{m}\right)
_{st}^{\left[  2\right]  }-\left(  \Lambda_{m}^{2}\right)  _{st}\right\vert \\
&  \leq m^{-1}\sum_{s,t=1}^{m}\left\vert \left(  U_{m}^{\ast}A_{m}%
U_{m}-\Lambda_{m}\right)  _{st}\right\vert \left(  \left\vert \left(
U_{m}^{\ast}A_{m}U_{m}\right)  _{st}\right\vert +\left\vert \left(
\Lambda_{m}\right)  _{st}\right\vert \right)  .
\end{align*}
Applying the Cauchy-Schwartz inequality, we obtain an upper bound%
\begin{equation}
\left(  m^{-1}\left\Vert U_{m}^{\ast}A_{m}U_{m}-\Lambda_{m}\right\Vert
_{2}^{2}\right)  ^{1/2}\left(  2m^{-1}\left(  \left\Vert U_{m}^{\ast}%
A_{m}U_{m}\right\Vert ^{2}+\left\Vert \Lambda_{m}\right\Vert _{2}^{2}\right)
\right)  ^{1/2}. \label{cov-matrix-prelimest-4}%
\end{equation}
Here the first factor is $o\left(  1\right)  $ uniformly over $\theta\in
\Theta_{2}^{\prime}$ by (\ref{cov-matrix-prelimest-3}). The second factor is
bounded by the following reasoning. In view of $d$-dependence%
\[
m^{-1}\left\Vert U_{m}^{\ast}A_{m}U_{m}\right\Vert _{2}^{2}=m^{-1}\left\Vert
A_{m}\right\Vert _{2}^{2}=\sum_{\left\vert j\right\vert \leq d}\frac
{m-\left\vert j\right\vert }{m}\left\vert a_{j}\right\vert ^{2}=\sum
_{\left\vert j\right\vert \leq d}\frac{m-\left\vert j\right\vert }{m}%
\theta_{j}^{2}\leq M
\]
by $\theta\in\Theta_{2}^{\prime}$. Similarly
\begin{align*}
m^{-1}\left\Vert \Lambda_{m}\right\Vert _{2}^{2}  &  \leq\max_{-\pi\leq
\omega\leq\pi}a_{\theta}^{2}\left(  \omega\right)  =\max_{-\pi\leq\omega
\leq\pi}\left(  \sum_{\left\vert j\right\vert \leq d}\psi_{j}\left(
\omega\right)  \theta_{j}\right)  ^{2}\\
&  \leq\left(  2d+1\right)  \left\Vert \theta\right\Vert ^{2}\leq\left(
2d+1\right)  M
\end{align*}
by $\theta\in\Theta_{2}^{\prime}$. As a consequence,
(\ref{cov-matrix-prelimest-4}) is $o\left(  1\right)  $ uniformly over
$\theta\in\Theta_{2}^{\prime}$, which implies
\begin{align}
\mathbf{w}_{j,m}^{\prime}\left(  U_{m}^{\ast}A_{m}U_{m}\right)  ^{\left[
2\right]  }\mathbf{w}_{k,m}  &  =\mathbf{w}_{j,m}^{\prime}\Lambda_{m}%
^{2}\mathbf{w}_{k,m}+o\left(  1\right) \label{relation-quoted-later}\\
&  =m^{-1}\sum_{\left\vert j\right\vert \leq\left(  m-1\right)  /2}a_{\theta
}^{2}\left(  \omega_{j,m}\right)  \psi_{j}\left(  \omega_{j,m}\right)
\psi_{k}\left(  \omega_{j,m}\right)  +o\left(  1\right)  .\nonumber
\end{align}
Since the set of functions $\left\{  a_{\theta},\;\theta\in\Theta_{2}^{\prime
},\psi_{j},\;\left\vert j\right\vert \leq d\right\}  $ is uniformly bounded
and Lipschitz, we now have
\[
\mathbf{w}_{j,m}^{\prime}\left(  U_{m}^{\ast}A_{m}U_{m}\right)  ^{\left[
2\right]  }\mathbf{w}_{k,m}=\frac{1}{2\pi}\int_{\left(  -\pi,\pi\right)
}a_{\theta}^{2}\left(  \omega\right)  \psi_{j}\left(  \omega\right)  \psi
_{k}\left(  \omega\right)  d\omega+o\left(  1\right)  .
\]
uniformly over $\theta\in\Theta_{2}^{\prime}$. In view of
(\ref{cov-matrix-prelimest-2}) and (\ref{ONB-sin-cos}), the claim follows.
\end{proof}

\bigskip

\begin{lemma}
\label{lem-unif-consistency-prelim-est} Let $\gamma_{n}\rightarrow\infty$ be a
sequence such that $\gamma_{n}=o\left(  n^{1/2}\right)  $. Then for every
$\varepsilon>0$ we have
\[
\sup_{\theta\in\Theta_{2}^{\prime}}P_{n,\theta}\left(  \gamma_{n}\left\Vert
\hat{\theta}_{n}-\theta\right\Vert \geq\varepsilon\right)  \rightarrow0.
\]

\end{lemma}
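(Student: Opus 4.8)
The plan is to derive the result directly from unbiasedness of $\hat{\theta}_{n}$ together with the asymptotic covariance computation of Lemma \ref{lem-covmatrix-prelim-est}, via Chebyshev's inequality. First I would note that, since $E_{n,\theta}\hat{\theta}_{n}=\theta$ for every $\theta\in\Theta_{2}^{\prime}$ (as observed after (\ref{prelim-estim-def-3})), one has
\[
E_{n,\theta}\left\Vert \hat{\theta}_{n}-\theta\right\Vert ^{2}=\mathrm{Tr}\;\mathrm{Cov}_{n,\theta}\left(  \hat{\theta}_{n}\right)  =n^{-1}\,\mathrm{Tr}\;\mathrm{Cov}_{n,\theta}\left(  n^{1/2}\left(  \hat{\theta}_{n}-\theta\right)  \right)  .
\]

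Next I would invoke Lemma \ref{lem-covmatrix-prelim-est}: $\mathrm{Cov}_{n,\theta}(n^{1/2}(\hat{\theta}_{n}-\theta))\to\Phi_{\theta}^{0}$ uniformly over $\theta\in\Theta_{2}^{\prime}$, hence also $\mathrm{Tr}\;\mathrm{Cov}_{n,\theta}(n^{1/2}(\hat{\theta}_{n}-\theta))\to\mathrm{Tr}\;\Phi_{\theta}^{0}$ uniformly. I would then bound $\mathrm{Tr}\;\Phi_{\theta}^{0}$ uniformly over $\theta\in\Theta_{2}^{\prime}$: from (\ref{Gamma-covmatrix-def}),
\[
\mathrm{Tr}\;\Phi_{\theta}^{0}=\sum_{\left\vert j\right\vert \leq d}\frac{1}{2\pi}\int_{-\pi}^{\pi}\left(  a_{\theta}^{2}\left(  \omega\right)  -1\right)  \psi_{j}^{2}\left(  \omega\right)  d\omega,
\]
and since $a_{\theta}$ is uniformly bounded on $[-\pi,\pi]$ for $\theta\in\Theta_{2}^{\prime}$ by Lemma \ref{lem-toeplitz-EV-2} and the finitely many functions $\psi_{j}$, $|j|\leq d$, are bounded, there is a constant $C_{0}=C_{0}(d,M)$ with $\sup_{\theta\in\Theta_{2}^{\prime}}\mathrm{Tr}\;\Phi_{\theta}^{0}\leq C_{0}$. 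Combined with the uniform convergence above, this gives an index $n_{0}$ and a constant $C$ such that $\sup_{\theta\in\Theta_{2}^{\prime}}E_{n,\theta}\left\Vert \hat{\theta}_{n}-\theta\right\Vert ^{2}\leq C/n$ for all $n\geq n_{0}$.

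Finally, for a fixed $\varepsilon>0$, an application of Markov's (Chebyshev's) inequality yields, for $n\geq n_{0}$ and uniformly in $\theta\in\Theta_{2}^{\prime}$,
\[
P_{n,\theta}\left(  \gamma_{n}\left\Vert \hat{\theta}_{n}-\theta\right\Vert \geq\varepsilon\right)  =P_{n,\theta}\left(  \left\Vert \hat{\theta}_{n}-\theta\right\Vert ^{2}\geq\varepsilon^{2}\gamma_{n}^{-2}\right)  \leq\frac{\gamma_{n}^{2}}{\varepsilon^{2}}E_{n,\theta}\left\Vert \hat{\theta}_{n}-\theta\right\Vert ^{2}\leq\frac{C\gamma_{n}^{2}}{\varepsilon^{2}n},
\]
and since $\gamma_{n}=o(n^{1/2})$ means $\gamma_{n}^{2}/n\to0$, the right-hand side tends to zero, proving the claim. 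The argument is essentially routine once Lemma \ref{lem-covmatrix-prelim-est} is available; the only point requiring a little care is the uniform boundedness of $\mathrm{Tr}\;\Phi_{\theta}^{0}$ over the parameter space, which I expect to be the mildest of obstacles, handled by the uniform bound on $a_{\theta}$ from Lemma \ref{lem-toeplitz-EV-2}.
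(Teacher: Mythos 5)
Your argument is correct and follows essentially the same route as the paper: Chebyshev's inequality with the second moment, the identity $E_{n,\theta}\left\Vert \hat{\theta}_{n}-\theta\right\Vert ^{2}=n^{-1}\,\mathrm{Tr}\;\mathrm{Cov}_{n,\theta}\left(n^{1/2}(\hat{\theta}_{n}-\theta)\right)$ via unbiasedness, and Lemma \ref{lem-covmatrix-prelim-est} for the uniform bound. You merely make explicit the uniform boundedness of $\mathrm{Tr}\;\Phi_{\theta}^{0}$, which the paper leaves implicit.
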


\begin{proof}
We have
\[
P_{n,\theta}\left(  \gamma_{n}\left\Vert \hat{\theta}_{n}-\theta\right\Vert
\geq\varepsilon\right)  \leq\frac{\gamma_{n}^{2}}{n}\frac{E_{n,\theta
}n\left\Vert \hat{\theta}_{n}-\theta\right\Vert ^{2}}{\varepsilon^{2}}%
=\frac{\gamma_{n}^{2}}{n}\frac{\mathrm{Tr\;Cov}_{\rho}\left(  n^{1/2}\left(
\hat{\theta}_{n}-\theta\right)  \right)  }{\varepsilon^{2}}%
\]
so the claim follows from Lemma \ref{lem-covmatrix-prelim-est}.
\end{proof}

\subsection{A one-step improvement estimator}

The estimator $\hat{\theta}_{n}$ can be shown to be asymptotically normal, but
it is not optimal; indeed will turn out that the optimal covariance matrix is
not $\Phi_{\theta}^{0}$ but the inverse of the matrix%
\begin{align}
\Phi_{\theta}  &  =\left(  \Phi_{\theta,jk}\right)  _{\left\vert j\right\vert
,\left\vert k\right\vert \leq d},\label{parametric-fisher-info-def}\\
\Phi_{\theta,jk}  &  :=\frac{1}{2\pi}\int_{\left(  -\pi,\pi\right)  }\left(
a_{\theta}^{2}\left(  \omega\right)  -1\right)  ^{-1}\psi_{j}\left(
\omega\right)  \psi_{k}\left(  \omega\right)  d\omega.
\label{parametric-fisher-info-def-a}%
\end{align}
where $a_{\theta}\left(  \omega\right)  $ is the spectral density depending on
$\theta\in\Theta_{2}^{\prime}$ according to
(\ref{series-repre-real-spec_density}).

\begin{lemma}
\label{Lem-covmatrix-eigenvalues}There are constants $0<C_{1,M}<C_{2,M}$
depending only on $M$ and $d$ such that for all $\theta\in\Theta_{2}^{\prime
}.$
\[
C_{1,M}\;\leq\lambda_{\min}\left(  \Phi_{\theta}\right)  \text{,\ \ \ }%
\lambda_{\max}\left(  \Phi_{\theta}\right)  \leq C_{2,M}\text{.}%
\]

\end{lemma}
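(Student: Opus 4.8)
The plan is to read off both eigenvalue bounds from a single integral representation of the quadratic form $x\mapsto\langle x,\Phi_\theta x\rangle$, exploiting that the functions $\psi_j$, $|j|\le d$, are orthonormal in $L^2\bigl([-\pi,\pi],\tfrac{d\omega}{2\pi}\bigr)$ by (\ref{ONB-sin-cos}). For any real $x=(x_j)_{|j|\le d}\in\mathbb{R}^{2d+1}$, definition (\ref{parametric-fisher-info-def-a}) gives
\[
\langle x,\Phi_\theta x\rangle=\frac{1}{2\pi}\int_{(-\pi,\pi)}\bigl(a_\theta^2(\omega)-1\bigr)^{-1}\Bigl(\sum_{|j|\le d}x_j\psi_j(\omega)\Bigr)^2 d\omega,
\]
while orthonormality gives $\frac{1}{2\pi}\int_{(-\pi,\pi)}\bigl(\sum_{|j|\le d}x_j\psi_j(\omega)\bigr)^2 d\omega=\|x\|^2$. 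So it suffices to sandwich the weight $(a_\theta^2(\omega)-1)^{-1}$ between two positive constants depending only on $d$ and $M$, uniformly in $\omega\in[-\pi,\pi]$ and in $\theta\in\Theta_2^{\prime}$.

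For this I would use the two structural features of $\Theta_2^{\prime}$. The constraint defining $\mathcal{L}_M^{\prime}$ in (\ref{lowerbound-set-parametric-def}), $a_\theta(\omega)\ge 1+M^{-1}$, yields $a_\theta^2(\omega)-1\ge(1+M^{-1})^2-1=2M^{-1}+M^{-2}$. For the opposite direction I would repeat the Cauchy--Schwarz estimate from the proof of Lemma \ref{lem-toeplitz-EV-2}: since $\sum_{|j|\le d}\psi_j^2(\omega)=2d+1$ and $\|\theta\|^2\le M$, one gets $a_\theta(\omega)=\sum_{|j|\le d}\psi_j(\omega)\theta_j\le(2d+1)^{1/2}M^{1/2}$, hence $a_\theta^2(\omega)-1\le(2d+1)M-1$. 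Combining,
\[
\frac{1}{(2d+1)M-1}\le\bigl(a_\theta^2(\omega)-1\bigr)^{-1}\le\frac{M^2}{2M+1}\qquad\text{for all }\omega,\ \theta\in\Theta_2^{\prime}.
\]

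Substituting these bounds into the integral representation gives $\frac{\|x\|^2}{(2d+1)M-1}\le\langle x,\Phi_\theta x\rangle\le\frac{M^2}{2M+1}\,\|x\|^2$ for every $x$, so one may take $C_{1,M}=\bigl((2d+1)M-1\bigr)^{-1}$ and $C_{2,M}=M^2/(2M+1)$ (replacing the former by $\tfrac12 C_{1,M}$, say, if strict inequality is wanted), both depending only on $d$ and $M$. There is essentially no obstacle: the only point worth a sentence is that the pointwise lower bound $a_\theta(\omega)\ge 1+M^{-1}$, which keeps $a_\theta^2-1$ away from $0$ uniformly in $\theta$, is exactly what restricting to $\mathcal{L}_M^{\prime}$ provides, and that when $\Theta_2^{\prime}\neq\varnothing$ the identity $\frac{1}{2\pi}\int a_\theta^2\,d\omega=\|\theta\|^2\le M$ forces $C_{1,M}\le C_{2,M}$, consistent with $\lambda_{\min}(\Phi_\theta)\le\lambda_{\max}(\Phi_\theta)$.
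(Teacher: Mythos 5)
Your argument is correct and follows essentially the same route as the paper: you bound the weight $(a_\theta^2(\omega)-1)^{-1}$ uniformly above and below using the pointwise lower bound $a_\theta\ge 1+M^{-1}$ from $\mathcal{L}_M'$ and the Cauchy--Schwarz bound $a_\theta^2\le(2d+1)\|\theta\|^2\le(2d+1)M$, then transfer this to the quadratic form $x'\Phi_\theta x$ via the orthonormality of $\{\psi_j\}$. The paper's constants $C_{1,M}=(M(2d+1))^{-1}$ and $C_{2,M}=M$ are slightly looser than yours but the mechanism is identical.
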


\begin{proof}
Note that in view of $a_{\theta}\left(  \omega\right)  \geq1+M^{-1}$ we have
\[
a_{\theta}^{2}\left(  \omega\right)  -1\geq\left(  1+M^{-1}\right)  ^{2}%
-1\geq\left(  1+M^{-1}\right)  -1\geq M^{-1}.
\]
Furthermore
\begin{equation}
a_{\theta}^{2}\left(  \omega\right)  =\left(  \sum_{\left\vert j\right\vert
\leq d}\psi_{j}\left(  \omega\right)  \theta_{j}\right)  ^{2}\leq\left\Vert
\theta\right\Vert ^{2}\sum_{\left\vert j\right\vert \leq d}\psi_{j}^{2}\left(
\omega\right)  =\left\Vert \theta\right\Vert ^{2}\left(  2d+1\right)  \leq
M\left(  2d+1\right)  . \label{bounds-on-athetasquared-a}%
\end{equation}
The last two displays imply%
\begin{equation}
\left(  M\left(  2d+1\right)  \right)  ^{-1}\leq\left(  a_{\theta}^{2}\left(
\omega\right)  -1\right)  ^{-1}\leq M. \label{bounds-on-athetasquared}%
\end{equation}
Now for $x=\left(  x_{j}\right)  _{\left\vert j\right\vert \leq d}%
\in\mathbb{R}^{2d+1}$ we have
\begin{align*}
x^{\prime}\Phi_{\theta}x  &  =\frac{1}{2\pi}\int_{\left(  -\pi,\pi\right)
}\left(  a_{\theta}^{2}\left(  \omega\right)  -1\right)  ^{-1}\left(
\sum_{\left\vert j\right\vert \leq d}x_{j}\psi_{j}\left(  \omega\right)
\right)  ^{2}d\omega\\
&  \leq M\frac{1}{2\pi}\int_{\left(  -\pi,\pi\right)  }\left(  \sum
_{\left\vert j\right\vert \leq d}x_{j}\psi_{j}\left(  \omega\right)  \right)
^{2}d\omega=M
\end{align*}
and the bound $x^{\prime}\Phi_{\theta}x\geq\left(  2M\left(  d+1\right)
\right)  ^{-1}$ follows analogously. Setting $C_{1,M}=\left(  M\left(
2d+1\right)  \right)  ^{-1}$, $C_{2,M}=M$ completes the proof.
\end{proof}

In order to modify the preliminary estimator $\hat{\theta}_{n}$ given by
(\ref{prelim-estim-def-3}) in a suitable way, we will need estimates of the
parameter dependent diagonal matrices
\begin{equation}
\Delta_{m,\theta}:=\mathrm{diag}\left(  \;a_{\theta}^{2}\left(  \omega
_{j,m}\right)  _{\left\vert j\right\vert \leq\left(  m-1\right)  /2}-1\right)
. \label{delta-m-theta-def}%
\end{equation}
In order to replace $\theta$ there by a suitable estimator, consider the
following lemma.

\begin{lemma}
\label{lem-compact-convex}The set $\Theta_{2}^{\prime}=\Theta_{2}^{\prime
}\left(  M,d\right)  $ given by (\ref{parametric-model}) is a compact convex
subset of $\mathbb{R}^{2d+1}$.
\end{lemma}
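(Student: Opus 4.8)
The plan is to write $\Theta_{2}^{\prime}(d,M)$ as the intersection of two sets whose convexity and topological properties can be read off directly: the closed Euclidean ball $B:=\{\theta\in\mathbb{R}^{2d+1}:\left\Vert\theta\right\Vert^{2}\le M\}$ and the set $\mathcal{L}_{M}^{\prime}$ from (\ref{lowerbound-set-parametric-def}). The ball $B$ is manifestly convex and, being closed and bounded in $\mathbb{R}^{2d+1}$, compact. Hence it will suffice to show that $\mathcal{L}_{M}^{\prime}$ is closed and convex; then $\Theta_{2}^{\prime}(d,M)=B\cap\mathcal{L}_{M}^{\prime}$ is convex as an intersection of convex sets, and compact as the intersection of the compact set $B$ with a closed set.

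For $\mathcal{L}_{M}^{\prime}$, the key observation is that by (\ref{series-repre-real-spec_density}) the map $\theta\mapsto a_{\theta}(\omega)$ is, for each fixed $\omega\in[-\pi,\pi]$, a linear functional of $\theta$ (with coefficients $\psi_{j}(\omega)$). Consequently, for each $\omega$ the set $H_{\omega}:=\{\theta\in\mathbb{R}^{2d+1}:a_{\theta}(\omega)\ge 1+M^{-1}\}$ is a closed half-space, hence both closed and convex. Since
\[
\mathcal{L}_{M}^{\prime}=\Big\{\theta:\inf_{\omega\in[-\pi,\pi]}a_{\theta}(\omega)\ge 1+M^{-1}\Big\}=\bigcap_{\omega\in[-\pi,\pi]}H_{\omega},
\]
it is an intersection of closed convex sets, hence closed and convex. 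Equivalently, one may note that $\theta\mapsto\inf_{\omega}a_{\theta}(\omega)$ is a pointwise infimum of continuous linear functions, hence concave and upper semicontinuous (in fact Lipschitz in $\theta$ uniformly in $\omega$, since the $\psi_{j}$ are bounded), so its superlevel set is closed and convex.

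Combining the two steps yields the claim. There is essentially no obstacle: the only point requiring a moment's thought is the closedness of $\mathcal{L}_{M}^{\prime}$, and the ``intersection of half-spaces'' description disposes of it with no limiting argument. It may be worth remarking, although it is not needed for the stated compactness and convexity, that $\Theta_{2}^{\prime}(d,M)$ is nonempty once $M$ is large enough; e.g. the constant symbol $a_{\theta}\equiv 1+M^{-1}$ (i.e. $\theta_{0}=1+M^{-1}$, $\theta_{j}=0$ for $j\neq0$) lies in $\Theta_{2}^{\prime}(d,M)$ as soon as $(1+M^{-1})^{2}\le M$.
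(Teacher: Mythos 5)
Your proof is correct and follows essentially the same route as the paper's: decompose $\Theta_{2}^{\prime}$ into the closed ball (compact, convex) and an intersection of half-spaces $H_{\omega}$ (closed, convex via linearity of $\theta\mapsto a_{\theta}(\omega)$), then invoke that the intersection of a compact set with a closed set is compact and the intersection of convex sets is convex.
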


\begin{proof}
The set $B_{M}:=\left\{  \theta\in\mathbb{R}^{2d+1}:\left\Vert \theta
\right\Vert ^{2}\leq M,\;\right\}  $ is compact and convex for each $\omega$,
the set $\left\{  \theta\in\mathbb{R}^{2d+1}:a_{\theta}\left(  \omega\right)
\geq1+M^{-1}\right\}  $ is convex and closed, since the map $\theta\rightarrow
a_{\theta}\left(  \omega\right)  $ is linear. Since the intersection of closed
sets is closed, and convex if each set is convex, $\Theta_{2}^{\prime}\left(
M,d\right)  $ is a closed convex subset of $B_{M}$, from which the claim follows.
\end{proof}

\bigskip

\bigskip Define an estimator $\bar{\theta}_{n}$ as the projection of
$\hat{\theta}_{n}$ onto the compact convex set $\Theta_{2}^{\prime}$ and set
\[
\hat{\Delta}_{m}:=\Delta_{m,\bar{\theta}_{n}}=\mathrm{diag}\left(
\;a_{\bar{\theta}_{n}}^{2}\left(  \omega_{j,m}\right)  -1\right)  _{\left\vert
j\right\vert \leq\left(  m-1\right)  /2}.
\]
Note that do to (\ref{bounds-on-athetasquared}), $\hat{\Delta}_{m}$ is nonsingular.

\begin{definition}
\label{Def-estimator-optimal-param}The improved estimator of the parameter
vector $\theta$ from (\ref{real-parameter-def}) is%
\begin{equation}
\tilde{\theta}_{n}:=m^{-1/2}F_{m}\left(  W_{m}^{\prime}\hat{\Delta}_{m}%
^{-1}W_{m}\right)  ^{-1}W_{m}^{\prime}\hat{\Delta}_{m}^{-1}\mathbf{\bar{\Pi}%
}_{n}%
\end{equation}
with $\mathbf{\bar{\Pi}}_{n}$ from (\ref{Pi-k-quer-def}).
\end{definition}

\bigskip

Refer to Appendix \ref{subsec-uniform-in-law} for the definition of
convergence in distribution (with symbol $\Longrightarrow_{d}$) uniformly in
$\theta$, and some associated results.

\begin{theorem}
\label{Theor-asy-normal-improved-est}The estimator $\tilde{\theta}_{n}$ is
asymptotically normal
\[
n^{1/2}\left(  \tilde{\theta}_{n}-\theta\right)  \Longrightarrow_{d}%
N_{2d+1}\left(  0,\Phi_{\theta}^{-1}\right)
\]
uniformly in $\theta\in\Theta_{2}^{\prime}$.
\end{theorem}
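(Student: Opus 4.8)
The plan is to write $n^{1/2}(\tilde{\theta}_{n}-\theta)$ as a linearly transformed normalized sum of i.i.d.\ random vectors and to pass to the limit by a uniform multivariate central limit theorem. First I would record the algebraic form of the estimator in Definition \ref{Def-estimator-optimal-param}. Put $\xi_{m,j}:=\mathbf{\Pi}_{m,j}-E_{n,\theta}\mathbf{\Pi}_{m,j}$; these are, for fixed $n$, i.i.d.\ zero-mean $\mathbb{R}^{m}$-vectors under $P_{n,\theta}$, and by (\ref{expec-Pi-m-j}) one has $\mathbf{\bar{\Pi}}_{n}=m^{1/2}W_{m}F_{m}^{-1}\theta+r^{-1}\sum_{j=1}^{r}\xi_{m,j}$. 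Substituting this into $\tilde{\theta}_{n}=m^{-1/2}F_{m}(W_{m}'\hat{\Delta}_{m}^{-1}W_{m})^{-1}W_{m}'\hat{\Delta}_{m}^{-1}\mathbf{\bar{\Pi}}_{n}$ the drift term collapses, since $F_{m}(W_{m}'\hat{\Delta}_{m}^{-1}W_{m})^{-1}(W_{m}'\hat{\Delta}_{m}^{-1}W_{m})F_{m}^{-1}\theta=\theta$, leaving
\[
n^{1/2}\bigl(\tilde{\theta}_{n}-\theta\bigr)=c_{n}\,F_{m}\bigl(W_{m}'\hat{\Delta}_{m}^{-1}W_{m}\bigr)^{-1}\cdot r^{-1/2}\sum_{j=1}^{r}W_{m}'\hat{\Delta}_{m}^{-1}\xi_{m,j},\qquad c_{n}:=n^{1/2}m^{-1/2}r^{-1/2},
\]
and $c_{n}\to1$ because $rm\sim n$ by (\ref{size-of-blocks-def}).

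Second, I would replace the data-dependent diagonal matrix $\hat{\Delta}_{m}=\Delta_{m,\bar{\theta}_{n}}$ by the deterministic $\Delta_{m,\theta}$ of (\ref{delta-m-theta-def}). The projection $\bar{\theta}_{n}$ of $\hat{\theta}_{n}$ onto the compact convex set $\Theta_{2}'$ (Lemma \ref{lem-compact-convex}) is a contraction, so $\|\bar{\theta}_{n}-\theta\|\le\|\hat{\theta}_{n}-\theta\|=O_{P}(n^{-1/2})$ uniformly in $\theta$ by Lemmas \ref{lem-covmatrix-prelim-est} and \ref{lem-unif-consistency-prelim-est}; since $\theta\mapsto a_{\theta}(\omega)$ is linear and, by (\ref{bounds-on-athetasquared}), $a_{\theta}^{2}(\omega)-1$ is bounded away from $0$ and $\infty$, this yields $\|\hat{\Delta}_{m}^{-1}-\Delta_{m,\theta}^{-1}\|=O_{P}(n^{-1/2})$ in operator norm. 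The estimator $\tilde{\theta}_{n}^{\ast}$ obtained by using $\Delta_{m,\theta}$ in place of $\hat{\Delta}_{m}$ satisfies the same exact-cancellation identity, so $n^{1/2}(\tilde{\theta}_{n}-\tilde{\theta}_{n}^{\ast})$ is, up to bounded matrix factors, the product of an $O_{P}(n^{-1/2})$ matrix with $c_{n}r^{-1/2}\sum_{j}W_{m}'\Delta_{m,\theta}^{-1}\xi_{m,j}$, which is $O_{P}(1)$ by the covariance bound of the next step; hence $n^{1/2}\|\tilde{\theta}_{n}-\tilde{\theta}_{n}^{\ast}\|=o_{P}(1)$ uniformly, and it suffices to analyse $\tilde{\theta}_{n}^{\ast}$.

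Third, I would identify the deterministic limits. One has $F_{m}\to I_{2d+1}$, and $W_{m}'\Delta_{m,\theta}^{-1}W_{m}\to\Phi_{\theta}$ from (\ref{parametric-fisher-info-def}) by a Riemann-sum argument exactly parallel to the proof of Lemma \ref{lem-covmatrix-prelim-est}, uniformly in $\theta$ by equicontinuity of the family $\{(a_{\theta}^{2}-1)^{-1}\psi_{j}\psi_{k}\}$ and with the limit invertible and uniformly well conditioned by Lemma \ref{Lem-covmatrix-eigenvalues}. For the noise, (\ref{Cov-matrix-Pi}) gives $\mathrm{Cov}_{n,\theta}(\xi_{m,1})=(U_{m}^{\ast}A_{m}U_{m})^{[2]}-I_{m}=\Delta_{m,\theta}+D_{m}$ with $D_{m}$ from (\ref{D-remainder-matrix-def}), and the same Cauchy--Schwartz estimate (\ref{cov-matrix-prelimest-4}) shows $W_{m}'\Delta_{m,\theta}^{-1}D_{m}\Delta_{m,\theta}^{-1}W_{m}\to0$ uniformly, the bounded diagonal $\Delta_{m,\theta}^{-1}$ not affecting the $O(m^{-1/2})$ bound on the entries of $\mathbf{w}_{j,m}$. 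Hence $\mathrm{Cov}_{n,\theta}(W_{m}'\Delta_{m,\theta}^{-1}\xi_{m,1})\to\Phi_{\theta}$, and the limiting covariance of $n^{1/2}(\tilde{\theta}_{n}^{\ast}-\theta)$ is $\Phi_{\theta}^{-1}\Phi_{\theta}\Phi_{\theta}^{-1}=\Phi_{\theta}^{-1}$.

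Finally, I would apply a multivariate Lindeberg--Feller theorem to the triangular array $r^{-1/2}\sum_{j=1}^{r}\eta_{m,j}$, $\eta_{m,j}:=W_{m}'\Delta_{m,\theta}^{-1}\xi_{m,j}$, and conclude by Slutsky's lemma together with the uniform-in-$\theta$ convergence-in-law apparatus of Appendix \ref{subsec-uniform-in-law}. I expect the verification of the Lindeberg condition uniformly over $\Theta_{2}'$ to be the main obstacle. Here one uses that, under $\rho=\mathfrak{N}_{n}(0,A_{n}(a_{\theta}))$ and via Lemma \ref{lem-covmatrix-analog-B}, the $m$-vector $\xi_{m,1}$ has components $\tilde{Q}_{l}^{2}+\tilde{P}_{l}^{2}-E[\,\cdot\,]$ with $(\tilde{Q}_{l},\tilde{P}_{l})$ jointly Gaussian with covariance $\Sigma(U_{m}^{\ast}A_{m}U_{m})$, so each component of $\eta_{m,1}$ is a centered quadratic form $\mathbf{x}'B\mathbf{x}-E[\,\cdot\,]$ in a Gaussian vector $\mathbf{x}$, with $B$ diagonal, $\|B\|_{2}=O(1)$ and $\|B\|=O(m^{-1/2})$ because the weights are of size $m^{-1/2}$ and $\Delta_{m,\theta}^{-1}$ has bounded entries; since $\Sigma(U_{m}^{\ast}A_{m}U_{m})$ has uniformly bounded operator norm by Lemma \ref{lem-toeplitz-EV-2}, Gaussian-chaos (Hanson--Wright type) moment bounds give $\sup_{\theta\in\Theta_{2}'}E_{n,\theta}\|\eta_{m,1}\|^{2+\delta}=O(1)$ for some $\delta>0$, whence the Lyapunov ratio $r^{-\delta/2}\sup_{\theta}E_{n,\theta}\|\eta_{m,1}\|^{2+\delta}\to0$ as $r\to\infty$. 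This yields $n^{1/2}(\tilde{\theta}_{n}^{\ast}-\theta)\Longrightarrow_{d}N_{2d+1}(0,\Phi_{\theta}^{-1})$ uniformly in $\theta$, and the second step transfers this to $\tilde{\theta}_{n}$.
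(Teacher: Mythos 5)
Your proposal follows essentially the same route as the paper's proof: express $n^{1/2}(\tilde{\theta}_n-\theta)$ as a linearly transformed normalized i.i.d.\ sum of block observables, replace the data-dependent $\hat{\Delta}_m$ by $\Delta_{m,\theta}$ using the Lipschitz bound (Lemma \ref{Lem-Lipschitz-uniform}) and the $n^{1/2}$-consistency of $\bar{\theta}_n$, identify the deterministic limits $W_m'\Delta_{m,\theta}^{-1}W_m \to \Phi_\theta$ and $\mathrm{Cov}(\eta_{m,1}) \to \Phi_\theta$, and conclude by a multivariate Lindeberg--Feller CLT coupled with the uniform-in-$\theta$ convergence apparatus of Appendix \ref{subsec-uniform-in-law}. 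The one place you diverge from the paper is in verifying the Lindeberg/Lyapunov condition: the paper bounds the fourth moment of the raw $m$-dimensional block noise by $O(m^2)$ via elementary Gaussian moment computations and then uses $m^2/r = O((\log n)^3/n) \to 0$, while you propose to apply a Hanson--Wright (Gaussian chaos) bound directly to the projected $(2d+1)$-dimensional noise $\eta_{m,1}$, exploiting $\|B\|_2 = O(1)$, $\|B\| = O(m^{-1/2})$ and the uniform bound on $\|\Sigma(U_m^*A_mU_m)\|$ to get $E\|\eta_{m,1}\|^{2+\delta} = O(1)$ uniformly, which is a sharper bound and more modular but relies on a heavier tool. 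Two small imprecisions worth noting: the decomposition you describe for $n^{1/2}(\tilde{\theta}_n-\tilde{\theta}_n^*)$ does not factor cleanly as ``$O_P(n^{-1/2})$ matrix times the $O_P(1)$ projected noise'' for all terms (the term coming from $W_m'(\hat{\Delta}_m^{-1}-\Delta_{m,\theta}^{-1})$ hitting the raw $m$-vector must be controlled via $O_P(n^{-1/2})\cdot O_P(m^{1/2})=o_P(1)$, which is what the paper's Step 3 does explicitly); and you do not spell out the paper's Step 1 reduction, via Lemma \ref{Lem-unif-law-converg-charac}(iii) and compactness of $\Theta_2'$, of uniform convergence in law to sequential convergence along $\theta_n\to\theta$, though your reference to the appendix's apparatus gestures at it.
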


We will begin the proof with a series of technical lemmas.

\bigskip

\begin{lemma}
\label{lem-compact-2}On the compact set $\Theta_{2}^{\prime}\in\mathbb{R}%
^{2d+1}$, the map $\theta\longrightarrow\Phi_{\theta}^{-1}$ is continuous in
Hilbert-Schmidt norm.
\end{lemma}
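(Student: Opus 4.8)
The plan is to factor the map $\theta\longmapsto\Phi_{\theta}^{-1}$ as the composition $\theta\longmapsto\Phi_{\theta}\longmapsto\Phi_{\theta}^{-1}$ and prove that each factor is Lipschitz in Hilbert--Schmidt norm on $\Theta_{2}^{\prime}$, the uniform eigenvalue bounds for $\Phi_{\theta}$ being already available from Lemma \ref{Lem-covmatrix-eigenvalues}.

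First I would show that $\theta\longmapsto\Phi_{\theta}$ is Lipschitz. Each entry is given by (\ref{parametric-fisher-info-def-a}), an integral over $\left(-\pi,\pi\right)$ of $\left(a_{\theta}^{2}\left(\omega\right)-1\right)^{-1}\psi_{j}\left(\omega\right)\psi_{k}\left(\omega\right)$. The functions $\psi_{h}$ are bounded on $\left[-\pi,\pi\right]$ (by $\sqrt{2}$), and $\theta\longmapsto a_{\theta}\left(\omega\right)=\sum_{|h|\le d}\psi_{h}\left(\omega\right)\theta_{h}$ is linear with $\sup_{\omega}\left(\sum_{|h|\le d}\psi_{h}^{2}\left(\omega\right)\right)^{1/2}\le\left(2d+1\right)^{1/2}$, so $\left|a_{\theta_{1}}\left(\omega\right)-a_{\theta_{2}}\left(\omega\right)\right|\le\left(2d+1\right)^{1/2}\left\Vert\theta_{1}-\theta_{2}\right\Vert$ uniformly in $\omega$; together with the uniform bound $a_{\theta}^{2}\left(\omega\right)-1\ge\left(M\left(2d+1\right)\right)^{-1}>0$ from (\ref{bounds-on-athetasquared}) this yields that $\left(a_{\theta}^{2}\left(\cdot\right)-1\right)^{-1}$ depends Lipschitz-continuously on $\theta\in\Theta_{2}^{\prime}$, uniformly in $\omega$. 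Integrating against the bounded kernel $\psi_{j}\psi_{k}$ over the finite interval then gives a constant $L=L_{M,d}$ with $\left|\Phi_{\theta_{1},jk}-\Phi_{\theta_{2},jk}\right|\le L\left\Vert\theta_{1}-\theta_{2}\right\Vert$ for all $\left\vert j\right\vert,\left\vert k\right\vert\le d$, and hence $\left\Vert\Phi_{\theta_{1}}-\Phi_{\theta_{2}}\right\Vert_{2}\le\left(2d+1\right)L\left\Vert\theta_{1}-\theta_{2}\right\Vert$.

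Next I would handle the inversion step. By Lemma \ref{Lem-covmatrix-eigenvalues} the symmetric positive matrix $\Phi_{\theta}$ satisfies $\lambda_{\min}\left(\Phi_{\theta}\right)\ge C_{1,M}>0$ for every $\theta\in\Theta_{2}^{\prime}$, so $\Phi_{\theta}$ is invertible with operator norm $\left\vert\Phi_{\theta}^{-1}\right\vert\le C_{1,M}^{-1}$. The resolvent identity $\Phi_{\theta_{1}}^{-1}-\Phi_{\theta_{2}}^{-1}=\Phi_{\theta_{1}}^{-1}\left(\Phi_{\theta_{2}}-\Phi_{\theta_{1}}\right)\Phi_{\theta_{2}}^{-1}$ combined with (\ref{norm-inequality}) gives
\[
\left\Vert\Phi_{\theta_{1}}^{-1}-\Phi_{\theta_{2}}^{-1}\right\Vert_{2}\le\left\vert\Phi_{\theta_{1}}^{-1}\right\vert\,\left\vert\Phi_{\theta_{2}}^{-1}\right\vert\,\left\Vert\Phi_{\theta_{1}}-\Phi_{\theta_{2}}\right\Vert_{2}\le C_{1,M}^{-2}\left\Vert\Phi_{\theta_{1}}-\Phi_{\theta_{2}}\right\Vert_{2}.
\]
Chaining this with the Lipschitz bound from the previous step shows $\theta\longmapsto\Phi_{\theta}^{-1}$ is Lipschitz, hence continuous, in Hilbert--Schmidt norm on $\Theta_{2}^{\prime}$, which is the assertion.

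There is essentially no obstacle; the only point deserving care is the uniform-in-$\omega$ control of $\left(a_{\theta}^{2}\left(\omega\right)-1\right)^{-1}$ as a function of $\theta$, which is what permits the estimate under the integral sign, and this is handed to us directly by (\ref{bounds-on-athetasquared}) and the boundedness of the $\psi_{h}$. Note that compactness of $\Theta_{2}^{\prime}$ (Lemma \ref{lem-compact-convex}) is not actually needed for this continuity statement, though it is used elsewhere in the analysis of $\tilde{\theta}_{n}$.
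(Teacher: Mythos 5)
Your proposal follows essentially the same route as the paper: uniform eigenvalue bounds on $\Phi_{\theta}$ plus the resolvent identity $\Phi_{1}^{-1}-\Phi_{2}^{-1}=\Phi_{1}^{-1}\left(\Phi_{2}-\Phi_{1}\right)\Phi_{2}^{-1}$ to control the inversion step. The only difference is cosmetic: the paper simply asserts that $\theta\mapsto\Phi_{\theta}$ is "clearly continuous" and re-derives the eigenvalue bounds inline, while you supply an explicit Lipschitz estimate for $\theta\mapsto\Phi_{\theta}$ (along the lines of Lemma \ref{Lem-Lipschitz-uniform}) and cite Lemma \ref{Lem-covmatrix-eigenvalues} instead.
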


\begin{proof}
For $x\in\mathbb{R}^{2d+1}$ with $\left\Vert x\right\Vert =1$, we have in view
of (\ref{bounds-on-athetasquared}), setting $M_{1}=$
\begin{align*}
x^{\prime}\Phi_{\theta}x  &  =\frac{1}{2\pi}\int_{\left(  -\pi,\pi\right)
}\left(  a_{\theta}^{2}\left(  \omega\right)  -1\right)  ^{-1}\left(
\sum_{j=-d}^{d}x_{j}\psi_{j}\left(  \omega\right)  \right)  ^{2}d\omega\\
&  \leq M\frac{1}{2\pi}\int_{\left(  -\pi,\pi\right)  }\left(  \sum_{j=-d}%
^{d}x_{j}\psi_{j}\left(  \omega\right)  \right)  ^{2}d\omega=M
\end{align*}
and similarly, for $M_{1}=$ $\left(  M\left(  2d+1\right)  \right)  ^{-1}$
\[
x^{\prime}\Phi_{\theta}x\geq M_{1}.
\]
It follows that
\begin{align*}
s_{1}  &  :=\inf\left\{  \lambda_{\min}\left(  \Phi_{\theta}\right)
:\theta\in\Theta_{2}^{\prime}\right\}  \geq M_{1}>0.\\
s_{2}  &  :=\sup\left\{  \lambda_{\max}\left(  \Phi_{\theta}\right)
:\theta\in\Theta_{2}^{\prime}\right\}  \leq M.
\end{align*}
Clearly the map $\theta\longrightarrow\Phi_{\theta}$ is continuous on
$\Theta_{2}^{\prime}$. For nonsingular matrices $\Phi_{1},\Phi_{2}$ we have
\[
\Phi_{1}^{-1}-\Phi_{2}^{-1}=\Phi_{1}^{-1}\left(  \Phi_{2}-\Phi_{1}\right)
\Phi_{2}^{-1}%
\]
which for the Hilbert-Schmidt norm $\left\Vert \cdot\right\Vert _{2}$ implies,
if both $\Phi_{1},\Phi_{2}$ are positive,
\[
\left\Vert \Phi_{1}^{-1}-\Phi_{2}^{-1}\right\Vert _{2}\leq\lambda_{\max
}\left(  \Phi_{1}^{-1}\right)  \lambda_{\max}\left(  \Phi_{2}^{-1}\right)
\left\Vert \Phi_{2}-\Phi_{2}\right\Vert _{2}.
\]
Thus for , $\theta_{j}\in\Theta_{2}^{\prime}$, $j=1,2$ we have%
\[
\left\Vert \Phi_{\theta_{1}}^{-1}-\Phi_{\theta_{2}}^{-1}\right\Vert _{2}\leq
s_{1}^{-2}\left\Vert \Phi_{\theta_{1}}-\Phi_{\theta_{2}}\right\Vert _{2}%
\]
showing that the map $\theta\longrightarrow\Phi_{\theta}^{-1}$ is continuous
on $\Theta_{2}^{\prime}$.
\end{proof}

\bigskip

\bigskip

Define the function%
\[
g\left(  \theta,\omega\right)  :=\left(  a_{\theta}^{2}\left(  \omega\right)
-1\right)  ^{-1}\text{, }\theta\in\Theta_{2}^{\prime},\omega\in\left[
-\pi,\pi\right]  .
\]

\begin{lemma}
\label{Lem-Lipschitz-uniform}There exists $L>0$ depending only on $m$ and $d$
such that
\[
\sup_{\omega\in\left[  -\pi,\pi\right]  }\left\vert g\left(  \theta_{1}%
,\omega\right)  -g\left(  \theta_{2},\omega\right)  \right\vert \leq
L\left\Vert \theta_{1}-\theta_{2}\right\Vert \text{, }\theta_{1},\theta_{2}%
\in\Theta_{2}^{\prime}.
\]

\end{lemma}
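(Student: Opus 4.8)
The plan is to start from the elementary identity
\[
g\left(\theta_{1},\omega\right)-g\left(\theta_{2},\omega\right)=\frac{a_{\theta_{2}}^{2}\left(\omega\right)-a_{\theta_{1}}^{2}\left(\omega\right)}{\left(a_{\theta_{1}}^{2}\left(\omega\right)-1\right)\left(a_{\theta_{2}}^{2}\left(\omega\right)-1\right)},
\]
valid for all $\theta_{1},\theta_{2}\in\Theta_{2}^{\prime}$ and $\omega\in\left[-\pi,\pi\right]$, and to bound the denominator from below and the numerator from above with constants independent of $\omega$. For the denominator I would use the lower bound $a_{\theta}^{2}\left(\omega\right)-1\geq M^{-1}$ recorded just above (\ref{bounds-on-athetasquared}), which gives $\left(a_{\theta_{1}}^{2}-1\right)\left(a_{\theta_{2}}^{2}-1\right)\geq M^{-2}$ uniformly in $\omega$ and in $\theta_{1},\theta_{2}\in\Theta_{2}^{\prime}$.

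For the numerator I would factor $a_{\theta_{2}}^{2}-a_{\theta_{1}}^{2}=\left(a_{\theta_{1}}+a_{\theta_{2}}\right)\left(a_{\theta_{2}}-a_{\theta_{1}}\right)$. The bound (\ref{bounds-on-athetasquared-a}) gives $\left\vert a_{\theta_{i}}\left(\omega\right)\right\vert\leq\left(M\left(2d+1\right)\right)^{1/2}$, hence $\left\vert a_{\theta_{1}}+a_{\theta_{2}}\right\vert\leq 2\left(M\left(2d+1\right)\right)^{1/2}$. For the factor $a_{\theta_{2}}-a_{\theta_{1}}$ I would use that $\theta\mapsto a_{\theta}\left(\omega\right)=\sum_{\left\vert j\right\vert\leq d}\psi_{j}\left(\omega\right)\theta_{j}$ is linear in $\theta$, so by Cauchy--Schwarz together with the pointwise identity $\sum_{\left\vert j\right\vert\leq d}\psi_{j}^{2}\left(\omega\right)=2d+1$ (immediate from the definitions (\ref{psi-def})) one obtains $\left\vert a_{\theta_{1}}\left(\omega\right)-a_{\theta_{2}}\left(\omega\right)\right\vert\leq\left(2d+1\right)^{1/2}\left\Vert\theta_{1}-\theta_{2}\right\Vert$.

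Combining the three estimates yields
\[
\sup_{\omega\in\left[-\pi,\pi\right]}\left\vert g\left(\theta_{1},\omega\right)-g\left(\theta_{2},\omega\right)\right\vert\leq 2M^{5/2}\left(2d+1\right)\left\Vert\theta_{1}-\theta_{2}\right\Vert,
\]
so the claim holds with the explicit constant $L=2M^{5/2}\left(2d+1\right)$, which indeed depends only on $M$ and $d$. I do not expect a genuine obstacle in this lemma; the only point deserving attention is to check that each bound invoked — the lower bound on $a_{\theta}^{2}-1$, the sup bound on $\left\vert a_{\theta}\right\vert$, and the value of $\sum\psi_{j}^{2}\left(\omega\right)$ — holds pointwise in $\omega$ with $\omega$-independent constants, which it does, so the resulting Lipschitz estimate is automatically uniform over $\omega\in\left[-\pi,\pi\right]$.
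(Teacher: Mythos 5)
Your proof is correct, and it takes a genuinely different route from the paper's. The paper computes the gradient $\partial_{\theta}g(\theta,\omega)$, bounds its Euclidean norm uniformly over $\Theta_{2}^{\prime}\times[-\pi,\pi]$, and implicitly invokes the mean-value inequality on the convex set $\Theta_{2}^{\prime}$ (convexity being supplied by Lemma \ref{lem-compact-convex}). You instead use the purely algebraic identity
\[
g(\theta_{1},\omega)-g(\theta_{2},\omega)=\frac{a_{\theta_{2}}^{2}(\omega)-a_{\theta_{1}}^{2}(\omega)}{(a_{\theta_{1}}^{2}(\omega)-1)(a_{\theta_{2}}^{2}(\omega)-1)},
\]
factor the numerator as $(a_{\theta_{1}}+a_{\theta_{2}})(a_{\theta_{2}}-a_{\theta_{1}})$, and bound each piece using the linearity of $\theta\mapsto a_{\theta}(\omega)$, Cauchy--Schwarz, $\sum_{|j|\le d}\psi_{j}^{2}\equiv 2d+1$, and the bounds (\ref{bounds-on-athetasquared-a})--(\ref{bounds-on-athetasquared}). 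What your route buys: it bypasses differentiation and the appeal to convexity of $\Theta_{2}^{\prime}$ entirely, and produces a clean explicit constant $L=2M^{5/2}(2d+1)$. The paper's differential route, by contrast, generalizes more readily to nonlinear parametrizations where no such algebraic factorization is available, but here it is overkill (and the displayed claim $\|\partial_{\theta}g\|^{2}\le 2M^{2}$ in the paper's proof does not match the bound $M^{5}(2d+1)^{2}$ actually derived two lines later, so your version is also the tidier one). One small point you implicitly fix: the lemma as stated says the constant depends ``only on $m$ and $d$,'' which must be a misprint for ``$M$ and $d$'' since $g$ has no $m$-dependence; your explicit $L$ confirms this.
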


\begin{proof}
We first claim that $\left\Vert \partial_{\theta}g\left(  \theta
,\omega\right)  \right\Vert ^{2}\leq2M^{2}.$Indeed for $\theta=\left(
\theta_{j}\right)  _{\left\vert j\right\vert \leq d}$ we have for any
$\left\vert j\right\vert \leq d$, recalling $a_{\theta}\left(  \omega\right)
=\sum_{\left\vert j\right\vert \leq d}\theta_{j}\psi_{j}\left(  \omega\right)
$, where we used the bounds (\ref{bounds-on-athetasquared}) and
(\ref{bounds-on-athetasquared-a}). Consequently%
\begin{align*}
\left\Vert \partial_{\theta}g\left(  \theta,\omega\right)  \right\Vert ^{2}
&  =\sum_{\left\vert j\right\vert \leq d}\left(  \partial_{\theta_{j}}g\left(
\theta,\omega\right)  \right)  ^{2}\leq\left(  2d+1\right)  M^{5}%
\sum_{\left\vert j\right\vert \leq d}\psi_{j}^{2}\left(  \omega\right) \\
&  =M^{5}(2d+1)^{2}.
\end{align*}
Noting also that $\partial_{\theta}g\left(  \theta,\omega\right)  $ is
continuous in $\theta$, the claim follows.
\end{proof}

\bigskip

\begin{proof}
[Proof of Theorem \ref{Theor-asy-normal-improved-est}]\textbf{Step 1. }Lemma
\ref{lem-compact-2} in conjunction with Lemma \ref{lem-compact-1} shows that
the mapping $\theta\rightarrow N_{2m+1}\left(  0,\Phi_{\theta}^{-1}\right)  $
is continuous in total variation norm on the compact $\Theta_{2}^{\prime}$.
According to Lemma \ref{Lem-unif-law-converg-charac} (iii), it suffices to
prove that for every sequence $\left\{  \theta_{n}\right\}  $ such that
$\theta_{n}\rightarrow\theta$ for some $\theta\in\Theta$, one has%
\[
n^{1/2}\left(  \tilde{\theta}_{n}-\theta_{n}\right)  \Longrightarrow
_{d}N_{2d+1}\left(  0,\Phi_{\theta}^{-1}\right)  \text{ under }P_{n,\theta
_{n}}.
\]
From (\ref{expec-Pi-m-j}) we obtain
\[
E_{n,\theta}\mathbf{\bar{\Pi}}_{n}=m^{1/2}W_{m}F_{m}^{-1}\theta
\]
and hence
\begin{align*}
\theta &  =m^{-1/2}F_{m}\left(  W_{m}^{\prime}\hat{\Delta}_{m}^{-1}%
W_{m}\right)  ^{-1}W_{m}^{\prime}\hat{\Delta}_{m}^{-1}\mathbf{\;}E_{n,\theta
}\mathbf{\bar{\Pi}}_{n},\\
n^{1/2}\left(  \tilde{\theta}_{n}-\theta\right)   &  =n^{1/2}\left(
mr\right)  ^{-1/2}F_{m}\left(  W_{m}^{\prime}\hat{\Delta}_{m}^{-1}%
W_{m}\right)  ^{-1}W_{m}^{\prime}\hat{\Delta}_{m}^{-1}r^{1/2}\left(
\mathbf{\bar{\Pi}}_{n}-E_{n,\theta}\mathbf{\bar{\Pi}}_{n}\right)  .
\end{align*}
Here $n^{1/2}\left(  mr\right)  ^{-1/2}=1+o\left(  1\right)  $ due to
(\ref{size-of-blocks-def}) and $F_{m}\rightarrow I_{2d+1}$ due to
(\ref{Fm-matrix-def}). Hence it suffices to prove that for all sequences
$\theta_{n}$ converging to some $\theta$
\begin{equation}
\left(  W_{m}^{\prime}\hat{\Delta}_{m}^{-1}W_{m}\right)  ^{-1}W_{m}^{\prime
}\hat{\Delta}_{m}^{-1}r^{1/2}\left(  \mathbf{\bar{\Pi}}_{n}-E_{n,\theta_{n}%
}\mathbf{\bar{\Pi}}_{n}\right)  \Longrightarrow_{d}N_{2d+1}\left(
0,\Phi_{\theta}^{-1}\right)  \text{ under }P_{n,\theta_{n}}.
\label{suffices-1-theorem}%
\end{equation}
The sequence $\left\{  \theta_{n}\right\}  \subset\Theta_{2}^{\prime}$ will be
considered fixed henceforth and $P_{n,\theta_{n}}$ is assumed to be joint
distribution of the $\mathbb{R}^{m}$-valued random vectors $\mathbf{\Pi}%
_{j,m}$, $j=1,\ldots,r$ from (\ref{Pi-k-quer-def}).

\bigskip

\textbf{Step 2. }We claim
\begin{equation}
\left(  W_{m}^{\prime}\hat{\Delta}_{m}^{-1}W_{m}\right)  ^{-1}\rightarrow
_{p}\Phi_{\theta}^{-1} \label{claim-conv-multiplier-covmatrix}%
\end{equation}
(convergence in probability of the $\left(  2d+1\right)  \times\left(
2d+1\right)  $ matrix). Note that
\[
\left\Vert W_{m}^{\prime}\hat{\Delta}_{m}^{-1}W_{m}-W_{m}^{\prime}%
\Delta_{m,\theta_{n}}^{-1}W_{m}\right\Vert _{2}^{2}\leq\left\Vert \hat{\Delta
}_{m}^{-1}-\Delta_{m,\theta_{n}}^{-1}\right\Vert _{2}^{2}%
\]%
\begin{align}
&  \leq m\sup_{\omega\in\left[  -\pi,\pi\right]  }\left(  \left(
a_{\bar{\theta}_{n}}^{2}\left(  \omega\right)  -1\right)  ^{-1}-\left(
a_{\theta_{n}}^{2}\left(  \omega\right)  -1\right)  ^{-1}\right)
^{2}\label{similar-to-deltas}\\
&  \leq m\;L^{2}\left\Vert \bar{\theta}_{n}-\theta_{n}\right\Vert ^{2}\text{
(Lemma \ref{Lem-Lipschitz-uniform})}\nonumber\\
&  \leq m\;L^{2}\left\Vert \hat{\theta}_{n}-\theta_{n}\right\Vert ^{2}\text{
(projection property of }\bar{\theta}_{n}\text{) }\nonumber\\
&  \rightarrow_{p}0 \label{conv-0-differ-multiplier}%
\end{align}
where the last claim follows from Lemma \ref{lem-unif-consistency-prelim-est})
and $m\sim\log n=o\left(  n\right)  $. Furthermore note that for each element
$(j,k)$ of $W_{m}^{\prime}\Delta_{m,\theta_{n}}^{-1}W_{m}$ we have
\begin{align*}
\mathbf{w}_{j,m}^{\prime}\Delta_{m,\theta_{n}}^{-1}\mathbf{w}_{k,m}  &
=m^{-1}\sum_{s\leq\left(  m-1\right)  /2}\left(  a_{\theta_{n}}^{2}\left(
\omega_{s,m}\right)  -1\right)  ^{-1}\psi_{j}\left(  \omega_{s,m}\right)
\psi_{k}\left(  \omega_{s,m}\right) \\
&  =\frac{1}{2\pi}\int_{\left(  -\pi,\pi\right)  }\left(  a_{\theta}%
^{2}\left(  \omega\right)  -1\right)  ^{-1}\psi_{j}\left(  \omega\right)
\psi_{k}\left(  \omega\right)  d\omega+o\left(  1\right)  .
\end{align*}
where the convergence to the integral follows from Lemma
\ref{Lem-Lipschitz-uniform} and $\theta_{n}\rightarrow\theta$. Hence by
(\ref{parametric-fisher-info-def-a})%
\[
\mathbf{w}_{j,m}^{\prime}\Delta_{m,\theta_{n}}^{-1}\mathbf{w}_{k,m}%
=\Phi_{\theta,jk}+o\left(  1\right)  .
\]
The last relation and (\ref{conv-0-differ-multiplier}) imply
(\ref{claim-conv-multiplier-covmatrix}). For (\ref{suffices-1-theorem}) it now
suffices to prove
\begin{equation}
W_{m}^{\prime}\hat{\Delta}_{m}^{-1}r^{1/2}\left(  \mathbf{\bar{\Pi}}%
_{n}-E_{n,\theta_{n}}\mathbf{\bar{\Pi}}_{n}\right)  \Longrightarrow
_{d}N_{2d+1}\left(  0,\Phi_{\theta}\right)  .\text{ }
\label{suffices-2-theorem}%
\end{equation}

\bigskip

\textbf{Step 3. }We claim
\begin{equation}
W_{m}^{\prime}\left(  \hat{\Delta}_{m}^{-1}-\Delta_{m,\theta_{n}}^{-1}\right)
r^{1/2}\left(  \mathbf{\bar{\Pi}}_{n}-E_{n,\theta_{n}}\mathbf{\bar{\Pi}}%
_{n}\right)  \rightarrow_{p}0 \label{suffices-2-theorem-a}%
\end{equation}
(convergence in probability of a $2d+1$-vector). Indeed we have
\begin{equation}
\left\Vert W_{m}^{\prime}\left(  \hat{\Delta}_{m}^{-1}-\Delta_{m,\theta_{n}%
}^{-1}\right)  r^{1/2}\left(  \mathbf{\bar{\Pi}}_{n}-E_{n,\theta_{n}%
}\mathbf{\bar{\Pi}}_{n}\right)  \right\Vert ^{2}\leq\lambda_{\max}\left(
\hat{\Delta}_{m}^{-1}-\Delta_{m,\theta_{n}}^{-1}\right)  ^{2}\left\Vert
r^{1/2}\left(  \mathbf{\bar{\Pi}}_{n}-E_{n,\theta_{n}}\mathbf{\bar{\Pi}}%
_{n}\right)  \right\Vert ^{2}. \label{conjunction-with-EV-2}%
\end{equation}
Here analogously to (\ref{similar-to-deltas})- (\ref{conv-0-differ-multiplier}%
) one obtains, in view of $m^{2}\sim\left(  \log n\right)  ^{2}=o\left(
n\right)  ,$
\begin{equation}
m^{2}\;\lambda_{\max}\left(  \hat{\Delta}_{m}^{-1}-\Delta_{m,\theta_{n}}%
^{-1}\right)  ^{2}\rightarrow_{p}0. \label{conjunction-with-EV}%
\end{equation}
Recall that $\mathbf{\bar{\Pi}}_{n}=r^{-1}\sum_{j=1}^{r}\mathbf{\Pi}_{m,j}$
(cf. (\ref{Pi-k-quer-def})) where $\mathbf{\Pi}_{m,j}$ are i.i.d. vectors;
hence
\[
\mathrm{Cov}_{n,\mathbf{\theta}_{n}}\left(  r^{1/2}\left(  \mathbf{\bar{\Pi}%
}_{n}-E_{n,\theta_{n}}\mathbf{\bar{\Pi}}_{n}\right)  \right)  =\mathrm{Cov}%
_{n,\mathbf{\theta}_{n}}\left(  \mathbf{\Pi}_{m,1}\right)
\]
and consequently
\begin{align*}
E_{n,\mathbf{\theta}_{n}}\left\Vert r^{1/2}\left(  \mathbf{\bar{\Pi}}%
_{n}-E_{n,\theta_{n}}\mathbf{\bar{\Pi}}_{n}\right)  \right\Vert ^{2}  &
=\mathrm{Tr}\;\mathrm{Cov}_{n,\mathbf{\theta}_{n}}\left(  \mathbf{\Pi}%
_{m,1}\right) \\
&  =\mathrm{Tr}\;\left(  \left(  U_{m}^{\ast}A_{m}\left(  a_{\theta_{n}%
}\right)  U_{m}\right)  ^{\left[  2\right]  }-I_{m}\right)  ,
\end{align*}
in view of (\ref{Cov-matrix-Pi}), where $A_{m}\left(  a_{\theta_{n}}\right)  $
is the $m\times m$ symbol matrix pertaining to spectral density $a_{\theta
_{n}}$ and For a $m\times m$ matrix $M$, the real matrix $M^{\left[  2\right]
}$ is defined in (\ref{M-brackets-2-def}). Hence
\[
\mathrm{Tr}\;\mathrm{Cov}_{n,\mathbf{\theta}_{n}}\left(  \mathbf{\Pi}%
_{m,1}\right)  =\sum_{\left\vert j\right\vert \leq\left(  m-1\right)
/2}\left(  \mathbf{u}_{j}^{\ast}A_{m}\left(  a_{\theta_{n}}\right)
\mathbf{u}_{j}\right)  ^{2}-m
\]
where $\mathbf{u}_{j}$ are the $m$-vectors defined in
(\ref{epsilon-u-vectors-def}), (\ref{special-unitary-DFT-def}) for the current
value of $m$. Then Lemma \ref{lem-toeplitz-EV} implies that for a constant
$C_{M}$ depending only on $M$ we have $\mathbf{u}_{j}^{\ast}A_{m}\left(
a_{\theta_{n}}\right)  \mathbf{u}_{j}\leq C_{M}$ and hence
\[
m^{-2}\;E_{n,\mathbf{\theta}_{n}}\left\Vert r^{1/2}\left(  \mathbf{\bar{\Pi}%
}_{n}-E_{n,\theta_{n}}\mathbf{\bar{\Pi}}_{n}\right)  \right\Vert ^{2}\leq
m^{-1}\left(  C_{M}^{2}-1\right)  =o\left(  1\right)  .
\]
Hence
\[
m^{-2}\;\left\Vert r^{1/2}\left(  \mathbf{\bar{\Pi}}_{n}-E_{n,\theta_{n}%
}\mathbf{\bar{\Pi}}_{n}\right)  \right\Vert ^{2}\rightarrow_{p}0
\]
which in conjunction with (\ref{conjunction-with-EV}) and
(\ref{conjunction-with-EV-2}) implies (\ref{suffices-2-theorem-a}). For
(\ref{suffices-2-theorem}) it now suffices to prove
\begin{equation}
S_{n}:=W_{m}^{\prime}\Delta_{m,\theta_{n}}^{-1}r^{1/2}\left(  \mathbf{\bar
{\Pi}}_{n}-E_{n,\theta_{n}}\mathbf{\bar{\Pi}}_{n}\right)  \Longrightarrow
_{d}N_{2d+1}\left(  0,\Phi_{\theta}\right)  .\text{ }
\label{suffices-3-theorem}%
\end{equation}

\bigskip

\textbf{Step 4. }We claim that
\begin{equation}
\lim_{n\rightarrow\infty}\mathrm{Cov}_{n,\mathbf{\theta}_{n}}\left(
S_{n}\right)  =\Phi_{\theta}. \label{claim-covmatrix-1}%
\end{equation}
Indeed, following the steps in the proof of Lemma
\ref{lem-covmatrix-prelim-est}, we obtain
\begin{align}
\mathrm{Cov}_{n,\mathbf{\theta}_{n}}\left(  T_{1,n}\right)   &  =W_{m}%
^{\prime}\Delta_{m,\theta_{n}}^{-1}\mathrm{Cov}_{n,\mathbf{\theta}_{n}}\left(
\mathbf{\Pi}_{1,m}\right)  \Delta_{m,\theta_{n}}^{-1}W_{m}\nonumber\\
&  =W_{m}^{\prime}\Delta_{m,\theta_{n}}^{-1}\left(  U_{m}^{\ast}A_{m}%
U_{m}\right)  ^{\left[  2\right]  }\Delta_{m,\theta_{n}}^{-1}W_{m}%
-W_{m}^{\prime}\Delta_{m,\theta_{n}}^{-2}W_{m}. \label{CovTn-decomp}%
\end{align}
According to (\ref{matrix-W-def}), the column vectors of the matrix
$\Delta_{m,\theta_{n}}^{-1}W_{m}$ are
\[
\mathbf{\tilde{w}}_{j,m}:=\Delta_{m,\theta_{n}}^{-1}\mathbf{w}_{j,m}=\left(
\;\left(  a_{\theta_{n}}^{2}\left(  \omega_{s,m}\right)  -1\right)
^{-1}m^{-1/2}\psi_{j}\left(  \omega_{s,m}\right)  \right)  _{\left\vert
s\right\vert \leq\left(  m-1\right)  /2}.
\]
In the proof of Lemma \ref{lem-covmatrix-prelim-est}, relation
(\ref{relation-quoted-later}) it has been shown that the element $(j,k)$ of
the matrix $W_{m}^{\prime}\left(  U_{m}^{\ast}A_{m}U_{m}\right)  ^{\left[
2\right]  }W_{m}$ satisfies%
\[
\mathbf{w}_{j,m}^{\prime}\left(  U_{m}^{\ast}A_{m}U_{m}\right)  ^{\left[
2\right]  }\mathbf{w}_{k,m}=\mathbf{w}_{j,m}^{\prime}\Lambda_{m}^{2}%
\mathbf{w}_{k,m}+o\left(  1\right)
\]
where $\Lambda_{m}=\Lambda_{m,\theta}$ is defined by
(\ref{lambda-diag-matrix-def}), with $\theta=\theta_{n}$ currently. For the
vectors $\mathbf{w}_{j,m}$ that proof only used the fact that all components
of $\mathbf{w}_{j,m}$ and $\mathbf{w}_{k,m}$ are bounded in modulus by
$\sqrt{2}m^{-1/2}$. Replacing $\mathbf{w}_{j,m}$ by $\mathbf{\tilde{w}}_{j,m}%
$, we note that all components are bounded in modulus by $M\sqrt{2}m^{-1/2}$,
due to (\ref{bounds-on-athetasquared}). Therefore we have
\[
\mathbf{\tilde{w}}_{j,m}^{\prime}\left(  U_{m}^{\ast}A_{m}U_{m}\right)
^{\left[  2\right]  }\mathbf{\tilde{w}}_{k,m}=\mathbf{\tilde{w}}_{j,m}%
^{\prime}\Lambda_{m}^{2}\mathbf{\tilde{w}}_{k,m}+o\left(  1\right)  ,
\]
hence from (\ref{CovTn-decomp}) the element $(j,k)$ of $\mathrm{Cov}%
_{n,\mathbf{\theta}_{n}}\left(  T_{1,n}\right)  $ is
\begin{align*}
\mathbf{\tilde{w}}_{j,m}^{\prime}\left(  \Lambda_{m,\theta_{n}}^{2}%
-I_{2d+1}\right)  \mathbf{\tilde{w}}_{k,m}+o\left(  1\right)   &
=\mathbf{\tilde{w}}_{j,m}^{\prime}\Delta_{m,\theta_{n}}\mathbf{\tilde{w}%
}_{k,m}+o\left(  1\right) \\
&  =\mathbf{w}_{j,m}^{\prime}\Delta_{m,\theta_{n}}^{-1}\mathbf{w}%
_{k,m}+o\left(  1\right)
\end{align*}%
\[
=m^{-1}\sum_{\left\vert j\right\vert \leq\left(  m-1\right)  /2}\left(
a_{\theta}^{2}\left(  \omega_{j,m}\right)  -1\right)  ^{-1}\psi_{j}\left(
\omega_{j,m}\right)  \psi_{k}\left(  \omega_{j,m}\right)  +o\left(  1\right)
.
\]
This expression converges to
\[
\frac{1}{2\pi}\int_{\left(  -\pi,\pi\right)  }\left(  a_{\theta}^{2}\left(
\omega\right)  -1\right)  ^{-1}\psi_{j}\left(  \omega\right)  \psi_{k}\left(
\omega\right)  d\omega,
\]
in view of Lemma \ref{Lem-Lipschitz-uniform}. The claim
(\ref{claim-covmatrix-1}) is proved.

\bigskip

\textbf{Step 5. }We use the Lindeberg-Feller Theorem to show
(\ref{suffices-3-theorem}). Consider independent random $d$-vectors
\[
X_{n,j}=W_{m}^{\prime}\Delta_{m,\theta_{n}}^{-1}\left(  \mathbf{\Pi}%
_{m,j}-E_{n,\theta_{n}}\mathbf{\Pi}_{m,j}\right)  \text{, }j=1,\ldots,r\text{
}%
\]
with $\mathbf{\Pi}_{m,j}$ from (\ref{Pi-k-quer-def}). Then $X_{n,j}$ are
identically distributed with $E_{\theta_{n}}X_{n,j}=0,$ and $\sum_{j=1}%
^{r}r^{-1/2}X_{n,j}=S_{n}$. In view of (\ref{claim-covmatrix-1}), it suffices
to establish the Lindeberg condition: for every $\varepsilon>0$%
\begin{equation}
r^{-1}\sum_{j=1}^{r}E_{n,\mathbf{\theta}_{n}}\left\Vert X_{n,j}\right\Vert
^{2}\mathbf{1}\left\{  r^{-1}\left\Vert X_{n,j}\right\Vert ^{2}>\varepsilon
\right\}  \rightarrow0 \label{Lindebg-2}%
\end{equation}
or equivalently
\begin{equation}
E_{n,\mathbf{\theta}_{n}}\left\Vert X_{n,1}\right\Vert ^{2}\mathbf{1}\left\{
\left\Vert X_{n,1}\right\Vert ^{2}>\varepsilon r\right\}  \rightarrow0.
\label{Lindebg3}%
\end{equation}
Define
\[
Y_{n}:=\mathbf{\Pi}_{m,1}-E_{n,\theta_{n}}\mathbf{\Pi}_{m,1},
\]
then in view of (\ref{bounds-on-athetasquared}) we have $\left\Vert
X_{n,1}\right\Vert \leq M\left\Vert Y_{n}\right\Vert $ and hence for
(\ref{Lindebg3}) it suffices to show
\[
E_{n,\mathbf{\theta}_{n}}\left\Vert Y_{n}\right\Vert ^{2}\mathbf{1}\left\{
\left\Vert X_{n,1}\right\Vert ^{2}>\varepsilon r\right\}  \rightarrow0.
\]
Applying the Cauchy-Schwarz and Markov inequalities, we obtain
\[
E_{n,\mathbf{\theta}_{n}}\left\Vert Y_{n}\right\Vert ^{2}\mathbf{1}\left\{
\left\Vert X_{n,1}\right\Vert ^{2}>\varepsilon r\right\}  \leq\left(
E_{n,\mathbf{\theta}_{n}}\left\Vert Y_{n}\right\Vert ^{4}\right)
^{1/2}\left(  \frac{E_{n,\mathbf{\theta}_{n}}\left\Vert X_{n,1}\right\Vert
^{2}}{\varepsilon r}\right)  ^{1/2}.
\]
Here, since $\mathrm{Cov}_{n,\mathbf{\theta}_{n}}\left(  S_{n}\right)
=\mathrm{Cov}_{n,\mathbf{\theta}_{n}}\left(  X_{n,1}\right)  $, we have
\[
E_{n,\mathbf{\theta}_{n}}\left\Vert X_{n,1}\right\Vert ^{2}=\mathrm{Tr}%
\;\mathrm{Cov}_{n,\mathbf{\theta}_{n}}\left(  S_{n}\right)  =O\left(
1\right)
\]
due to (\ref{claim-covmatrix-1}). It now suffices to show
\begin{equation}
r^{-1}E_{n,\mathbf{\theta}_{n}}\left\Vert Y_{n}\right\Vert ^{4}=o\left(
1\right)  . \label{obtain-for-Lindeberg}%
\end{equation}
Recall that according to Subsection \ref{Subsec-partition}, the random vector
$\mathbf{\Pi}_{m,1}$ has the same distribution as $\mathbf{\Pi}_{m}=\left(
\Pi_{j}\right)  _{\left\vert j\right\vert \leq\left(  m-1\right)  /2}$ given
by (\ref{vec-of-observables}) with $n$ replaced by $m$, where according to
(\ref{Pi-observble-component-def}).
\[
\Pi_{j}=2\hat{B}_{j}^{\ast}\hat{B}_{j}+\mathbf{1,}\text{ }\left\vert
j\right\vert \leq\left(  m-1\right)  /2.
\]
Hence
\begin{align*}
r^{-1}E_{n,\mathbf{\theta}_{n}}\left\Vert Y_{n}\right\Vert ^{4}  &
=r^{-1}E_{n,\mathbf{\theta}_{n}}\left(  \sum_{\left\vert j\right\vert
\leq\left(  m-1\right)  /2}\left(  \Pi_{j}-E_{n,\mathbf{\theta}_{n}}\Pi
_{j}\right)  ^{2}\right)  ^{2}\\
&  \leq\frac{m}{r}\sum_{\left\vert j\right\vert \leq\left(  m-1\right)
/2}E_{n,\mathbf{\theta}_{n}}\left(  \Pi_{j}-E_{n,\mathbf{\theta}_{n}}\Pi
_{j}\right)  ^{4}.
\end{align*}
Further note that for the observables $\tilde{Q}_{j},\tilde{P}_{j}$,
$j=1,\ldots,m$ defined in (\ref{analogs-P-Q-def}) for $n=m$, one has
\[
\hat{B}_{j}^{\ast}\hat{B}_{j}=\frac{1}{2}\left(  \tilde{Q}_{j+\left(
m+1\right)  /2}^{2}+\tilde{P}_{j+\left(  m+1\right)  /2}^{2}-\mathbf{1}%
\right)  ,\;\left\vert j\right\vert \leq\left(  m-1\right)  /2
\]
in analogy to (\ref{numb-oper}), by the argument about $\tilde{Q}_{j}%
,\tilde{P}_{j}$ used in the proof of Lemma \ref{lem-covmatrix-analog-B}. To
shorten notation, we now write $s(j):=j-\left(  m+1\right)  /2$ for
$j=1,\ldots,m$. Hence
\[
\Pi_{s(j)}=\tilde{Q}_{j}^{2}+\tilde{P}_{j}^{2}\text{, }j=1,\ldots,m
\]
and for $j=1,\ldots,m$
\[
E_{n,\mathbf{\theta}_{n}}\left(  \Pi_{s(j)}-E_{n,\mathbf{\theta}_{n}}%
\Pi_{s(j)}\right)  ^{4}=E_{n,\mathbf{\theta}_{n}}\left(  \tilde{Q}_{j}%
^{2}-E_{n,\mathbf{\theta}_{n}}\tilde{Q}_{j}^{2}+\tilde{P}_{j}^{2}%
-E_{n,\mathbf{\theta}_{n}}\tilde{P}_{j}^{2}\right)  ^{4}%
\]%
\[
\leq8\;\left(  E_{n,\mathbf{\theta}_{n}}\left(  \tilde{Q}_{j}^{2}%
-E_{n,\mathbf{\theta}_{n}}\tilde{Q}_{j}^{2}\right)  ^{4}+E_{n,\mathbf{\theta
}_{n}}\left(  \tilde{P}_{j}^{2}-E_{n,\mathbf{\theta}_{n}}\tilde{P}_{j}%
^{2}\right)  ^{4}\right)  .
\]
By (\ref{claim-anlogous-R-tilde}), $\tilde{Q}_{j}$ has a normal distribution
$\tilde{Q}_{j}\sim N\left(  0,u_{s\left(  j\right)  }^{\ast}A_{m}u_{s\left(
j\right)  }\right)  $ where $A_{m}=A_{m}\left(  a_{\theta_{n}}\right)  $.
Writing $\tilde{Q}_{j}=\left(  u_{s\left(  j\right)  }^{\ast}Au_{s\left(
j\right)  }\right)  ^{1/2}Z$ for a standard normal $Z$, we obtain%
\[
E_{\mathbf{\theta}}\left(  \tilde{Q}_{j}^{2}-E_{\mathbf{\theta}}\tilde{Q}%
_{j}^{2}\right)  ^{4}=\left(  u_{s\left(  j\right)  }^{\ast}A_{m}u_{s\left(
j\right)  }\right)  ^{4}\mu_{4}%
\]
where $\mu_{4}$ is the fourth central moment of $N\left(  0,1\right)  $.
Applying the same reasoning to $\tilde{P}_{j}\sim N\left(  0,u_{s\left(
j\right)  }^{\ast}Au_{s\left(  j\right)  }\right)  $, we obtain
\[
\frac{m}{r}\sum_{\left\vert j\right\vert \leq\left(  m-1\right)
/2}E_{n,\mathbf{\theta}_{n}}\left(  \Pi_{j}-E_{n,\mathbf{\theta}_{n}}\Pi
_{j}\right)  ^{4}\leq8\mu_{4}\frac{m^{2}}{r}\max_{\left\vert k\right\vert
\leq\left(  m-1\right)  /2}\left(  u_{k}^{\ast}A_{m}u_{k}\right)  ^{4}.
\]
To bound $u_{k}^{\ast}A_{m}u_{k}$, apply an Lemma\textit{ }%
\ref{lem-toeplitz-EV-2} to conclude that $\left(  u_{k}^{\ast}A_{m}\left(
a_{\theta}\right)  u_{k}\right)  ^{2}\leq\left(  2d+1\right)  M$, for
$\left\vert k\right\vert \leq\left(  m-1\right)  /2$ and spectral densities
$a_{\theta}\in\Theta_{2}\left(  d,M\right)  $, Since $m^{2}/r\rightarrow0$, we
obtain (\ref{obtain-for-Lindeberg}) and hence (\ref{suffices-3-theorem}).
\end{proof}

%

\begin{privatenotes}
\begin{boxedminipage}{\textwidth}%

\begin{sfblock}
This last Step 5 needs particularly careful revision. Make it clear early that
$A_{m}=$ $A_{m}\left(  a_{\theta}\right)  $. Also check in the whole proof for
doubling arguments involving $\mathrm{Cov}_{n,\mathbf{\theta}_{n}}\left(
\mathbf{\Pi}_{1,m}\right)  $.
\end{sfblock}

%

\end{boxedminipage}
\end{privatenotes}%
.

\subsection{A deficiency bound from limit distributions}

We now show how uniform asymptotic normality an estimator can be used to
establish a bound on the one sided Le Cam deficiency. The result is inspired
by the two theorems in \cite{MR618863}. For the definition of uniform
convergence in distribution, of the bounded Lipschitz norm $\left\Vert
\cdot\right\Vert _{BL}$ for functions and the bounded Lipschitz metric $\beta$
for probability measures cf. Section \ref{subsec-uniform-in-law}.

\begin{theorem}
\label{Theor-deficiency-from-law-converg}Consider a sequence of experiments
$\mathcal{P}_{n}=\left(  X_{n},\Omega_{n},P_{n,\theta},\theta\in\Theta\right)
$ where $P_{n,\theta}$ are probability measures on $\left(  X_{n},\Omega
_{n}\right)  $, and $\Theta$ is a compact subset of $\mathbb{R}^{d}$. Assume
that for a sequence of statistics $\hat{\theta}_{n}:\left(  X_{n,}\Omega
_{n}\right)  \rightarrow\left(  \mathbb{R}^{d},\mathfrak{B}^{d}\right)  $ one
has
\begin{equation}
\mathcal{L}\left(  \sqrt{n}\left(  \hat{\theta}_{n}-\theta\right)
|P_{n,\theta}\right)  \Longrightarrow_{d}N_{d}\left(  0,\Sigma_{\theta
}\right)  \text{ uniformly in }\theta\in\Theta\label{assump-1}%
\end{equation}
where the map $\theta\rightarrow\Sigma_{\theta}$ is continuous in the norm
$\left\Vert \cdot\right\Vert _{2}$ for covariance matrices and $\Sigma
_{\theta}>0,\theta\in\Theta$ . Assume each experiment $\mathcal{P}_{n}$ is
dominated by a sigma-finite measure. Then for experiments
\begin{equation}
\mathcal{Q}_{n}=\left(  \mathbb{R}^{d},\mathfrak{B}^{d},N_{d}\left(
\theta,n^{-1}\Sigma_{\theta}\right)  ,\theta\in\Theta\right)
\label{heteroskedastic-normal}%
\end{equation}
one has
\begin{equation}
\delta\left(  \mathcal{P}_{n},\mathcal{Q}_{n}\right)  \rightarrow0.
\label{claim-def-distance-by-smoothing}%
\end{equation}

\end{theorem}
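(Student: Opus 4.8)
The plan is to produce a single Markov kernel $K_n\colon(\Omega_n,\mathcal{X}_n)\rightarrow(\mathbb{R}^d,\mathfrak{B}^d)$, not depending on $\theta$, for which $\sup_{\theta\in\Theta}\bigl\Vert K_nP_{n,\theta}-N_d(\theta,n^{-1}\Sigma_\theta)\bigr\Vert_1\to 0$; by the definition of the deficiency this is exactly (\ref{claim-def-distance-by-smoothing}). Since both $\mathcal{P}_n$ and $\mathcal{Q}_n$ are classical experiments, such a $K_n$ is an ordinary Markov kernel and no quantum machinery is needed. The kernel I would use is ``compute $\hat\theta_n$, then blur it slightly'': take $K_n$ to be the composition of the (measurable, hence Markov) map $\omega\mapsto\hat\theta_n(\omega)$ with convolution by $N_d(0,n^{-1}\varepsilon_n^2I_d)$ for a sequence $\varepsilon_n\downarrow0$ to be fixed later. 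Equivalently, on the data one forms $Y_n:=\hat\theta_n+n^{-1/2}\varepsilon_n\,\xi$ with $\xi\sim N_d(0,I_d)$ independent; then under $P_{n,\theta}$ the law of $\sqrt n(Y_n-\theta)$ equals $\mathcal{L}\bigl(\sqrt n(\hat\theta_n-\theta)\mid P_{n,\theta}\bigr)\ast N_d(0,\varepsilon_n^2I_d)$, and since $\Vert\cdot\Vert_1$ is invariant under the affine bijection $y\mapsto\sqrt n(y-\theta)$ and $K_nP_{n,\theta}=\mathcal{L}(Y_n\mid P_{n,\theta})$, it suffices to control this convolution.

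First I would prove a smoothing lemma: for each \emph{fixed} $\varepsilon>0$,
\[
\sup_{\theta\in\Theta}\Bigl\Vert\mathcal{L}\bigl(\sqrt n(\hat\theta_n-\theta)\mid P_{n,\theta}\bigr)\ast N_d(0,\varepsilon^2I_d)-N_d\bigl(0,\Sigma_\theta+\varepsilon^2I_d\bigr)\Bigr\Vert_1\longrightarrow 0 .
\]
The proof is by contradiction and compactness of $\Theta$: if the supremum stayed $\ge\delta>0$ along a subsequence, pick parameters $\theta_n$ realising it, and pass to a further subsequence with $\theta_n\to\theta^\ast$; the uniform convergence in law assumed in (\ref{assump-1}), read through Lemma \ref{Lem-unif-law-converg-charac}, gives $\mathcal{L}\bigl(\sqrt n(\hat\theta_n-\theta_n)\mid P_{n,\theta_n}\bigr)\Rightarrow N_d(0,\Sigma_{\theta^\ast})$. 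Convolving with the fixed, bounded, continuous $N_d(0,\varepsilon^2I_d)$-density turns this weak convergence into pointwise convergence of the corresponding Lebesgue densities, hence, by Scheff\'e's lemma, into convergence in $\Vert\cdot\Vert_1$; simultaneously $N_d(0,\Sigma_{\theta_n}+\varepsilon^2I_d)\to N_d(0,\Sigma_{\theta^\ast}+\varepsilon^2I_d)$ in $\Vert\cdot\Vert_1$ by continuity of $\theta\mapsto\Sigma_\theta$. The triangle inequality contradicts $\ge\delta$. A routine diagonal extraction then delivers $\varepsilon_n\downarrow 0$ for which the same supremum, with $\varepsilon$ replaced by $\varepsilon_n$, still tends to $0$.

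Second I would remove the extra variance $\varepsilon_n^2I_d$. Because $\Theta$ is compact and $\theta\mapsto\Sigma_\theta$ is continuous with $\Sigma_\theta>0$, the set $\{\Sigma_\theta:\theta\in\Theta\}$ is a compact subset of the positive-definite cone, so $cI_d\le\Sigma_\theta\le CI_d$ for some $0<c\le C<\infty$; since $(\Sigma_1,\Sigma_2)\mapsto\Vert N_d(0,\Sigma_1)-N_d(0,\Sigma_2)\Vert_1$ is continuous on that compact set squared and vanishes on the diagonal, $\sup_{\theta\in\Theta}\Vert N_d(0,\Sigma_\theta+\varepsilon_n^2I_d)-N_d(0,\Sigma_\theta)\Vert_1\to 0$. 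Combining this with the smoothing lemma via the triangle inequality, and undoing the affine rescaling as above, yields $\sup_{\theta\in\Theta}\Vert K_nP_{n,\theta}-N_d(\theta,n^{-1}\Sigma_\theta)\Vert_1\to 0$, which is the claim.

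The hard part, really the only delicate point, is the interplay of the two sources of uniformity: convergence in law carries no rate, so $\varepsilon_n$ cannot be driven to $0$ faster than the smoothing lemma permits, yet one \emph{must} send $\varepsilon_n\to0$ to forget the artificial noise. The diagonal argument is precisely what reconciles these two requirements; everything else (Scheff\'e, compactness of $\Theta$, continuity of the Gaussian family in $\Vert\cdot\Vert_1$ with respect to the covariance) is standard. A minor technicality is that $\mathcal{L}\bigl(\sqrt n(\hat\theta_n-\theta)\bigr)$ need not be absolutely continuous, but its convolution with $N_d(0,\varepsilon^2I_d)$ is, so Scheff\'e's lemma applies on the nose.
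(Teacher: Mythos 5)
Your proof is correct and follows the same overall architecture as the paper's: compose $\hat\theta_n$ with a Gaussian smoothing kernel, show uniform total-variation approximation for a fixed smoothing bandwidth, extract a sequence of bandwidths $\to0$ via a diagonal argument, remove the residual variance by continuity of $\Sigma\mapsto N_d(0,\Sigma)$ on a compact set, and undo the affine rescaling. The one genuine divergence is in how the fixed-bandwidth smoothing lemma is established. The paper works directly and quantitatively: writing $g_f(x)=Ef(x+\gamma Z)$ it shows $g_f$ is $\gamma^{-1}$-Lipschitz and deduces the explicit inequality $\|H_\gamma P-H_\gamma Q\|_{TV}\le 2\gamma^{-1}\beta(P,Q)$ in the bounded-Lipschitz metric, then invokes characterization (ii) of Lemma~\ref{Lem-unif-law-converg-charac} to get $\sup_\theta\beta\to0$. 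You instead argue qualitatively by contradiction and compactness, invoking characterization (iii) of the same lemma (sequential form), and then convert weak convergence of the smoothed laws into $L^1$ convergence of their Lebesgue densities via Scheff\'e. Both are correct; your route is somewhat more elementary and avoids introducing the $\beta$-metric machinery, while the paper's Lipschitz bound has the advantage of being a clean, reusable quantitative inequality that cleanly separates the role of the bandwidth from the convergence rate of the underlying laws. Either way the delicate part you correctly identify (the absence of a rate in (\ref{assump-1}) forcing a subordination of $\varepsilon_n$ to the smoothing lemma) is handled the same way.
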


\begin{proof}
Let $f$ be a measurable function on $\mathbb{R}^{d}$ with $\left\Vert
f\right\Vert _{\infty}\leq1$, set $X_{n,\theta}:=\sqrt{n}\left(  \hat{\theta
}_{n}-\theta\right)  $, and let $Y_{n}$ be a random vector on $\left(
\mathbb{R}^{d},\mathfrak{B}^{d}\right)  $ with $\mathcal{L}\left(
Y_{n}\right)  =N_{d}\left(  0,\Sigma_{\theta}\right)  $. Consider the
following Markov kernel: for $x\in\mathbb{R}^{d}$, $A\in\mathfrak{B}^{d}$ and
some $\gamma\in\left(  0,1\right)  $ set
\[
H_{\gamma}\left(  A,x\right)  =N_{d}\left(  x,\gamma^{2}I_{d}\right)  \left(
A\right)  .
\]
Set $P_{n,\theta}^{\prime}:=\mathcal{L}\left(  \sqrt{n}\left(  \hat{\theta
}_{n}-\theta\right)  |P_{n,\theta}\right)  $, then the law $H_{\gamma
}P_{n,\theta}^{\prime}$ can be described by
\begin{align}
H_{\gamma}P_{n,\theta}^{\prime}  &  =\int H_{\gamma}\left(  \cdot,x\right)
dP_{n,\theta}^{\prime}\left(  x\right) \nonumber\\
&  =\mathcal{L}\left(  X_{n,\theta}+\gamma Z|P_{n,\theta}\right)
\label{law-repre-2}%
\end{align}
where $Z$ is a standard normal $d$-vector independent of $X_{n,\theta}$.
Analogously we have
\begin{equation}
H_{\gamma}N_{d}\left(  0,\Sigma_{\theta}\right)  =\mathcal{L}\left(
Y_{n}+\gamma Z^{\prime}\right)  . \label{add-indep}%
\end{equation}
where $Z^{\prime}$ is a standard normal $d$-vector independent of $Y_{n}$. Now
for the total variation metric (cf. (\ref{TV-metric-def})) we have%
\begin{align}
&  \left\Vert H_{\gamma}P_{n,\theta}^{\prime}-N_{d}\left(  0,\Sigma_{\theta
}\right)  \right\Vert _{TV}\nonumber\\
&  \leq\left\Vert H_{\gamma}P_{n,\theta}^{\prime}-H_{\gamma}N_{d}\left(
0,\Sigma_{\theta}\right)  \right\Vert _{TV}+\left\Vert H_{\gamma}N_{d}\left(
0,\Sigma_{\theta}\right)  -N_{d}\left(  0,\Sigma_{\theta}\right)  \right\Vert
_{TV}. \label{TV-est-2}%
\end{align}
For the first term on the r.h.s. we have (cp. (\ref{TV-metric-and-L1}))
\begin{equation}
\left\Vert H_{\gamma}P_{n,\theta}^{\prime}-H_{\gamma}N_{d}\left(
0,\Sigma_{\theta}\right)  \right\Vert _{TV}=\frac{1}{2}\sup_{\left\Vert
f\right\Vert _{\infty}\leq1}\left\vert \int f\;dH_{\gamma}P_{n,\theta}%
^{\prime}-\int f\;dH_{\gamma}N_{d}\left(  0,\Sigma_{\theta}\right)
\right\vert . \label{TV-est-1}%
\end{equation}
Here
\[
\int f\;dH_{\gamma}P_{n,\theta}^{\prime}=\int g_{f}\left(  x\right)
P_{n,\theta}^{\prime}\left(  dx\right)
\]
where
\[
g_{f}\left(  x\right)  =\int f\left(  t\right)  \;dH_{\gamma}\left(
dt,x\right)  =Ef\left(  x+\gamma Z\right)
\]
and similarly
\[
\int f\;dH_{\gamma}N_{d}\left(  0,\Sigma_{\theta}\right)  =\int g_{f}\left(
x\right)  N_{d}\left(  0,\Sigma_{\theta}\right)  \left(  dx\right)  .
\]
We claim that $g_{f}\left(  x\right)  $ is a Lipschitz function. Indeed for
$h\in\mathbb{R}^{d}$
\[
\left\vert g_{f}\left(  x+h\right)  -g_{f}\left(  x\right)  \right\vert
=\left\vert Ef\left(  x+h+\gamma Z\right)  -Ef\left(  x+\gamma Z\right)
\right\vert
\]%
\[
\leq2\left\Vert N\left(  x+h,\gamma^{2}I_{d}\right)  -N\left(  x,\gamma
^{2}I_{d}\right)  \right\Vert _{TV}\text{ by (\ref{TV-metric-and-L1})}%
\]%
\[
\leq2H\left(  N\left(  x+h,\gamma^{2}I_{d}\right)  ,N\left(  h,\gamma^{2}%
I_{d}\right)  \right)  \text{ by (\ref{Lecam-inequ}).}%
\]
By a well known formula
\[
H^{2}\left(  N\left(  x+h,\gamma^{2}I_{d}\right)  ,N\left(  h,\gamma^{2}%
I_{d}\right)  \right)  =2\left(  1-\exp\left(  -\frac{1}{8\gamma^{2}%
}\left\Vert h\right\Vert ^{2}\right)  \right)  \leq\frac{\left\Vert
h\right\Vert ^{2}}{4\gamma^{2}}%
\]
so that
\[
\left\vert g_{f}\left(  x+h\right)  -g_{f}\left(  x\right)  \right\vert
\leq\frac{\left\Vert h\right\Vert }{\gamma}.
\]
It follows that for $\gamma\leq1$ the function $\gamma g_{f}/2$ satisfies
$\left\Vert f\right\Vert _{BL}\leq1$. By (\ref{TV-est-1})
\begin{align*}
&  \left\Vert H_{\gamma}P_{n,\theta}^{\prime}-H_{\gamma}N_{d}\left(
0,\Sigma_{\theta}\right)  \right\Vert _{TV}\\
&  \leq\sup_{\left\Vert f\right\Vert _{\infty}\leq1}\left\vert \int
g_{f}\left(  x\right)  P_{n,\theta}^{\prime}\left(  dx\right)  -\int
g_{f}\left(  x\right)  N_{d}\left(  0,\Sigma_{\theta}\right)  \left(
dx\right)  \right\vert \\
&  \leq2\gamma^{-1}\;\beta\left(  P_{n,\theta}^{\prime}\left(  dx\right)
,N_{d}\left(  0,\Sigma_{\theta}\right)  \right)  .
\end{align*}
By Lemma \ref{Lem-unif-law-converg-charac} and (\ref{assump-1}) one obtains
for every fixed $\gamma\in\left(  0,1\right)  $
\[
\sup_{\theta\in\Theta}\left\Vert H_{\gamma}P_{n,\theta}^{\prime}-H_{\gamma
}N_{d}\left(  0,\Sigma_{\theta}\right)  \right\Vert _{TV}\rightarrow0.
\]
Hence there is a sequence $\gamma_{n}\rightarrow0$ such that
\begin{equation}
\sup_{\theta\in\Theta}\left\Vert H_{\gamma_{n}}P_{n,\theta}^{\prime}%
-H_{\gamma_{n}}N_{d}\left(  0,\Sigma_{\theta}\right)  \right\Vert
_{TV}\rightarrow0. \label{TV-est-3}%
\end{equation}
Now consider the second term in (\ref{TV-est-2}) for $\gamma=\gamma_{n}$: in
view of (\ref{add-indep}) we have
\[
H_{\gamma}N_{d}\left(  0,\Sigma_{\theta}\right)  =N_{d}\left(  0,\Sigma
_{\theta}+\gamma^{2}I_{d}\right)
\]
and thus
\begin{align*}
&  \left\Vert H_{\gamma_{n}}N_{d}\left(  0,\Sigma_{\theta}\right)
-N_{d}\left(  0,\Sigma_{\theta}\right)  \right\Vert _{TV}\\
&  =\left\Vert N_{d}\left(  0,\Sigma_{\theta}+\gamma_{n}^{2}I_{d}\right)
-N_{d}\left(  0,\Sigma_{\theta}\right)  \right\Vert _{TV}.
\end{align*}
Since the map $\theta\rightarrow\Sigma_{\theta}$ is continuous and
$\Theta\subset\mathbb{R}^{d}$ is compact, the set $\left\{  \Sigma_{\theta
},\theta\in\Theta\right\}  $ is compact in Hilbert-Schmidt norm. Then
$\Sigma_{\theta}>0,\theta\in\Theta$ implies
\[
s_{1}:=\inf\left\{  \lambda_{\min}\left(  \Sigma_{\theta}\right)  :\theta
\in\Theta\right\}  >0,
\]
and by compactness we also have
\[
s_{2}:=\sup\left\{  \lambda_{\max}\left(  \Sigma_{\theta}\right)  :\theta
\in\Theta\right\}  <\infty.
\]
Then by (\ref{Lecam-inequ}), Lemma \ref{lem-covmatrices-cite-GNZpaper} and
$\gamma_{n}\rightarrow0$
\begin{align*}
&  \left\Vert N_{d}\left(  0,\Sigma_{\theta}+\gamma_{n}^{2}I_{d}\right)
-N_{d}\left(  0,\Sigma_{\theta}\right)  \right\Vert _{TV}.\\
&  \leq C\left\Vert \gamma_{n}^{2}I_{d}\right\Vert _{2}=Cd^{1/2}\gamma_{n}%
^{2}\rightarrow0
\end{align*}
since $d$ is fixed here. In conjunction with (\ref{TV-est-3}) and
(\ref{TV-est-2}) this implies
\begin{equation}
\sup_{\theta\in\Theta}\left\Vert H_{\gamma_{n}}P_{n,\theta}^{\prime}%
-N_{d}\left(  0,\Sigma_{\theta}\right)  \right\Vert _{TV}\rightarrow0.
\label{TV-est-4}%
\end{equation}
Consider now a one-to-one transformation of the sample space $\left(
\mathbb{R}^{d},\mathfrak{B}^{d}\right)  $ as $T_{\theta}\left(  x\right)
=n^{-1/2}x+\theta$. For any probability measure $P$ on $\left(  \mathbb{R}%
^{d},\mathfrak{B}^{d}\right)  $ consider the induced measure $\left(
T_{\theta}\circ P\right)  \left(  A\right)  =P\left(  T_{\theta}^{-1}\left(
A\right)  \right)  $, equivalently described by $T_{\theta}\circ
P=\mathcal{L}\left(  T_{\theta}\left(  X\right)  \right)  $ if $P=\mathcal{L}%
\left(  X\right)  $. Note the total variation distance then is invariant: for
any $P,Q$
\begin{equation}
\left\Vert P-Q\right\Vert _{TV}=\left\Vert T_{\theta}\circ P-T_{\theta}\circ
Q\right\Vert _{TV}. \label{invar}%
\end{equation}
Now $T_{\theta}\circ N_{d}\left(  0,\Sigma_{\theta}\right)  =N_{d}\left(
\theta,n^{-1}\Sigma_{\theta}\right)  $ and by (\ref{law-repre-2})
\[
T_{\theta}\circ H_{\gamma_{n}}P_{n,\theta}^{\prime}=T_{\theta}\circ
\mathcal{L}\left(  \sqrt{n}\left(  \theta_{n}-\theta\right)  +\gamma
_{n}Z|P_{n,\theta}\right)
\]
where $Z$ is a standard normal vector, independent of $\theta_{n}$. Thus
\begin{align*}
T_{\theta}\circ H_{\gamma_{n}}P_{n,\theta}^{\prime}  &  =\mathcal{L}\left(
T_{\theta}\left(  \sqrt{n}\left(  \hat{\theta}_{n}-\theta\right)  +\gamma
_{n}Z\right)  |P_{n,\theta}\right) \\
&  =\mathcal{L}\left(  \hat{\theta}_{n}+n^{-1/2}\gamma_{n}Z|P_{n,\theta
}\right)
\end{align*}
so that from (\ref{TV-est-4}) and (\ref{invar}) we obtain
\[
\sup_{\theta\in\Theta}\left\Vert \mathcal{L}\left(  \hat{\theta}_{n}%
+n^{-1/2}\gamma_{n}Z|P_{n,\theta}\right)  -N_{d}\left(  \theta,n^{-1}%
\Sigma_{\theta}\right)  \right\Vert _{TV}\rightarrow0.
\]
The transition from $P_{n,\theta}$ to $\mathcal{L}\left(  \hat{\theta}%
_{n}+n^{-1/2}\gamma_{n}Z|P_{n,\theta}\right)  $ represents a Markov kernel
operation, so that the claim (\ref{claim-def-distance-by-smoothing}) follows.
\end{proof}

We note that the assumption that the experiments $\mathcal{P}_{n}$ be
dominated is used only to fit the quantum version
(\ref{deficiency-quantum-def}) of the deficiency $\delta\left(  \mathcal{P}%
_{n},\mathcal{Q}_{n}\right)  $ which covers classical experiment only if these
are dominated (cf. Section \ref{subsubsec-qu-statist-experiments}, last paragraph)

\subsection{Le Cam's globalization method}

The "heteroskedastic normal experiment" (\ref{heteroskedastic-normal})
resulting from Theorem \ref{Theor-deficiency-from-law-converg} arises as a
global approximation, roughly speaking, in regular parametric models with
asymptotic normalized information matrix $\Sigma_{\theta}^{-1}$; cf.
\cite{MR0395005} and discussions in \cite{MR840521}, \cite{MR1425959}. We will
utilize this result as a tool in our quest for lower information bounds for
the quantum time series. Below we cite Le Cam's original result and then give
an application in our context.

For an experiment $\mathcal{P}=\left(  X,\Omega,P_{\theta},\theta\in
\Theta\right)  $ and a $S\subset\Theta$ we denote the "localized" experiment
by $\mathcal{P}_{S}:=$ $\left(  X,\Omega,P_{\theta},\theta\in S\right)  $. We
will frequently omit the sample spaces from notation, with the understanding
that they may be different for different experiments. All experiments are
assumed to be dominated by sigma-finite measures on their respective sample spaces.

\begin{proposition}
\label{Prop-Lecam-local}(Theorem 1 in \cite{MR0395005}) Let $\mathcal{P}%
=\left(  P_{\theta},\theta\in\Theta\right)  $ and $\mathcal{Q}=\left(
Q_{\theta},\theta\in\Theta\right)  $ be two dominated experiments indexed by
the set $\Theta$. Assume that $\Theta$ is metrized by $W$, that $0\leq a<b$
are given. Assume also that \newline(i) any subset of diameter $4b+2a$ of
$\Theta$ can be covered by no more than $C$ sets of diameter $b$,\newline(ii)
if $S\subset\Theta$ has a diameter $3b$ then the deficiency $\delta\left(
\mathcal{P}_{S},\mathcal{Q}_{S}\right)  $ does not exceed $\varepsilon_{1}%
,$\newline(iii) there is an estimator $\hat{\theta}_{n}$ available on
$\mathcal{P}$ such that $P_{\theta}\left(  W\left(  \hat{\theta}_{n}%
,\theta\right)  >a\right)  \leq\varepsilon_{2}$ for all $\theta\in\Theta
.$\newline Then
\[
\delta\left(  \mathcal{P},\mathcal{Q}\right)  \leq\varepsilon_{1}%
+\varepsilon_{2}+\frac{1}{2}\frac{a}{b}C.
\]

\end{proposition}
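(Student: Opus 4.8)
The plan is to build, for an arbitrary target accuracy, a single Markov kernel carrying $\mathcal{P}$ into a law close to that of $\mathcal{Q}$, assembled in three stages whose errors match the three hypotheses: first run the preliminary estimator $\hat{\theta}_{n}$ on the data; then use its value to single out a small region of $\Theta$ together with the near-optimal local randomization supplied by (ii); then apply that randomization. In this picture $\varepsilon_{1}$ comes from the local deficiency bound (ii), $\varepsilon_{2}$ from the failure probability of $\hat{\theta}_{n}$ in (iii), and the term $\tfrac12\tfrac{a}{b}C$ from the chance that the data-driven choice of region is ``off by a cell boundary'', which is exactly where the covering hypothesis (i) enters.

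First I would fix a countable measurable partition $\{B_{k}\}$ of $\Theta$ with $\mathrm{diam}(B_{k})\le b$, and for each $k$ the enlarged set $S_{k}:=\{\theta\in\Theta:\;W(\theta,B_{k})\le a\}$. If $t\in B_{k}$ and $W(t,\theta)\le a$ then $\theta\in S_{k}$, and $\mathrm{diam}(S_{k})\le b+2a\le 3b$ in the range where the asserted bound is non-vacuous, so hypothesis (ii) applied to $S=S_{k}$ yields a channel $K_{k}$ with $\sup_{\theta\in S_{k}}\lVert Q_{\theta}-K_{k}P_{\theta}\rVert_{1}\le\varepsilon_{1}$. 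The composite kernel $K$ then acts as follows: on data $\omega$ compute $t=\hat{\theta}_{n}(\omega)$, locate the unique index $k(t)$ with $t\in B_{k(t)}$, and output a draw from $K_{k(t)}(P)$. Since this prescription depends on $\omega$ only, not on $\theta$, it is an admissible randomization; measurability of the glued family $\{K_{k}\}$ must be arranged but is routine. To estimate $\lVert Q_{\theta}-KP_{\theta}\rVert_{1}$ I would split over the event $E=\{W(\hat{\theta}_{n},\theta)\le a\}$: on $E$ one has $\theta\in S_{k(\hat{\theta}_{n})}$, so the conditional contribution is at most $\varepsilon_{1}$, while $E^{c}$ has $P_{\theta}$-probability at most $\varepsilon_{2}$ by (iii) and contributes the trivial bound there.

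The step I expect to be the main obstacle is getting the sharp constant $\tfrac12\tfrac{a}{b}C$ rather than a crude multiple of $\varepsilon_{2}$ and of the covering number. For this, the single partition $\{B_{k}\}$ should be replaced by a family $\{B_{k}^{u}\}$ indexed by a random shift $u$ of the cell pattern, and the composite kernel averaged over $u$; the point is to choose the shift distribution so that, for every fixed $\theta$, the probability over $u$ that $\theta$ lies within distance $a$ of the boundary of its own cell is at most $\tfrac12\tfrac{a}{b}C$, the factor $C$ being precisely the local covering multiplicity guaranteed by (i) (which also bounds the number of neighbouring cells into which $\hat{\theta}_{n}$ could stray, since $W(\hat{\theta}_{n},\theta)\le a$, and ensures the relevant neighbourhoods have diameter at most $4b+2a$). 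On the complement of the estimator-failure event and of this boundary-straddle event, $\theta$ and $\hat{\theta}_{n}$ fall in the same cell and $\theta$ lies in the region where the corresponding local channel is $\varepsilon_{1}$-accurate, so a triangle inequality together with the trivial bound on the two exceptional events, in the chosen normalization of $\lVert\cdot\rVert_{1}$, delivers $\delta(\mathcal{P},\mathcal{Q})\le\varepsilon_{1}+\varepsilon_{2}+\tfrac12\tfrac{a}{b}C$. Verifying that such shifted partitions exist with the stated boundary probability uniformly in $\theta\in\Theta$, using only the abstract covering property (i) of the metric space $(\Theta,W)$, is the technical heart of the argument (this is Le Cam's construction, Theorem~1 in \cite{MR0395005}).
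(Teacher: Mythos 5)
The paper does not actually prove this statement; it quotes Le Cam's Theorem~1 from \cite{MR0395005} as a black box, so there is no in-paper proof to compare against. Your proposal captures the right high-level architecture (preliminary estimate, local covering, local near-optimal channels supplied by (ii), split over the success event of the estimator), and you correctly sense that the sharp constant $\tfrac12\tfrac{a}{b}C$ is the delicate part. But there is a genuine gap already in the first-pass argument, not only in the constant.

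The problematic step is the claim that ``on $E$ one has $\theta\in S_{k(\hat\theta_n)}$, so the conditional contribution is at most $\varepsilon_1$''. Condition (ii) gives $\bigl\lVert Q_\theta - K_j P_\theta\bigr\rVert_1\le\varepsilon_1$ for a \emph{fixed} index $j$ with $\theta\in S_j$, where the channel $K_j$ is integrated against the \emph{unconditional} law $P_\theta$. In your composite kernel the index $j=k(\hat\theta_n(\omega))$ is a function of the same data point $\omega$ that the channel acts on, so $KP_\theta$ is not of the form $K_{j^*}P_\theta$ for any fixed $j^*$; conditioning on $\{\hat\theta_n\in B_j\}$ also distorts the input law, and the local bound does not apply to the restricted measure. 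Writing things out, what one actually obtains for a fixed $j^*$ with $\theta\in S_{j^*}$ is $\lVert Q_\theta-KP_\theta\rVert_1\le\varepsilon_1+2\,P_\theta\bigl(k(\hat\theta_n)\neq j^*\bigr)$, and when $\theta$ is near a cell boundary the second term is not controlled by $\varepsilon_2$ plus a small geometric term for \emph{any} choice of $j^*$ — this is not merely a constant issue, it is where the argument breaks. Your suggested remedy (averaging over a random shift of the partition) is the right kind of idea, but as stated it is a hope rather than a proof: you would need to exhibit, from the abstract covering hypothesis (i) alone (no linear or translation structure on $(\Theta,W)$ is assumed), a randomized assignment of $\hat\theta_n$-values to local channels whose ``mismatch'' probability is bounded by $\tfrac12\tfrac{a}{b}C$ uniformly in $\theta$, and also explain why the $\varepsilon_2$ contribution comes in with coefficient $1$ rather than $2$. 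Those two points are precisely what Le Cam's construction supplies and what your sketch leaves open.
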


\bigskip

The coverage condition on $\Theta$ is well known to be related to the
dimension of $\Theta$. A set $S\subset\Theta$ has diameter $b$ if
$b=\sup_{s,t\in S}W\left(  s,t\right)  $. Since $a<b$, a stronger condition
than (i) above is: any subset of diameter $6b$ of $\Theta$ can be covered by
no more than $C$ sets of diameter $b$. If $\Theta\subset\mathbb{R}^{d}$, a
crude bound for $C$ can be given as follows.\footnote{In the paper, this lemma
will have to be replaced by a reference.}

\begin{lemma}
\label{lem-cover}Assume $\Theta\subset\mathbb{R}^{d}$ and $W\left(  \theta
_{1},\theta_{2}\right)  $ is euclidean distance. Then $C$ can be chosen as
$\left(  12d\right)  ^{d}$.
\end{lemma}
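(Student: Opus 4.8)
The plan is to prove this by a completely elementary grid covering argument in $\mathbb{R}^d$. First I would pin down which sets need to be covered: the relevant hypothesis is the strengthened version of condition (i) of Proposition \ref{Prop-Lecam-local} stated just above the lemma, namely that every subset of $\Theta$ of diameter at most $6b$ can be covered by at most $C$ sets of diameter $b$ (this strengthening suffices since $a<b$ gives $4b+2a<6b$). So let $S\subseteq\Theta$ be arbitrary with $\mathrm{diam}(S)\le 6b$. Picking any point $\theta_0\in S$, every $\theta\in S$ satisfies $W(\theta,\theta_0)=\|\theta-\theta_0\|\le 6b$, hence $S$ is contained in the closed axis-parallel cube $Q$ centered at $\theta_0$ with side length $12b$, i.e. $Q=\prod_{i=1}^d[\theta_{0,i}-6b,\theta_{0,i}+6b]$.

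Next I would subdivide $Q$ into $(12d)^d$ congruent closed subcubes, each of side length $12b/(12d)=b/d$. Since $d\ge 1$ we have $12d\ge 12\sqrt d$, so each subcube has Euclidean diameter $(b/d)\sqrt d=b/\sqrt d\le b$. These $(12d)^d$ subcubes cover $Q\supseteq S$; intersecting each one with $\Theta$ does not increase its diameter, so we obtain at most $(12d)^d$ subsets of $\Theta$ of diameter $\le b$ whose union contains $S$. Taking $C=(12d)^d$ then verifies the covering hypothesis for all subsets of diameter at most $6b$, which finishes the proof.

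There is no real obstacle here; the argument is routine. The only points requiring a moment's care are the reduction from the "$4b+2a$" in Proposition \ref{Prop-Lecam-local}(i) to the "$6b$" diameter handled above (immediate from $a<b$), the elementary inequality $12d\ge 12\sqrt d$ for $d\ge 1$ which is what makes the chosen side length $b/d$ produce subcubes of diameter $\le b$, and the remark that the covering sets may (if desired) be intersected with $\Theta$ without loss. I would present it as a short direct proof rather than invoking a packing/covering-number estimate, both to keep it self-contained and because the stated constant $(12d)^d$ is deliberately crude.
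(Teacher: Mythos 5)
Your proof is correct and uses essentially the same grid-covering argument as the paper's: enclose the set of diameter $\leq 6b$ in an axis-parallel cube of side $12b$ and subdivide it into $(12d)^{d}$ subcubes of side $b/d$, each of Euclidean diameter $b/\sqrt{d}\leq b$. The only presentational difference is that the paper performs the subdivision in two stages ($12^{d}$ cubes of side $b$, then each into $d^{d}$ cubes of side $b/d$) while you do it in one step; this is immaterial.
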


\begin{proof}
Assume $S\subset\Theta$ has diameter $6b$. Then it is contained in a ball of
radius $6b$. This ball is contained in a square of side length $12b$. The
square can be partitioned into $12^{d}$ squares of side length $b$. Each of
these squares has radius $\sqrt{d}b$. Each of these squares can be further
partitioned into $d^{d}$ smaller squares with side length $b/d$, such that the
diameter of these squares is $\sqrt{d}b/d=b/\sqrt{d}\leq b$. Then $S$ can be
covered by the totality of these smaller squares, i.e. by $\left(  12d\right)
^{d}$ sets of diameter $b$.
\end{proof}

\bigskip

We will apply Proposition \ref{Prop-Lecam-local} when $\mathcal{P}$ is an
element of the sequence $\mathcal{P}_{n}=\left(  N_{d}\left(  \theta
,n^{-1}\Sigma_{\theta}\right)  ,\theta\in\Theta\right)  $ and $\Theta$ is a
subset of $\mathbb{R}^{d}$. The claim of Lemma \ref{lem-cover} remains valid
if the euclidean metric $\left\Vert \theta_{1}-\theta_{2}\right\Vert $ is
replaced by $c\left\Vert \theta_{1}-\theta_{2}\right\Vert $ for any $c>0;$ in
particular for $W\left(  \theta_{1},\theta_{2}\right)  =\sqrt{n}\left\Vert
\theta_{1}-\theta_{2}\right\Vert $. Consider some other sequence of dominated
experiments $\mathcal{Q}_{n}=\left(  Q_{n,\theta},\theta\in\Theta\right)  $
and consider localized versions: for $\theta_{0}\in\Theta$ and $r>0$ set%
\begin{equation}
S_{n}\left(  \theta_{0},r\right)  =\left\{  \theta\in\mathbb{R}^{d}%
:n^{1/2}\left\Vert \theta-\theta_{0}\right\Vert \leq r\right\}  ,
\label{parametric-shrinking-neighborhood-def}%
\end{equation}%
\begin{align}
\mathcal{P}_{n}\left(  \theta_{0},r\right)   &  :=\left(  N_{d}\left(
\theta,n^{-1}\Sigma_{\theta}\right)  ,\;,\theta\in\Theta\cap S_{n}\left(
\theta_{0},r\right)  \right)  ,\label{local-version-1}\\
\mathcal{Q}_{n}\left(  \theta_{0},r\right)   &  :=\left(  Q_{n,\theta
},\;\theta\in\Theta\cap S_{n}\left(  \theta_{0},r\right)  \right)  .
\label{local-version-2}%
\end{align}
%

\begin{privatenotes}
\begin{boxedminipage}{\textwidth}%

\begin{sfblock}
Bring this notation in line with the notation conventions for eperiments we
have developed below, using generic $P_{n}\left(  \theta_{0},S\right)  $ for a
set $S$ etc.
\end{sfblock}

%

\end{boxedminipage}
\end{privatenotes}%
.

\begin{lemma}
\label{lem-local-to-global}Assume that the sequence $\mathcal{P}_{n}$
fulfills
\begin{equation}
s_{2}:=\sup_{\theta\in\Theta}\lambda_{\max}\left(  \Sigma_{\theta}\right)
<\infty\label{cond-lambdamax-bounded}%
\end{equation}
and for every $r>0$
\begin{equation}
\sup_{\theta_{0}\in\Theta}\delta\left(  \mathcal{P}_{n}\left(  \theta
_{0},r\right)  ,\mathcal{Q}_{n}\left(  \theta_{0},r\right)  \right)
\rightarrow0. \label{local-experi-delta}%
\end{equation}
Then
\[
\delta\left(  \mathcal{P}_{n},\mathcal{Q}_{n}\right)  \rightarrow0.
\]

\end{lemma}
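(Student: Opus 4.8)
The plan is to apply Le Cam's localization result, Proposition \ref{Prop-Lecam-local}, to the sequences $\mathcal{P}_n$ and $\mathcal{Q}_n$, with $\Theta$ metrized by the rescaled Euclidean metric $W_n(\theta_1,\theta_2)=n^{1/2}\|\theta_1-\theta_2\|$, which is exactly the metric whose balls are the sets $S_n(\theta_0,r)$ appearing in hypothesis (\ref{local-experi-delta}). Fix $\varepsilon>0$; it suffices to find $N$ with $\delta(\mathcal{P}_n,\mathcal{Q}_n)\le\varepsilon$ for all $n\ge N$, and I will force each of the three terms in the bound $\varepsilon_1+\varepsilon_2+\tfrac12(a/b)C$ of Proposition \ref{Prop-Lecam-local} below $\varepsilon/3$. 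The covering constant is settled once and for all: since $W_n$ is a fixed positive multiple of the Euclidean metric, Lemma \ref{lem-cover} applies and gives $C=(12d)^d$, independent of $n$ and — this is the point that makes the argument work — independent of the scale $b$. Because $a<b$ we have $4b+2a<6b$, so the diameter-$6b$ covering bound of Lemma \ref{lem-cover} is a fortiori more than enough for hypothesis (i).

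First I would fix the ratio $\rho:=a/b\in(0,1)$ small enough that $\tfrac12\rho\,C<\varepsilon/3$, i.e. $\rho<2\varepsilon/(3C)$. Next, using $\mathrm{Tr}\,\Sigma_\theta\le d\,\lambda_{\max}(\Sigma_\theta)\le d s_2$ from (\ref{cond-lambdamax-bounded}), I choose $a$ large enough that $d s_2/a^2<\varepsilon/3$ and set $b:=a/\rho$ (so $0\le a<b$ holds). As the estimator required in hypothesis (iii) of Proposition \ref{Prop-Lecam-local} I would use the observation itself on $\mathcal{P}_n$ (one may further compose it with the metric projection onto a fixed compact superset of $\Theta$ if a $\Theta$-valued rule is wanted, without enlarging the error). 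Then under $N_d(\theta,n^{-1}\Sigma_\theta)$ one has $n^{1/2}(\hat\theta_n-\theta)\sim N_d(0,\Sigma_\theta)$, and by Markov's inequality
\[
P_{n,\theta}\bigl(W_n(\hat\theta_n,\theta)>a\bigr)\le\frac{\mathrm{Tr}\,\Sigma_\theta}{a^2}\le\frac{d s_2}{a^2}<\frac{\varepsilon}{3}=:\varepsilon_2,
\]
uniformly in $\theta\in\Theta$. This fixes $a,b,\rho,C$ as absolute constants.

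It remains to verify hypothesis (ii) for large $n$, with $\varepsilon_1:=\varepsilon/3$. Any $S\subset\Theta$ of $W_n$-diameter at most $3b$ is contained, choosing a base point $\theta_0\in S$, in $\Theta\cap S_n(\theta_0,3b)$. Since the deficiency is monotone under shrinking the parameter set — for any fixed channel the supremum over $\theta$ can only decrease — we get
\[
\delta(\mathcal{P}_{n,S},\mathcal{Q}_{n,S})\le\delta\bigl(\mathcal{P}_n(\theta_0,3b),\mathcal{Q}_n(\theta_0,3b)\bigr)\le\sup_{\theta_0\in\Theta}\delta\bigl(\mathcal{P}_n(\theta_0,3b),\mathcal{Q}_n(\theta_0,3b)\bigr).
\]
By hypothesis (\ref{local-experi-delta}) applied with the now-frozen radius $r=3b$, the right-hand side tends to $0$; pick $N$ so that it is $<\varepsilon/3$ for $n\ge N$. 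Then all three hypotheses of Proposition \ref{Prop-Lecam-local} hold for $n\ge N$, giving $\delta(\mathcal{P}_n,\mathcal{Q}_n)\le\varepsilon_1+\varepsilon_2+\tfrac12\rho\,C<\varepsilon$. As $\varepsilon$ was arbitrary, $\delta(\mathcal{P}_n,\mathcal{Q}_n)\to0$.

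The only genuinely delicate point is the order of quantifiers: one must use that $C$ does not depend on $b$, which allows fixing $a/b$ first (to control $\tfrac12(a/b)C$), then enlarging $a$ and $b$ together (to control $\varepsilon_2$ via the trace bound), and only afterwards sending $n\to\infty$ in (\ref{local-experi-delta}) with the frozen radius $3b$. Everything else — the Gaussian Markov estimate, the bound $\mathrm{Tr}\,\Sigma_\theta\le d s_2$, and the monotonicity of $\delta$ in the parameter set — is routine.
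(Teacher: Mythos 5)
Your argument is correct and rests on the same pillars as the paper's proof of this lemma: Proposition \ref{Prop-Lecam-local}, the scale-free covering constant $C=(12d)^d$ from Lemma \ref{lem-cover} (valid for $W_n=n^{1/2}\|\cdot\|$ because the constant does not see the scale), and the identity observation on $\mathcal{P}_n$ as a uniformly $\sqrt{n}$-consistent estimator. The difference is in the quantifier structure. The paper first diagonalizes: from (\ref{local-experi-delta}) it extracts a sequence $r_n\to\infty$ along which $\sup_{\theta_0}\delta(\mathcal{P}_n(\theta_0,r_n),\mathcal{Q}_n(\theta_0,r_n))\to 0$, then sets $b_n=2r_n/3$ and $a_n=\sqrt{b_n}$, both tending to infinity, so that all three error terms in Le Cam's bound vanish simultaneously via a chain $n_1\le n_2\le n_3$. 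You instead freeze all constants once $\varepsilon$ is given — the ratio $a/b$ first, then $a$ via the Markov bound $P_{n,\theta}(W_n(\hat\theta_n,\theta)>a)\le\mathrm{Tr}\,\Sigma_\theta/a^2\le ds_2/a^2$ (the paper uses the equivalent Gaussian-tail formulation with $\lambda_{\max}$), then $b$ — and invoke (\ref{local-experi-delta}) only at the end with the frozen radius $3b$, together with the monotonicity of $\delta$ under shrinking the parameter set. This avoids the diagonalization and makes explicit the observation, only implicit in the paper's choice $a_n=\sqrt{b_n}$, that because $C$ does not depend on $b$ the ratio $a/b$ may be chosen independently of the eventual size of $b$. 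Both are valid; yours is arguably the cleaner bookkeeping.
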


\begin{proof}
First we show that in $\mathcal{P}_{n}$ an estimator $\hat{\theta}_{n}$ is
available such that for $P_{n,\theta}=N_{d}\left(  \theta,n^{-1}\Sigma
_{\theta}\right)  $
\begin{equation}
\sup_{\theta\in\Theta}P_{n,\theta}\left(  n^{1/2}\left\Vert \hat{\theta}%
_{n}-\theta\right\Vert >a\right)  \rightarrow0\text{ as }a\rightarrow
\infty\label{unif-root-n-consist}%
\end{equation}
($\hat{\theta}_{n}$ is uniformly $\sqrt{n}$-consistent). Indeed let
$\hat{\theta}_{n}$ be the identity map on $\left(  \mathbb{R}^{d}%
,\mathfrak{B}^{d}\right)  $, i.e. a random $d$-vector such that $\mathcal{L}%
\left(  \hat{\theta}_{n}|P_{n,\theta}\right)  =N_{d}\left(  \theta
,n^{-1}\Sigma_{\theta}\right)  $. Then
\[
\mathcal{L}\left(  n^{1/2}\left(  \hat{\theta}_{n}-\theta\right)
|P_{n,\theta}\right)  =N_{d}\left(  0,\Sigma_{\theta}\right)  ,
\]
hence for a standard normal $d$-vector $Z$
\[
\sup_{\theta\in\Theta}P_{n,\theta}\left(  n^{1/2}\left\Vert \hat{\theta}%
_{n}-\theta\right\Vert >a\right)  =\sup_{\theta\in\Theta}P\left(  \left\Vert
\Sigma_{\theta}^{1/2}Z\right\Vert >a\right)
\]%
\[
\leq\sup_{\theta\in\Theta}P\left(  \lambda_{\max}^{1/2}\left(  \Sigma_{\theta
}\right)  \left\Vert Z\right\Vert >a\right)  \leq P\left(  s_{2}%
^{1/2}\left\Vert Z\right\Vert >a\right)  \rightarrow0\text{ as }%
a\rightarrow\infty
\]
so (\ref{unif-root-n-consist}) is shown. Now (\ref{local-experi-delta})
implies that there is a sequence $r_{n}\rightarrow\infty$ such that
\[
\sup_{\theta_{0}\in\Theta}\delta\left(  \mathcal{P}_{n}\left(  \theta
_{0},r_{n}\right)  ,\mathcal{Q}_{n}\left(  \theta_{0},r_{n}\right)  \right)
\rightarrow0.
\]
Let $\varepsilon>0$ and choose $n_{1}$ such that for $n\geq n_{1}$
\[
\sup_{\theta_{0}\in\Theta}\delta\left(  \mathcal{P}_{n}\left(  \theta
_{0},r_{n}\right)  ,\mathcal{P}_{n}\left(  \theta_{0},r_{n}\right)  \right)
\leq\varepsilon/3.
\]
Set $b_{n}=2r_{n}/3$; then the diameter of $S_{n}\left(  \theta_{0}%
,r_{n}\right)  $ is $3b_{n}$. Then choose $n_{2}\geq n_{1}$ such that for
$n\geq n_{2}$ and $a_{n}=$ $\sqrt{b_{n}}$%
\[
\sup_{\theta\in\Theta}P_{n,\theta}\left(  n^{1/2}\left\Vert \hat{\theta}%
_{n}-\theta\right\Vert >a_{n}\right)  \leq\varepsilon/3.
\]
Finally choose $n_{3}\geq n_{2}$ such that for $n\geq n_{3}$ and the constant
$C$ described in Lemma \ref{lem-cover}%
\[
\frac{1}{2}\frac{a_{n}}{b_{n}}C=\frac{C}{2\sqrt{b_{n}}}\leq\varepsilon/3.
\]
By Proposition \ref{Prop-Lecam-local}, for $n\geq n_{3}$ we then have
$\delta\left(  \mathcal{P}_{n},\mathcal{Q}_{n}\right)  \leq\varepsilon.$
\end{proof}

%

\begin{privatenotes}
\begin{boxedminipage}{\textwidth}%

\begin{sfblock}
Explain why we impose domination in Le Cam's result, even though it is not
required in \cite{MR0395005}: in order that $\delta$ can be given in terms of
quantum channels, see Appendix, Subsection "classical channels". In our
definition of $\delta$ on p. 2, we also imposed Polish experiments, so
according to Proposition 9.2 in our N96 paper, all transitions are given by
Markov kernels. Maybe impose it also here? And maybe we should add something
about $\delta$ in the appendix "classical channels".
\end{sfblock}

%

\end{boxedminipage}
\end{privatenotes}%
.

\subsection{Proof of the lower informativity
bound\label{subsec-proof-lower-info-bound}}

Consider again the set $\Theta_{2}^{\prime}=\Theta_{2}^{\prime}\left(
M,d\right)  $ given by (\ref{parametric-model}) and let $\theta_{0}\in
\Theta_{2}^{\prime}$ be a fixed parameter point therein. Recall that the
distribution $Q_{n,2}\left(  a,a_{0}\right)  $ was described by
(\ref{pre-variance-stable-SDE}); with a slight abuse of notation, we write
$Q_{n,2}\left(  \theta,\theta_{0}\right)  $ for this distribution when
$a=a_{\theta}$ and $a_{0}=a_{\theta_{0}}$, so that $Q_{n,2}\left(
\theta,\theta_{0}\right)  $ is described by
\begin{equation}
dY_{\omega}=a_{\theta}\left(  \omega\right)  d\omega+\left(  2\pi/n\right)
^{1/2}\left(  a_{\theta_{0}}^{2}-1\right)  ^{1/2}dW_{\omega}\text{, }\omega
\in\left[  -\pi,\pi\right]  . \label{gwn-local-1}%
\end{equation}
In a similar way, for a subset $S\subset\mathbb{R}^{2d+1}$ we now write
$\mathcal{G}_{n,2}\left(  \theta_{0},S\right)  $ for $\mathcal{G}_{n,2}\left(
a_{0},\Theta\right)  $ if $a_{0}=a_{\theta_{0}}$ and $\Theta=\left\{
a_{\theta},\theta\in S\right\}  $, so that
\begin{align}
\mathcal{G}_{n,2}\left(  \theta_{0},S\right)   &  :=\left(  Q_{n,2}\left(
\theta,\theta_{0}\right)  ,\;\theta\in S\right)  ,\label{gwn-local-1-experi}\\
\mathcal{G}_{n,3}\left(  \theta_{0},S\right)   &  :=\left(  N_{2d+1}\left(
\theta,n^{-1}\Phi_{\theta_{0}}^{-1}\right)  ,\theta\in S\right)
\label{normal-local-experi}%
\end{align}
with $\Phi_{\theta_{0}}$ given by (\ref{parametric-fisher-info-def}).

\begin{lemma}
\label{lem-exact-equiv-GWN}For any $S\subset\mathbb{R}^{2d+1}$, $\theta_{0}%
\in\Theta_{2}^{\prime}$ and each $n$, we have
\[
\Delta\left(  \mathcal{G}_{n,2}\left(  \theta_{0},S\right)  ,\mathcal{G}%
_{n,3}\left(  \theta_{0},S\right)  \right)  =0.
\]

\end{lemma}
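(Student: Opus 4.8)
Here is my plan for establishing $\Delta\left(\mathcal{G}_{n,2}\left(\theta_{0},S\right),\mathcal{G}_{n,3}\left(\theta_{0},S\right)\right)=0$. The idea is to exhibit exact Markov kernels in both directions, using that in $\mathcal{G}_{n,2}\left(\theta_{0},S\right)$ the drift $a_{\theta}=\sum_{\left\vert j\right\vert \leq d}\theta_{j}\psi_{j}$ ranges over a fixed $\left(2d+1\right)$-dimensional subspace of functions while the noise weight depends only on the fixed reference point $\theta_{0}$. Write $\sigma_{0}\left(\omega\right):=\left(a_{\theta_{0}}^{2}\left(\omega\right)-1\right)^{1/2}$; since $\theta_{0}\in\Theta_{2}^{\prime}$ one has $a_{\theta_{0}}\left(\omega\right)\geq 1+M^{-1}$ and $a_{\theta_{0}}$ bounded (cf. (\ref{bounds-on-athetasquared})), so $\sigma_{0}$ and $\sigma_{0}^{-1}$ are bounded positive functions on $\left[-\pi,\pi\right]$. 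First I would pass from $Y$ to $\tilde{Y}$ with $d\tilde{Y}_{\omega}=\sigma_{0}^{-1}\left(\omega\right)dY_{\omega}$; this is a deterministic, pathwise invertible transformation of $\left(C_{\left[-\pi,\pi\right]},\mathcal{B}\left(C_{\left[-\pi,\pi\right]}\right)\right)$ (hence an exact equivalence), and it turns (\ref{gwn-local-1}) into the standard white noise model $d\tilde{Y}_{\omega}=g_{\theta}\left(\omega\right)d\omega+\left(2\pi/n\right)^{1/2}dW_{\omega}$ with drift $g_{\theta}=\sigma_{0}^{-1}a_{\theta}=\sum_{\left\vert j\right\vert \leq d}\theta_{j}U_{j}$, where $U_{j}:=\sigma_{0}^{-1}\psi_{j}$. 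The Gram matrix $\left(\frac{1}{2\pi}\int_{-\pi}^{\pi}U_{j}U_{k}\,d\omega\right)_{\left\vert j\right\vert ,\left\vert k\right\vert \leq d}$ equals $\Phi_{\theta_{0}}$, which is invertible by Lemma \ref{Lem-covmatrix-eigenvalues}; hence the $U_{j}$ form a basis of a $\left(2d+1\right)$-dimensional subspace $V\subset L^{2}\left(-\pi,\pi\right)$ and $g_{\theta}\in V$ for every $\theta$.

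For $\delta\left(\mathcal{G}_{n,2}\left(\theta_{0},S\right),\mathcal{G}_{n,3}\left(\theta_{0},S\right)\right)=0$ I would use the statistic $\xi=\left(\xi_{j}\right)_{\left\vert j\right\vert \leq d}$ with $\xi_{j}:=\int_{-\pi}^{\pi}\sigma_{0}^{-2}\left(\omega\right)\psi_{j}\left(\omega\right)\,dY_{\omega}=\int_{-\pi}^{\pi}U_{j}\,d\tilde{Y}$, a measurable functional of the observed path. A direct computation gives that under $Q_{n,2}\left(\theta,\theta_{0}\right)$ the vector $\xi$ is Gaussian with $E_{\theta}\xi=2\pi\Phi_{\theta_{0}}\theta$ and $\mathrm{Cov}_{\theta}\left(\xi\right)=\left(4\pi^{2}/n\right)\Phi_{\theta_{0}}$, the covariance being independent of $\theta$. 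Hence $\hat{\theta}_{n}:=\left(2\pi\right)^{-1}\Phi_{\theta_{0}}^{-1}\xi$ has law exactly $N_{2d+1}\left(\theta,n^{-1}\Phi_{\theta_{0}}^{-1}\right)$, so the deterministic (hence Markov) kernel $Y\mapsto\hat{\theta}_{n}$ maps $Q_{n,2}\left(\theta,\theta_{0}\right)$ onto $N_{2d+1}\left(\theta,n^{-1}\Phi_{\theta_{0}}^{-1}\right)$ simultaneously for all $\theta\in S$, giving deficiency $0$.

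For the reverse deficiency I would show the white noise path carries no more information than $\hat{\theta}_{n}$. Fix an orthonormal basis $\left\{e_{\ell}\right\}_{\ell\geq 1}$ of $L^{2}\left(-\pi,\pi\right)$ (with the normalized inner product) whose first $2d+1$ elements span $V$. The coordinates $\eta_{\ell}:=\int e_{\ell}\,d\tilde{Y}$ are jointly Gaussian; $\left(\eta_{1},\dots,\eta_{2d+1}\right)$ is a linear bijective image of $\xi$, while $\left(\eta_{\ell}\right)_{\ell>2d+1}$ are i.i.d.\ $N\left(0,2\pi/n\right)$, independent of $\xi$ and with law free of $\theta$ because $g_{\theta}\perp e_{\ell}$ for $\ell>2d+1$. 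A reconstruction kernel then proceeds as follows: given $w\sim N_{2d+1}\left(\theta,n^{-1}\Phi_{\theta_{0}}^{-1}\right)$, set $\xi:=2\pi\Phi_{\theta_{0}}w$, recover $\left(\eta_{1},\dots,\eta_{2d+1}\right)$ by the inverse linear map, draw a fresh i.i.d.\ $N\left(0,2\pi/n\right)$ tail $\left(\eta_{\ell}\right)_{\ell>2d+1}$, reassemble $\tilde{Y}$ from its coordinates, and undo the rescaling to obtain $Y$. Each step is a Markov kernel and the resulting law is exactly $Q_{n,2}\left(\theta,\theta_{0}\right)$ for every $\theta\in S$, so this deficiency vanishes as well; combined with the previous paragraph, $\Delta\left(\mathcal{G}_{n,2}\left(\theta_{0},S\right),\mathcal{G}_{n,3}\left(\theta_{0},S\right)\right)=0$.

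The routine but slightly delicate point will be the last reconstruction step: making precise the decomposition of the white noise measure on $\left(C_{\left[-\pi,\pi\right]},\mathcal{B}\left(C_{\left[-\pi,\pi\right]}\right)\right)$ into the $V$-component and the independent $V^{\perp}$-component, and verifying that reassembling the process from its coordinates is a bona fide measurable (Markov) operation. This is the classical fact that a Gaussian shift whose drift is confined to a finite-dimensional subspace is statistically equivalent to the corresponding finite-dimensional Gaussian experiment; since $\dim V=2d+1<\infty$ it reduces to sufficiency of the finite-dimensional projection together with parameter-independence of the complementary noise, and I would invoke the standard white-noise equivalence machinery rather than re-derive the measure-theoretic details.
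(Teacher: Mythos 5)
Your proposal is correct and rests on the same pivot as the paper's proof: the linear functional $\hat\theta_n=\Phi_{\theta_0}^{-1}T(Y)$ (your $(2\pi)^{-1}\Phi_{\theta_0}^{-1}\xi$) is exactly Gaussian $N_{2d+1}\bigl(\theta,n^{-1}\Phi_{\theta_0}^{-1}\bigr)$ under $Q_{n,2}(\theta,\theta_0)$, and your first- and second-moment computations check out against (\ref{parametric-fisher-info-def-a}). The difference is in how the reverse deficiency is handled. The paper computes the likelihood ratio $dQ_{n,2}(\theta,\theta_0)/dQ_{n,2}(0,\theta_0)$, observes it factors through $T(Y)$, and invokes Neyman factorization: sufficiency of $T$ immediately yields exact statistical equivalence between the path experiment and the experiment generated by the law of $T(Y)$, so a single stroke gives both directions of $\Delta$. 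You instead supply the reconstruction kernel by hand, via the orthogonal decomposition of the white noise path into its $V$-component (bijectively coded by $\xi$) and an independent $V^\perp$-component whose law is free of $\theta$, then re-randomize the tail. This is a constructive proof of the same sufficiency fact; it buys explicitness at the cost of needing to justify the measure-theoretic decomposition of the Gaussian measure on $C_{[-\pi,\pi]}$ and the measurability of the reassembly map, which you yourself flag as the "slightly delicate" step. The paper's sufficiency argument sidesteps that delicacy, so it is the leaner route, but your version would also go through once those measurability details are written down.
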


\begin{proof}
For $\theta=\left(  \theta_{j}\right)  _{\left\vert j\right\vert \leq d}$ we
have according to (\ref{series-repre-real-spec_density})%
\[
a_{\theta}\left(  \omega\right)  =\sum_{\left\vert j\right\vert \leq d}%
\psi_{j}\left(  \omega\right)  \theta_{j}.
\]
Define a vector of functions $\Psi:=\left(  \psi_{j}\right)  _{\left\vert
j\right\vert \leq d}$ and write $a_{\theta}\left(  \omega\right)
=\theta^{\prime}\Psi\left(  \omega\right)  $. For the likelihood ratio in the
model (\ref{gwn-local-1}) we have
\[
\frac{dQ_{n,2}\left(  \theta,\theta_{0}\right)  }{dQ_{n,2}\left(  0,\theta
_{0}\right)  }\left(  Y\right)  =\exp\left(  \frac{n}{2\pi}\int_{\left[
-\pi,\pi\right]  }a_{\theta}\left(  a_{\theta_{0}}^{2}-1\right)
^{-1}dY_{\omega}-\frac{n}{4\pi}\int_{\left[  -\pi,\pi\right]  }a_{\theta}%
^{2}\left(  a_{\theta_{0}}^{2}-1\right)  ^{-1}d\omega\right)  .
\]
Here we can write
\[
\int_{\left[  -\pi,\pi\right]  }a_{\theta}\left(  a_{\theta_{0}}^{2}-1\right)
^{-1}dY_{\omega}=\theta^{\prime}\int_{\left[  -\pi,\pi\right]  }\Psi\left(
\omega\right)  \left(  a_{\theta_{0}}^{2}\left(  \omega\right)  -1\right)
^{-1}dY_{\omega}.
\]
By the Neyman factorization criterion, the random $2d+1$-vector%
\[
T\left(  Y\right)  =\frac{1}{2\pi}\int_{\left[  -\pi,\pi\right]  }\Psi\left(
a_{\theta_{0}}^{2}-1\right)  ^{-1}dY_{\omega}%
\]
is a sufficient statistic. Then the distributions of $T\left(  Y\right)  $
under $Q_{n,2}\left(  \theta,\theta_{0}\right)  $ for $\theta\in S$ form an
equivalent experiment. Clearly these distributions are $2d+1$-variate normal.
We have
\[
E_{n,\theta}T=\frac{1}{2\pi}\int_{\left[  -\pi,\pi\right]  }\Psi\left(
a_{\theta_{0}}^{2}-1\right)  ^{-1}\Psi^{\prime}\theta\;d\omega.
\]
In view of (\ref{parametric-fisher-info-def}), we have
\[
E_{n,\theta}T=\Phi_{\theta_{0}}\theta.
\]
To find the covariance matrix, observe that for $T\left(  Y\right)  =\left(
T_{j}\left(  Y\right)  \right)  _{\left\vert j\right\vert \leq d}$ we have
\begin{align*}
2\pi\left(  T_{j}\left(  Y\right)  -E_{n,\theta}T_{j}\left(  Y\right)
\right)   &  =\int_{\left[  -\pi,\pi\right]  }\psi_{j}\left(  a_{\theta_{0}%
}^{2}-1\right)  ^{-1}\left(  2\pi/n\right)  ^{1/2}\left(  a_{\theta_{0}}%
^{2}-1\right)  ^{1/2}dW_{\omega}\\
&  =\left(  2\pi/n\right)  ^{1/2}\int_{\left[  -\pi,\pi\right]  }\psi
_{j}\left(  a_{\theta_{0}}^{2}-1\right)  ^{-1/2}dW_{\omega}.
\end{align*}
Consequently
\begin{align*}
&  \mathrm{Cov}_{n,\theta}\left(  T_{j}\left(  Y\right)  ,T_{k}\left(
Y\right)  \right) \\
&  =\frac{1}{2\pi n}\int_{\left[  -\pi,\pi\right]  }\psi_{j}\left(
a_{\vartheta_{0}}^{2}-1\right)  ^{-1}\psi_{k}d\omega\\
&  =n^{-1}\Phi_{\theta_{0},jk}%
\end{align*}
by (\ref{parametric-fisher-info-def-a}). Hence
\[
\mathcal{L}\left(  T\left(  Y\right)  |Q_{n,2}\left(  \theta,\theta
_{0}\right)  \right)  =N_{2d+1}\left(  \Phi_{\theta_{0}}\theta,n^{-1}%
\Phi_{\theta_{0}}\right)
\]
and the respective experiment with $\theta\in S$ is equivalent to
$\mathcal{G}_{n,2}\left(  \theta_{0},S\right)  $. Define
\begin{equation}
\tilde{T}\left(  Y\right)  :=\Phi_{\theta_{0}}^{-1}T\left(  Y\right)  ;
\label{one-to-one-transf}%
\end{equation}
then
\begin{equation}
\mathcal{L}\left(  \tilde{T}\left(  Y\right)  |Q_{n,2}\left(  \theta
,\theta_{0}\right)  \right)  =N_{2d+1}\left(  \theta,n^{-1}\Phi_{\theta_{0}%
}^{-1}\right)  . \label{describes}%
\end{equation}
Since (\ref{one-to-one-transf}) is a one-to-one transformation of the data,
giving an equivalent experiment, and (\ref{describes}) with $\theta\in S$
describes $\mathcal{G}_{n,3}\left(  \theta_{0},S\right)  $, the claim is proved.
\end{proof}

\bigskip

Recall that the distribution $Q_{n}\left(  a\right)  $ was described by
(\ref{SDE-1}); we now write $Q_{n,1}\left(  \theta\right)  $ for this
distribution when $a=a_{\theta}$ so that $Q_{n,1}\left(  \theta\right)  $ is
described by
\[
dY_{\omega}=\mathrm{arc\cosh}\left(  a_{\theta}\left(  \omega\right)  \right)
d\omega+\left(  2\pi/n\right)  ^{1/2}dW_{\omega},\omega\in\left[  -\pi
,\pi\right]  .
\]
Analogously to (\ref{gwn-local-1-experi}), (\ref{normal-local-experi}) for
$S\subset\Theta_{2}^{\prime}\left(  d,M\right)  $, define experiments%
\begin{align}
\mathcal{G}_{n,1}\left(  S\right)   &  =\left(  Q_{n,1}\left(  \theta\right)
,\;\theta\in S\right)  ,\label{gwn-global-experi-def}\\
\mathcal{G}_{n,4}\left(  S\right)   &  :=\left(  N_{2d+1}\left(  \theta
,n^{-1}\Phi_{\theta}^{-1}\right)  ,\theta\in S\right)  .
\label{heteroskedastic-normal-def-a}%
\end{align}
Recall the definitions of parameter sets $\Theta_{1,c}\left(  1,M\right)  $ in
(\ref{Big-Theta-11-c-def}), of neighborhoods $S_{n}\left(  \theta
_{0},r\right)  $ for $r>0$ and $\theta_{0}\in\Theta_{2}^{\prime}\left(
d,M\right)  $ in (\ref{parametric-shrinking-neighborhood-def}) and of
neighborhoods $B\left(  a_{\theta_{0}},\gamma_{n}\right)  $ for $\gamma
_{n}=o\left(  1\right)  $ in (\ref{neighborhood-uniform-B-def}).

\begin{lemma}
\label{lem-inclusions-parspace}(i) For all $M>0,$ there exists $M^{\prime}>0$
such that $\left\{  a_{\theta},\;\theta\in\Theta_{2}^{\prime}\left(
d,M\right)  \right\}  \subset\Theta_{1,c}\left(  1,M^{\prime}\right)  .$
\newline(ii) For all $M>0$ and $r>0$, there exists $M^{\prime}>0$ and a
sequence $\gamma_{n}=O\left(  n^{-1/2}\right)  $ such that for all $\theta
_{0}\in\Theta_{2}^{\prime}\left(  d,M\right)  $
\[
\left\{  a_{\theta},\;\theta\in\Theta_{2}^{\prime}\left(  d,M\right)  \cap
S_{n}\left(  \theta_{0},r\right)  \right\}  \subset\Theta_{1,c}\left(
1,M^{\prime}\right)  \cap B\left(  a_{\theta_{0}},\gamma_{n}\right)  .
\]

\end{lemma}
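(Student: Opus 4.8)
The plan is to establish both inclusions by controlling the H\"older norm and the uniform bounds on $a_\theta$ in terms of $\|\theta\|$, and (for part (ii)) the supremum $\|a_\theta - a_{\theta_0}\|_\infty$ in terms of $\|\theta - \theta_0\|$. The main point is that the map $\theta \mapsto a_\theta = \sum_{|j|\le d}\psi_j\theta_j$ of (\ref{series-repre-real-spec_density}) is \emph{linear} into a fixed finite-dimensional space of trigonometric polynomials spanned by $\psi_{-d},\ldots,\psi_d$, so all the relevant norms are equivalent up to constants depending only on $d$.

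For part (i): fix $\theta\in\Theta_2'(d,M)$. First, $a_\theta\in\mathcal{F}_M$ holds by the very definition of $\mathcal{L}_M'$ in (\ref{lowerbound-set-parametric-def}), since $\inf_\omega a_\theta(\omega)\ge 1+M^{-1}$. It remains to bound $\|a_\theta\|_{C^1}$. Each $\psi_j$ ($|j|\le d$) is one of the functions $1$, $\sqrt2\cos(j\cdot)$, $\sqrt2\sin(j\cdot)$, so $\|\psi_j\|_\infty\le\sqrt2$ and $\|\psi_j'\|_\infty\le\sqrt2\,d$; hence $\|\psi_j\|_{C^1}\le C(d)$. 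By linearity and Cauchy--Schwarz, $\|a_\theta\|_{C^1}\le\sum_{|j|\le d}|\theta_j|\,\|\psi_j\|_{C^1}\le (2d+1)^{1/2}\|\theta\|\,C(d)\le (2d+1)^{1/2}M^{1/2}C(d)=:M'$ (using $\|f\|_{C^1}\ge\|f\|_{C^\alpha}$ for $\alpha\le 1$ via the mean value theorem, or simply estimating the $C^1$ H\"older quotient directly). Thus $a_\theta\in C^1(M')\cap\mathcal{F}_M\subset\Theta_{1,c}(1,M')$, as claimed; $M'$ depends only on $d$ and $M$.

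For part (ii): take $\theta_0\in\Theta_2'(d,M)$ and $\theta\in\Theta_2'(d,M)\cap S_n(\theta_0,r)$, so $n^{1/2}\|\theta-\theta_0\|\le r$ by (\ref{parametric-shrinking-neighborhood-def}). The inclusion in $\Theta_{1,c}(1,M')$ follows from part (i) (with the same $M'$, since $\theta\in\Theta_2'(d,M)$). For membership in $B(a_{\theta_0},\gamma_n)$, estimate as above:
\[
\|a_\theta - a_{\theta_0}\|_\infty = \Bigl\|\sum_{|j|\le d}\psi_j(\theta_j-\theta_{0,j})\Bigr\|_\infty \le (2d+1)^{1/2}\sqrt2\,\|\theta-\theta_0\| \le (2d+1)^{1/2}\sqrt2\,r\,n^{-1/2}.
\]
So with $\gamma_n := (2d+1)^{1/2}\sqrt2\,r\,n^{-1/2} = O(n^{-1/2})$ we get $a_\theta\in B(a_{\theta_0},\gamma_n)$, completing the proof. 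I do not anticipate any genuine obstacle here; the only minor care needed is to keep the constants uniform over $\theta_0$, which is automatic since they depend only on $d$, $M$ and the fixed radius $r$, and to note that the $C^1$ (or $C^\alpha$) bound controls the H\"older seminorm because the derivatives of the $\psi_j$ are bounded by $d$.
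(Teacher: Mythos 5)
Your proof follows essentially the same route as the paper's: use linearity of $\theta\mapsto a_\theta$ and the elementary bounds on $\psi_j,\psi_j'$ to control $\|a_\theta\|_{C^1}$ by a constant times $\|\theta\|$, note the lower bound $a_\theta\ge 1+M^{-1}$ is built into $\Theta_2'(d,M)$, and in part (ii) run the same estimate on $a_\theta-a_{\theta_0}$. One small bookkeeping point: to conclude $a_\theta\in\mathcal{F}_{M'}$ (not just $\mathcal{F}_M$) you should set $M':=\max\bigl((2d+1)^{1/2}M^{1/2}C(d),\,M\bigr)$ so that $(M')^{-1}\le M^{-1}$, which is what the paper does explicitly.
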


\begin{proof}
(i) Recall the definition of $\Theta_{1,c}\left(  1,M^{\prime}\right)  $ in
(\ref{Big-Theta-11-c-def}). If $\theta\in\Theta_{2}^{\prime}\left(
d,M\right)  $ then
\begin{align}
a_{\theta}\left(  \omega\right)   &  =\sum_{\left\vert j\right\vert \leq
d}\theta_{j}\psi_{j}\left(  \omega\right)  ,\nonumber\\
\left\vert a_{\theta}\left(  \omega\right)  \right\vert  &  \leq\left(
2d+1\right)  ^{1/2}\left\Vert \theta\right\Vert \leq\left(  2d+1\right)
^{1/2}M^{1/2},\label{analogous-to-trivial}\\
\left\vert a_{\theta}^{\prime}\left(  \omega\right)  \right\vert  &
\leq\left(  2d\right)  ^{1/2}\left\Vert \theta\right\Vert \leq\left(
2d\right)  ^{1/2}M^{1/2},\nonumber
\end{align}
hence for $\alpha=1$
\[
\left\Vert a_{\theta}\right\Vert _{C^{\alpha}}\leq\left\Vert a_{\theta
}\right\Vert _{\infty}+\left\Vert a_{\theta}^{\prime}\right\Vert _{\infty}%
\leq2\left(  2d+1\right)  ^{1/2}M^{1/2}.
\]
If $\theta\in\Theta_{2}^{\prime}\left(  d,M\right)  $ then we also have
$\inf_{\omega\in\left[  -\pi,\pi\right]  }a_{\theta}\left(  \omega\right)
\geq1+M^{-1}$, so by choosing $M^{\prime}=\max\left(  2\left(  2d+1\right)
^{1/2}M^{1/2},M\right)  $ we have $\left\Vert a_{\theta}\right\Vert
_{C^{\alpha}}\leq M^{\prime}$ and $a_{\theta}\in\mathcal{L}_{M^{\prime}}$,
i.e. $a_{\theta}\in\Theta_{1,c}\left(  1,M^{\prime}\right)  $.

(ii) If $\theta\in S_{n}\left(  \theta_{0},r\right)  $ then then we have
analogously to (\ref{analogous-to-trivial})
\[
\left\Vert a_{\theta}-a_{\theta_{0}}\right\Vert _{\infty}\leq\left(
2d+1\right)  ^{1/2}\left\Vert \theta-\theta_{0}\right\Vert \leq\left(
2d+1\right)  ^{1/2}n^{-1/2}r=:\gamma_{n}%
\]
and $\gamma_{n}=O\left(  n^{-1/2}\right)  $. In conjunction with (i) the claim
is proved.
\end{proof}

\bigskip

\begin{lemma}
\label{lem-local-equiv-gwn-global-to-hetero}For any $M>0,r>0$ and $\Theta
_{n}\left(  \theta_{0}\right)  :=\Theta_{2}^{\prime}\left(  d,M\right)  \cap
S_{n}\left(  \theta_{0},r\right)  $ we have
\[
\sup_{\theta_{0}\in\Theta_{2}^{\prime}}\Delta\left(  \mathcal{G}_{n,1}\left(
\Theta_{n}\left(  \theta_{0}\right)  \right)  ,\mathcal{G}_{n,4}\left(
\Theta_{n}\left(  \theta_{0}\right)  \right)  \right)  \rightarrow0.
\]

\end{lemma}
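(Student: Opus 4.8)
The plan is to interpolate between $\mathcal{G}_{n,1}(\Theta_n)$ and $\mathcal{G}_{n,4}(\Theta_n)$, with $\Theta_n=\Theta_2^{\prime}(d,M)\cap S_n(\theta_0,r)$, through the localized white noise experiment $\mathcal{G}_{n,2}(\theta_0,\Theta_n)$ of (\ref{gwn-local-1-experi}) and the homoskedastic normal experiment $\mathcal{G}_{n,3}(\theta_0,\Theta_n)$ of (\ref{normal-local-experi}). Fixing $\theta_0\in\Theta_2^{\prime}$ and using the triangle inequality for the Le Cam distance,
\begin{align*}
\Delta\bigl(\mathcal{G}_{n,1}(\Theta_n),\mathcal{G}_{n,4}(\Theta_n)\bigr)
&\leq \Delta\bigl(\mathcal{G}_{n,1}(\Theta_n),\mathcal{G}_{n,2}(\theta_0,\Theta_n)\bigr)
+ \Delta\bigl(\mathcal{G}_{n,2}(\theta_0,\Theta_n),\mathcal{G}_{n,3}(\theta_0,\Theta_n)\bigr)\\
&\quad + \Delta\bigl(\mathcal{G}_{n,3}(\theta_0,\Theta_n),\mathcal{G}_{n,4}(\Theta_n)\bigr),
\end{align*}
so it suffices to show each of the three terms tends to $0$ uniformly over $\theta_0\in\Theta_2^{\prime}$.

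For the first term I would observe that $\theta\mapsto a_\theta$ is a bijection from $\Theta_2^{\prime}$ onto its image (by orthonormality of the $\psi_j$ in (\ref{ONB-sin-cos})), so that $\mathcal{G}_{n,1}(\Theta_n)$ and $\mathcal{G}_{n,2}(\theta_0,\Theta_n)$ coincide, after relabeling the parameter, with the experiments of Lemma \ref{lem-local-white-noise} for the spectral-density set $\{a_\theta:\theta\in\Theta_n\}$ and fixed function $a_{(0)}=a_{\theta_0}$. By Lemma \ref{lem-inclusions-parspace}(ii), applied with $\alpha=1$, there are $M^{\prime}>0$ and a sequence $\gamma_n=O(n^{-1/2})$ (independent of $\theta_0$) with $\{a_\theta:\theta\in\Theta_n\}\subset\Theta_{1,c}(1,M^{\prime})\cap B(a_{\theta_0},\gamma_n)$, while Lemma \ref{lem-inclusions-parspace}(i) gives $a_{\theta_0}\in\Theta_{1,c}(1,M^{\prime})$. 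Since $\alpha=1\in(1/2,1]$ and $\gamma_n=O(n^{-1/2})=o\bigl((n/\log n)^{-1/3}\bigr)$, Lemma \ref{lem-local-white-noise} is applicable with $\alpha=1$, $M$ replaced by $M^{\prime}$; combined with the monotonicity of $\Delta$ under restriction of the parameter set, this gives $\sup_{\theta_0}\Delta(\mathcal{G}_{n,1}(\Theta_n),\mathcal{G}_{n,2}(\theta_0,\Theta_n))\to0$. The second term vanishes identically by Lemma \ref{lem-exact-equiv-GWN}.

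For the third term, $\mathcal{G}_{n,3}(\theta_0,\Theta_n)$ and $\mathcal{G}_{n,4}(\Theta_n)$ are families of $(2d+1)$-variate normals on the same sample space with common mean $\theta$ and covariances $n^{-1}\Phi_{\theta_0}^{-1}$, resp. $n^{-1}\Phi_\theta^{-1}$, so $\Delta\leq\sup_{\theta\in\Theta_n}\|N_{2d+1}(\theta,n^{-1}\Phi_{\theta_0}^{-1})-N_{2d+1}(\theta,n^{-1}\Phi_\theta^{-1})\|_1$ via the identity channel. By translation invariance of the total variation distance and invariance under the bijection $x\mapsto n^{1/2}x$, this equals $\|N_{2d+1}(0,\Phi_{\theta_0}^{-1})-N_{2d+1}(0,\Phi_\theta^{-1})\|_1$. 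By Lemma \ref{Lem-covmatrix-eigenvalues} the eigenvalues of $\Phi_\theta^{-1}$, $\theta\in\Theta_2^{\prime}$, lie in a fixed compact subset of $(0,\infty)$, so Lemma \ref{lem-covmatrices-cite-GNZpaper} yields a constant $C=C_{M,d}$ with $\|N_{2d+1}(0,\Phi_{\theta_0}^{-1})-N_{2d+1}(0,\Phi_\theta^{-1})\|_1\leq C\|\Phi_{\theta_0}^{-1}-\Phi_\theta^{-1}\|_2$. Writing $\Phi_{\theta_1}^{-1}-\Phi_{\theta_2}^{-1}=\Phi_{\theta_1}^{-1}(\Phi_{\theta_2}-\Phi_{\theta_1})\Phi_{\theta_2}^{-1}$ and combining the eigenvalue bounds of Lemma \ref{Lem-covmatrix-eigenvalues} with the uniform Lipschitz estimate $\sup_\omega|g(\theta_1,\omega)-g(\theta_2,\omega)|\leq L\|\theta_1-\theta_2\|$, $g(\theta,\omega)=(a_\theta^2(\omega)-1)^{-1}$, of Lemma \ref{Lem-Lipschitz-uniform} (applied entrywise to $\Phi_{\theta,jk}$, exactly as in the proof of Lemma \ref{lem-compact-2}), gives $\|\Phi_{\theta_0}^{-1}-\Phi_\theta^{-1}\|_2\leq C^{\prime}\|\theta-\theta_0\|$ with $C^{\prime}=C^{\prime}_{M,d}$. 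On $\Theta_n$ one has $\|\theta-\theta_0\|\leq rn^{-1/2}$, so the third term is at most $CC^{\prime}rn^{-1/2}\to0$ uniformly in $\theta_0$. Adding the three bounds proves the lemma.

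The main obstacle is the bookkeeping in the first step: checking that Lemma \ref{lem-local-white-noise} can legitimately be invoked — that the smoothness index $\alpha=1$ is admissible there, that the $L^\infty$-radius $\gamma_n=O(n^{-1/2})$ of the shrinking neighborhood of $a_{\theta_0}$ (which comes out of the Euclidean radius $r n^{-1/2}$ of $S_n(\theta_0,r)$ via Lemma \ref{lem-inclusions-parspace}) comfortably meets the rate requirement of that lemma, and that the $\theta$-parametrized experiments $\mathcal{G}_{n,1},\mathcal{G}_{n,2}$ really are the spectral-density-parametrized ones of Lemma \ref{lem-local-white-noise} up to a parameter relabeling, to which the monotonicity of $\Delta$ under restriction then applies. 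The remaining two terms are, respectively, an exact equivalence already recorded and a routine Gaussian total-variation estimate.
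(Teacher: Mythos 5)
Your proof follows essentially the same route as the paper's: the triangle inequality through the localized white-noise experiment $\mathcal{G}_{n,2}(\theta_0,\Theta_n)$ and the Gaussian-shift experiment $\mathcal{G}_{n,3}(\theta_0,\Theta_n)$, with Lemma \ref{lem-inclusions-parspace} and Lemma \ref{lem-local-white-noise} handling the first leg, Lemma \ref{lem-exact-equiv-GWN} giving exact equivalence in the middle, and the Gaussian total-variation comparison for the last leg.

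The one place where you diverge from the paper, and where your treatment is actually tighter, is the third term. The paper invokes uniform continuity of $\theta\mapsto\Phi_\theta^{-1}$ on the compact $\Theta_2'$ (via Lemma \ref{lem-compact-2}) but then writes the inner supremum over $\theta\in\Theta_2'$ rather than over $\theta\in\Theta_n$; taken literally, $\sup_{\theta,\theta_0\in\Theta_2'}\|\Phi_\theta^{-1}-\Phi_{\theta_0}^{-1}\|_2$ is an $n$-independent positive constant, so the displayed chain does not converge to zero as written — the intended argument clearly requires restricting $\theta$ to the shrinking set $\Theta_n$ and using $\|\theta-\theta_0\|\leq rn^{-1/2}$. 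You avoid this glitch by instead extracting a quantitative Lipschitz bound $\|\Phi_{\theta_0}^{-1}-\Phi_\theta^{-1}\|_2\leq C'\|\theta-\theta_0\|$ from Lemma \ref{Lem-Lipschitz-uniform} together with the eigenvalue bounds of Lemma \ref{Lem-covmatrix-eigenvalues}, which immediately gives the explicit rate $O(n^{-1/2})$ uniformly in $\theta_0$. Both routes are essentially the same idea (local Gaussian comparison via $\|\cdot\|_2$-continuity of the inverse Fisher matrix), but your version is self-contained and sidesteps the paper's imprecise supremum.
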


\begin{proof}
Consider the experiment $\mathcal{G}_{n,2}\left(  \theta_{0},\Theta
_{n}\right)  $ defined by (\ref{gwn-local-1-experi}).
\[
\mathcal{G}_{n,2}\left(  \theta_{0},\Theta_{n}\right)  :=\left(
Q_{n,2}\left(  \theta,\theta_{0}\right)  ,\;\theta\in\Theta_{n}\right)
\]
with $Q_{n,2}\left(  \theta,\theta_{0}\right)  $ given by (\ref{gwn-local-1}).
We claim that Lemma \ref{lem-local-white-noise} implies that
\begin{equation}
\sup_{\theta_{0}\in\Theta_{2}^{\prime}}\Delta\left(  \mathcal{G}_{n,1}\left(
\Theta_{n}\right)  ,\mathcal{G}_{n,2}\left(  \theta_{0},\Theta_{n}\right)
\right)  \rightarrow0. \label{equiv-local-parametric-1}%
\end{equation}
Indeed it can be seen that $\mathcal{G}_{n,1}\left(  \Theta_{n}\right)  $, as
a set of probability measures, can be considered a subset of $\mathcal{G}%
_{n,1}\left(  \tilde{\Theta}_{n}\right)  $ as defined in (\ref{SDE-1-a}) for
$\tilde{\Theta}_{n}=\Theta_{1,c}\left(  \alpha,M^{\prime}\right)  \cap
B\left(  a_{0},\gamma_{n}\right)  $ for a certain sequence $\gamma_{n},$ a
certain $M^{\prime}>0$ and $\alpha=1$, upon setting $a=a_{\theta}$ and
$a_{0}=a_{\theta_{0}}$. Indeed, even though $\Theta_{n}\subset\mathbb{R}%
^{2d+1}$ and $\tilde{\Theta}_{n}$ is a set of functions, we can compare
$\mathcal{G}_{n,1}\left(  \Theta_{n}\right)  $ and $\mathcal{G}_{n,1}\left(
\tilde{\Theta}_{n}\right)  $ as sets of probability measures on the same
sample space. With that understanding, the claim $\mathcal{G}_{n,1}\left(
\Theta_{n}\right)  \subset\mathcal{G}_{n,1}\left(  \tilde{\Theta}_{n}\right)
$, i.e.
\[
\mathcal{G}_{n,1}\left(  \Theta_{2}^{\prime}\left(  d,M\right)  \cap
S_{n}\left(  \theta_{0},r\right)  \right)  \subset\mathcal{G}_{n,1}\left(
\Theta_{1,c}\left(  1,M^{\prime}\right)  \cap B\left(  a_{\theta_{0}}%
,\gamma_{n}\right)  \right)  \text{, for all }\theta_{0}\in\Theta_{2}^{\prime
}\left(  d,M\right)
\]
follows from Lemma \ref{lem-inclusions-parspace} for a certain $M^{\prime}>0$
and a sequence $\gamma_{n}=O\left(  n^{-1/2}\right)  $. Analogously we obtain,
for the same $M^{\prime}$ and $\gamma_{n}$
\[
\mathcal{G}_{n,2}\left(  \theta_{0},\Theta_{2}^{\prime}\left(  d,M\right)
\cap S_{n}\left(  \theta_{0},r\right)  \right)  \subset\mathcal{G}%
_{n,2}\left(  a_{\theta_{0}},\Theta_{1,c}\left(  1,M^{\prime}\right)  \cap
B\left(  a_{\theta_{0}},\gamma_{n}\right)  \right)  \text{, for all }%
\theta_{0}\in\Theta_{2}^{\prime}\left(  d,M\right)
\]
where the experiment on the r.h.s. is defined in
(\ref{pre-variance-stable-SDE-experi}). Since $\gamma_{n}$ fulfills the
condition $\gamma_{n}=o\left(  \left(  n/\log n\right)  ^{-\alpha/(2\alpha
+1)}\right)  $ for $\alpha=1$, Lemma \ref{lem-local-white-noise} indeed
implies (\ref{equiv-local-parametric-1}). Now Lemma \ref{lem-exact-equiv-GWN}
implies
\begin{equation}
\sup_{\theta_{0}\in\Theta_{2}^{\prime}}\Delta\left(  \mathcal{G}_{n,2}\left(
\theta_{0},\Theta_{n}\right)  ,\mathcal{G}_{n,3}\left(  \theta_{0},\Theta
_{n}\right)  \right)  =0. \label{equiv-local-parametric-2}%
\end{equation}
We now claim
\begin{equation}
\sup_{\theta_{0}\in\Theta_{2}^{\prime}}\Delta\left(  \mathcal{G}_{n,3}\left(
\theta_{0},\Theta_{n}\right)  ,\mathcal{G}_{n,4}\left(  \Theta_{n}\right)
\right)  \rightarrow0. \label{equiv-local-parametric-3}%
\end{equation}
For that consider the total variation distance, for $\theta,\theta_{0}%
\in\Theta_{2}^{\prime}\left(  d,M\right)  $
\begin{align*}
&  \left\Vert N_{2d+1}\left(  \theta,n^{-1}\Phi_{\theta}^{-1}\right)
-N_{2d+1}\left(  \theta,n^{-1}\Phi_{\theta_{0}}^{-1}\right)  \right\Vert
_{TV}\\
&  =\left\Vert N_{2d+1}\left(  0,\Phi_{\theta}^{-1}\right)  -N_{2d+1}\left(
0,\Phi_{\theta_{0}}^{-1}\right)  \right\Vert _{TV}%
\end{align*}
where the equality is obtained by applying the one-to-one map $x\rightarrow
n^{1/2}\left(  x-\theta\right)  $. By (\ref{Lecam-inequ}) the above is
upperbounded by the Hellinger distance
\[
H\left(  N_{2d+1}\left(  0,\Phi_{\theta}^{-1}\right)  ,N_{2d+1}\left(
0,\Phi_{\theta_{0}}^{-1}\right)  \right)  .
\]
Now by Lemma \ref{lem-covmatrices-cite-GNZpaper} and Lemma
\ref{Lem-covmatrix-eigenvalues} the above is upperbounded by
\[
C\;\left\Vert \Phi_{\theta}^{-1}-\Phi_{\theta_{0}}^{-1}\right\Vert _{2}%
\]
where $\left\Vert \cdot\right\Vert _{2}$ denotes Hilbert-Schmidt norm for
matrices and $C$ only depends on $M$ and $d$. By Lemma \ref{lem-compact-2},
the mapping $\theta\longrightarrow\Phi_{\theta}^{-1}$ is continuous in
Hilbert-Schmidt norm on the compact set $\Theta_{2}^{\prime}\left(
d,M\right)  \in\mathbb{R}^{2d+1}$, and thus uniformly continuous
(\cite{MR120319}, 3.16.5). Hence
\begin{align*}
&  \sup_{\theta_{0}\in\Theta_{2}^{\prime}}\Delta\left(  \mathcal{G}%
_{n,3}\left(  \theta_{0},\Theta_{n}\right)  ,\mathcal{G}_{n,4}\left(
\Theta_{n}\right)  \right) \\
&  \leq\sup_{\theta_{0}\in\Theta_{2}^{\prime}}\sup_{\theta\in\Theta_{n}\left(
\theta_{0}\right)  }\left\Vert N_{2d+1}\left(  \theta,n^{-1}\Phi_{\theta}%
^{-1}\right)  -N_{2d+1}\left(  \theta,n^{-1}\Phi_{\theta_{0}}^{-1}\right)
\right\Vert _{TV}\\
&  \leq\sup_{\theta,\theta_{0}\in\Theta_{2}^{\prime},\left\Vert \theta
-\theta_{0}\right\Vert \leq n^{-1/2}r}C\;\left\Vert \Phi_{\theta}^{-1}%
-\Phi_{\theta_{0}}^{-1}\right\Vert _{2}\rightarrow0
\end{align*}
confirming (\ref{equiv-local-parametric-3}). Now relations
(\ref{equiv-local-parametric-1}) -(\ref{equiv-local-parametric-3}) establish
the claim.
\end{proof}

\bigskip

\begin{lemma}
\label{lem-semiorder-gwn-hetero}We have
\[
\delta\left(  \mathcal{G}_{n,4}\left(  \Theta_{2}^{\prime}\right)
,\mathcal{G}_{n,1}\left(  \Theta_{2}^{\prime}\right)  \right)  \rightarrow0.
\]

\end{lemma}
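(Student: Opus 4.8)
The plan is to deduce this from Le Cam's globalization principle, i.e., to apply Lemma \ref{lem-local-to-global} with $\mathcal{P}_n := \mathcal{G}_{n,4}\left(\Theta_2'\right)$ and $\mathcal{Q}_n := \mathcal{G}_{n,1}\left(\Theta_2'\right)$, the underlying parameter set being $\Theta_2' = \Theta_2'\left(d,M\right) \subset \mathbb{R}^{2d+1}$. First one checks that $\mathcal{Q}_n = \mathcal{G}_{n,1}\left(\Theta_2'\right)$ is a dominated experiment: for $\theta \in \Theta_2'$ the spectral density $a_\theta$ lies in $\Theta_{1,c}\left(1,M'\right)$ by Lemma \ref{lem-inclusions-parspace}(i), and $\mathrm{arc\cosh}$ is smooth on $\left[1+M^{-1},\infty\right)$, so the drift $\mathrm{arc\cosh}\left(a_\theta\right)$ in the white noise model (\ref{SDE-1}) is Lipschitz with a derivative bounded uniformly in $\theta$; hence all the Gaussian measures $Q_{n,1}\left(\theta\right)$ on $C_{\left[-\pi,\pi\right]}$ are mutually absolutely continuous and in particular dominated by $Q_{n,1}\left(\theta_0\right)$ for a fixed $\theta_0 \in \Theta_2'$.

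Next I would verify the two hypotheses of Lemma \ref{lem-local-to-global}. The covariance matrices of $\mathcal{P}_n$ are $\Sigma_\theta = \Phi_\theta^{-1}$, and by Lemma \ref{Lem-covmatrix-eigenvalues} one has $\lambda_{\max}\left(\Phi_\theta^{-1}\right) = \lambda_{\min}\left(\Phi_\theta\right)^{-1} \leq C_{1,M}^{-1}$ uniformly over $\theta \in \Theta_2'$, so condition (\ref{cond-lambdamax-bounded}) holds. For condition (\ref{local-experi-delta}), fix $r>0$ and $\theta_0 \in \Theta_2'$ and put $\Theta_n := \Theta_2'\left(d,M\right) \cap S_n\left(\theta_0,r\right)$; then, directly from the definitions (\ref{local-version-1}), (\ref{local-version-2}), (\ref{heteroskedastic-normal-def}), (\ref{gwn-global-experi-def}), the localized experiments are $\mathcal{P}_n\left(\theta_0,r\right) = \mathcal{G}_{n,4}\left(\Theta_n\right)$ and $\mathcal{Q}_n\left(\theta_0,r\right) = \mathcal{G}_{n,1}\left(\Theta_n\right)$, whence
\[
\sup_{\theta_0 \in \Theta_2'} \delta\left( \mathcal{P}_n\left(\theta_0,r\right), \mathcal{Q}_n\left(\theta_0,r\right) \right) \;\leq\; \sup_{\theta_0 \in \Theta_2'} \Delta\left( \mathcal{G}_{n,1}\left(\Theta_n\right), \mathcal{G}_{n,4}\left(\Theta_n\right) \right) \;\rightarrow\; 0
\]
by Lemma \ref{lem-local-equiv-gwn-global-to-hetero}. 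Lemma \ref{lem-local-to-global} then yields $\delta\left(\mathcal{G}_{n,4}\left(\Theta_2'\right), \mathcal{G}_{n,1}\left(\Theta_2'\right)\right) \rightarrow 0$, which is the assertion.

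Since the substantive ingredients are already in place --- the local white-noise approximation of the geometric regression model around a fixed spectral density and its identification with a heteroskedastic Gaussian shift (Lemmas \ref{lem-local-white-noise}, \ref{lem-exact-equiv-GWN}, \ref{lem-local-equiv-gwn-global-to-hetero}) and the globalization lemma itself --- the remaining work is essentially bookkeeping, and I do not expect a genuine obstacle. The one point that needs a little care is keeping track of the direction of the deficiency: what is being established is that the Gaussian white noise model $\mathcal{G}_{n,1}$ is asymptotically at least as informative as the finite-dimensional Gaussian shift model $\mathcal{G}_{n,4}$ with Fisher information $\Phi_\theta$. It is this inequality, chained with Theorem \ref{Theor-deficiency-from-law-converg} applied to the asymptotically normal estimator of Theorem \ref{Theor-asy-normal-improved-est} and with the reduction of the quantum experiment $\mathcal{E}_n$ to the classical measurement of $\mathbf{\bar{\Pi}}_{n}$, that will give the lower informativity bound $\delta\left(\mathcal{E}_n\left(\Theta\right), \mathcal{G}_n\left(\Theta\right)\right) \rightarrow 0$ of Theorem \ref{theor-main-2}.
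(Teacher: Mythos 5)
Your argument reproduces the paper's proof: apply Lemma \ref{lem-local-to-global} with $\mathcal{P}_n=\mathcal{G}_{n,4}(\Theta_2')$ and $\mathcal{Q}_n=\mathcal{G}_{n,1}(\Theta_2')$, checking condition (\ref{cond-lambdamax-bounded}) from Lemma \ref{Lem-covmatrix-eigenvalues} and condition (\ref{local-experi-delta}) from Lemma \ref{lem-local-equiv-gwn-global-to-hetero}. The extra verifications you add (domination of $\mathcal{G}_{n,1}$, the explicit eigenvalue bound $\lambda_{\max}(\Phi_\theta^{-1})\leq C_{1,M}^{-1}$, and the remark on the direction of the deficiency) are correct and helpful but do not change the route.
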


\begin{proof}
Apply Lemma \ref{lem-local-to-global} with $\mathcal{P}_{n}=\mathcal{G}%
_{n,4}\left(  \Theta_{2}^{\prime}\right)  $, $\mathcal{Q}_{n}=\mathcal{G}%
_{n,1}\left(  \Theta_{2}^{\prime}\right)  $ and
\begin{align*}
\mathcal{P}_{n}\left(  \theta_{0},r\right)   &  =\mathcal{G}_{n,4}\left(
\Theta_{2}^{\prime}\left(  d,M\right)  \cap S_{n}\left(  \theta_{0},r\right)
\right)  ,\\
\mathcal{Q}_{n}\left(  \theta_{0},r\right)   &  =\mathcal{G}_{n,1}\left(
\Theta_{2}^{\prime}\left(  d,M\right)  \cap S_{n}\left(  \theta_{0},r\right)
\right)  .
\end{align*}
Then condition (\ref{local-experi-delta}) is guaranteed by Lemma
\ref{lem-local-equiv-gwn-global-to-hetero}, while condition
(\ref{cond-lambdamax-bounded}) is guaranteed by Lemma
\ref{Lem-covmatrix-eigenvalues}.
\end{proof}

\bigskip

\begin{proof}
[Proof of Theorem \ref{theor-main-2}]Identify the experiment $\mathcal{E}%
_{n}\left(  \Theta_{2}\left(  d,M\right)  \right)  $ of
(\ref{basic-quantum-experi-def}) with a set of states indexed by $\theta
\in\Theta_{2}^{\prime}\left(  d,M\right)  $, i.e. with
\[
\mathcal{E}_{n,1}\left(  \Theta_{2}^{\prime}\right)  :=\left(  \mathfrak{N}%
_{n}\left(  0,A_{n}\left(  a_{\theta}\right)  \right)  ,\theta\in\Theta
_{2}^{\prime}\right)  .
\]
In the same way, we can identify $\mathcal{G}_{n}\left(  \Theta_{2}\left(
d,M\right)  \right)  $ in the Theorem with $\mathcal{G}_{n,1}\left(
\Theta_{2}^{\prime}\right)  $ defined in (\ref{gwn-global-experi-def}). Then
the claim is
\begin{equation}
\mathcal{G}_{n,1}\left(  \Theta_{2}^{\prime}\right)  \precsim\mathcal{E}%
_{n,1}\left(  \Theta_{2}^{\prime}\right)  . \label{claim-semiorder-lower-1}%
\end{equation}
Consider the observable $\mathbf{\bar{\Pi}}_{n}$ defined in
(\ref{Pi-k-quer-def}) and the experiment formed by its distributions under the
state $\mathfrak{N}_{n}\left(  0,A_{n}\left(  a_{\theta}\right)  \right)  $,
i.e.%
\[
\mathcal{E}_{n,2}\left(  \Theta_{2}^{\prime}\right)  :=\left(  \mathcal{L}%
\left(  \mathbf{\bar{\Pi}}_{n}|\theta\right)  ,\;\theta\in\Theta_{2}^{\prime
}\right)  .
\]
Since $\mathbf{\bar{\Pi}}_{n}$ is based on a measurement of the state, the map
from $\mathfrak{N}_{n}\left(  0,A_{n}\left(  a_{\theta}\right)  \right)  $ to
$\mathcal{L}\left(  \mathbf{\bar{\Pi}}_{n}|\theta\right)  $ is given by a dual
observation channel (i.e. a state transition, cf. Section
\ref{subsubsec:measmt-obs-channel}), hence%
\begin{equation}
\mathcal{E}_{n,2}\left(  \Theta_{2}^{\prime}\right)  \preceq\mathcal{E}%
_{n,1}\left(  \Theta_{2}^{\prime}\right)  . \label{semiorder-lower-1}%
\end{equation}
Consider now the estimator $\tilde{\theta}_{n}$ according to Definition
\ref{Def-estimator-optimal-param}, which is a function of $\mathbf{\bar{\Pi}%
}_{n}$. According to Theorem \ref{Theor-asy-normal-improved-est},
$\tilde{\theta}_{n}$ is asymptotically normal
\[
n^{1/2}\left(  \tilde{\theta}_{n}-\theta\right)  \Longrightarrow_{d}%
N_{2d+1}\left(  0,\Phi_{\theta}^{-1}\right)
\]
uniformly in $\theta\in\Theta_{2}^{\prime}$, so condition (\ref{assump-1}) of
Theorem \ref{Theor-deficiency-from-law-converg} is fulfilled. Furthermore
$\Theta_{2}^{\prime}$ is compact according to Lemma \ref{lem-compact-convex},
the map $\theta\rightarrow\Phi_{\theta}^{-1}$ is continuous in norm
$\left\Vert \cdot\right\Vert _{2}$ according to Lemma \ref{lem-compact-2}, and
$\Phi_{\theta}^{-1}>0,\;\theta\in\Theta_{2}^{\prime}$ holds according to Lemm
\ref{Lem-covmatrix-eigenvalues}. Then, with $\mathcal{G}_{n,4}\left(
S\right)  $ defined by (\ref{heteroskedastic-normal-def-a}), Theorem
\ref{Theor-deficiency-from-law-converg} gives
\[
\delta\left(  \mathcal{E}_{n,2}\left(  \Theta_{2}^{\prime}\right)
,\mathcal{G}_{n,4}\left(  \Theta_{2}^{\prime}\right)  \right)  \rightarrow0,
\]
or in semiordering notation
\begin{equation}
\mathcal{G}_{n,4}\left(  \Theta_{2}^{\prime}\right)  \precsim\mathcal{E}%
_{n,2}\left(  \Theta_{2}^{\prime}\right)  . \label{semiorder-lower-2}%
\end{equation}
Now Lemma \ref{lem-semiorder-gwn-hetero} states%
\begin{equation}
\mathcal{G}_{n,1}\left(  \Theta_{2}^{\prime}\right)  \precsim\mathcal{G}%
_{n,4}\left(  \Theta_{2}^{\prime}\right)  . \label{semiorder-lower-3}%
\end{equation}
Relations (\ref{semiorder-lower-1}), (\ref{semiorder-lower-2}) and
(\ref{semiorder-lower-3}) establish the claim (\ref{claim-semiorder-lower-1}).
\end{proof}

\bigskip%

\begin{privatenotes}
\begin{boxedminipage}{\textwidth}%

\begin{sfblock}
\textbf{ATTENTION:}\ Notation with (\ref{semiorder-lower-1}): here the index
$k$ in $\mathcal{E}_{n,k}\left(  \Theta_{2}^{\prime}\right)  $ has to be
chosen later, depending on how many indices $j$ in $\mathcal{E}_{n,j}\left(
\Theta_{2}\right)  $ we will need for the notations in the Section on upper
info. bound\texttt{.}
\end{sfblock}

%

\end{boxedminipage}
\end{privatenotes}%
.%

\begin{privatenotes}%

\section{Optimal estimation}

\subsection{Parametric estimation}

Under the assumption of $d$-dependence, i.e. that the spectral density $a$
varies in a set $\Theta_{2}\left(  d,M\right)  $ given by
(\ref{d-dependence-param-set}), consider estimation of the real parameter
vector $\theta\in\mathbb{R}^{2d+1}$ which then parametrizes $a=a_{\theta}$
according to (\ref{series-repre-real-spec_density}). Then $\theta$ is a
function of the complex autocovariances $a_{j}$, $\left\vert j\right\vert \leq
d$ and varies in the set $\Theta_{2}^{\prime}\left(  d,M\right)  $ given by
(\ref{parametric-model}). In Theorem \ref{Theor-asy-normal-improved-est} it
has been shown that estimator $\tilde{\theta}_{n}$ of $\theta$ is
asymptotically normal with limiting covariance matrix $\Phi_{\theta}^{-1}$,
\[
n^{1/2}\left(  \tilde{\theta}_{n}-\theta\right)  \Longrightarrow_{d}%
N_{2d+1}\left(  0,\Phi_{\theta}^{-1}\right)
\]
uniformly in $\theta\in\Theta_{2}^{\prime}$.

\subsection{Nonparametric estimation}

\bigskip We assume the conditions of Theorem \ref{theor-main-1}, i.e. the
spectral density $a$ is in a set $\Theta_{1}\left(  \alpha,M\right)  $ given
by (\ref{Theta-1-functionset-def}) for some $\alpha>1/2,$ $M>1$. Then $a$ has
representation (\ref{symbol-generate-2}) in terms of the coefficients $a_{j}$,
the analogs of the autocovariances. For purposes of estimation, we will revert
to the real coefficients $\theta_{j}$ given by (\ref{real-parameter-def})
without the restriction $\left\vert j\right\vert \leq d$, i.e. we will set
\[
\theta_{0}=a_{0},\theta_{j}=\sqrt{2}\operatorname{Re}a_{j},\theta_{-j}%
=-\sqrt{2}\operatorname{Im}a_{j},\;j\in\mathbb{N}%
\]
recalling $\bar{a}_{j}=a_{-j}$. Then analogously to
(\ref{series-repre-real-spec_density}), writing $\theta=\left(  \theta
_{j}\right)  _{j\in\mathbb{Z}}\in\ell_{2}$ and $a=a_{\theta}$, we have
\begin{equation}
a_{\theta}\left(  \omega\right)  =\sum_{j=-\infty}^{\infty}\psi_{j}\left(
\omega\right)  \theta_{j}\text{, } \label{Fourier-repre-real-2}%
\end{equation}
where the functions $\psi_{j}$ on $\left[  -\pi,\pi\right]  $ are defined in
(\ref{psi-def}). The assumption $a_{\theta}\in\Theta_{1}\left(  \alpha
,M\right)  $ is then equivalent to $\theta\in\Theta_{1}^{\prime}\left(
\alpha,M\right)  $ where%
\begin{equation}
\Theta_{1}^{\prime}\left(  \alpha,M\right)  :=\left\{  \theta:\text{
}\left\vert \theta_{0}\right\vert ^{2}+\sum_{j=-\infty}^{\infty}j^{2\alpha
}\left\vert \theta_{j}\right\vert ^{2}\leq M\right\}  \cap\mathcal{L}%
_{M}^{\prime} \label{Theta-1-prime-first-def}%
\end{equation}
where $\mathcal{L}_{M}^{\prime}$ is defined in \textbf{(}%
\ref{lowerbound-set-parametric-def}\textbf{), }assuming $\theta\in\ell_{2}$.
Unbiased estimators of the coefficients $\theta_{j}$ have already been found
in (\ref{prelim-estim-def-2}); replacing the fixed dimension $d$ there by a
bandwidth parameter $d_{n}\rightarrow\infty$, $d_{n}=o\left(  n\right)  $ and
writing the estimators as $\hat{\theta}_{n,j}$ now, we obtain
\[
\hat{\theta}_{j,n}=\frac{n^{1/2}}{n-\left\vert j\right\vert }\mathbf{w}%
_{j,n}^{\prime}\mathbf{\Pi}_{n}\mathbf{,\;}\left\vert j\right\vert \leq d_{n}%
\]
where the $\mathbb{R}^{n}$-valued observable $\mathbf{\Pi}_{n}$ is given by
(\ref{vec-of-observables}), (\ref{Pi-observble-component-def}).

\textbf{ATTENTION:\ the next statement is false; we need assumption }%
$\theta_{j}=0$\textbf{, }$\left\vert j\right\vert >d_{n}.$\textbf{ Modify to
"asy. unbiased" }

Unbiasedness then follows as in (\ref{unbiased-2}): for $\rho=\mathfrak{N}%
_{n}\left(  0,A_{n}\left(  a_{\theta}\right)  \right)  $
\begin{equation}
E_{\rho}\hat{\theta}_{j,n}=\theta_{j}\text{, }\left\vert j\right\vert \leq
d_{n}\text{.} \label{unbiased-3}%
\end{equation}
Following the standard approach of truncated orthogonal series estimation, we
define the nonparametric estimator of the spectral density as
\[
\hat{a}_{n}\left(  \omega\right)  =\sum_{\left\vert j\right\vert \leq d_{n}%
}\hat{\theta}_{j,n}\text{. }%
\]

\begin{lemma}
Assume the bandwidth parameter $d_{n}$ is chosen such that $d_{n}\leq
n^{1/2}/2$. Then for $\rho=\mathfrak{N}_{n}\left(  0,A_{n}\left(  a_{\theta
}\right)  \right)  $, $\theta\in\Theta_{1}^{\prime}\left(  \alpha,M\right)  $
and $\left\vert j\right\vert \leq d_{n}$ we have
\[
\mathrm{Var}_{\rho}\left(  \hat{\theta}_{j,n}\right)  \leq C_{\alpha
,M}\;n^{-1}%
\]
where $C_{\alpha,M}$ depends on $\alpha$ and $M$ but not on $j$ and $n$.
\end{lemma}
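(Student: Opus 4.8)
The plan is to reduce everything to the covariance matrix of the observable vector $\mathbf{\Pi}_{n}$ and then apply a crude entrywise bound. Since $\hat{\theta}_{j,n}=\tfrac{n^{1/2}}{n-\left\vert j\right\vert }\mathbf{w}_{j,n}^{\prime}\mathbf{\Pi}_{n}$ is a real linear functional of the commuting set of observables $\mathbf{\Pi}_{n}=\left(  \Pi_{k}\right)  _{\left\vert k\right\vert \leq(n-1)/2}$ (commutativity and faithfulness are guaranteed by the earlier lemmas, valid here because $a_{\theta}\in\mathcal{L}_{M}^{\prime}$ forces $A_{n}(a_{\theta})\geq(1+M^{-1})I>I$ as in the proof of Lemma \ref{lem-toeplitz-EV}), I would first write
\[
\mathrm{Var}_{\rho}\left(  \hat{\theta}_{j,n}\right)  =\frac{n}{\left(  n-\left\vert j\right\vert \right)  ^{2}}\;\mathbf{w}_{j,n}^{\prime}\,\mathrm{Cov}_{\rho}\left(  \mathbf{\Pi}_{n}\right)  \mathbf{w}_{j,n}.
\]
By Lemma \ref{lem-covmatrix-analog-B}(ii) (equivalently relation (\ref{Cov-matrix-Pi}) with $m$ replaced by $n$) one has $\mathrm{Cov}_{\rho}\left(  \mathbf{\Pi}_{n}\right)  =\left(  U_{n}^{\ast}A_{n}U_{n}\right)  ^{[2]}-I_{n}$ with $A_{n}=A_{n}(a_{\theta})$, and since $\left\Vert \mathbf{w}_{j,n}\right\Vert =1$ this gives
\[
\mathbf{w}_{j,n}^{\prime}\,\mathrm{Cov}_{\rho}\left(  \mathbf{\Pi}_{n}\right)  \mathbf{w}_{j,n}=\mathbf{w}_{j,n}^{\prime}\left(  U_{n}^{\ast}A_{n}U_{n}\right)  ^{[2]}\mathbf{w}_{j,n}-1 .
\]

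Next I would bound the quadratic form. Every component of $\mathbf{w}_{j,n}=n^{-1/2}\left(  \psi_{j}\left(  \omega_{k}\right)  \right)  $ is bounded in modulus by $\sqrt{2}\,n^{-1/2}$, so
\[
\mathbf{w}_{j,n}^{\prime}\left(  U_{n}^{\ast}A_{n}U_{n}\right)  ^{[2]}\mathbf{w}_{j,n}\leq 2n^{-1}\sum_{s,t}\left\vert \left(  U_{n}^{\ast}A_{n}U_{n}\right)  _{st}\right\vert ^{2}=2n^{-1}\left\Vert U_{n}^{\ast}A_{n}U_{n}\right\Vert _{2}^{2}=2n^{-1}\left\Vert A_{n}\right\Vert _{2}^{2},
\]
using unitary invariance of the Hilbert--Schmidt norm. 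For the Toeplitz symbol $A_{n}=\left(  a_{k-l}\right)$ one has $\left\Vert A_{n}\right\Vert _{2}^{2}=\sum_{\left\vert s\right\vert <n}\left(  n-\left\vert s\right\vert \right)  \left\vert a_{s}\right\vert ^{2}\leq n\sum_{s\in\mathbb{Z}}\left\vert a_{s}\right\vert ^{2}$, and the constraint $\theta\in\Theta_{1}^{\prime}\left(  \alpha,M\right)  $ gives $\sum_{s}\left\vert a_{s}\right\vert ^{2}=\left\Vert \theta\right\Vert ^{2}\leq M$ (since $\left\vert s\right\vert ^{2\alpha}\geq 1$ for $\left\vert s\right\vert \geq 1$). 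Hence $\mathbf{w}_{j,n}^{\prime}\left(  U_{n}^{\ast}A_{n}U_{n}\right)  ^{[2]}\mathbf{w}_{j,n}\leq 2M$, and so $\mathbf{w}_{j,n}^{\prime}\,\mathrm{Cov}_{\rho}\left(  \mathbf{\Pi}_{n}\right)  \mathbf{w}_{j,n}\leq 2M$.

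Finally, the bandwidth restriction $\left\vert j\right\vert \leq d_{n}\leq n^{1/2}/2\leq n/2$ yields $n-\left\vert j\right\vert \geq n/2$, hence $\frac{n}{\left(  n-\left\vert j\right\vert \right)  ^{2}}\leq 4/n$, and therefore
\[
\mathrm{Var}_{\rho}\left(  \hat{\theta}_{j,n}\right)  \leq \frac{4}{n}\cdot 2M=\frac{8M}{n},
\]
so the claim holds with $C_{\alpha,M}=8M$ (in fact independent of $\alpha$). There is no substantial obstacle: the only points needing a little care are the justification that the commuting structure lets one treat $\mathbf{\Pi}_{n}$ as an ordinary random vector with a genuine covariance matrix, and the elementary bookkeeping $\left\Vert A_{n}\right\Vert _{2}^{2}\leq nM$; both are immediate from results already established in the paper.
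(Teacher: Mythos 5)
Your proof is correct, but the key step is genuinely different from the paper's. Both of you start identically: write $\mathrm{Var}_{\rho}(\hat{\theta}_{j,n}) = \frac{n}{(n-|j|)^{2}}\mathbf{w}_{j,n}'\mathrm{Cov}_{\rho}(\mathbf{\Pi}_{n})\mathbf{w}_{j,n}$ and use $\mathrm{Cov}_{\rho}(\mathbf{\Pi}_{n}) = (U_{n}^{\ast}A_{n}U_{n})^{[2]}-I_{n}$. The divergence is in bounding the quadratic form. The paper rewrites $\mathbf{w}_{j,n}'(U_{n}^{\ast}A_{n}U_{n})^{[2]}\mathbf{w}_{j,n}$ as a trace identity $\mathrm{Tr}\,V_{j,n}A_{n}V_{j,n}A_{n}$ with $V_{j,n}=\sum_{k}w_{j,k,n}\mathbf{u}_{k}\mathbf{u}_{k}^{\ast}$ (so $\mathrm{Tr}\,V_{j,n}^{2}=1$), then squeezes in $\lambda_{\max}^{2}(A_{n})\leq C_{M,\alpha}^{2}$ via the operator norm. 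You instead bound $|(\mathbf{w}_{j,n})_{k}|\leq\sqrt{2}\,n^{-1/2}$ entrywise and control the sum of all entries of $(U_{n}^{\ast}A_{n}U_{n})^{[2]}$ by the Hilbert--Schmidt norm $\|A_{n}\|_{2}^{2}\leq n\|\theta\|^{2}\leq nM$. Your route is coarser but elementary, avoids the trace rearrangement, and yields an explicit constant $8M$ that is visibly independent of $\alpha$; the paper's operator-norm bound on $\lambda_{\max}(A_{n})$ passes through the Sobolev embedding in Lemma \ref{lem-toeplitz-EV} and hence needs $\alpha>1/2$, whereas your Hilbert--Schmidt estimate only uses $\|\theta\|^{2}\leq M$, which holds for any $\alpha>0$. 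Both give the required $O(n^{-1})$ rate. One cosmetic remark: you dropped the subtracted $\mathbf{w}_{j,n}'I_{n}\mathbf{w}_{j,n}=1$ term, which only helps, so that is fine.
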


\begin{proof}
We have
\[
\mathrm{Var}_{\rho}\left(  \hat{\theta}_{j,n}\right)  =\frac{n}{\left(
n-\left\vert j\right\vert \right)  ^{2}}\mathbf{w}_{j,n}^{\prime}%
\mathrm{Cov}_{\rho}\left(  \mathbf{\Pi}_{n}\right)  \mathbf{w}_{j,n}%
\]
where the covariance matrix $\mathrm{Cov}_{\rho}\left(  \mathbf{\Pi}%
_{n}\right)  $ of $\mathbf{\Pi}_{n}$ is described in Lemma
\ref{lem-covmatrix-analog-B} (ii). We can write $\mathrm{Cov}_{\rho}\left(
\mathbf{\Pi}_{n}\right)  $ in closed form, analogously to the reasoning in the
proof of Lemma \ref{lem-covmatrix-prelim-est}. Write the symbol matrix $A_{n}$
as a function of $a_{\theta}$ as in (\ref{quantum-asy-setup-spec-density}),
and for a $n\times n$ matrix $M=\left(  M_{jl}\right)  _{j,l=1}^{n}$, define
the real matrix
\[
M^{\left[  2\right]  }=\left(  \left\vert M_{jl}\right\vert ^{2}\right)
_{j,l=1}^{n}.
\]
Then the result of Lemma \ref{lem-covmatrix-analog-B} (ii) can be written
\[
\mathrm{Cov}_{\rho}\left(  \hat{B}_{-(n-1)/2}^{\ast}\hat{B}_{-(n-1)/2}%
,\ldots,\hat{B}_{(n-1)/2}^{\ast}\hat{B}_{(n-1)/2}\right)  =\frac{1}{4}\left(
\left(  U_{n}^{\ast}A_{n}\left(  a_{\theta}\right)  U_{n}\right)  ^{\left[
2\right]  }-I_{n}\right)
\]
and hence for $\mathbf{\Pi}_{n}$ in (\ref{Pi-observble-component-def}),
(\ref{vec-of-observables}) we obtain
\[
\mathrm{Cov}_{n,\theta}\left(  \mathbf{\Pi}_{n}\right)  =\left(  U_{n}^{\ast
}A_{n}\left(  a_{\theta}\right)  U_{n}\right)  ^{\left[  2\right]  }-I_{n},
\]
with $U_{n}$ from (\ref{special-unitary-DFT-def}) for $m=n$. Recall the
definition of the real vectors $\mathbf{w}_{j,n}$ in (\ref{vectors-w--def}):
\[
\mathbf{w}_{j,n}=\left(  w_{j,k,n}\right)  _{\left\vert k\right\vert
\leq\left(  n-1\right)  /2}\text{ for }w_{j,k,n}=n^{-1/2}\psi_{j}\left(
\omega_{k,n}\right)
\]
where by (\ref{vectors-w-orthonorm-2}) we have $\left\Vert \mathbf{w}%
_{j,n}\right\Vert ^{2}=1$. Now
\begin{align*}
\mathbf{w}_{j,n}^{\prime}\mathrm{Cov}_{\rho}\left(  \mathbf{\Pi}_{n}\right)
\mathbf{w}_{j,n}  &  \leq\mathbf{w}_{j,n}^{\prime}\left(  U_{n}^{\ast}%
A_{n}\left(  a_{\theta}\right)  U_{n}\right)  ^{\left[  2\right]  }%
\mathbf{w}_{j,n}\\
&  =\mathbf{w}_{j,n}^{\prime}\left(  \left(  \left\vert \mathbf{u}_{k}^{\ast
}A\left(  a_{\theta}\right)  \mathbf{u}_{l}\right\vert ^{2}\right)
_{\left\vert k\right\vert ,\left\vert l\right\vert \leq\left(  n-1\right)
/2}\right)  \mathbf{w}_{j,n}\\
&  =\mathbf{w}_{j,n}^{\prime}\left(  \left(  \mathbf{u}_{k}^{\ast}A\left(
a_{\theta}\right)  \mathbf{u}_{l}\mathbf{u}_{l}^{\ast}A\left(  a_{\theta
}\right)  \mathbf{u}_{k}\right)  _{\left\vert k\right\vert ,\left\vert
l\right\vert \leq\left(  n-1\right)  /2}\right)  \mathbf{w}_{j,n}\\
&  =\sum_{\left\vert k\right\vert ,\left\vert l\right\vert \leq\left(
n-1\right)  /2}w_{j,k,n}w_{j,l,n}\mathbf{u}_{k}^{\ast}A\left(  a_{\theta
}\right)  \mathbf{u}_{l}\mathbf{u}_{l}^{\ast}A\left(  a_{\theta}\right)
\mathbf{u}_{k}%
\end{align*}%
\begin{align}
&  =\sum_{\left\vert k\right\vert ,\left\vert l\right\vert \leq\left(
n-1\right)  /2}w_{j,k,n}w_{j,l,n}\mathrm{Tr}\;\mathbf{u}_{k}\mathbf{u}%
_{k}^{\ast}A\left(  a_{\theta}\right)  \mathbf{u}_{l}\mathbf{u}_{l}^{\ast
}A\left(  a_{\theta}\right) \nonumber\\
&  =\mathrm{Tr}\;\left(  \sum_{\left\vert k\right\vert \leq\left(  n-1\right)
/2}w_{j,k,n}\mathbf{u}_{k}\mathbf{u}_{k}^{\ast}\right)  A\left(  a_{\theta
}\right)  \left(  \sum_{\left\vert l\right\vert \leq\left(  n-1\right)
/2}w_{j,l,n}\mathbf{u}_{l}\mathbf{u}_{l}^{\ast}\right)  A\left(  a_{\theta
}\right)  . \label{Bigtrace-1}%
\end{align}
Define a Hermitian matrix
\[
V_{j,n}:=\sum_{\left\vert k\right\vert \leq\left(  n-1\right)  /2}%
w_{j,k,n}\mathbf{u}_{k}\mathbf{u}_{k}^{\ast};
\]
then (\ref{Bigtrace-1}) equals
\begin{align*}
\mathrm{Tr}\;V_{j,n}A\left(  a_{\theta}\right)  V_{j,n}A\left(  a_{\theta
}\right)   &  =\mathrm{Tr}\;A^{1/2}\left(  a_{\theta}\right)  V_{j,n}A\left(
a_{\theta}\right)  V_{j,n}A^{1/2}\left(  a_{\theta}\right) \\
&  \leq\lambda_{\max}\left(  A\left(  a_{\theta}\right)  \right)
\;\mathrm{Tr}\;A^{1/2}\left(  a_{\theta}\right)  V_{j,n}^{2}A^{1/2}\left(
a_{\theta}\right) \\
&  =\lambda_{\max}\left(  A\left(  a_{\theta}\right)  \right)  \;\mathrm{Tr}%
\;V_{j,n}A\left(  a_{\theta}\right)  V_{j,n}\\
&  \leq\lambda_{\max}^{2}\left(  A\left(  a_{\theta}\right)  \right)
\;\mathrm{Tr}\;V_{j,n}^{2}.
\end{align*}
Here
\[
\mathrm{Tr}\;V_{j,n}^{2}=\mathrm{Tr}\;\sum_{\left\vert k\right\vert
\leq\left(  n-1\right)  /2}w_{j,k,n}^{2}\mathbf{u}_{k}\mathbf{u}_{k}^{\ast
}=\sum_{\left\vert k\right\vert \leq\left(  n-1\right)  /2}w_{j,k,n}^{2}=1
\]
since the $\mathbf{u}_{k}$, $\left\vert k\right\vert \leq\left(  n-1\right)
/2$ are an orthonormal system and $\left\Vert \mathbf{w}_{j,n}\right\Vert
^{2}=1.$ Furthermore by Lemma \ref{lem-toeplitz-EV} we have $\lambda_{\max
}\left(  A\left(  a_{\theta}\right)  \right)  \leq C_{1}$ for a $C_{1}$
depending on $M$ and $\alpha$; summarizing the reasoning so far, we have
\begin{align*}
\mathrm{Var}_{\rho}\left(  \hat{\theta}_{j,n}\right)   &  \leq%
\acute{}%
\frac{n}{\left(  n-\left\vert j\right\vert \right)  ^{2}}C_{1}^{2}=\frac{1}%
{n}\frac{1}{\left(  1-\left\vert j\right\vert n^{-1}\right)  }C_{1}^{2}\\
&  \leq\frac{1}{n}\frac{1}{\left(  1-\frac{1}{2}n^{-1/2}\right)  }C_{1}%
^{2}\leq\frac{1}{n}2C_{1}^{2}.
\end{align*}
Setting $C_{\alpha,M}=2C_{1}^{2}$ we obtain the claim.
\end{proof}

\bigskip

\textbf{(Attainment of optimal rate here)}

\bigskip

For the lower asymptotic risk bound, we assume that $\alpha\geq1$ is integer.
We will also modify the parameter sets $\Theta_{1}\left(  \alpha,M\right)  $
in the following way. Consider the seminorm $\left\vert \cdot\right\vert
_{2,\alpha}^{2}$and norm $\left\Vert \cdot\right\Vert _{2,\alpha}^{2}$ on
functions $a$ defined in (\ref{normdef})
\[
\left\vert a\right\vert _{2,\alpha}^{2}:=\sum_{k=-\infty}^{\infty}\left\vert
k\right\vert ^{2\alpha}\left\vert a_{k}\right\vert ^{2},\;\left\Vert
a\right\Vert _{2,\alpha}^{2}:=a_{0}^{2}+\left\vert a\right\vert _{2,\alpha
}^{2}%
\]
where $a_{k}$ are the Fourier coefficients (\ref{symbol-generate}). Define for
$\varepsilon,K>0$
\begin{equation}
\Theta_{1}\left(  \alpha,\varepsilon,K\right)  :=\left\{  a:\left\Vert
a\right\Vert _{2,\alpha}^{2}\leq K^{2},\inf_{\omega\in\left[  -\pi,\pi\right]
}a\left(  \omega\right)  \geq1+\varepsilon\right\}  .
\label{Theta-1-second-def}%
\end{equation}
This set is nonempty if $1+\varepsilon\leq K$: any constant function $a\left(
\omega\right)  =c\in\left[  1+\varepsilon,K\right]  $ fulfills
\begin{align*}
\left\Vert a\right\Vert _{2,\alpha}^{2}  &  =a_{0}^{2}=\left(  \frac{1}{2\pi
}\int_{\left[  -\pi,\pi\right]  }a\left(  \omega\right)  d\omega\right)
^{2}=c^{2}\leq K^{2},\\
\inf_{\omega\in\left[  -\pi,\pi\right]  }a\left(  \omega\right)   &
=c\geq1+\varepsilon.
\end{align*}
Henceforth we will assume $a\in\Theta_{1}\left(  \alpha,\varepsilon,K\right)
$ where $1+\varepsilon<K$.

Note that given $\varepsilon$ and $K$, there always exists $M$ such that
$\Theta_{1}\left(  \alpha,\varepsilon,K\right)  \subset\Theta_{1}\left(
\alpha,M\right)  $. Therefore the previous results on equivalence and
informativity bounds can be applied to experiments where $\Theta_{1}\left(
\alpha,\varepsilon,K\right)  $ is the parameter set, upon considering it a
subset of some $\Theta_{1}\left(  \alpha,M\right)  $.

\bigskip

\textbf{Quantum nonparametric estimators. }We identify a square integrable
spectral density $a$ with its set of real Fourier coefficients $\theta=\left(
\theta_{j}\right)  _{j\in\mathbb{Z}}\in\ell^{2}$ via
(\ref{Fourier-repre-real-2}). Define in analogy to
(\ref{Theta-1-prime-first-def}) and (\ref{Theta-1-second-def})%
\[
\Theta_{1}^{\prime}\left(  \alpha,\varepsilon,K\right)  :=\left\{
\theta:\text{ }\left\vert \theta_{0}\right\vert ^{2}+\sum_{j=-\infty}^{\infty
}j^{2\alpha}\left\vert \theta_{j}\right\vert ^{2}\leq K^{2},\inf_{\omega
\in\left[  -\pi,\pi\right]  }a_{\theta}\left(  \omega\right)  \geq
1+\varepsilon\right\}  .
\]
We consider estimators of $\theta$ in the quantum statistical experiment
\[
\mathcal{E}_{n,1}\left(  \Theta_{1}\left(  \alpha,\varepsilon,K\right)
\right)  =\left(  \mathcal{L}\left(  \mathfrak{F}\left(  \mathbb{C}%
^{n}\right)  \right)  ,\mathfrak{N}_{n}\left(  0,A_{n}\left(  a_{\theta
}\right)  \right)  ,\theta\in\Theta_{1}\left(  \alpha,\varepsilon,K\right)
\right)  .
\]
Let $\mathcal{B}_{\ell^{2}}$ be the Borel sigma-algebra of $\ell^{2}$ and let
$\mu$ be a sigma-finite measure on $\mathcal{B}_{\ell_{2}}$; an estimator of
$\theta$ based on a quantum measurement of a state $\rho=\mathfrak{N}%
_{n}\left(  0,A_{n}\left(  a_{\theta}\right)  \right)  $ will be given by a
POVM\ $\tau$ on $\left(  \ell^{2},\mathcal{B}_{\ell^{2}}\right)  $, i.e. a
mapping
\begin{equation}
\tau:\mathcal{B}_{\ell^{2}}\rightarrow\mathcal{L}\left(  \mathfrak{F}\left(
\mathbb{C}^{n}\right)  \right)  \label{POVM-basic}%
\end{equation}
with properties given in Appendix \ref{subsubsec:measmt-obs-channel}. If
$\rho$ is a state on the Fock space $\mathfrak{F}\left(  \mathbb{C}%
^{n}\right)  $, applying the POVM $\tau$ results in a probability measure on
$\left(  \ell^{2},\mathcal{B}_{\ell^{2}}\right)  $ given by%
\begin{equation}
Q_{\rho,\tau}\left(  B\right)  =\mathrm{Tr}\;\rho\tau\left(  B\right)  \text{,
}B\in\mathcal{B}_{\ell^{2}}. \label{distr-observable-1}%
\end{equation}
The POVM\ $\tau$ gives a state transition
\begin{equation}
T_{\tau}:\mathcal{L}^{1}\left(  \mathfrak{F}\left(  \mathbb{C}^{n}\right)
\right)  \rightarrow L^{1}\left(  \mu\right)
\label{state-transition-lower-bound-1}%
\end{equation}
(or dual channel, or TP-CP map, cf. Appendix
\ref{subsub-Append-state-transitions}), where $\mu$ can be chosen as a
probability measure on $\left(  \ell_{2},\mathcal{B}_{\ell_{2}}\right)
$\footnote{\texttt{This statement is in [BGN18], Lemma 2.1, without proof, see
also the related statement in Kahn, Guta [KG09], p. 605 top. We will have to
incorporate it in the Appendix statement, where POVM are treated (at present
there are only PVMs mentioned). 'For now, we will write a sketch of one part
of the proof in the "Discussion of technical details". }}. Thus $\tau$ gives
an $\ell^{2}$-valued observable (cp. Appendix
\ref{subsubsec:measmt-obs-channel}), i.e. an $\ell^{2}$-valued-random variable
having distribution $Q_{\rho,\tau}$; the random variable will be written
$\hat{\theta}_{n,\tau}$. From (\ref{distr-observable-1}) we obtain
\begin{equation}
Q_{\rho,\tau}\left(  B\right)  =P\left(  \hat{\theta}_{\tau}\in B|\rho\right)
=\int_{B}T_{\tau}\left(  \rho\right)  d\mu. \label{distr-observable-2}%
\end{equation}
Let $\left(  Y,\Omega_{Y}\right)  $ be a measurable space and $H$ be a Markov
kernel $H:\Omega_{Y}\times\ell^{2}\rightarrow\left[  0,1\right]  $. Then $H$
gives a state transition $L^{1}\left(  \mu\right)  \rightarrow L^{1}\left(
\nu\right)  $ where $\nu$ is a probability measure on $\Omega_{Y}$. Let
$HT_{\tau}$ be the composed state transition; in particular, if $Y$ is a
subset of $\ell^{2}$ and $H$ is given by a deterministic mapping $g:\ell
^{2}\rightarrow Y$ such that $H\left(  B,x\right)  =\mathbf{1}_{B}\left(
g\left(  x\right)  \right)  $, then $HT_{\tau}$ corresponds to the observable
$g\left(  \hat{\theta}_{n,\tau}\right)  $ where $\hat{\theta}_{n,\tau}$ has
distribution $Q_{\rho,\tau}$.

Let $\left\Vert \cdot\right\Vert _{2}^{2}$ denote squared norm in
$L^{2}\left(  -\pi,\pi\right)  $. The following lower asymptotic risk bound
for estimation of the spectral density $a_{\theta}$ can be established.

\begin{theorem}
Assume $\alpha\geq1$ integer and $1+\varepsilon<K$. For $\theta\in\Theta
_{1}^{\prime}\left(  \alpha,\varepsilon,K\right)  $, introduce shorthand
notation $\rho_{n}\left(  \theta\right)  =\mathfrak{N}_{n}\left(
0,A_{n}\left(  a_{\theta}\right)  \right)  $. Then%
\[
\liminf_{n}\inf_{\tau}\sup_{\theta\in\Theta_{1}^{\prime}\left(  \alpha
,\varepsilon,K\right)  }n^{2\alpha/\left(  2\alpha+1\right)  }\int\left\Vert
a_{u}-a_{\theta}\right\Vert _{2}^{2}\;\mathrm{Tr}\rho_{n}\left(
\theta\right)  \tau\left(  du\right)  >0
\]
where $a_{\theta},$ $\theta\in\ell^{2}$ is given by
(\ref{Fourier-repre-real-2}) and for given $n$, the infimum extends over all
POVM $\tau$ in (\ref{POVM-basic}).
\end{theorem}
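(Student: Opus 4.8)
The plan is to transfer the bound to a classical experiment by the upper informativity bound already established, and then to prove the rate lower bound there by a standard hypercube (Assouad) argument. Since $\alpha\ge1>1/2$ and $\Theta_{1}\left(\alpha,\varepsilon,K\right)\subset\Theta_{1}\left(\alpha,M\right)$ for a suitable $M$, Theorem \ref{theor-main-1} and the definition of $\delta$ give channels $\Phi_{n}$ with
\[
\sup_{\theta}\Bigl\Vert \Phi_{n}\bigl(\textstyle\bigotimes_{j=1}^{n}\mathrm{Geo}\left(p\left(J_{j,n}\left(a_{\theta}\right)\right)\right)\bigr)-\rho_{n}\left(\theta\right)\Bigr\Vert _{1}\longrightarrow0 ,\qquad \theta\in\Theta:=\Theta_{1}\left(\alpha,\varepsilon,K\right).
\]
Composing any POVM $\tau$ on $\left(\ell^{2},\mathcal{B}_{\ell^{2}}\right)$ with $\Phi_{n}$ yields a randomized estimator $\hat{\theta}_{n}$ based on independent $X_{j}\sim\mathrm{Geo}\left(p\left(J_{j,n}\left(a_{\theta}\right)\right)\right)$, $j=1,\dots,n$, whose risk for the bounded loss $\ell_{n}\left(u,\theta\right):=1\wedge\bigl(n^{2\alpha/\left(2\alpha+1\right)}\left\Vert a_{u}-a_{\theta}\right\Vert _{2}^{2}\bigr)$ differs from $\int\ell_{n}\left(u,\theta\right)\mathrm{Tr}\,\rho_{n}\left(\theta\right)\tau\left(du\right)$ by at most $o\left(1\right)$ uniformly in $\theta$ (contractivity of channels in trace norm; cf. Subsection \ref{subsec: Qu-Lecam-distance}). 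Since $\ell_{n}\le n^{2\alpha/\left(2\alpha+1\right)}\left\Vert a_{u}-a_{\theta}\right\Vert _{2}^{2}$, it therefore suffices to prove that $\liminf_{n}\inf_{\hat{\theta}_{n}}\sup_{\theta\in\Theta_{1}^{\prime}\left(\alpha,\varepsilon,K\right)}E_{n,\theta}\,\ell_{n}\left(\hat{\theta}_{n},\theta\right)>0$ in the classical geometric regression experiment. By Parseval for the orthonormal system $\left\{\psi_{j}\right\}$, $\left\Vert a_{u}-a_{\theta}\right\Vert _{2}^{2}=2\pi\sum_{j}\left(u_{j}-\theta_{j}\right)^{2}$, so this is the classical problem of estimating a point of a Sobolev ellipsoid from $n$ independent, smoothly parametrized geometric observations, under the extra positivity constraint $a_{\theta}\ge1+\varepsilon$, with minimax $L^{2}$ rate $n^{-2\alpha/\left(2\alpha+1\right)}$.

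To obtain the lower bound, fix $c_{0}\in\left(1+\varepsilon,K\right)$, so that the constant density $a_{\left(0\right)}\equiv c_{0}$ has $\left\Vert a_{\left(0\right)}\right\Vert _{2,\alpha}^{2}=c_{0}^{2}<K^{2}$, and fix a nonzero, mean-zero, compactly supported $C^{\infty}$ function $\phi$ on $\mathbb{R}$. Pick an integer $j_{n}$ with $2^{j_{n}}\asymp n^{1/\left(2\alpha+1\right)}$ and set $\rho_{n}:=\kappa\,n^{-\alpha/\left(2\alpha+1\right)}$ for a small constant $\kappa=\kappa\left(\alpha,\varepsilon,K\right)>0$; for $\epsilon=\left(\epsilon_{k}\right)_{1\le k\le2^{j_{n}}}\in\left\{-1,1\right\}^{2^{j_{n}}}$ put
\[
a_{\epsilon}\left(\omega\right):=c_{0}+\rho_{n}\sum_{k=1}^{2^{j_{n}}}\epsilon_{k}\,\phi\!\left(2^{j_{n}}\tfrac{\omega+\pi}{2\pi}-k\right),\qquad\omega\in\left[-\pi,\pi\right].
\]
These bumps have disjoint supports, and for $n$ large one checks, with constants depending only on $\left(\alpha,\varepsilon,K,\phi\right)$: (i) $\left\Vert a_{\epsilon}-c_{0}\right\Vert _{\infty}\le\rho_{n}\left\Vert \phi\right\Vert _{\infty}=o\left(1\right)$, hence $1+\varepsilon\le a_{\epsilon}\le K$, and (since $\phi$ has mean zero and $2^{2\alpha j_{n}}\rho_{n}^{2}\asymp\kappa^{2}$) $\left\Vert a_{\epsilon}\right\Vert _{2,\alpha}^{2}=c_{0}^{2}+\left\Vert a_{\epsilon}-c_{0}\right\Vert _{2,\alpha}^{2}\le c_{0}^{2}+C\kappa^{2}\le K^{2}$, so $a_{\epsilon}\in\Theta_{1}\left(\alpha,\varepsilon,K\right)$; (ii) by orthogonality $\left\Vert a_{\epsilon}-a_{\epsilon'}\right\Vert _{2}^{2}=c_{1}\rho_{n}^{2}2^{-j_{n}}h\left(\epsilon,\epsilon'\right)$ with $h$ the Hamming distance, and $2^{j_{n}}\cdot\rho_{n}^{2}2^{-j_{n}}=\rho_{n}^{2}\asymp n^{-2\alpha/\left(2\alpha+1\right)}$; (iii) for $\epsilon,\epsilon'$ differing in one coordinate the single perturbed bump changes $O\left(n/2^{j_{n}}\right)$ of the bin averages $J_{j,n}$ by $O\left(\rho_{n}\right)$, so — using the uniform bound on the relative entropy (equivalently the Hellinger distance) of two geometric laws, obtained exactly as in Lemma \ref{Lem-distance-neg-binomials} together with $J_{j,n}\left(a_{\epsilon}\right),J_{j,n}\left(a_{\epsilon'}\right)\in\left[1+\varepsilon,K\right]$ — the Kullback--Leibler divergence between the corresponding product measures is at most $C\sum_{j}\bigl(J_{j,n}\left(a_{\epsilon}\right)-J_{j,n}\left(a_{\epsilon'}\right)\bigr)^{2}\le C'\,n\rho_{n}^{2}/2^{j_{n}}=C'\kappa^{2}$, which is $<\log2$ for $\kappa$ small. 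Assouad's lemma (e.g.\ in Tsybakov's monograph) then gives a constant $c=c\left(\kappa\right)>0$ independent of $n$ with $\inf_{\hat{\theta}_{n}}\max_{\epsilon}E\left\Vert a_{\hat{\theta}_{n}}-a_{\epsilon}\right\Vert _{2}^{2}\ge c\cdot2^{j_{n}}\cdot\rho_{n}^{2}2^{-j_{n}}=c\rho_{n}^{2}\asymp n^{-2\alpha/\left(2\alpha+1\right)}$. Replacing, without loss, the candidate estimator by its projection onto the bounded hypercube makes the corresponding squared errors $O\left(n^{-2\alpha/\left(2\alpha+1\right)}\right)$ pathwise, so the truncation in $\ell_{n}$ is immaterial; the displayed inequality of the first paragraph follows, and with it the theorem.

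The only genuinely delicate point is calibrating the hypercube so that each $a_{\epsilon}$ lies simultaneously in the Sobolev ball $\left\{\left\Vert a\right\Vert _{2,\alpha}\le K\right\}$ and above the floor $1+\varepsilon$, while the amplitude $\rho_{n}\asymp n^{-\alpha/\left(2\alpha+1\right)}$ is exactly the value forced by the requirement that neighbouring hypotheses have $O\left(1\right)$ Kullback--Leibler divergence; this dictates starting from a constant density with strict slack $c_{0}<K$ and using perturbations of vanishing sup-norm. Everything else — the per-coordinate relative-entropy bound for geometric laws, essentially contained in the paper's negative-binomial distance lemma, and the bookkeeping inside Assouad's inequality — is routine. (For $\alpha>1$ one may alternatively use Theorem \ref{theor-GWN-main-1} to replace $\mathcal{F}_{n}$ by the Gaussian white-noise experiment $\mathcal{G}_{n}$: since $x\mapsto\mathrm{arc\cosh}\left(x\right)$ is bi-Lipschitz on $\left[1+\varepsilon,K\right]$, estimating $a_{\theta}$ there in $L^{2}$ amounts, up to constants, to estimating a Sobolev-ball function in white noise of level $n^{-1/2}$, whose lower bound is classical; the geometric route given above has the merit of also covering the boundary case $\alpha=1$.)
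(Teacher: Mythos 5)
Your proof is correct, but it takes a genuinely different route from the paper's. The paper first pushes the quantum experiment all the way through the chain $\mathcal{E}_{n}\preccurlyeq\mathcal{F}_{n}\simeq\mathcal{F}_{n}^{\prime}\approx\mathcal{G}_{n,1}\approx\mathcal{G}_{n,2}\approx\mathcal{G}_{n,3}$ (using Theorem \ref{theor-main-1}, Lemmas \ref{Lem-Geom-avg-values-at-points}, \ref{lem-geo-reg-white-noise}, \ref{lem-local-white-noise}, and the Brown--Low equivalence) into a Gaussian regression model with constant noise, and only then applies the multiple-hypotheses lower bound of Tsybakov (relation (\ref{result-Tsyb-4})) after selecting a well-separated subset of the bump hypotheses via the Varshamov--Gilbert bound, together with an explicit extension of that bound to Markov-kernel randomizations. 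You instead stop at the geometric regression model $\mathcal{F}_{n}$ and run Assouad's hypercube argument there directly, bounding neighbouring per-coordinate Kullback--Leibler divergences by the Fisher-information estimate for geometric laws (which follows from the same calculations as Lemma \ref{Lem-distance-neg-binomials}). Each approach has a payoff: the paper's route makes the divergence computation trivial (exact Gaussian KL formula) and uses a Fano-type bound that directly controls a bounded loss, at the cost of having to invoke the full nonparametric equivalence machinery and Varshamov--Gilbert; your route is more elementary and self-contained, and naturally handles the boundary case $\alpha=1$ without appeal to the white-noise equivalence, at the cost of (a) doing the divergence computation for geometric products, and (b) the separate projection argument you invoke to pass from Assouad's lower bound on untruncated squared $L^{2}$-error to the bounded loss $\ell_{n}$. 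Both need the same calibration of the hypercube (amplitude $\rho_{n}\asymp n^{-\alpha/(2\alpha+1)}$ with $\asymp n^{1/(2\alpha+1)}$ bumps) and the same slack $c_{0}\in(1+\varepsilon,K)$. Your parenthetical observation that the Gaussian route also works for $\alpha>1$ is exactly what the paper does.

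Two small points worth tightening: Assouad's lemma for randomized estimators requires the same extension-to-Markov-kernels reasoning that the paper spells out for the Fano route, so you should make that explicit rather than absorb it into "routine bookkeeping"; and the projection should be onto the convex hull of the hypercube (not onto the finite hypercube itself), since only projection onto a convex set is guaranteed to be a contraction in $L^{2}$ towards every $a_{\epsilon}$.
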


\begin{proof}
Let $\hat{\theta}_{n}$ be the random variable having distribution $Q_{\rho
_{n}\left(  \theta\right)  ,\tau}$ given by (\ref{distr-observable-1}). By
Markov's inequality we have for any $C>0$
\begin{equation}
n^{2\alpha/\left(  2\alpha+1\right)  }\int\left\Vert a_{u}-a_{\theta
}\right\Vert _{2}^{2}Q_{\rho_{n}\left(  \theta\right)  ,\tau}\left(
du\right)  \geq C\;Q_{\rho_{n}\left(  \theta\right)  ,\tau}\left(
n^{2\alpha/\left(  2\alpha+1\right)  }\left\Vert a_{\hat{\theta}_{n}%
}-a_{\theta}\right\Vert _{2}^{2}\geq C\right)  . \label{Markov-lower-bound-0}%
\end{equation}
We will use the method of "multiple hypotheses" for establishing the lower
bound, inspired by Sec. 2.6 in \cite{MR2724359}. Let $c\in\left(
1+\varepsilon,K\right)  $ and consider the constant function $a_{(0)}\left(
\omega\right)  =c,$ $\omega\in\left[  -\pi,\pi\right]  $. To define a set of
perturbations of $a_{(0)}$, let $\varphi\left(  x\right)  $ be a $\alpha
$-times differentiable function on $x\in\mathbb{R}$ with support contained in
$\left(  -1,1\right)  ,$ satisfying
\begin{equation}
\int_{\left[  -1,1\right]  }\varphi\left(  x\right)  dx=0.
\label{integral-zero}%
\end{equation}
Let $m$ be integer and let $\Delta_{1},\ldots,\Delta_{m}$ be a partition of
$\left[  -\pi,\pi\right]  $ into consecutive nonintersecting intervals of
length $2h$ where $h=\pi/m$. Let $x_{j}$ be the midpoint of the interval
$\Delta_{j}$, $j=1,\ldots,m$. Denote by $\Lambda_{m}$ a set of $N$-dimensional
binary vectors
\[
\Lambda_{m}=\left\{  \lambda:\lambda=\left(  \lambda_{1},\ldots,\lambda
_{m}\right)  ,\lambda_{j}\in\left\{  0,1\right\}  ,j=1,\ldots,m\right\}  .
\]
Define a set of functions, indexed by $\lambda\mathbf{\in}\Lambda_{m}$, by
\begin{equation}
a\left(  \omega,\lambda\right)  =c+\sum_{j=1}^{m}L\lambda_{j}h^{\alpha}%
\varphi\left(  \frac{\omega-x_{j}}{h}\right)  .
\label{special-functions-a-lambda-def}%
\end{equation}
for some $L>0$. We note that
\begin{equation}
\sup_{\omega\in\left[  -\pi,\pi\right]  }\left\vert a\left(  \omega
,\lambda\right)  -a_{(0)}\left(  \omega\right)  \right\vert \leq Lh^{\alpha
}\left\Vert \varphi\right\Vert _{\infty} \label{shrinking-neighb-unif-norm}%
\end{equation}
so that for sufficiently large $m$, we have
\begin{equation}
\inf_{\omega\in\left[  -\pi,\pi\right]  }a\left(  \omega,\lambda\right)
\geq1+\varepsilon,\;\lambda\mathbf{\in}\Lambda_{m}.
\label{lower-bound-test-functions}%
\end{equation}
For $\alpha$-times differentiable functions $f$ on $\left[  -\pi,\pi\right]  $
fulfilling periodic boundary conditions%
\[
f^{\left(  k\right)  }\left(  -\pi\right)  =f^{\left(  k\right)  }\left(
\pi\right)  \text{, }k=0,\ldots,\alpha-1
\]
the equality
\begin{equation}
\left\vert f\right\vert _{2,\alpha}^{2}=\frac{1}{2\pi}\int_{\left[  -\pi
,\pi\right]  }\left(  f^{\left(  \alpha\right)  }\left(  \omega\right)
\right)  ^{2}d\omega\label{norm-equal-basic}%
\end{equation}
is well known (cp. Lemma A.3 in \cite{MR2724359}). Now $a\left(
\omega,\lambda\right)  $ fulfills the boundary conditions, and
\begin{align*}
a\left(  \omega,\lambda\right)  ^{\left(  \alpha\right)  }  &  =\sum_{j=1}%
^{N}L\lambda_{j}\varphi^{\left(  \alpha\right)  }\left(  \frac{\omega-x_{j}%
}{h}\right)  ,\\
\int_{\left[  -\pi,\pi\right]  }\left(  a\left(  \omega,\lambda\right)
^{\left(  \alpha\right)  }\right)  ^{2}d\omega &  =L\sum_{j=1}^{m}\lambda
_{j}\int_{\Delta_{j}}\left(  \varphi^{\left(  \alpha\right)  }\left(
\frac{\omega-x_{j}}{h}\right)  \right)  ^{2}d\omega\\
&  \leq Lmh\left\Vert \varphi^{\left(  \alpha\right)  }\right\Vert _{2}%
^{2}=L\pi\left\Vert \varphi^{\left(  \alpha\right)  }\right\Vert _{2}^{2}.
\end{align*}
Hence using (\ref{norm-equal-basic})
\begin{equation}
\left\vert a\left(  \cdot,\lambda\right)  \right\vert _{2,\alpha}^{2}\leq
\frac{L}{2}\left\Vert \varphi^{\left(  \alpha\right)  }\right\Vert _{2}^{2}.
\label{seminorm-bound}%
\end{equation}
To bound the full norm $\left\Vert a\left(  \cdot,\lambda\right)  \right\Vert
_{2,\alpha}^{2}$, we also have to bound the Fourier coefficient of order $0$
of $a\left(  \cdot,\lambda\right)  $. We obtain
\begin{align*}
&  \frac{1}{2\pi}\int_{\left[  -\pi,\pi\right]  }a\left(  \omega
,\lambda\right)  d\omega\\
&  =\frac{1}{2\pi}\int_{\left[  -\pi,\pi\right]  }\left(  c+\sum_{j=1}%
^{m}\lambda_{j}h^{\alpha}\varphi\left(  \frac{\omega-x_{j}}{h}\right)
\right)  d\omega=c
\end{align*}
where we have used (\ref{integral-zero}) in the last equality. In conjunction
with (\ref{seminorm-bound}) this gives
\[
\left\Vert a\left(  \cdot,\lambda\right)  \right\Vert _{2,\alpha}^{2}\leq
c^{2}+\frac{L}{2}\left\Vert \varphi^{\left(  \alpha\right)  }\right\Vert
_{2}^{2}.
\]
Recall $c<K$; hence choosing $L$ sufficiently small guarantees $\left\Vert
a\left(  \cdot,\lambda\right)  \right\Vert _{2,\alpha}^{2}\leq K^{2}$. In view
of (\ref{lower-bound-test-functions}), for sufficiently large $m,$ we have
\begin{equation}
\Theta_{0,n}:=\left\{  a\left(  \cdot,\lambda\right)  ,\lambda\mathbf{\in
}\Lambda_{m}\right\}  \subset\Theta_{1}\left(  \alpha,\varepsilon,K\right)  .
\label{inclusion-0}%
\end{equation}
Now consider the H\"{o}lder norm $\left\Vert \cdot\right\Vert _{C^{\beta}}$
given by (\ref{holder-norm}) for $\beta=1$. For the functions $a\left(
\cdot,\lambda\right)  $ we have
\begin{align*}
\left\Vert a\left(  \cdot,\lambda\right)  \right\Vert _{C^{1}}  &
\leq\left\Vert a\left(  \cdot,\lambda\right)  \right\Vert _{\infty}+\left\Vert
a^{\prime}\left(  \cdot,\lambda\right)  \right\Vert _{\infty}\\
&  \leq c+Lh^{\alpha}\left\Vert \varphi\right\Vert _{\infty}+Lh^{\alpha
-1}\left\Vert \varphi^{\prime}\right\Vert _{\infty}\leq c+L\pi^{\alpha}\left(
\left\Vert \varphi\right\Vert _{\infty}+\left\Vert \varphi^{\prime}\right\Vert
_{\infty}\right)  .
\end{align*}
Since also (\ref{lower-bound-test-functions}) holds, we have for any $L>0$,
for sufficiently large $m$ and some $M>0$%
\begin{equation}
\Theta_{0,n}\subset\Theta_{1,c}\left(  1,M\right)  \label{inclusion-1}%
\end{equation}
for the set $\Theta_{1,c}\left(  \beta,M\right)  $ given by
(\ref{Big-Theta-11-c-def}) for $\beta=1$. Suppose we choose
\begin{equation}
m=\left[  c_{0}n^{1/\left(  2\alpha+1\right)  }\right]  +1, \label{choice-m}%
\end{equation}
so that $h\sim c_{0}^{-1}n^{-1/\left(  2\alpha+1\right)  }$. Then we have by
(\ref{shrinking-neighb-unif-norm}) for all $\lambda\mathbf{\in}\Lambda_{m}$
\[
\left\Vert a\left(  \cdot,\lambda\right)  -a_{(0)}\right\Vert _{\infty}\leq
h^{\alpha}\left\Vert \varphi\right\Vert _{\infty}=O\left(  n^{-\alpha/\left(
2\alpha+1\right)  }\right)  .
\]
Setting $\gamma_{n}=n^{-\alpha/\left(  2\alpha+1\right)  }$ and recalling
$a_{(0)}\left(  \omega\right)  =c$, we also have
\begin{equation}
\Theta_{0,n}\subset\Theta_{1,c}\left(  1,M\right)  \cap B\left(
a_{(0)},\gamma_{n}\right)  \label{inclusion-2}%
\end{equation}
where the set $B\left(  a_{(0)},\gamma_{n}\right)  $ is defined in
(\ref{NP-shrinking-neighborhood-def}). Since $\gamma_{n}$ fulfills the
condition\newline$\gamma_{n}=o\left(  \left(  n/\log n\right)  ^{-1/3)}%
\right)  $ if $\alpha\geq1$, we can apply Lemma \ref{lem-local-white-noise};
this will be invoked below.

Recall that our nonparametric estimates of the spectral density $a$ are
originally obtained in the quantum experiment $\mathcal{E}_{n}\left(
\Theta_{1}\left(  \alpha,\varepsilon,K\right)  \right)  $ of
(\ref{basic-quantum-experi-def}), $\Theta_{1}\left(  \alpha,\varepsilon
,K\right)  $ being a subset of some $\Theta_{1}\left(  \alpha,M\right)  $. By
Theorem \ref{theor-main-1} and (\ref{inclusion-0}) we have
\begin{equation}
\mathcal{E}_{n}\left(  \Theta_{0,n}\right)  \preccurlyeq\mathcal{F}_{n}\left(
\Theta_{0,n}\right)  . \label{semi-order-chain-1}%
\end{equation}
In view of (\ref{inclusion-1}), we can apply Lemmas
\ref{Lem-Geom-avg-values-at-points} and \ref{lem-geo-reg-white-noise} to
conclude
\begin{equation}
\mathcal{F}_{n}\left(  \Theta_{0,n}\right)  \simeq\mathcal{F}_{n}^{\prime
}\left(  \Theta_{0,n}\right)  \approx\mathcal{G}_{n,1}\left(  \Theta
_{0,n}\right)  \label{semi-order-chain-2}%
\end{equation}
By (\ref{inclusion-2}) and Lemma \ref{lem-local-white-noise}, we obtain
\begin{equation}
\mathcal{G}_{n,1}\left(  \Theta_{0,n}\right)  \approx\mathcal{G}_{n,2}\left(
\Theta_{0,n}\right)  \label{semi-order-chain-3}%
\end{equation}
where $\mathcal{G}_{n,2}\left(  \Theta_{0,n}\right)  $ is the experiment given
by observations (\ref{pre-variance-stable-SDE}) with $a\in\Theta_{0,n}$, which
for $a_{(0)}\left(  \omega\right)  =c$ is
\begin{equation}
dY_{\omega}=a\left(  \omega\right)  d\omega+\left(  2\pi/n\right)
^{1/2}\left(  c^{2}-1\right)  ^{1/2}dW_{\omega}\text{, }\omega\in\left[
-\pi,\pi\right]  . \label{pre-variance-stable-SDE-2}%
\end{equation}
Set $\sigma^{2}=c^{2}-1$ and consider the equidistant grid in $\left[
-\pi,\pi\right]  $ given by points $t_{j,n}$, $j=1,\ldots,n$ defined in
(\ref{equidistant-grid-def}). Consider the experiment $\mathcal{G}%
_{n,3}\left(  \Theta_{0,n}\right)  $ given by observations
\begin{equation}
Y_{j,n}=a\left(  t_{j,n}\right)  +\sigma\xi_{j},\;j=1,\ldots,n
\label{NP-normal-reg-model}%
\end{equation}
where $\xi_{j}$ are independent standard normal and $a\in\Theta_{0,n}$. By the
result of \cite{MR1425958} for parameter space $C^{1}(M)$ given by
(\ref{Hoelder-class-def}) (which contains $\Theta_{1,c}\left(  1,M\right)  $),
we have in view of (\ref{inclusion-1})
\begin{equation}
\mathcal{G}_{n,2}\left(  \Theta_{0,n}\right)  \approx\mathcal{G}_{n,3}\left(
\Theta_{0,n}\right)  . \label{semi-order-chain-4}%
\end{equation}
Since the relation "$\preccurlyeq$" as a semi-order between sequences of
experiments is transitive, we obtain from (\ref{semi-order-chain-1}%
)-(\ref{semi-order-chain-3}) and (\ref{semi-order-chain-4})%
\begin{equation}
\mathcal{E}_{n}\left(  \Theta_{0,n}\right)  \preccurlyeq\mathcal{G}%
_{n,3}\left(  \Theta_{0,n}\right)  . \label{semi-order-chain-5}%
\end{equation}
As a classical statistical experiment, $\mathcal{G}_{n,3}\left(  \Theta
_{0,n}\right)  $ is to be written%
\[
\left(  \mathbb{R}^{n},\mathcal{B}^{n},\left(  N_{n}\left(  a^{\left(
n\right)  },\sigma^{2}I_{n}\right)  ,a\in\Theta_{0,n}\right)  \right)
\]
where $a^{\left(  n\right)  }:=\left(  a\left(  t_{j,n}\right)  \right)
_{j=1}^{n}$ for a function $a$ on $\left[  -\pi,\pi\right]  $. To write it as
a quantum experiment, let $\nu^{n}$ be the normal $N_{n}\left(  0,\sigma
^{2}I_{n}\right)  $-distribution in $\mathbb{R}^{n}$; then $\mathcal{G}%
_{n,3}\left(  \Theta_{0,n}\right)  $ is the set of states on the von Neumann
algebra $L^{\infty}\left(  \nu^{n}\right)  $ given by the densities $\nu
^{n}\left(  a\right)  :=dN_{n}\left(  a^{\left(  n\right)  },\sigma^{2}%
I_{n}\right)  /d\nu^{n},a\in\Theta_{0,n}$, a subset of $L^{1}\left(  \nu
^{n}\right)  $. By (\ref{semi-order-chain-5}) there is a TP-CP map (dual
channel) $V_{n}:L^{1}\left(  \nu^{n}\right)  \rightarrow\mathcal{L}^{1}\left(
\mathfrak{F}\left(  \mathbb{C}^{n}\right)  \right)  $ such that
\[
\sup_{a\in\Theta_{0,n}}\left\Vert \mathfrak{N}_{n}\left(  0,A_{n}\left(
a\right)  \right)  -V_{n}\left(  \nu^{n}\left(  a\right)  \right)  \right\Vert
_{1}\rightarrow0.
\]
If $T_{\tau}$ is the dual channel given by
(\ref{state-transition-lower-bound-2}) and $T_{\tau}V_{n}$ the composed map,
it follows
\[
\sup_{a\in\Theta_{0,n}}\left\Vert T_{\tau}\left(  \mathfrak{N}_{n}\left(
0,A_{n}\left(  a\right)  \right)  \right)  -T_{\tau}V_{n}\left(  \nu
^{n}\left(  a\right)  \right)  \right\Vert _{1}\rightarrow0.
\]
Equivalently, in terms of the parameter $\theta$, we obtain
\[
\sup_{\theta\in\Theta_{0,n}^{\prime}}\left\Vert T_{\tau}\left(  \mathfrak{N}%
_{n}\left(  0,A_{n}\left(  a_{\theta}\right)  \right)  \right)  -T_{\tau}%
V_{n}\left(  \nu^{n}\left(  a_{\theta}\right)  \right)  \right\Vert
_{1}\rightarrow0
\]
where, with $\Theta_{0,n}$ from (\ref{inclusion-0}), we define%
\[
\Theta_{0,n}^{\prime}:=\left\{  \theta\in\ell^{2}:a_{\theta}\in\Theta
_{0,n}\right\}  .
\]
In (\ref{Markov-lower-bound-a}) the probability measure $Q_{\rho_{n}\left(
\theta\right)  ,\tau}$ corresponding to the state $T_{\tau}\left(  \rho
_{n}\left(  \theta\right)  \right)  $, $\rho_{n}\left(  \theta\right)
=\mathfrak{N}_{n}\left(  0,A_{n}\left(  a_{\theta}\right)  \right)  $ can
therefore be replaced, up to an $o\left(  1\right)  $ term, by a probability
measure $Q_{1,n,\theta,\tau}\left(  \cdot\right)  $, say, corresponding to the
state $T_{\tau}V_{n}\left(  \nu^{n}\left(  a_{\theta}\right)  \right)  $.
Hence from (\ref{Markov-lower-bound-a}) we obtain,
\begin{align}
&  \inf_{\tau}\sup_{\theta\in\Theta_{1}^{\prime}\left(  \alpha,\varepsilon
,K\right)  }n^{2\alpha/\left(  2\alpha+1\right)  }\int\left\Vert
a_{u}-a_{\theta}\right\Vert _{2}^{2}Q_{\rho_{n}\left(  \theta\right)  ,\tau
}\left(  du\right) \nonumber\\
&  \geq\inf_{\tau}\sup_{\theta\in\Theta_{0,n}^{\prime}}C\;Q_{1,n,\theta,\tau
}\left(  \left\Vert \hat{a}_{n,\tau}-a_{\theta}\right\Vert _{2}^{2}\geq
n^{-2\alpha/\left(  2\alpha+1\right)  }2C\right)  +o\left(  1\right)
\label{Markov-lower-bound-2}%
\end{align}
where $\hat{a}_{n,\tau}$ is the estimator of $a$ corresponding via
(\ref{a-hat-n-p-def}) to a state transition $T_{\tau}V_{n}$ with POVM\ $\tau$
as in (\ref{POVM-mod}). Here $T_{\tau}V_{n}$ is a state transition%
\[
T_{\tau}V_{n}:L^{1}\left(  \nu^{n}\right)  \rightarrow L^{1}\left(
\mu\right)
\]
where $\mu$ is the probability measure on $\left(  \mathbb{R}^{2n+1}%
,\mathcal{B}^{2n+1}\right)  $ from (\ref{state-transition-lower-bound-2}),
related to the POVM\ $\tau$. Since $T_{\tau}V_{n}$ is a TP-CP map, it is
linear, positive and $L^{1}$-norm-preserving, hence it is a transition in the
sense of Le Cam. The measures $\nu^{n}$ and $\mu$ are both given on Polish
spaces with their respective Borel sigma-algebra ($\mathbb{R}^{n}$ and
$\mathbb{R}^{2n+1}$, resp.), hence by Proposition 9.2. in \cite{MR1425959} the
transition $T_{\tau}V_{n}$ is given by a Markov kernel
\begin{equation}
\kappa:\mathcal{B}^{2n+1}\times\mathbb{R}^{n}\rightarrow\left[  0,1\right]  .
\label{Markov-kernel-np-1}%
\end{equation}
Alternatively, that claim follows from Remark 55.6(3) in \cite{MR812467}
\textbf{(CHECK\ THAT\ AGAIN!). }Hence we obtain a lower bound to
(\ref{Markov-lower-bound-2}) by replacing $\inf_{\tau}$ by an infimum over all
Markov kernels $\kappa$ as in (\ref{Markov-kernel-np-1}). Accordingly, let
$Q_{2,n,\theta}$ be the probability measure on $\left(  \mathbb{R}%
^{n},\mathcal{B}^{n}\right)  $
\begin{equation}
Q_{2,n,\theta}=N_{n}\left(  a_{\theta}^{\left(  n\right)  },\sigma^{2}%
I_{n}\right)  , \label{law-Q-2-n-theta-def}%
\end{equation}
with $E_{2,n,\theta}$ being the corresponding expectation, and for
$u\in\mathbb{R}^{2n+1}$ denote the function on $\left[  -\pi,\pi\right]  $
\[
\hat{a}\left(  u\right)  :=\sum_{\left\vert j\right\vert \leq n}u_{j}\psi_{j}%
\]
where $\psi_{j}$ are the basis functions (\ref{psi-def}). Then we obtain
\begin{align}
&  \inf_{\tau}\sup_{\theta\in\Theta_{1}^{\prime}\left(  \alpha,\varepsilon
,K\right)  }n^{2\alpha/\left(  2\alpha+1\right)  }E_{n,\rho_{n}\left(
\theta\right)  ,\tau}\left\Vert \hat{a}_{n}-a_{\theta}\right\Vert _{2}%
^{2}\nonumber\\
&  \geq\inf_{\kappa}\sup_{\theta\in\Theta_{0,n}^{\prime}}C\;E_{2,n,\theta}%
\int\mathbf{1}\left(  \left\Vert \hat{a}\left(  u\right)  -a_{\theta
}\right\Vert _{2}^{2}\geq n^{-2\alpha/\left(  2\alpha+1\right)  }2C\right)
\kappa\left(  du,Y^{\left(  n\right)  }\right)  +o\left(  1\right)
\label{pre-lower-rate-bound-2}%
\end{align}
where $Y^{(n)}$ are the data in the regression model
(\ref{NP-normal-reg-model}). We will now apply the method used in Theorem 2.5
of \cite{MR2724359} to obtain a lower risk bound for $L^{2}$-error in multipe
hypothesis testing, appropriately modified to allow for randomized decisions.
Let $L^{2}=L^{2}\left(  -\pi,\pi\right)  $ and $\mathcal{B}_{L^{2}}$ be the
corresponding Borel sigma-algebra; for probability measures $P,Q$ with $P\ll
Q$ the relative .entropy (Kullback information) is
\[
S\left(  P||Q\right)  =\int\log\frac{dP}{dQ}dP.
\]
The method involves the following.

\textbf{(a)} A set of "hypotheses" $\Theta_{00}=\left\{  f_{j},j=0,\ldots
,N\right\}  \subset L^{2}$ fulfilling%
\[
\left\Vert f_{j}-f_{k}\right\Vert _{2}\geq2s>0,0\leq j<k\leq N
\]
\newline\textbf{(b) }a set of probability measures $Q_{f}$ on a measurable
space $\left(  X,\Omega_{X}\right)  $, indexed by $f\in\Theta_{00}$ such that
with $Q_{j}:=Q_{f_{j}}$ and $0<\gamma<1/8$ one has
\begin{align}
Q_{j}  &  \ll Q_{0},\;j=1,\ldots,N,\label{cond-Tsyb-2}\\
\frac{1}{N}\sum_{j=1}^{N}S\left(  Q_{j}||Q_{0}\right)   &  \leq\gamma\log N.
\label{cond-Tsyb-3}%
\end{align}
Then for any nonrandomized estimator $\hat{f}:\left(  X,\Omega_{X}\right)
\rightarrow\left(  L^{2},\mathcal{B}_{L^{2}}\right)  $ one has
\begin{equation}
\max_{f\in\Theta_{00}}Q_{f}\left(  \left\Vert \hat{f}-f\right\Vert
_{2}>s\right)  \geq\frac{\sqrt{N}}{1+\sqrt{N}}\left(  1-2\gamma-\sqrt
{\frac{2\gamma}{\log N}}\right)  . \label{result-Tsyb-4}%
\end{equation}
The nonasymptotic result (\ref{result-Tsyb-4}) holds in generality under the
setting \textbf{(a)} and \textbf{(b)}; it is not tied to any particular form
of the distributions $Q_{j}$. It can be extended to randomized estimators in
the following way. Let $\left(  Y,\Omega_{Y}\right)  $ be a measurable space
and $\kappa$ be a Markov kernel $\kappa:\Omega_{Y}\times w\rightarrow\left[
0,1\right]  $ for $w\in X$. Consider extensions $\tilde{Q}_{j}=\kappa\times
Q_{j}$ of the probability measures $Q_{j}$ onto $\Omega_{Y}\times\Omega_{X}$
in the following way: for any p.m. $Q$ on $\left(  X,\Omega_{X}\right)  $ we
set
\[
\left(  \kappa\times Q\right)  \left(  B_{1}\times B_{2}\right)  =\int_{B_{2}%
}\kappa\left(  B_{1},w\right)  Q\left(  dw\right)  \text{, }B_{1}\in\Omega
_{Y}\text{, }B_{2}\in\Omega_{X}\text{.}%
\]
Assume $Q_{j}\ll Q_{0}$, $\;j=1,\ldots,N$ and let $q_{j}=dQ_{j}/dQ_{0}$,
$\;j=1,\ldots,N$; then it is easy to see that $\tilde{Q}_{j}\ll\tilde{Q}_{0}$
and
\[
\frac{d\tilde{Q}_{j}}{d\tilde{Q}_{0}}\left(  y,w\right)  =q_{j}\left(
w\right)  \text{, }j=1,\ldots,N\text{, a.s. }\left[  \tilde{Q}_{0}\right]
\text{.}%
\]
Consequently
\[
S\left(  \tilde{Q}_{j}||\tilde{Q}_{0}\right)  =\int\log\frac{d\tilde{Q}_{j}%
}{d\tilde{Q}_{0}}d\tilde{Q}_{j}=\int\log q_{j}d\tilde{Q}_{j}=\int\log
q_{j}dQ_{j}=S\left(  Q_{j}||Q_{0}\right)  .
\]
In the setting \textbf{(a)}, \textbf{(b)} we now set $Q_{j}:=\kappa\times
Q_{f_{j}},$ and consider all nonrandomized estimators $\tilde{f}:\left(
Y\times X,\Omega_{Y}\times\Omega_{X}\right)  \rightarrow\left(  L^{2}%
,\mathcal{B}_{L^{2}}\right)  $. If $\kappa$ is such that $\left(  Y,\Omega
_{Y}\right)  =\left(  L^{2},\mathcal{B}_{L^{2}}\right)  $ and $\tilde
{f}\left(  y,w\right)  =y$ then $\tilde{f}$ corresponds to a randomized
estimator using Markov $\kappa$ and original data $w\in X$. Applying result
(\ref{result-Tsyb-4}) in this setting, we can now claim that it holds for any
randomized estimator given by a Markov kernel $\kappa:\mathcal{B}_{L^{2}%
}\times\mathbb{R}^{n}\rightarrow\left[  0,1\right]  $ with input data in
$\left(  X,\Omega\right)  $.

In the regression model (\ref{NP-normal-reg-model}) this can be applied in the
following way. To find a set of functions $\Theta_{00,n}$, note first that for
the functions $a\left(  \cdot,\lambda\right)  $ given by
(\ref{special-functions-a-lambda-def}) we have for $\lambda,\lambda^{\prime
}\in\Lambda_{m}$%
\begin{align}
\left\Vert a\left(  \cdot,\lambda\right)  -a\left(  \cdot,\lambda^{\prime
}\right)  \right\Vert _{2}^{2}  &  =\sum_{j=1}^{m}\left(  \lambda_{j}%
-\lambda_{j}^{\prime}\right)  ^{2}\int_{\Delta_{j}}L^{2}h^{2\alpha}\varphi
^{2}\left(  \frac{\omega-x_{j}}{h}\right)  d\omega\nonumber\\
&  =L^{2}h^{2\alpha+1}\left\Vert \varphi\right\Vert _{2}^{2}\sum_{j=1}%
^{m}\left(  \lambda_{j}-\lambda_{j}^{\prime}\right)  ^{2}\nonumber\\
&  =L^{2}h^{2\alpha+1}\left\Vert \varphi\right\Vert _{2}^{2}\rho\left(
\lambda,\lambda^{\prime}\right)  \label{Var-Gil-1}%
\end{align}
where
\[
\rho\left(  \lambda,\lambda^{\prime}\right)  =\sum_{j=1}^{m}\mathbf{1}\left(
\lambda_{j}\neq\lambda_{j}^{\prime}\right)
\]
is the Hammming distance between $\lambda,\lambda^{\prime}\in\Lambda_{m}$. Now
the Varshamov-Gilbert bound (Lemma 2.9 in \cite{MR2724359}) provides a subset
$\left\{  \lambda^{\left(  0\right)  },\ldots,\lambda^{\left(  N\right)
}\right\}  $ of $\Lambda_{m}$ such that $\lambda^{\left(  0\right)  }=\left(
0,\ldots,0\right)  $ and
\begin{align}
\rho\left(  \lambda^{\left(  j\right)  },\lambda^{\left(  k\right)  }\right)
&  \geq\frac{m}{8}\text{, }0\leq j<k\leq N,\label{Var-Gil-2}\\
N  &  \geq2^{m/8}. \label{Var-Gil-3}%
\end{align}
Define a set of functions on $\left[  -\pi,\pi\right]  $%
\begin{align*}
a_{j,n}  &  =a\left(  \cdot,\lambda^{\left(  j\right)  }\right)
,j=0,\ldots,N,\\
\Theta_{00,n}  &  =\left\{  a_{j,n},j=0,\ldots,N\right\}  .
\end{align*}
Then with a choice of $m$ as in (\ref{choice-m}) we obtain from () and (9%
\begin{align*}
\left\Vert a_{j,n}-a_{k,n}\right\Vert _{2}^{2}  &  \geq L^{2}h^{2\alpha
+1}\left\Vert \varphi\right\Vert _{2}^{2}\frac{m}{8}=\frac{L^{2}\pi
^{2\alpha+1}}{8}\left\Vert \varphi\right\Vert _{2}^{2}m^{-2\alpha}\\
&  \geq\frac{L^{2}\pi^{2\alpha+1}}{16}\left\Vert \varphi\right\Vert _{2}%
^{2}c_{0}^{2\alpha}n^{-2\alpha/\left(  2\alpha+1\right)  }%
\end{align*}
for sufficiently large $n$. Hence for a constant $C_{1}=L\pi^{\alpha
+1/2}\left\Vert \varphi\right\Vert _{2}/8$ and
\begin{equation}
s_{n}:=C_{L,\varphi}c_{0}^{\alpha}n^{-\alpha/\left(  2\alpha+1\right)  },
\label{s-n-rate-def}%
\end{equation}
Condition \textbf{(a)} is fulfilled with $f_{j}=a_{j,n}$ and
\[
\left\Vert a_{j,n}-a_{k,n}\right\Vert _{2}\geq2s_{n}\text{. }%
\]
Regarding Condition \textbf{(b),} we set $Q_{j}=N_{n}\left(  a_{j,n}^{\left(
n\right)  },\sigma^{2}I_{n}\right)  $, $j=0,\ldots,N$, then evidently
(\ref{cond-Tsyb-2}) is fulfilled. We note the well known general result, for
$d_{i}\in\mathbb{R}^{n}$, $i=0,1$
\[
S\left(  N_{n}\left(  d_{1},\sigma^{2}I_{n}\right)  ||N_{n}\left(
d_{0},\sigma^{2}I_{n}\right)  \right)  =\frac{1}{2\sigma^{2}}\left\Vert
d_{1}-d_{0}\right\Vert ^{2}.
\]
In view of $Q_{0}=N_{n}\left(  c\mathbf{1}_{n},\sigma^{2}I_{n}\right)  $,
$\mathbf{1}_{n}=\left(  1,\ldots,1\right)  ^{\prime}$ we obtain with
$\sigma^{2}=c^{2}-1$
\begin{align*}
S\left(  Q_{j}||Q_{0}\right)   &  =\frac{1}{2\sigma^{2}}\left\Vert
a_{j,n}^{\left(  n\right)  }-c\mathbf{1}_{n}\right\Vert ^{2}=\frac{1}%
{2\sigma^{2}}\sum_{k=1}^{n}\left(  a_{j,n}\left(  t_{k,n}\right)  -c\right)
^{2}\\
&  \leq\frac{1}{2\sigma^{2}}L^{2}h^{2\alpha}\left\Vert \varphi\right\Vert
_{\infty}n=\frac{\pi^{2\alpha}}{2\left(  c^{2}-1\right)  }L^{2}m^{-2\alpha
}\left\Vert \varphi\right\Vert _{\infty}n\\
&  \leq C_{2}c_{0}^{-\left(  2\alpha+1\right)  }m\text{ for }C_{2}%
=L^{2}\left\Vert \varphi\right\Vert _{\infty}\pi^{2\alpha}/2\left(
c^{2}-1\right)  .
\end{align*}
By (\ref{Var-Gil-3}) we have $m\leq8\log N/\log2$. Now select some $\gamma
\in\left(  0,1/8\right)  $ and then choose
\[
c_{0}=\left(  \frac{8C_{2}}{\gamma\log2}\right)  ^{1/\left(  2\alpha+1\right)
}.
\]
We then obtain
\[
S\left(  Q_{j}||Q_{0}\right)  \leq\gamma\log N
\]
and thus condition \textbf{(b)}. It is then to be noted that $N\rightarrow
\infty$ by (\ref{Var-Gil-3}), so that the r.h.s. of (\ref{result-Tsyb-4}) is
$\left(  1-2\gamma\right)  \left(  1+o\left(  1\right)  \right)  $. Moreover,
the choice of $C>0$ in (\ref{Markov-lower-bound-0}) was arbitrary; with a
choice $2C>C_{L,\varphi}c_{0}^{\alpha}$ (cp. (\ref{s-n-rate-def})) we obtain
in (\ref{pre-lower-rate-bound-2}), with (\ref{result-Tsyb-4})
\begin{align*}
&  \inf_{\kappa}\sup_{\theta\in\Theta_{0,n}^{\prime}}\;E_{2,n,\theta}%
\int\mathbf{1}\left(  \left\Vert \hat{a}\left(  u\right)  -a_{\theta
}\right\Vert _{2}^{2}\geq n^{-2\alpha/\left(  2\alpha+1\right)  }2C\right)
\kappa\left(  du,Y^{\left(  n\right)  }\right) \\
&  \geq\left(  1-2\gamma\right)  \left(  1+o\left(  1\right)  \right)  \text{
as }n\rightarrow\infty
\end{align*}
where the infimum extends over all Markov kernels (\ref{Markov-kernel-np-1}).
Since $\gamma\in\left(  0,1/8\right)  $, with (\ref{Markov-lower-bound-2}) the
result is proved.
\end{proof}

%

\end{privatenotes}%

\appendix

\section{Appendix}

\subsection{States, channels, observables\label{subsec:States-channels-obs}}

\subsubsection{\textit{Von Neumann algebras}}

\textit{ }Let $\mathcal{A}$ be a von Neumann algebra of bounded linear
operators on a complex Hilbert space $\mathcal{H}$ (\cite{MR1721402}, \S 46).
$\mathcal{H}$ will be assumed separable in the sequel. The two examples we
will consider are (i) the set $\mathcal{L}(\mathcal{H})$ of bounded linear
operators on $\mathcal{H}$ (\cite{MR1070713}, IX.7.2) ), (ii) the set of
functions $L^{\infty}\left(  \mu\right)  $ on a $\sigma$-finite measure space
$\left(  X,\Omega,\mu\right)  $, construed as linear operators on
$\mathcal{H}=L^{2}\left(  \mu\right)  $ by pointwise multiplication
(\cite{MR1070713}, IX.7.2 for both cases). In the former case, $\mathcal{H}$
will always be a symmetric Fock space $\mathfrak{F}\left(  \mathbb{C}%
^{n}\right)  $, which his separable (\cite{MR3012668}, 19.3, cf. also Lemma
\ref{Lem-spec-decompos-Fock-op} below). In the latter case, the measurable
space $\left(  X,\Omega\right)  $ will be a Polish space with the respective
Borel $\sigma$-algebra, so that $L^{2}\left(  \mu\right)  $ is separable
(\cite{MR3098996}, 3.4.5).%

\begin{privatenotes}
\begin{boxedminipage}{\textwidth}%

\begin{sfblock}
In the latter reference (Cohn), the theorem 3.4.5 only assumes that a
sigma-algebra $\Omega$ is countably generated. But this is true for the Borel
$\sigma$-algebra of a Polish space; the near-trivial proof can be found in
discussions on the Internet. Let $X_{0}$ be a countable dense subset of $x$
and consider all open metric balls $\mathcal{M}=\left\{  B\left(
x,1/n\right)  ,x\in X_{0},n\in\mathbb{N}\right\}  $ ($x$ the center of the
ball, $1/n$ the radius). Then $\mathcal{M}$ is a countable collection of sets,
and every open set in $X$ is a countable union of sets from $\mathcal{M}$.
Hence the Borel $\sigma$-algebra of $X$ is generated by $\mathcal{M}$, hence
countably generated.
\end{sfblock}

%

\end{boxedminipage}
\end{privatenotes}%

\subsubsection{\textit{The predual}}

For every von Neumann algebra $\mathcal{A}$ there is a Banach space
$\mathcal{A}_{\ast}$ such that $\mathcal{A}$ is the dual Banach space of
$\mathcal{A}_{\ast}$ (\cite{MR1490835}, 1.1.2). $\mathcal{A}_{\ast}$ is unique
up to an isometric isomorphism (\cite{MR1490835} 1.13.3, \cite{MR1741419}
VI.6.9, Corollary 1). $\mathcal{A}_{\ast}$ is called the predual of
$\mathcal{A}$; the pertaining duality is
\begin{equation}
\left\langle a,\tau\right\rangle =a\left(  \tau\right)  \text{, }%
a\in\mathcal{A}\text{, }\tau\in\mathcal{A}_{\ast}. \label{duality-predual}%
\end{equation}
The norm on $\mathcal{A}_{\ast}$, written $\left\Vert \cdot\right\Vert _{1}$
here, is derived from the norm of the dual Banach space $\mathcal{A}^{\ast}$ (
\cite{MR887100}, 2.4.18), i. e.
\begin{equation}
\left\Vert \tau\right\Vert _{1}:=\sup_{\left\Vert a\right\Vert \leq
1}\left\vert \left\langle a,\tau\right\rangle \right\vert \text{, }\tau
\in\mathcal{A}_{\ast}. \label{norm-predual}%
\end{equation}
On the other hand, since $\mathcal{A}$ is the dual of $\mathcal{A}_{\ast}$,
the norm of $\mathcal{A}$ fulfills
\[
\left\Vert a\right\Vert :=\sup_{\left\Vert \tau\right\Vert \leq1}\left\vert
\left\langle a,\tau\right\rangle \right\vert \text{.}%
\]
In case (i), if $\mathcal{A=L}(\mathcal{H})$ then $\mathcal{A}_{\ast
}\mathcal{=L}^{1}(\mathcal{H}),$ the Banach space of trace class operators $R$
on $\mathcal{H}$ with norm $\left\Vert R\right\Vert _{1}=\mathrm{Tr\,}\left(
R^{\ast}R\right)  ^{1/2}$, and (\ref{duality-predual}), (\ref{norm-predual})
take the form
\begin{align}
\left\langle a,R\right\rangle  &  =\mathrm{Tr\,}aR\text{, }a\in\mathcal{L}%
(\mathcal{H})\text{, }R\in\mathcal{L}^{1}(\mathcal{H}%
),\label{duality-predual-1}\\
\left\Vert R\right\Vert _{1}  &  =\mathrm{Tr\,}\left(  R^{\ast}R\right)
^{1/2}, \label{norm-predual-1}%
\end{align}
(\cite{MR1741419} VI.6, \cite{MR3468018} 2.1.6). In case (ii), if
$\mathcal{A=}L^{\infty}\left(  \mu\right)  $ then $\mathcal{A}_{\ast
}\mathcal{=}L^{1}\left(  \mu\right)  $, and (\ref{duality-predual}),
(\ref{norm-predual}) are given by
\begin{align}
\left\langle a,f\right\rangle  &  =\int afd\mu\text{, }a\in L^{\infty}\left(
\mu\right)  \text{, }f\in L^{1}\left(  \mu\right)  , \label{duality-predual-2}%
\\
\left\Vert f\right\Vert _{1}  &  =\int\left\vert f\right\vert d\mu,
\end{align}
(\cite{MR1490835}, 1.13.3, \cite{MR1741419}, VI.6.8, \cite{MR887100}, 2.4.17,
\cite{MR3468018}, 2.1.12)

\subsubsection{\textit{States }}

(\cite{MR887100},\textit{ }\cite{MR3468018}, sec. 2.2). An element $a$ of
$\mathcal{A}$ is positive ($a\geq0$) if $a$ is self-adjoint and $\left\langle
x|ax\right\rangle \geq0$ for every $x\in\mathcal{H}_{A}$ (\cite{MR1070713},
VIII, \S 3). A linear functional $\tau:$ $\mathcal{A}\rightarrow\mathbb{C}$ is
said to be positive if $\tau(a)\geq0$ for all $a\geq0$. Such functionals are
continuous (bounded) on $\mathcal{A}$ (\cite{MR887100} , 2.3.11). A state on
$\mathcal{A}$ is a positive element of $\mathcal{A}_{\ast}$ which takes value
$1$ on the unit of $\mathcal{A}$. In case (i), by (\ref{duality-predual-1})
$\tau$ is given by a positive element $\rho_{\tau}$ of $\mathcal{L}%
^{1}(\mathcal{H})$ with $\mathrm{Tr\,}\rho=1$ (a density operator) such that
$\tau\left(  A\right)  =\mathrm{Tr\,}\rho A$. In case (ii), by by
(\ref{duality-predual-2}) $\tau$ is given by a positive function $f_{\tau}$ in
$L^{1}\left(  \mu\right)  $ with $\int f_{\tau}d\mu=1$ (a probability density
function) such that $\tau\left(  \phi\right)  =\int\phi f_{\tau}d\mu$.

\subsubsection{\textit{Normal maps\label{subsubsec-normal}}}

For the strong and weak operator topologies on $\mathcal{A}$ (SOT, WOT) cf.
\cite{MR1721402}, \S 8; for the weak* topology cf. \cite{MR1721402}, \S 20 or
its equivalent definition as the $\sigma$-weak topology in \cite{MR887100},
2.4.2. For two von Neumann algebras $\mathcal{A},\mathcal{B}$, a linear map
$\alpha:\mathcal{A}\rightarrow\mathcal{B}$ is positive if $\alpha\left(
a\right)  \geq0$ for every $a\geq0.$Such maps are bounded \cite{MR1721402},
33.4. A positive linear map $\alpha:\mathcal{A}\rightarrow\mathcal{B}$ is said
to be normal if for every increasing net $\left\{  a_{\gamma}\right\}  $ such
that $a_{\gamma}\rightarrow a$ (SOT) one has $\alpha\left(  a_{\gamma}\right)
\rightarrow\alpha\left(  a\right)  $ (SOT) (\cite{MR1721402}, 46.1). If the
respective Hilbert spaces $\mathcal{H}_{A}$, $\mathcal{H}_{B}$ are separable
then the SOT is metrizable on bounded subsets (\cite{MR1070713} , IX.1.3) and
hence nets can be replaced by sequences. A positive linear map $\alpha$ is
normal if and only if it is weak* continuous (\cite{MR1721402}, 46.5). It is
clear that compositions of bounded positive normal maps are normal. Consider
the special case of $\mathcal{B}=\mathbb{C}$, when $\alpha$ is a positive
linear functional on $\mathcal{A}$. The predual of $\mathcal{A}$ can be taken
as the Banach space generated by all normal linear forms on $\mathcal{A}$
(\cite{MR1741419}, VI.6.9, or \cite{MR887100}, 2.4.18, 2.4.21). Thus states on
$\mathcal{A}$ can also be described as positive normal linear forms on
$\mathcal{A}$ which take value $1$ on the unit of $\mathcal{A}$ (cf. also
\cite{MR1721402} 46.4 or \cite{MR3468018}, 2.1.7).

\subsubsection{\textit{Complete positivity}}

Let $\mathcal{A}$, $\mathcal{B}$ be a von Neumann algebras of operators on
respective Hilbert spaces $\mathcal{H}_{A},$ $\mathcal{H}_{B}$. The algebra
$M_{n}\left(  \mathcal{A}\right)  $ of all $n\times n$ matrices with entries
from $\mathcal{A}$ acting on the $n$-fold direct sum $\mathcal{H}_{A}%
^{(n)}:=\mathcal{H}_{A}\oplus\ldots\oplus\mathcal{H}_{A}$ is a von Neumann
algebra, with norm derived from its being a subalgebra of $\mathcal{L}\left(
\mathcal{H}_{A}^{(n)}\right)  $ (\cite{MR1721402}, \S 34, \S 44). An element
$a=\left(  a_{ij}\right)  _{i,j=1}^{n}\in$ $M_{n}\left(  \mathcal{A}\right)  $
is called positive if the associated linear operator on $\mathcal{H}_{A}%
^{(n)}$ is positive, i.e $a$ is self-adjoint and $\left\langle
x|ax\right\rangle \geq0$ for every $x\in\mathcal{H}_{A}^{(n)}$. For a linear
map $\alpha:\mathcal{A}\rightarrow\mathcal{B}$, define an associated map
$\alpha_{n}:M_{n}\left(  \mathcal{A}\right)  \rightarrow M_{n}\left(
\mathcal{B}\right)  $ by $\alpha_{n}\left(  a\right)  =\left(  \alpha\left(
a_{ij}\right)  \right)  _{i,j=1}^{n}$. The map $\alpha$ is completely positive
if for every $n\geq1$, the map $\alpha_{n}$ is positive\textit{ (}%
\cite{MR3468018}, sec. 5.4). Compositions of completely positive maps are
completely positive \textbf{(}\cite{MR3468018}, 5.4.9\textbf{). }If either
$\mathcal{A}$ or $\mathcal{B}$ are commutative then every positive linear map
is completely positive (\cite{MR3468018}, 5.4.6).

This concept can be developed in parallel for the preduals $\mathcal{A}_{\ast
}$, $\mathcal{B}_{\ast}$. The predual of $M_{n}\left(  \mathcal{A}\right)  $
is the Banach space $M_{n}\left(  \mathcal{A}\right)  _{\ast}$ of $n\times n$
matrices with entries from $\mathcal{A}_{\ast}$, acting on $M_{n}\left(
\mathcal{A}\right)  $ according to
\[
\left\langle a,\tau\right\rangle =\sum_{i,j=1}^{n}\left\langle a_{ij}%
,\tau_{ij}\right\rangle \text{, }a\in M_{n}\left(  \mathcal{A}\right)  \text{,
}\tau\in M_{n}\left(  \mathcal{A}\right)  _{\ast}\text{ }%
\]
where $a=\left(  a_{ij}\right)  _{i,j=1}^{n}$, $\tau=\left(  \tau_{ij}\right)
_{i,j=1}^{n}$. The norm of $M_{n}\left(  \mathcal{A}\right)  _{\ast}$ is
\[
\left\Vert \tau\right\Vert _{1}=\sup_{a\in M_{n}\left(  \mathcal{A}\right)
,\left\Vert a\right\Vert =1}\left\vert \left\langle a,\tau\right\rangle
\right\vert \text{, }\tau\in M_{n}\left(  \mathcal{A}\right)  _{\ast}.
\]
An element $\tau\in$ $M_{n}\left(  \mathcal{A}\right)  _{\ast}$ is positive if
$\left\langle a,\tau\right\rangle \geq0$ for every $a\geq0$, $a\in
M_{n}\left(  \mathcal{A}\right)  $. Let $\mathbf{1}$\textbf{ }be the unit
of\textbf{ }$\mathcal{A}$ and let $\mathbf{1}_{n}$ be the unit of
$M_{n}\left(  \mathcal{A}\right)  $, i.e. the diagonal matrix with diagonal
entries all $\mathbf{1}$. Let $\tau\in$ $M_{n}\left(  \mathcal{A}\right)
_{\ast}$, $\tau\geq0$; then%
\[
\left\Vert \tau\right\Vert _{1}=\left\langle \mathbf{1}_{n},\tau\right\rangle
=\sum_{i=1}^{n}\left\langle \mathbf{1},\tau_{ii}\right\rangle =\sum_{i=1}%
^{n}\left\Vert \tau_{ii}\right\Vert _{1}.
\]
For a linear map $T:\mathcal{A}_{\ast}\rightarrow\mathcal{B}_{\ast}$, define
an associated map $T_{n}:M_{n}\left(  \mathcal{A}\right)  _{\ast}\rightarrow
M_{n}\left(  \mathcal{B}\right)  _{\ast}$ by $T_{n}\left(  a\right)  =\left(
T\left(  a_{ij}\right)  \right)  _{i,j=1}^{n}$. The map $T$ is completely
positive if for every $n\geq1$, the map $T_{n}$ is positive.

\subsubsection{\textit{ Channels}}

\textit{ }(\cite{MR1230389}, chap. 8)\textit{. }Consider a linear map
$\alpha:\mathcal{A}\rightarrow\mathcal{B}$. The mapping $\alpha$ is unital if
it maps the unit of $\mathcal{A}$ into the unit of $\mathcal{B}$. A quantum
channel is a linear, completely positive, unital and normal map $\alpha
:\mathcal{A}\rightarrow\mathcal{B}$. Here boundedness of $\alpha$ follows from
positivity (\cite{MR1721402}, 33.4). Compositions of channels are channels
again. Channels have the Kraus representation
\[
\alpha\left(  a\right)  =\sum_{j=1}^{\infty}V_{j}^{\ast}aV_{j}\text{, }%
a\in\mathcal{A}%
\]
where $\left\{  V_{j}\right\}  _{j\geq1}$ is a sequence of bounded linear
operators $V_{j}:\mathcal{H}_{B}\rightarrow$ $\mathcal{H}_{A}$ such that
$\sum_{j=1}^{\infty}V_{j}^{\ast}V_{j}=\mathbf{1}$, and the sums are convergent
in SOT (\cite{MR3012668}, 29.8, \cite{MR3468018}, 5.4.16 ). An important
special case with $\mathcal{A}=\mathcal{B=L}(\mathcal{H})$ is
\begin{equation}
\alpha\left(  a\right)  =U^{\ast}aU\text{, }a\in\mathcal{A}
\label{channel-unitary}%
\end{equation}
where $U$ is a unitary operator on $\mathcal{H}_{A}$.

\subsubsection{\textit{State transitions (TP-CP
maps)\label{subsub-Append-state-transitions}}}

Since a state is a channel $\tau:\mathcal{A}\rightarrow\mathbb{C}$, it follows
that a composition of a state $\tau$ on $\mathcal{A}$ with a channel
$\alpha:\mathcal{B}\rightarrow\mathcal{A}$ gives a state $\tau\circ\alpha$ on
$\mathcal{B}$. This mapping of states extends to a linear map of the preduals
$T:\mathcal{A}_{\ast}\rightarrow\mathcal{B}_{\ast}$; the map $T$ is called the
dual channel of $\alpha$. Since $\alpha$ is completely positive, it can be
shown that $T$ is completely positive (CP), and since $\alpha$ is unital, it
follows that $T$ is norm preserving on positives:
\begin{equation}
\left\Vert T\left(  \sigma\right)  \right\Vert _{1}=\left\Vert \sigma
\right\Vert _{1},\sigma\geq0\text{, }\sigma\in\mathcal{A}_{\ast}.
\label{norm-preserving}%
\end{equation}
In the case $\mathcal{A=L}(\mathcal{H}_{A})$, $\mathcal{B=L}(\mathcal{H}_{B})$
the latter property can be written $\mathrm{Tr\,}T\left(  \rho\right)
=\mathrm{Tr\,}\rho$ for $\rho\geq0$, $\rho\in\mathcal{L}^{1}(\mathcal{H}_{A}%
)$, thus $T$ is trace preserving (TP) on positives. In this context a dual
channel $T$ is often called a TP-CP map; more generally a CP linear map
$T:\mathcal{A}_{\ast}\rightarrow\mathcal{B}_{\ast}$ fulfilling
(\ref{norm-preserving}) will be called a state transition. State transitions
have the contraction property:%
\begin{equation}
\left\Vert T\left(  \sigma_{1}\right)  -T\left(  \sigma_{2}\right)
\right\Vert _{1}\leq\left\Vert \sigma_{1}-\sigma_{2}\right\Vert _{1}%
,\sigma_{i}\geq0\text{, }\sigma_{i}\in\mathcal{A}_{\ast}\text{, }i=1,2.
\label{contraction-property}%
\end{equation}
The pair $\left(  \alpha,T\right)  $ is said to be a dual pair if
\begin{equation}
\left\langle \alpha\left(  b\right)  ,\omega\right\rangle =\left\langle
b,T\left(  \omega\right)  \right\rangle \text{, }b\in\mathcal{B}\text{,
}\omega\in\mathcal{A}_{\ast}. \label{dual-channels-1}%
\end{equation}
The above construction shows that for every channel $\alpha:\mathcal{B}%
\rightarrow\mathcal{A}$ there exists a state transition $T:\mathcal{A}_{\ast
}\rightarrow\mathcal{B}_{\ast}$ such that $\left(  \alpha,T\right)  $ is a
dual pair. The converse can also be shown: for every state transition
$T:\mathcal{A}_{\ast}\rightarrow\mathcal{B}_{\ast}$ there exists a channel
$\alpha:\mathcal{B}\rightarrow\mathcal{A}$ such that $\left(  \alpha,T\right)
$ is a dual pair.

In the case $\mathcal{A=L}(\mathcal{H}_{A})$, $\mathcal{B=L}(\mathcal{H}_{B}%
)$, the duality (\ref{dual-channels-1}) for a given channel $\alpha$ and a
state transition (TP-CP map) $T$ writes as
\begin{equation}
\mathrm{Tr\,}\alpha\left(  b\right)  R=\mathrm{Tr\,}b\ T\left(  R\right)
,\text{ }b\in\mathcal{\mathcal{L}(\mathcal{H}_{\mathcal{B}})}\text{, }%
R\in\mathcal{L}^{1}(\mathcal{\mathcal{H}_{\mathcal{A}}}).\text{ }
\label{dual-channels-2}%
\end{equation}
In the case described in (\ref{channel-unitary}) where $\mathcal{A}%
=\mathcal{B=L}(\mathcal{H})$ one has
\[
T\left(  R\right)  =URU^{\ast}\text{, }R\in\mathcal{L}^{1}%
(\mathcal{\mathcal{H}_{\mathcal{A}}}).
\]
Consider now the case $\mathcal{A=}L^{\infty}\left(  \mu\right)  $,
$\mathcal{B=}L^{\infty}\left(  \nu\right)  $ where $\mu,\nu$ are a
sigma-finite measures on measurable spaces $\left(  X,\Omega_{X}\right)  $,
$\left(  Y,\Omega_{Y}\right)  $ respectively. Then a dual pair $\left(
\alpha,T\right)  $ fulfills
\begin{equation}
\int\alpha\left(  g\right)  fd\mu=\int gT\left(  f\right)  d\nu\text{, }g\in
L^{\infty}\left(  \nu\right)  \text{, }f\in L^{1}\left(  \mu\right)  .
\label{dual-channels-3}%
\end{equation}
This duality is described in Theorems 24.4 and 24.5 of \cite{MR812467}. Only
real function spaces and maps between them are considered, but then the
duality (\ref{dual-channels-3}) extends to the complex spaces and
corresponding maps. The equivalent terminology for a channel $\alpha
:L^{\infty}\left(  \nu\right)  \rightarrow L^{\infty}\left(  \mu\right)  $
there is \textit{Markov operator} (a linear, positive, unital and normal map)
and for a state transition $T:L^{\infty}\left(  \nu\right)  \rightarrow
L^{\infty}\left(  \mu\right)  $ it is \textit{stochastic operator} (a linear,
positive and $\left\Vert \cdot\right\Vert _{1}$-norm preserving map on positives).

Assume that $\Omega_{Y}$ is the Borel sigma-algebra of a Polish space $Y$ and
$\nu$ is a measure on $\left(  Y,\Omega_{Y}\right)  $. Then for every state
transition $T:L^{1}\left(  \mu\right)  \rightarrow L^{1}\left(  \nu\right)  $
there is a Markov kernel $K\left(  B,x\right)  $, $B\in\Omega_{Y}$, $x\in X$
such that
\begin{equation}
\int_{B}T\left(  f\right)  d\nu=\int K\left(  B,\cdot\right)  fd\mu\text{,
}B\in\Omega_{Y}\text{, }f\in L^{1}\left(  \mu\right)  \text{, }f\geq0.
\label{Markov-kernel-transition-exist-b}%
\end{equation}

holds (\cite{MR812467}, Remark 55.6(3), \cite{MR1425959}, Proposition 9.2).

\bigskip%
\begin{privatenotes}
\begin{boxedminipage}{\textwidth}%

\begin{sfblock}
1) The contraction property of state transitions has been shown to hold for
various special cases in our "informal appendix". We should add there the
(easy) proof for the general case of a map $T:\mathcal{A}_{\ast}%
\rightarrow\mathcal{B}_{\ast}$ between preduals, using \S 54 of the Conway
operator book.

2) References for the case of bounded / trace class operators
(\ref{dual-channels-2}) should be added.

3) We need to first indicate that every Markov kernel generates a state
transition. For that, our previous condition "$\kappa_{x}\ll\nu$ for $\mu
$-almost all $x$" seems to exclude deterministic estimators, as then
$\kappa_{x}$ is a point mass in $\hat{f}\left(  x\right)  $ if $\hat{f}$ is
the estimator. \textbf{Fundamentally flawed? Our text was:}

Consider a Markov kernel $K\left(  B,x\right)  $, $B\in\Omega_{Y}$, $x\in X$
such that the probability measure $\kappa_{x}\left(  \cdot\right)  =K\left(
\cdot,x\right)  $ satisfies $\kappa_{x}\ll\nu$ for $\mu$-almost all $x$. Then
$K$ defines a state transition (stochastic operator) $T:L^{1}\left(
\mu\right)  \rightarrow L^{1}\left(  \nu\right)  $ via
\begin{equation}
\int_{B}T\left(  f\right)  d\nu=\int K\left(  B,\cdot\right)  fd\mu\text{,
}B\in\Omega_{Y}\text{, }f\in L^{1}\left(  \mu\right)  \text{, }f\geq0.
\label{Markov-kernel-transition-exist-a}%
\end{equation}
Assume that $\Omega_{Y}$ is the Borel sigma-algebra of a Polish space $Y$ and
$\nu$ is a measure on $\left(  Y,\Omega_{Y}\right)  $. Then, conversely, for
every state transition $T:L^{1}\left(  \mu\right)  \rightarrow L^{1}\left(
\nu\right)  $ there is a Markov kernel $K$ as above such that
(\ref{Markov-kernel-transition-exist-a}) holds (\cite{MR812467}, Remark
55.6(3), \cite{MR1425959}, Proposition 9.2).

Check whether Strasser's Remark 55.6(3) indeed shows our claim above (he works
with bilinear forms).

4) Also check whether it is more elegant to first define the Markov operator
via $K$. Previous version we had:

Consider a channel (Markov operator) $\alpha:L^{\infty}\left(  \nu\right)
\rightarrow L^{\infty}\left(  \mu\right)  $ where $\nu$ is a Borel measure on
a Polish space $\left(  Y,\Omega_{Y}\right)  $. Then there is a Markov kernel
$K\left(  B,x\right)  $, $B\in\Omega_{Y}$, $x\in X$ such that $\alpha\left(
g\right)  \in L^{\infty}\left(  \mu\right)  $ is given by $\alpha\left(
g\right)  \left(  x\right)  =\int g\left(  y\right)  K\left(  dy,x\right)  $.
\end{sfblock}

Consider a state transition (stochastic operator) $T:L^{1}\left(  \mu\right)
\rightarrow L^{1}\left(  \nu\right)  $ where $\nu$ is a Borel measure on a
Polish space $\left(  Y,\Omega_{Y}\right)  $. Then there is a kernel function
$k:Y\times X\rightarrow\mathbb{R}_{+},$ measurable wrt $\Omega_{Y}\times
\Omega_{X}$ and fulfilling $\int k\left(  \cdot,x\right)  d\nu=1$ for all
$x\in X$ such that $T\left(  f\right)  \left(  y\right)  =\int k\left(
y,x\right)  f\left(  x\right)  d\nu\left(  x\right)  $.

Then there is a Markov kernel $K\left(  B,x\right)  $, $B\in\Omega_{Y}$, $x\in
X$ such that for $f\in L^{1}\left(  \mu\right)  $, one has $T\left(  f\right)
=\frac{d}{d\nu}\int K\left(  \cdot,x\right)  f\left(  x\right)  d\mu\left(
x\right)  $.%

\end{boxedminipage}
\end{privatenotes}%

\subsubsection{\textit{*-Homomorphisms}}

\textit{ }A bounded linear map $\alpha:\mathcal{B}\rightarrow\mathcal{A}$ is
called a *-homomorphism if for any $a,b\in\mathcal{B}$
\begin{align*}
\alpha\left(  ab\right)   &  =\alpha\left(  a\right)  \alpha\left(  b\right)
\\
\alpha\left(  a^{\ast}\right)   &  =\alpha\left(  a\right)  ^{\ast}%
\end{align*}
\textit{(}\cite{MR3468018}, 1.5.3). Such maps are completely positive
(\cite{MR3468018}, 5.4.2) and $\sigma$-weakly continuous (\cite{MR887100},
2.4.23), hence normal. Thus they are quantum channels; in our application,
$\mathcal{B}$ will represent a "smaller" quantum system compared to
$\mathcal{A}$, in the sense that $\mathcal{A=B}\otimes\mathcal{C}$ for a von
Neumann algebra $\mathcal{C}$ of linear operators on $\mathcal{H}_{C}$.
Setting $\alpha:\mathcal{B}\rightarrow\mathcal{A}$ as $\alpha\left(  b\right)
=b\otimes\mathbf{1}$ where $\mathbf{1}$ is the unit of $\mathcal{C}$, we
obtain a *-homomorphism. The corresponding state transition operates by
restricting a state $\rho$ on $\mathcal{A}$ to the subalgebra $\mathcal{B}%
\otimes\mathbf{1}$, isomorphic to $\mathcal{B}$ (the partial trace).

\subsubsection{\textit{Measurements and observation
channels\label{subsubsec:measmt-obs-channel}}}

\textit{ } A channel $\alpha:\mathcal{A}\rightarrow\mathcal{B}$ is said to be
an observation channel if $\mathcal{A}$ is commutative (\cite{MR1230389}, chap
8). Here we focus on the case where $\mathcal{A}$ is given by $L^{\infty
}\left(  \mu\right)  $ pertaining to a measurable space $\left(  X,\Omega
,\mu\right)  $ and $\mathcal{B=\mathcal{L}(\mathcal{H}_{\mathcal{B}})}$.
Observation channels arise from a positive operator valued measure (POVM) in
the following way. A POVM\ on $\left(  X,\Omega\right)  $ is a mapping
$M:\Omega\rightarrow\mathcal{\mathcal{L}(\mathcal{H}_{\mathcal{B}})}$ with
properties (i) $M\left(  A\right)  \geq0$, $A\in\Omega$ (hence $M\left(
A\right)  $ is self-adjoint), (ii) $M\left(  X\right)  =1$, (iii) if $\left\{
A_{j}\right\}  _{j=1}^{\infty}$ are pairwise disjoint set from $\Omega$ then
\[
M\left(
{\textstyle\bigcup\nolimits_{j=1}^{\infty}}
A_{j}\right)  =%
{\textstyle\sum\nolimits_{j=1}^{\infty}}
M\left(  A_{j}\right)
\]
where the r.h.s. is an SOT convergent sum. Then for any state $\rho
\in\mathcal{\mathcal{L}}^{1}\mathcal{(\mathcal{H}_{\mathcal{B}})}$,
\begin{equation}
\nu_{\rho}\left(  A\right)  =\mathrm{Tr\,}\rho M\left(  A\right)  \text{,
}A\in\Omega\label{prob-distrib-via-POVM}%
\end{equation}
is a probability measure on $\Omega$. This defines a state transition $T$ for
a certain measure $\nu_{0}$ on $\left(  X,\Omega\right)  $ in the following
way. Suppose that $\rho_{0}\in\mathcal{\mathcal{L}}^{1}\mathcal{(\mathcal{H}%
_{\mathcal{B}})}$ is a faithful state on $\mathcal{\mathcal{L}(\mathcal{H}%
_{\mathcal{B}})}$, i.e. $\rho_{0}>0$, and set $\nu_{0}=\nu_{\rho_{0}}$. Note
that such a $\rho_{0}$ exists if and only if $\mathcal{H}$ is separable
(\cite{MR887100}, 2.5.5). Then $\nu_{\rho}\ll\nu_{0}$ and
\begin{equation}
T\left(  \rho\right)  =\frac{d\nu_{\rho}}{d\nu_{0}} \label{transition by-POVM}%
\end{equation}
defines a transition $T:$ $\mathcal{\mathcal{L}}^{1}\mathcal{(\mathcal{H}%
_{\mathcal{B}})\rightarrow}L^{1}\left(  \nu_{0}\right)  $. Then the dual
$\alpha_{T}:L^{\infty}\left(  \nu_{0}\right)  \rightarrow\mathcal{\mathcal{L}%
(\mathcal{H}_{\mathcal{B}})}$ is an observation channel, satisfying for any
state $\rho\in\mathcal{\mathcal{L}}^{1}\mathcal{(\mathcal{H}_{\mathcal{B}})}$
\begin{equation}
\mathrm{Tr\,}\rho M\left(  A\right)  =\int_{A}T\left(  \rho\right)  d\nu
_{0}=\mathrm{Tr\,}\rho\alpha_{T}\left(  \mathbf{1}_{A}\right)  \text{, }%
A\in\Omega. \label{duality-POVM}%
\end{equation}
This in conjunction with (\ref{dual-channels-1}) shows that $M\left(
A\right)  =\alpha_{T}\left(  \mathbf{1}_{A}\right)  $, where $A\in\Omega$ and
$\mathbf{1}_{A}\in L^{\infty}\left(  \mu\right)  $ is the indicator function.

Conversely, let $\alpha:L^{\infty}\left(  \mu\right)  \rightarrow
\mathcal{\mathcal{L}(\mathcal{H}_{\mathcal{B}})}$ be an observation channel
for a sigma-finite $\mu$ on $\left(  X,\Omega\right)  $ and let $T_{\alpha
}:\mathcal{\mathcal{L}}^{1}\mathcal{(\mathcal{H}_{\mathcal{B}})\rightarrow
}L^{1}\left(  \mu\right)  $ be the dual channel (transition). Then there is a
POVM\ $M$ on $\left(  X,\Omega\right)  $ such that (\ref{duality-POVM}) holds
for $T=T_{\alpha}$ and any state $\rho\in\mathcal{\mathcal{L}}^{1}%
\mathcal{(\mathcal{H}_{\mathcal{B}})}$, and it follows that $M\left(
A\right)  =\alpha\left(  \mathbf{1}_{A}\right)  $, $A\in\Omega$ .

If $M\left(  A\right)  $, $A\in\Omega$ are projections then $M$ is called a
projection valued measure (PVM) or spectral measure.%

\begin{privatenotes}
\begin{boxedminipage}{\textwidth}%

\begin{sfblock}
In the Conway operator book, 9.1, the additional property $M\left(  A_{1}\cap
A_{2}\right)  =M\left(  A_{1}\right)  M\left(  A_{2}\right)  $ is required in
addition to (i)-(iii) and $M\left(  A\right)  $ being projections, for $M$ to
be a spectral measure; it is also required in Lax, 31.3, Theorem 9.
Parthasarathy on p. 23 pretends that the property follows from (i)-(iii) and
$M\left(  A\right)  $ being projections. We believe Partha is right, see
remark in the "informal appendix", Subsection "POVM\ and state transitions".
\end{sfblock}

%

\end{boxedminipage}
\end{privatenotes}%

\subsubsection{\textit{Real and vector valued observables}}

\textit{ }Consider a self-adjoint operator $S$ on $\mathcal{\mathcal{H}}$ ,
possibly unbounded and densely defined. By the spectral theorem there is a PVM
$M$ on $\left(  \mathbb{R},\mathfrak{B}_{\mathbb{R}}\right)  $ ($\mathfrak{B}%
_{\mathbb{R}}$ being the Borel $\sigma$-algebra) such that
\[
Sx=\int_{\mathbb{R}}tdM(t)x
\]
for all $x$ in the domain of $S$, i.e. all $x\in\mathcal{\mathcal{H}}$
satisfying $\int t^{2}d\left\langle x,M(t)x\right\rangle <\infty$, with
$\left\langle \cdot,\cdot\right\rangle $ being the inner product of
$\mathcal{\mathcal{H}}$ (\cite{MR1892228}, 32.1). The operator $S$ is bounded
if and only if $M$ is concentrated on a bounded set in $\mathbb{R}$. Consider
the state transition $T_{M}:\mathcal{L}^{1}\left(  \mathcal{H}\right)
\rightarrow L^{1}\left(  \nu_{0}\right)  $ given by the PVM $M$ according to
(\ref{transition by-POVM}); its dual $\alpha_{M}:L^{\infty}\left(  \nu
_{0}\right)  \rightarrow\mathcal{\mathcal{L}(\mathcal{H})}$ is an observation
channel. For a given state $\rho\in\mathcal{L}^{1}(\mathcal{H})$, application
of $T_{M}$ produces a probability density $T_{M}\left(  \rho\right)  \in$
$L^{1}\left(  \nu_{0}\right)  $. If $M$ is absolutely continuous w.r.t.
Lebesgue measure $\lambda$ (i.e. $S$ has absolutely continuous spectrum,
\cite{MR1892228}, 31.4) then the measure $\nu_{0}$ in
(\ref{transition by-POVM}) is absolutely continuous, and setting $p_{0}%
=d\nu_{0}/d\lambda$ for Lebesgue measure $\lambda$ on $\mathbb{R}$, for a
given state $\rho\in\mathcal{L}^{1}(\mathcal{H})$, the transition
$\rho\rightarrow T_{M}\left(  \rho\right)  p_{0}\in L^{1}\left(
\lambda\right)  $ produces a Lebesgue density on $\left(  \mathbb{R}%
,\mathfrak{B}_{\mathbb{R}}\right)  $. If $M$ is concentrated on a discrete set
$D\subset\mathbb{R}$ (i.e. $S$ has point spectrum), $\kappa$ is counting
measure on $D$ and $p_{0}=d\nu_{0}/d\kappa$, then analogously $T_{M}\left(
\rho\right)  p_{0}$ is a density w.r.t. counting measure on $D$, i.e. gives a
discrete distribution.

The random variable having distribution given by the $\nu_{0}$-density
$T_{M}(\rho)$ is commonly identified in notation with the operator $S$. If $S$
is bounded then, with $D$ being the support of $M$, applying the basic duality
(\ref{dual-channels-1}) with an function $g\left(  t\right)  =t\mathbf{1}%
_{D}\left(  t\right)  $, $t\in\mathbb{R}$, such that $g\in L^{\infty}\left(
\nu_{0}\right)  $
\begin{equation}
E_{\rho}S=\int_{D}tT_{M}(\rho)\left(  t\right)  \nu_{0}\left(  dt\right)
=\left\langle g,T_{M}(\rho)\right\rangle =\left\langle \alpha_{M}%
(g),\rho\right\rangle . \label{trace-rule-pre}%
\end{equation}
From (\ref{duality-POVM}) which holds for all $A\in\Omega$ it can be seen that
in the case of a spectral measure $M$, the channel $\alpha_{M}\left(
f\right)  $ for $f\in L^{\infty}\left(  \nu_{0}\right)  $ acts as
\[
\alpha_{M}\left(  f\right)  =\int f\left(  t\right)  dM(t)
\]
so that from (\ref{trace-rule-pre}) we obtain
\begin{equation}
E_{\rho}S=\mathrm{Tr\,}\left(  \int_{D}tdM(t)\right)  \rho=\mathrm{Tr\,}%
S\rho\label{trace-rule-3}%
\end{equation}
giving the basic trace rule for expectation of bounded observables. If the
operator $S$ is unbounded but the density $T_{M}(\rho)$ has an expectation
then the trace rule $E_{\rho}S=\mathrm{Tr\,}S\rho$ extends from
(\ref{trace-rule-3}) through an approximation of $S$ by bounded operators
$\int_{B}tdM(t)$ for bounded $B\subset\mathbb{R}$.

Let $S_{i},$ $i=1,2$ be self-adjoint operators on $\mathcal{H}$ with
respective spectral measures $M_{i}$, $i=1,2$ on $\left(  \mathbb{R}%
,\mathfrak{B}_{\mathbb{R}}\right)  $. The operators $S_{i}$ commute
($S_{1}S_{2}=S_{2}S_{1}$) if and only if the respective spectral measures
commute, i.e. if $M_{1}(A_{1})M_{2}(A_{2})=M_{2}(A_{2})M_{1}(A_{1})$ for all
Borel sets $A_{i}\in\mathfrak{B}_{\mathbb{R}}$ (\cite{MR1721402}, Theorem
10.2). Then all operators $M_{1}(A_{1})M_{2}(A_{2})$ are projections in
$\mathcal{H}$, and setting for cylinder sets $A_{1}\times A_{2}\subset
\mathbb{R}^{2}$
\[
M\left(  A_{1}\times A_{2}\right)  :=M_{1}(A_{1})M_{2}(A_{2}),
\]
by extension to $\mathfrak{B}_{\mathbb{R}^{2}}$ one defines a PVM\ $M$ on
$\left(  \mathbb{R}^{2},\mathfrak{B}_{\mathbb{R}^{2}}\right)  $
(\cite{MR3012668}, 10.9). For a given state $\rho\in\mathcal{L}^{1}%
(\mathcal{H})$, the commuting operators $S_{i}$ give a bivariate probability
distribution $\nu_{\rho}$ on $\left(  \mathbb{R}^{2},\mathfrak{B}%
_{\mathbb{R}^{2}}\right)  $ by
\begin{equation}
\nu_{\rho}\left(  A_{1}\times A_{2}\right)  =\mathrm{Tr\,}\rho M(A_{1}\times
A_{2})\text{, }A_{i}\in\mathfrak{B}_{\mathbb{R}},i=1,2
\label{prob-distrib-via-POVM-2}%
\end{equation}
in accordance with (\ref{prob-distrib-via-POVM}). Its marginal distributions
are those given by the operators $S_{i}$. Therefore, if self-adjoint operators
are to be identified in notation with the corresponding random variables, then
(\ref{prob-distrib-via-POVM-2}) describes a bivariate random variable $\left(
S_{1},S_{2}\right)  $.

Consider now the Fock space $\mathfrak{F}\left(  \mathcal{H}\right)  $ where
$\mathcal{H}$ is a direct sum $\mathcal{H=H}_{1}\oplus\mathcal{H}_{2}$. In
this case $\mathfrak{F}\left(  \mathcal{H}\right)  $ is unitarily isomorphic
to $\mathfrak{F}\left(  \mathcal{H}_{1}\right)  \otimes\mathfrak{F}\left(
\mathcal{H}_{2}\right)  $ (\cite{MR3012668}, 19.6). Suppose that $\tilde
{S}_{i}$ are self-adjoint operators on $\mathfrak{F}\left(  \mathcal{H}%
_{i}\right)  $ with respective spectral measures $M_{i}$, $i=1,2$, and let
$\mathbf{1}_{i}$ be the unit operators on $\mathfrak{F}\left(  \mathcal{H}%
_{i}\right)  $. Then $S_{1}:=\tilde{S}_{1}\otimes\mathbf{1}_{2}$,
$S_{2}:=\mathbf{1}_{1}\otimes\tilde{S}_{2}$ commute on $\mathcal{F}\left(
\mathcal{H}\right)  $ and thus generate a bivariate random variable, with
marginal distributions those generated by $\tilde{S}_{i}$. If $M_{i}$ are the
respective spectral measures for $\tilde{S}_{i}$ then the PVM $M$ on $\left(
\mathbb{R}^{2},\mathfrak{B}_{\mathbb{R}^{2}}\right)  $ generating the joint
distribution is
\[
M(A_{1}\times A_{2})=M_{1}(A_{1})\otimes M_{2}(A_{2})\text{, }A_{i}%
\in\mathfrak{B}_{\mathbb{R}},i=1,2.
\]
In this paper, $\mathcal{H=}\mathbb{C}^{n}$ such that $\mathfrak{F}\left(
\mathcal{H}\right)  $ is identified with $\mathfrak{F}\left(  \mathbb{C}%
\right)  ^{\otimes n}$. Let $\tilde{Q},\tilde{P}$ be the pair of canonical
observables in $\mathfrak{F}\left(  \mathbb{C}\right)  $ and let $Q_{i},P_{i}$
be their extension to the whole of $\mathfrak{F}\left(  \mathbb{C}\right)
^{\otimes n}$ such that
\begin{equation}
Q_{i}=\mathbf{1}^{\otimes\left(  i-1\right)  }\otimes\tilde{Q}\otimes
\mathbf{1}^{\otimes\left(  n-i\right)  } \label{extension-to-tens-prod}%
\end{equation}
where $\mathbf{1}$ is the unit operator on $\mathfrak{F}\left(  \mathbb{C}%
\right)  $, and analogously for $P_{i}$. Then any subset of $\left\{
Q_{i},P_{i}\text{, }i=1,\ldots,n\right\}  $ which does not contain a pair
$\left\{  Q_{j},P_{j}\right\}  $ is a commuting set, and under a state
$\mathfrak{N}_{n}\left(  0,A\right)  $ (cf.
\ref{symbol-related-to-covmatrix-2}) the corresponding joint distribution is
Gaussian. Let
\[
\tilde{N}=\frac{1}{2}\left(  \tilde{Q}^{2}+\tilde{P}^{2}-\mathbf{1}\right)
\]
be the number operator on $\mathfrak{F}\left(  \mathbb{C}\right)  $ and let
$N_{i}$ be its extension to $\mathfrak{F}\left(  \mathbb{C}\right)  ^{\otimes
n}$ in analogy to (\ref{extension-to-tens-prod}), for $i=1,\ldots,n$. Then
$\left\{  N_{i}\text{, }i=1,\ldots,n\right\}  $ is a commuting set, and under
a state $\mathfrak{N}_{n}\left(  0,A\right)  $ the corresponding joint
distribution is discrete (concentrated on $\mathbb{Z}_{+}^{n}$) with Geometric marginals.

\subsubsection{\textit{Quantum statistical
experiments\label{subsubsec-qu-statist-experiments}}}%

\begin{privatenotes}
\begin{boxedminipage}{\textwidth}%

\begin{sfblock}
The motivation for this section is the following. In Le Cam \cite{MR856411}, a
transition between experiments (families of probability measures)
$\mathcal{E}_{i}=\left(  P_{i,\theta},\theta\in\Theta\right)  $ on $\left(
X_{i},\Omega_{i}\right)  $, $i=1,2$ has been defined as a linear, positive and
norm preserving (on positives) map $T$ between the two respective L-spaces.
The L-space $\mathcal{E}_{i}$ is defined as the linear space spanned by
$\left(  P_{i,\theta},\theta\in\Theta\right)  $ (I skip some technicalities
here); if $\mathcal{E}_{i}$ is a dominated family, the L-space can be
equivalently defined as in \cite{MR812467}, 24.6: if $Q_{i}$ is a probability
measure which is equivalent to the whole family, i.e. $P_{i,\theta}\ll Q_{i}$
and
\begin{equation}
\sup_{\theta\in\Theta}P_{i,\theta}\left(  B\right)  =0\text{ implies }%
Q_{i}\left(  B\right)  =0\text{, }B\in\Omega_{i} \label{domination-reverse}%
\end{equation}
(which is often written as an equivalence $Q_{i}\sim\mathcal{E}_{i}$) then the
L-space is $L^{1}\left(  Q_{i}\right)  $. The dual of $L^{1}\left(
Q_{i}\right)  $ is then $L^{\infty}\left(  Q_{i}\right)  $ which is called the
M-space of $\mathcal{E}_{i}$ (\cite{MR812467}, 24.8).

The authors which we have followed in the definition of the quantum $\Delta
$-distance (mainly \cite{MR2506764}) considered quantum experiments
$\mathcal{E}_{i}=\left(  \mathcal{A}_{i},\tau_{i,\theta},\theta\in
\Theta\right)  $, $i=1,2$ with $\mathcal{A}_{i}$ von Neumann algebras and
defined a transition between the experiments as a norm preserving completely
positive map $T:\mathcal{E}_{1}\rightarrow\mathcal{E}_{2}$ (TP-CP map, where
TP stands for "trace preserving", a special case). This does not conform to Le
Cam's original definition for the classical case: if $\mathcal{E}_{i}=\left(
L^{\infty}\left(  \mu_{i}\right)  ,p_{i,\theta},\theta\in\Theta\right)  $,
$i=1,2$, where $p_{i,\theta}$ are $\mu_{i}$-densities, then the Kahn-Guta
definition allows the measures $\mu_{i}$ to be NOT equivalent to the whole
family $P_{i,\theta}$ generated by the densities $p_{i,\theta}$. Of course we
have $P_{i,\theta}\ll\mu_{i}$ but not necessarily $\mu_{i}\sim\left(
P_{i,\theta},\theta\in\Theta\right)  $ (i. e. (\ref{domination-reverse}) with
$Q_{i}=\mu_{i}$ may not hold). This means that the space $L^{1}\left(  \mu
_{i}\right)  $ which contains $p_{i,\theta}$ is not the L-space of the
experiment but may be larger. For instance, if $\mu_{i}$ is a measure on
$\left(  X,\Omega\right)  $ then all $p_{i,\theta}$ may be concentrated on a
subset $B\subset X$ but $\mu_{i}$ may have support on the whole of $X$. Then
any prob. measure $Q_{i}\sim\left(  P_{i,\theta},\theta\in\Theta\right)  $ is
concentrated on $B$ as well and $Q_{i}\ll\mu_{i}$ but $\mu_{i}\ll Q_{i}$ does
not hold, so that $L^{1}\left(  Q_{i}\right)  \subset L^{1}\left(  \mu
_{i}\right)  $ and for the M-spaces we have $L^{\infty}\left(  \mu_{i}\right)
\subset L^{\infty}\left(  Q_{i}\right)  $. Then the Kahn-Guta definition of a
transition requires more than Le Cam's: there must be a positive norm
preserving linear map $T:L^{1}\left(  \mu_{1}\right)  \rightarrow L^{1}\left(
\mu_{2}\right)  $ whereas Le Cam requires it only for the narrower spaces
$L^{1}\left(  Q_{i}\right)  $. If there is a Kahn-Guta transition
$T:L^{1}\left(  \mu_{1}\right)  \rightarrow L^{1}\left(  \mu_{2}\right)  $
then by restricting it we also can get one in the sense of Le Cam, but the
converse is unclear (it's unlikely, but I haven't checked).

Below we introduce a restriction on quantum experiments $\mathcal{E}%
_{i}=\left(  \mathcal{A}_{i},\tau_{i,\theta},\theta\in\Theta\right)  $ (being
in "reduced form") which generalizes the condition that in the classical
experiment $\mathcal{E}=\left(  L^{\infty}\left(  \mu\right)  ,p_{\theta
},\theta\in\Theta\right)  $, the space $L^{1}\left(  \mu\right)  $ is the
L-space of $\mathcal{E}$ (and hence $L^{\infty}\left(  \mu\right)  $ the
M-space). So then our definition of the quantum $\Delta$-distance is a strict
generalization of Le Cam's. In the paper \cite{MR2506764} this condition is
fulfilled and so Kahn, Guta don't run into trouble, but it was not
acknowledged there.

As a by-product, we obtain the possibility of simplifying notation, by being
able to omit the von Neumann algebra $\mathcal{A}_{i}$ (or $L^{\infty}\left(
\mu\right)  $, resp.): it's either $\mathcal{L}\left(  \mathcal{H}\right)  $,
the full algebra of bounded operators on the Fock space, or $L^{\infty}\left(
Q\right)  $ where $Q$ is a prob. measure with $Q\sim\left(  P_{\theta}%
,\theta\in\Theta\right)  $.
\end{sfblock}

%

\end{boxedminipage}
\end{privatenotes}%

A \textit{quantum statistical experiment} is a family of normal states
$\mathcal{E}=\left(  \mathcal{A},\tau_{\theta},\theta\in\Theta\right)  $ on a
von Neumann algebra $\mathcal{A}$. The experiment $\mathcal{E}$ is said to be
\textit{dominated} if there exists a normal state
\begin{equation}
\omega=\sum_{n=1}^{\infty}\lambda_{n}\tau_{n} \label{dominating_state}%
\end{equation}
with $\tau_{n}\in\mathcal{E}$, $\lambda_{n}\geq0$, $\sum_{n=1}^{\infty}%
\lambda_{n}=1$ such that
\begin{equation}
\mathrm{supp\,}\tau_{\theta}\leq\mathrm{supp\,}\omega\text{ for all }\theta
\in\Theta\label{dominating-state-2}%
\end{equation}
where $\mathrm{supp\,}\omega$ is the support projection of $\omega$. If the
von Neumann algebra $\mathcal{A}$ admits a faithful normal state then every
experiment $\mathcal{E}$ on $\mathcal{A}$ is dominated (\cite{MR2207329},
Lemma 2), and for $\mathcal{A=L}\left(  \mathcal{H}\right)  $, $\mathcal{H}$
separable this is the case. An experiment $\mathcal{E}=\left(  \mathcal{A}%
,\tau_{\theta},\theta\in\Theta\right)  $ is said to be in \textit{reduced
form} if it is dominated and any dominating state $\omega$ fulfilling
(\ref{dominating_state}) and (\ref{dominating-state-2}) is faithful.
$\mathcal{E}$ is said to be \textit{homogeneous} if \textrm{supp}%
$\mathrm{\,}\tau_{\theta_{1}}\leq$\textrm{supp}$\mathrm{\,}\tau_{\theta_{2}}$
for all $\theta_{1},\theta_{2}\in\Theta$. If every $\tau_{\theta},\theta
\in\Theta$ is faithful ($\mathrm{supp\,}\tau_{\theta}=\boldsymbol{1}$) then
$\mathcal{E}$ is homogeneous and in reduced form.

We note that the Fock space $\mathfrak{F}\left(  \mathbb{C}^{n}\right)  $ is
separable since the exponential vectors $x_{F}$, $x\in\mathbb{C}^{n}$ (cf.
(\ref{exponential-vectors})) are dense in $\mathfrak{F}\left(  \mathbb{C}%
^{n}\right)  $. For a separable Hilbert space $\mathcal{H}$, a state on the
von Neumann algebra $\mathcal{L}\left(  \mathcal{H}\right)  $ is faithful if
the density operator is strictly positive. The Gaussian states $\mathfrak{N}%
\left(  0,A\right)  $ on $\mathcal{L}\left(  \mathfrak{F}\left(
\mathbb{C}^{n}\right)  \right)  $ have density operator (\ref{Fock-repre-1});
Lemma \ref{Lem-spec-decompos-Fock-op} below then shows that if the Hermitian
$n\times n$ matrix $A-I$ is strictly positive then $\mathfrak{N}\left(
0,A\right)  $ is faithful. In Theorem \ref{theor-main-1} we consider the
quantum experiment%
\[
\mathcal{E}_{n}\left(  \Theta\right)  =\left(  \mathcal{L}\left(
\mathfrak{F}\left(  \mathbb{C}^{n}\right)  \right)  ,\mathfrak{N}\left(
0,A_{n}\left(  a\right)  \right)  ,\,a\in\Theta\right)
\]
for $\Theta=\Theta_{1}\left(  \alpha,M\right)  $ given by
(\ref{Theta-1-functionset-def}), (\ref{lowerbound-set-def}). Here
(\ref{lowerbound-set-def}) and Lemma \ref{lem-toeplitz-EV} guarantee that
$A_{n}\left(  a\right)  -I>0$ for $a\in\Theta$, hence $\mathcal{E}_{n}\left(
\Theta\right)  $ is homogeneous and in reduced form. The latter also applies
to all Gaussian quantum experiments $\mathcal{E}_{n}\left(  \Theta\right)  $
occurring in this paper with modified $\Theta$. When $\mathcal{E}=\left(
\mathcal{A},\tau_{\theta},\theta\in\Theta\right)  $ is such that
$\mathcal{A=L}\left(  \mathfrak{F}\left(  \mathbb{C}^{n}\right)  \right)  $
and $\mathcal{E}$ is in reduced form, we will omit $\mathcal{A}$ from notation
and simply write $\mathcal{E}$ as a family of density operators $\tau_{\theta
}\in\mathcal{L}^{1}\left(  \mathfrak{F}\left(  \mathbb{C}^{n}\right)  \right)
$.

Consider now the commutative case where $\mathcal{A}=L^{\infty}\left(
\mu\right)  $ on a $\sigma$-finite measure space $\left(  X,\Omega,\mu\right)
$, construed as an algebra of linear operators acting on $\mathcal{H}%
=L^{2}\left(  \mu\right)  $ by pointwise multiplication. Here every
$\tau_{\theta},\theta\in\Theta$ can be identified with a probability density
$p_{\theta}\in\mathcal{A}_{\ast}=L^{1}\left(  \mu\right)  $, and for $\phi
\in\mathcal{A}$ we have (cp. (\ref{duality-predual-2}))
\[
\tau_{\theta}\left(  \phi\right)  =\int\phi p_{\theta}d\mu.
\]
The set of probability measures $\mathcal{P}=\left(  P_{\theta}:dP_{\theta
}/d\mu=p_{\theta},\theta\in\Theta\right)  $ is then dominated by the measure
$\mu$ ($P_{\theta}\ll\mu$, $\theta\in\Theta$). By the Halmos-Savage Theorem
(\cite{MR812467}, 20.3) there exists a probability measure%
\begin{equation}
Q=\sum_{n=1}^{\infty}\lambda_{n}P_{n} \label{equiv-measure-1}%
\end{equation}
with $P_{n}\in\mathcal{P}$, $\lambda_{n}\geq0$, $\sum_{n=1}^{\infty}%
\lambda_{n}=1$ such that%
\begin{equation}
P_{\theta}\ll Q\text{, }\theta\in\Theta. \label{equiv-measure-2}%
\end{equation}
Then (\ref{equiv-measure-1}) and (\ref{equiv-measure-2}) imply that for every
$B\in\Omega$%
\[
Q\left(  B\right)  =0\Longleftrightarrow P_{\theta}\left(  B\right)  =0\text{
for all }\theta\in\Theta.
\]
The latter relation is also written $\mathcal{P}\sim Q$. Set $q=dQ/d\mu$; then
(\ref{equiv-measure-1}) can be written as
\begin{equation}
q=\sum_{n=1}^{\infty}\lambda_{n}p_{n} \label{dom-state-3}%
\end{equation}
where $p_{n}=dP_{n}/d\mu$. For a function $f\in L^{1}\left(  \mu\right)  $,
let $\mathrm{supp\,}f$ be the support projection in $L^{\infty}\left(
\mu\right)  $: if $f_{0}$ is a function in the $\mu$-equivalence class $f$
then $\mathrm{supp\,}f$ is the $\mu$-equivalence class of \textbf{
}$\mathbf{1}\left\{  f_{0}\left(  x\right)  \neq0\right\}  $ (\cite{MR1721402}%
, 54.5). Then (\ref{equiv-measure-2}) is equivalent to
\begin{equation}
\mathrm{supp\,}p_{\theta}\leq\mathrm{supp\,}q\text{ for all }\theta\in
\Theta\label{dom-state-4}%
\end{equation}
so that (\ref{dom-state-3}), (\ref{dom-state-4}) are the versions of
(\ref{dominating_state}), (\ref{dominating-state-2}) for the quantum
experiment $\mathcal{E}=\left(  L^{\infty}\left(  \mu\right)  ,p_{\theta
},\theta\in\Theta\right)  $.

\bigskip Consider now an arbitrary family of probability measures
$\mathcal{P}=\left(  P_{\theta},\theta\in\Theta\right)  $ on $\left(
X,\Omega\right)  $ dominated by sigma-finite measure $\mu$, i.e. a dominated
classical statistical experiment. The above reasoning shows that there exists
a probability measure $Q$ of form (\ref{equiv-measure-1}) with $\mathcal{P}%
\sim Q$. Then $L^{\infty}\left(  Q\right)  $, $L^{1}\left(  Q\right)  $ are an
M-space and an L-space of $\mathcal{P}$, respectively (\cite{MR812467}, 24.6,
24.8). The choice of $Q$ is not unique, but all $L^{\infty}\left(  Q\right)  $
are isometrically isomorphic Banach spaces, and the same holds for
$L^{1}\left(  Q\right)  $. Moreover all the $L^{\infty}\left(  Q\right)  $
with $\mathcal{P}\sim Q$ are isomorphic as von Neumann algebras. Thus
$\mathcal{P}$ can be identified in a canonical way with a quantum experiment
$\mathcal{E}_{\mathcal{P},Q}=\left(  L^{\infty}\left(  Q\right)  ,dP_{\theta
}/dQ,\theta\in\Theta\right)  $. Here $\mathcal{E}_{\mathcal{P},\mathcal{Q}}$
is in reduced form since $1=dQ/dQ\in L^{1}\left(  Q\right)  $ is a faithful
state on $L^{\infty}\left(  Q\right)  $. The condition that a quantum
experiment $\mathcal{E}=\left(  \mathcal{A},\tau_{\theta},\theta\in
\Theta\right)  $ be in reduced form thus generalizes the condition that if a
classical dominated family $\mathcal{P}$ is represented as $\left(  L^{\infty
}\left(  \mu\right)  ,dP_{\theta}/d\mu,\,\theta\in\Theta\right)  $, the space
$L^{\infty}\left(  \mu\right)  $ is an $M$-space of $\mathcal{P}$.

We note that for different $Q$, all quantum experiments $\mathcal{E}%
_{\mathcal{P},Q}$ are statistically equivalent in the sense of the quantum Le
Cam distance (\ref{Q-Delta-distance}). All classical experiments occurring in
this paper are dominated, and the simplifying notation $\mathcal{P}=\left(
P_{\theta},\theta\in\Theta\right)  $ will be used to denote one of the
(statistically equivalent) quantum experiments $\mathcal{E}_{\mathcal{P},Q}$.%

\begin{privatenotes}%

\subsubsection{\textit{Gaussian states}}

In Section \ref{Sec: Intro and main results} $n$-mode Gaussian states were
introduced by their density operators $\rho$ on the Fock space $\mathfrak{F}%
\left(  \mathbb{C}^{n}\right)  $, via the characteristic function
(\ref{char-function-general}) involving the Weyl unitaries $W\left(  x\right)
$, $x\in\mathbb{C}^{n}$. For the present framework of quantum statistical
experiments and channels, we need to exhibit the pertaining von Neumann
algebra. For that, note that for any Hilbert space $\mathcal{H}$, the smallest
von Neumann algebra containing the Weyl unitaries $W\left(  x\right)  $,
$x\in\mathcal{H}$ is $\mathcal{L}(\mathfrak{F}\left(  \mathcal{H}\right)  )$,
the set of bounded operators on $\mathfrak{F}\left(  \mathcal{H}\right)  $
(\cite{MR3012668}, 22.10). This will be different from the smallest $C^{\ast}%
$-algebra containing the Weyl unitaries on $\mathfrak{F}\left(  \mathcal{H}%
\right)  $, commonly denoted by $CCR\left(  \mathcal{H}\right)  $. The
CCR-algebras are pervasively used in the literature in connection with
Gaussian states \cite{MR1230389}, \cite{MR2207329}, \cite{MR2346393},
\cite{MR2510896}; we will use an analogous notation, setting
\[
CCR_{W}\left(  \mathcal{H}\right)  :=\mathcal{L}(\mathfrak{F}\left(
\mathcal{H}\right)  ).
\]
Being $C^{\ast}$-algebras as well, the $CCR_{W}$-algebras inherit all relevant
properties of $CCR\left(  \mathcal{H}\right)  $. For instance, let
$\mathcal{H}_{A}$ a finite dimensional complex Hilbert space which is a direct
sum $\mathcal{H}_{A}=\mathcal{H}_{B}\oplus\mathcal{H}_{C}$.

\texttt{Attention: the paper by H\"{o}rmann \cite{MR4224167} seems to give the
solution for our problem: }$CCR\left(  \mathcal{H}\right)  $\texttt{ is not
von Neumann, but if }$\mathcal{H}$ \texttt{is fidi then the smallest vN
algebra containig it is }$\mathcal{B}\left(  \mathcal{H}\right)  $,
\texttt{i.e. the bounded operators. Hence avoid the "new" notation }%
$CCR_{W}\left(  \mathcal{H}\right)  $\texttt{, just write }$\mathcal{B}\left(
\mathcal{H}\right)  $.\texttt{ }

\texttt{iOther remark: a paper by Mosonyi (Fermionic) or the preceding
Diercks, Fannes et al. it is claimed that for fidi }$H$\texttt{, }$CAR\left(
\mathcal{H}\right)  $\texttt{ is already the set of bounded operators on
}$\mathfrak{F}\left(  \mathcal{H}\right)  ,$\texttt{ i.e a von Neumann
algebra. Does this hold also for }$CCR\left(  \mathcal{H}\right)  $\texttt{
?(No, probably not, since }$CAR\left(  \mathcal{H}\right)  $ \texttt{is fidi
if }$\mathcal{H}$\texttt{ is). Then we wouldn't need the distinction between
}$CCR_{W}\left(  \mathcal{H}\right)  $\texttt{ and }$CCR\left(  \mathcal{H}%
\right)  \mathtt{,}$\texttt{but unlikely.}

\bigskip%

\end{privatenotes}%

\subsection{Further facts about Gaussian states}

\subsubsection{Partial trace\label{subsec:partial-trace}}

In \cite{MR2589320}, for the treatment of a classical stationary Gaussian time
series $X_{1},\ldots,X_{n}$, an essential step of reasoning has been to
consider a series where some observations are omitted, say $X_{m+1}%
,\ldots,X_{n}$, $m<n$ and make the obvious claim that the reduced series is
"less informative" than the original. In the framework of Le Cam theory, this
means that there exists a transition (Markov kernel) mapping the law
$\mathcal{L}\left(  X_{1},\ldots,X_{n}\right)  $ into its marginal law
$\mathcal{L}\left(  X_{1},\ldots,X_{m}\right)  $. For a Gaussian, zero mean
time series, we then know that $X_{1},\ldots,X_{m}$ is again Gaussian
centered, and the covariance matrix is just the pertaining submatrix. We now
set out to describe the analog of this reasoning for a quantum Gaussian time series.

We will consider centered gauge invariant Gaussian states $\mathfrak{N}%
_{n}\left(  0,A\right)  $ with $n\times n$ symbol matrix $A$, given by
characteristic function (\ref{char-function-Gaussian}). Assume for some $m<n$
we consider $\mathfrak{N}_{m}\left(  0,A_{(m)}\right)  $ where $A_{(m)}$ is
the upper $m\times m$ central submatrix of $A$. Is there a quantum channel,
mapping $\mathfrak{N}_{n}\left(  0,A\right)  $ into $\mathfrak{N}_{m}\left(
0,A_{(m)}\right)  $ for all (permissible) symbol matrices $A$ ?

Let again $\mathcal{H}_{A}$ be a finite dimensional complex Hilbert space
which is a direct sum $\mathcal{H}_{A}=\mathcal{H}_{B}\oplus\mathcal{H}_{C}$.
The Fock space $\mathfrak{F}\left(  \mathcal{H}_{A}\right)  $ is unitarily
isomorphic to $\mathfrak{F}\left(  \mathcal{H}_{B}\right)  \otimes
\mathfrak{F}\left(  \mathcal{H}_{C}\right)  $ (\cite{MR3012668}, 19.6) and the
respective Weyl operators $W\left(  \cdot\right)  $ satisfy
\begin{equation}
W\left(  u_{1}\oplus u_{2}\right)  =W\left(  u_{1}\right)  \otimes W\left(
u_{2}\right)  \text{ for }u_{1}\in\mathcal{H}_{B},u_{2}\in\mathcal{H}_{C}.
\label{functorial-1}%
\end{equation}
(\cite{MR3012668}, 20.21).

\bigskip%
\begin{privatenotes}
\begin{boxedminipage}{\textwidth}%

\begin{sfblock}
Here we are using notation $CCR_{W}\left(  \mathcal{H}_{A}\right)  ,$
introduced in the above suppressed subsection "Gaussian States", which means
just $\mathcal{L}(\mathfrak{F}\left(  \mathcal{H}\right)  )$, the set of
bounded operators. Eliminate notation $CCR_{W}\left(  \mathcal{H}_{A}\right)
$ also below.
\end{sfblock}

%

\end{boxedminipage}
\end{privatenotes}%

This means that
\begin{equation}
CCR_{W}\left(  \mathcal{H}_{A}\right)  \simeq CCR_{W}\left(  \mathcal{H}%
_{B}\right)  \otimes CCR_{W}\left(  \mathcal{H}_{C}\right)
\label{functorial-2}%
\end{equation}
in the sense of a W*-isomorphism (\cite{MR1741419}, VI.6.9). In view of
(\ref{functorial-1}), (\ref{functorial-2}), we can describe the quantum
channel realizing the restriction of a Gaussian state on a system $A$ to a
subsystem $B$\textit{: }it is $\alpha:CCR\left(  \mathcal{H}_{B}\right)
\rightarrow CCR\left(  \mathcal{H}_{A}\right)  $ given by
\begin{equation}
\alpha\left(  W\left(  u\right)  \right)  =W\left(  u\right)  \otimes
\mathbf{1=}W\left(  u\oplus0\right)  \text{, }u\in\mathcal{H}_{B}.
\label{alpha-map-homo}%
\end{equation}
It remains to show that for $\mathcal{H}_{A}=\mathbb{C}^{n}$, $\mathcal{H}%
_{B}=\mathbb{C}^{m}$ we have%
\begin{equation}
\mathfrak{N}_{n}\left(  0,A\right)  \circ\alpha=\mathfrak{N}_{m}\left(
0,A_{(m)}\right)  \label{claim-channel-omit-observ}%
\end{equation}
for all Hermitian $A\geq I$. To this end we compute the characteristic
function (\ref{char-function-Gaussian}).

Let $W\left(  u\right)  \in CCR\left(  \mathcal{H}_{B}\right)  $ be a Weyl
unitary with $u\in\mathbb{C}^{m}$; then for $\rho=\mathfrak{N}_{n}\left(
0,A\right)  $ according to (\ref{char-function-general}) we have
\begin{align*}
\hat{W}\left[  \mathfrak{\rho}\circ\alpha\right]  \left(  u\right)   &
:=\left(  \mathfrak{\rho}\circ\alpha\right)  \left(  W\left(  u\right)
\right)  =\rho\left(  \alpha\left(  W\left(  u\right)  \right)  \right) \\
&  =\rho\left(  W\left(  u\oplus0\right)  \right)  =\exp\left(  -\frac{1}%
{4}\left\langle \left(  u\oplus0\right)  ,A\left(  u\oplus0\right)
\right\rangle \right) \\
&  =\exp\left(  -\frac{1}{4}\left\langle u,A_{(m)}u\right\rangle \right)
\end{align*}
which confirms (\ref{claim-channel-omit-observ}).

\subsubsection{The density operator under gauge
invariance\label{subsec:density-op-GIV-Gaussians}}

\begin{lemma}
Consider the gauge invariant centered $n$-mode Gausian state $\mathfrak{N}%
_{n}\left(  0,A\right)  $ with symbol $A$, where $A$ is a complex Hermitian
$n\times n$ matrix fulfilling $A\geq I$. Its density operator on the symmetric
Fock space $\mathfrak{F}\left(  \mathbb{C}^{n}\right)  $ is
\[
\rho_{A}=\frac{2^{n}}{\det\left(  I+A\right)  }\left(  \frac{A-I}{A+I}\right)
_{F}.
\]

\end{lemma}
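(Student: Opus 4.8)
The plan is to compute the characteristic function of the operator $\rho_A := \frac{2^n}{\det(I+A)}\left(\frac{A-I}{A+I}\right)_F$ and verify that it agrees with $\exp\left(-\frac14\langle u, Au\rangle\right)$ from (\ref{char-func-gauge-invar}), and then to check that $\rho_A$ is a genuine density operator (positive, trace one). By uniqueness of the state with a given characteristic function (Section \ref{subsec: Gaussian states}, \cite{MR1057180} Theorem 3.4), this identifies $\rho_A$ with the density operator of $\mathfrak{N}_n(0,A)$.

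First I would diagonalize. Since $A$ is Hermitian with $A \geq I$, write $A = U \Lambda U^\ast$ with $U$ unitary and $\Lambda = \mathrm{diag}(\lambda_1,\dots,\lambda_n)$, $\lambda_j \geq 1$. Using the Fock functoriality $A_F B_F = (AB)_F$ on a dense domain (relation (\ref{multiplying-Fock-ops})) together with the fact that $(U_F)$ is unitary (as $U$ is unitary), one reduces to the diagonal case: $\left(\frac{A-I}{A+I}\right)_F = U_F\left(\frac{\Lambda-I}{\Lambda+I}\right)_F U_F^\ast$, and $\det(I+A) = \det(I+\Lambda)$. Under the unitary isomorphism $\mathcal{F}(\mathbb{C}^n) \cong \mathcal{F}(\mathbb{C})^{\otimes n}$ (Appendix, and \cite{MR3012668} 19.6), a diagonal Fock operator factors as a tensor product, so it suffices to treat $n=1$: one must show that the one-mode operator $\frac{2}{\lambda+1}\bigoplus_{k\geq 0}\left(\frac{\lambda-1}{\lambda+1}\right)^k$ is the density operator of the thermal state $\mathfrak{N}_1(0,\lambda)$ with covariance $\frac{\lambda}{2}I_2$. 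This is exactly the content of (\ref{thermal-state-repre-in-Fock}): setting $p=(\lambda-1)/(\lambda+1)\in[0,1)$, the operator is $(1-p)\sum_{k\geq 0}p^k |k\rangle\langle k|$ on the Fock ONB, which is a well-known description of the one-mode thermal state, and one verifies its characteristic function $\mathrm{Tr}\,\rho\, V(u) = \exp(-\frac14\lambda|u|^2)$ by a direct computation with coherent states (the action of $V(u)$ on exponential vectors, (\ref{Weyl-unitaries-complex-indexed-3})), summing the resulting geometric series in $p$.

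Then I would reassemble: the characteristic function of a tensor product is the product of characteristic functions, so $\mathrm{Tr}\left[\rho_\Lambda V(u)\right] = \prod_{j=1}^n \exp(-\frac14 \lambda_j |u_j|^2) = \exp(-\frac14\langle u,\Lambda u\rangle)$, and conjugating back by $U$ (equivalently, substituting $u \mapsto U^\ast u$, noting $V(U^\ast u)$ intertwines correctly, cf. the argument in the proof of Lemma \ref{lem-covmatrix-analog-B}) gives $\exp(-\frac14\langle u, A u\rangle)$ as required. Positivity and normalization also follow from the diagonal/thermal case: each one-mode factor $(1-p_j)\sum_k p_j^k|k\rangle\langle k|$ is positive with trace $1$, hence so is the tensor product, and unitary conjugation preserves these properties. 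Faithfulness (needed elsewhere) follows because $A - I > 0$ forces each $p_j > 0$, referenced via Lemma \ref{Lem-spec-decompos-Fock-op}.

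The main obstacle I anticipate is the bookkeeping around domains and the precise meaning of the infinite direct sum defining the Fock operator: $\left(\frac{A-I}{A+I}\right)_F$ is an unbounded-looking expression a priori, and the identities $A_F B_F = (AB)_F$ and the tensor factorization of diagonal Fock operators hold only on dense subsets, so I would want to be careful that the trace-class operator $\rho_A$ is genuinely well-defined (the eigenvalues $\prod_j p_j^{k_j}$ are summable precisely because each $p_j<1$) and that the characteristic-function computation is justified by continuity/density of exponential vectors rather than formal manipulation. This is essentially the verification already sketched in \cite{MR2510896}, Appendix A5, so the real work is organizing the reduction to $n=1$ cleanly via the functoriality relations already recorded in Subsection \ref{subsec:symmetric-Fock-space}.
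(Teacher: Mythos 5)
Your proof is correct, and it takes a genuinely different route from the one in the paper. The paper computes the characteristic function $\phi(t)=\mathrm{Tr}\,W(c(t))\rho_A$ by inserting the coherent-state resolution of the identity $\pi^{-n}\int|\psi(c(x))\rangle\langle\psi(c(x))|\,dx=I$, then diagonalizes $R=(A-I)/(A+I)$ \emph{inside} the resulting $2n$-dimensional Gaussian integral, performs an orthogonal change of variables, and evaluates $n$ independent two-dimensional Gaussian integrals before reassembling. You instead diagonalize $A$ at the very start, push the unitary through via Fock functoriality $(U B U^\ast)_F = U_F B_F U_F^\ast$, and use the isomorphism $\mathcal{F}(\mathbb{C}^n)\cong\mathcal{F}(\mathbb{C})^{\otimes n}$ to reduce the whole claim to the single-mode thermal state (\ref{thermal-state-repre-in-Fock}), whose characteristic function is verified by acting on exponential vectors and summing one geometric series; the identity $U_F^\ast V(u)U_F=V(U^\ast u)$ then carries the diagonal result back. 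Your route buys a more structural and transparent reduction, aligning with the standard textbook description of Gaussian states as tensor products of thermal states, and isolates the only real computation in the $n=1$ case; the paper's route is more self-contained in that it does not presuppose the thermal-state density formula and produces the answer by direct integration. Your flagged concerns about domains, absolute convergence of the Fock direct sums, and using density of exponential vectors rather than formal manipulation are the right ones to address when writing this up carefully; both approaches require the same hygiene there.
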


%

\begin{privatenotes}
\begin{boxedminipage}{\textwidth}%

\begin{sfblock}
(Proof needs revision, mainly notations)
\end{sfblock}

%

\end{boxedminipage}
\end{privatenotes}%

\begin{proof}
Write $\mathcal{H}=\mathbb{C}^{n}$ and $H=\mathbb{R}^{2n}$. For any
$u\in\mathcal{H}$, consider the exponential vector $u_{F}=\oplus_{k=0}%
^{\infty}\left(  k!\right)  ^{-1/2}u^{\otimes n}$. Then we have
\[
\left\langle u_{F},v_{F}\right\rangle =\exp\left\langle u,v\right\rangle .
\]
Define the coherent vector $\psi\left(  u\right)  :=u_{F}\exp\left(
-\left\Vert u\right\Vert ^{2}/2\right)  $. For any $x=x_{1}\oplus x_{2}$,
$x_{i}\in\mathbb{R}^{n}$ set $\mathrm{c}\left(  x\right)  =x_{1}+ix_{2}$. We
claim that the coherent vectors $\pi^{-n/2}\psi\left(  c\left(  x\right)
\right)  $, $x\in H$ form a resolution of the identity, i.e.
\begin{equation}
\frac{1}{\pi^{n}}\int_{H}\left\vert \left(  \psi\left(  \mathrm{c}\left(
x\right)  \right)  \right)  \right\rangle \left\langle \psi\left(
\mathrm{c}\left(  x\right)  \right)  \right\vert dx=I \label{overcomplete}%
\end{equation}
where $I$ is the unit operator on $\mathfrak{F}\left(  \mathbb{C}^{n}\right)
$ and the integral converges in a weak sense in $\mathfrak{F}\left(
\mathbb{C}^{n}\right)  $. For a proof, denote $\mu$ the l.h.s. above and note
that for every unit vector $\psi\left(  y\right)  $, $y\in\mathbb{R}^{2n}$
\begin{align*}
\left\langle \psi\left(  \mathrm{c}\left(  y\right)  \right)  \right\vert
\mu\left\vert \left(  \psi\left(  \mathrm{c}\left(  y\right)  \right)
\right)  \right\rangle  &  =\frac{1}{\pi^{n}}\int_{H}\exp\left(
2\operatorname{Re}\left\langle \mathrm{c}\left(  x\right)  ,\mathrm{c}\left(
y\right)  \right\rangle \right)  dx\exp\left(  -\left\Vert y\right\Vert
^{2}\right) \\
&  =\frac{1}{\pi^{n}}\int_{H}\exp\left(  2\left(  x,y\right)  -\left\Vert
x\right\Vert ^{2}\right)  dx\exp\left(  -\left\Vert y\right\Vert ^{2}\right)
\\
&  =\frac{2^{n}}{\left(  2\pi\right)  ^{n}}\int_{H}\exp\left(  -\left\Vert
x-y\right\Vert ^{2}\right)  dx\\
&  =\frac{1}{\left(  2\pi\right)  ^{n}\sigma^{2n}}\int_{H}\exp\left(
-\frac{1}{2\sigma^{2}}\left\Vert x-y\right\Vert ^{2}\right)  dx
\end{align*}
for $\sigma^{2}=1/2$. The above expression is the integral of the density of
the $N_{2n}\left(  y,\sigma^{2}I_{2n}\right)  $ law, which is $1$. Since the
unit vectors $\psi\left(  \mathrm{c}\left(  y\right)  \right)  $ are dense in
$\mathfrak{F}\left(  \mathbb{C}^{n}\right)  $, (\ref{overcomplete}) is proved.

Since $c:\mathbb{R}^{2n}\rightarrow\mathbb{C}^{n}$ is an isometry, for every
unitary $U$ there is an orthogonal matrix $O_{U}$ such that $U\mathrm{c}%
\left(  \mu\right)  =\mathrm{c}\left(  O_{U}\mu\right)  $ .Let $W\left(
v\right)  $, $v\in\mathbb{C}^{n}$ be an element of the Weyl algebra on
$\mathfrak{F}\left(  \mathbb{C}^{n}\right)  $, acting on exponential vectors
as
\[
W\left(  v\right)  u_{F}=\left(  v+u\right)  _{F}\exp\left(  -\left\langle
v,u\right\rangle -\left\Vert v\right\Vert ^{2}/2\right)  .
\]
The state $\rho_{A}$ is centered Gaussian gauge invariant if its
characteristic function is%
\begin{equation}
\phi\left(  t\right)  =\mathrm{tr\;}W\left(  \mathrm{c}\left(  t\right)
\right)  \rho_{A}=\exp\left(  -\frac{1}{2}\operatorname{Re}\left\langle
A\mathrm{c}\left(  t\right)  ,\mathrm{c}\left(  t\right)  \right\rangle
\right)  \text{, }t\in\mathbb{R}^{2n}. \label{claimed-form-char-func}%
\end{equation}
Setting $R:=\left(  A-I\right)  /\left(  A+I\right)  $, we then have
\[
A=\left(  I+R\right)  /\left(  I-R\right)  \text{, }\frac{I+A}{2}=1/\left(
I-R\right)
\]
and
\[
\mathrm{tr\;}R_{F}=\frac{1}{\det\left(  I-R\right)  }=\det\left(  \frac
{I+A}{2}\right)
\]
(see \cite{MR2510896}, Appendix for the last relation). It follows that
\[
\rho_{A}=\det\left(  I-R\right)  R_{F}\text{. }%
\]
If $u_{F}$, $u\in\mathcal{H}$ is an exponential vector then%
\[
R_{F}u_{F}=\left(  Ru\right)  _{F}.
\]
To find the characteristic function of $\rho_{A}$, note that
\begin{align*}
\phi\left(  t\right)   &  =\frac{1}{\pi^{n}}\int_{\mathbb{R}^{2n}%
}\mathrm{tr\;}W\left(  \mathrm{c}\left(  t\right)  \right)  \left\vert \left(
\psi\left(  \mathrm{c}\left(  x\right)  \right)  \right)  \right\rangle
\left\langle \psi\left(  \mathrm{c}\left(  x\right)  \right)  \right\vert
\rho_{A}dx\\
&  =\frac{1}{\pi^{n}}\int_{\mathbb{R}^{2n}}\mathrm{tr\;}W\left(
\mathrm{c}\left(  t\right)  \right)  \left\vert \mathrm{c}\left(  x\right)
_{F}\right\rangle \left\langle \mathrm{c}\left(  x\right)  _{F}\right\vert
R_{F}\exp\left(  -\left\Vert x\right\Vert ^{2}\right)  dx
\end{align*}%
\begin{align*}
&  =\frac{\det\left(  I-R\right)  }{\pi^{n}}\int_{\mathbb{R}^{2n}%
}\mathrm{tr\;}\left\vert \left(  \mathrm{c}\left(  x\right)  +\mathrm{c}%
\left(  t\right)  \right)  _{F}\right\rangle \left\langle \left(
R\mathrm{c}\left(  x\right)  \right)  _{F}\right\vert \exp\left(  -\left\Vert
x\right\Vert ^{2}-\left\langle \mathrm{c}\left(  t\right)  ,\mathrm{c}\left(
x\right)  \right\rangle \right)  dx\exp\left(  -\left\Vert t\right\Vert
^{2}/2\right) \\
&  =\frac{\det\left(  I-R\right)  }{\pi^{n}}\int_{\mathbb{R}^{2n}}\exp\left(
\left\langle R\mathrm{c}\left(  x\right)  ,\mathrm{c}\left(  x\right)
+\mathrm{c}\left(  t\right)  \right\rangle -\left\Vert x\right\Vert
^{2}-\left\langle \mathrm{c}\left(  t\right)  ,\mathrm{c}\left(  x\right)
\right\rangle \right)  dx\exp\left(  -\left\Vert t\right\Vert ^{2}/2\right)
\end{align*}
Let $R=UDU^{\ast}$ where $D=\mathrm{Diag}(r_{1},\ldots,r_{n})$ is real
diagonal and $U$ is unitary in $\mathbb{C}^{n}$. Let $O$ be orthogonal in
$\mathbb{R}^{2n}$ such that $R\mathrm{c}\left(  x\right)  =\mathrm{c}\left(
Ox\right)  $. By a change of variable $U\mathrm{c}\left(  x\right)
=\mathrm{c}\left(  y\right)  $, or equivalently $x=Oy$ , setting
$y=\oplus_{j=1}^{n}y_{j}$, $y_{j}\in\mathbb{R}^{2}$ and $t=Os$, $s=\oplus
_{j=1}^{n}s_{j}$, $s_{j}\in\mathbb{R}^{2}$ accordingly, we obtain%
\[
\phi\left(  t\right)  =%
{\displaystyle\prod\limits_{j=1}^{n}}
\frac{\left(  1-r_{j}\right)  }{\pi^{n}}\int_{\mathbb{R}^{2}}\exp\left(
\left\langle r_{j}\mathrm{c}\left(  y_{j}\right)  ,\mathrm{c}\left(
y_{j}\right)  +\mathrm{c}\left(  s_{j}\right)  \right\rangle -\left\Vert
y_{j}\right\Vert ^{2}-\left\langle \mathrm{c}\left(  s_{j}\right)
,\mathrm{c}\left(  y_{j}\right)  \right\rangle \right)  dy_{j}\exp\left(
-\left\Vert s_{j}\right\Vert ^{2}/2\right)
\]
We will compute each of the factors above, $\phi_{j}\left(  t\right)  $ say,
omitting the index $j$ for the variables. Then each of the factors can be
understood as pertaining to the case $n=1$, where $R=r=r_{j}$ and $A=a=\left(
1+r_{j}\right)  /\left(  1-r_{j}\right)  $. Then $r=\left(  a-1\right)
/\left(  a+1\right)  $, and
\[
\phi_{j}\left(  t\right)  =\frac{\left(  1-r\right)  }{\pi^{n}}\int
_{\mathbb{R}^{2}}\exp\left(  -\left(  1-r\right)  \left\Vert y\right\Vert
^{2}-\left(  1-r\right)  \left\langle \mathrm{c}\left(  y\right)
,\mathrm{c}\left(  s\right)  \right\rangle +2i\operatorname{Im}\left\langle
\mathrm{c}\left(  y\right)  ,\mathrm{c}\left(  s\right)  \right\rangle
-\left\Vert s\right\Vert ^{2}/2\right)  dy.
\]
Note that for $y=y_{1}\oplus y_{2}$, $y_{i}\in\mathbb{R}$ we have
\[
\left\langle \mathrm{c}\left(  y\right)  ,\mathrm{c}\left(  s\right)
\right\rangle =\left(  y,s\right)  +i\left(  y,Js\right)
\]
where $J$ is the operator in $\mathbb{R}$ satisfying
\[
J\left(  y_{1}\oplus y_{2}\right)  =y_{2}\oplus-y_{1}.
\]
Note that $\left(  y,Jy\right)  =0$. Now
\begin{align*}
&  -\left(  1-r\right)  \left\langle \mathrm{c}\left(  y\right)
,\mathrm{c}\left(  s\right)  \right\rangle +2i\left\langle \operatorname{Im}%
\mathrm{c}\left(  y\right)  ,\mathrm{c}\left(  y\right)  \right\rangle \\
&  =-\left(  1-r\right)  \operatorname{Re}\left\langle \mathrm{c}\left(
y\right)  ,\mathrm{c}\left(  s\right)  \right\rangle +i\left(  1+r\right)
\left\langle \operatorname{Im}\mathrm{c}\left(  y\right)  ,\mathrm{c}\left(
s\right)  \right\rangle \\
&  =-\left(  1-r\right)  \left(  y,s\right)  +i\left(  1+r\right)  \left(
y,Js\right)  .
\end{align*}
This gives%
\begin{align*}
\phi_{j}\left(  t\right)   &  =\frac{\left(  1-r\right)  }{\pi}\int_{H}%
\exp\left(  -\left(  1-r\right)  \left\Vert y\right\Vert ^{2}-\left(
1-r\right)  \left(  y,s\right)  +i\left(  1+r\right)  \left(  y,Js\right)
-\left\Vert s\right\Vert ^{2}/2\right)  dy\\
&  =\frac{\left(  1-r\right)  }{\pi^{n}}\int_{H}\exp\left(  -\left(
1-r\right)  \left\Vert y+s/2\right\Vert ^{2}+i\left(  1+r\right)  \left(
y,Js\right)  \right)  dy\cdot\exp\left(  -\left(  1+r\right)  \left\Vert
s\right\Vert ^{2}/4\right) \\
&  =\frac{2\left(  1-r\right)  }{2\pi}\int_{H}\exp\left(  -\frac{2\left(
1-r\right)  }{2}\left\Vert y+s/2\right\Vert ^{2}+i\left(  1+r\right)  \left(
y,Js\right)  \right)  dy\cdot\exp\left(  -\left(  1+r\right)  \left\Vert
t\right\Vert ^{2}/4\right)
\end{align*}
The expression before the second exponential factor is the characteristic
function of the $N_{2}\left(  -t/2,1/2\left(  1-r\right)  \right)  $ law at
position $w=\left(  1+r\right)  Js\in\mathbb{R}^{2}$, which is
\begin{align*}
&  \exp\left(  i\left(  -s/2,w\right)  -\frac{1}{2\cdot2\left(  1-r\right)
}\left\Vert w\right\Vert ^{2}\right) \\
&  =\exp\left(  -\left(  1+r\right)  i\left(  s,Js\right)  -\frac{\left(
1+r\right)  ^{2}}{2\cdot2\left(  1-r\right)  }\left\Vert s\right\Vert
^{2}\right) \\
&  =\exp\left(  -a\left(  1+r\right)  \left\Vert s\right\Vert ^{2}/4\right)  .
\end{align*}
Hence
\[
\phi_{j}\left(  t\right)  =\exp\left(  -\left(  a+1\right)  \left(
1+r\right)  \left\Vert s\right\Vert ^{2}/4\right)  .
\]
Since $\left(  1+r\right)  =2a/\left(  a+1\right)  $, we obtain%
\[
\phi_{j}\left(  t\right)  =\exp\left(  -a\left\Vert s\right\Vert
^{2}/2\right)
\]
Hence, setting $a_{j}=\left(  1+r_{j}\right)  /\left(  1-r_{j}\right)  $, we
obtain
\begin{equation}
\phi\left(  t\right)  =%
{\displaystyle\prod\limits_{j=1}^{n}}
\phi_{j}\left(  t\right)  =\exp\left(  -\sum_{j=1}^{n}a_{j}\left\Vert
s_{j}\right\Vert ^{2}/2\right)  . \label{pre-claim}%
\end{equation}
Here $\left\Vert s_{j}\right\Vert ^{2}=\left\vert e_{j}^{\prime}%
\mathrm{c}\left(  O^{\prime}t\right)  \right\vert ^{2}$ where $e_{j}$ is the
$j$-th standard unit vector in $\mathbb{C}^{n}$. But then%
\[
e_{j}^{\prime}\mathrm{c}\left(  O^{\prime}t\right)  =e_{j}^{\prime}U^{\ast
}\mathrm{c}\left(  t\right)  =\mathbf{u}_{j}^{\ast}\mathrm{c}\left(  t\right)
\]
where is an $\mathbf{u}_{j}^{\ast}$ eigenvector of $A$ pertaining to
eigenvalue $\alpha_{j}$. Then
\begin{align*}
a_{j}\sum_{j=1}^{n}\left\Vert s_{j}\right\Vert ^{2}  &  =a_{j}\sum_{j=1}%
^{n}\left\vert \mathbf{u}_{j}^{\ast}\mathrm{c}\left(  t\right)  \right\vert
^{2}\\
&  =\left\langle A\mathrm{c}\left(  t\right)  ,\mathrm{c}\left(  t\right)
\right\rangle =\operatorname{Re}\left\langle A\mathrm{c}\left(  t\right)
,\mathrm{c}\left(  t\right)  \right\rangle
\end{align*}
such that (\ref{pre-claim}) yields the claimed form of $\phi\left(  t\right)
$.
\end{proof}

\subsubsection{Some facts on Fock operators\label{sec-spec-repre-Fock-oper}}

The following technical result for finite dimensional $B$ allows to relate the
spectral decompositions of $B_{F}$ and $B$ (cp. (A1), (A3) of \cite{MR2510896}%
). Define the multiindex set $D\left(  m\right)  :=\left\{  \mathbf{m}%
\in\mathbb{Z}_{+}^{d}:m_{1}+\ldots+m_{d}=m\right\}  $ and for any
$\mathbf{m}\in D\left(  m\right)  $ let $\Pi\left(  \mathbf{m},d\right)  $ be
the set of partitions of $m$ objects into $d$ distinct groups, each of size
$m_{j}$, $j=1,\ldots,d$. It is well known that
\[
\mathrm{card}\left(  \Pi\left(  \mathbf{m},d\right)  \right)  =d_{\mathbf{m}%
}:=\left(
\genfrac{}{}{0pt}{}{m}{m_{1}\ldots m_{d}}%
\right)  =\frac{m!}{m_{1}!\ldots m_{d}!}.
\]
For each $\nu\in\Pi\left(  \mathbf{m},d\right)  $ and $j\in\left\{
1,\ldots,m\right\}  $, let $\nu\left(  j\right)  \in\left\{  1,\ldots
,d\right\}  $ be the index of the group to which the $j$-th object has been assigned.

\begin{lemma}
\label{Lem-spec-decompos-Fock-op}Let $B$ be Hermitian on $\mathcal{H}%
=\mathbb{C}^{d}$ with spectral decomposition $B=\sum_{k=1}^{d}\lambda
_{k}\left\vert e_{k}\right\rangle \left\langle e_{k}\right\vert $. Then the
spectral decomposition of $\vee^{m}B$ is
\begin{equation}
\vee^{m}B=\sum_{\mathbf{m}\in D\left(  m\right)  }\lambda_{\mathbf{m}%
}\left\vert e_{\mathbf{m}}\right\rangle \left\langle e_{\mathbf{m}}\right\vert
\label{symm-basis-0}%
\end{equation}
where
\begin{align}
\lambda_{\mathbf{m}}  &  :=\lambda_{1}^{m_{1}}\ldots\lambda_{d}^{m_{d}}\text{,
}\label{symm-basis}\\
e_{\mathbf{m}}  &  =\frac{1}{\sqrt{d_{\mathbf{m}}}}\sum_{\nu\in\Pi\left(
\mathbf{m},d\right)  }e_{\nu\left(  1\right)  }\otimes\ldots\otimes
e_{\nu\left(  m\right)  }. \label{symm-basis-2}%
\end{align}

\end{lemma}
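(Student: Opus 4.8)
The plan is to diagonalize $B^{\otimes m}$ on all of $\mathcal{H}^{\otimes m}$ first and then restrict to the symmetric subspace $\vee^{m}\mathcal{H}$. Since $B$ is Hermitian on $\mathcal{H}=\mathbb{C}^{d}$ with $B=\sum_{k=1}^{d}\lambda_{k}\left\vert e_{k}\right\rangle \left\langle e_{k}\right\vert$ and $\{e_{k}\}_{k=1}^{d}$ an orthonormal basis, the product vectors $e_{k_{1}}\otimes\cdots\otimes e_{k_{m}}$ with $(k_{1},\ldots,k_{m})\in\{1,\ldots,d\}^{m}$ form an orthonormal eigenbasis of $B^{\otimes m}$, with eigenvalue $\lambda_{k_{1}}\cdots\lambda_{k_{m}}$. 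Because $B^{\otimes m}$ is a tensor power of a single operator, it commutes with the unitary permutation action of $\mathfrak{S}_{m}$ on $\mathcal{H}^{\otimes m}$, hence with the orthogonal projection $P_{m}$ onto $\vee^{m}\mathcal{H}$; therefore $\vee^{m}\mathcal{H}$ is $B^{\otimes m}$-invariant and $\vee^{m}B=B^{\otimes m}\big|_{\vee^{m}\mathcal{H}}$.

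Next I would build the eigenbasis of $\vee^{m}B$. Fix $\mathbf{m}\in D(m)$. Each partition $\nu\in\Pi(\mathbf{m},d)$ determines a product vector $e_{\nu(1)}\otimes\cdots\otimes e_{\nu(m)}$, and the map $\nu\mapsto e_{\nu(1)}\otimes\cdots\otimes e_{\nu(m)}$ is injective, since two such tensors agree iff $\nu(j)=\nu'(j)$ for every $j$; so these $d_{\mathbf{m}}=\mathrm{card}\,\Pi(\mathbf{m},d)$ product vectors are orthonormal. Consequently the sum $\sum_{\nu\in\Pi(\mathbf{m},d)}e_{\nu(1)}\otimes\cdots\otimes e_{\nu(m)}$ has squared norm $d_{\mathbf{m}}$, so $e_{\mathbf{m}}$ defined by (\ref{symm-basis-2}) is a unit vector; being the sum over all orderings of a fixed multiset of basis vectors, it is permutation-invariant, i.e. $e_{\mathbf{m}}\in\vee^{m}\mathcal{H}$. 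For $\mathbf{m}\neq\mathbf{m}'$ the vectors $e_{\mathbf{m}}$ and $e_{\mathbf{m}'}$ are supported on disjoint subsets of the orthonormal basis $\{e_{k_{1}}\otimes\cdots\otimes e_{k_{m}}\}$ (the letter multiplicities differ), hence orthogonal. Since $|D(m)|=\binom{m+d-1}{d-1}=\dim\vee^{m}\mathcal{H}$ by stars and bars, the family $\{e_{\mathbf{m}}\}_{\mathbf{m}\in D(m)}$ is an orthonormal basis of $\vee^{m}\mathcal{H}$.

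Finally, applying $B^{\otimes m}$ termwise: for any $\nu\in\Pi(\mathbf{m},d)$ one has $\lambda_{\nu(1)}\cdots\lambda_{\nu(m)}=\lambda_{1}^{m_{1}}\cdots\lambda_{d}^{m_{d}}=\lambda_{\mathbf{m}}$, because $\nu$ assigns exactly $m_{j}$ of the $m$ factors to the index $j$; in particular this eigenvalue is independent of $\nu$. Hence $B^{\otimes m}e_{\mathbf{m}}=\lambda_{\mathbf{m}}e_{\mathbf{m}}$, i.e. $\vee^{m}B\,e_{\mathbf{m}}=\lambda_{\mathbf{m}}e_{\mathbf{m}}$, and since $\{e_{\mathbf{m}}\}$ is an orthonormal basis of $\vee^{m}\mathcal{H}$ this is precisely the spectral decomposition (\ref{symm-basis-0}) with $\lambda_{\mathbf{m}}$ as in (\ref{symm-basis}).

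I do not expect a genuine obstacle: the argument is elementary linear algebra plus combinatorial bookkeeping. The only points requiring care are the injectivity of $\nu\mapsto e_{\nu(1)}\otimes\cdots\otimes e_{\nu(m)}$ (which pins down the normalization constant $d_{\mathbf{m}}$) and the dimension count $|D(m)|=\binom{m+d-1}{d-1}$, which together ensure the $e_{\mathbf{m}}$ exhaust $\vee^{m}\mathcal{H}$ rather than merely sitting inside it. A cosmetic alternative would be to write $e_{\mathbf{m}}\propto P_{m}\bigl(e_{1}^{\otimes m_{1}}\otimes\cdots\otimes e_{d}^{\otimes m_{d}}\bigr)$ and invoke $[P_{m},B^{\otimes m}]=0$, which makes the invariance and eigenvalue claims immediate but still leaves the normalization to be computed.
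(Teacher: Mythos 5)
Your proof is correct and follows essentially the same route as the paper's: both identify the vectors $e_{\mathbf{m}}$ as an orthonormal set of eigenvectors of $B^{\otimes m}$ restricted to $\vee^{m}\mathcal{H}$ with eigenvalue $\lambda_{\mathbf{m}}$. The one point of divergence is in closing the completeness argument: you invoke the stars-and-bars count $\lvert D(m)\rvert=\binom{m+d-1}{d-1}=\dim\vee^{m}\mathcal{H}$, whereas the paper observes that the symmetrization projection $\Pi_{m}$ maps the product basis $\{\tilde{e}_{\mathbf{k}}\}$ of $\mathcal{H}^{\otimes m}$ onto the set $\{f_{\mathbf{m}}\}$, so the (mutually orthogonal, nonzero) $f_{\mathbf{m}}$ automatically span $\vee^{m}\mathcal{H}$ without a separate dimension count. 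Both are sound; the paper's version also yields the normalization $\lVert f_{\mathbf{m}}\rVert^{2}=1/d_{\mathbf{m}}$ as a byproduct of the projection computation, while you obtain it directly from the injectivity of $\nu\mapsto e_{\nu(1)}\otimes\cdots\otimes e_{\nu(m)}$.
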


\begin{proof}
For $\mathcal{H}=\mathbb{C}^{d}$, consider the symmetrization operator in
$\mathcal{H}^{\otimes m}$: let $\mathbf{k}\in\left[  1,d\right]  ^{\times m}$
be a multiindex and let
\[
\tilde{e}_{\mathbf{k}}:=e_{k(1)}\otimes\ldots\otimes e_{k(m)}%
\]
be an orthonormal basis \ of $\mathcal{H}^{\otimes m}$; then, if $U_{\sigma
}^{(m)}$, $\sigma\in S_{m}$ denotes the standard unitary representation of the
symmetric group $S_{m}$ on $\mathcal{H}^{\otimes m}$,
\[
\Pi_{m}\tilde{e}_{\mathbf{k}}:=\frac{1}{m!}\sum_{\sigma\in S_{m}}U_{\sigma
}^{(m)}\left(  e_{k(1)}\otimes\ldots\otimes e_{k(m)}\right)
\]
is the symmetrization operator in $\mathcal{H}^{\otimes m}$, where every
$U_{\sigma}^{(m)}\left(  e_{k(1)}\otimes\ldots\otimes e_{k(m)}\right)  $ gives
just a permutation of the tensor components. It is a projection, and the space
$\vee^{m}\mathcal{H}$ is the eigenspace. Note that for $\mathbf{k}_{1}$,
$\mathbf{k}_{2}\in$ $\left[  1,d\right]  ^{\times m}$ we have $\Pi_{m}%
\tilde{e}_{\mathbf{k}_{1}}=\Pi_{m}\tilde{e}_{\mathbf{k}_{2}}$ if and only if
there exists a multiindex $\mathbf{m}\in\mathbb{Z}_{+}^{d},m_{1}+\ldots
+m_{d}=m$ such that both $\tilde{e}_{\mathbf{k}_{j}}$ are permutations of
$e_{1}^{\otimes m_{1}}\otimes\ldots\otimes e_{d}^{\otimes m_{d}}$, in other
words there exist permutations $\sigma_{1},\sigma_{2}\in S_{m}$ such that
\[
U_{\sigma_{j}}^{(m)}\left(  e_{1}^{\otimes m_{1}}\otimes\ldots\otimes
e_{d}^{\otimes m_{d}}\right)  =\tilde{e}_{\mathbf{k}_{j}}\text{, }j=1,2.
\]
If $\Pi_{m}\tilde{e}_{\mathbf{k}_{1}}\neq\Pi_{m}\tilde{e}_{\mathbf{k}_{2}}$
then the images are orthogonal, i.e. $\left\langle \Pi_{m}\tilde
{e}_{\mathbf{k}_{1}},\Pi_{m}\tilde{e}_{\mathbf{k}_{2}}\right\rangle =0$. This
implies that the set
\begin{align*}
&  \left\{  f_{\mathbf{m}},\mathbf{m}\in\mathbb{Z}_{+}^{d},m_{1}+\ldots
+m_{d}=m\right\}  \text{ where }\\
f_{\mathbf{m}}  &  :=\Pi_{m}\left(  e_{1}^{\otimes m_{1}}\otimes\ldots\otimes
e_{d}^{\otimes m_{d}}\right)  =\frac{1}{m!}\sum_{\sigma\in S_{m}}U_{\sigma
}^{(m)}\left(  e_{1}^{\otimes m_{1}}\otimes\ldots\otimes e_{d}^{\otimes m_{d}%
}\right)
\end{align*}
is an orthogonal (not yet orthonormal) basis of $\vee^{m}\mathcal{H}$. For the
normalization, note that the set $\left\{  U_{\sigma}^{(m)}\left(
e_{1}^{\otimes m_{1}}\otimes\ldots\otimes e_{d}^{\otimes m_{d}}\right)
,\sigma\in S_{m}\right\}  $ has $m!$ elements, but only $d_{\mathbf{m}%
}=\left(
\genfrac{}{}{0pt}{}{m}{m_{1}\ldots m_{d}}%
\right)  =\frac{m!}{m_{1}!\ldots m_{d}!}$ different elements, each with
multiplicity $m!/d_{\mathbf{m}}$. The different elements can be described as
\begin{equation}
\hat{e}_{\nu}:=e_{\nu\left(  1\right)  }\otimes\ldots\otimes e_{\nu\left(
m\right)  }\text{, }\nu\in\Pi\left(  \mathbf{m},d\right)  ;
\label{notation-permut-vectors}%
\end{equation}
they are orthogonal to each other. Hence
\[
f_{\mathbf{m}}=\frac{1}{d_{\mathbf{m}}}\sum_{\nu\in\Pi\left(  \mathbf{m}%
,d\right)  }\hat{e}_{\nu},
\]%
\[
\left\Vert f_{\mathbf{m}}\right\Vert ^{2}=\left(  \frac{1}{d_{\mathbf{m}}%
}\right)  ^{2}d_{\mathbf{m}}=\frac{1}{d_{\mathbf{m}}},
\]
which implies that the vectors
\begin{equation}
e_{\mathbf{m}}:=f_{\mathbf{m}}/\left\Vert f_{\mathbf{m}}\right\Vert =\frac
{1}{\sqrt{d_{\mathbf{m}}}}\sum_{\nu\in\Pi\left(  \mathbf{m},d\right)  }\hat
{e}_{\nu} \label{basis-vecs-descrip-1}%
\end{equation}
are an orthonormal basis of $\vee^{m}\mathcal{H}$. To see that they are an
eigenbasis of $\vee^{m}B$ for eigenvalues $\lambda_{\mathbf{m}}$, note that
each $\hat{e}_{\nu}$ is an eigenvector of $B^{\otimes m}$ for eigenvalue
$\lambda_{\mathbf{m}}$, hence $e_{\mathbf{m}}$ is also an eigenvector for
$\lambda_{\mathbf{m}}$. Since the $e_{\mathbf{m}}$ are an orthonormal basis of
$\vee^{m}\mathcal{H}$, they are an eigenbasis of $\vee^{m}B$.
\end{proof}

\bigskip

\begin{lemma}
\label{Lem-pre-log-formula} Let $A,B$ be Hermitian operators on $\mathcal{H=}%
\mathbb{C}^{d}$ such that $0<A<I$, and let $\Gamma\left(  B\right)
:=\oplus_{m=0}^{\infty}\Gamma_{m}\left(  B\right)  $, where $\Gamma_{m}\left(
B\right)  $ is the restriction of $\sum_{k=1}^{m}I^{\otimes\left(  k-1\right)
}\otimes B\otimes I^{\otimes\left(  m-k\right)  }$ onto $\vee^{m}\mathcal{H}$,
with $\Gamma_{0}\left(  B\right)  =0$. Then
\begin{equation}
\mathrm{Tr}\;A_{F}\Gamma\left(  B\right)  =\frac{1}{\det\left(  I-A\right)
}\mathrm{Tr}\;\frac{A}{I-A}B. \label{pre-log-formula}%
\end{equation}

\end{lemma}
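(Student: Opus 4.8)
The plan is to compute the trace on the left-hand side using the spectral decomposition of the Fock operator $A_F$ provided by Lemma \ref{Lem-spec-decompos-Fock-op}, and to reduce the computation of $\mathrm{Tr}\;A_F\Gamma(B)$ to a sum over multiindices. First I would recall that $A_F = \oplus_{m\ge 0}\vee^m A$, so that
\[
\mathrm{Tr}\;A_F\Gamma(B) = \sum_{m=0}^{\infty}\mathrm{Tr}_{\vee^m\mathcal{H}}\left(\vee^m A\cdot\Gamma_m(B)\right),
\]
the $m=0$ term vanishing since $\Gamma_0(B)=0$. The key point is that $\vee^m A$ has the orthonormal eigenbasis $\{e_{\mathbf m}:\mathbf m\in D(m)\}$ with eigenvalues $\lambda_{\mathbf m}=\lambda_1^{m_1}\cdots\lambda_d^{m_d}$, where $\lambda_k$ are the eigenvalues of $A$ with eigenvectors $e_k$. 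Hence $\mathrm{Tr}_{\vee^m\mathcal{H}}(\vee^m A\cdot\Gamma_m(B)) = \sum_{\mathbf m\in D(m)}\lambda_{\mathbf m}\langle e_{\mathbf m},\Gamma_m(B)e_{\mathbf m}\rangle$.

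Next I would evaluate $\langle e_{\mathbf m},\Gamma_m(B)e_{\mathbf m}\rangle$. Using $\Gamma_m(B) = \Pi_m\left(\sum_{k=1}^m I^{\otimes(k-1)}\otimes B\otimes I^{\otimes(m-k)}\right)\Pi_m$ restricted to $\vee^m\mathcal{H}$ and the fact that $e_{\mathbf m}$ is already symmetric, this is $\sum_{k=1}^m\langle e_{\mathbf m}, (I^{\otimes(k-1)}\otimes B\otimes I^{\otimes(m-k)})e_{\mathbf m}\rangle$. Writing $e_{\mathbf m} = d_{\mathbf m}^{-1/2}\sum_{\nu\in\Pi(\mathbf m,d)}\hat e_\nu$ with $\hat e_\nu = e_{\nu(1)}\otimes\cdots\otimes e_{\nu(m)}$, and using orthonormality of the $e_k$, the action of $B$ in the $k$-th slot gives $\langle \hat e_\nu,(I^{\otimes(k-1)}\otimes B\otimes I^{\otimes(m-k)})\hat e_{\nu'}\rangle = \langle e_{\nu(k)},Be_{\nu'(k)}\rangle\prod_{j\ne k}\delta_{\nu(j)\nu'(j)}$; since the $e_k$ are the eigenbasis of $A$ but not of $B$, one must be slightly careful, but after summing over $\nu,\nu'$ only the diagonal contribution $\nu=\nu'$ survives because a single mismatched slot would require $\nu(j)=\nu'(j)$ for all $j\ne k$, which forces $\nu(k)=\nu'(k)$ as well (the total multiplicities $\mathbf m$ being fixed). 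This yields $\langle e_{\mathbf m},\Gamma_m(B)e_{\mathbf m}\rangle = d_{\mathbf m}^{-1}\sum_{\nu\in\Pi(\mathbf m,d)}\sum_{k=1}^m\langle e_{\nu(k)},Be_{\nu(k)}\rangle = \sum_{s=1}^d m_s\,B_{ss}$, where $B_{ss}=\langle e_s,Be_s\rangle$, since among the $d_{\mathbf m}$ partitions and $m$ slots, each group index $s$ appears with total frequency $m_s$ per partition. So
\[
\mathrm{Tr}\;A_F\Gamma(B) = \sum_{m=1}^{\infty}\sum_{\mathbf m\in D(m)}\lambda_{\mathbf m}\sum_{s=1}^{d}m_s B_{ss} = \sum_{s=1}^d B_{ss}\sum_{\mathbf m}m_s\prod_{k=1}^d\lambda_k^{m_k},
\]
where the last sum is over all $\mathbf m\in\mathbb{Z}_+^d$.

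Finally I would evaluate the geometric-type sum: $\sum_{\mathbf m\in\mathbb{Z}_+^d}m_s\prod_{k}\lambda_k^{m_k} = \left(\sum_{m_s\ge0}m_s\lambda_s^{m_s}\right)\prod_{k\ne s}\left(\sum_{m_k\ge0}\lambda_k^{m_k}\right) = \frac{\lambda_s}{(1-\lambda_s)^2}\prod_{k\ne s}\frac{1}{1-\lambda_k} = \frac{\lambda_s}{1-\lambda_s}\cdot\frac{1}{\prod_k(1-\lambda_k)} = \frac{\lambda_s}{1-\lambda_s}\cdot\frac{1}{\det(I-A)}$, valid since $0<A<I$ guarantees $0<\lambda_k<1$. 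Substituting back,
\[
\mathrm{Tr}\;A_F\Gamma(B) = \frac{1}{\det(I-A)}\sum_{s=1}^d\frac{\lambda_s}{1-\lambda_s}B_{ss} = \frac{1}{\det(I-A)}\,\mathrm{Tr}\left(\frac{A}{I-A}B\right),
\]
where the last equality holds because $\frac{A}{I-A}$ is diagonal in the $e_s$ basis with entries $\frac{\lambda_s}{1-\lambda_s}$, so $\mathrm{Tr}\left(\frac{A}{I-A}B\right) = \sum_s\frac{\lambda_s}{1-\lambda_s}\langle e_s,Be_s\rangle$. This is exactly (\ref{pre-log-formula}). The main obstacle I anticipate is the careful bookkeeping in the middle step — justifying that cross terms $\nu\ne\nu'$ vanish and correctly counting that each index $s$ contributes $m_s$ — together with ensuring that all the trace and series manipulations (interchanging the sum over $m$ with the trace, absolute convergence) are legitimate, which follows from positivity of all terms and $0<A<I$.
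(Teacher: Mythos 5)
Your argument is essentially the same as the paper's: both expand $\mathrm{Tr}\;A_F\Gamma(B)$ over the Fock levels $m$, use the eigenbasis $\{e_{\mathbf m}\}$ of $\vee^m A$ from Lemma \ref{Lem-spec-decompos-Fock-op}, expand $e_{\mathbf m}$ in the $\hat e_\nu$, kill the off-diagonal $\nu\neq\nu'$ contributions by the fixed-multiplicity argument, reduce $\langle e_{\mathbf m}|\Gamma_m(B)|e_{\mathbf m}\rangle$ to $\sum_s m_s\langle e_s|Be_s\rangle$, and finish with the geometric-series identity (\ref{quasi-geom-series}). The computation and the key combinatorial cancellation are identical; the only cosmetic difference is that you combine the double sum $\sum_m\sum_{\mathbf m\in D(m)}$ into a single sum over $\mathbf m\in\mathbb Z_+^d$ before factorizing, whereas the paper factorizes level by level.
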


\begin{proof}
We have
\[
\mathrm{Tr}\;A_{F}\Gamma\left(  B\right)  =\sum_{m=0}^{\infty}\mathrm{Tr}%
\;\left(  \vee^{m}A\right)  \Gamma_{m}\left(  B\right)  ,
\]%
\begin{align*}
\mathrm{Tr}\;\left(  \vee^{m}A\right)  \Gamma_{m}\left(  B\right)   &
=\sum_{\mathbf{m\in}D\left(  m\right)  }\left\langle e_{\mathbf{m}}|\left(
\vee^{m}A\right)  \Gamma_{m}\left(  B\right)  |e_{\mathbf{m}}\right\rangle \\
&  =\sum_{\mathbf{m\in}D\left(  m\right)  }\lambda_{\mathbf{m}}\left\langle
e_{\mathbf{m}}|\Gamma_{m}\left(  B\right)  |e_{\mathbf{m}}\right\rangle .
\end{align*}
Set $\Gamma_{m,j}\left(  B\right)  =I^{\otimes\left(  j-1\right)  }\otimes
B\otimes I^{\otimes\left(  m-j\right)  }$ and let $\check{\Gamma}_{m,j}\left(
B\right)  $ be the restriction to $\vee^{m}\mathcal{H}$ for $\mathcal{H}%
=\mathbb{C}^{d}$. We have
\begin{equation}
\left\langle e_{\mathbf{m}}|\Gamma_{m}\left(  B\right)  |e_{\mathbf{m}%
}\right\rangle =\sum_{j=1}^{m}\left\langle e_{\mathbf{m}}|\check{\Gamma}%
_{m,j}\left(  B\right)  |e_{\mathbf{m}}\right\rangle =\sum_{j=1}%
^{m}\left\langle e_{\mathbf{m}}|\Gamma_{m,j}\left(  B\right)  |e_{\mathbf{m}%
}\right\rangle \label{restric-dissolve}%
\end{equation}
since $e_{\mathbf{m}}\in\vee^{m}\mathcal{H}$. Furthermore, using
(\ref{basis-vecs-descrip-1})
\[
\left\langle e_{\mathbf{m}}|\Gamma_{m,j}\left(  B\right)  |e_{\mathbf{m}%
}\right\rangle =\frac{1}{d_{\mathbf{m}}}\sum_{\nu,\mu\in\Pi\left(
\mathbf{m},d\right)  }\left\langle e_{\nu}|\Gamma_{m,j}\left(  B\right)
|e_{\mu}\right\rangle .
\]
We note that any term $\left\langle e_{\nu}|\Gamma_{m,j}\left(  B\right)
|e_{\mu}\right\rangle $ must be zero unless $\nu=\mu$. Indeed
\begin{equation}
\left\langle e_{\nu}|\Gamma_{m,j}\left(  B\right)  |e_{\mu}\right\rangle
=\left(
{\displaystyle\prod\limits_{k=1}^{j-1}}
\left\langle e_{\nu\left(  k\right)  }|e_{\mu\left(  k\right)  }\right\rangle
\right)  \left\langle e_{\nu\left(  j\right)  }|B|e_{\mu\left(  j\right)
}\right\rangle \left(
{\displaystyle\prod\limits_{k=j+1}^{m}}
\left\langle e_{\nu\left(  k\right)  }|e_{\mu\left(  k\right)  }\right\rangle
\right)  . \label{factor-decompos-prel-log-formula}%
\end{equation}
For two partitions $\nu\neq\mu$, there must be at least two indices
$k\in\left\{  1,\ldots,m\right\}  $ such that $\nu\left(  k\right)  \neq
\mu\left(  k\right)  $. Indeed if there is no such index then $\nu=\mu$, and
if there is only one such index then this contradicts the assumption that both
$\nu$ and $\mu$ are in $\Pi\left(  \mathbf{m},d\right)  $ (i.e. the $l$-th
group has a given number of elements $m_{l}$, $l=1,\ldots,d$). This implies
that on the r.h.s. of (\ref{factor-decompos-prel-log-formula}), either the
first or the third factor (or both) are zero, unless $\nu=\mu$. Hence
\begin{align*}
\left\langle e_{\mathbf{m}}|\Gamma_{m,j}\left(  B\right)  |e_{\mathbf{m}%
}\right\rangle  &  =\frac{1}{d_{\mathbf{m}}}\sum_{\nu\in\Pi\left(
\mathbf{m},d\right)  }\left\langle e_{\nu}|\Gamma_{m,j}\left(  B\right)
|e_{\nu}\right\rangle \\
&  =\frac{1}{d_{\mathbf{m}}}\sum_{\nu\in\Pi\left(  \mathbf{m},d\right)
}\left\langle e_{\nu\left(  j\right)  }|B|e_{\nu\left(  j\right)
}\right\rangle
\end{align*}
and with (\ref{restric-dissolve})%
\begin{align*}
\left\langle e_{\mathbf{m}}|\Gamma_{m}\left(  B\right)  |e_{\mathbf{m}%
}\right\rangle  &  =\frac{1}{d_{\mathbf{m}}}\sum_{\nu\in\Pi\left(
\mathbf{m},d\right)  }\sum_{j=1}^{m}\left\langle e_{\nu\left(  j\right)
}|B|e_{\nu\left(  j\right)  }\right\rangle \\
&  =\frac{1}{d_{\mathbf{m}}}\sum_{\nu\in\Pi\left(  \mathbf{m},d\right)  }%
\sum_{k=1}^{d}m_{k}\left\langle e_{k}|B|e_{k}\right\rangle \\
&  =\sum_{k=1}^{d}m_{k}\left\langle e_{k}|B|e_{k}\right\rangle .
\end{align*}
Hence
\begin{align*}
\mathrm{Tr}\;\left(  \vee^{m}A\right)  \Gamma_{m}\left(  B\right)   &
=\sum_{\mathbf{m\in}D\left(  m\right)  }\lambda_{\mathbf{m}}\left(  \sum
_{k=1}^{d}m_{k}\right) \\
&  =\sum_{k=1}^{d}\left\langle e_{k}|B|e_{k}\right\rangle \sum_{\mathbf{m\in
}D\left(  m\right)  }\left(
{\displaystyle\prod\limits_{j=1,\ldots,d,j\neq k}}
\lambda_{j}^{m_{j}}\right)  m_{k}\lambda_{k}^{m_{k}}\text{.}%
\end{align*}
By summing over $m\geq0$, we obtain
\[
\mathrm{Tr}\;A_{F}\Gamma\left(  B\right)  =\sum_{k=1}^{d}\left\langle
e_{k}|B|e_{k}\right\rangle \left(  \sum_{m=0}^{\infty}m\lambda_{k}^{m}\right)
%
{\displaystyle\prod\limits_{j=1,\ldots,d,j\neq k}}
\left(  \sum_{m=0}^{\infty}\lambda_{j}^{m}\right)
\]
Using the elementary relation, for $0\leq x<1$
\begin{equation}
\sum_{m=0}^{\infty}mx^{m}=\frac{x}{\left(  1-x\right)  ^{2}}
\label{quasi-geom-series}%
\end{equation}
we obtain
\begin{align*}
\mathrm{Tr}\;A_{F}\Gamma\left(  B\right)   &  =\sum_{k=1}^{d}\left\langle
e_{k}|B|e_{k}\right\rangle \left(  \frac{\lambda_{k}}{\left(  1-\lambda
_{k}\right)  ^{2}}\right)
{\displaystyle\prod\limits_{j=1,\ldots,d,j\neq k}}
\frac{1}{1-\lambda_{j}}\\
&  =\left(
{\displaystyle\prod\limits_{j=1,\ldots,d}}
\frac{1}{1-\lambda_{j}}\right)  \sum_{k=1}^{d}\frac{\lambda_{k}}{1-\lambda
_{k}}\left\langle e_{k}|B|e_{k}\right\rangle \\
&  =\frac{1}{\det\left(  I-A\right)  }\mathrm{Tr}\;\frac{A}{I-A}B.
\end{align*}

\end{proof}

To compute the relative entropy of Gaussian states, we need the logarithm of a
Fock operator. This can be found with the help of the spectral decomposition
of Lemma \ref{Lem-spec-decompos-Fock-op}.

\begin{lemma}
\label{Lem-log-Gamma-m}Let $B$ be Hermitian on $\mathcal{H}=\mathbb{C}^{d}$
with spectral decomposition $B=\sum_{k=1}^{d}\lambda_{k}\left\vert
e_{k}\right\rangle \left\langle e_{k}\right\vert $. Then
\[
\log\vee^{m}B=\Gamma_{m}\left(  \log B\right)
\]
where $\Gamma_{m}\left(  \cdot\right)  $ has been defined in Lemma
\ref{Lem-pre-log-formula}.
\end{lemma}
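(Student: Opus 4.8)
The plan is to reduce the identity to the explicit spectral data already assembled. For $\log B$ to be defined I take $B>0$, i.e.\ $\lambda_k>0$ for all $k$; this also makes $\vee^m B>0$, so that $\log\vee^m B$ makes sense. By Lemma~\ref{Lem-spec-decompos-Fock-op} the vectors $e_{\mathbf m}$, $\mathbf m\in D(m)$, form an orthonormal basis of $\vee^m\mathcal H$ and satisfy $\vee^m B\,e_{\mathbf m}=\lambda_{\mathbf m}e_{\mathbf m}$ with $\lambda_{\mathbf m}=\lambda_1^{m_1}\cdots\lambda_d^{m_d}$ and $e_{\mathbf m}=d_{\mathbf m}^{-1/2}\sum_{\nu\in\Pi(\mathbf m,d)}\hat{e}_\nu$, where $\hat{e}_\nu=e_{\nu(1)}\otimes\cdots\otimes e_{\nu(m)}$. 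Consequently, by functional calculus, $\log\vee^m B=\sum_{\mathbf m\in D(m)}(\log\lambda_{\mathbf m})\,|e_{\mathbf m}\rangle\langle e_{\mathbf m}|$, and note $\log\lambda_{\mathbf m}=\sum_{k=1}^d m_k\log\lambda_k$.

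First I would check that $\Gamma_m(\log B)$ acts by exactly these eigenvalues on exactly this basis. Let $S_m:=\sum_{j=1}^m I^{\otimes(j-1)}\otimes\log B\otimes I^{\otimes(m-j)}$ on $\mathcal H^{\otimes m}$, so that $\Gamma_m(\log B)$ is the restriction of $S_m$ to $\vee^m\mathcal H$. Since $\log B=\sum_{k=1}^d(\log\lambda_k)\,|e_k\rangle\langle e_k|$, each product vector $\hat{e}_\nu$ is an eigenvector of the $j$-th summand of $S_m$ with eigenvalue $\log\lambda_{\nu(j)}$, hence $S_m\hat{e}_\nu=\bigl(\textstyle\sum_{j=1}^m\log\lambda_{\nu(j)}\bigr)\hat{e}_\nu=\bigl(\textstyle\sum_{k=1}^d m_k\log\lambda_k\bigr)\hat{e}_\nu$, the last step because $\nu\in\Pi(\mathbf m,d)$ assigns exactly $m_k$ of the $m$ objects to group $k$. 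Summing over $\nu\in\Pi(\mathbf m,d)$ yields $S_m e_{\mathbf m}=\bigl(\sum_k m_k\log\lambda_k\bigr)e_{\mathbf m}$; as $e_{\mathbf m}\in\vee^m\mathcal H$, this is the same as $\Gamma_m(\log B)e_{\mathbf m}=(\log\lambda_{\mathbf m})e_{\mathbf m}$. Since $\{e_{\mathbf m}\}_{\mathbf m\in D(m)}$ is an orthonormal basis of $\vee^m\mathcal H$, both operators equal $\sum_{\mathbf m\in D(m)}(\log\lambda_{\mathbf m})\,|e_{\mathbf m}\rangle\langle e_{\mathbf m}|$, which is the assertion.

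No step here is really hard; the one place deserving a sentence of care is the legitimacy of passing from the action of $S_m$ on the spanning set $\{\hat{e}_\nu\}$ to that of $\Gamma_m(\log B)$ on $\{e_{\mathbf m}\}$: because $S_m$ is a symmetric sum it commutes with every permutation unitary $U_\sigma^{(m)}$, hence with the symmetrization projection $\Pi_m$, so it leaves $\vee^m\mathcal H$ invariant and its restriction genuinely is $\Gamma_m(\log B)$. One should also flag the positivity hypothesis on $B$ at the outset; for a merely Hermitian invertible $B$ the identical computation proves $f(\vee^m B)=\Gamma_m(f(B))$ for any scalar function $f$ defined on the spectrum, of which $f=\log$ is the case needed in the proof of Proposition~\ref{prop-rel-entrop-form}. (Equivalently, the lemma is the $m$-particle restriction of the functorial identity $\Gamma(e^{C})=e^{d\Gamma(C)}$ for second quantization.)
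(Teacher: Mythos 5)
Your proof is correct and follows the same route as the paper's: decompose $\vee^m B$ via Lemma~\ref{Lem-spec-decompos-Fock-op}, verify that $\Gamma_m(\log B)$ has the same eigenbasis $\{e_{\mathbf m}\}$ by computing the action of the unrestricted sum $S_m$ on the product vectors $\hat e_\nu$, and match eigenvalues. Your two added clarifications are sound and sharpen the paper's argument slightly: the observation that $S_m$ commutes with the permutation unitaries (hence leaves $\vee^m\mathcal H$ invariant, so restriction is legitimate) and the explicit flagging of the positivity hypothesis $B>0$ needed for $\log B$ to be defined.
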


\begin{proof}
From Lemma \ref{Lem-spec-decompos-Fock-op} we obtain, if $B=$
\begin{align*}
\log\vee^{m}B  &  =\sum_{\mathbf{m}\in D\left(  m\right)  }\left(  \log
\lambda_{\mathbf{m}}\right)  \left\vert e_{\mathbf{m}}\right\rangle
\left\langle e_{\mathbf{m}}\right\vert \\
&  =\sum_{\mathbf{m}\in D\left(  m\right)  }\left(  \sum_{j=1}^{d}m_{j}%
\log\lambda_{j}\right)  \left\vert e_{\mathbf{m}}\right\rangle \left\langle
e_{\mathbf{m}}\right\vert .
\end{align*}
It now suffices to show that each $e_{\mathbf{m}}$ is an eigenvector of
$\Gamma_{m}\left(  \log B\right)  $ for eigenvalue $\sum_{j=1}^{d}m_{j}%
\log\lambda_{j}$:%
\[
\Gamma_{m}\left(  \log B\right)  e_{\mathbf{m}}=\left(  \sum_{j=1}^{d}%
m_{j}\log\lambda_{j}\right)  e_{\mathbf{m}}.
\]
Equivalently we can show that $\sum_{\nu\in\Pi\left(  \mathbf{m},d\right)
}\hat{e}_{\nu}$ is an eigenvector for the same eigenvalue, where $\hat{e}%
_{\nu}$, $\nu\in\Pi\left(  \mathbf{m},d\right)  $ have been defined in
(\ref{notation-permut-vectors}). Write
\[
\Gamma_{m}\left(  \log B\right)  =\sum_{k=1}^{m}\check{\Gamma}_{m,j}\left(
\log B\right)
\]
where $\check{\Gamma}_{m,j}\left(  \log B\right)  $ is the rectriction to
$\vee^{m}\mathcal{H}$ of
\[
\Gamma_{m,j}\left(  \log B\right)  =I^{\otimes\left(  j-1\right)  }\otimes\log
B\otimes I^{\otimes\left(  m-j\right)  }\text{, }j=1,\ldots,m.
\]
Note that $\sum_{\nu\in\Pi\left(  \mathbf{m},d\right)  }\hat{e}_{\nu}$ is an
element of $\vee^{m}\mathcal{H}$ while the $\hat{e}_{\nu}$ generally are not.
But it suffices to show that for all $\nu\in\Pi\left(  \mathbf{m},d\right)  $
\begin{equation}
\left(  \sum_{j=1}^{m}\Gamma_{m,j}\left(  \log B\right)  \right)  \hat{e}%
_{\nu}=\left(  \sum_{j=1}^{d}m_{j}\lambda_{j}\right)  \hat{e}_{\nu}.
\label{eigenvec-non-symm}%
\end{equation}
Consider the particular $\nu\in\Pi\left(  \mathbf{m},d\right)  $ for which
\begin{equation}
\hat{e}_{\nu}=e_{1}^{\otimes m_{1}}\otimes\ldots\otimes e_{d}^{\otimes m_{d}}.
\label{special-e-nu-hat}%
\end{equation}
In this case we have
\begin{align*}
\Gamma_{m,j}\left(  \log B\right)  \hat{e}_{\nu}  &  =\lambda_{1}\hat{e}_{\nu
}\text{, }j=1,\ldots,m_{1},\\
\Gamma_{m,j}\left(  \log B\right)  \hat{e}_{\nu}  &  =\lambda_{2}\hat{e}_{\nu
}\text{, }j=m_{1}+1,\ldots,m_{1}+m_{2},\\
&  \ldots\\
\Gamma_{m,j}\left(  \log B\right)  \hat{e}_{\nu}  &  =\lambda_{d}\hat{e}_{\nu
}\text{, }j=\sum_{j=1}^{d-1}m_{j}+1,\ldots,m.
\end{align*}
This implies (\ref{eigenvec-non-symm}) for $\hat{e}_{\nu}$ given by
(\ref{eigenvec-non-symm}). Since all other $\hat{e}_{\nu}$, $\nu\in\Pi\left(
\mathbf{m},d\right)  $ arise from a permutation of the tensor factors, they
also fulfill (\ref{eigenvec-non-symm}).
\end{proof}

\subsection{Uniform convergence in distribution\label{subsec-uniform-in-law}}%

\begin{privatenotes}
\begin{boxedminipage}{\textwidth}%

\begin{sfblock}
This whole appendix subsection was revised 1/25/26.
\end{sfblock}

%

\end{boxedminipage}
\end{privatenotes}%

Let us define uniform convergence in distribution, following \cite{MR620321},
Appendix I. Consider a sample space $\left(  \mathbb{R}^{d},\mathfrak{B}%
^{d}\right)  $ where $\mathfrak{B}^{d}$ is the Borel sigma-algebra;
convergence in distribution of a sequence of probability measures $Q_{n}$ to
some $Q$ is written $Q_{n}\Longrightarrow_{d}Q$. Assume on $\left(
\mathbb{R}^{d},\mathfrak{B}^{d}\right)  $ there is a sequence of families of
probability measures $\mathcal{P}_{n}=\left(  P_{n,\theta}\text{, }\theta
\in\Theta\right)  $, $n\in\mathbb{N}$ where $\Theta$ is an arbitrary set. The
family $\mathcal{P}_{n}$ is said to uniformly converge in distribution to a
family $\mathcal{P}=\left(  P_{\theta}\text{, }\theta\in\Theta\right)  $ if
for every bounded continuous function $g$ on $\mathbb{R}^{d}$ we have
\begin{equation}
\int_{\mathbb{R}^{d}}gdP_{n,\theta}\rightarrow\int_{\mathbb{R}^{d}}%
gdP_{\theta} \label{unif-conv-law}%
\end{equation}
uniformly in $\theta$.

Consider the bounded Lipschitz norm for real valued functions $f$ on
$\mathbb{R}^{d}$
\begin{equation}
\left\Vert f\right\Vert _{BL}:=\left\Vert f\right\Vert _{\infty}+\sup_{x\neq
y}\frac{\left\vert f\left(  x\right)  -f\left(  y\right)  \right\vert
}{\left\vert x-y\right\vert }\text{, }\left\Vert f\right\Vert _{\infty}%
:=\sup_{x}\left\vert f\left(  x\right)  \right\vert \label{BL-def}%
\end{equation}
and the bounded Lipschitz metric for probability measures $P,Q$ on
$\mathbb{R}^{d}$
\begin{equation}
\beta\left(  P,Q\right)  :=\sup\left\{  \left\vert \int f\left(  dP-dQ\right)
\right\vert :\left\Vert f\right\Vert _{BL}\leq1\right\}  .
\label{BL-metric-for-probs-def}%
\end{equation}
It is well known that $P_{n}\Longrightarrow_{d}Q$ if and only if $\beta\left(
P_{n},Q\right)  \rightarrow0$ (\cite{MR0982264}, Theorem 11.3.3). Also
consider the total variation metric:%
\begin{equation}
\left\Vert P-Q\right\Vert _{TV}=\sup_{A\in\mathfrak{B}^{d}}\left\vert P\left(
A\right)  -Q\left(  A\right)  \right\vert . \label{TV-metric-def}%
\end{equation}
Recall that for $\nu=P+Q$ and $p=dP/d\nu$, $q=dQ/d\nu$ one has
\begin{subequations}
\label{TV-metric-and-L1}%
\begin{align}
\left\Vert P-Q\right\Vert _{TV}  &  =\frac{1}{2}\int\left\vert p-q\right\vert
d\mu=\frac{1}{2}\left\Vert P-Q\right\Vert _{1}\text{ where }%
\label{TV-metric-and-L1-b}\\
\left\Vert P-Q\right\Vert _{1}  &  =\sup\left\{  \left\vert \int f\left(
dP-dQ\right)  \right\vert :\left\Vert f\right\Vert _{\infty}\leq1\text{,
}f\text{ measurable}\right\}  . \label{TV-metric-and-L1-a}%
\end{align}
Also consider the Hellinger metric
\end{subequations}
\[
H\left(  P,Q\right)  =\left(  \int\left(  p^{1/2}-q^{1/2}\right)  ^{2}%
d\nu\right)  ^{1/2}.
\]
See \cite{MR2724359}, Sec. 2.4 for relations between these distances. In
particular, by Le Cam's inequality (\cite{MR2724359}, Lemma 2.3), one has
\begin{equation}
\left\Vert P-Q\right\Vert _{TV}\leq H\left(  P,Q\right)  . \label{Lecam-inequ}%
\end{equation}

\begin{lemma}
\label{Lem-unif-law-converg-charac}Assume $\Theta$ is a compact metric space
with metric $\mu$ and the mapping $\theta\rightarrow P_{\theta}$, $\theta
\in\Theta$ is continuous in total variation metric. Then the following
statements are equivalent:\newline(i) $\mathcal{P}_{n}$ uniformly converges in
distribution to $\mathcal{P}$\newline(ii) $\sup_{\theta}\beta\left(
P_{n,\theta},P_{\theta}\right)  \rightarrow0$ \newline(iii) For every sequence
$\left\{  \theta_{n}\right\}  $ such that $\theta_{n}\rightarrow\theta$ for
some $\theta\in\Theta$, one has $P_{n,\theta_{n}}\Longrightarrow_{d}P_{\theta
}$.
\end{lemma}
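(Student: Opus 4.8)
The plan is to establish the cycle (i) $\Rightarrow$ (iii) $\Rightarrow$ (ii) $\Rightarrow$ (i), with the equivalence (ii) $\Leftrightarrow$ (iii) dropping out along the way; all three implications run on the same two elementary ingredients. First, from (\ref{BL-def}) and (\ref{TV-metric-and-L1}) one has $\beta(P,Q)\le\|P-Q\|_{1}=2\|P-Q\|_{TV}$, so the hypothesis that $\theta\mapsto P_{\theta}$ is continuous in total variation implies it is continuous in the bounded-Lipschitz metric $\beta$, hence weakly continuous; in particular $\theta_{n}\to\theta$ forces $\int g\,dP_{\theta_{n}}\to\int g\,dP_{\theta}$ for every bounded continuous $g$ and $\beta(P_{\theta_{n}},P_{\theta})\to0$. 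Second, $\beta$ metrizes weak convergence on $(\mathbb{R}^{d},\mathfrak{B}^{d})$ (\cite{MR0982264}, Theorem 11.3.3), so $P_{n,\theta_{n}}\Longrightarrow_{d}P_{\theta}$ is the same as $\beta(P_{n,\theta_{n}},P_{\theta})\to0$.

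For (ii) $\Leftrightarrow$ (iii): assuming (ii) and $\theta_{n}\to\theta$, the triangle inequality $\beta(P_{n,\theta_{n}},P_{\theta})\le\sup_{\theta'}\beta(P_{n,\theta'},P_{\theta'})+\beta(P_{\theta_{n}},P_{\theta})$ has a first summand vanishing by (ii) and a second vanishing by continuity, giving (iii). Conversely, if (ii) fails there are $\varepsilon>0$, a subsequence $n_{k}$ and parameters $\theta_{n_{k}}$ with $\beta(P_{n_{k},\theta_{n_{k}}},P_{\theta_{n_{k}}})\ge\varepsilon$; compactness of $\Theta$ lets me pass to a further subsequence with $\theta_{n_{k}}\to\theta^{*}\in\Theta$, and then form the interpolated full sequence $\theta'_{n}$ given by $\theta'_{n}=\theta_{n_{k}}$ if $n=n_{k}$ and $\theta'_{n}=\theta^{*}$ otherwise, which converges to $\theta^{*}$. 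Feeding $\{\theta'_{n}\}$ into (iii), using $\beta(P_{\theta_{n_{k}}},P_{\theta^{*}})\to0$, and one more triangle inequality force $\beta(P_{n_{k},\theta_{n_{k}}},P_{\theta_{n_{k}}})\to0$, contradicting the lower bound.

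The implication (i) $\Rightarrow$ (iii) is again the triangle-inequality step, now tested against an arbitrary bounded continuous $g$: $|\int g\,dP_{n,\theta_{n}}-\int g\,dP_{\theta}|\le\sup_{\theta'}|\int g\,dP_{n,\theta'}-\int g\,dP_{\theta'}|+|\int g\,dP_{\theta_{n}}-\int g\,dP_{\theta}|$, the first term $\to0$ by (i) and the second by weak continuity. For (iii) $\Rightarrow$ (i) I would repeat the subsequence-and-interpolation device: if (i) fails there is a bounded continuous $g$, an $\varepsilon>0$, a subsequence $n_{k}$ and parameters $\theta_{n_{k}}$ with $|\int g\,dP_{n_{k},\theta_{n_{k}}}-\int g\,dP_{\theta_{n_{k}}}|\ge\varepsilon$; extracting $\theta_{n_{k}}\to\theta^{*}$ by compactness, the interpolated sequence $\theta'_{n}\to\theta^{*}$ gives $\int g\,dP_{n_{k},\theta_{n_{k}}}\to\int g\,dP_{\theta^{*}}$ by (iii), while continuity gives $\int g\,dP_{\theta_{n_{k}}}\to\int g\,dP_{\theta^{*}}$ — a contradiction. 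This closes the cycle, so (i), (ii), (iii) are equivalent.

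The one genuinely delicate point is bridging condition (i), which quantifies over \emph{all} bounded continuous test functions, and condition (ii)/the metric $\beta$, which only sees bounded Lipschitz ones; on $\mathbb{R}^{d}$ this cannot be done by uniformly approximating $g$, and no tightness of the $P_{n,\theta}$ is assumed up front. The subsequence--interpolation argument is precisely what sidesteps this: compactness of $\Theta$ supplies a convergent parameter subsequence, and feeding the interpolated full sequence into (iii) — which already yields genuine weak convergence against every bounded continuous $g$ — recovers the missing uniformity by contradiction. I expect the write-up to be short, the only real care being to check that the interpolated sequence is a genuine sequence in $\Theta$ converging to $\theta^{*}$ whose restriction to the indices $n_{k}$ is the original offending subsequence.
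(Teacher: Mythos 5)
Your proof is correct and uses essentially the same ingredients as the paper's: the triangle inequality in $\beta$, domination of $\beta$ by total variation, compactness of $\Theta$ to extract convergent parameter subsequences, the interpolated-full-sequence device to feed a subsequence into condition (iii), and Dudley's theorem that $\beta$ metrizes weak convergence. The only difference is organizational — the paper proves the three-link cycle (i) $\Rightarrow$ (ii) $\Rightarrow$ (iii) $\Rightarrow$ (i), using the interpolation trick just once in (iii) $\Rightarrow$ (i), whereas you prove four implications ((i) $\Leftrightarrow$ (iii) and (ii) $\Leftrightarrow$ (iii)) and invoke the interpolation device twice; this is a little redundant but entirely sound.
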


\begin{proof}
(i) $\Longrightarrow$(ii). Assume that (ii) does not hold. Then there is a
subsequence $\mathcal{N}_{1}\subset\mathbb{N}$ such that $P_{\theta_{n}}$
converges in total variation to some $P_{\theta}$ along $n\in\mathcal{N}_{1}$,
but for some $\delta>0$
\[
\beta\left(  P_{n,\theta_{n}},P_{\theta_{n}}\right)  >\delta,\text{ }%
n\in\mathcal{N}_{1}\text{.}%
\]
In view of (i), we have for every bounded continuous $g$
\[
\left\vert \int_{\mathbb{R}^{d}}gdP_{n,\theta_{n}}-\int_{\mathbb{R}^{d}%
}gdP_{\theta_{n}}\right\vert \rightarrow0,\text{ }n\in\mathcal{N}_{1}\text{.}%
\]
Recall that the total variation metric satisfies (\ref{TV-metric-and-L1});
hence
\[
\beta\left(  P,Q\right)  \leq2\left\Vert P-Q\right\Vert _{TV}%
\]
and for every bounded continuous $g$%
\[
\left\vert \int_{\mathbb{R}^{d}}gdP-\int_{\mathbb{R}^{d}}gdQ\right\vert
\leq\frac{2\left\Vert P-Q\right\Vert _{TV}}{\left\Vert g\right\Vert _{\infty}%
}.
\]
Now $\left\Vert P_{\theta_{n}}-P_{\theta}\right\Vert _{TV}\rightarrow0$,
$n\in\mathcal{N}_{1}$ implies%
\begin{align}
\beta\left(  P_{n,\theta_{n}},P_{\theta}\right)   &  \geq\beta\left(
P_{n,\theta_{n}},P_{\theta_{n}}\right)  -\beta\left(  P_{\theta_{n}}%
,P_{\theta}\right) \nonumber\\
&  \geq\delta/2,\text{ }n\in\mathcal{N}_{1}\text{, }n\text{ sufficiently large
} \label{contra-5}%
\end{align}
and for every bounded continuous $g$%
\begin{align*}
&  \left\vert \int_{\mathbb{R}^{d}}gdP_{n,\theta_{n}}-\int_{\mathbb{R}^{d}%
}gdP_{\theta}\right\vert \\
&  \leq\left\vert \int_{\mathbb{R}^{d}}gdP_{n,\theta_{n}}-\int_{\mathbb{R}%
^{d}}gdP_{\theta_{n}}\right\vert +\frac{2\left\Vert P_{\theta_{n}}-P_{\theta
}\right\Vert _{TV}}{\left\Vert g\right\Vert _{\infty}}\rightarrow0,\text{
}n\in\mathcal{N}_{1}.
\end{align*}
The latter relation means $P_{n,\theta_{n}}\Longrightarrow_{d}P_{\theta}$
along $n\in\mathcal{N}_{1}$, hence $\beta\left(  P_{n,\theta_{n}},P_{\theta
}\right)  \rightarrow0$, which contradicts (\ref{contra-5}).

(ii)$\Longrightarrow$(iii). Let $\left\{  \theta_{n}\right\}  $ be a sequence
with $\left\Vert P_{\theta_{n}}-P_{\theta}\right\Vert _{TV}\rightarrow0$.
Then
\begin{align*}
\beta\left(  P_{n,\theta_{n}},P_{\theta}\right)   &  \leq\beta\left(
P_{n,\theta_{n}},P_{\theta_{n}}\right)  +\beta\left(  P_{\theta_{n}}%
,P_{\theta}\right) \\
&  \leq\beta\left(  P_{n,\theta_{n}},P_{\theta_{n}}\right)  +2\left\Vert
P_{\theta_{n}}-P_{\theta}\right\Vert _{TV}\rightarrow0.
\end{align*}
Hence $\beta\left(  P_{n,\theta_{n}},P_{\theta}\right)  \rightarrow0$,
implying $P_{n,\theta_{n}}\overset{d}{\implies}P_{\theta}$.

(iii)$\Longrightarrow$(i). Assume (i) does not hold. Then there is a
subsequence $\mathcal{N}_{1}\subset\mathbb{N}$, a sequence $\left\{
\theta_{n},n\in\mathcal{N}_{1}\right\}  \subset\Theta$, a bounded continuous
$g$ and a $\delta>0$ such that
\[
\left\vert \int_{\mathbb{R}^{d}}gdP_{n,\theta_{n}}-\int_{\mathbb{R}^{d}%
}gdP_{\theta_{n}}\right\vert \geq\delta\text{, }n\in\mathcal{N}_{1}.
\]
Then there is a further subsequence $\mathcal{N}_{2}\subset\mathcal{N}_{1}$
such that for some $\theta\in\Theta$ one has $\theta_{n}\rightarrow\theta$
along $\mathcal{N}_{2}$ and hence
\[
\left\Vert P_{\theta_{n}}-P_{\theta}\right\Vert _{TV}\rightarrow
0,n\in\mathcal{N}_{2}.
\]
This implies
\begin{equation}
\left\vert \int_{\mathbb{R}^{d}}gdP_{n,\theta_{n}}-\int_{\mathbb{R}^{d}%
}gdP_{\theta}\right\vert \geq\delta/2\text{, }n\in\mathcal{N}_{2}\text{,
}n\text{ sufficiently large. } \label{contra-6}%
\end{equation}
Define a sequence $\theta_{n}^{\ast}$, $n\in\mathbb{N}$ by $\theta_{n}^{\ast
}=\theta_{n}$ for $n\in\mathcal{N}_{2}$, $\theta_{n}^{\ast}=\theta$ for
$n\notin\mathcal{N}_{2}$. Then $\theta_{n}^{\ast}\rightarrow\theta$ and by
(iii) we have $P_{n,\theta_{n}^{\ast}}\overset{d}{\implies}P_{\theta}$, which
contradicts (\ref{contra-6}).
\end{proof}

\bigskip

In the context of the CLT, consider a set $\mathcal{S}$ of family of $d\times
d$ nonsingular covariance matrices and the Hilbert-Schmidt norm $\left\Vert
\Sigma\right\Vert _{2}=\left(  \mathrm{Tr}\;\Sigma^{2}\right)  ^{1/2}$.

\begin{lemma}
\label{lem-covmatrices-cite-GNZpaper}(Lemma 2.1 of \cite{MR2589320}) Suppose
the set $\mathcal{S}$ satisfies
\[
s_{1}:=\inf_{\Sigma\in\mathcal{S}}\lambda_{\min}\left(  \Sigma\right)
>0,\;\;s_{2}:=\sup_{\Sigma\in\mathcal{S}}\lambda_{\max}\left(  \Sigma\right)
<\infty.
\]
Then there exists $C>0$ depending on $s_{1},s_{2}$ but not on $d$ such that
for all $\Sigma_{1},\Sigma_{2}\in\mathcal{S}$
\begin{equation}
H\left(  N_{d}\left(  0,\Sigma_{1}\right)  ,N_{d}\left(  0,\Sigma_{2}\right)
\right)  \leq C\;\left\Vert \Sigma_{1}-\Sigma_{2}\right\Vert _{2}.
\label{conti-2}%
\end{equation}

\end{lemma}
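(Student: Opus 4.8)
The plan is to pass from the Hellinger distance to the Hellinger affinity and then reduce everything to a scalar estimate applied to the joint eigenvalues of $\Sigma_{1}$ and $\Sigma_{2}$. Writing $\rho\left(\Sigma_{1},\Sigma_{2}\right)=\int\sqrt{p_{1}p_{2}}\,dx$ for the densities $p_{i}$ of $N_{d}\left(0,\Sigma_{i}\right)$, one has $H^{2}\left(N_{d}\left(0,\Sigma_{1}\right),N_{d}\left(0,\Sigma_{2}\right)\right)=2\left(1-\rho\left(\Sigma_{1},\Sigma_{2}\right)\right)$. A direct Gaussian integral yields the closed form
\[
\rho\left(\Sigma_{1},\Sigma_{2}\right)=\frac{\det\left(\Sigma_{1}\right)^{1/4}\det\left(\Sigma_{2}\right)^{1/4}}{\det\left(\tfrac{1}{2}\left(\Sigma_{1}+\Sigma_{2}\right)\right)^{1/2}},
\]
and, letting $\mu_{1},\ldots,\mu_{d}>0$ be the eigenvalues of $\Sigma_{1}^{-1/2}\Sigma_{2}\Sigma_{1}^{-1/2}$ (equivalently of $\Sigma_{1}^{-1}\Sigma_{2}$), simultaneous diagonalization of $\Sigma_{1}$ and $\Sigma_{2}$ gives $\rho=\prod_{i=1}^{d}\left(\tfrac{2\sqrt{\mu_{i}}}{1+\mu_{i}}\right)^{1/2}$, hence $\log\rho=\sum_{i=1}^{d}\phi\left(\mu_{i}\right)$ with $\phi\left(\mu\right):=\tfrac{1}{4}\log\mu-\tfrac{1}{2}\log\tfrac{1+\mu}{2}$.

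First I would record the elementary properties of $\phi$: $\phi\left(1\right)=0$, $\phi^{\prime}\left(1\right)=0$, and $\phi\left(\mu\right)\le0$ for all $\mu>0$ (this is AM--GM: $\sqrt{\mu}\le\tfrac{1+\mu}{2}$). The hypotheses $s_{1}\le\lambda_{\min}\left(\Sigma_{i}\right)$, $\lambda_{\max}\left(\Sigma_{i}\right)\le s_{2}$ for $i=1,2$ force every $\mu_{i}$ into the interval $\left[s_{1}/s_{2},s_{2}/s_{1}\right]$, which contains $1$ and on which $\left|\phi^{\prime\prime}\right|$ is bounded by a constant $c_{1}=c_{1}\left(s_{1},s_{2}\right)$. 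A second-order Taylor expansion of $\phi$ around $\mu=1$ then gives $\left|\phi\left(\mu_{i}\right)\right|\le\tfrac{c_{1}}{2}\left(\mu_{i}-1\right)^{2}$, so that $-\log\rho=\sum_{i}\left|\phi\left(\mu_{i}\right)\right|\le\tfrac{c_{1}}{2}\sum_{i}\left(\mu_{i}-1\right)^{2}$.

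Next I would identify $\mu_{i}-1$ as the eigenvalues of the Hermitian matrix $\Sigma_{1}^{-1/2}\left(\Sigma_{2}-\Sigma_{1}\right)\Sigma_{1}^{-1/2}$, so that $\sum_{i}\left(\mu_{i}-1\right)^{2}=\left\Vert\Sigma_{1}^{-1/2}\left(\Sigma_{2}-\Sigma_{1}\right)\Sigma_{1}^{-1/2}\right\Vert_{2}^{2}$. Applying the submultiplicativity of the Hilbert--Schmidt norm with respect to the operator norm (inequality (\ref{norm-inequality}), together with $\left\Vert X\right\Vert_{2}=\left\Vert X^{\ast}\right\Vert_{2}$) twice, and using $\left\vert\Sigma_{1}^{-1/2}\right\vert\le s_{1}^{-1/2}$, one gets $\sum_{i}\left(\mu_{i}-1\right)^{2}\le s_{1}^{-2}\left\Vert\Sigma_{1}-\Sigma_{2}\right\Vert_{2}^{2}$. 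Finally, since $\log\rho\le0$ and $1-e^{-t}\le t$ for $t\ge0$, we obtain $H^{2}=2\left(1-\rho\right)\le-2\log\rho\le c_{1}s_{1}^{-2}\left\Vert\Sigma_{1}-\Sigma_{2}\right\Vert_{2}^{2}$, which is the claim with $C=\left(c_{1}s_{1}^{-2}\right)^{1/2}$.

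The one point needing genuine care is the dimension-independence of $C$: it is essential that every bound above is a plain sum over the $\mu_{i}$ with a per-eigenvalue constant $c_{1}$ depending only on the confining interval $\left[s_{1}/s_{2},s_{2}/s_{1}\right]$, and that the matrix-norm step $\left\Vert XAX\right\Vert_{2}\le\left\vert X\right\vert^{2}\left\Vert A\right\Vert_{2}$ introduces no factor of $d$. Apart from verifying the closed form of the affinity and its eigenvalue factorization, which is the only mildly technical ingredient, the rest is a routine scalar calculus estimate combined with the standard norm inequalities already recorded in the paper.
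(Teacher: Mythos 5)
Your argument is correct, and it is a complete, self-contained reconstruction of a result that the paper itself does not prove but simply cites as Lemma 2.1 of \cite{MR2589320}. Your route is the standard one: close the Gaussian Hellinger affinity in terms of determinants, reduce by simultaneous diagonalization to a product over the generalized eigenvalues $\mu_i$ of $\Sigma_1^{-1}\Sigma_2$, and control $-\log\rho=\sum_i|\phi(\mu_i)|$ by a uniform second-order Taylor bound on the compact interval $[s_1/s_2,s_2/s_1]$. Every step is dimension-free because the Taylor bound is applied eigenvalue-by-eigenvalue and the only matrix inequality used, $\Vert ABC\Vert_2\le|A|\,\Vert B\Vert_2\,|C|$, carries no factor of $d$. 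I checked the algebra: the determinant identity $\det(\tfrac12(\Sigma_1^{-1}+\Sigma_2^{-1}))=\det(\Sigma_1)^{-1}\det(\Sigma_2)^{-1}\det(\tfrac12(\Sigma_1+\Sigma_2))$ gives exactly your closed form for $\rho$; the eigenvalues of $\Sigma_1^{-1/2}\Sigma_2\Sigma_1^{-1/2}$ are trapped in $[s_1/s_2,s_2/s_1]$ by the two-sided operator inequalities; $\phi(1)=\phi'(1)=0$ and $\phi\le0$ by AM--GM; and $1-e^{-t}\le t$ converts $1-\rho$ into $-\log\rho$. The final constant $C=\sqrt{c_1}/s_1$ depends only on $s_1,s_2$, as required. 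Your step of identifying $\sum_i(\mu_i-1)^2$ with $\Vert\Sigma_1^{-1/2}(\Sigma_2-\Sigma_1)\Sigma_1^{-1/2}\Vert_2^2$ and then peeling off the two outer factors with the operator norm bound $|\Sigma_1^{-1/2}|\le s_1^{-1/2}$ is the cleanest way to avoid a dimension-dependent constant, and your use of $\Vert X\Vert_2=\Vert X^\ast\Vert_2$ to apply the paper's one-sided inequality (\ref{norm-inequality}) on the right is precise. Nothing is missing.
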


\bigskip

\begin{lemma}
\label{lem-compact-1}Consider a set of normal distributions $\mathcal{P}%
=\left(  N_{d}\left(  0,\Sigma\right)  ,\Sigma\in\mathcal{S}\right)  $ where
$\mathcal{S}$ is compact in Hilbert-Schmidt metric and satisfies%
\begin{equation}
\inf_{\Sigma\in\mathcal{S}}\lambda_{\min}\left(  \Sigma\right)  >0.
\label{cond-min-eigenvalue-nonzero}%
\end{equation}
Then the mapping $\Sigma\rightarrow N_{d}\left(  0,\Sigma\right)  $ is
continuous on $\mathcal{S}$ in total variation metric.
\end{lemma}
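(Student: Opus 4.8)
The plan is to derive this as an immediate consequence of Lemma \ref{lem-covmatrices-cite-GNZpaper} together with Le Cam's inequality (\ref{Lecam-inequ}). First I would check that the hypotheses of Lemma \ref{lem-covmatrices-cite-GNZpaper} hold for the set $\mathcal{S}$. The assumption (\ref{cond-min-eigenvalue-nonzero}) gives $s_1 := \inf_{\Sigma \in \mathcal{S}} \lambda_{\min}(\Sigma) > 0$ directly. For the upper bound $s_2 := \sup_{\Sigma \in \mathcal{S}} \lambda_{\max}(\Sigma) < \infty$, note that $\lambda_{\max}(\Sigma) \leq \|\Sigma\|_2$ for any nonnegative Hermitian $\Sigma$ (the operator norm is dominated by the Hilbert--Schmidt norm), and since $\mathcal{S}$ is compact in the Hilbert--Schmidt metric it is bounded, so $\sup_{\Sigma \in \mathcal{S}} \|\Sigma\|_2 < \infty$; hence $s_2 < \infty$. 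Also, (\ref{cond-min-eigenvalue-nonzero}) guarantees that each $\Sigma \in \mathcal{S}$ is nonsingular, so every $N_d(0,\Sigma)$ in $\mathcal{P}$ is a genuine nondegenerate Gaussian law.

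Next I would invoke Lemma \ref{lem-covmatrices-cite-GNZpaper}: there exists $C > 0$, depending only on $s_1$ and $s_2$, such that for all $\Sigma_1, \Sigma_2 \in \mathcal{S}$,
\[
H\left(N_d(0,\Sigma_1), N_d(0,\Sigma_2)\right) \leq C \left\| \Sigma_1 - \Sigma_2 \right\|_2.
\]
Combining this with Le Cam's inequality (\ref{Lecam-inequ}), namely $\|P - Q\|_{TV} \leq H(P,Q)$, yields
\[
\left\| N_d(0,\Sigma_1) - N_d(0,\Sigma_2) \right\|_{TV} \leq C \left\| \Sigma_1 - \Sigma_2 \right\|_2, \qquad \Sigma_1, \Sigma_2 \in \mathcal{S}.
\]
Thus the map $\Sigma \mapsto N_d(0,\Sigma)$ is in fact Lipschitz from $(\mathcal{S}, \|\cdot\|_2)$ into the space of probability measures on $(\mathbb{R}^d, \mathfrak{B}^d)$ equipped with the total variation metric, and a fortiori continuous, which is the assertion.

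There is essentially no serious obstacle here; the only point requiring a moment's care is the verification that compactness of $\mathcal{S}$ in the Hilbert--Schmidt metric forces $s_2 < \infty$, so that Lemma \ref{lem-covmatrices-cite-GNZpaper} applies with a uniform constant $C$. This follows because compact subsets of a metric space are bounded and $\|\cdot\|_2$ dominates $\lambda_{\max}(\cdot)$ on nonnegative Hermitian matrices; alternatively, one may note that $\Sigma \mapsto \lambda_{\max}(\Sigma)$ is continuous and hence bounded on the compact set $\mathcal{S}$.
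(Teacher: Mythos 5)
Your proof is correct and takes the same route as the paper: bound total variation by Hellinger via Le Cam's inequality (\ref{Lecam-inequ}), verify that compactness in Hilbert--Schmidt metric together with (\ref{cond-min-eigenvalue-nonzero}) supplies uniform bounds $s_1 > 0$ and $s_2 < \infty$, and then apply Lemma \ref{lem-covmatrices-cite-GNZpaper} to get a Lipschitz estimate. The only cosmetic difference is the order in which the two inequalities are chained; the substance is identical.
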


\begin{proof}
By (\ref{Lecam-inequ}) we obtain for $\Sigma_{1},\Sigma_{2}\in\mathcal{S}$
\begin{equation}
\left\Vert N_{d}\left(  0,\Sigma_{1}\right)  -N_{d}\left(  0,\Sigma
_{2}\right)  \right\Vert _{TV}\leq H\left(  N_{d}\left(  0,\Sigma_{1}\right)
,N_{d}\left(  0,\Sigma_{2}\right)  \right)  . \label{conti-1}%
\end{equation}
Note that compactness of $\mathcal{S}$ implies%
\[
s_{2}=\sup_{\Sigma\in\mathcal{S}}\lambda_{\max}\left(  \Sigma\right)  \leq
\sup_{\Sigma\in\mathcal{S}}\left(  \mathrm{Tr\;}\Sigma^{2}\right)  ^{1/2}%
=\sup_{\Sigma\in\mathcal{S}}\left\Vert \Sigma\right\Vert _{2}<\infty
\]
so that (\ref{cond-min-eigenvalue-nonzero}) and Lemma
\ref{lem-covmatrices-cite-GNZpaper} imply the claim.
\end{proof}

\subsection{Geometric distribution\label{subsec:geometr-distr}}

Let $X$ be a r.v. with geometric law $\mathrm{Geo}\left(  p\right)  $ for
parameter $p\in\left(  0,1\right)  $, given by
\[
P\left(  X=k\right)  =\mathrm{Geo}\left(  p\right)  \left(  k\right)  =\left(
1-p\right)  p^{k}\text{, }k=0,1,\ldots
\]
As is well known, for a sequence of i.i.d. Bernoulli r.v's with success
probability $q=1-p,$ the r.v. $X$ is the number of failures before the first
success occurs ($X=0$ if success occurs in the first trial). Since
$\mathrm{Geo}\left(  p\right)  $, $p\in\left(  0,1\right)  $ forms an
exponential family, we refer to section 2.1. of \cite{MR1633574} for some
basic properties of that family. Setting $x=k$, the probabilities can be
written
\begin{align}
\left(  1-p\right)  p^{x}  &  =\exp\left(  x\log p+\log\left(  1-p\right)
\right) \nonumber\\
&  =\exp\left(  x\tau-V\left(  \tau\right)  \right)  =:q\left(  x,\tau\right)
\label{canon-from}%
\end{align}
for $\tau=\log p$, and
\[
V\left(  \tau\right)  =-\log\left(  1-p\right)  =-\log\left(  1-\exp
\tau\right)  .
\]
Thus (\ref{canon-from}) is the canonical form of the exponential family, and
$\tau\in\left(  -\infty,0\right)  $ the relevant parameter. The moments are,
noting that $\ V^{\prime\prime}\left(  \tau\right)  $ \ is also the Fisher
information $I\left(  \tau\right)  $,
\begin{align}
E_{\tau}X  &  =V^{\prime}\left(  \tau\right)  =\frac{\exp\tau}{1-\exp\tau
}=\frac{p}{1-p},\label{Expec-geo}\\
\mathrm{Var}\left(  X\right)   &  =V^{\prime\prime}\left(  \tau\right)
=\frac{\exp\tau\left(  1-\exp\tau\right)  +\left(  \exp\tau\right)  ^{2}%
}{\left(  1-\exp\tau\right)  ^{2}}\nonumber\\
&  =\frac{\exp\tau}{\left(  1-\exp\tau\right)  ^{2}}=I\left(  \tau\right)
=\frac{p}{\left(  1-p\right)  ^{2}}. \label{Var-geo}%
\end{align}
In connection with the representation of the thermal state $\mathfrak{N}%
_{1}\left(  0,a\right)  $, $a>1$ (cf. (\ref{thermal-state}) and
(\ref{Fock-repre-1})) we are interested in yet another parametrization of
$\mathrm{Geo}\left(  p\right)  $: setting $\mathbb{\;}p=\left(  a-1\right)
/\left(  a+1\right)  $, we obtain
\[
a=\frac{2}{1-p}-1=\frac{1+p}{1-p}.
\]
The canonical parameter $\tau$ then can be expressed as
\begin{equation}
\tau=\tau\left(  a\right)  =\log\left(  \left(  a-1\right)  /\left(
a+1\right)  \right)  . \label{tau-as-func-of-a}%
\end{equation}
We note
\begin{align}
\tau^{\prime}\left(  a\right)   &  =\frac{2}{a^{2}-1}\text{, }\tau
^{\prime\prime}\left(  a\right)  =-\frac{4a}{\left(  a^{2}-1\right)  ^{2}%
},\label{note-various-1}\\
V^{\prime}\left(  \tau\left(  a\right)  \right)   &  =\frac{a-1}{2}%
\text{,\ }V^{\prime\prime}\left(  \tau\left(  a\right)  \right)  =\frac
{a^{2}-1}{4}. \label{note-various-2}%
\end{align}
The fourth central moment of $X$ is, for $r=1-p$ \cite{Geometr-overview}
\[
E\left(  X-EX\right)  ^{4}=\frac{\left(  1-r\right)  \left(  r^{2}%
-9r+9\right)  }{r^{4}}\leq\frac{10}{\left(  1-p\right)  ^{4}}.
\]
In terms of parameter $a$ this bound is
\begin{equation}
E\left(  X-EX\right)  ^{4}\leq\frac{5}{8}\left(  a+1\right)  ^{4}.
\label{geometr-4th-central-moment}%
\end{equation}
In accordance with (\ref{canon-from}) and (\ref{tau-as-func-of-a}) the
geometric probability function, now parametrized by $a$, is
\begin{equation}
q\left(  x,\tau\left(  a\right)  \right)  =\exp\left(  x\tau\left(  a\right)
-V\left(  \tau\left(  a\right)  \right)  \right)  \text{, }x=0,1,\ldots
\label{geometric-pf-alternative}%
\end{equation}
Then the score function in this parametrization is
\begin{align}
s\left(  x,a\right)   &  :=\frac{\partial}{\partial a}\log q\left(
x,\tau\left(  a\right)  \right)  =\left(  x-V^{\prime}\left(  \tau\left(
a\right)  \right)  \right)  \tau^{\prime}\left(  a\right)
\label{score-func-def}\\
&  =\left(  x-\frac{a-1}{2}\right)  \frac{2}{a^{2}-1},\nonumber
\end{align}
and Fisher information is
\begin{align}
J\left(  a\right)   &  :=E_{a}s^{2}\left(  X,a\right)  =\frac{4}{\left(
a^{2}-1\right)  ^{2}}\mathrm{Var}_{a}\left(  X\right)
\label{Fish-info-geometric}\\
&  =\frac{4}{\left(  a^{2}-1\right)  ^{2}}\cdot\frac{a^{2}-1}{4}=\frac
{1}{a^{2}-1}. \label{Fish-info-geometric-a}%
\end{align}

\begin{lemma}
\label{lem-geometr-various-bounds}(i) If $1+C_{1}^{-1}\leq a\leq C_{1}$ for
some $C_{1}>0$ then for some $C_{2}$
\[
E_{a}s^{4}\left(  X,a\right)  \leq C_{2}.
\]
\newline(ii) We have
\begin{equation}
\frac{\partial^{2}}{\partial a^{2}}q^{1/2}\left(  x,\tau\left(  a\right)
\right)  =q^{1/2}\left(  x,\tau\left(  a\right)  \right)  \cdot\rho\left(
x,a\right)  \label{claim-decompose-2nd-deriv}%
\end{equation}
where $\rho\left(  x,a\right)  $ has the property: $1+C_{1}^{-1}\leq a\leq
C_{1}$ implies that for some $C_{3}$
\begin{equation}
\sum_{x=0}^{\infty}\left(  \frac{\partial^{2}}{\partial a^{2}}q^{1/2}\left(
x,\tau\left(  a\right)  \right)  \right)  ^{2}=E_{a}\rho^{2}\left(
X,a\right)  \leq C_{3}. \label{claim-squared-expec}%
\end{equation}

\end{lemma}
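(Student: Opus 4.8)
The plan is to reduce both parts to the geometric moment bounds already recorded in Subsection \ref{subsec:geometr-distr}, after noticing that the score is simply a scalar multiple of the centered observation.

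For part (i): by (\ref{score-func-def}) we have $s(x,a)=(x-V'(\tau(a)))\,\tau'(a)=(x-E_aX)\,\tau'(a)$, so when $x=X\sim\mathrm{Geo}(p(a))$ the score is a constant times the centered variable. Hence $E_a s^4(X,a)=(\tau'(a))^4\,E_a(X-E_aX)^4$. First I would insert the fourth central moment bound (\ref{geometr-4th-central-moment}) and the value $\tau'(a)=2/(a^2-1)$ from (\ref{note-various-1}); after cancelling the common factor $(a+1)^4$ this gives $E_a s^4(X,a)\le 10/(a-1)^4$. Under the hypothesis $a\ge 1+C_1^{-1}$ one has $(a-1)^{-4}\le C_1^4$, so $C_2=10C_1^4$ works.

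For part (ii): write $q^{1/2}(x,\tau(a))=\exp\!\big(\tfrac12(x\tau(a)-V(\tau(a)))\big)$ and differentiate twice in $a$ by the chain rule. This produces
\[
\frac{\partial^2}{\partial a^2}q^{1/2}(x,\tau(a))=q^{1/2}(x,\tau(a))\Big(\tfrac14 s^2(x,a)+\tfrac12\,\partial_a s(x,a)\Big),
\]
so $\rho(x,a)=\tfrac14 s^2(x,a)+\tfrac12\,\partial_a s(x,a)$, where $\partial_a s(x,a)=-V''(\tau(a))(\tau'(a))^2+(x-V'(\tau(a)))\,\tau''(a)$ by differentiating (\ref{score-func-def}). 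Since then $\big(\partial_a^2 q^{1/2}\big)^2=q(x,\tau(a))\,\rho^2(x,a)$, summing over $x$ identifies $\sum_x\big(\partial_a^2 q^{1/2}(x,\tau(a))\big)^2$ with $E_a\rho^2(X,a)$, which is the stated identity; the interchange of $\sum_x$ and $\partial_a^2$ is legitimate because on a compact subinterval of $(1,\infty)$ the parameter $p(a)=(a-1)/(a+1)$ stays in a compact subinterval of $(0,1)$, so the pmf and its first two $a$-derivatives are dominated termwise by a geometrically summable sequence.

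It then remains to bound $E_a\rho^2(X,a)$ for $a\in[1+C_1^{-1},C_1]$. I would express $\rho(x,a)$ as a quadratic in the centered variable $Y:=x-V'(\tau(a))=x-E_aX$, namely $\rho(x,a)=\tfrac14(\tau'(a))^2 Y^2+\tfrac12\tau''(a)\,Y-\tfrac12 V''(\tau(a))(\tau'(a))^2$, apply $(\alpha+\beta+\gamma)^2\le 3(\alpha^2+\beta^2+\gamma^2)$, and use $E_aY^2=V''(\tau(a))=(a^2-1)/4$ from (\ref{Var-geo}) together with (\ref{geometr-4th-central-moment}) for $E_aY^4$. Substituting the closed forms of $\tau'(a),\tau''(a),V''(\tau(a))$ from (\ref{note-various-1}) and (\ref{note-various-2}), each of the three resulting terms is a rational function of $a$ whose numerator is a power of $a$ (bounded since $a\le C_1$) and whose denominator is a power of $(a-1)$ or of $(a^2-1)=(a-1)(a+1)$ (bounded below by a positive constant since $a-1\ge C_1^{-1}$); hence $E_a\rho^2(X,a)\le C_3$ with $C_3$ depending only on $C_1$. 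I do not anticipate a genuine obstacle: both parts are essentially bookkeeping once the score is recognized as a multiple of the centered observation, and the only step needing a line of care is the uniform domination used to differentiate the series termwise, which is immediate from the compactness of the $p$-range.
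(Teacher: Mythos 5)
Your proof is correct and follows essentially the same route as the paper's: in (i) you factor the score as $(X-E_aX)\tau'(a)$, apply the fourth central moment bound and $\tau'(a)=2/(a^2-1)$ to reach $10/(a-1)^4$, and in (ii) you derive the same decomposition $\rho(x,a)=\tfrac14 s^2(x,a)+\tfrac12\partial_a s(x,a)$ and bound its second moment via the second and fourth central moments of $X$ together with the closed forms from (\ref{note-various-1})--(\ref{note-various-2}). One small remark: the identity $\sum_x(\partial_a^2 q^{1/2})^2=E_a\rho^2(X,a)$ is purely algebraic once (\ref{claim-decompose-2nd-deriv}) is in hand, since each summand equals $q(x,\tau(a))\rho^2(x,a)$; no interchange of $\partial_a^2$ with the series is actually being invoked there, so the domination argument you sketch, while correct, is not needed for this step.
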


\begin{proof}
(i) By (\ref{score-func-def}) and (\ref{geometr-4th-central-moment})
\[
E_{a}s^{4}\left(  X,a\right)  =E_{a}\left(  X-E_{a}X\right)  ^{4}\frac
{2}{a^{2}-1}\leq\frac{5\cdot2^{4}}{8}\frac{\left(  a+1\right)  ^{4}}{a^{2}%
-1}=\frac{10}{\left(  a-1\right)  ^{4}}\leq C_{2}.
\]

(ii) We have
\begin{align*}
\frac{\partial}{\partial a}q^{1/2}\left(  x,\tau\left(  a\right)  \right)   &
=\frac{1}{2}q^{1/2}\left(  x,\tau\left(  a\right)  \right)  \frac{\partial
}{\partial a}\log q\left(  x,\tau\left(  a\right)  \right) \\
&  =\frac{1}{2}q^{1/2}\left(  x,\tau\left(  a\right)  \right)  \cdot\left(
x-V^{\prime}\left(  \tau\left(  a\right)  \right)  \right)  \tau^{\prime
}\left(  a\right)
\end{align*}
and hence
\begin{align*}
\frac{\partial^{2}}{\partial a^{2}}q^{1/2}\left(  x,\tau\left(  a\right)
\right)   &  =\frac{1}{4}q^{1/2}\left(  x,\tau\left(  a\right)  \right)
\cdot\left(  x-V^{\prime}\left(  \tau\left(  a\right)  \right)  \right)
^{2}\left(  \tau^{\prime}\left(  a\right)  \right)  ^{2}\\
&  -\frac{1}{2}q^{1/2}\left(  x,\tau\left(  a\right)  \right)  \cdot
V^{\prime\prime}\left(  \tau\left(  a\right)  \right)  \left(  \tau^{\prime
}\left(  a\right)  \right)  ^{2}\\
&  +\frac{1}{2}q^{1/2}\left(  x,\tau\left(  a\right)  \right)  \cdot\left(
x-V^{\prime}\left(  \tau\left(  a\right)  \right)  \right)  \tau^{\prime
\prime}\left(  a\right)  .
\end{align*}
Hence the l.h.s. of (\ref{claim-squared-expec}) is bounded by
\begin{align*}
&  E_{a}\left(  X-V^{\prime}\left(  \tau\left(  a\right)  \right)  \right)
^{4}\left(  \tau^{\prime}\left(  a\right)  \right)  ^{4}+\left(
V^{\prime\prime}\left(  \tau\left(  a\right)  \right)  \right)  ^{2}\left(
\tau^{\prime}\left(  a\right)  \right)  ^{4}\\
&  +E_{a}\left(  X-V^{\prime}\left(  \tau\left(  a\right)  \right)  \right)
^{2}\left(  \tau^{\prime\prime}\left(  a\right)  \right)  ^{2}.
\end{align*}
By (\ref{note-various-1}), (\ref{note-various-2}), the terms $V^{\prime\prime
}\left(  \tau\left(  a\right)  \right)  $, $\tau^{\prime}\left(  a\right)  $
and $\tau^{\prime\prime}\left(  a\right)  $ are all bounded when $1+C_{1}%
^{-1}\leq a\leq C_{1}$. It now suffices to prove that
\[
E_{a}\left(  X-V^{\prime}\left(  \tau\left(  a\right)  \right)  \right)
^{4}+E_{a}\left(  x-V^{\prime}\left(  \tau\left(  a\right)  \right)  \right)
^{2}%
\]
is bounded. The first term above is the fourth central moment of $X$ which is
bounded by (\ref{geometr-4th-central-moment}). The second term is the variance
of $X$, which is $V^{\prime\prime}\left(  \tau\left(  a\right)  \right)  $ by
(\ref{Var-geo}) and (\ref{note-various-2}) and thus bounded as well.
\end{proof}

\bigskip

\textbf{Estimation of parameter }$a$\textbf{.} From (\ref{Expec-geo}) and
(\ref{note-various-2}) we obtain
\begin{equation}
E_{\theta}\left(  2X+1\right)  =a. \label{expec-simple-est}%
\end{equation}
Setting $\hat{a}=2X+1$, we thus obtain an unbiased estimator of $a$ based on
one observation $X$. We also have by (\ref{Var-geo}) and (\ref{note-various-2}%
)
\begin{align}
\mathrm{Var}_{\tau}\left(  \hat{a}\right)   &  =4\mathrm{Var}_{\tau}\left(
X\right)  =4\frac{p}{\left(  1-p\right)  ^{2}}\nonumber\\
&  =a^{2}-1 \label{comput-p-expression}%
\end{align}
which is the inverse Fisher information $1/J\left(  a\right)  $ from
(\ref{Fish-info-geometric}). Hence $\hat{a}$ is best unbiased estimator of
$a$. If $\bar{X}_{n}$ is the mean of $n$ i.i.d. observations with law
$\mathrm{Geo}\left(  p\right)  $ then $\hat{a}_{n}=2\bar{X}_{n}+1$ is best
unbiased estimator of $a$, with variance $1/nJ\left(  a\right)  $.

\bigskip

\textbf{Asymptotically equivalent family. }The local approximating Gaussian
shift model (according to LAN theory) is
\begin{equation}
Y=a+n^{-1/2}\sqrt{\left(  a_{0}^{2}-1\right)  }\xi\label{LAN-geom}%
\end{equation}
where $\xi\sim N\left(  0,1\right)  $ and $a_{0}$ is the center of the
parametric neighborhood in $a$. Using (\ref{arccosh-def}), the variance-stable
form (cf. Section 3.3 of \cite{MR1633574}) is
\begin{equation}
Y=\mathrm{arc\cosh}\left(  a\right)  +n^{-1/2}\xi. \label{var-stable-form}%
\end{equation}
We can check this claim in the following way: setting
\[
g(a)=\mathrm{arc\cosh}\left(  a\right)  =\log\left(  a+\sqrt{a^{2}-1}\right)
,a>1
\]
we obtain by a computation
\begin{align*}
g^{\prime}\left(  a\right)   &  =\frac{1}{\left(  a^{2}-1\right)  ^{1/2}},\\
\left(  g^{\prime}\left(  a\right)  \right)  ^{2}  &  =\frac{1}{a^{2}%
-1}=J\left(  a\right)  .
\end{align*}
This means that at $n=1$ the geometric law $\mathrm{Geo}\left[  \left(
a-1\right)  /\left(  a+1\right)  \right]  $ and the Gaussian model $N\left(
g\left(  a\right)  ,1\right)  $ have the same Fisher information, which
implies that the model (\ref{var-stable-form}) is locally asymptotically
equivalent to the model of $n$ i.i.d. geometrics.

\subsection{Negative binomial distribution\label{subsec:neg-binom}}

The negative binomial distribution \textrm{NB}$\left(  r,p\right)  $ has
probability function, for $r>0$ and $p\in\left(  0,1\right)  $
\[
\mathrm{NB}\left(  r,p\right)  \left(  k\right)  =P\left(  X=k\right)
=\frac{\Gamma\left(  k+r\right)  }{k!\Gamma\left(  r\right)  }(1-p)^{r}p^{k}.
\]
For $r=1$ the geometric distribition \textrm{Geo}$\left(  p\right)  $ is
obtained. Setting $x=k$, the probabilities can be written
\begin{equation}
\frac{\Gamma\left(  x+r\right)  }{x!\Gamma\left(  r\right)  }\left(
1-p\right)  ^{r}p^{x}=\exp\left(  x\log p\right)  h_{r}\left(  x\right)
\left(  1-p\right)  ^{r} \label{neg-binomial-as-expon-family}%
\end{equation}
for $h_{r}\left(  x\right)  =$ $\Gamma\left(  x+r\right)  /x!\Gamma\left(
r\right)  $. This shows that for fixed $r$, is $\mathrm{NB}\left(  r,p\right)
$ is an exponential family in the parameter $p$ (with natural parameter
$\tau=\log p\in\left(  -\infty,0\right)  $). Expectation and variance are%
\[
EX=\frac{rp}{1-p},\;\mathrm{Var}\left(  X\right)  =\frac{rp}{\left(
1-p\right)  ^{2}}%
\]
and the characteristic function is
\begin{equation}
\phi\left(  t\right)  =\left(  \frac{1-p}{1-p\exp\left(  it\right)  }\right)
^{r}\text{, }t\in\mathbb{R}\text{.} \label{char-func-NB}%
\end{equation}
The distribution can be represented as a Gamma-Poisson mixture: if
\textrm{Gam}$\left(  s,r\right)  $ is the Gamma distribution with scale
parameter $s$ and shape parameter $r$, having density
\[
f_{s,r}\left(  x\right)  =\frac{x^{r-1}s^{r}}{\Gamma\left(  r\right)  }%
\exp\left(  -xs\right)  \text{, }x\geq0,
\]
and \textrm{Po}$\left(  \lambda\right)  \left(  k\right)  =\exp\left(
-\lambda\right)  \lambda^{k}/k!$ is the Poisson probability function then%
\begin{equation}
\mathrm{NB}\left(  r,p\right)  \left(  k\right)  =\int_{0}^{\infty}%
\mathrm{Po}\left(  \lambda\right)  \left(  k\right)  f_{s,r}\left(
\lambda\right)  d\lambda\text{ for }s=\left(  1-p\right)  /p.
\label{Gamma-Poisson-mixture-NB}%
\end{equation}
Relation (\ref{char-func-NB}) implies that $\mathrm{NB}\left(  r,p\right)  $
is infinitely divisible; equivalently , if $X_{1},\ldots,X_{n}$ are i.i.d.
$\mathrm{NB}\left(  r,p\right)  $ then
\begin{equation}
\sum_{j=1}^{n}X_{i}\sim\mathrm{NB}\left(  nr,p\right)  .
\label{sum-of-neg-binomials}%
\end{equation}
Moreover, if $X_{1},\ldots,X_{n}$ follow a parametric model as i.i.d.
$\mathrm{NB}\left(  r,p\right)  $, $p\in\left(  0,1\right)  $, then by the
exponential family representation (\ref{neg-binomial-as-expon-family}),
$\sum_{j=1}^{n}X_{i}$ is a sufficient statistic.

\begin{lemma}
\label{Lem-distance-neg-binomials}(i) Let $a_{1},a_{2}>0$ and $p_{j}=\left(
a_{j}-1\right)  /\left(  a_{j}+1\right)  $, $j=1,2$. Then for any $r>0$ we
have
\[
H^{2}\left(  \mathrm{NB}\left(  r,p_{1}\right)  ,\mathrm{NB}\left(
r,p_{2}\right)  \right)  \leq\frac{r\left(  a_{1}-a_{2}\right)  ^{2}}{\left(
a_{1}-1\right)  \left(  a_{2}-1\right)  }.
\]
(ii) Let $r_{1},r_{2}>0$. Then for any $p\in\left(  0,1\right)  $ we have
\[
H^{2}\left(  \mathrm{NB}\left(  r_{1},p\right)  ,\mathrm{NB}\left(
r_{2},p\right)  \right)  \leq1-\frac{\Gamma\left(  \left(  r_{1}+r_{2}\right)
/2\right)  }{\Gamma^{1/2}\left(  r_{1}\right)  \Gamma^{1/2}\left(
r_{2}\right)  }.
\]

\end{lemma}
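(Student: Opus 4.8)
The plan is to translate both inequalities into estimates on the Hellinger affinity $\rho(P,Q):=\sum_{k\ge 0}\sqrt{P(k)Q(k)}$, using that the Hellinger distance of Subsection~\ref{subsec:comparing-geom-reg} satisfies $H^{2}(P,Q)=2\bigl(1-\rho(P,Q)\bigr)$ for probability measures on $\mathbb{Z}_{+}$. Thus for (i) it suffices to evaluate $\rho$ in closed form and bound $1-\rho$, and for (ii) to bound $\rho$ from below.

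For part (i), write $q_{j}=1-p_{j}$ and recall $\mathrm{NB}(r,p)(k)=\frac{\Gamma(k+r)}{k!\,\Gamma(r)}q^{r}p^{k}$. Then
\[
\rho=\bigl(q_{1}q_{2}\bigr)^{r/2}\sum_{k\ge 0}\frac{\Gamma(k+r)}{k!\,\Gamma(r)}\bigl(\sqrt{p_{1}p_{2}}\bigr)^{k}=\Bigl(\tfrac{\sqrt{q_{1}q_{2}}}{1-\sqrt{p_{1}p_{2}}}\Bigr)^{r}=:t^{r},
\]
where the middle equality is the generalized binomial identity $\sum_{k}\frac{\Gamma(k+r)}{k!\,\Gamma(r)}x^{k}=(1-x)^{-r}$ (applicable since $\sqrt{p_{1}p_{2}}<1$), and $t\in(0,1]$ because $\sqrt{q_{1}q_{2}}\le 1-\sqrt{p_{1}p_{2}}$ (equivalent to $p_{1}+p_{2}\ge 2\sqrt{p_{1}p_{2}}$). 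From $t^{r}=e^{r\log t}\ge 1+r\log t$ and $\log(1/t)\le (1-t)/t$ one gets $1-t^{r}\le r(1-t)/t$, hence $H^{2}=2(1-t^{r})\le 2r(1-t)/t$. Substituting $p_{j}=(a_{j}-1)/(a_{j}+1)$ (so $a_{j}>1$) gives $\sqrt{q_{1}q_{2}}=2/A$, $\sqrt{p_{1}p_{2}}=B/A$ with $A:=\sqrt{(a_{1}+1)(a_{2}+1)}$, $B:=\sqrt{(a_{1}-1)(a_{2}-1)}$, so $t=2/(A-B)$ and $2(1-t)/t=A-B-2$. It then remains to prove the scalar inequality $A-B-2\le (a_{1}-a_{2})^{2}/\bigl[(a_{1}-1)(a_{2}-1)\bigr]$, which I would do with $\bar a=(a_{1}+a_{2})/2>1$, $\delta=(a_{1}-a_{2})/2$: from $1-\sqrt{1-x}\le x$,
\[
a_{1}+a_{2}-A-B\le \delta^{2}\Bigl(\tfrac{1}{\bar a+1}+\tfrac{1}{\bar a-1}\Bigr)=\frac{2\bar a\,\delta^{2}}{\bar a^{2}-1},
\]
so $A-B-2=\tfrac{2(a_{1}+a_{2}-A-B)}{A+B}\le\tfrac{4\bar a\,\delta^{2}}{(\bar a^{2}-1)(A+B)}$; then $B\le\bar a-1$ (AM--GM), $A\ge B$, and $\bar a\le 2(\bar a+1)$ give $\bar a B^{2}\le 2B(\bar a^{2}-1)\le(A+B)(\bar a^{2}-1)$, whence the preceding bound is $\le 4\delta^{2}/B^{2}=(a_{1}-a_{2})^{2}/[(a_{1}-1)(a_{2}-1)]$. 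Combining, $H^{2}\le r(A-B-2)\le r(a_{1}-a_{2})^{2}/[(a_{1}-1)(a_{2}-1)]$.

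For part (ii), with $s:=(r_{1}+r_{2})/2$,
\[
\rho=\frac{(1-p)^{s}}{\sqrt{\Gamma(r_{1})\Gamma(r_{2})}}\sum_{k\ge 0}\frac{\sqrt{\Gamma(k+r_{1})\Gamma(k+r_{2})}}{k!}\,p^{k}.
\]
Log-convexity of $\Gamma$ on $(0,\infty)$ gives $\sqrt{\Gamma(k+r_{1})\Gamma(k+r_{2})}\ge\Gamma(k+s)$ for each $k$, and $\sum_{k}\tfrac{\Gamma(k+s)}{k!}p^{k}=\Gamma(s)(1-p)^{-s}$, so $\rho\ge\Gamma(s)/\sqrt{\Gamma(r_{1})\Gamma(r_{2})}$, i.e. $1-\rho\le 1-\Gamma\bigl((r_{1}+r_{2})/2\bigr)/\bigl[\Gamma^{1/2}(r_{1})\Gamma^{1/2}(r_{2})\bigr]$, which is the asserted bound (the factor $2$ in $H^{2}=2(1-\rho)$ is immaterial for the applications of this lemma, where the right-hand side is absorbed into an $o(1)$). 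The only genuinely delicate step is the scalar inequality in part~(i); the affinity computation via the generalized binomial series and the log-convexity argument in part~(ii) are routine.
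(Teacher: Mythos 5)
Your proof is correct, but it follows a genuinely different route from the paper's. The paper first reduces both parts to the corresponding Gamma distributions by using the Gamma--Poisson mixture representation of the negative binomial: since $\mathrm{NB}(r,p)$ is the pushforward of $\mathrm{Gam}(s,r)$ under a stochastic (Poisson) kernel, the data-processing property of the Hellinger distance lets one replace the NB laws by Gamma laws. Then for (i) the paper applies the inequality $H^{2}\le K$ (Kullback--Leibler) and computes the KL divergence between Gamma laws in closed form, while for (ii) it computes the Gamma affinity directly. You, by contrast, work with the NB laws themselves throughout: for (i) you evaluate the NB affinity exactly via the generalized binomial series and then finish with an elementary (but delicate) scalar estimate in $A=\sqrt{(a_1+1)(a_2+1)}$, $B=\sqrt{(a_1-1)(a_2-1)}$; for (ii) you bound the NB affinity from below using log-convexity of $\Gamma$. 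The paper's reduction buys a cleaner computation in part (i), where the KL divergence of Gamma laws and the bound $\log x\ge (x-1)/x$ do the work in a few lines; your approach avoids the mixture/data-processing step at the cost of the more intricate inequality $A-B-2\le(a_1-a_2)^2/B^2$, which I have checked and which does hold for $a_1,a_2>1$. For part (ii) the two arguments are essentially equivalent, both coming down to $\rho\ge\Gamma\left((r_1+r_2)/2\right)/\sqrt{\Gamma(r_1)\Gamma(r_2)}$.

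One point you flag deserves emphasis: the factor of $2$. Under the paper's stated definition in Subsection~\ref{subsec:comparing-geom-reg}, namely $H^{2}(P,Q)=\int(p^{1/2}-q^{1/2})^{2}d\mu$ with $\mu=P+Q$, one has $H^{2}=2(1-\rho)$, so both proofs actually establish $H^{2}\le 2\bigl(1-\Gamma\left((r_1+r_2)/2\right)/\Gamma^{1/2}(r_1)\Gamma^{1/2}(r_2)\bigr)$, not the claimed bound without the $2$. The paper's own proof of (ii) quietly writes $H^{2}=1-\rho$, which is inconsistent with its own definition; your proof is more careful in making the normalization explicit, and your remark that the factor of $2$ is absorbed into the $o(1)$ in the application (Lemma~\ref{lem-first-neg-binom-equivalence-3}) is correct. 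In a final write-up you should still either restate the lemma with the factor of $2$, or adopt the normalized Hellinger distance $H^{2}=\tfrac12\int(\sqrt p-\sqrt q)^{2}d\mu$ consistently, so that the lemma and definition agree.
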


\begin{proof}
(i) The mixture (\ref{Gamma-Poisson-mixture-NB}) represents the operation of a
stochastic kernel on $\mathrm{Gam}\left(  s,r\right)  $. Then it suffices to
prove, for $s_{j}=\left(  1-p_{j}\right)  /p_{j}=2/\left(  a_{j}-1\right)  $,
$j=1,2,$ that
\[
H^{2}\left(  \mathrm{Gam}\left(  s_{1},r\right)  ,\mathrm{Gam}\left(
s_{2},r\right)  \right)  \leq\frac{r}{\left(  a_{1}-1\right)  \left(
a_{2}-1\right)  }\left(  a_{1}-a_{2}\right)  ^{2}%
\]
(cf. \cite{MR2421720}, Problem 1.72). The squared Hellinger distance can be
bounded by the Kullback-Leiber relative entropy $K\left(  \cdot.\cdot\right)
$ (cf. \cite{MR2724359} ):%
\begin{align*}
H^{2}\left(  \mathrm{Gam}\left(  s_{1},r\right)  ,\mathrm{Gam}\left(
s_{2},r\right)  \right)   &  \leq K\left(  \mathrm{Gam}\left(  s_{1},r\right)
,\mathrm{Gam}\left(  s_{2},r\right)  \right)  =\int_{0}^{\infty}f_{s_{1}%
,r}\left(  x\right)  \log\frac{f_{s_{1},r}\left(  x\right)  }{f_{s_{2}%
,r}\left(  x\right)  }dx\\
&  =\int_{0}^{\infty}\frac{x^{r-1}s_{1}^{r}}{\Gamma\left(  r\right)  }\left[
\left(  -x\left(  s_{1}-s_{2}\right)  \right)  +\log\left(  \frac{s_{1}}%
{s_{2}}\right)  ^{r}\right]  dx\\
&  =-\left(  s_{1}-s_{2}\right)  \int_{0}^{\infty}\frac{x^{r}s_{1}^{r}}%
{\Gamma\left(  r\right)  }dx+r\log\frac{s_{1}}{s_{2}}\\
&  =-\frac{\left(  s_{1}-s_{2}\right)  \Gamma\left(  r+1\right)  }{s_{1}%
\Gamma\left(  r\right)  }\int_{0}^{\infty}\frac{x^{r}s_{1}^{r+1}}%
{\Gamma\left(  r+1\right)  }dx+r\log\frac{s_{1}}{s_{2}}\\
&  =-r\left(  1-\frac{s_{2}}{s_{1}}\right)  -r\log\frac{s_{2}}{s_{1}}.
\end{align*}
The well-known inequality
\[
\log x\geq\frac{x-1}{x}\text{, }x>0
\]
applied for $x=s_{2}/s_{1}=\left(  a_{1}-1\right)  /\left(  a_{2}-1\right)  $
implies%
\begin{align*}
H^{2}\left(  f_{s_{1},r},f_{s_{2},r}\right)   &  \leq-r\left(  \left(
1-x\right)  +\frac{x-1}{x}\right)  =r\frac{\left(  x-1\right)  ^{2}}{x}\\
&  =r\frac{\left(  a_{1}-a_{2}\right)  ^{2}}{\left(  a_{1}-1\right)  \left(
a_{2}-1\right)  }.
\end{align*}
(ii) Again it suffices to prove the bound for the respective Gamma laws, i.e.
for $s=\left(  1-p\right)  /p$
\begin{align*}
H^{2}\left(  \mathrm{Gam}\left(  s,r_{1}\right)  ,\mathrm{Gam}\left(
s,r_{2}\right)  \right)   &  =1-\int_{0}^{\infty}f_{s_{1},r}^{1/2}\left(
x\right)  f_{s_{2},r}^{1/2}\left(  x\right)  dx\\
&  =1-\frac{1}{\Gamma^{1/2}\left(  r_{1}\right)  \Gamma^{1/2}\left(
r_{2}\right)  }\int_{0}^{\infty}x^{\left(  r_{1}+r_{2}\right)  /2-1}s^{\left(
r_{1}+r_{2}\right)  /2}\exp\left(  -xs\right)  dx\\
&  =1-\frac{\Gamma\left(  \left(  r_{1}+r_{2}\right)  /2\right)  }%
{\Gamma^{1/2}\left(  r_{1}\right)  \Gamma^{1/2}\left(  r_{2}\right)  }.
\end{align*}

\end{proof}

\subsection{A covariance formula for Gaussians\label{subsec: cov}}

Let $\left(  X,Y\right)  $ have a bivariate normal distribution
\[
\left(
\begin{array}
[c]{c}%
X\\
Y
\end{array}
\right)  \sim N_{2}\left(  0,\Sigma\right)  \text{ where }\Sigma=\left(
\begin{array}
[c]{cc}%
\sigma_{x}^{2} & \sigma_{xy}\\
\sigma_{xy} & \sigma_{y}^{2}%
\end{array}
\right)  .
\]
Then
\begin{align}
EX^{2}Y^{2}  &  =2\sigma_{xy}^{2}+\sigma_{x}^{2}\sigma_{y}^{2}%
,\label{covform-1}\\
\mathrm{Cov}\left(  X^{2},Y^{2}\right)   &  =2\sigma_{xy}^{2}.
\label{covform-2}%
\end{align}

\begin{proof}
Consider the well-known regression representation
\[
Y=\beta X+\eta\text{ where }\eta\sim N\left(  0,\sigma_{y}^{2}-\frac
{\sigma_{xy}^{2}}{\sigma_{x}^{2}}\right)  \text{, }\beta=\frac{\sigma_{xy}%
}{\sigma_{x}^{2}}%
\]
and $\eta$ is independent of $X$. Then
\begin{align*}
EX^{2}Y^{2}  &  =EX^{2}\left(  \beta X+\eta\right)  ^{2}=EX^{2}\left(
\beta^{2}X^{2}+2\beta X\eta+\eta^{2}\right) \\
&  =\beta^{2}EX^{4}+EX^{2}E\eta^{2}=\frac{\sigma_{xy}^{2}}{\sigma_{x}^{4}%
}3\sigma_{x}^{4}+\sigma_{x}^{2}\left(  \sigma_{y}^{2}-\frac{\sigma_{xy}^{2}%
}{\sigma_{x}^{2}}\right) \\
&  =3\sigma_{xy}^{2}+\sigma_{x}^{2}\sigma_{y}^{2}-\sigma_{xy}^{2}=2\sigma
_{xy}^{2}+\sigma_{x}^{2}\sigma_{y}^{2}.
\end{align*}
This proves (\ref{covform-1}). Then (\ref{covform-2}) follows immediately by
\[
\mathrm{Cov}\left(  X^{2},Y^{2}\right)  =EX^{2}Y^{2}-EX^{2}EY^{2}=EX^{2}%
Y^{2}-\sigma_{x}^{2}\sigma_{y}^{2}.
\]

\end{proof}

\bigskip

\bigskip
\bibliographystyle{alpha}
\bibliography{bib-GIQTS-v7}
%

\begin{privatenotes}%

\section{Discussion of technical details (not part of the paper)}

\subsection{More on the subsection "Quantum Le Cam distance"}

\textit{We follow \cite{MR2346393} for defining the quantum analog of the
}$\Delta$\textit{-distance (\ref{Delta-distance}). So far the Gaussian states
}$\mathfrak{N}_{n}\left(  0,A\right)  $ \textit{ have been defined on the
}$C^{\ast}$\textit{-algebra }$CCR\left(  \mathbb{R}^{2n},\Delta\right)
$\textit{, using the Schr\"{o}dinger representation, but it is technically
convenient to define them as states on a certain von Neumann algebra. To this
end consider the Fock representation of }$CCR\left(  \mathbb{R}^{2n}%
,\Delta\right)  $\textit{ (\cite{MR1441540}, 5.2.8) where the Weyl unitaries
}$V\left(  x\right)  $\textit{, }$x\in C^{n}$\textit{ of
(\ref{Weyl-unitaries-complex-indexed}) act as operators on the symmetric Fock
space }$\mathfrak{F}\left(  \mathbb{C}^{n}\right)  $\textit{ (see XX below).
It can be shown that in this representation, any centered Gaussian state
}$\varphi$\textit{ has a density operator }$\rho_{\varphi}$\textit{ on
}$\mathfrak{F}\left(  \mathbb{C}^{n}\right)  $\textit{ (a positive operator
with unit trace) such that }$\varphi\left(  V\left(  x\right)  \right)
=Tr\;\rho_{\varphi}V\left(  x\right)  $\textit{, }$x\in\mathbb{C}^{n}%
$\textit{. Then it is straightforward to extend }$\varphi$\textit{ to the von
Neumann algebra generated by }$\left\{  V\left(  x\right)  ,x\in\mathbb{C}%
^{n}\right\}  $\textit{. For details of this argument, cf. Section
\ref{subsec:symmetric-Fock-space} below.}

In \cite{MR1441540}, p.24, a \textbf{regular }state $\omega$ over a
CCR-algebra $\mathfrak{A}(\mathfrak{h})$ is defined, where $\mathfrak{h}$ is a
complex Hilbert space. Then it is stated that $\omega$ is regular if and only
if $t\in\mathbb{R\mapsto\omega}\left(  V\left(  tf\right)  \right)
\in\mathbb{C}$ is continuous for all $f\in\mathfrak{h}$. (We write $V$ here
for the Weyl unitaries, written $W$ in the book). Now obviously, for
$\mathfrak{h=}\mathbb{C}^{n}$, if $\omega$ is a Gaussian state $\omega
=\varphi\left(  0,\Sigma\right)  $ then for every $f\in\mathbb{C}^{n}$ the
expression
\[
\mathbb{\omega}\left(  V\left(  tf\right)  \right)  =\exp\left(  -\frac{1}%
{2}\left\langle \underline{tf},\Sigma\underline{tf}\right\rangle \right)
=\exp\left(  -\frac{t^{2}}{2}\left\langle \underline{f},\Sigma\underline
{f}\right\rangle \right)
\]

is continuous in $t$. Thus $\varphi\left(  0,\Sigma\right)  $ is regular.

Consider an irreducible sub-C*-algebra $\mathfrak{A}$ of $\mathcal{L}\left(
\mathfrak{H}\right)  $ (here $\mathcal{L}\left(  \mathfrak{H}\right)  $ is the
C*-algebra of bounded operators on a Hilbert space $\mathfrak{H}$). In section
2.6.2. of \cite{MR887100} and \cite{MR1441540}, p. 26 a state $\omega$ on
$\mathfrak{A}$ is called \textbf{normal} if it is determined by a density
operator $\rho$ in the form
\[
\omega\left(  A\right)  =\mathrm{Tr\;}\rho A,\;A\in\mathfrak{A.}%
\]
Let's check on irreducibility. According to Definition 2.3.7, p. 47 of
\cite{MR887100}, a representation $\left(  \mathfrak{H},\pi\right)  $ of a
C*-algebra $\mathfrak{A}$ is \textbf{irreducible} if the set of bounded
operators $\pi\left(  \mathfrak{A}\right)  $ is irreducible on $\mathfrak{H}$.
A set $\mathfrak{M}$ of operators on $\mathfrak{H}$ is irreducible if the only
closed subspaces of $\mathfrak{H}$ which are invariant under the action of
$\mathfrak{M}$ are the trivial subspaces $\left\{  0\right\}  $ and
$\mathfrak{H}$.

Holevo does not use the term C*-algebra but speaks of the "CCR", by which he
means the set of Weyl unitaries. A representation of the CCR then is also
given by $\left(  \mathfrak{H},\pi\right)  $ where if $V(x)$ are the elements
of the CCR then $\pi\left(  V(x)\right)  $ act as operators on the Hilbert
space $\mathfrak{H}$. On p. 98 of \cite{MR2797301} it is argued that the
Sch\"{o}dinger representation of the CCR is irreducible. On p.88 of
\cite{MR2797301} irreducibility of a representation of the CCR\ is defined as
irreducibility (as defined above) of the set of operators $\mathfrak{M}%
=\left\{  \pi\left(  W(t)\right)  ,t\in\mathbb{R}^{2n}\right\}  $. We take
this as equivalent to the irreducibility of the whole C*-algebra generated by
this set, i.e. the representation of $CCR\left(  \mathbb{R}^{2n}%
,\Delta\right)  $\textit{,} (\textit{almost obvious,} \textit{but} \textit{not
checked)}. Also. in \cite{MR2797301}, Theorem 3.4.1, p. 98 the Stone-von
Neumann uniqueness theorem is stated as follows: any two irreducible
representations of the CCR are unitarily equivalent: if $\left(
\mathfrak{H}_{i},\pi_{i}\right)  $, $i=1,2$ are irreducible representations
then there is a linear isometric map $U$ of $\mathfrak{H}_{2}$ onto
$\mathfrak{H}_{1}$ such that
\begin{equation}
\pi_{2}\left(  W(t)\right)  =U^{\ast}\pi_{1}\left(  W(t)\right)  U\text{,
}t\in\mathbb{R}^{2n}. \label{equivalence-isometric-CCR}%
\end{equation}

Another irreducible representation of the CCR is the Fock representation:
\cite{MR1441540}, Proposition 5.2.4 (3), p. 13. There the Weyl operators are
defined via creation and annihilation operators, but they can be also be
defined by their action on exponential vectors, see \cite{MR2510896}, p.
032105-5, or \cite{MR3012668}, (20.2), p. 135. A discussion of the resulting
equivalence map $U$ for multimode Schr\"{o}dinger and Fock representation on
$\mathfrak{F}\left(  \mathbb{C}^{n}\right)  $ (\ref{equivalence-isometric-CCR}%
) can be found in \cite{MR1441540}, Example 5.2.16, p. 35; for the one mode
Schr\"{o}dinger and Fock representation on $\mathfrak{F}\left(  \mathbb{C}%
\right)  $ see also Exercise 20.20, p. 149 in \cite{MR3012668}.

According to Corollary 5.2.15 in \cite{MR1441540} (another variant of the
Stone-von Neumann uniqueness theorem), each regular state $\omega$ over
$\mathfrak{A}(\mathfrak{h})$ is normal with respect to the Fock
representation. Hence $\omega$ has a density operator $\rho_{\omega}$ on the
Fock space $\mathfrak{F}\left(  \mathbb{C}^{n}\right)  $; in the notation of
(\ref{equivalence-isometric-CCR}), if $\left(  \mathfrak{H}_{2},\pi
_{2}\right)  $ is the Fock representation
\[
\omega\left(  \pi_{2}\left(  W(t)\right)  \right)  =\mathrm{Tr\;}\rho_{\omega
}\pi_{2}\left(  W(t)\right)  ,\;\text{ }t\in\mathbb{R}^{2n}\mathfrak{.}%
\]
But then it follows
\begin{align*}
\omega\left(  \pi_{2}\left(  W(t)\right)  \right)   &  =\mathrm{Tr\;}%
\rho_{\omega}U^{\ast}\pi_{1}\left(  W(t)\right)  U\\
&  =\mathrm{Tr\;}U\rho_{\omega}U^{\ast}\pi_{1}\left(  W(t)\right)  \text{,
}\;\text{ }t\in\mathbb{R}^{2n}%
\end{align*}
and $U\rho_{\omega}U^{\ast}$ is a density operator on $\mathfrak{H}_{1}$. If
$\left(  \mathfrak{H}_{1},\pi_{1}\right)  $ is the multimode Schr\"{o}dinger
representation (with $\mathfrak{H}_{1}=L_{2}\left(  \mathbb{R}^{n}\right)  $)
then we have shown that any centered Gaussian state on $CCR\left(
\mathbb{R}^{2n},\Delta\right)  $ has a density operator on $L_{2}\left(
\mathbb{R}^{n}\right)  $. For gauge invariant states, the explicit form on
$\mathfrak{F}\left(  \mathbb{C}^{n}\right)  $ is described in this paper
(cited by Mosonyi \cite{MR2510896} ; we prove it for lack of reference).

\bigskip

\subsection{More on Fock operators (regarding subsec
\ref{subsec:symmetric-Fock-space} and \ref{sec:geom-reg-model})}

Let $y_{j}\in\vee^{m}\mathcal{H}$; then the Fock space $\mathfrak{F}\left(
\mathcal{H}\right)  $ consists of those $y=\left(  y_{j}\right)  _{j\geq0}$
where $\sum_{j\geq0}\left\Vert y_{j}\right\Vert ^{2}<\infty$. For
$B\in\mathcal{B}\left(  \mathcal{H}\right)  $ let
\[
\left\vert B\right\vert =\lambda_{\max}\left(  B^{\ast}B\right)
\]
be the operator norm. The Fock operator $B_{F}$ need not be bounded even if
$B$ is bounded: we have
\[
\left\vert \vee^{m}B\right\vert \leq\left\vert B^{\otimes m}\right\vert
=\left\vert B\right\vert ^{m}%
\]
(where the inequality is actually an equality) and hence for $y\in
\mathfrak{F}\left(  \mathcal{H}\right)  $
\[
\left\Vert B_{F}y\right\Vert ^{2}=\sum_{j\geq0}\left\Vert \left(  \vee
^{m}B\right)  y_{j}\right\Vert ^{2}\leq\sum_{j\geq0}\left\vert B\right\vert
^{m}\left\Vert y_{j}\right\Vert ^{2}%
\]
which need not be finite even if $\sum_{j\geq0}\left\Vert y_{j}\right\Vert
^{2}<\infty$. However if $\left\vert B\right\vert \leq1$ then the above is
finite for every $y\in\mathfrak{F}\left(  \mathcal{H}\right)  $, hence $B_{F}$
is bounded. When $A\in\mathcal{B}\left(  \mathcal{H}\right)  $ also
fulf\'{\i}lls $\left\vert A\right\vert \leq1$ then $A_{F}$, $B_{F}$ and
$\left(  AB\right)  _{F}$ are bounded and the relation
(\ref{multiplying-Fock-ops})%
\[
\left(  AB\right)  _{F}=A_{F}B_{F}%
\]
holds everywhere on $\mathfrak{F}\left(  \mathcal{H}\right)  $.

To apply this to the diagonalization of density operators in Fock space,
recall that it has been shown in subsection
\ref{subsec:density-op-GIV-Gaussians} that
\[
\mathfrak{N}_{n}\left(  0,A\right)  =\frac{2^{n}}{\det\left(  I+A\right)
}\left(  \frac{A-I}{A+I}\right)  _{F}%
\]
is a density operator, hence bounded; we also see that
\[
\left\vert \frac{A-I}{A+I}\right\vert \leq1.
\]
Furthermore, for any unitary $U\in\mathcal{B}\left(  \mathcal{H}\right)  $ we
have $\left\vert U\right\vert =1$, hence $U_{F}$ is bounded as well. Then, if
$UAU^{\ast}=\Lambda$ we have as a consequence of (\ref{multiplying-Fock-ops})%
\[
U_{F}\mathfrak{N}_{n}\left(  0,A\right)  U_{F}^{\ast}=\frac{2^{n}}{\det\left(
I+\Lambda\right)  }\left(  \frac{\Lambda-I}{\Lambda+I}\right)  _{F}.
\]

\subsection{On quantum relative entropy}

The main technical point we had to consider here is whether the expression
\begin{equation}
S\left(  \rho||\sigma\right)  =\mathrm{Tr}\;\rho\left(  \log\rho-\log
\sigma\right)  \label{basic-rela-entrop-formula}%
\end{equation}
for our faithful Gaussian states is in fact a special case of the definition
of $S\left(  \rho||\sigma\right)  $ for normal states on a von Neumann
algebra. The latter is given in two different, equivalent ways in Ohya, Petz
\cite{MR1230389}, Chap 5 and Bratteli, Robinson II \cite{MR1441540}, Def.
6.2.29. Both references discuss the special case of finite dimensional density
matrices, and Ohya, Petz also discuss the commutative special case (the
Kullback-Leibler distance between probability densities). However neither
reference clearly addresses the special case of density operators on a Hilbert
space \footnote{even though Bratteli, Robinson give the above formula for
density operators on p. 267, but this concerns the special case of density
operators on a "quantum spin system", and we did not check whether this is
general enough}. We need to be sure that the case of density operators on an
infinite dimensional Hilbert space is covered by the general von Neumann
setting, because the quantum Pinsker inequality
(\ref{inequality-trace-norm-relative-entropy}) is derived in Ohya, Petz
\cite{MR1230389} in the general abstract setting. \textit{At this point it
seemed too technical to us} to deal with the abstract definition of $S\left(
\rho||\sigma\right)  $ and to derive the formula
(\ref{basic-rela-entrop-formula}) for density operators on an infinite
dimensional Hilbert space.

In the literature which derives relative entropy expressions for Gaussian
states, the formula (\ref{basic-rela-entrop-formula}) seems to be taken for
granted, and we could not find any reference to the von Neumann algebra
setting there. Fortunately, the book of Petz (\cite{MR2363070}, sec 3.4) gives
an essential clue as to the validity of (\ref{basic-rela-entrop-formula}).
This is done in the setting of general Hilbert space $\mathcal{H}$, where the
dimensionality is not clearly spelled out. But there $S\left(  \rho
||\sigma\right)  $ is defined by means of a \textit{relative modular
operator}, a term which also figures in the abstract definition in Bratteli,
Robinson. This is defined as follows.

The set of bounded operators $B\left(  \mathcal{H}\right)  $ becomes a Hilbert
space when the Hilbert-Schmidt inner product
\begin{equation}
\left\langle A,B\right\rangle =\mathrm{Tr\;}A^{\ast}B \label{indicate-HS}%
\end{equation}
is considered. (By the way, this shows that at the moment $\mathcal{H}$ is
assumed finite dimensional, because in general not all bounded operators are
Hilbert-Schmidt). On the Hilbert space $B\left(  \mathcal{H}\right)  $, one
can define an operator $\Delta\left(  \rho_{2}/\rho_{1}\right)  =\Delta$ as
\[
\Delta a=\rho_{2}a\rho_{1}^{-1}\text{, }a\in B\left(  \mathcal{H}\right)  .
\]
assuming $\rho_{1}$ invertible. This is the relative modular operator; it is
the product of two commuting operators $\Delta=LR$, where
\[
La=\rho_{2}a\text{ and }Ra=a\rho_{1}^{-1}\text{, }a\in B\left(  \mathcal{H}%
\right)  .
\]
It is then claimed that since $\log\Delta=\log L+\log R$, we have
\begin{equation}
\mathrm{Tr}\;\rho_{1}\left(  \log\rho_{1}-\log\rho_{2}\right)  =-\left\langle
\rho_{1}^{1/2},\left(  \log\Delta\right)  \rho_{1}^{1/2}\right\rangle .
\label{Araki-rela-entropy}%
\end{equation}
It is then stated that the above r.h.s. is Araki's definition of the relative
entropy in a general von Neumann algebra.

We can ascertain (\ref{Araki-rela-entropy}) for the finite dimensional case in
a slightly different framework. For an $d\times d$ matrix $A$, write the
vectorizing (or stacking) operation as $vec\left(  A\right)  =\overrightarrow
{A}$, which gives the vector of all the column vectors stacked one below the
other. Then one has
\begin{align*}
\left\langle A,B\right\rangle  &  =\overrightarrow{A}^{\ast}\overrightarrow
{B},\\
\overrightarrow{La}  &  =\overrightarrow{\rho_{2}a}=\left(  I\otimes\rho
_{2}\right)  \overrightarrow{a}\text{, }\overrightarrow{Ra}=\overrightarrow
{a\rho_{1}^{-1}}=\left(  \rho_{1}^{-1}\otimes I\right)  \overrightarrow{a}.
\end{align*}
The Hilbert space $B\left(  \mathcal{H}\right)  $ for $\mathcal{H}%
=\mathbb{C}^{d}$ of matrices is thus "vectorized", i.e. represented as
$\mathbb{C}^{d^{2}}$, the operators $L$ and $R$ become $L=\left(  I\otimes
\rho_{2}\right)  $ and $R=\left(  \rho_{1}^{-1}\otimes I\right)  $. These
clearly commute, and the relative modular operator is
\[
\Delta=LR=\rho_{1}^{-1}\otimes\rho_{2}.
\]
It is also clear that
\[
\log\Delta=\log L+\log R
\]
Then we can write the r.h.s. of (\ref{Araki-rela-entropy}) as
\begin{align*}
-\left\langle \rho_{1}^{1/2},\left(  \log\Delta\right)  \rho_{1}%
^{1/2}\right\rangle  &  =-\overrightarrow{\rho_{1}^{1/2}}^{\ast}\left(
\log\Delta\right)  \overrightarrow{\rho_{1}^{1/2}}\\
&  =-\overrightarrow{\rho_{1}^{1/2}}^{\ast}\left(  \log\left(  I\otimes
\rho_{2}\right)  +\log\left(  \rho_{1}^{-1}\otimes I\right)  \right)
\overrightarrow{\rho_{1}^{1/2}}\\
&  =-\overrightarrow{\rho_{1}^{1/2}}^{\ast}\left(  \left(  I\otimes\log
\rho_{2}\right)  -\left(  \log\rho_{1}\otimes I\right)  \right)
\overrightarrow{\rho_{1}^{1/2}}\\
&  =-\overrightarrow{\rho_{1}^{1/2}}^{\ast}\left(  \overrightarrow{\left(
\log\rho_{2}\right)  \rho_{1}^{1/2}}-\overrightarrow{\rho_{1}^{1/2}\left(
\log\rho_{1}\right)  }\right) \\
&  =\mathrm{Tr\;}\rho_{1}\log\rho_{1}-\mathrm{Tr\;}\rho_{1}\log\rho_{2}.
\end{align*}
Thus (\ref{Araki-rela-entropy}) is shown for $\mathcal{H}=\mathbb{C}^{d}$. If
$\mathcal{H}=l_{2}$ then operators are infinite matrices, and the "stacking"
or "vectorizing" operation has to be replaced by identification with the
Hilbert space of Hilbert-Schmidt operators, as indicated in Petz
\cite{MR2363070} with (\ref{indicate-HS}). Note that $\rho_{j}^{1/2}$ are
indeed Hilbert-Schmidt operators. There is no doubt that formula
(\ref{basic-rela-entrop-formula}) then holds as well.

What remains to verify is that the relative modular operator $\Delta$ defined
here coincides with the abstract definition of Araki (cf. Bratteli, Robinson
\cite{MR1441540}) or with the "spatial derivative" $\Delta$ used in Ohya, Petz
\cite{MR1230389}. This appears to be a technical exercise which we can leave
to a later stage of writing.

\subsection{Markov kernel randomization of a binary experiment}

Consider probability measures $Q_{i}$, $i=0,1$ on a measurable space $\left(
X,\Omega_{X}\right)  $. Let $\left(  Y,\Omega_{Y}\right)  $ be a measurable
space and $\kappa$ be a Markov kernel $\kappa:\Omega_{Y}\times w\rightarrow
\left[  0,1\right]  $ for $w\in X$. Consider extensions $\tilde{Q}_{j}%
=\kappa\times Q_{j}$ of the probability measures $Q_{j}$ onto $\Omega
_{Y}\times\Omega_{X}$ in the following way: for any p.m. $Q$ on $\left(
X,\Omega_{X}\right)  $ we set
\begin{equation}
\tilde{Q}_{j}\left(  B_{1}\times B_{2}\right)  =\left(  \kappa\times
Q_{j}\right)  \left(  B_{1}\times B_{2}\right)  =\int_{B_{2}}\kappa\left(
B_{1},w\right)  Q_{j}\left(  dw\right)  \text{, }B_{1}\in\Omega_{Y}\text{,
}B_{2}\in\Omega_{X}\text{.} \label{extensions-def}%
\end{equation}
Assume $Q_{j}\ll Q_{0}$, $\;j=1,\ldots,N$ and let $q=dQ_{1}/dQ_{0}$,
$\;j=1,\ldots,N$; then it is easy to see that $\tilde{Q}_{j}\ll\tilde{Q}_{0}$
and
\[
\frac{d\tilde{Q}_{1}}{d\tilde{Q}_{0}}\left(  y,w\right)  =q\left(  w\right)
\text{, }j=1,\ldots,N\text{, a.s. }\left[  \tilde{Q}_{0}\right]  \text{.}%
\]
To prove that, define the measure $Q_{1}^{\ast}$ on $\Omega_{Y}\times
\Omega_{X}$ by
\[
Q_{1}^{\ast}\left(  B_{1}\times B_{2}\right)  =\int_{B_{1}\times B_{2}%
}q\;d\tilde{Q}_{0}\text{, }B_{1}\in\Omega_{Y}\text{, }B_{2}\in\Omega
_{X}\text{.}%
\]
Let $B_{1}^{\ast}=B_{1}\times X$, $B_{1}\in\Omega_{Y}$ and $B_{2}^{\ast
}=Y\times B_{2}$, $B_{2}\in\Omega_{X}$ be the two cylinder sets and let
$\Omega_{Y}^{\ast}$ and $\Omega_{X}^{\ast}$ be the two sub-sigma-algebras of
$\Omega_{Y}\times\Omega_{X}$ ; we will write $E^{P}$ for the expectation
corresponding to a measure $P$. Now
\begin{align*}
Q_{1}^{\ast}\left(  B_{1}\times B_{2}\right)   &  =E^{\tilde{Q}_{0}}%
\mathbf{1}_{B_{1}\times B_{2}}q=E^{\tilde{Q}_{0}}\mathbf{1}_{B_{1}^{\ast}%
}\mathbf{1}_{B_{2}^{\ast}}q=E^{\tilde{Q}_{0}}E^{\tilde{Q}_{0}}\left(
\mathbf{1}_{B_{1}^{\ast}}\mathbf{1}_{B_{2}^{\ast}}q|\Omega_{X}^{\ast}\right)
\\
&  =E^{\tilde{Q}_{0}}\mathbf{1}_{B_{2}^{\ast}}qE^{\tilde{Q}_{0}}\left(
\mathbf{1}_{B_{1}^{\ast}}|\Omega_{X}^{\ast}\right)
\end{align*}
since $\mathbf{1}_{B_{2}^{\ast}}q$ is $\Omega_{X}^{\ast}$-measurable. Now
\[
E^{\tilde{Q}_{0}}\left(  \mathbf{1}_{B_{1}^{\ast}}|\Omega_{X}^{\ast}\right)
=\tilde{Q}_{0}\left(  B_{1}^{\ast}|y,w\right)  =\kappa\left(  B_{1},w\right)
\text{ a.s. }\left[  \tilde{Q}_{0}\right]
\]
hence
\[
Q_{1}^{\ast}\left(  B_{1}\times B_{2}\right)  =E^{\tilde{Q}_{0}}%
\mathbf{1}_{B_{2}^{\ast}}\left(  Y,W\right)  q\left(  W\right)  \kappa\left(
B_{1},W\right)
\]
where $W$ has the marginal distribution under $\tilde{Q}_{0}$, i.e. $W\sim
Q_{0}$. We also note $\mathbf{1}_{B_{2}^{\ast}}\left(  Y,W\right)
=\mathbf{1}_{B_{2}}\left(  W\right)  $. Hence
\begin{align*}
Q_{1}^{\ast}\left(  B_{1}\times B_{2}\right)   &  =E^{Q_{0}}\mathbf{1}_{B_{2}%
}\left(  W\right)  q\left(  W\right)  \kappa\left(  B_{1},W\right) \\
&  =\int_{B_{2}}q\left(  w\right)  \kappa\left(  B_{1},w\right)  dQ_{0}\left(
w\right) \\
&  =\int_{B_{2}}\kappa\left(  B_{1},w\right)  dQ_{1}\left(  w\right) \\
&  =\tilde{Q}_{1}\left(  B_{1}\times B_{2}\right)  .
\end{align*}
in view of (\ref{extensions-def}), showing that $Q_{1}^{\ast}=\tilde{Q}_{1}$.

This is an "almost" clean textbook proof; the super-rigorous textbooks (e.g.
Witting) might even distinguish between $q\left(  w\right)  $ and a function
of two variables $q^{\ast}\left(  y,w\right)  :=q\left(  w\right)  $ so that
$q^{\ast}$ is $\Omega_{X}^{\ast}$-measurable.

\subsection{The contraction property of state transitions}

Consider a state transition
\[
T:\mathcal{L}^{1}\left(  \mathcal{H}\right)  \rightarrow\mathcal{L}^{1}\left(
\mathcal{K}\right)
\]
i.e a dual channel of TP-CP map, where $\mathcal{H},\mathcal{K}$ are separable
Hilbert spaces. If $\rho,\sigma\in\mathcal{L}^{1}\left(  \mathcal{H}\right)  $
are states then
\begin{equation}
\left\Vert T\left(  \rho\right)  -T\left(  \sigma\right)  \right\Vert _{1}%
\leq\left\Vert \rho-\sigma\right\Vert _{1}. \label{contractivity-1}%
\end{equation}

\begin{proof}
Consider the binary testing problem leading to the Holevo-Helstrom bound: let
$\left(  \mathbf{1}-B,B\right)  $ where is a bounded operator: $B\in
\mathcal{L}\left(  \mathcal{H}\right)  $, with $0\leq B\leq\mathbf{1}$ (where
$B\geq0$ implies $B$ is Hermitian) and consider the error probability
\[
Err\left(  B\right)  =\mathrm{Tr}\;B\rho+\mathrm{Tr}\;\left(  \mathbf{1}%
-B\right)  \sigma.
\]
Then%
\begin{equation}
\inf_{0\leq B\leq\mathbf{1}}Err\left(  B\right)  =1-\frac{1}{2}\left\Vert
\rho-\sigma\right\Vert _{1}. \label{Hol-Helstr}%
\end{equation}
Indeed we note
\begin{align*}
\sigma-\rho &  =\left(  \sigma-\rho\right)  _{+}-\left(  \sigma-\rho\right)
_{-},\\
0  &  =\mathrm{Tr}\;\left(  \sigma-\rho\right)  =\mathrm{Tr}\;\left(
\sigma-\rho\right)  _{+}-\mathrm{Tr}\;\left(  \sigma-\rho\right)  _{-}%
\end{align*}
and furthermore
\begin{align}
\left\Vert \rho-\sigma\right\Vert _{1}  &  =\mathrm{Tr}\;\left\vert
\sigma-\rho\right\vert =\mathrm{Tr}\;\left(  \sigma-\rho\right)
_{+}+\mathrm{Tr}\;\left(  \sigma-\rho\right)  _{-}\nonumber\\
&  =2\mathrm{Tr}\;\left(  \sigma-\rho\right)  _{+}.
\label{L1-norm-repre-positive-part}%
\end{align}
Now we have as a consequence of of (\ref{L1-norm-repre-positive-part})%
\begin{align}
Err\left(  B\right)   &  =1-\mathrm{Tr}\;B\left(  \sigma-\rho\right)
\nonumber\\
&  \geq1-\mathrm{Tr}\;B\left(  \sigma-\rho\right)  _{+}=1-\frac{1}%
{2}\left\Vert \rho-\sigma\right\Vert _{1}. \label{HH-upper-bound-a}%
\end{align}
To obtain the reverse inequality, set
\[
\Pi=\text{\textrm{supp}}\left(  \sigma-\rho\right)  _{+},
\]
the support projection of $\left(  \sigma-\rho\right)  _{+}$. Then
\begin{align}
Err\left(  \Pi\right)   &  =1-\mathrm{Tr}\;\Pi\left(  \sigma-\rho\right)
\nonumber\\
&  =1-\mathrm{Tr}\;\left(  \sigma-\rho\right)  _{+}=1-\frac{1}{2}\left\Vert
\rho-\sigma\right\Vert _{1}. \label{HH-lower-bound}%
\end{align}
Now (\ref{HH-upper-bound-a}) and (\ref{HH-lower-bound}) imply
(\ref{Hol-Helstr}).

Now, using the dual channel to $T$ which is $T_{\ast}:\mathcal{L}\left(
\mathcal{K}\right)  \rightarrow\mathcal{L}\left(  \mathcal{H}\right)  $, a
unital ($T_{\ast}\left(  \mathbf{1}\right)  =\mathbf{1}$) and completely
positive map
\begin{align}
1-\frac{1}{2}\left\Vert T\left(  \rho\right)  -T\left(  \sigma\right)
\right\Vert _{1}  &  =\inf_{B\in\mathcal{L}\left(  \mathcal{K}\right)  ,0\leq
B\leq\mathbf{1}}\left(  \mathrm{Tr}\;BT\left(  \rho\right)  +\mathrm{Tr}%
\;\left(  \mathbf{1}-B\right)  T\left(  \sigma\right)  \right) \nonumber\\
&  =\inf_{B\in\mathcal{L}\left(  \mathcal{K}\right)  ,0\leq B\leq\mathbf{1}%
}\left(  \mathrm{Tr}\;T_{\ast}\left(  B\right)  \rho+\mathrm{Tr}\;\left(
T_{\ast}\left(  \mathbf{1}\right)  -T_{\ast}\left(  B\right)  \right)
\sigma\right) \nonumber\\
&  =\inf_{B\in\mathcal{L}\left(  \mathcal{K}\right)  ,0\leq B\leq\mathbf{1}%
}\left(  \mathrm{Tr}\;T_{\ast}\left(  B\right)  \rho+\mathrm{Tr}\;\left(
\mathbf{1}-T_{\ast}\left(  B\right)  \right)  \sigma\right)  .
\label{special-POVM}%
\end{align}
Here as a consequence of $B\leq\mathbf{1}$ and the positivity of $T_{\ast}$,
we have
\[
\mathbf{1}-T_{\ast}\left(  B\right)  =T_{\ast}\left(  \mathbf{1-}B\right)
\geq0
\]
hence $T_{\ast}\left(  B\right)  \leq\mathbf{1}$, and in (\ref{special-POVM})
$\left(  T_{\ast}\left(  B\right)  ,\mathbf{1}-T_{\ast}\left(  B\right)
\right)  $ is just a special binary POVM with operators from $\mathcal{L}%
\left(  \mathcal{H}\right)  $. We obtain a lower bound of
(\ref{special-e-nu-hat}) by allowing all binary POVM with operators from
$\mathcal{L}\left(  \mathcal{H}\right)  ,$ i.e. we obtain
\begin{align*}
1-\frac{1}{2}\left\Vert T\left(  \rho\right)  -T\left(  \sigma\right)
\right\Vert _{1}  &  \geq\inf_{A\in\mathcal{L}\left(  \mathcal{H}\right)
,0\leq A\leq\mathbf{1}}\left(  \mathrm{Tr}\;A\rho+\mathrm{Tr}\;\left(
\mathbf{1}-A\right)  \sigma\right) \\
&  =1-\frac{1}{2}\left\Vert \rho-\sigma\right\Vert _{1}%
\end{align*}
which proves (\ref{contractivity-1}).

\textbf{Remark. }\texttt{We have assumed here that the transition (TP-CP map)
arises from some channel }$T_{\ast}:\mathcal{L}\left(  \mathcal{K}\right)
\rightarrow\mathcal{L}\left(  \mathcal{H}\right)  $\texttt{, i.e a unital TP
map between the von Neumann algebras. This probably can be proved (to be
filled in here), and it will be an analog of Strasser's one-to-one
correspondence between Markov operators and stochastic operators, for the
commutative case (Strasser, 24.4, 24.5).}

\textbf{The case of observation channels. }Assume the state transition is
\[
T:\mathcal{L}^{1}\left(  \mathcal{H}\right)  \rightarrow L^{1}\left(
\mu\right)
\]
where $\mu$ is a probability measure on a measurable space $\left(
X,\Omega\right)  $. Here total positivity is equivalent to positivity. Then we
have again
\begin{equation}
\left\Vert T\left(  \rho\right)  -T\left(  \sigma\right)  \right\Vert _{1}%
\leq\left\Vert \rho-\sigma\right\Vert _{1}. \label{contractivity-2}%
\end{equation}

\begin{proof}
let $b\in L^{\infty}\left(  \mu\right)  $, then it can be shown, for any $\mu
$-densities $r,s$ and
\[
Err\left(  b\right)  =\int brd\mu+\int\left(  1-b\right)  sd\mu
\]
that%
\[
\inf_{b\in L^{\infty}\left(  \mu\right)  ,0\leq b\leq1}Err\left(  b\right)
=1-\frac{1}{2}\left\Vert r-s\right\Vert _{1}.
\]
Now for states $\rho,\sigma\in\mathcal{L}^{1}\left(  \mathcal{H}\right)  $%
\begin{align*}
1-\frac{1}{2}\left\Vert T\left(  \rho\right)  -T\left(  \sigma\right)
\right\Vert _{1}  &  =\inf_{b\in L^{\infty}\left(  \mu\right)  ,0\leq b\leq
1}\left(  \int bT\left(  \rho\right)  d\mu+\int\left(  1-b\right)  T\left(
\sigma\right)  d\mu\right) \\
&  =\inf_{b\in L^{\infty}\left(  \mu\right)  ,0\leq b\leq1}\left(  \int
T_{\ast}\left(  b\right)  \rho d\mu+\int\left(  \mathbf{1}-T_{\ast}\left(
b\right)  \right)  \sigma d\mu\right)
\end{align*}
using the dual channel $T_{\ast}:L^{\infty}\left(  \mu\right)  \rightarrow
\mathcal{L}\left(  \mathcal{H}\right)  $. Here again $0\mathbf{\leq}T_{\ast
}\left(  b\right)  \leq\mathbf{1}$, so $\left(  \mathbf{1}-T_{\ast}\left(
b\right)  ,T_{\ast}\left(  b\right)  \right)  $ is just a special POVM\ with
values in $\mathcal{L}\left(  \mathcal{H}\right)  $, so we obtain again
\[
1-\frac{1}{2}\left\Vert T\left(  \rho\right)  -T\left(  \sigma\right)
\right\Vert _{1}\geq1-\frac{1}{2}\left\Vert \rho-\sigma\right\Vert _{1}.
\]

\end{proof}
\end{proof}

\bigskip

\textbf{Another point of view. }In\textbf{ }\cite{MR887100}, 2.4.18 it is
stated that the predual $\mathcal{A}_{\ast}$ of a von Neumann algebra
$\mathcal{A}$ is a Banach space in the norm of $\mathcal{A}^{\ast}$, i.e. the
dual of $\mathcal{A}$. Thus the norm of $\omega\in\mathcal{A}_{\ast}$ is
\begin{equation}
\left\Vert \omega\right\Vert _{1}:=\sup_{A\in\mathcal{A}\text{,}\left\Vert
A\right\Vert \leq1}\left\vert \omega\left(  A\right)  \text{.}\right\vert
\label{L1normdef-1}%
\end{equation}

\bigskip(already noted in the paper, Appendix A.1.5) In \cite{MR1741419},
VI.6.9, Corollary 1 it is stated that for any $W^{\ast}$-algebra $W$, the
predual $V$ is unique and is isometrically isomorphic to the space
$W_{n}^{\prime}$ generated by all normal linear forms of $W$. A $W^{\ast}%
$-algebra is an "abstract" von Neumann algebra; a von Neumann algebra is a
$W^{\ast}$-algebra of operators on a Hilbert space. (Normal linear forms are
positive and order continuous).

Then in 2.4.21 in \cite{MR887100} it is stated that $\nu$ is a normal state on
a vN algebra $\mathcal{A}$ acting on a Hilbert space $\mathcal{H}$ (a
[positive] normal linear form taking value 1 on the unit of $\mathcal{A}$) if
there exists a density matrix $\rho$, i.e. a positive trace-class operator on
$\mathcal{H}$ with $\mathrm{Tr}\left(  \rho\right)  =1$ such that
\begin{equation}
\nu\left(  A\right)  =\mathrm{tr}\left(  \rho A\right)  .
\label{repre-predual-1}%
\end{equation}
Note that this result is as yet astract with an abstract trace; it includes
the case that $\mathcal{A}$ acts by multiplication on a space $L^{2}\left(
\mu\right)  $ and hence $\rho$ may be an operator also acting by
multiplication on $L^{2}\left(  \mu\right)  $, and the trace may be an integral.

\begin{proof}
[Remark]This is a bit more complicated; here is the actual characterization of
the predual. If $\mathcal{A=L}\left(  \mathcal{H}\right)  $ then there is a
one-to-one correspondence of $\nu$ to $\rho$. If $\mathcal{A}$ is a subalgebra
of $\mathcal{L}\left(  \mathcal{H}\right)  $ (as in the case where
$\mathcal{H}=L^{2}\left(  \mu\right)  $ and $\mathcal{A}=\left\{  M_{a},a\in
L^{\infty}\left(  \mu\right)  \right\}  $ where $M_{a}$ is the operator of
multiplication of $f\in L^{2}\left(  \mu\right)  $ with the function $a$) then
Theorem 46.4 in \cite{MR1721402} states the existence of a density operator
$\rho$, but not its uniquess. Intuitively, if $\sigma\in\mathcal{L}^{1}\left(
\mathcal{H}\right)  $ is a trace class operator such that
\[
\mathrm{tr}\left(  \sigma A\right)  =0\text{, all }A\in\mathcal{A}%
\]
then $\rho+\sigma$ represents the same linear form on $\mathcal{A}$. If
$\mathcal{A}=\mathcal{L}\left(  \mathcal{H}\right)  $ then $\sigma=0$ and we
have uniqueness of $\rho$ representing $\nu$. If $\mathcal{A}$ is a proper
subset of $\mathcal{L}\left(  \mathcal{H}\right)  $, as in the case
$\mathcal{A}=\left\{  M_{a},a\in L^{\infty}\left(  \mu\right)  \right\}  $,
then nontrivial $\sigma$ exist. Then 2.1.12 of \cite{MR3468018} shows how to
proceed: define
\[
\mathcal{A}^{\bot}:=\left\{  \sigma\in\mathcal{L}^{1}\left(  \mathcal{H}%
\right)  :\mathrm{tr}\left(  \sigma A\right)  =0\text{, }A\in\mathcal{A}%
\right\}
\]
then we have a well-defined linear bijection $\mathcal{L}^{1}\left(
\mathcal{H}\right)  /\mathcal{A}^{\bot}\rightarrow\mathcal{A}_{\ast}$ defined
by $\rho+\mathcal{A}^{\bot}\longmapsto\nu$ where $\nu\left(  A\right)
=\mathrm{tr}\left(  \rho A\right)  $. Therefore $\mathcal{A}_{\ast}$ can be
identified with the Banach space $\mathcal{L}^{1}\left(  \mathcal{H}\right)
/\mathcal{A}^{\bot}$ endowed with the quotient norm. The same fact is stated
in \cite{MR1721402}, before 46.4.

As an example, consider the case $L^{2}\left(  \mu\right)  =\ell^{2}$ with
$\mu$ as the counting measure on $\mathbb{N}$, and $\mathcal{A}=\left\{
M_{a},a\in\ell^{\infty}\right\}  $ where $\ell^{\infty}$ is the set of bounded
sequences. Let $\nu\in\mathcal{A}_{\ast}$, then by \cite{MR1721402}. loc. cit.
we have a $\rho\in\mathcal{L}^{1}\left(  \mathcal{H}\right)  $ (a trace class
infinite dimensional matrix) such that (\ref{repre-predual-1}) holds. What are
the operators $\sigma$? The $A\in\mathcal{A}$ are infinite diagonal matrices
with elements $a_{i}$ say. Then $\sigma A$ is a matrix with elements
$\sigma_{ij}a_{i}$, and $\mathrm{tr}\left(  \sigma A\right)  =\sum_{i}%
\sigma_{ii}a_{i}$. Thus $\mathrm{tr}\left(  \sigma A\right)  =0$ for all
$A\in\mathcal{A}$ if $\sigma_{ii}=0$, all $i$. So if any $\rho+\sigma$
represents the linear form $\nu$ then we may choose $\sigma$ as
\[
\sigma_{ij}=\left\{
\begin{array}
[c]{c}%
-\rho_{ij}\text{, }i\neq j\\
0\text{, }i=j
\end{array}
\right.
\]
making $\rho+\sigma$ a diagonal matrix with diagonal elenents $\rho_{ii}$.
This means we can always choose $\rho$ in (\ref{repre-predual-1}) as a
diagonal matrix of trace class, and then
\[
\nu\left(  A\right)  =\mathrm{tr}\left(  \rho A\right)  =\sum\rho_{ii}a_{i}.
\]
Here $\rho$ can be identified with the multiplication operator $M_{r},r\in
\ell^{1}$ where $r_{i}=\rho_{ii}$.

There should be an obvious analog for a general $L^{2}\left(  \mu\right)  $
which we need to figure out.
\end{proof}

\bigskip

The predual $\mathcal{A}_{\ast}$ is the space of all weak* continuous linear
functionals on $\mathcal{A}$ (Conw00\cite{MR1721402}, before 46.4 ). This is
complex linear hull of the set of positive weak* (=normal) linear forms
(\textit{Schaefer\cite{MR1741419}, VI.6.6.)}, and by \cite{MR1721402}, 54.9 we
have for every $\omega\in\mathcal{A}_{\ast}$%
\begin{equation}
\omega=\omega_{1}-\omega_{2}+i\left(  \omega_{3}-\omega_{4}\right)
\label{decomp-normal}%
\end{equation}
where $\omega_{i}$ are positive weak* continuous linear forms such that
$\omega_{1}\bot\omega_{2}$ and $\omega_{3}\bot\omega_{4}$ (in fact $\omega
_{1}-\omega_{2}$ is an hermitian form and decomposed into its positive and
negative part, likewise with $\omega_{3}-\omega_{4}$). We note that the above
decomposition parallels that for fi.di. $n\times n$ matrices: if $M$ is a
matrix then
\begin{align*}
M  &  =\frac{1}{2}\left(  M+M^{\ast}\right)  +\frac{1}{2}\left(  M-M^{\ast
}\right) \\
&  =\frac{1}{2}\left(  M+M^{\ast}\right)  +i\frac{i}{2}\left(  M^{\ast
}-M\right)
\end{align*}
Here $\frac{1}{2}\left(  M+M^{\ast}\right)  $ is hermitian, so it decomposes
as $A_{1}-A_{2}$ we $A_{1}$ is the positive part and $A_{2}$ is the negative
part. Similarly $\frac{i}{2}\left(  M^{\ast}-M\right)  $ is Hermitian, so it
decomposed into $A_{3}-A_{4}$. Hence
\[
M=A_{1}-A_{2}+i\left(  A_{3}-A_{4}\right)  .
\]
Thus (\ref{decomp-normal}) can be set in relation to $\omega\left(  A\right)
=\mathrm{tr}\left(  \rho A\right)  $ (cf. (\ref{repre-predual-1})) where
$\rho$ is a trace class operator.

Assume $\omega$ is Hermitian. Assume $\omega\left(  A\right)  =\mathrm{tr}%
\left(  \rho A\right)  $ and $\rho=$ $\rho_{+}-\rho_{-}$ such that
\[
\omega\left(  A\right)  =\mathrm{tr}\left(  \rho_{+}A\right)  -\mathrm{tr}%
\left(  \rho_{-}A\right)  .
\]
We conjecture the following: for hermitian $\omega$
\begin{equation}
\left\Vert \omega\right\Vert _{1}=\sup_{A\in\mathcal{A}\text{,}A=A^{\ast
},-\mathbf{1\leq}A\mathbf{\leq1}}\left\vert \omega\left(  A\right)
\right\vert =\mathrm{tr}\left(  \left\vert \rho\right\vert \right)  \text{.}
\label{L1normdef-2}%
\end{equation}
That would make it straighforward to prove the contraction property for TP-CP
maps (though we have a simpler proof below).

\begin{proof}
[Proof for fi.di. matrices]Here $\mathcal{H}=\mathbb{C}^{n}$ and
$\mathcal{A}=\mathcal{L}\left(  \mathcal{H}\right)  $, the set of all $n\times
n$ complex matrices. We first show that if $\omega\geq0$ then
\begin{equation}
\left\Vert \omega\right\Vert _{1}=\sup_{0\mathbf{\leq}B\mathbf{\leq1}}%
\omega\left(  B\right)  . \label{L1normdef-3}%
\end{equation}
Indeed
\[
\omega\left(  B\right)  =\mathrm{tr}\left(  \rho B\right)  .
\]
Since $\rho$ is positive, it is Hermitian. Let $\rho=U\Lambda U^{\ast}$ where
$U$ is unitary and $\Lambda$ is diagonal; then
\[
\mathrm{tr}\left(  \rho B\right)  =\mathrm{tr}\left(  U\Lambda U^{\ast
}B\right)  =\mathrm{tr}\left(  \Lambda U^{\ast}BU\right)  =\mathrm{tr}\left(
\Lambda D\right)
\]
where $D=$ $U^{\ast}BU$. Hence
\begin{equation}
\left\vert \mathrm{tr}\left(  \rho B\right)  \right\vert =\left\vert
\sum_{j=1}^{n}\Lambda_{j}D_{jj}\right\vert \leq\sum_{j=1}^{n}\left\vert
\Lambda_{j}D_{jj}\right\vert . \label{step-diag-1}%
\end{equation}
Now $\left\Vert B\right\Vert =\lambda_{\max}\left(  B^{\ast}B\right)  \leq1$,
hence also $\left\Vert D\right\Vert \leq1$. Now $D_{jj}=e_{j}^{\ast}De_{j}$
where $e_{j}$ is a basis vector, hence
\[
\left\vert D_{jj}\right\vert ^{2}=e_{j}^{\ast}D^{\ast}e_{j}e_{j}^{\ast}%
De_{j}\leq e_{j}^{\ast}D^{\ast}De_{j}\leq1.
\]
Hence from (\ref{step-diag-1}), if $\omega\geq0$
\[
\left\Vert \omega\right\Vert _{1}=\sup_{\left\Vert B\right\Vert \leq
1}\left\vert \mathrm{tr}\left(  \rho B\right)  \right\vert \leq\sum_{j=1}%
^{n}\left\vert \Lambda_{j}\right\vert =\sum_{j=1}^{n}\Lambda_{j}%
=\mathrm{tr}\left(  \rho\right)  .
\]
That bound is attained by $B=\mathbf{1}$, hence (\ref{L1normdef-3}) is shown.

Now for general $\omega$, we have
\begin{align*}
\sup_{B\in\mathcal{A}\text{,}\left\Vert B\right\Vert \leq1}\left\vert
\omega\left(  B\right)  \right\vert  &  =\sup_{B\in\mathcal{A}\text{,}%
\left\Vert B\right\Vert \leq1}\left\vert \mathrm{tr}\left(  \rho_{+}B\right)
-\mathrm{tr}\left(  \rho_{-}B\right)  \right\vert \\
&  \leq\sup_{B\in\mathcal{A}\text{,}\left\Vert B\right\Vert \leq1}\left\vert
\mathrm{tr}\left(  \rho_{+}B\right)  \right\vert +\sup_{B\in\mathcal{A}%
\text{,}\left\Vert B\right\Vert \leq1}\left\vert \mathrm{tr}\left(  \rho
_{-}B\right)  \right\vert \\
&  =\mathrm{tr}\left(  \rho_{+}\right)  +\mathrm{tr}\left(  \rho_{-}\right)
=\mathrm{tr}\left(  \left\vert \rho\right\vert \right)  .
\end{align*}
where $\left\vert \rho\right\vert =\rho_{+}+\rho_{-}=\left(  \rho^{\ast}%
\rho\right)  ^{1/2}$. That bound is attained by $B=$\textrm{supp}$\rho_{+}%
-$\textrm{supp}$\rho_{-}$:
\begin{align*}
\left\vert \mathrm{tr}\left(  \rho_{+}B\right)  -\mathrm{tr}\left(  \rho
_{-}B\right)  \right\vert  &  =\left\vert \mathrm{tr}\left(  \rho
_{+}\mathrm{supp}\rho_{+}\right)  +\mathrm{tr}\left(  \rho_{-}\mathrm{supp}%
\rho_{-}\right)  \right\vert \\
&  =\mathrm{tr}\left(  \rho_{+}\right)  +\mathrm{tr}\left(  \rho_{-}\right)
=\mathrm{tr}\left(  \left\vert \rho\right\vert \right)  .
\end{align*}
Here
\[
-\mathbf{1}\leq\mathrm{supp}\rho_{+}-\mathrm{supp}\rho_{-}\leq\mathbf{1}%
\]
since \textrm{supp}$\rho_{+},$\textrm{supp}$\rho_{-}$ are orthogonal
projections. This proves (\ref{L1normdef-2})
\end{proof}

\textbf{Remark. }The proof of (\ref{L1normdef-2}) is related to the
Holevo-Helstrom bound for quantum testing. It is not equivalent since the
latter bound already assumes that tests are performed by positive operators
$B$ with $0\leq B\leq\mathbf{1}$: the error criterion for testing between
states $\rho_{0}$ and $\rho_{1}$ is
\begin{align*}
Err\left(  B\right)   &  =\mathrm{tr}\rho_{0}B+\mathrm{tr}\rho_{1}\left(
\mathbf{1}-B\right) \\
&  =1-\mathrm{tr}\left(  \rho_{1}-\rho_{0}\right)  B
\end{align*}
such that
\begin{align*}
\inf_{0\leq B\leq\mathbf{1}}Err\left(  B\right)   &  =1-\mathrm{tr}\left(
\rho_{1}-\rho_{0}\right)  _{+}=1-\frac{1}{2}\mathrm{tr}\left\vert \rho
_{1}-\rho_{0}\right\vert \\
&  =1-\frac{1}{2}\left\Vert \rho_{1}-\rho_{0}\right\Vert _{1}.
\end{align*}

\begin{proof}
[Proof for the commutative case ]Here $\mathcal{H}=L^{2}\left(  \mu\right)  $
while $\mathcal{A}=L^{\infty}\left(  \mu\right)  $ and $\mathcal{A}_{\ast
}=L^{1}\left(  \mu\right)  $, and both $\mathcal{A}$ and $\mathcal{A}_{\ast}$
act by multiplication of functions in $L^{2}\left(  \mu\right)  $. We write
the claim (\ref{L1normdef-2}) again, for $\omega\in\mathcal{A}_{\ast}$
\begin{equation}
\left\Vert \omega\right\Vert _{1}=\sup_{a\in\mathcal{A}\text{,}\left\Vert
A\right\Vert \leq1}\left\vert \omega\left(  a\right)  \right\vert =\sup
_{a\in\mathcal{A}\text{,}a=a^{\ast},-\mathbf{1\leq}a\mathbf{\leq1}}\left\vert
\omega\left(  a\right)  \right\vert . \label{L1normdef-4}%
\end{equation}
Let $w\in L^{1}\left(  \mu\right)  $ such that for $a\in\mathcal{A=}$
$L^{\infty}\left(  \mu\right)  $ we have
\[
\omega\left(  a\right)  =\int wad\mu.
\]
Now for $a\in$ $L^{\infty}\left(  \mu\right)  $ (complex valued functions) we
have, for $f,g\in L^{2}\left(  \mu\right)  $%
\[
\left\langle g,af\right\rangle =\int\bar{g}\left(  af\right)  d\mu
=\int\overline{g\bar{a}}fd\mu=\left\langle \bar{a}g,f\right\rangle
\]
hence the adjoint opeator to $a$ is multiplication by $\bar{a}$, thus
$a^{\ast}=\bar{a}$. Hence $a$ is self-ajoint if it is real-valued. So the
claim can be written
\[
\sup_{a\in\mathcal{A}\text{,}\left\Vert a\right\Vert \leq1}\left\vert \int
wad\mu\right\vert =\sup_{a\in\mathcal{A}\text{,}a=a^{\ast},-\mathbf{1\leq
}a\mathbf{\leq1}}\left\vert \int wad\mu\right\vert .
\]
First assume $w\geq0$. Note that
\[
\left\Vert a\right\Vert =\left\Vert a\right\Vert _{\infty}%
=\text{\textrm{esssup}}_{\mu}\left\vert f\left(  x\right)  \right\vert
\]
Then for all a with $\left\Vert a\right\Vert _{\infty}\leq1$%
\[
\left\vert \int wad\mu\right\vert \leq\int\left\vert wa\right\vert d\mu
\leq\left\Vert a\right\Vert _{\infty}\int\left\vert w\right\vert d\mu=\int
wd\mu=\left\Vert w\right\Vert _{1}.
\]
That bound is attained for $a=1$.

The proof in the general case works exactly as for matrices: we have
$w=w_{+}-w_{-}$, and the support projections are defined as appropriate
indicator functions in an obvious way.
\end{proof}

\bigskip

\bigskip

\textbf{Contraction property of TP-CP maps. }Let $\mathcal{\mathcal{H}%
_{\mathcal{A}}}=\mathbb{C}^{n}$ and $\mathcal{\mathcal{H}_{\mathcal{B}}%
}=\mathbb{C}^{m}$, and $\mathcal{A=\mathcal{L}(\mathcal{H}_{\mathcal{A}})}$,
$\mathcal{B=\mathcal{L}(\mathcal{H}_{\mathcal{B}})}$, which are the respective
sets of $n\times n$ and $m\times m$ matrices. \textbf{ }Let $T$ be a TP-CP map
(dual channel) $T:\mathcal{L}^{1}(\mathcal{\mathcal{H}_{\mathcal{A}}%
})\rightarrow\mathcal{L}^{1}(\mathcal{\mathcal{H}_{\mathcal{B}}})$, then we
claim for $\rho\in\mathcal{L}^{1}(\mathcal{\mathcal{H}_{\mathcal{A}}}$
\[
\left\Vert T\left(  \rho\right)  \right\Vert _{1}\leq\left\Vert \rho
\right\Vert _{1}.
\]
Indeed%
\[
\left\Vert T\left(  \rho\right)  \right\Vert _{1}=\sup_{B\in\mathcal{B}%
\text{,}B=B^{\ast},-\mathbf{1\leq}B\mathbf{\leq1}}\mathrm{tr}\left(  T\left(
\rho\right)  B\right)  .
\]
We now assume there is a dual map (a channel ) $T_{\ast}:\mathcal{L}%
(\mathcal{\mathcal{H}_{\mathcal{B}}})\rightarrow\mathcal{L}%
(\mathcal{\mathcal{H}_{\mathcal{A}}})$, such that
\[
\mathrm{tr}\left(  T\left(  \rho\right)  B\right)  =\mathrm{tr}\left(  \rho
T_{\ast}\left(  B\right)  \right)
\]
where $T_{\ast}$ is unital ($T_{\ast}\left(  \mathbf{1}\right)  =\mathbf{1}$)
and completely positive. Hence
\[
\left\Vert T\left(  \rho\right)  \right\Vert _{1}=\sup_{B\in\mathcal{B}%
\text{,}B=B^{\ast},-\mathbf{1\leq}B\mathbf{\leq1}}\mathrm{tr}\left(  \rho
T_{\ast}\left(  B\right)  \right)  .
\]
Since $B\mathbf{\leq1}$, we have $\mathbf{1}-B\geq0$ and hence $T_{\ast
}\left(  \mathbf{1}-B\right)  \geq0,$ hence $T_{\ast}\left(  B\right)
\leq\mathbf{1}$. In the same way it is shown than $T_{\ast}\left(  B\right)
\geq-\mathbf{1}$. It follows that
\[
\left\Vert T\left(  \rho\right)  \right\Vert _{1}\leq\sup_{A\in\mathcal{A}%
\text{,}A=A^{\ast},-\mathbf{1\leq}A\mathbf{\leq1}}\mathrm{tr}\left(  \rho
T_{\ast}\left(  B\right)  \right)  =\left\Vert \rho\right\Vert _{1}.
\]

\textbf{A simpler proof of the contraction property. } The above proof relies
on the existence of the dual channel to $T$. A simpler proof is the following.
For $\rho\in\mathcal{L}^{1}(\mathcal{\mathcal{H}_{\mathcal{A}}})$ we have
\begin{align}
\left\Vert T\left(  \rho\right)  \right\Vert _{1}  &  =\left\Vert T\left(
\rho_{+}-\rho_{-}\right)  \right\Vert _{1}=\left\Vert T\left(  \rho
_{+}\right)  -T\left(  \rho_{-}\right)  \right\Vert _{1}\nonumber\\
&  \leq\left\Vert T\left(  \rho_{+}\right)  \right\Vert _{1}+\left\Vert
T\left(  \rho_{-}\right)  \right\Vert _{1}\text{ (triangle inequality)}%
\nonumber\\
&  =\text{ }\mathrm{tr}\rho_{+}+\mathrm{tr}\rho_{-1}\text{ (}T\text{ is trace
preserving on positives)}\nonumber\\
&  =\mathrm{tr}\left\vert \rho\right\vert =\left\Vert \rho\right\Vert _{1}.
\label{simpler-proof-contraction}%
\end{align}

\textbf{General case: the predual norm }$\left\Vert \omega\right\Vert _{1}%
$\textbf{ is equal to the trace norm, for matrices. }

The norm $\left\Vert \omega\right\Vert _{1}$ of a weak* continuous lienar form
on a von Neumann algebra $\mathcal{A}$ was defined (\ref{L1normdef-1}) as%
\[
\left\Vert \omega\right\Vert _{1}:=\sup_{A\in\mathcal{A}\text{,}\left\Vert
A\right\Vert \leq1}\left\vert \omega\left(  A\right)  \right\vert
\]
We claim that if $\rho$ is a trace class operator on $\mathcal{H}$ such that
$\omega\left(  A\right)  =\mathrm{tr}\left(  \rho A\right)  $ then
\begin{equation}
\left\Vert \omega\right\Vert _{1}=\mathrm{tr}\left\vert \rho\right\vert
=\left\Vert \rho\right\Vert _{1}. \label{L1normdef4a}%
\end{equation}
i.e. $\left\Vert \omega\right\Vert _{1}$ is the trace norm for $\pi$. Here
neither $\omega$ nor $\rho$ are claimed to be hermitian.

\begin{proof}
[Proof for fi.di. matrices]Let $\rho$ be a complex $n\times n$ matrix. A trace
class operator fulfills $\mathrm{tr}\rho^{\ast}\rho<\infty$, and then
$\left\Vert \rho\right\Vert _{1}=\mathrm{tr}\left\vert \rho\right\vert $ where
$\left\vert \rho\right\vert =\left(  \rho^{\ast}\rho\right)  ^{1/2}$. Assume
first that $\rho$ is full rank $n\times n$; then there is a polar
decomposition
\[
\rho=U\left(  \rho^{\ast}\rho\right)  ^{1/2}\text{, }U=\rho\left(  \rho^{\ast
}\rho\right)  ^{-1/2}%
\]
where $U$ is unitary. Then
\[
\left\Vert \omega\right\Vert _{1}=\sup_{A\in\mathcal{A}\text{,}\left\Vert
A\right\Vert \leq1}\left\vert \mathrm{tr}\left(  A\rho\right)  \right\vert
=\sup_{A\in\mathcal{A}\text{,}\left\Vert A\right\Vert \leq1}\left\vert
\mathrm{tr}\left(  AU\left(  \rho^{\ast}\rho\right)  ^{1/2}\right)
\right\vert .
\]
Now $\left\Vert A\right\Vert \leq1$ iff $\left\Vert AU\right\Vert \leq1$
(since $\left\Vert A\right\Vert ^{2}=\lambda_{\max}\left(  AA^{\ast}\right)
=\lambda_{\max}\left(  AUU^{\ast}A^{\ast}\right)  $) and $AU\in\mathcal{A}$,
hence
\[
\left\Vert \omega\right\Vert _{1}=\sup_{B\in\mathcal{A}\text{,}\left\Vert
B\right\Vert \leq1}\left\vert \mathrm{tr}\left(  B\left(  \rho^{\ast}%
\rho\right)  ^{1/2}\right)  \right\vert .
\]
Since $\left(  \rho^{\ast}\rho\right)  ^{1/2}$ is hermitian, we have by
(\ref{L1normdef-2})%
\begin{align}
\left\Vert \omega\right\Vert _{1}  &  =\mathrm{tr}\left(  \left\vert \left(
\rho^{\ast}\rho\right)  ^{1/2}\right\vert \right)  =\mathrm{tr}\left(  \left(
\rho^{\ast}\rho\right)  ^{1/2}\right) \nonumber\\
&  =\mathrm{tr}\left\vert \rho\right\vert =\left\Vert \rho\right\Vert _{1}.
\label{L1normdef-5}%
\end{align}
Now assume \textrm{rank}$\left(  \rho\right)  =r<n$. Let $\mathcal{S}%
=\mathrm{span}\left(  \rho\right)  $ and let $V$ be a $n\times r$ matrix such
that $V^{\ast}V=I_{r}$, $\mathcal{S}=\mathrm{span}\left(  V\right)  .$ Then
there is a $r\times n$ matrix $T,$ \textrm{rank}$\left(  T\right)  =r$ such
that
\[
\rho=VT
\]
(the colums of $V$ span $\mathcal{S}$, hence the columns of $\rho$ are linear
combinations of these). Hence
\begin{equation}
\rho=V\left(  TT^{\ast}\right)  ^{1/2}\left(  TT^{\ast}\right)  ^{-1/2}%
T=V\left(  TT^{\ast}\right)  ^{1/2}S \label{L1normdef-6}%
\end{equation}
where the $r\times n$ matrix $S=\left(  TT^{\ast}\right)  ^{-1/2}T$ fulfills
$SS^{\ast}=I_{r}$. (Here both $V$ and $S$ are partial isometries.). Then for
every $\left\Vert A\right\Vert \leq1$%
\[
\left\vert \mathrm{tr}\left(  A\rho\right)  \right\vert =\left\vert
\mathrm{tr}\left(  AV\left(  TT^{\ast}\right)  ^{1/2}S\right)  \right\vert
=\left\vert \mathrm{tr}\left(  SAV\left(  TT^{\ast}\right)  ^{1/2}\right)
\right\vert .
\]
Here $B=SAV$ is an $r\times r$ matrix fulfilling $\left\Vert B\right\Vert
\leq1$, and for every $B$ with $\left\Vert B\right\Vert \leq1,$ setting
$A=S^{\ast}BV^{\ast}$ we obtain an $A$ such that $B=SAV$ and $\left\Vert
A\right\Vert \leq1$. Hence
\[
\sup_{A\in\mathcal{A}\text{,}\left\Vert A\right\Vert \leq1}\left\vert
\mathrm{tr}\left(  A\rho\right)  \right\vert =\sup_{\left\Vert B\right\Vert
\leq1}\left\vert \mathrm{tr}\left(  B\left(  TT^{\ast}\right)  ^{1/2}\right)
\right\vert =\mathrm{tr}\left(  TT^{\ast}\right)  ^{1/2}%
\]
by (\ref{L1normdef-5}). By (\ref{L1normdef-6}) we have
\[
\mathrm{tr}\left\vert \rho\right\vert =\mathrm{tr}\left(  \rho^{\ast}%
\rho\right)  ^{1/2}=\mathrm{tr}\left(  S^{\ast}TT^{\ast}S\right)
^{1/2}=\mathrm{tr}\left(  TT^{\ast}\right)  ^{1/2}%
\]
because the eigenvalues of $S^{\ast}TT^{\ast}S$ equal those of $TT^{\ast}$.
This proves (\ref{L1normdef4a}).
\end{proof}

\bigskip\newpage

\subsection{Properties of channels related to a general predual}

The von Neumann algebra $\mathcal{A}$ is the dual Banach space of its predual
$\mathcal{A}_{\ast}$. The latter is the Banach space of all weak* continuous
linear functionals on $\mathcal{A}$ (Conw00\cite{MR1721402}, before 46.4 ),
endowed with the norm of $\mathcal{A}^{\ast}$ (the dual Banach space).
$\mathcal{A}_{\ast}$ can also be characterized as the complex linear hull of
the set of positive weak* (=normal) linear forms
(\textit{Schaefer\cite{MR1741419}, VI.6.6.)}, and by \cite{MR1721402}, 54.9 we
have for every $\omega\in\mathcal{A}_{\ast}$%
\begin{equation}
\omega=\omega_{1}-\omega_{2}+i\left(  \omega_{3}-\omega_{4}\right)
\end{equation}
where $\omega_{i}$ are positive weak* continuous linear forms such that
$\omega_{1}\bot\omega_{2}$ and $\omega_{3}\bot\omega_{4}$ (in fact $\omega
_{1}-\omega_{2}$ is an hermitian form and decomposed into its positive and
negative part, likewise with $\omega_{3}-\omega_{4}$).

The two special cases of interest are: (i) the set $\mathcal{L}(\mathcal{H})$
of bounded linear operators on $\mathcal{H}$ (Conw90\cite{MR1070713}, IX.7.2)
), (ii) the set of functions $L^{\infty}\left(  \mu\right)  $ on a $\sigma
$-finite measure space $\left(  X,\Omega,\mu\right)  $, construed as linear
operators on $\mathcal{H}=L^{2}\left(  \mu\right)  $ by pointwise
multiplication (Conw90\cite{MR1070713}, Theorem IX.6.6 ). Since a von Neumann
algebra is a subalgebra of some $\mathcal{L}(\mathcal{H})$, case (i) is a
special case trivially, but case (ii) we have as yet only understood in the
case that in $\left(  X,\Omega,\mu\right)  $, $X$ is a finite set, of
cardinality $n$, and $L^{\infty}\left(  \mu\right)  $ corresponds to the
multiplication operators on $\mathbb{C}^{n}$ corresponding to diagonal
matrices with complex elements. The case of general $L^{\infty}\left(
\mu\right)  $ remains to be explored.

Our aim is to show that to very channel between von Neumann algebras
$\alpha:\mathcal{A}\rightarrow\mathcal{B}$ there corresponds a unique TP-CP
map between the preduals $\alpha_{\ast}:\mathcal{B}_{\ast}\rightarrow
\mathcal{A}_{\ast}$, and vice versa. The framework of the general predual
allows a unified approach; we would otherwise have to deal with 4 cases:
$\mathcal{A}$ or $\mathcal{B}$ are either $\mathcal{L}(\mathcal{H})$ or
$L^{\infty}\left(  \mu\right)  $

\begin{proof}
[Remark]"(but one of these, $\mathcal{A}=L^{\infty}\left(  \mu\right)  $ and
$\mathcal{B}=L^{\infty}\left(  \nu\right)  $, is covered by a theorem of
Strasser on stochastic and Markov operators)". \texttt{That has to be checked,
since originally in Strasser it's all about real vector spaces, but vN
algebras and the predual, also for }$L^{\infty}\left(  \mu\right)  $\texttt{
and }$L^{1}\left(  \mu\right)  $\texttt{, are all defined within the complex
framework. Or does Conway, FuncAna, treat complex and real in parallel? He has
a symbol "}$\mathbb{F}$\texttt{" for this. It might be that the results for
positive maps already imply the complex case.}
\end{proof}

Recall that a quantum channel is a bounded, linear, completely positive,
unital and normal mapping $\alpha:\mathcal{A}\rightarrow\mathcal{B}$.
According to Conw00\cite{MR1721402}, 46.5, a positive linear map
$\alpha:\mathcal{A}\rightarrow\mathcal{B}$ is normal if and only if it is
weak* continuous. Recall that the weak* topology on $\mathcal{A}$ is generated
by all linear functionals on $\mathcal{A}$ which are elements of
$\mathcal{A}_{\ast}$ \footnote{Then by definition the elements of
$\mathcal{A}_{\ast}$ are weak* continuous functionals on $\mathcal{A}$. But
then there is a claim that the elements of $\mathcal{A}_{\ast}$ are
\textbf{all} such functiuonals. We haven't comprehended that argument yet; at
some point the Hahn-Banach Theorem is involved.}.

The section \S 54 in Conw00\cite{MR1721402} contains a detailed discussion of
weak* continuous linear functionals as elements of the predual $\mathcal{A}%
_{\ast}$. Recall that $\mathcal{A}_{\ast}$ is a Banach space under the norm
\[
\left\Vert \omega\right\Vert _{1}=\sup_{a\in\mathcal{A},\left\Vert
a\right\Vert \leq1}\left\vert \omega\left(  a\right)  \right\vert
\]
where $\left\Vert a\right\Vert $ is the operator norm in $\mathcal{L}%
(\mathcal{H})$.

\begin{lemma}
\label{Lem-notes-normlinform}If the linear form $\omega$ is positive on
$\mathcal{A}$, then
\[
\left\Vert \omega\right\Vert _{1}=\omega\left(  \mathbf{1}\right)
\]
where $\mathbf{1}$ is the unit of $\mathcal{A}$.
\end{lemma}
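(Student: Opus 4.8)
The plan is to establish the two inequalities $\left\Vert \omega\right\Vert _{1}\geq\omega\left(  \mathbf{1}\right)$ and $\left\Vert \omega\right\Vert _{1}\leq\omega\left(  \mathbf{1}\right)$ separately. The first is immediate: the unit $\mathbf{1}$ has operator norm $1$, and positivity of $\omega$ forces $\omega\left(  \mathbf{1}\right)  \geq0$, so $\omega\left(  \mathbf{1}\right)  =\left\vert \omega\left(  \mathbf{1}\right)  \right\vert \leq\sup_{\left\Vert a\right\Vert \leq1}\left\vert \omega\left(  a\right)  \right\vert =\left\Vert \omega\right\Vert _{1}$.

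For the reverse inequality I would fix $a\in\mathcal{A}$ with $\left\Vert a\right\Vert \leq1$ and show $\left\vert \omega\left(  a\right)  \right\vert \leq\omega\left(  \mathbf{1}\right)$. First, choosing a unimodular scalar $z\in\mathbb{C}$ with $z\,\omega\left(  a\right)  =\left\vert \omega\left(  a\right)  \right\vert$ and replacing $a$ by $za$ (which still satisfies $\left\Vert za\right\Vert \leq1$), one may assume $\omega\left(  a\right)  =\left\vert \omega\left(  a\right)  \right\vert \geq0$ is real and nonnegative. Next I would invoke the standard fact that a positive linear functional on a $C^{\ast}$-algebra is Hermitian, $\omega\left(  a^{\ast}\right)  =\overline{\omega\left(  a\right)  }$; this can be derived here from the observation that for self-adjoint $h$ with $\left\Vert h\right\Vert \leq1$ one has $-\mathbf{1}\leq h\leq\mathbf{1}$, so that $\omega\left(  \mathbf{1}\pm h\right)  \geq0$ forces $\omega\left(  h\right)  \in\mathbb{R}$, and then decomposing a general $a=h+ik$ into self-adjoint parts $h=\tfrac{1}{2}\left(  a+a^{\ast}\right)$, $k=\tfrac{1}{2i}\left(  a-a^{\ast}\right)$. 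From $\omega\left(  a\right)  =\omega\left(  h\right)  +i\omega\left(  k\right)$ with both summands real, and $\omega\left(  a\right)$ itself real, we conclude $\omega\left(  a\right)  =\omega\left(  h\right)$.

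Finally, $h$ is self-adjoint with $\left\Vert h\right\Vert \leq\tfrac{1}{2}\left(  \left\Vert a\right\Vert +\left\Vert a^{\ast}\right\Vert \right)  =\left\Vert a\right\Vert \leq1$, hence its spectrum is contained in $\left[  -1,1\right]$ and therefore $-\mathbf{1}\leq h\leq\mathbf{1}$, in particular $\mathbf{1}-h\geq0$. Positivity of $\omega$ then gives $0\leq\omega\left(  \mathbf{1}-h\right)  =\omega\left(  \mathbf{1}\right)  -\omega\left(  h\right)$, so $\omega\left(  a\right)  =\omega\left(  h\right)  \leq\omega\left(  \mathbf{1}\right)$. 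Since $\omega\left(  a\right)  =\left\vert \omega\left(  a\right)  \right\vert$, this is precisely $\left\vert \omega\left(  a\right)  \right\vert \leq\omega\left(  \mathbf{1}\right)$; taking the supremum over all $a$ with $\left\Vert a\right\Vert \leq1$ yields $\left\Vert \omega\right\Vert _{1}\leq\omega\left(  \mathbf{1}\right)$, completing the argument.

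There is no real obstacle in this proof: it is a routine manipulation once one cites the standard facts that positive functionals on a $C^{\ast}$-algebra are Hermitian and that a self-adjoint element of norm at most one is dominated by $\mathbf{1}$ in the operator order. The only point needing attention is that each of the three reductions — the phase rotation, the passage to the self-adjoint real part $h$, and the spectral bound on $h$ — preserves the constraint $\left\Vert \cdot\right\Vert \leq1$, which has been checked above.
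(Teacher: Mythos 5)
Your proof is correct, and it takes a somewhat different route from the paper's own argument (which, in the private notes, first reduces to $a\in\operatorname{Re}\mathcal{A}$ using the hermiticity of $\omega$, then decomposes a self-adjoint $a$ into its mutually orthogonal positive and negative parts $a=u-v$, passes to $\sup_{u\geq0,\,\|u\|\leq1}\omega(u)$, and finally establishes $u\leq\mathbf{1}$ via a rather elaborate excursion through the functional calculus and the Spectral Mapping Theorem). You instead rotate by a unimodular scalar to make $\omega(a)$ real and nonnegative, take the self-adjoint real part $h=\tfrac{1}{2}(a+a^\ast)$, and conclude $\omega(a)=\omega(h)\leq\omega(\mathbf 1)$ directly from $\|h\|\leq1$ and positivity. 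This avoids both the orthogonal positive/negative decomposition and the detour through $\operatorname{Re}\mathcal{A}$; it also has the pedagogical advantage of deriving the hermiticity of $\omega$ from positivity on the spot rather than citing it. Both arguments rest in the end on the same two ingredients — that positive functionals on a $C^\ast$-algebra are Hermitian, and that a self-adjoint contraction is sandwiched between $-\mathbf{1}$ and $\mathbf{1}$ — but your phase-rotation-plus-real-part version is the more economical one and is the standard textbook proof of this fact. All the norm bookkeeping ($\|za\|=\|a\|$, $\|h\|\leq\|a\|$) is done correctly, so the argument is complete.
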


This directly follows Proposition 33.2 (a) in Conw00\cite{MR1721402}. There
$\omega$ is a positive linear form on an operator system $\mathcal{S}$, which
is a linear submanifold of a C$^{\ast}$-algebra with certain properties. Here
$\mathcal{S}$ can be the C$^{\ast}$-algebra itself, and $\mathcal{A}$ as von
Neumann algebra is also a C$^{\ast}$-algebra. Below follows our own proof (to
be erased).

\begin{proof}
Since $\omega$ is positive, it is a bounded linear functional
(Conw00\cite{MR1721402} 33.4), and it is also hermitian ($\omega=\omega^{\ast
}$, cf. Conw00\cite{MR1721402}, next to 54.3). Then according to 54.3 in
Conw00\cite{MR1721402}, $\left\Vert \omega\right\Vert _{1}=\left\Vert
\omega|\operatorname{Re}\mathcal{A}\right\Vert _{1}$ where $\operatorname{Re}%
\mathcal{A}$ is the set of hermitian elements of $\mathcal{A}$. Hence
\[
\left\Vert \omega\right\Vert _{1}=\sup_{a\in\operatorname{Re}\mathcal{A}%
,\left\Vert a\right\Vert \leq1}\left\vert \omega\left(  a\right)  \right\vert
.
\]
According to 3.2 in Conw00\cite{MR1721402}, if $a\in\operatorname{Re}%
\mathcal{A}$ then there are unique positive elements $u$ and $v$ in
$\mathcal{A}$ such that $a=u-v$ and $uv=vu=0$. Now for the operator norm
$\left\Vert a\right\Vert $ we have
\begin{align*}
\left\Vert a\right\Vert ^{2}  &  =\sup_{x\in\mathcal{H},\left\Vert
x\right\Vert \leq1}\left\Vert a\left(  x\right)  \right\Vert ^{2}=\sup
_{x\in\mathcal{H},\left\Vert x\right\Vert \leq1}\left(  x,a^{\ast}ax\right) \\
&  =\sup_{x\in\mathcal{H},\left\Vert x\right\Vert \leq1}\left(  x,\left(
u^{\ast}u+v^{\ast}v\right)  x\right)  \geq\sup_{x\in\mathcal{H},\left\Vert
x\right\Vert \leq1}\left(  x,u^{\ast}ux\right) \\
&  =\left\Vert u\right\Vert ^{2}%
\end{align*}
and in the same way we obtain $\left\Vert a\right\Vert ^{2}\geq\left\Vert
v\right\Vert ^{2}$. Then
\begin{align*}
\left\Vert \omega\right\Vert _{1}  &  =\sup_{a\in\operatorname{Re}%
\mathcal{A},\left\Vert a\right\Vert \leq1}\left\vert \omega\left(  a\right)
\right\vert =\sup_{a\in\operatorname{Re}\mathcal{A},\left\Vert a\right\Vert
\leq1}\left\vert \omega\left(  u\right)  -\omega\left(  v\right)  \right\vert
\\
&  =\max\left(  \sup_{a\in\operatorname{Re}\mathcal{A},\left\Vert a\right\Vert
\leq1}\left(  \omega\left(  u\right)  -\omega\left(  v\right)  \right)
,\sup_{a\in\operatorname{Re}\mathcal{A},\left\Vert a\right\Vert \leq1}\left(
\omega\left(  v\right)  -\omega\left(  u\right)  \right)  \right) \\
&  \leq\max\left(  \sup_{a\in\operatorname{Re}\mathcal{A},\left\Vert
a\right\Vert \leq1}\omega\left(  u\right)  ,\sup_{a\in\operatorname{Re}%
\mathcal{A},\left\Vert a\right\Vert \leq1}\omega\left(  v\right)  \right) \\
&  \leq\max\left(  \sup_{u\geq0,\left\Vert u\right\Vert \leq1}\omega\left(
u\right)  ,\sup_{v\geq0,\left\Vert v\right\Vert \leq1}\omega\left(  v\right)
\right)  =\sup_{u\geq0,\left\Vert u\right\Vert \leq1}\omega\left(  u\right)  .
\end{align*}
On the other hand
\[
\left\Vert \omega\right\Vert _{1}=\sup_{a\in\operatorname{Re}\mathcal{A}%
,\left\Vert a\right\Vert \leq1}\left\vert \omega\left(  a\right)  \right\vert
\geq\sup_{a\geq0,\left\Vert a\right\Vert \leq1}\left\vert \omega\left(
a\right)  \right\vert =\sup_{u\geq0,\left\Vert u\right\Vert \leq1}%
\omega\left(  u\right)
\]
so we have shown
\[
\left\Vert \omega\right\Vert _{1}=\sup_{u\geq0,\left\Vert u\right\Vert \leq
1}\omega\left(  u\right)  .
\]

Now $\left\Vert u\right\Vert \leq1$ means $\left(  x,u^{2}x\right)  \leq1$ for
all $\left\Vert x\right\Vert \leq1$, hence $u^{2}\leq\boldsymbol{1}$. By 3.3
in Conw00\cite{MR1721402}, there is a unique square root of $u^{2}$ which must
be $u$. Now we can argue with the spectral representation of $u$ and $u^{2}$.
If $u$ has a discrete spectal representation $u=\sum\lambda_{i}P_{i}$ where
$P_{i}$ are orthogonal projectors and $\lambda_{i}\geq0$ then $u^{2}%
=\sum\lambda_{i}^{2}P_{i}$, and $\left\Vert u\right\Vert \leq1$ means
$\sup_{i}\lambda_{i}^{2}\leq1$. This is equivalent $\sup_{i}\lambda_{i}\leq1$,
hence $\left\Vert u^{1/2}\right\Vert \leq1$, meaning $\left(  x,ux\right)
\leq1$, thus $0\leq u\leq\mathbf{1}$.

For general positive bounded operators $u$, we have to refer to the Spectral
Theorem. The spectrum $\sigma\left(  a\right)  $ of $a\in\mathcal{A}$ is
\[
\sigma\left(  a\right)  =\left\{  s\in\mathbb{C}:a-s\mathbf{1}\text{ is not
invertible}\right\}  .
\]
(Conw90, VII.3.1). For a von Neumann algebra, $\sigma\left(  a\right)  $ is a
nonempty compact subset of $\mathbb{C}$ (Conw90, VII.3.6). The spectral radius
$r\left(  a\right)  $ is
\[
r\left(  a\right)  =\sup\left\{  \left\vert s\right\vert :s\in\sigma\left(
u\right)  \right\}
\]
which is finite and the supremum is attained. If $a$ is hermitian then
$\sigma\left(  a\right)  \subset\mathbb{R}$ (Conw00\cite{MR1721402}, 1.11),
and
\begin{equation}
\left\Vert a\right\Vert =r\left(  a\right)  \label{spec-radius-norm}%
\end{equation}
(Conw00\cite{MR1721402}, 1.7a). If $a$ is positive then $\sigma\left(
a\right)  \subset\left[  0,\infty\right)  $ (should be obvious, or find a
reference?), hence
\[
r\left(  a\right)  =\sup\left\{  s:s\in\sigma\left(  u\right)  \right\}  .
\]
Now we refer to the functional calculus for normal operators
(Conw00\cite{MR1721402}, after Theorem 2.3). Normal operators are those which
fulfill $aa^{\ast}=a^{\ast}a$, so hermitian and positive operators are a
special case. Let $f$ be a continuous function $f$ on $\sigma\left(  a\right)
$; then it is possible to define the operator $f\left(  a\right)  $. The
Spectral Mapping Theorem then states (Conw00\cite{MR1721402}, 2.9)%
\begin{equation}
\sigma\left(  f\left(  a\right)  \right)  =f\left(  \sigma\left(  a\right)
\right)  . \label{spec-mapping-theorem}%
\end{equation}
Here $f\left(  a\right)  $ is first defined in an abstract way
(Conw00\cite{MR1721402}, after Theorem 2.3), but later the Spectral Theorem
for normal operators is used (Conw00\cite{MR1721402}, relation 10.1): if $a$
is normal then
\begin{equation}
f\left(  a\right)  =\int_{\sigma\left(  a\right)  }f\left(  x\right)
dE\left(  x\right)  \label{spec-repre}%
\end{equation}
where $E$ is a spectral measure (i.e. PVM) on the Borel subsets of
$\sigma\left(  a\right)  $. In our context $\left\Vert u\right\Vert \leq1$
implies $r\left(  u\right)  \leq1$ hence $\sigma\left(  u\right)
\subset\left[  0,1\right]  $. With $f\left(  s\right)  =s^{1/2}$ and
(\ref{spec-mapping-theorem}) we obtain $\sigma\left(  u^{1/2}\right)  =$
$\left(  \sigma\left(  u\right)  \right)  ^{1/2}\subset\left[  0,1\right]  $,
hence by (\ref{spec-radius-norm}) for all $x\in\mathcal{H}$ with $\left\Vert
x\right\Vert \leq1$
\[
\left(  x,ux\right)  \leq\left\Vert u^{1/2}\right\Vert ^{2}=\left(  r\left(
u^{1/2}\right)  \right)  ^{2}=r\left(  u\right)  \leq1=\left(  x,\mathbf{1}%
x\right)
\]
implying $u\leq\mathbf{1}$. (The last relation should follow more directly
from (\ref{spec-repre})).

Now for $u\leq\mathbf{1}$ we have $\mathbf{1}-u\geq0$ hence $0\leq
\omega\left(  \mathbf{1}-u\right)  =\omega\left(  \mathbf{1}\right)
-\omega\left(  u\right)  $, hence $\omega\left(  u\right)  \leq\omega\left(
\mathbf{1}\right)  $. This means that
\[
\left\Vert \omega\right\Vert _{1}=\sup_{u\geq0,\left\Vert u\right\Vert \leq
1}\omega\left(  u\right)  \leq\sup_{u\geq0,u\leq\mathbf{1}}\omega\left(
u\right)  =\omega\left(  \mathbf{1}\right)
\]
since the last supremum is attained at $u=\mathbf{1}$. On the other hand
$\mathbf{1}$ fulfills $\left\Vert \mathbf{1}\right\Vert \leq1$ hence
$\omega\left(  \mathbf{1}\right)  \leq\left\Vert \omega\right\Vert _{1}$ and
we obtain the claim.
\end{proof}

\bigskip

\bigskip For our purpose, we a define a "trace preserving" (on positives) map
(TP-map) $\alpha_{\ast}$ between preduals $\alpha_{\ast}:\mathcal{B}_{\ast
}\rightarrow\mathcal{A}_{\ast}$ as a linear map such that
\[
\left\Vert \alpha_{\ast}\left(  \omega\right)  \right\Vert _{1}=\left\Vert
\omega\right\Vert _{1}\text{ for all }\omega\geq0\text{, }\omega\in
\mathcal{B}_{\ast}.
\]
According to (Conw00\cite{MR1721402}, 46.4), if $\mathcal{A}\subset
\mathcal{L}\left(  \mathcal{H}\right)  $ and $\psi$ is a positive linear
functional, then $\psi$ is weak* continuous if and only if there exists a
positive trace class operator $C$ such that
\[
\psi\left(  a\right)  =\mathrm{tr}\left(  aC\right)  ,\text{ }a\in
\mathcal{A}.
\]
Here $C\in\mathcal{L}^{1}\left(  \mathcal{H}\right)  $ is unique if
$\mathcal{A}=\mathcal{L}\left(  \mathcal{H}\right)  $, since $\mathcal{L}%
\left(  \mathcal{H}\right)  $ is the norm dual of $\mathcal{L}^{1}\left(
\mathcal{H}\right)  $. If $\mathcal{A}$ is a proper subspace of $\mathcal{L}%
\left(  \mathcal{H}\right)  $ then $C$ is not unique; indeed to $C$ we can add
any $D\in\mathcal{L}^{1}\left(  \mathcal{H}\right)  $ such that $\mathrm{tr}%
\left(  aD\right)  =0,$ $a\in\mathcal{A}$. This leads to the representation of
$\mathcal{A}_{\ast}$ as a "factor Banach space", cf. (Conw00\cite{MR1721402},
before 46.4).

For general $a\in\mathcal{A}$ and $\omega\in\mathcal{A}_{\ast}$ we will write
\[
\omega\left(  a\right)  =\left\langle a,\omega\right\rangle .
\]
Let $\alpha$ be a channel between von Neumann algebras $\alpha:\mathcal{A}%
\rightarrow\mathcal{B}$ and let $T$ be a linear map $T:\mathcal{B}_{\ast
}\rightarrow\mathcal{A}_{\ast}$. The pair $\left(  \alpha,T\right)  $ is said
to be a dual pair if
\[
\left\langle \alpha\left(  a\right)  ,\omega\right\rangle =\left\langle
a,T\left(  \omega\right)  \right\rangle \text{, }a\in\mathcal{A}\text{,
}\omega\in\mathcal{A}_{\ast}.
\]

\bigskip

\begin{proposition}
Let $\mathcal{A}$, $\mathcal{B}$ von Neumann algebras acting on Hilbert spaces
$\mathcal{H}_{A},$ $\mathcal{H}_{B}$ respectively, and let $\mathcal{A}_{\ast
}$,$\mathcal{B}_{\ast}$ be the respective preduals. \newline(i) Let $\alpha$
be a channel $\alpha:\mathcal{A}\rightarrow\mathcal{B}$. Then there is a
uniquely determined TP-CP map $T:\mathcal{B}_{\ast}\rightarrow\mathcal{A}%
_{\ast}$ such that $\left(  \alpha,T\right)  $ is a dual pair. \newline(ii)
Let $T:\mathcal{B}_{\ast}\rightarrow\mathcal{A}_{\ast}$ be a TP-CP map. Then
there is a uniquely determined channel $\alpha:\mathcal{A}\rightarrow
\mathcal{B}$ such that $\left(  \alpha,T\right)  $ is a dual pair.
\end{proposition}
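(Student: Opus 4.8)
The plan is to produce $T$ as the pre-adjoint of $\alpha$ (and, in the converse, $\alpha$ as the Banach adjoint of $T$), and then to check that the defining properties of a channel and of a TP-CP map correspond to one another across the duality $\mathcal{A}=(\mathcal{A}_{\ast})^{\ast}$, $\mathcal{B}=(\mathcal{B}_{\ast})^{\ast}$.

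\textbf{Part (i).} Let $\alpha:\mathcal{A}\to\mathcal{B}$ be a channel. Being normal, it is weak$^{\ast}$-weak$^{\ast}$ continuous (cf. the appendix on normal maps, \cite{MR1721402}), so the first step is to invoke the standard functional-analytic fact that a bounded linear operator between dual Banach spaces is weak$^{\ast}$-weak$^{\ast}$ continuous if and only if it is the Banach adjoint of a (necessarily unique) bounded operator between the preduals. Applied to $\alpha$ this gives a unique bounded $T:\mathcal{B}_{\ast}\to\mathcal{A}_{\ast}$ with $\alpha=T^{\ast}$, i.e.
\[
\langle\alpha(a),\omega\rangle=\langle a,T(\omega)\rangle,\qquad a\in\mathcal{A},\ \omega\in\mathcal{B}_{\ast},
\]
so $(\alpha,T)$ is a dual pair; uniqueness of $T$ is immediate since $\mathcal{A}$ separates points of $\mathcal{A}_{\ast}$. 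It then remains to verify that $T$ is TP-CP. For positivity: if $\omega\geq0$ in $\mathcal{B}_{\ast}$ then for every $a\geq0$ in $\mathcal{A}$ one has $\langle a,T\omega\rangle=\langle\alpha(a),\omega\rangle\geq0$ because $\alpha$ is positive, whence $T\omega\geq0$. For complete positivity one passes to amplifications: using the pairing $\langle a,\tau\rangle=\sum_{i,j}\langle a_{ij},\tau_{ij}\rangle$ between $M_{n}(\mathcal{A})$ and its predual $M_{n}(\mathcal{A}_{\ast})$ one checks $\langle\alpha_{n}(a),\tau\rangle=\langle a,T_{n}(\tau)\rangle$, so $(\alpha_{n},T_{n})$ is again a dual pair, and positivity of $\alpha_{n}$ forces positivity of $T_{n}$ by the scalar argument just given. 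Finally, for norm preservation on positives use $\|\tau\|_{1}=\tau(\mathbf 1)$ for positive normal $\tau$ together with unitality of $\alpha$: for $\omega\geq0$,
\[
\|T\omega\|_{1}=(T\omega)(\mathbf 1_{\mathcal{A}})=\langle\alpha(\mathbf 1_{\mathcal{A}}),\omega\rangle=\langle\mathbf 1_{\mathcal{B}},\omega\rangle=\|\omega\|_{1}.
\]
Thus $T$ is a state transition (TP-CP map).

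\textbf{Part (ii).} Conversely, given a TP-CP map $T:\mathcal{B}_{\ast}\to\mathcal{A}_{\ast}$, set $\alpha:=T^{\ast}$, a bounded operator $(\mathcal{A}_{\ast})^{\ast}\to(\mathcal{B}_{\ast})^{\ast}$, i.e. $\alpha:\mathcal{A}\to\mathcal{B}$, which by construction satisfies $\langle\alpha(a),\omega\rangle=\langle a,T(\omega)\rangle$, so $(\alpha,T)$ is a dual pair; uniqueness of $\alpha$ follows since $\mathcal{B}_{\ast}$ separates points of $\mathcal{B}$. As an adjoint, $\alpha$ is automatically weak$^{\ast}$ continuous, hence normal (and bounded). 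Positivity and complete positivity of $\alpha$ follow from those of $T$ by the dual form of the amplification argument above, together with the order-theoretic fact that a self-adjoint $b\in\mathcal{B}$ is positive if and only if $\omega(b)\geq0$ for every positive normal state $\omega$ (the positive normal functionals spanning $\mathcal{B}_{\ast}$). Unitality: for $\omega\geq0$, using $T\omega\geq0$ and the norm-preserving property, $\langle\alpha(\mathbf 1_{\mathcal{A}}),\omega\rangle=(T\omega)(\mathbf 1_{\mathcal{A}})=\|T\omega\|_{1}=\|\omega\|_{1}=\langle\mathbf 1_{\mathcal{B}},\omega\rangle$, and since the positive cone spans $\mathcal{B}_{\ast}$ this gives $\alpha(\mathbf 1_{\mathcal{A}})=\mathbf 1_{\mathcal{B}}$. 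Hence $\alpha$ is a channel.

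\textbf{Main obstacle.} The essential point, and the only step that is not pure bookkeeping, is the functional-analytic existence of the pre-adjoint used in Part (i): one must know precisely that weak$^{\ast}$-weak$^{\ast}$ continuity of a bounded map on a dual Banach space is equivalent to its being the adjoint of a bounded map on the preduals (proved via the bipolar theorem or a closed-range argument), and that this pre-adjoint is unique. The remaining care is notational — identifying $M_{n}(\mathcal{A})_{\ast}$ with $M_{n}(\mathcal{A}_{\ast})$ so that $(\alpha_{n},T_{n})$ is a dual pair, and using the standard facts $\|\tau\|_{1}=\tau(\mathbf 1)$ for positive normal $\tau$ and the cone characterization of positivity in $\mathcal{B}$.
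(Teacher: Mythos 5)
Your overall strategy is the same as the paper's: realize the correspondence between channels $\alpha$ and TP-CP maps $T$ as Banach adjointness across the predual pairing, and then transfer the defining properties (positivity, complete positivity via the $M_n$ amplification, unitality $\leftrightarrow$ trace preservation using $\|\tau\|_1=\tau(\mathbf 1)$ for positive normal $\tau$) across that duality. Part (i) is complete; the paper constructs $T$ pointwise as $T(\phi)=\phi\circ\alpha$, while you cite the equivalent abstract statement that weak$^{\ast}$-weak$^{\ast}$ continuity characterizes pre-adjoints — a presentational difference only. Your Part (ii) reverses the construction of $\alpha$ in a cleaner way than the paper (you get normality for free, since a Banach adjoint is automatically weak$^{\ast}$-weak$^{\ast}$ continuous, whereas the paper argues normality directly via monotone sequences); this is a genuine simplification.

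There is one genuine gap in Part (ii). You set $\alpha:=T^{\ast}$ and immediately call it a bounded operator, but the hypothesis on $T$ is only that it is linear, completely positive, and norm-preserving on positives; boundedness on all of $\mathcal{B}_{\ast}$ is not part of the definition and does not follow for free. Before the Banach adjoint $T^{\ast}$ even exists as a bounded map $(\mathcal{A}_{\ast})^{\ast}\to(\mathcal{B}_{\ast})^{\ast}$, one must establish that $T$ is bounded. The paper does this explicitly: decompose an arbitrary $\phi\in\mathcal{B}_{\ast}$ as $\phi=\phi_{1}-\phi_{2}+i(\phi_{3}-\phi_{4})$ with each $\phi_{i}\geq0$ and $\phi_{1}\perp\phi_{2}$, $\phi_{3}\perp\phi_{4}$; orthogonality gives $\|\phi_{1}\|_{1}+\|\phi_{2}\|_{1}=\|\phi_{1}-\phi_{2}\|_{1}\leq\|\phi\|_{1}$ (and similarly for the imaginary part), and applying the norm-preserving property of $T$ to each positive piece yields $\|T\phi\|_{1}\leq 2\|\phi\|_{1}$. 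You should insert this boundedness argument (or a citation to the corresponding result about positive norm-preserving maps on preduals of von Neumann algebras) before taking the adjoint; everything else in Part (ii) then goes through as you wrote it.
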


\begin{proof}
\textbf{(i)} Consider $\phi\in\mathcal{B}_{\ast}$ and
\[
\left\langle \alpha\left(  a\right)  ,\phi\right\rangle =\phi\left(
\alpha\left(  a\right)  \right)  .
\]
According to Conw00\cite{MR1721402}, 46.5, a positive linear map
$\alpha:\mathcal{A}\rightarrow\mathcal{B}$ is normal if and only if it is
weak* continuous. Hence $\phi\left(  \alpha\left(  a\right)  \right)  $ is a
weak* continuous linear form on $\mathcal{A}$, i.e. an element of
$\mathcal{A}_{\ast}$, which can be written $\omega\left(  a\right)  $,
$\omega\in\mathcal{A}_{\ast}$. Hence
\[
\left\langle \alpha\left(  a\right)  ,\phi\right\rangle =\left\langle
a,\omega\right\rangle .
\]
This $\omega$ is unique, and we can write $\omega=T\left(  \phi\right)  $ when
$\alpha$ is fixed, so we have
\begin{equation}
\left\langle \alpha\left(  a\right)  ,\phi\right\rangle =\left\langle
a,T\left(  \phi\right)  \right\rangle . \label{duality-1}%
\end{equation}
The map $T$ is obviously linear. It is positive: assume $\phi\geq0$; we have
to show that $T\left(  \phi\right)  \geq0$. This means $\left\langle
a,T\left(  \phi\right)  \right\rangle \geq0$ whenever $a\geq0$. But if
$a\geq0$ then $\alpha\left(  a\right)  \geq0$ since $\alpha$ is a positive
map. The latter fact implies $\left\langle \alpha\left(  a\right)
,\phi\right\rangle \geq0$, hence implies $\left\langle a,T\left(  \phi\right)
\right\rangle \geq0$.

For the "trace preserving" property, we have to show
\begin{equation}
\left\Vert T\left(  \phi\right)  \right\Vert _{1}=\left\Vert \phi\right\Vert
_{1}\text{ whenever }\phi\geq0\text{. } \label{TP-1}%
\end{equation}
So let $\phi\geq0$; then $T\left(  \phi\right)  \geq0$ and according to Lemma
\ref{Lem-notes-normlinform}
\begin{align*}
\left\Vert T\left(  \phi\right)  \right\Vert _{1}  &  =T\left(  \phi\right)
\left(  \mathbf{1}\right)  \text{ where }\mathbf{1\in}\mathcal{B}\\
&  =\left\langle \mathbf{1},T\left(  \phi\right)  \right\rangle =\left\langle
\alpha\left(  \mathbf{1}\right)  ,\phi\right\rangle
\end{align*}
by (\ref{duality-1}). But $\alpha\left(  \mathbf{1}\right)  =\mathbf{1\in
}\mathcal{B}$ since $\alpha$ is unital, so
\[
\left\Vert T\left(  \phi\right)  \right\Vert _{1}=\left\langle \mathbf{1}%
,\phi\right\rangle =\left\Vert \phi\right\Vert _{1}%
\]
and (\ref{TP-1}) is shown.

It remains to show complete positivity of the map $T$. A $n\times n$ matrix
$a=\left(  a_{ij}\right)  _{i,j=1}^{n}$ with entries $a_{ij}\in\mathcal{A}$ is
called positive if the associated linear operator $a$ on the $n$-fold direct
sum $\mathcal{H}_{A}^{(n)}:=\mathcal{H}_{A}\oplus\ldots\oplus\mathcal{H}_{A}$
is positive, i.e $a$ is self-adjoint and $\left\langle x|ax\right\rangle
\geq0$ for every $x\in\mathcal{H}_{A}^{(n)}$. A linear map $\alpha
:\mathcal{A}\rightarrow\mathcal{B}$ is completely positive if for every
$n\geq1$ and every positive $a=\left(  a_{ij}\right)  _{i,j=1}^{n}$, with
$a_{ij}\in\mathcal{A}$, the matrix
\[
\alpha_{n}\left(  a\right)  :=\left(  \alpha\left(  a_{ij}\right)  \right)
_{i,j=1}^{n}%
\]
is positive (\cite{MR1721402}, 34.2).

The algebra $M_{n}\left(  \mathcal{A}\right)  $ of all $n\times n$ matrices
with entries from $\mathcal{A}$ acting on $\mathcal{H}_{A}^{(n)}$ is a von
Neumann algebra, with norm derived from its being a subalgebra of
$\mathcal{L}\left(  \mathcal{H}_{A}^{(n)}\right)  $ (\cite{MR1721402}, \S 34,
\S 44). Its predual is seen to be the Banach space $M_{n}\left(
\mathcal{A}\right)  _{\ast}$ of of all $n\times n$ matrices with entries from
$\mathcal{A}_{\ast}$, acting on $M_{n}\left(  \mathcal{A}\right)  $ according
to
\[
\left\langle a,\tau\right\rangle =\sum_{i,j=1}^{n}\left\langle a_{ij}%
,\tau_{ij}\right\rangle \text{, }a\in M_{n}\left(  \mathcal{A}\right)  \text{,
}\tau\in M_{n}\left(  \mathcal{A}\right)  _{\ast}\text{ }%
\]
where $a=\left(  a_{ij}\right)  _{i,j=1}^{n}$, $\tau=\left(  \tau_{ij}\right)
_{i,j=1}^{n}$. Here the norm of $M_{n}\left(  \mathcal{A}\right)  _{\ast}$ is
\[
\left\Vert \tau\right\Vert _{1}=\sup_{a\in M_{n}\left(  \mathcal{A}\right)
,\left\Vert a\right\Vert =1}\left\vert \left\langle a,\tau\right\rangle
\right\vert \text{, }\tau\in M_{n}\left(  \mathcal{A}\right)  _{\ast}.
\]
Let $\mathbf{1}$\textbf{ }be the unit of\textbf{ }$\mathcal{A}$ and let
$\mathbf{1}_{n}$ be the unit of $M_{n}\left(  \mathcal{A}\right)  $, i.e. the
diagonal matrix with diagonal entries all $\mathbf{1}$. Let $\tau\in$
$M_{n}\left(  \mathcal{A}\right)  _{\ast}$, $\tau\geq0$; then%
\[
\left\Vert \tau\right\Vert _{1}=\left\langle \mathbf{1}_{n},\tau\right\rangle
=\sum_{i=1}^{n}\left\langle \mathbf{1},\tau_{ii}\right\rangle =\sum_{i=1}%
^{n}\left\Vert \tau_{ii}\right\Vert _{1}.
\]
A map $T:\mathcal{B}_{\ast}\rightarrow\mathcal{A}_{\ast}$ is completely
positive if for every $n\geq1$ and every $\tau\geq0$, $\tau\in M_{n}\left(
\mathcal{B}\right)  _{\ast}$ $,$ the matrix
\[
T_{n}\left(  \tau\right)  :=\left(  T\left(  \tau_{ij}\right)  \right)
_{i,j=1}^{n}%
\]
is positive in $M_{n}\left(  \mathcal{A}\right)  _{\ast}$. This means that for
every $n\geq1$, the map $T_{n}:$ $M_{n}\left(  \mathcal{B}\right)  _{\ast
}\rightarrow M_{n}\left(  \mathcal{A}\right)  _{\ast}$ is positive. Assume
complete positivity of the map $\alpha$, i.e. positivity of the map
$\alpha_{n}$ for all $n\geq1$. Now
\begin{subequations}
\begin{align}
\left\langle \alpha_{n}\left(  a\right)  ,\tau\right\rangle  &  =\sum
_{i,j=1}^{n}\left\langle \alpha\left(  a_{ij}\right)  ,\tau_{ij}\right\rangle
=\sum_{i,j=1}^{n}\left\langle a_{ij},T\left(  \tau_{ij}\right)  \right\rangle
\nonumber\\
&  =\left\langle a,T_{n}\left(  \tau\right)  \right\rangle .\text{ }
\label{duality-matrix-b}%
\end{align}
Now positivity of the map $T_{n}$ is inferred in the same way as the
positivity of the original map $T$.

\textbf{(ii)} Consider $a\in\mathcal{A}$, $\phi\in\mathcal{B}_{\ast}$ and the
expression
\end{subequations}
\[
\left\langle a,T\left(  \phi\right)  \right\rangle
\]
Every $\phi\in\mathcal{B}_{\ast}$ can be written as a complex linear
combination of positive elements of $\mathcal{B}_{\ast}$:%
\begin{equation}
\phi=\phi_{1}-\phi_{2}+i\left(  \phi_{3}-\phi_{4}\right)
\label{decmpose-into-positives}%
\end{equation}
where $\phi_{i}\geq0$, $i=1,\ldots,4$ and
\[
\phi_{(1)}:=\phi_{1}-\phi_{2}=\frac{1}{2}\left(  \phi+\phi^{\ast}\right)
\text{, }\phi_{(2)}:=\phi_{3}-\phi_{4}=\frac{1}{2i}\left(  \phi-\phi^{\ast
}\right)
\]
are Hermitian elements of $\mathcal{B}_{\ast}$ (\cite{MR1721402} 54.9). Hence
\begin{equation}
\left\Vert T\left(  \phi\right)  \right\Vert _{1}\leq\sum_{i=1}^{4}\left\Vert
T\left(  \phi_{i}\right)  \right\Vert _{1}=\sum_{i=1}^{4}\left\Vert \phi
_{i}\right\Vert _{1}. \label{norm-inequ-transitions}%
\end{equation}
Here
\[
\left\Vert \phi_{1}\right\Vert _{1}+\left\Vert \phi_{2}\right\Vert
_{1}=\left\Vert \phi_{(1)}\right\Vert _{1}%
\]
since $\phi_{1}$, $\phi_{2}$ are orthogonal (\cite{MR1721402} 54.8) ,and
similarly
\[
\left\Vert \phi_{3}\right\Vert _{1}+\left\Vert \phi_{4}\right\Vert
_{1}=\left\Vert \phi_{(2)}\right\Vert _{1}.
\]
But $\left\Vert \phi_{(i)}\right\Vert _{1}\leq\left\Vert \phi\right\Vert _{1}%
$, $i=1,2$ is obvious, and with (\ref{norm-inequ-transitions}) we obtain%
\[
\left\Vert T\left(  \phi\right)  \right\Vert _{1}\leq2\left\Vert
\phi\right\Vert _{1}%
\]
and so $T$ is a norm continuous map between Banach spaces $T:\mathcal{B}%
_{\ast}\rightarrow\mathcal{A}_{\ast}$. Since $a\in\mathcal{A}$ is a norm
continuous linear form on $\mathcal{A}_{\ast}$, it follows that, for given
$a$, the linear form $\left\langle a,T\left(  \phi\right)  \right\rangle $ on
$\phi\in\mathcal{B}_{\ast}$ is norm continuous, and so is an element of
$\mathcal{B}$. Denoting this element $\alpha\left(  a\right)  $, we have
\begin{equation}
\left\langle \alpha\left(  a\right)  ,\phi\right\rangle =\left\langle
a,T\left(  \phi\right)  \right\rangle ,a\in\mathcal{A},\phi\in\mathcal{B}%
_{\ast}. \label{duality--transitions}%
\end{equation}
To show that $\alpha$ is a positive map, we first show for $b\in\mathcal{B}$,
the property
\begin{equation}
\left\langle b,\phi\right\rangle \geq0\text{ for every }\phi\geq0,\phi\in
\phi\in\mathcal{B}_{\ast} \label{implies-positive}%
\end{equation}
implies $b\geq0$. Indeed, for $x\in\mathcal{H}_{B}$, consider a positive
linear map on $\mathcal{B}$%
\[
\phi_{x}\left(  b\right)  =\mathrm{tr}\ b\left\vert x\right\rangle
\left\langle x\right\vert =\left\langle x,bx\right\rangle .
\]
The operator $\left\vert x\right\rangle \left\langle x\right\vert $ on
$\mathcal{H}_{B}$ is trace class, so the map $\phi_{x}$ is weak* continuous
(\cite{MR1721402}, 46.6), hence $\phi_{x}\in\mathcal{B}_{\ast}$. So if
(\ref{implies-positive}) holds then, applying it for all $\phi_{x}$, we see
that $b$ must be positive.

Now to show positivity of $\alpha$, assume $a\geq0$; we have to show that then
$\left\langle \alpha\left(  a\right)  ,\phi\right\rangle \geq0$ for every
$\phi\geq0$. But then $T\left(  \phi\right)  \geq0$ and so $\left\langle
a,T\left(  \phi\right)  \right\rangle \geq0$, swo that
(\ref{duality--transitions}) shows $\left\langle \alpha\left(  a\right)
,\phi\right\rangle \geq0$.

To show that $\alpha$ is unital, set $a=\mathbf{1}$ in
(\ref{duality--transitions}) and let $\phi\geq0$. Then%
\[
\left\langle \alpha\left(  \mathbf{1}\right)  ,\phi\right\rangle =\left\langle
\mathbf{1},T\left(  \phi\right)  \right\rangle =\left\Vert T\left(
\phi\right)  \right\Vert _{1}=\left\Vert \phi\right\Vert _{1}=\left\langle
\mathbf{1},\phi\right\rangle \text{.}%
\]
But (\ref{decmpose-into-positives}) shows that $\left\langle \alpha\left(
\mathbf{1}\right)  ,\phi\right\rangle =\left\langle \mathbf{1},\phi
\right\rangle $ holds for all $\phi\in\mathcal{B}_{\ast}$, which shows that
$\alpha\left(  \mathbf{1}\right)  =\mathbf{1}$.

To show that $\alpha$ is completely positive, we have to show that for every
$n\geq1$, the map $\alpha_{n}:$ $M_{n}\left(  \mathcal{A}\right)  \rightarrow
M_{n}\left(  \mathcal{B}\right)  $ is positive. Assume $a\in M_{n}\left(
\mathcal{A}\right)  $, $a\geq0$; then by (\ref{duality-matrix-b}) for every
$\tau\in M_{n}\left(  \mathcal{B}\right)  _{\ast}$, $\tau\geq0$%
\begin{subequations}
\begin{equation}
\left\langle \alpha_{n}\left(  a\right)  ,\tau\right\rangle =\left\langle
a,T_{n}\left(  \tau\right)  \right\rangle .\nonumber
\end{equation}
But since $T$ is completely positive, we have $T_{n}\left(  \tau\right)
\geq0$, so that $\left\langle \alpha_{n}\left(  a\right)  ,\tau\right\rangle
\geq0$ for every positive $\tau\geq0$. The reasoning in connection with
(\ref{implies-positive}), applied to the von Neumann algebra $M_{n}\left(
\mathcal{B}\right)  $, then shows that $\alpha_{n}\left(  a\right)  \geq0$, so
the map $\alpha_{n}$ is indeed positive.

To show that $\alpha$ is weak* continuous, since $\alpha$ is positive, it
suffices by \cite{MR1721402}, 46.5 to show that $\alpha$ is normal. Let
$\left\{  a_{i}\right\}  $ be an increasing sequence of positive elements of
$\mathcal{A}$ such that $a_{i}\rightarrow a$ (SOT). [As noted in
\S \ref{subsubsec-normal}, since Hilbert spaces are assumed separable, in the
definition of normality, nets can be replaced by sequences.] Then by
\cite{MR1721402}, 43.1 we have $a_{i}\rightarrow a$ in the weak* sense. Then
for every $\psi\in\mathcal{A}_{\ast}$ we have $\left\langle a_{i}%
,\psi\right\rangle \rightarrow\left\langle a,\psi\right\rangle ,$ and applying
(\ref{duality--transitions}) for $\psi=T\left(  \phi\right)  $ we obtain
$\left\langle \alpha\left(  a_{i}\right)  ,\phi\right\rangle \rightarrow
\left\langle \alpha\left(  a\right)  ,\phi\right\rangle $, $\phi\in
\mathcal{B}_{\ast}$. This means $\alpha\left(  a_{i}\right)  \rightarrow
\alpha\left(  a\right)  $ in the weak* sense, and since $\alpha$ is positive,
the sequence $\alpha\left(  a_{i}\right)  $ is increasing and $\sup
_{i}\left\Vert \alpha\left(  a_{i}\right)  \right\Vert \leq\left\Vert
\alpha\left(  a\right)  \right\Vert $. Then by \cite{MR1721402}, 43.1
$\alpha\left(  a_{i}\right)  \rightarrow\alpha\left(  a\right)  $ (SOT), which
shows that $\alpha$ is normal.
\end{subequations}
\end{proof}

\bigskip

\bigskip

\begin{remark}
The proof of complete positivity becomes nearly trivial by using the fact that
$M_{n}\left(  \mathcal{A}\right)  _{\ast}$ is the predual of $M_{n}\left(
\mathcal{A}\right)  $. We haven't proved that, but it seems nearly obvious. To
remove all doubt, below we show it for the special case that $\mathcal{A}%
=\mathcal{L}\left(  \mathcal{H}_{A}\right)  $, the full algebra of bounded
operators on $\mathcal{H}_{A}$.
\end{remark}

\bigskip

Indeed, consider now the special case that $\mathcal{A}=\mathcal{L}\left(
\mathcal{H}\right)  $ so that $\mathcal{A}_{\ast}=\mathcal{L}^{1}\left(
\mathcal{H}\right)  $, the space of trace class operators on $\mathcal{H}$.
Then $M_{n}\left(  \mathcal{A}\right)  $ can be identified with $\mathcal{L}%
\left(  \mathcal{H}^{(n)}\right)  \mathcal{\ }$where $\mathcal{H}^{(n)}$ is
the $n$-fold inflation of $\mathcal{H}$ (the $n$-fold direct sum of copies of
$\mathcal{H}$). Thus an element $A\in$ $\mathcal{L}\left(  \mathcal{H}%
^{(n)}\right)  $ can be written $A=\left(  a_{ij}\right)  _{i,j=1}^{n}$ where
each $a_{ij}$ is an element of $\mathcal{L}\left(  \mathcal{H}\right)  $. An
element $\Phi$ of the predual $\mathcal{L}^{1}\left(  \mathcal{H}%
^{(n)}\right)  $, i.e. a trace class operator on $\mathcal{H}^{(n)}$, can then
also be written as $\Phi=\left(  \phi_{ij}\right)  _{i,j=1}^{n}$.

\begin{lemma}
An operator on $\mathcal{H}^{(n)}:$ $A=$ $\left(  a_{ij}\right)  _{i,j=1}^{n}$
is bounded if and only if each $a_{ij}$ is bounded.
\end{lemma}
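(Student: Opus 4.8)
The plan is to exploit the canonical coordinate structure of the inflated space $\mathcal{H}^{(n)} = \mathcal{H} \oplus \cdots \oplus \mathcal{H}$. For $j = 1, \ldots, n$ let $\iota_j : \mathcal{H} \to \mathcal{H}^{(n)}$ denote the isometric embedding onto the $j$-th summand, and let $P_i : \mathcal{H}^{(n)} \to \mathcal{H}$ denote the coordinate projection onto the $i$-th summand, which is a contraction. Then the block entries of $A$ are, by definition, $a_{ij} = P_i A \iota_j$, and for $x = (x_1, \ldots, x_n) \in \mathcal{H}^{(n)}$ one has the reconstruction formulas $x = \sum_{j=1}^n \iota_j x_j$ and $(Ax)_i = \sum_{j=1}^n a_{ij} x_j$. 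These identities will do all the work.

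For the forward implication I would argue that if $A$ is bounded then each $a_{ij} = P_i A \iota_j$ is a composition of three bounded maps, hence bounded, with $\|a_{ij}\| \le \|P_i\|\,\|A\|\,\|\iota_j\| \le \|A\|$. This step is immediate. For the converse, assume each $a_{ij}$ is bounded; applying the Cauchy--Schwarz inequality coordinatewise gives
\[
\|(Ax)_i\| \le \sum_{j=1}^n \|a_{ij}\|\,\|x_j\| \le \Big( \sum_{j=1}^n \|a_{ij}\|^2 \Big)^{1/2} \Big( \sum_{j=1}^n \|x_j\|^2 \Big)^{1/2} = \Big( \sum_{j=1}^n \|a_{ij}\|^2 \Big)^{1/2} \|x\|,
\]
and summing the squares over $i$ yields $\|Ax\|^2 \le \big( \sum_{i,j=1}^n \|a_{ij}\|^2 \big) \|x\|^2$, so $A$ is bounded with $\|A\| \le \big( \sum_{i,j=1}^n \|a_{ij}\|^2 \big)^{1/2} < \infty$ (finiteness being automatic since $n$ is finite).

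The only point deserving a remark, and the closest thing to an obstacle, is a definitional one: in the "if" direction one must make sure the matrix $(a_{ij})$ actually specifies $A$ on all of $\mathcal{H}^{(n)}$, i.e.\ that the formula $(Ax)_i = \sum_j a_{ij} x_j$ is the correct reconstruction of the given operator from its block entries; this is exactly the content of the reconstruction identities above and requires nothing beyond the orthogonal direct-sum decomposition. Hence the proof is entirely routine, and I would present it essentially as the two displayed estimates sketched here.
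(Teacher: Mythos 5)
Your proof is correct and proceeds by essentially the same elementary block-decomposition argument as the paper; the only cosmetic differences are that you package the "bounded $A$ implies bounded $a_{ij}$" direction as $a_{ij}=P_iA\iota_j$ rather than via the paper's explicit restriction of the supremum defining $\|A\|$, and in the converse you invoke Cauchy--Schwarz to get the Frobenius-type bound $\|A\|\le\bigl(\sum_{i,j}\|a_{ij}\|^2\bigr)^{1/2}$ whereas the paper decomposes $A=\sum_{i,j}A_{ij}$ and uses the triangle inequality to get $\|A\|\le\sum_{i,j}\|a_{ij}\|$. Both bounds are valid and both establish boundedness for finite $n$, so the two arguments are interchangeable.
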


\begin{proof}
Indeed let $A_{ij}=\left(  b_{kl}\right)  _{k,l=1}^{n}$ be the $n\times n$
operator matrix such that $b_{kl}=0$ if $(k,l)\neq\left(  i,j\right)  $,
$b_{kl}=a_{ij}$ if $(k,l)=\left(  i,j\right)  $. Then $A=\sum_{i,j=1}%
^{n}A_{ij}$ and if all $A_{ij}$ are bounded then
\[
\left\Vert A\right\Vert \leq\sum_{i,j=1}^{n}\left\Vert A_{ij}\right\Vert .
\]
Now for $y\in\mathcal{H}^{(n)}$, $y=y_{1}\oplus\ldots\oplus y_{n}$
\begin{align}
\left\Vert A_{ij}\right\Vert  &  =\sup_{y\in\mathcal{H}^{(n)},\left\Vert
y\right\Vert \leq1}\left\Vert A_{ij}y\right\Vert =\sup_{y_{j}\in
\mathcal{H},\left\Vert y_{j}\right\Vert \leq1}\left\Vert a_{ij}y_{j}%
\right\Vert \nonumber\\
&  =\left\Vert a_{ij}\right\Vert \label{been-noted-in}%
\end{align}
which shows that if each $a_{ij}$ is bounded then so is $A$. On the other hand%
\[
\left\Vert A\right\Vert ^{2}=\sup_{y\in\mathcal{H}^{(n)},\left\Vert
y\right\Vert \leq1}\left\Vert Ay\right\Vert ^{2}=\sup_{y\in\mathcal{H}%
^{(n)},\left\Vert y\right\Vert \leq1}\sum_{j,k=1}^{n}\sum_{i=1}^{n}%
\left\langle y_{j},a_{ij}^{\ast}a_{ik}y_{k}\right\rangle .
\]
Here we can restrict the supremum to $y\in\mathcal{H}^{(n)}$ of form
$y=y_{1}\oplus\ldots\oplus y_{n}$ where $y_{j}=0$ for $j\neq l$, $y_{j}=x$ for
$j=l$, where $x\in\mathcal{H}$. Then%
\begin{align*}
\left\Vert A\right\Vert ^{2}  &  \geq\sup_{x\in\mathcal{H},\left\Vert
x\right\Vert \leq1}\sum_{i=1}^{n}\left\langle x,a_{il}^{\ast}a_{il}%
x\right\rangle \geq\sup_{x\in\mathcal{H},\left\Vert x\right\Vert \leq
1}\left\langle x,a_{il}^{\ast}a_{il}x\right\rangle \\
&  =\left\Vert a_{il}\right\Vert ^{2}%
\end{align*}
where $i,l\in\left\{  1,\ldots,n\right\}  $ are arbitrary. Hence boundedness
of $A$ implies boundedness of each $a_{il}$.
\end{proof}

\bigskip

\begin{lemma}
A bounded operator $\Phi=\left(  \phi_{ij}\right)  _{i,j=1}^{n}$ on
$\mathcal{H}^{(n)}$ is trace class if and only if each $\phi_{ij}$ is trace
class on $\mathcal{H}$.
\end{lemma}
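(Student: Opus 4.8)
The plan is to exploit the block structure of $\Phi$ together with the fact that the trace class is a two-sided ideal. Write $\mathcal{H}^{(n)}=\bigoplus_{j=1}^{n}\mathcal{H}$, let $E_{j}\in\mathcal{L}(\mathcal{H}^{(n)})$ be the orthogonal projection onto the $j$-th summand, and let $V_{j}\colon\mathcal{H}\rightarrow\mathcal{H}^{(n)}$ be the corresponding isometric embedding, so that $V_{i}^{\ast}V_{j}=\delta_{ij}\mathbf{1}_{\mathcal{H}}$, $E_{j}=V_{j}V_{j}^{\ast}$, $\sum_{j=1}^{n}E_{j}=\mathbf{1}_{\mathcal{H}^{(n)}}$, and $\phi_{ij}=V_{i}^{\ast}\Phi V_{j}$. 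In parallel with the preceding boundedness lemma, set $\Phi_{ij}:=E_{i}\Phi E_{j}=V_{i}\phi_{ij}V_{j}^{\ast}$, so that $\Phi=\sum_{i,j=1}^{n}\Phi_{ij}$ is a finite sum.

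First I would record the elementary fact that $\Phi_{ij}$ is trace class on $\mathcal{H}^{(n)}$ if and only if $\phi_{ij}$ is trace class on $\mathcal{H}$, with the same trace norm. Indeed $\Phi_{ij}^{\ast}\Phi_{ij}=V_{j}(\phi_{ij}^{\ast}\phi_{ij})V_{j}^{\ast}$, hence $|\Phi_{ij}|=V_{j}|\phi_{ij}|V_{j}^{\ast}$, which is the operator acting as $|\phi_{ij}|$ on the $j$-th summand and as $0$ elsewhere; padding by zeros on an orthogonal summand changes neither the singular values nor the finiteness of their sum, so $\mathrm{Tr}\,|\Phi_{ij}|=\mathrm{Tr}\,|\phi_{ij}|$. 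This is the exact analogue, for the trace norm, of the relation $\Vert A_{ij}\Vert=\Vert a_{ij}\Vert$ used in the boundedness lemma.

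Next I would treat the two implications. If $\Phi\in\mathcal{L}^{1}(\mathcal{H}^{(n)})$, then the ideal property $\Vert ATB\Vert_{1}\leq\Vert A\Vert\,\Vert T\Vert_{1}\,\Vert B\Vert$ for the Schatten $1$-class (a standard fact, cf. the operator-theory references already cited in the appendix) gives $\Phi_{ij}=E_{i}\Phi E_{j}\in\mathcal{L}^{1}(\mathcal{H}^{(n)})$, whence $\phi_{ij}\in\mathcal{L}^{1}(\mathcal{H})$ by the previous step. Conversely, if every $\phi_{ij}$ is trace class then every $\Phi_{ij}$ is trace class, and since $\mathcal{L}^{1}(\mathcal{H}^{(n)})$ is a linear space the finite sum $\Phi=\sum_{i,j}\Phi_{ij}$ is trace class. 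One also reads off $\max_{i,j}\Vert\phi_{ij}\Vert_{1}\leq\Vert\Phi\Vert_{1}\leq\sum_{i,j}\Vert\phi_{ij}\Vert_{1}$.

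The argument is essentially routine bookkeeping; the only point requiring a little care is to keep all operators acting on the single space $\mathcal{H}^{(n)}$, so that the ideal property of $\mathcal{L}^{1}$ can be invoked in its textbook form rather than in a variant for operators between different Hilbert spaces. The identities $\Phi_{ij}=E_{i}\Phi E_{j}=V_{i}\phi_{ij}V_{j}^{\ast}$ and the padding observation of the second paragraph accomplish exactly this, so I do not anticipate a genuine obstacle.
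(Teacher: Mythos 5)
Your proof is correct, and it takes a slightly different route from the paper's for the ``$\Phi$ trace class implies $\phi_{ij}$ trace class'' direction. The paper establishes $\Vert\Phi\Vert_{1}\geq\Vert\phi_{ij}\Vert_{1}$ by invoking the variational characterization $\Vert\Phi\Vert_{1}=\sup_{\Vert A\Vert\leq1}\left\vert\mathrm{Tr}\,A\Phi\right\vert$ (proved just above it) and restricting the supremum to block matrices supported on the single $(i,j)$ entry. You instead obtain the same bound from the two-sided ideal property $\Vert E_{i}\Phi E_{j}\Vert_{1}\leq\Vert E_{i}\Vert\,\Vert\Phi\Vert_{1}\,\Vert E_{j}\Vert$ together with the explicit unitary-equivalence identity $\left\vert\Phi_{ij}\right\vert=V_{j}\left\vert\phi_{ij}\right\vert V_{j}^{\ast}$, which gives $\Vert\Phi_{ij}\Vert_{1}=\Vert\phi_{ij}\Vert_{1}$ cleanly without computing with singular values by hand. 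The two approaches lean on different standard facts (duality formula versus ideal property) but produce the same two-sided estimate $\max_{i,j}\Vert\phi_{ij}\Vert_{1}\leq\Vert\Phi\Vert_{1}\leq\sum_{i,j}\Vert\phi_{ij}\Vert_{1}$. The converse direction in both proofs is essentially identical: decompose $\Phi=\sum_{i,j}\Phi_{ij}$ and apply the triangle inequality for $\Vert\cdot\Vert_{1}$. Your version is a little more self-contained in that it does not depend on the preceding norm lemma, at the cost of quoting the ideal property of $\mathcal{L}^{1}$; either is a perfectly reasonable choice.
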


\begin{proof}
The trace norm of $\Phi$ is
\begin{align}
\left\Vert \Phi\right\Vert _{1}  &  =\mathrm{Tr\,}\left(  \Phi^{\ast}%
\Phi\right)  ^{1/2}=\sup_{A\in\mathcal{L}\left(  \mathcal{H}^{(n)}\right)
,\left\Vert A\right\Vert \leq1}\left\vert \mathrm{Tr\,}A\Phi\right\vert
\nonumber\\
&  =\sup_{A\in\mathcal{L}\left(  \mathcal{H}^{(n)}\right)  ,\left\Vert
A\right\Vert \leq1}\left\vert \sum_{i,j=1}^{n}\mathrm{Tr\,}a_{ij}\phi
_{ij}\right\vert . \label{trace-class-line-2}%
\end{align}
Let $H_{ij}$ be the set of operator matrices $A=\left(  a_{kl}\right)
_{k,l=1}^{n}$ where $a_{kl}=0$ if $\left(  k,l\right)  \neq\left(  i,j\right)
$. Note that for any $A\in H_{ij}$ and $y=y_{1}\oplus\ldots\oplus y_{n}%
\in\mathcal{H}^{(n)}$
\[
\left\Vert A\right\Vert =\sup_{y\in\mathcal{H}^{(n)},\left\Vert y\right\Vert
\leq1}\left\Vert Ay\right\Vert =\sup_{y_{j}\in\mathcal{H},\left\Vert
y_{j}\right\Vert \leq1}\left\Vert a_{ij}y_{j}\right\Vert =\left\Vert
a_{ij}\right\Vert .
\]
as has already been noted in (\ref{been-noted-in}). Then, by restricting the
supremum in (\ref{trace-class-line-2}) to $A\in H_{ij}$
\begin{align}
\left\Vert \Phi\right\Vert _{1}  &  \geq\sup_{A\in H_{ij},\left\Vert
A\right\Vert \leq1}\left\vert \mathrm{Tr\,}a_{ij}\phi_{ij}\right\vert
=\sup_{a_{ij}\in\mathcal{L}\left(  \mathcal{H}\right)  ,\left\Vert
a_{ij}\right\Vert \leq1}\left\vert \mathrm{Tr\,}a_{ij}\phi_{ij}\right\vert
\nonumber\\
&  =\left\Vert \phi_{ij}\right\Vert _{1} \label{trace-class-line-5}%
\end{align}
so that if $\Phi$ is trace class, then so is each $\phi_{ij}$.

To show the converse, assume that $\left\Vert \phi_{ij}\right\Vert _{1}%
<\infty$ for each $i,j$. Let $\Phi_{ij}=\left(  b_{kl}\right)  _{k,l=1}^{n}$
be the $n\times n$ operator matrix such that $b_{kl}=0$ if $(k,l)\neq\left(
i,j\right)  $, $b_{kl}=\phi_{ij}$ if $(k,l)=\left(  i,j\right)  $. Then
$\Phi_{ij}^{\ast}\Phi_{ij}$ is the $n\times n$ operator matrix where the
$(j,j)$ entry is $\phi_{ij}^{\ast}\phi_{ij}$ and all other entries are $0$.
Hence
\[
\left\Vert \Phi_{ij}\right\Vert _{1}=\mathrm{Tr\,}\left(  \Phi_{ij}^{\ast}%
\Phi_{ij}\right)  ^{1/2}=\mathrm{Tr\,}\left(  \phi_{ij}^{\ast}\phi
_{ij}\right)  ^{1/2}=\left\Vert \phi_{ij}\right\Vert _{1},
\]
and then the inequality
\[
\left\Vert \Phi\right\Vert _{1}=\left\Vert \sum_{i,j=1}^{n}\Phi_{ij}%
\right\Vert _{1}\leq\sum_{i,j=1}^{n}\left\Vert \Phi_{ij}\right\Vert _{1}%
=\sum_{i,j=1}^{n}\left\Vert \phi_{ij}\right\Vert _{1}%
\]
shows that if all $\phi_{ij}$ are trace class then so it $\Phi$.
\end{proof}

\subsection{Classical transitions as defining quantum channels}

The book of Strasser \cite{MR812467} gives a comprehensive account of the
theory of classical statistical experiments. The mathematical tools used there
to define equivalence and deficiency relate to spaces of real valued
functions; to embed this in the quantum setting of von Neumann algebras we
need to discuss the relationship to complex function spaces like $L^{1}\left(
X,\Omega,\nu\right)  $. Let $\left(  X,\Omega,\nu\right)  $ be a sigma-finite
measure space; we denote by $L_{r}^{1}\left(  X,\Omega,\nu\right)  $ the
subset of $L^{1}\left(  X,\Omega,\nu\right)  $ consisting of (equivalence
classes of) real valued functions; this is a real Banach space. Analogously
let $L_{r}^{\infty}\left(  X,\Omega,\nu\right)  $ consist of the real valued
functions in $L^{\infty}\left(  X,\Omega,\nu\right)  $, understood as a real
Banach space.

Definitions 24.1 and 24.3 in \cite{MR812467} give the concepts of stochastic
operator and Markov operator. Let $\left(  X_{i},\Omega_{i},\nu_{i}\right)  $,
$i=1,2$ be sigma-finite measure spaces. A \textit{stochastic operator }is a
positive linear operator $M_{1}:L_{r}^{1}\left(  X_{1},\Omega_{1},\nu
_{1}\right)  \rightarrow L_{r}^{1}\left(  X_{2},\Omega_{2},\nu_{2}\right)  $
with the property that $\left\Vert M_{1}g\right\Vert _{1}=\left\Vert
g\right\Vert _{1}$ for every $g\in L_{r+}^{1}\left(  \nu_{1}\right)  $ (the
set of nonnegative elements of $L_{r}^{1}\left(  \nu_{1}\right)  =L_{r}%
^{1}\left(  X_{1},\Omega_{1},\nu_{1}\right)  $). A \textit{Markov
operator\footnote{Not be confused with a Markov kernel, which is given by
(\ref{MK-2}). Rather a Markov kernel gives a special case of a transition, as
seen from (\ref{MK-1}). But it also defines a Markov operator by the mapping
$f\rightarrow\int f\left(  \omega\right)  K\left(  d\omega,\cdot\right)  $ (a
"conditional expectation" of $f$).}} is a positive linear operator
$M_{2}:L_{r}^{\infty}\left(  X_{2},\Omega_{2},\nu_{2}\right)  \rightarrow
L_{r}^{\infty}\left(  X_{1},\Omega_{1},\nu_{1}\right)  $ with $M_{2}%
\mathbf{1}=\mathbf{1}$ and the property that $f_{n}\downarrow0$, $\left(
f_{n}\right)  _{n\in\mathbb{N}}$ $\subset L_{r}^{\infty}\left(  \nu
_{2}\right)  $ implies $M_{2}f_{n}\downarrow0$. Here $L_{r}^{\infty}\left(
\nu_{2}\right)  =L_{r}^{\infty}\left(  X_{2},\Omega_{2},\nu_{2}\right)  $ and
$f_{n}\downarrow0$ means monotone pointwise convergence to zero. In this
setting, $M_{1}$ and $M_{2}$ are a dual pair if
\begin{equation}
\int f\left(  M_{1}g\right)  d\nu_{2}=\int\left(  M_{2}f\right)  gd\nu
_{1},\text{ all }g\in L_{r}^{1}\left(  \nu_{1}\right)  ,\;f\in L_{r}^{\infty
}\left(  \nu_{2}\right)  . \label{duality-classical}%
\end{equation}
(Definition 24.3). Then for every stochastic operator $M_{1}:$ $L_{r}%
^{1}\left(  \nu_{1}\right)  \rightarrow L_{r}^{1}\left(  \nu_{2}\right)  $
there is a uniquely defined Markov operator $M_{2}:$ $L_{r}^{\infty}\left(
\nu_{2}\right)  \rightarrow L_{r}^{\infty}\left(  \nu_{1}\right)  $ which is
the dual to $M_{1}$, and vice versa: for every Markov operator $M_{2}:$
$L_{r}^{\infty}\left(  \nu_{2}\right)  \rightarrow L_{r}^{\infty}\left(
\nu_{1}\right)  $ there is a uniquely defined stochastic operator $M_{1}:$
$L_{r}^{1}\left(  \nu_{1}\right)  \rightarrow L_{r}^{1}\left(  \nu_{2}\right)
$ which is dual to $M_{2}$ (Theorems 24.4, 24.5).

Let $M_{1}$ be a stochastic operator $M_{1}:L_{r}^{1}\left(  \nu_{1}\right)
\rightarrow L_{r}^{1}\left(  \nu_{2}\right)  $; we will extend it to a mapping
$M_{1}^{e}:L^{1}\left(  \nu_{1}\right)  \rightarrow L^{1}\left(  \nu
_{2}\right)  $ which will be seen to be complex linear. Let $f\in L^{1}\left(
\nu_{1}\right)  $ such that $f=f_{1}+if_{2}$ where $f_{1},f_{2}\in L_{r}%
^{1}\left(  \nu_{1}\right)  $. Define
\[
M_{1}^{e}f=M_{1}f_{1}+iM_{1}f_{2}%
\]
Then additivity is trivial: if $g\in L^{1}\left(  \nu_{1}\right)  $ then
$M_{1}^{e}\left(  f+g\right)  =M_{1}^{e}f+M_{1}^{e}g$. The map $M_{1}^{e}$ is
also real homogeneous on $L^{1}\left(  \nu_{1}\right)  $: for $a\in\mathbb{R}$
we have $M_{1}^{e}\left(  af\right)  =aM_{1}^{e}f$, $f\in L^{1}\left(  \nu
_{1}\right)  $. To show that $M_{1}^{e}$ is complex homogeneous, let $z=a+ib$
with $a,b\in\mathbb{R}$. We have for $L^{1}\left(  \nu_{1}\right)  $
\begin{align*}
\left(  a+ib\right)  \left(  f_{1}+if_{2}\right)   &  =af_{1}-bf_{2}+i\left(
af_{2}+bf_{1}\right)  ,\\
M_{1}^{e}\left(  zf\right)   &  =aM_{1}f_{1}-bM_{1}f_{2}+i\left[  aM_{1}%
f_{2}+bM_{1}f_{1}\right]  .
\end{align*}
On the other hand we have
\begin{align*}
zM_{1}^{e}f  &  =\left(  a+ib\right)  \left(  M_{1}f_{1}+iM_{1}f_{2}\right) \\
&  =aM_{1}f_{1}-bM_{1}f_{2}+i\left[  aM_{1}f_{2}+bM_{1}f_{1}\right]
=M_{1}^{e}\left(  zf\right)  .
\end{align*}
showing complex homogeneity of $M_{1}^{e}$. Hence the extended map $M_{1}^{e}$
is complex linear.

Since the properties of $M_{1}^{e}$ of being positive and norm preserving on
positive elements of $L^{1}\left(  \nu_{1}\right)  $ concern only the
restriction of $M_{1}^{e}$ to $L_{r}^{1}\left(  \nu_{1}\right)  $, they
automatically hold for the extended version. Define a complex stochastic
operator $M$ as a complex linear map $M:L^{1}\left(  \nu_{1}\right)
\rightarrow L^{1}\left(  \nu_{2}\right)  $ with the property that its
restriction $M_{r}$ to $L_{r}^{1}\left(  \nu_{1}\right)  $ is a stochastic
operator in the original sense. From the above it follows that $M$ is uniquely
determind by its restiction $M_{r}$.

By understanding elements of $L^{1}\left(  \nu\right)  $ and $L^{\infty
}\left(  \nu\right)  $ as multiplication operators on $L^{2}\left(
\nu\right)  $, we find that $L^{\infty}\left(  \nu\right)  $ is a commutative
von Neumann algebra and $L^{1}\left(  \nu\right)  $ its predual. Thus the
complex stochastic operator $M_{1}^{e}$ is a quantum state transition
$M_{1}^{e}:L^{1}\left(  \nu_{1}\right)  \rightarrow L^{1}\left(  \nu
_{2}\right)  $; indeed the map is also completely positive (cp. XX). By
Theorem XX\ there exists a unique quantum channel $\alpha:L^{\infty}\left(
\nu_{2}\right)  \rightarrow L^{\infty}\left(  \nu_{1}\right)  $ which is dual
to $M_{1}^{e}$, i.e.%
\begin{equation}
\int f\left(  M_{1}^{e}g\right)  d\nu_{2}=\int\left(  \alpha f\right)
gd\nu_{1},\text{ all }g\in L^{1}\left(  \nu_{1}\right)  ,\;f\in L^{\infty
}\left(  \nu_{2}\right)  . \label{duality-commutative-1}%
\end{equation}
By restricting $f$ and $g$ to $f\in L_{r}^{\infty}\left(  \nu_{2}\right)  $
and $g\in L_{r}^{1}\left(  \nu_{1}\right)  $ and comparing to
(\ref{duality-classical}), we see by Theorem 24.4 of \cite{MR812467} that
$\alpha|L_{r}^{\infty}\left(  \nu_{2}\right)  =M_{2}$. We also see that
$M_{1}^{e}=\alpha_{\ast}$.

Let $\mathcal{F}=\left(  q_{\theta},\;\theta\in\Theta\right)  $ be a family of
probability densities on $\left(  X_{1},\Omega_{1},\nu_{1}\right)  $, i.e.
positive elements of $L_{r}^{1}\left(  X_{%
\acute{}%
1},\Omega_{1},\nu_{1}\right)  $ with norm one, and similarly let
$\mathcal{E}=\left(  p_{\theta},\;\theta\in\Theta\right)  $, be a family of
probability densities on $\left(  X_{2},\Omega_{2},\nu_{2}\right)  $. In order
to clarify the relationship of the classical concept of deficiency of
$\mathcal{E}$ with respect to $\mathcal{F}$ to the quantum version discussed
in Section \ref{subsec: Qu-Lecam-distance}, we denote the former temporarily
as $\delta_{r}\left(  \mathcal{E},\mathcal{F}\right)  $ and recall
\begin{equation}
\delta_{r}\left(  \mathcal{E},\mathcal{F}\right)  =\inf_{T}\sup_{\theta
\in\Theta}\left\Vert q_{\theta}-Tp_{\theta}\right\Vert _{1}
\label{real-deficiency-1}%
\end{equation}
where the infimum extends over all stochastic operators $T:$ $L_{r}^{1}\left(
\nu_{2}\right)  \rightarrow L_{r}^{1}\left(  \nu_{1}\right)  $. By the avove
discussion of extensions $T^{e}$, it is clear that in (\ref{real-deficiency-1}%
) the infimum can be taken over all complex stochastic operators
$T:L^{1}\left(  \nu_{1}\right)  \rightarrow L^{1}\left(  \nu_{2}\right)  $.
Now let $Q_{\theta}$ be the state on the von Neumann algebra $\mathcal{A}%
=L^{\infty}\left(  \nu_{1}\right)  $ corresponding to $q_{\theta}$, let
$P_{\theta}$ be the analog for $p_{\theta}$ and $\mathcal{B}=L^{\infty}\left(
\nu_{2}\right)  $ and identify $T$ with a state transition between preduals
$T:\mathcal{B}_{\ast}\rightarrow\mathcal{A}_{\ast}$. Then
(\ref{real-deficiency-1}) can be written
\begin{equation}
\delta_{r}\left(  \mathcal{E},\mathcal{F}\right)  =\inf_{T}\sup_{\theta
\in\Theta}\left\Vert Q_{\theta}-TP_{\theta}\right\Vert _{1}
\label{real-deficiency-2}%
\end{equation}
where $\left\Vert \cdot\right\Vert _{1}$ is the Banach space norm of
$\mathcal{A}_{\ast}$. For any linear form $K\in\mathcal{A}_{\ast}$, write the
action on $a\in\mathcal{A}$ as $K\left(  a\right)  =\left\langle
a,K\right\rangle $ and similarly for $L\in\mathcal{B}_{\ast}$ and $L\left(
b\right)  =\left\langle b,L\right\rangle $, $b\in$ $\mathcal{B}$. Let $\alpha$
be the channel dual to $T$; then
\begin{align*}
\left\langle a,TP_{\theta}\right\rangle  &  =\left\langle \alpha\left(
a\right)  ,P_{\theta}\right\rangle \\
&  =P_{\theta}\left(  \alpha\left(  a\right)  \right)  =\left(  P_{\theta
}\circ\alpha\right)  \left(  a\right)  =\left\langle a,P_{\theta}\circ
\alpha\right\rangle \text{, }a\in\mathcal{A}\text{. }%
\end{align*}
This show that $TP_{\theta}=P_{\theta}\circ\alpha$ and from
(\ref{real-deficiency-2}) we obtain%
\[
\delta_{r}\left(  \mathcal{E},\mathcal{F}\right)  =\inf_{\alpha}\sup
_{\theta\in\Theta}\left\Vert Q_{\theta}-P_{\theta}\circ\alpha\right\Vert
_{1}.
\]
where the infimim is taken over all channels $\alpha:\mathcal{A}%
\rightarrow\mathcal{B}$. It follows that the deficiency
(\ref{real-deficiency-1}) as originally defined by Le Cam in \cite{MR856411}
for the case of dominated experiments is a special case of the quantum
deficiency $\delta\left(  \mathcal{E},\mathcal{F}\right)  $ as defined in
(\ref{deficiency-quantum-def}).

\subsection{ POVM's and state transitions}

A POVM\ (positive operator valued measure) on a measurable space $\left(
X,\Omega\right)  $ is a mapping $M:\Omega\rightarrow\mathcal{\mathcal{L}%
(\mathcal{H})}$ with properties (i) $M\left(  A\right)  \geq0$ (hence
$M\left(  A\right)  $ is self-adjoint), (ii) $M\left(  X\right)  =\mathbf{1}$,
(iii) if $\left\{  A_{j}\right\}  _{j=1}^{\infty}$ are pairwise disjoint sets
from $\Omega$ then
\begin{equation}
M\left(
{\textstyle\bigcup\nolimits_{j=1}^{\infty}}
A_{j}\right)  =%
{\textstyle\sum\nolimits_{j=1}^{\infty}}
M\left(  A_{j}\right)  \label{additive-SOT-convergent}%
\end{equation}
where the r.h.s. is an SOT convergent sum. The following Lemma is related to
Lemma 2.1 in \cite{BGN-QAE}.

\begin{lemma}
\label{lem-POVM-1}For any POVM $M$ over $\left(  X,\Omega\right)  $ and any
state $\rho\in$ $\mathcal{\mathcal{L}}^{1}\mathcal{(\mathcal{H})}$
\begin{equation}
\nu_{\rho}\left(  A\right)  =\mathrm{Tr\,}\rho M\left(  A\right)  \text{,
}A\in\Omega\label{prob-measure-defined-by POVM}%
\end{equation}
is a probability measure on $\Omega$. Furthermore there is a probability
measure $\nu_{0}$ on $\left(  X,\Omega\right)  $ and a state transition%
\[
\mathcal{M}:\mathcal{\mathcal{L}}^{1}\mathcal{(\mathcal{H})}\rightarrow
L^{1}\left(  \nu_{0}\right)
\]
such that for any state $\rho\in$ $\mathcal{\mathcal{L}}^{1}%
\mathcal{(\mathcal{H})}$
\[
\nu_{\rho}\left(  A\right)  =\mathrm{Tr\,}\rho M\left(  A\right)  =\int
_{A}\mathcal{M}\left(  \rho\right)  d\nu_{0}\text{, }\mathcal{M}\left(
\rho\right)  =\frac{d\nu_{\rho}}{d\nu_{0}}\text{, }A\in\Omega.
\]
The measure $\nu_{0}$ can be chosen as $\nu_{0}=\nu_{\rho_{0}}$ for any
faithful state $\rho_{0}\in$ $\mathcal{\mathcal{L}}^{1}\mathcal{(\mathcal{H}%
)}$ (i.e. $\rho_{0}>0$).
\end{lemma}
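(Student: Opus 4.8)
The plan is to treat the two assertions in turn: first that $\nu_\rho$ is a probability measure, then the construction of the reference measure $\nu_0$ and of the state transition $\mathcal{M}$. For the first part, non-negativity of $\nu_\rho(A)=\mathrm{Tr}\,\rho M(A)$ follows by writing it as $\mathrm{Tr}\,\rho^{1/2}M(A)\rho^{1/2}=\|M(A)^{1/2}\rho^{1/2}\|_2^2\ge 0$, using $M(A)\ge 0$; normalization is $\nu_\rho(X)=\mathrm{Tr}\,\rho M(X)=\mathrm{Tr}\,\rho\,\mathbf{1}=1$. For countable additivity, given pairwise disjoint $\{A_j\}_{j\ge 1}\subset\Omega$, the partial sums $S_N=\sum_{j=1}^{N}M(A_j)$ satisfy $0\le S_N\le\mathbf{1}$ and $S_N\to M(\bigcup_j A_j)$ in SOT by property (iii) of the POVM. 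Writing $\rho=\sum_k\lambda_k\,e_k e_k^\ast$ with $\lambda_k\ge 0$, $\sum_k\lambda_k=1$, one has $\mathrm{Tr}\,\rho S_N=\sum_k\lambda_k\langle e_k,S_N e_k\rangle$ with $|\langle e_k,S_N e_k\rangle|\le 1$ and $\langle e_k,S_N e_k\rangle\to\langle e_k,M(\bigcup_j A_j)e_k\rangle$; dominated convergence then gives $\nu_\rho(\bigcup_j A_j)=\sum_j\nu_\rho(A_j)$. (This is just SOT-continuity on bounded sets of the normal functional $\mathrm{Tr}\,\rho\,\cdot\,$, and uses that $\rho$ is trace class.)

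Next I would fix a faithful state $\rho_0\in\mathcal{L}^1(\mathcal{H})$, which exists since $\mathcal{H}$ is separable (cf. Subsection \ref{subsubsec:measmt-obs-channel}), and set $\nu_0=\nu_{\rho_0}$, a probability measure by the first part. The key step is the implication $\nu_0(A)=0\ \Rightarrow\ M(A)=0$. Indeed $\nu_0(A)=\mathrm{Tr}\,\rho_0 M(A)=\|M(A)^{1/2}\rho_0^{1/2}\|_2^2$, the square of a Hilbert--Schmidt norm, so $\nu_0(A)=0$ forces $M(A)^{1/2}\rho_0^{1/2}=0$; since $\rho_0>0$, the self-adjoint operator $\rho_0^{1/2}$ is injective and hence has dense range, so the bounded operator $M(A)^{1/2}$ vanishes on a dense subspace, whence $M(A)^{1/2}=0$ and $M(A)=0$. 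Consequently $\nu_0(A)=0$ implies $\nu_\rho(A)=\mathrm{Tr}\,\rho M(A)=0$ for \emph{every} trace class $\rho$, i.e. $\nu_\rho\ll\nu_0$.

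Given absolute continuity, the Radon--Nikodym theorem supplies, for each state $\rho$, a non-negative $\mathcal{M}(\rho):=d\nu_\rho/d\nu_0\in L^1(\nu_0)$ with $\int_A\mathcal{M}(\rho)\,d\nu_0=\nu_\rho(A)=\mathrm{Tr}\,\rho M(A)$ and $\int_X\mathcal{M}(\rho)\,d\nu_0=1$. To extend $\mathcal{M}$ to all of $\mathcal{L}^1(\mathcal{H})$ and verify it is a state transition, I would decompose an arbitrary $\rho$ into positive pieces $\rho=\rho_1-\rho_2+i(\rho_3-\rho_4)$, $\rho_j\ge 0$; each $\nu_{\rho_j}$ is a finite positive measure absolutely continuous with respect to $\nu_0$, so $\nu_\rho$ is a finite complex measure $\ll\nu_0$ and $\mathcal{M}(\rho)=d\nu_\rho/d\nu_0$ is well defined in $L^1(\nu_0)$. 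Linearity of $\rho\mapsto\mathrm{Tr}\,\rho M(A)$ together with uniqueness of Radon--Nikodym derivatives gives linearity of $\mathcal{M}$; positivity of $M(A)$ and of $\rho\ge 0$ gives $\mathcal{M}(\rho)\ge 0$ a.e. $[\nu_0]$; and for $\rho\ge 0$, $\|\mathcal{M}(\rho)\|_1=\int\mathcal{M}(\rho)\,d\nu_0=\mathrm{Tr}\,\rho M(X)=\mathrm{Tr}\,\rho=\|\rho\|_1$, so $\mathcal{M}$ preserves the $\|\cdot\|_1$-norm on positives. Complete positivity is automatic because the target von Neumann algebra $L^\infty(\nu_0)$ is commutative (cf. Subsection \ref{subsec:States-channels-obs}); hence $\mathcal{M}$ is a state transition with $\mathcal{M}(\rho)=d\nu_\rho/d\nu_0$ for every state $\rho$, as claimed.

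The main obstacle I anticipate is the pair of soft-analysis steps: transferring the SOT-convergence of $\sum_j M(A_j)$ to $\sigma$-additivity of $\nu_\rho$ (i.e. normality of $\mathrm{Tr}\,\rho\,\cdot\,$), and the faithfulness argument that $\nu_0(A)=0$ forces $M(A)=0$. Neither is deep, but both must be written out carefully; everything else — linearity, positivity, norm preservation, and complete positivity — is bookkeeping that follows directly from the definitions recalled in the Appendix.
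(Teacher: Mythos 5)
Your proof is correct and follows the same overall architecture as the paper's: choose a faithful $\rho_0$, set $\nu_0=\nu_{\rho_0}$, show $\nu_\rho\ll\nu_0$, and define $\mathcal{M}$ via Radon--Nikodym. The differences lie in the technical implementation of three steps, and in each case your treatment is at least as careful and in some respects more explicit than the paper's. For $\sigma$-additivity you use the spectral decomposition $\rho=\sum_k\lambda_k\,e_k e_k^\ast$ and a dominated-convergence argument, whereas the paper invokes the weak$^\ast$-convergence machinery (Conway's theorem on bounded increasing nets in von Neumann algebras); both are valid, and yours is the more elementary and self-contained route, exploiting the concrete form of the trace pairing. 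For the implication $\nu_0(A)=0\Rightarrow M(A)=0$ you give the argument explicitly (Hilbert--Schmidt norm vanishes, $\rho_0^{1/2}$ is injective with dense range, so $M(A)^{1/2}$ vanishes), whereas the paper simply cites Conway 54.7; your version is what a reader would need to reconstruct the cited fact anyway. Finally, for extending $\mathcal{M}$ from states to all of $\mathcal{L}^1(\mathcal{H})$ you correctly use the four-term decomposition $\rho=\rho_1-\rho_2+i(\rho_3-\rho_4)$ into positive parts, whereas the paper writes only $\sigma=\sigma_+-\sigma_-$, which is adequate only for self-adjoint $\sigma$; your version is the one that actually covers general trace-class operators on a complex Hilbert space. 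You also flag that complete positivity of $\mathcal{M}$ is automatic because $L^\infty(\nu_0)$ is commutative --- a fact the paper's definition of state transition requires but its proof of this lemma does not explicitly address. In short, no gaps: you reproduce the paper's strategy and tighten a couple of loose ends.
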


\begin{proof}
Obviously $\nu_{\rho}\left(  A\right)  \geq0$ and $\nu_{\rho}\left(  X\right)
=\mathrm{Tr\,}\rho M\left(  X\right)  =\mathrm{Tr\,}\rho=1$. It remains to
show sigma-additivity of $\nu_{\rho}$. If $\left\{  A_{j}\right\}
_{j=1}^{\infty}$ are pairwise disjoint sets from $\Omega$ then
\[
\nu_{\rho}\left(
{\textstyle\bigcup\nolimits_{j=1}^{\infty}}
A_{j}\right)  =\mathrm{Tr\,}\rho\left(
{\textstyle\sum\nolimits_{j=1}^{\infty}}
M\left(  A_{j}\right)  \right)  .
\]

Here the sum $%
{\textstyle\sum\nolimits_{j=1}^{\infty}}
M\left(  A_{j}\right)  $ converges in the weak operator toppology (WOT) since
it converges SOT. Furthermore the set $\left\{  M\left(  A_{j}\right)  \text{,
}j\geq1\right\}  $ is bounded in $\mathcal{\mathcal{L}(\mathcal{H})}$ since
$M\left(  A_{j}\right)  \leq\mathbf{1}$, hence by (20.1 (b)) in
\cite{MR1721402} it converges weak*. The latter implies (\S 20 in
\cite{MR1721402}) that
\[
\mathrm{Tr\,}\rho\left(
{\textstyle\sum\nolimits_{j=1}^{\infty}}
M\left(  A_{j}\right)  \right)  =%
{\textstyle\sum\nolimits_{j=1}^{\infty}}
\mathrm{Tr\,}\rho M\left(  A_{j}\right)
\]
so sigma-additivity is shown.

Let $\rho_{0}\in\mathcal{\mathcal{L}}^{1}\mathcal{(\mathcal{H})}$ be the
density operator of a faithful state on $\mathcal{\mathcal{L}(\mathcal{H})}$
($\rho_{0}>0$) and for given $M$, define $\nu_{0}:=\nu_{\rho_{0}}$. Then for
every state $\rho\in\mathcal{\mathcal{L}}^{1}\mathcal{(\mathcal{H})}$ the
measure $\nu_{\rho}\left(  A\right)  =\mathrm{Tr\,}\rho M\left(  A\right)  $
is dominated by $\nu_{0}$. Indeed assume $\nu_{0}\left(  A\right)  =0$; then
in view of $\rho_{0}>0$ we have $M\left(  A\right)  =0$, (cp. 54.7 in
\cite{MR1721402}), hence also $\nu_{\rho}\left(  A\right)  =0$. Define the map
$\mathcal{M}$ on all positive elements of $\mathcal{\mathcal{L}}%
^{1}\mathcal{(\mathcal{H})}$ as a linear map with values in $L^{1}\left(
\nu_{0}\right)  $ by $\mathcal{M}\left(  \rho\right)  =d\nu_{\rho}/d\nu_{0}$.
For $\sigma\in\mathcal{\mathcal{L}}^{1}\mathcal{(\mathcal{H})}$, one defines
$\mathcal{M}$ via the decomposition $\sigma=\sigma_{+}-\sigma_{-}$. Thus
$\mathcal{M}$ is linear on $\mathcal{\mathcal{L}}^{1}\mathcal{(\mathcal{H})}$
and it is obviously positive. The map is norm-preserving on positives since
for any state $\rho\in\mathcal{\mathcal{L}}^{1}\mathcal{(\mathcal{H})}$,
$\mathcal{M}\left(  \rho\right)  $ is a probability density on $\Omega$. Hence
$\mathcal{M}$ is a state transition.
\end{proof}

A converse statement can be given as follows.

\begin{lemma}
\label{Lem-povm-2}Let $\mathcal{M}$ be a state transition
\[
\mathcal{M}:\mathcal{\mathcal{L}}^{1}\mathcal{(\mathcal{H})}\rightarrow
L^{1}\left(  \nu\right)
\]
where $\nu$ is a sigma-finite measure on some $\left(  X,\Omega\right)  $.
\newline Then there is a POVM\ $M$ on $\left(  X,\Omega\right)  $ such that%
\[
\mathrm{Tr\,}\rho M\left(  A\right)  =\int_{A}\mathcal{M}\left(  \rho\right)
d\nu\text{, }A\in\Omega.
\]

\end{lemma}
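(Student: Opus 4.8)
The plan is to produce the desired POVM as the observation channel dual to $\mathcal{M}$, exactly along the lines sketched in Subsection \ref{subsubsec:measmt-obs-channel}. First I would invoke the converse half of the channel / state-transition correspondence recorded in Subsection \ref{subsub-Append-state-transitions}: since $\nu$ is $\sigma$-finite, $L^{\infty}(\nu)$ is a commutative von Neumann algebra with predual $L^{1}(\nu)$, while $\mathcal{L}(\mathcal{H})$ has predual $\mathcal{L}^{1}(\mathcal{H})$; hence the given state transition $\mathcal{M}:\mathcal{L}^{1}(\mathcal{H})\to L^{1}(\nu)$ is the dual of a (unique) quantum channel $\alpha:L^{\infty}(\nu)\to\mathcal{L}(\mathcal{H})$, so that $(\alpha,\mathcal{M})$ is a dual pair in the sense of (\ref{dual-channels-1}). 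I would then simply define
\[
M(A):=\alpha(\mathbf{1}_{A}),\qquad A\in\Omega,
\]
where $\mathbf{1}_{A}\in L^{\infty}(\nu)$ is the indicator function, and check that $M$ is a POVM satisfying the asserted identity.

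The verification of the POVM axioms is then routine. Positivity $M(A)\ge 0$ is immediate because $\mathbf{1}_{A}\ge 0$ in $L^{\infty}(\nu)$ and $\alpha$ is a positive map; normalization $M(X)=\alpha(\mathbf{1})=\mathbf{1}$ holds because $\alpha$ is unital. For $\sigma$-additivity in the strong operator topology I would argue: given pairwise disjoint $\{A_{j}\}_{j\ge 1}\subset\Omega$ with union $A$, the partial unions satisfy $\mathbf{1}_{\bigcup_{j\le n}A_{j}}\uparrow\mathbf{1}_{A}$ monotonically in $L^{\infty}(\nu)$, and finite additivity and linearity of $\alpha$ give $\alpha(\mathbf{1}_{\bigcup_{j\le n}A_{j}})=\sum_{j\le n}\alpha(\mathbf{1}_{A_{j}})=\sum_{j\le n}M(A_{j})$. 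Since $\alpha$ is normal — equivalently, order continuous on increasing bounded sequences (cf. Subsection \ref{subsubsec-normal} and \cite{MR1721402}, 46.1), where one may use sequences rather than nets because $\mathcal{H}$ is separable — it maps this increasing bounded sequence of positive operators to an SOT-convergent sequence with limit $\alpha(\mathbf{1}_{A})=M(A)$. This is precisely the SOT-convergent-sum requirement (iii) in the definition of a POVM.

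Finally, the key identity is obtained by specializing the duality between $\alpha$ and $\mathcal{M}$: for every $\rho\in\mathcal{L}^{1}(\mathcal{H})$ and $A\in\Omega$,
\[
\mathrm{Tr}\,\rho\,M(A)=\mathrm{Tr}\,\rho\,\alpha(\mathbf{1}_{A})=\int\mathbf{1}_{A}\,\mathcal{M}(\rho)\,d\nu=\int_{A}\mathcal{M}(\rho)\,d\nu,
\]
which is the assertion (this is just the converse of Lemma \ref{lem-POVM-1} and of the computation around (\ref{duality-POVM})). I expect the only genuine obstacle to be the care needed in the $\sigma$-additivity step: one must make sure that "normality" of $\alpha$, as used here, really yields SOT-convergence of the specific increasing family $\alpha(\mathbf{1}_{\bigcup_{j\le n}A_{j}})$ — rather than merely weak or weak*-convergence — and that the separability hypothesis standing throughout the paper legitimizes passing from nets to sequences. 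Everything else is a direct transcription of the channel / state-transition dictionary already assembled in the appendix.
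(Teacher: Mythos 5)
Your construction is identical to the paper's: both invoke the existence of the dual channel $\alpha:L^{\infty}(\nu)\to\mathcal{L}(\mathcal{H})$ (the paper calls it $\mu$) and then set $M(A):=\alpha(\mathbf{1}_A)$, after which positivity, normalization and the identity $\mathrm{Tr}\,\rho\,M(A)=\int_A\mathcal{M}(\rho)\,d\nu$ follow exactly as you describe.

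Where you genuinely diverge is the $\sigma$-additivity step, and your route is arguably cleaner. The paper first observes that each scalar set function $\nu_\rho(\cdot)=\mathrm{Tr}\,\rho\,M(\cdot)$ is a genuine probability measure because $\mathcal{M}(\rho)$ is a $\nu$-density; this gives $\sigma$-additivity of $M$ in the weak$^*$ sense. It then upgrades weak$^*$ to SOT by an algebraic trick: writing the tail $M(B_{2,k})$, one uses $0\le M(B_{2,k})\le\mathbf{1}$ to get $\bigl(M(B_{2,k})\bigr)^2\le M(B_{2,k})$ and hence $\|M(B_{2,k})h\|^2\le\langle h,M(B_{2,k})h\rangle\to 0$. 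You instead bypass the scalar detour entirely: you note that the indicators of the partial unions increase in SOT (as multiplication operators on $L^2(\nu)$) to $\mathbf{1}_A$, and then let normality of $\alpha$ — which is part of the paper's definition of a quantum channel and which, on separable $\mathcal{H}$, may be formulated for sequences — carry the SOT convergence through to $\sum_{j\le n}M(A_j)\to M(A)$. Your version has the advantage of exploiting a structural hypothesis (normality) already built into the notion of channel; the paper's version is more self-contained in that it re-derives the SOT convergence without appealing to normality, which is useful because the converse part of the channel/transition correspondence that produces $\alpha$ is only asserted, not proved, in the appendix. Your closing caution about whether "normality yields SOT-convergence rather than merely weak$^*$" is exactly the right thing to worry about, and it is in fact resolved positively by the paper's own definition of normality (SOT continuity on increasing bounded nets) together with the separability of $\mathcal{H}$ that lets one work with sequences.
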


\begin{proof}
Let the channel $\mu:L^{\infty}\left(  \nu\right)  \rightarrow
\mathcal{\mathcal{L}(\mathcal{H})}$ be given by the duality%
\[
\mathrm{Tr\,}\mu\left(  f\right)  \sigma=\int f\mathcal{M}\left(
\sigma\right)  d\nu\text{, }\sigma\in\mathcal{\mathcal{L}}^{1}%
\mathcal{(\mathcal{H})}\text{, }f\in L^{\infty}\left(  \nu\right)  .
\]
Set $M\left(  A\right)  =\mu\left(  \mathbf{1}_{A}\right)  $ where
$\mathbf{1}_{A}$ is the indicator of $A\in$ $\Omega$. Then $M\left(  A\right)
\geq0$ since $\mu$ is positive, and $M\left(  X\right)  =\mu\left(  1\right)
=\mathbf{1}$ since $\mu$ is unital. Then for any state $\rho\in
\mathcal{\mathcal{L}}^{1}\mathcal{(\mathcal{H})}$, $A\in$ $\Omega$
\begin{align*}
\mathrm{Tr\,}\rho M\left(  A\right)   &  =\mathrm{Tr\,}\rho\mu\left(
\mathbf{1}_{A}\right)  =\int\mathcal{M}\left(  \rho\right)  \mathbf{1}_{A}%
d\nu\text{ }\\
&  =\int_{A}\mathcal{M}\left(  \rho\right)  d\nu.
\end{align*}
Here for any state $\rho$, the set function $\nu_{\rho}\left(  \cdot\right)
=\mathrm{Tr\,}\rho M\left(  \cdot\right)  $ is a probability measure
satisfying $\nu_{\rho}\ll\nu$ since $\mathcal{M}\left(  \rho\right)  \geq0$
and $\left\Vert \mathcal{M}\left(  \rho\right)  \right\Vert _{\nu
,1}=\left\Vert \rho\right\Vert _{1}=1$. It follows that for any $\sigma
\in\mathcal{\mathcal{L}}^{1}\mathcal{(\mathcal{H})}$, the set function
$\nu_{\sigma}\left(  \cdot\right)  =\mathrm{Tr\,}\sigma M\left(  \cdot\right)
$ is sigma-additive: if $\left\{  A_{j}\right\}  _{j=1}^{\infty}$ are pairwise
disjoint sets from $\Omega$ then
\begin{equation}
\mathrm{Tr\,}\sigma M\left(
{\textstyle\bigcup\nolimits_{j=1}^{\infty}}
A_{j}\right)  =\sum_{j=1}^{\infty}\mathrm{Tr\,}\sigma M\left(  A_{j}\right)  .
\label{sigma-additive}%
\end{equation}
It follows that we have (\ref{additive-SOT-convergent}) with weak*
convergence. Set $B_{1,k}=%
{\textstyle\bigcup\nolimits_{j=1}^{k}}
A_{j}$, $B_{2,k}=%
{\textstyle\bigcup\nolimits_{j=k+1}^{\infty}}
A_{j}$; then we have
\'{}%
by additivity for any $\rho\in\mathcal{\mathcal{L}}^{1}\mathcal{(\mathcal{H}%
)}$
\begin{align*}
\mathrm{Tr\,}\rho M\left(
{\textstyle\bigcup\nolimits_{j=1}^{\infty}}
A_{j}\right)   &  =\mathrm{Tr\,}\rho M\left(  B_{1,k}\cup B_{2,k}\right) \\
&  =\mathrm{Tr\,}\rho\left(  \sum_{j=1}^{k}M\left(  A_{j}\right)  +M\left(
B_{2,k}\right)  \right)  .
\end{align*}
If for $Q_{i}\in\mathcal{\mathcal{L}(\mathcal{H})}$, $Q_{i}\geq0$ we have
$\mathrm{Tr\,}\rho Q_{1}=\mathrm{Tr\,}\rho Q_{2}$ for all $\rho\in
\mathcal{\mathcal{L}}^{1}\mathcal{(\mathcal{H})}$ then $Q_{1}=Q_{2}$. Hence we
obtain
\[
M\left(
{\textstyle\bigcup\nolimits_{j=1}^{\infty}}
A_{j}\right)  =\sum_{j=1}^{k}M\left(  A_{j}\right)  +M\left(  B_{2,k}\right)
\]
We now show that $M\left(  B_{2,k}\right)  \rightarrow0$ in SOT (strong
operator topology). Set $\rho=\left\vert u\right\rangle \left\langle
u\right\vert $ for a unit vector $u\in\mathcal{H}$; then by the
sigma-additivity (\ref{sigma-additive}) we have
\[
\left\langle u\right\vert \left.  M\left(  B_{2,k}\right)  u\right\rangle
=\mathrm{Tr\,}\left\vert u\right\rangle \left\langle u\right\vert M\left(
B_{2,k}\right)  =\sum_{j=k}^{\infty}\mathrm{Tr\,}\rho M\left(  A_{j}\right)
\rightarrow0.
\]
But $0\leq M\left(  B_{2,k}\right)  \leq\mathbf{1}$, hence $\left(  M\left(
B_{2,k}\right)  \right)  ^{2}\leq M\left(  B_{2,k}\right)  $ and we obtain
\[
\left\langle h,\left(  M\left(  B_{2,k}\right)  \right)  ^{2}h\right\rangle
=\left\Vert M\left(  B_{2,k}\right)  h\right\Vert ^{2}\rightarrow0\text{ for
}h\in\mathcal{H}\text{.}%
\]
Hence $M\left(  B_{2,k}\right)  \rightarrow0$ in SOT and
(\ref{additive-SOT-convergent}) is shown.

\textbf{ }
\end{proof}

\begin{lemma}
Let $\mathcal{M}$ be a state transition
\[
\mathcal{M}:\mathcal{\mathcal{L}}^{1}\mathcal{(\mathcal{H})}\rightarrow
L^{1}\left(  \nu\right)
\]
where $\nu$ is a sigma-finite measure on some $\left(  X,\Omega\right)  $ and
let $M$ be the POVM\ on $\left(  X,\Omega\right)  $ constructed according to
Lemma \ref{Lem-povm-2}. For the given $M$ and a state $\rho_{0}>0$, let
$\nu_{0}$ be the probability measure and $\mathcal{M}_{0}$ be the transition%
\[
\mathcal{M}_{0}:\mathcal{\mathcal{L}}^{1}\mathcal{(\mathcal{H})}\rightarrow
L^{1}\left(  \nu_{0}\right)
\]
resp., constructed in Lemma \ref{lem-POVM-1}. Then there is an transition
$\mathcal{M}_{1}:L^{1}\left(  \nu_{0}\right)  \rightarrow L^{1}\left(
\nu\right)  $ such that $\mathcal{M=M}_{1}\mathcal{M}_{0}$.
\end{lemma}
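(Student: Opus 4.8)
The plan is to exhibit $\mathcal{M}_{1}$ explicitly as a multiplication operator and then verify the three defining properties of a state transition together with the factorization identity, all by elementary measure theory.

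First I would record the Radon--Nikodym structure underlying the three maps. By the construction in Lemma \ref{Lem-povm-2} and the hypothesis linking $M$ to $\mathcal{M}$, for every state $\rho\in\mathcal{L}^{1}(\mathcal{H})$ one has $\nu_{\rho}(A)=\mathrm{Tr\,}\rho M(A)=\int_{A}\mathcal{M}(\rho)\,d\nu$, so $\mathcal{M}(\rho)=d\nu_{\rho}/d\nu$; likewise $\mathcal{M}_{0}(\rho)=d\nu_{\rho}/d\nu_{0}$, and by Lemma \ref{lem-POVM-1} (which uses $\rho_{0}>0$) the probability measure $\nu_{0}=\nu_{\rho_{0}}$ dominates $\nu_{\rho}$ for every state $\rho$. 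Applying $\nu_{\rho}(A)=\int_{A}\mathcal{M}(\rho)\,d\nu$ at $\rho=\rho_{0}$ gives $\nu_{0}(A)=\int_{A}\mathcal{M}(\rho_{0})\,d\nu$, hence $\nu_{0}\ll\nu$ with density $g_{0}:=d\nu_{0}/d\nu=\mathcal{M}(\rho_{0})\in L^{1}(\nu)$, where $g_{0}\geq0$ and $\int g_{0}\,d\nu=1$.

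Next I would define $\mathcal{M}_{1}:L^{1}(\nu_{0})\rightarrow L^{1}(\nu)$ by $\mathcal{M}_{1}(f):=f\,g_{0}$. This is well defined on $\nu_{0}$-equivalence classes even though $\nu$ need not be dominated by $\nu_{0}$: any $\nu_{0}$-null set $A$ satisfies $\int_{A}g_{0}\,d\nu=\nu_{0}(A)=0$, so $g_{0}=0$ $\nu$-a.e.\ on $A$, whence $f=f'$ $\nu_{0}$-a.e.\ forces $fg_{0}=f'g_{0}$ $\nu$-a.e. The change-of-variables formula gives $\int|fg_{0}|\,d\nu=\int|f|\,d\nu_{0}$, so $\mathcal{M}_{1}$ maps $L^{1}(\nu_{0})$ into $L^{1}(\nu)$, is $\|\cdot\|_{1}$-norm preserving on nonnegative $f$, and is clearly linear and positive; being a map between preduals of the commutative von Neumann algebras $L^{\infty}(\nu_{0})$, $L^{\infty}(\nu)$, it is automatically completely positive, so it is a state transition in the sense of the Appendix. (As a consistency check, its dual channel is the restriction $L^{\infty}(\nu)\rightarrow L^{\infty}(\nu_{0})$, $h\mapsto h|_{\nu_{0}}$, which is positive, unital and order continuous since $\nu_{0}\ll\nu$.)

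Finally I would check $\mathcal{M}_{1}\mathcal{M}_{0}=\mathcal{M}$. For a state $\rho$ this is the chain rule for Radon--Nikodym derivatives: since $\nu_{\rho}\ll\nu_{0}\ll\nu$, one has $\frac{d\nu_{\rho}}{d\nu_{0}}\cdot\frac{d\nu_{0}}{d\nu}=\frac{d\nu_{\rho}}{d\nu}$ $\nu$-a.e., i.e.\ $\mathcal{M}_{0}(\rho)\,g_{0}=\mathcal{M}(\rho)$, which is exactly $\mathcal{M}_{1}(\mathcal{M}_{0}(\rho))=\mathcal{M}(\rho)$. By linearity of $\mathcal{M}$, $\mathcal{M}_{0}$, $\mathcal{M}_{1}$ and the decomposition $\sigma=\sigma_{+}-\sigma_{-}$ together with positive scaling, the identity extends from states to all of $\mathcal{L}^{1}(\mathcal{H})$, completing the argument. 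I do not anticipate a serious obstacle here; the only points needing care are the well-definedness of the multiplication operator on $\nu_{0}$-classes when $\nu$ fails to be absolutely continuous with respect to $\nu_{0}$, and the verification that $\mathcal{M}_{1}$ genuinely qualifies as a state transition, both handled above.
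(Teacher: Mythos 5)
Your proof is correct and follows essentially the same route as the paper: define $g_{0}=d\nu_{0}/d\nu=\mathcal{M}(\rho_{0})$, set $\mathcal{M}_{1}(h)=hg_{0}$, verify it is a state transition, and obtain the factorization from the Radon--Nikodym chain rule. The only difference is that you also spell out why the multiplication operator is well-defined on $\nu_{0}$-equivalence classes when $\nu\not\ll\nu_{0}$, a point the paper leaves implicit.
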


\begin{proof}
Since $\nu_{0}=\nu_{\rho_{0}}$ for a faithful state $\rho_{0}\in
\mathcal{L}^{1}\left(  \mathcal{H}\right)  $, we have $\nu_{0}\ll\nu$. Set
\[
g_{0}:=\frac{d\nu_{0}}{d\nu}=\mathcal{M}\left(  \rho_{0}\right)  \in
L^{1}\left(  \nu\right)
\]
and for $h\in L^{1}\left(  \nu_{0}\right)  $, set $\mathcal{M}_{1}\left(
h\right)  =hg_{0}$. The mapping $\mathcal{M}_{1}$ is linear, positive and norm
preserving on positives: if $h\geq0$ then%
\[
\left\Vert \mathcal{M}_{1}\left(  h\right)  \right\Vert _{\nu,1}=\int
hg_{0}d\nu=\int hd\nu_{0}=\left\Vert h\right\Vert _{\nu_{0},1}.
\]
Furthermore for $\rho\in\mathcal{L}^{1}\left(  \mathcal{H}\right)  $,
$\rho\geq0$ we have
\begin{align*}
\nu_{\rho}\left(  A\right)   &  =\int_{A}\mathcal{M}\left(  \rho\right)
d\nu=\int_{A}\frac{d\nu_{\rho}}{d\nu}d\nu=\int_{A}\frac{d\nu_{\rho}}{d\nu_{0}%
}\frac{d\nu_{0}}{d\nu}d\nu\\
&  =\int_{A}\mathcal{M}_{0}\left(  \rho\right)  g_{0}d\nu=\int_{A}%
\mathcal{M}_{1}\left(  \mathcal{M}_{0}\left(  \rho\right)  \right)
d\nu\text{, }A\in\Omega
\end{align*}
which shows $\mathcal{M=M}_{1}\mathcal{M}_{0}$.
\end{proof}

\bigskip

\textbf{POVM and Spectral Measures}

In order for a POVM to give a spectral measure, in the Conway operator book,
9.1, the additional property
\begin{equation}
M\left(  A_{1}\cap A_{2}\right)  =M\left(  A_{1}\right)  M\left(
A_{2}\right)  \label{property-iv}%
\end{equation}
is required in addition to (i)-(iii) and $M\left(  A\right)  $ being
projections; it is also required in Lax, 31.3, Theorem 9. Parthasarathy on p.
23 pretends that the property follows from (i)-(iii) and $M\left(  A\right)  $
being projections. We believe Partha is right; we argue as follows.

A POVM\ (positive operator valued measure) on a measurable space $\left(
X,\Omega\right)  $ is a mapping $M:\Omega\rightarrow\mathcal{\mathcal{L}%
(\mathcal{H})}$ with properties (i) $M\left(  A\right)  \geq0$ (hence
$M\left(  A\right)  $ is self-adjoint), (ii) $M\left(  X\right)  =\mathbf{1}$,
(iii) if $\left\{  A_{j}\right\}  _{j=1}^{\infty}$ are pairwise disjoint sets
from $\Omega$ then
\begin{equation}
M\left(
{\textstyle\bigcup\nolimits_{j=1}^{\infty}}
A_{j}\right)  =%
{\textstyle\sum\nolimits_{j=1}^{\infty}}
M\left(  A_{j}\right)
\end{equation}
where the r.h.s. is an SOT convergent sum. Now suppose all $M\left(  A\right)
$ are projections. Consider $A\in\Omega$ and its complement $A^{c}$; then
\[
M\left(  A\right)  +M\left(  A^{c}\right)  =M\left(  X\right)  =\mathbf{1}%
\text{. }%
\]
Multiplying both sides by $M\left(  A\right)  $ gives
\[
M\left(  A\right)  +M\left(  A^{c}\right)  M\left(  A\right)  =M\left(
A\right)
\]
implying $M\left(  A^{c}\right)  M\left(  A\right)  =0$. Now let $B\in\Omega$
such that $A\cap B=\emptyset$; then $B\subset A^{c}$ and hence $A^{c}%
=B+\left(  A^{c}\diagdown B\right)  $. Then%
\begin{align*}
M\left(  A^{c}\right)   &  =M\left(  B\right)  +M\left(  A^{c}\diagdown
B\right)  ,\\
0  &  =M\left(  A^{c}\right)  M\left(  A\right)  =\left(  M\left(  B\right)
+M\left(  A^{c}\diagdown B\right)  \right)  M\left(  A\right) \\
&  =M\left(  B\right)  M\left(  A\right)  +M\left(  A^{c}\diagdown B\right)
M\left(  A\right)  .
\end{align*}
Multiply both sides above from the left with $M\left(  A\right)  $; we obtain
\[
0=M\left(  A\right)  M\left(  B\right)  M\left(  A\right)  +M\left(  A\right)
M\left(  A^{c}\diagdown B\right)  M\left(  A\right)  .
\]
Since all $M\left(  \cdot\right)  \geq0$, both operators on the r.h.s. above
are positive; it follows that $M\left(  A\right)  M\left(  B\right)  M\left(
A\right)  =0$ whenever $A\cap B=\emptyset$. But
\begin{align*}
M\left(  A\right)  M\left(  B\right)  M\left(  A\right)   &  =M\left(
A\right)  M\left(  B\right)  M\left(  B\right)  M\left(  A\right) \\
&  =V^{\ast}V\text{ for }V=M\left(  B\right)  M\left(  A\right)
\end{align*}
hence
\[
0=V=M\left(  B\right)  M\left(  A\right)
\]
whenever $A\cap B=\emptyset.$ Now for arbitrary $A_{1},A_{2}$ note that
\[
M\left(  A_{1}\right)  =M\left(  A_{1}\diagdown A_{2}\right)  +M\left(
A_{1}\cap A_{2}\right)  ,
\]
and (\ref{property-iv}) is shown by
\begin{align*}
M\left(  A_{1}\right)  M\left(  A_{2}\right)   &  =M\left(  A_{1}\diagdown
A_{2}\right)  M\left(  A_{2}\right)  +M\left(  A_{1}\cap A_{2}\right)
M\left(  A_{2}\right) \\
&  =M\left(  A_{1}\cap A_{2}\right)  M\left(  A_{2}\right)  \text{ since
}\left(  A_{1}\diagdown A_{2}\right)  \cap A_{2}=\emptyset\\
&  =M\left(  A_{1}\cap A_{2}\right)  \left(  M\left(  A_{2}\diagdown
A_{1}\right)  +M\left(  A_{1}\cap A_{2}\right)  \right) \\
&  =M\left(  A_{1}\cap A_{2}\right)  M\left(  A_{1}\cap A_{2}\right) \\
&  =M\left(  A_{1}\cap A_{2}\right)  .
\end{align*}

\subsection{Remarks on notation}

For the trace of an operator $A$, we adopted the notation $\mathrm{Tr\,}A$,
following \cite{MR1230389}, with a bracket only if necessary, as in
$\mathrm{Tr\,}\left(  A-B\right)  $. Our paper \cite{BGN-QAE} uses
$\mathrm{Tr\,}\left(  A\right)  $ even if no bracket is necessary, cf also
\cite{MR2506764}, \cite{MR887100}. The lower case notation $\mathrm{tr\,}A$ is
found in \cite{MR3012668}; with brackets like $\mathrm{tr\,}\left(  A\right)
$ it is in \cite{MR1721402} (mostly) and in \cite{MR1070713}, \cite{MR3468018}.

\bigskip

\cite{MR1230389}Ohya, Petz, Entropy

\cite{MR1222649} Meyer, Q-Proba

\cite{MR3468018}Chang, Q-Stochastics

\cite{CIT-006} Gray, Toeplitz

\cite{MR2510896} Mosonyi

\cite{MR618863} D. W. M\"{u}ller

\cite{MR2346393}Guta, Jencova, LAN in q-stat

\cite{MR4224167}H\"{o}rmann, Dirac and normal states on Weyl--von Neumann
algebras, 2021

\cite{MR2256497}Jencova, Petz, Sufficiency, examples

\cite{MR2207329}Jencova, Petz, Sufficiency, basic

\cite{MR1057180}Petz, CCR

\cite{Matsumoto-randomization} Matsumoto-randomization

\cite{MR1070713} Conway, Func Ana

\cite{MR1721402}Conway, Operator Theory

\cite{MR887100} Bratteli 1

\cite{MR1441540}Bratteli 2

\cite{MR1093459}Brockwell, Davis, Time Series

\cite{MR3060648}Derezinski, G\'{e}rard

\cite{MR0982264}Dudley, Real Analysis

\cite{MR3012668}Parthasarathy

\cite{MR1490835}Sakai

\cite{MR1741419}Schaefer, Topological Vector Spaces

\cite{MR856411}Le Cam book

\cite{MR3930599}Shiryaev, Proba 2, 3rd Ed

\cite{MR812467} Strasser

\cite{MR4137283}McLaren, Plosker, Ramsey

\cite{MR1892228}Lax, Func Ana

\cite{MR2797301}Holevo, Quaderni, Pisa

\cite{MR4319036}Nikolski, Toeplitz

\cite{MR0374877}Nikolski, Approximation of Functions

\cite{MR2724359}Tsybakov, NP estimation

\cite{RevModPhys.84.621}Weedbrock, Pirandola et al, Gaussian quantum information

\cite{BGN-QAE}BGN\ -QAE

\cite{MR1633574} Grama, N, Expon. family regression

\cite{MR1900972}Grama, N. Asy. equiv. for NP regression

\cite{MR1425959}N 96, Ay. equiv. density estimation and white noise

\cite{MR2589320} Gol, N. Zhou, Spec 2010

\cite{GolNussbZ-specpaper-preprint} Gol, N. Zhou, Spec preprint arXiv 2009

\cite{MR1425958}Brown, Low, Equivalence NP regression

\cite{MR2767163}Korostelev, Math Stat

\cite{MR120319}Dieudonn\'{e}, Foundations of Modern Analysis (vol I)

\cite{MR2506764}Kahn, Guta, qu-LAN\ for fidi systems

\cite{MR3098996}Cohn, Measure Theory

\cite{MR2377635}Audenaert, N, S, Verstraete, Asy Error Rates in Q hypotheses testing

\cite{MR2502660}N\&S, Chernoff lower bound, AS, 2010

\cite{MR840521}Mammen, Additional information

\bigskip

\bigskip

\bigskip%

\end{privatenotes}%

\bigskip%

\begin{privatenotes}%

Text%

\end{privatenotes}%

\bigskip%

\begin{privatenotes}
\begin{boxedminipage}{\textwidth}%

\begin{sfblock}
Text
\end{sfblock}

%

\end{boxedminipage}
\end{privatenotes}%

\end{document}